\newtheorem{prop}[subsubsection]{Proposition}
\newtheorem{cor}[subsubsection]{Corollary}
\newtheorem{lem}[subsubsection]{Lemma}
\newtheorem{defn}[subsubsection]{Definition}
\newtheorem{thm}[subsubsection]{Theorem}
\theoremstyle{remark}
\newtheorem{rem}[subsubsection]{Remark}
\newcommand{\lemref}[1]{Lemma~\ref{#1}}
\newcommand{\thmref}[1]{Theorem~\ref{#1}}
\newcommand{\secref}[1]{Sect.~\ref{#1}}
\newcommand{\corref}[1]{Corollary~\ref{#1}}
\newcommand{\propref}[1]{Proposition~\ref{#1}}
\numberwithin{equation}{section}
\newcommand{\nc}{\newcommand}
\nc{\ssec}{\subsection}
\nc{\sssec}{\subsubsection}
\nc{\renc}{\renewcommand}
\nc{\on}{\operatorname}
\nc\ol{\overline}
\nc\wt{\widetilde}
\nc{\Loc}{\on{Loc}}
\nc{\Bun}{\on{Bun}}
\nc{\BQ}{{\mathbb{Q}}}
\nc{\BA}{{\mathbb{A}}}
\nc{\BC}{{\mathbb{C}}}
\nc{\BH}{{\mathbb{H}}}
\nc{\BG}{{\mathbb{G}}}
\nc{\BK}{{\mathbb{K}}}
\nc{\BN}{{\mathbb{N}}}
\nc{\BD}{{\mathbb{D}}}
\nc{\BV}{{\mathbb{V}}}
\nc{\BL}{{\mathbb{L}}}
\nc{\BP}{{\mathbb{P}}}
\nc{\CA}{{\mathcal{A}}}
\nc{\CC}{{\mathcal{C}}}
\nc{\CO}{{\mathcal{O}}}
\nc{\CP}{{\mathcal{P}}}
\nc{\CR}{{\mathcal{R}}}
\nc{\CW}{{\mathcal{W}}}
\nc{\CU}{{\mathcal{U}}}
\nc{\CK}{{\mathcal{K}}}
\nc{\CM}{{\mathcal{M}}}
\nc{\CL}{{\mathcal{L}}}
\nc{\CF}{{\mathcal{F}}}
\nc{\CE}{{\mathcal{E}}}
\nc{\CY}{{\mathcal{Y}}}
\nc{\CZ}{{\mathcal{Z}}}
\nc{\D}{{\mathcal{D}}}
\nc{\fg}{{\mathfrak{g}}}
\nc{\fD}{{\mathfrak{D}}}
\nc{\fh}{{\mathfrak{h}}}
\nc{\fc}{{\mathfrak{c}}}
\nc{\fn}{{\mathfrak{n}}}
\nc{\sM}{{\mathsf M}}
\nc{\ppart}{(\!(t)\!)}
\nc{\hg}{{\widehat\fg}}
\nc{\sF}{{\mathsf F}}
\nc{\sG}{{\mathsf G}}
\nc{\sT}{{\mathsf T}}
\nc{\sh}{{\mathsf h}}
\nc{\si}{{\mathsf i}}
\nc{\sj}{{\mathsf j}}
\renc{\sp}{{\mathsf p}}
\nc{\oS}{\overset{\circ}S{}}
\nc{\oT}{\overset{\circ}T{}}
\nc{\of}{\overset{\circ}f{}}
\nc{\bC}{{\mathbf{C}}}
\nc{\bE}{{\mathbf{E}}}
\nc{\bZ}{{\mathbf{Z}}}
\nc{\bD}{{\mathbf{D}}}
\nc{\bO}{{\mathbf{O}}}
\nc{\bU}{{\mathbf{U}}}
\nc{\bc}{{\mathbf{c}}}
\nc{\bd}{{\mathbf{d}}}
\nc{\be}{{\mathbf{e}}}
\nc{\bi}{{\mathbf{i}}}
\nc{\bM}{{\mathbf{M}}}
\nc{\bA}{{\mathbf{A}}}
\nc{\bI}{{\mathbf{I}}}
\nc{\bJ}{{\mathbf{J}}}
\nc{\bK}{{\mathbf{K}}}
\nc{\bDelta}{{\mathbf {\Delta}}}
\nc{\fW}{{\mathfrak{W}}}
\nc{\reg}{{\text{\rm reg}}}
\nc{\nilp}{{\text{\rm nilp}}}
\nc{\cG}{{\check{G}}}
\nc{\cB}{{\check{B}}}
\nc{\cg}{{\check{\fg}}}
\nc{\cb}{{\check{\fb}}}
\nc{\cn}{{\check{\fn}}}
\nc{\mer}{{\on{mer}}}
\nc{\Const}{\mathsf{Const}}
\nc{\Whit}{\on{Whit}}
\nc{\KL}{\on{KL}}
\nc{\FS}{\on{FS}}
\nc{\LocSys}{\on{LocSys}}
\nc{\QCoh}{\on{QCoh}}
\nc{\Coh}{\on{Coh}}
\nc{\IndCoh}{{\on{IndCoh}}}
\nc{\Cat}{\on{Cat}}
\nc{\Op}{\on{Op}}
\nc{\Gr}{\on{Gr}}
\nc{\Fl}{\on{Fl}}
\nc{\Rep}{\on{Rep}}
\renc{\mod}{{\on{-mod}}}
\nc{\Conn}{\on{Conn}}
\nc{\unit}{{\mathbf{1}}}
\nc{\Hom}{\on{Hom}}
\nc{\Maps}{\on{Maps}}
\nc{\CMaps}{{\mathcal Maps}}
\nc{\End}{\on{End}}
\nc{\Vect}{\on{Vect}}
\nc{\Av}{\on{Av}}
\nc{\Ind}{\on{Ind}}
\nc{\Spec}{\on{Spec}}
\nc{\KG}{K\backslash G}
\nc{\comult}{{co\text{-}mult}}
\nc{\counit}{{co\text{-}unit}}
\nc{\uHom}{{\underline{\Hom}}}
\nc{\dgSch}{\on{DGSch}}
\nc{\Sch}{\on{Sch}}
\nc{\affdgSch}{\on{DGSch}^{\on{aff}}}
\nc{\affSch}{\on{Sch^{\on{aff}}}}
\nc{\Groupoids}{\on{Grpd}}
\nc{\inftygroup}{\infty\on{-Grpd}}
\nc{\inftyCat}{\infty\on{-Cat}}
\nc{\StinftyCat}{\on{DGCat}}
\nc{\MoninftyCat}{\infty\on{-Cat}^{Mon}}
\nc{\SymMoninftyCat}{\infty\on{-Cat}^{\on{SymMon}}}
\nc{\PicCat}{\infty\on{-Grpd}^{\on{Pic}}}
\nc{\SymMonStinftyCat}{\on{DGCat}^{\on{SymMon}}}
\nc{\SymMonModStinftyCat}{\on{DGCat}^{\on{SymMon+Mod}}}
\nc{\SymMonModopStinftyCat}{\on{DGCat}^{\on{SymMon}^{\on{op}}+\on{Mod}}}
\nc{\MonStinftyCat}{\on{DGCat}^{\on{Mon}}}
\nc{\inftystack}{\on{Stk}}
\nc{\inftystackalg}{Stk^{1\text{-}alg}}
\nc{\inftyprestack}{\on{PreStk}}
\nc{\inftydgnearstack}{\on{NearStk}}
\nc{\inftydgstack}{\on{Stk}}
\nc{\inftydgstackalg}{DGStk^{1\text{-}alg}}
\nc{\inftydgprestack}{\on{PreStk}}
\nc{\mmod}{{\on{-}{\mathbf{mod}}}}
\nc{\wh}{\widehat}
\nc{\sotimes}{\overset{!}\otimes}
\nc{\starr}{\text{\dh}}
\begin{document}

\title[Ind-coherent sheaves]{Ind-coherent sheaves}

\author{Dennis Gaitsgory}

\dedicatory{To the memory of I.~M.~Gelfand}

\date{\today}

\begin{abstract}
We develop the theory of ind-coherent sheaves on schemes and stacks. The category of
ind-coherent sheaves is closely related, but inequivalent, to the category of quasi-coherent
sheaves, and the difference becomes crucial for the formulation of the categorical Geometric
Langlands Correspondence.
\end{abstract}

\maketitle

\tableofcontents

\section*{Introduction}

\ssec{Why ind-coherent sheaves?}

This paper grew out of a series of digressions in an attempt to write down the formulation
of the categorical Geometric Langlands Conjecture.

\medskip

Let us recall that the categorical Geometric Langlands Conjecture is supposed to say that
the following two categories are equivalent. One category is the (derived) category of D-modules
on the stack $\Bun_G$ classifying principal $G$-bundles on a smooth projective curve $X$.
The other category is the (derived) category of quasi-coherent sheaves on the stack $\LocSys_{\cG}$
classifying $\cG$-local systems on $X$, where $\cG$ is the Langlands dual of $G$.

\medskip

However, when $G$ is not a torus, the equivalence between
$$\on{D-mod}(\Bun_G) \text{ and } \QCoh(\LocSys_{\cG})$$
does not hold, but it is believed it will hold once we slightly modify
the categories $\on{D-mod}(\Bun_G)$ and $\QCoh(\LocSys_{\cG})$.

\sssec{}

So, the question is what ``modify slightly" means. However, prior to that, one
should ask what kind of categories we want to consider. 

\medskip 

Experience shows that when working with triangulated categories, 
\footnote{In the main body of the paper, instead of working with 
triangulated categories we will work with DG categories and functors.} the following framework
is convenient: we want to consider categories that are cocomplete (i.e., admit arbitrary
direct sums), and that are generated by a set of compact objects. The datum of such a category is 
equivalent (up to passing to the Karoubian completion) to the datum of the corresponding 
subcategory of compact objects. So, let us stipulate that this is the framework in which we want
to work. 
As functors between two such triangulated categories we will take those triangulated 
functors that commute with direct sums; we call such functors continuous.

\sssec{Categories of D-modules}

Let $X$ be a scheme of finite type (over a field of characteristic zero). We assign to it the (derived) category
$\on{D-mod}(X)$, by which we mean the unbounded derived category with quasi-coherent cohomologies. 
This category is compactly generated; the corresponding subcategory $\on{D-mod}(X)^c$ of
compact objects consists
of those complexes that are cohomologically bounded and coherent (i.e., have cohomologies in finitely many degrees,
and each cohomology is locally finitely generated). 

\medskip

The same goes through when $X$ is no longer a scheme, but a quasi-compact algebraic stack, under a mild 
additional hypothesis, see \cite{DrGa1}. Note, however, that in this case, compact objects of the category 
$\on{D-mod}(X)^c$ are less easy to describe, see \cite[Sect. 8.1]{DrGa1}.

\medskip

However, the stack $\Bun_G(X)$ 
is not quasi-compact, so the compact generation of the category $\on{D-mod}(\Bun_G)$, although true, is not obvious. 
The proof of the compact generation of $\on{D-mod}(\Bun_G)$ is the subject of the paper \cite{DrGa2}.

\medskip

In any case, in addition to the subcategory $\on{D-mod}(\Bun_G)^c$
of actual compact objects in $\on{D-mod}(\Bun_G)$, there are several other small subcategories
of $\on{D-mod}(\Bun_G)$, all of which would have been the same, had $\Bun_G$ been a quasi-compact scheme.
Any of these categories can be used to ``redefine" the derived category on $\Bun_G$. 

\medskip

Namely, if $\on{D-mod}(\Bun_G)^c{}'$ is such a category, one can consider its ind-completion
$$\on{D-mod}(\Bun_G)':=\on{Ind}(\on{D-mod}(\Bun_G)^c{}').$$Replacing the initial $\on{D-mod}(\Bun_G)$ by 
$\on{D-mod}(\Bun_G)'$ is what we mean by "slightly modifying" our category. 

\sssec{Categories of quasi-coherent sheaves}

Let $X$ be again a scheme of finite type, or more generally, a DG scheme
almost of finite type (see \secref{sss:DG schemes}), over a field of characteristic zero.
It is well-known after \cite{TT} that the category $\QCoh(X)$ is compactly
generated, where the subcategory $\QCoh(X)^c$ is the category 
$\QCoh(X)^{\on{perf}}$ of perfect objects. 

\medskip

In this case also, there is another natural candidate to replace the category
$\QCoh(X)^{\on{perf}}$, namely, the category $\Coh(X)$ which consists of bounded
complexes with coherent cohomologies. The ind-completion $\QCoh(X)':=\Ind(\Coh(X))$ is the
category that we denote $\IndCoh(X)$, and which is the main object of study in this 
paper. 

\medskip

There may be also other possibilities for a natural choice of a small subcategory
in $\QCoh(X)$. Specifically, if $X$ is a locally complete intersection, one can attach a 
certain subcategory
$$\QCoh(X)^{\on{perf}}\subset \QCoh(X)^{\on{perf}}_{{\mathcal N}}\subset \Coh(S)$$
to any Zariski-closed conical subset ${\mathcal N}$ of $\Spec_X(\on{Sym}(H^1(T_X)))$,
where $T_X$ denotes the tangent complex of $X$, see \cite{AG} where this theory
is developed. 

\sssec{}

An important feature of the examples considered above is that the resulting modified categories
$\on{D-mod}(\Bun_G)'$ and $\QCoh(X)'$ all carry a t-structure, such that
their eventually coconnective subcategories (i.e., $\on{D-mod}(\Bun_G)'{}^+$
and $\QCoh(X)'{}^+$, respectively) are in fact equivalent to the corresponding 
old categories (i.e., $\on{D-mod}(\Bun_G)^+$ and $\QCoh(X)^+$, respectively). 

\medskip

I.e., the
difference between the new categories and the corresponding old ones is that the former
are not left-complete in their t-structures, i.e., Postnikov towers do not necessarily converge
(see \secref{ss:QCos as left compl} where the notion of left-completeness is reviewed).

\medskip

However, this difference is non-negligible from the point of view of Geometric Langlands:
the conjectural equivalence between the modified categories
$$\on{D-mod}(\Bun_G)' \simeq \QCoh(\LocSys_{\cG})'$$
has an unbounded cohomological amplitude, so an object which is trivial with respect to the
t-structure on one side (i.e., has all of its cohomologies equal to $0$), may be 
non-trivial with respect to the t-structure on the other side. An example is provided by
the ``constant sheaf" D-module $$k_{\Bun_G}\in \on{D-mod}(\Bun_G)^+=\on{D-mod}(\Bun_G)'{}^+.$$

\ssec{So, why ind-coherent sheaves?}

Let us however return to the question why one should study specifically the category $\IndCoh(X)$.
In addition to the desire to understand the possible candidates for the spectral side of Geometric 
Langlands, there are three separate reasons of general nature. 

\sssec{}

Reason number one is that when $X$ is not a scheme, but an ind-scheme, the category $\IndCoh(X)$
is usually much more manageable than $\QCoh(X)$. This is explained in \cite[Sect. 2]{GR1}. 

\sssec{}

Reason number two has to do with the category of D-modules. For any scheme or stack $X$, the category
$\on{D-mod}(X)$ possesses two forgetful functors:
$${\mathbf{oblv}}^l:\on{D-mod}(X)\to \QCoh(X) \text{ and } {\mathbf{oblv}}^r:\on{D-mod}(X)\to \IndCoh(X),$$
that realize $\on{D-mod}(X)$ as ``left" D-modules and ``right" D-modules, respectively.

\medskip

However, the ``right" realization is much better behaved, see \cite{GR2} for details. 

\medskip

For example,
it has the nice feature of being compatible with the natural
t-structures on both sides: an object
$\CF\in  \on{D-mod}(X)$ is coconnective (i.e., cohomologically $\geq 0$) if and only if 
${\mathbf{oblv}}^r(\CF)$ is. \footnote{When $X$ is smooth, the ``left" realization is also compatible
with the t-structures up to a shift by $\dim(X)$. However, when $X$ is not smooth, the functor 
${\mathbf{oblv}}^l$ is not well-behaved with respect to the t-structures, which is the reason for
the popular misconception that ``left D-modules do not make sense for singular schemes." 
More precisely, the derived category of left D-modules makes sense always, but the corresponding
abelian category is indeed difficult to see from the ``left" realization.}

\medskip

So, if we want to study the category of D-modules in its incarnation as ``right" D-modules,
we need to study the category $\IndCoh(X)$. 

\sssec{}

Reason number three is even more fundamental. Recall that the Grothendieck-Serre duality
constructs a functor $f^!:\QCoh(Y)^+\to \QCoh(X)^+$ for a proper morphism
$f:X\to Y$, which is the right adjoint to the functor $f_*:\QCoh(X)\to \QCoh(Y)$.

\medskip

However, if we stipulate that we want to stay within the framework of cocomplete categories
and functors that commute with direct sums, a natural way to formulate this adjunction is
for the categories $\IndCoh(X)$ and $\IndCoh(Y)$. Indeed, the right adjoint to $f_*$,
considered as a functor $\QCoh(X)\to \QCoh(Y)$ will not be continuous, as $f_*$
does not necessarily send compact objects of $\QCoh(X)$, i.e., $\QCoh(X)^{\on{perf}}$,
to $\QCoh(Y)^{\on{perf}}$. But it does send $\Coh(X)$ to $\Coh(Y)$.

\medskip

We should remark that developing the formalism of the !-pullback and the base change
formulas that it satisfies, necessitates passing from the category or ordinary schemes
to that of DG schemes. Indeed, if we start with with a diagram of ordinary schemes
$$
\CD
& & X \\
& & @VV{f}V \\
Y' @>{g_Y}>> Y,
\endCD
$$
their Cartesian product $X':=X\underset{Y}\times Y'$ must be considered as a DG scheme,
or else the base change formula
\begin{equation} \label{e:base change 0}
g_Y^!\circ f_*\simeq f'_*\circ g_X^!
\end{equation}
would fail, where $f':X'\to Y'$ and $g_X:X'\to X$ denotes the resulting base changed maps.

\sssec{What is done in this paper}

We shall presently proceed to review the actual contents of this paper. 

\medskip

We should say right away that there is no
particular main theorem toward which other results are directed. Rather, we are trying to provide
a systematic exposition of the theory that we will be able to refer to in subsequent papers
on foundadtions of derived algebraic geometry needed for 
Geometric Langlands. 

\medskip

The present paper naturally splits into three parts: 

\medskip

Part I consists of Sections \ref{s:ind-coherent}-\ref{s:properties}, which deal
with properties of $\IndCoh$ on an individual DG scheme, and functorialities for an individual
morphism between DG schemes. The techniques used in this part essentially amount to 
homological algebra, and in this sense are rather elementary. 

\medskip

Part II consists of 
Sections \ref{s:!}-\ref{s:Serre} where we establish and exploit the functoriality properties of the assignment $$S\mapsto \IndCoh(S)$$
on the category of DG schemes as a whole. This part substantially relies on the theory
of $\infty$-categories.  

\medskip

Part III consists of Sections \ref{s:stacks}-\ref{s:Artin}, where we extend $\IndCoh$ to stacks and prestacks.  

\ssec{Contents of the paper: the ``elementary" part}

\sssec{}

In \secref{s:ind-coherent} we introduce the category $\IndCoh(S)$ where $S$ is a Noetherian
DG scheme. We note that when $S$ is a classical scheme (i.e., structure sheaf 
$\CO_X$ satisfies $H^i(\CO_X)=0$ for $i<0$), we recover the category that was introduced by
Krause in \cite{Kr}. 

\medskip

The main result of \secref{s:ind-coherent} is \propref{p:equiv on D plus}. It says that the eventually
coconnective part of $\IndCoh(X)$, denoted $\IndCoh(X)^+$, maps isomorphically to
$\QCoh(X)^+$. This is a key to transferring many of the functorialities of $\QCoh$ to
$\IndCoh$; in particular, the construction of direct images for $\IndCoh$ relies on this
equivalence. 

\sssec{}

Section \ref{s:non-Noeth} is a digression that will not be used elsewhere in the present paper.
Here we try to spell out the definition of $\IndCoh(S)$ for a scheme $S$ which is not
necessarily Noetherian. 

\medskip

The reason we do this is that we envisage the need of $\IndCoh$ when studying objects
such as the loop group $G\ppart$. In practice, $G\ppart$ can be presented as an inverse
limit of ind-schemes of finite type, i.e., in a sense we can treat $\IndCoh(G\ppart)$ be referring
back to the Noetherian case. However, the theory would be more elegant if we could give
an independent definition, which is what is done here. 

\sssec{}

In \secref{s:functorialities} we  introduce the three basic functors between the categories
$\IndCoh(S_1)$ and $\IndCoh(S_2)$ for a map $f:S_1\to S_2$ of Noetherian DG schemes.

\medskip

The first is direct image, denoted 
$$f^{\IndCoh}_*:\IndCoh(S_1)\to \IndCoh(S_2),$$ which is ``transported" from the usual
direct image $f_*:\QCoh(S_1)\to \QCoh(S_2)$. 

\medskip

The second is
$$f^{\IndCoh,*}:\IndCoh(S_2)\to \IndCoh(S_1),$$
which is also transported from the usual inverse image $f^*:\QCoh(S_2)\to \QCoh(S_1)$.
However, unlike the quasi-coherent setting, the functor $f^{\IndCoh,*}$ is only defined
for morphisms that are \emph{eventually coconnective}, i.e., those for which the usual
$f^*$ sends $\QCoh(S_2)^+$ to $\QCoh(S_1)^+$.

\medskip

The third functor is a specialty of $\IndCoh$: this is the !-pullback functor $f^!$
for a proper map $f:S_1\to S_2$, and which is defined as the right adjoint of
$f^{\IndCoh}_*$.

\sssec{}

In \secref{s:properties} we discuss some of the basic properties of $\IndCoh$,
which essentially say that there are no unpleasant surprises vis-\`a-vis the familiar
properties of $\QCoh$. E.g., Zariski descent, localization with respect to an
open embedding, etc., all hold like they do for $\QCoh$. 

\medskip

A pleasant feature of $\IndCoh$  (which does not hold for $\QCoh$)
is the convergence property: for a DG scheme $S$, the category
$\IndCoh(S)$ is the limit of the categories $\IndCoh(\tau^{\leq n}(S))$, where 
$\tau^{\leq n}(S)$ is the $n$-coconnective truncation of $S$. 

\ssec{Interlude: $\infty$-categories}

A significant portion of the work in the present paper goes into enhancing the functoriality
of $\IndCoh$ for an individual morphism $f$ to a functor from the 
category of $\dgSch_{\on{Noeth}}$ of Noetherian DG schemes
to the category of DG categories.

\medskip

We need to work with DG categories rather than with triangulated categories as
the former allow for such operation as taking limits. The latter is necessary in order
to extend $\IndCoh$ to stacks and to formulate descent.

\medskip

Since we are working with DG schemes, $\dgSch_{\on{Noeth}}$ form an $(\infty,1)$-category
rather than an ordinary category. Thus, homotopy theory naturally enters the picture.
For now we regard the category of DG categories, denoted $\StinftyCat$,
also as an $(\infty,1)$-category (rather than as a $(\infty,2)$-category).

\sssec{}

Throughout the paper we formulate statements of $\infty$-categorical nature in a way
which is independent of a particular model for the theory of $\infty$-categories (although
the model that we have in mind is that of quasi-categories as treated in \cite{Lu0}).

\medskip

However, formulating things at the level of $\infty$-categories is not such an easy thing to do.
Any of the models for $\infty$-categories is a very convenient computational/proof-generating
machine. However, given a particular algebro-geometric problem, such as the assignment 
$$S\mapsto \IndCoh(S),$$
it is rather difficult to feed it into this machine as an input. 

\medskip

E.g., in the model of quasi-categories, an $\infty$-category is a simplicial set, and a functor 
is a map of simplicial sets. It appears as a rather awkward endeavor to organize DG schemes
and ind-coherent sheaves on them into a particular simplicial set...
\footnote{It should be remarked that when one works with a specific manageable diagram of DG categories 
arising from algebraic geometry (e.g., the category of D-modules on the loop group viewed
as a monoidal category), it is sometimes possible to work in a specific model, and to carry 
out the constructions ``at the chain level", as is done, e.g., in \cite{FrenGa}. But in 
slightly more general situations, this approach does not have much promise.}

\sssec{}

So, the question is: how, for example, do me make $\IndCoh$ into a functor $$\dgSch_{\on{Noeth}}\to \StinftyCat,$$
e.g., with respect to the push-forward operation, denoted $f^{\IndCoh}_*$?  The procedure is multi-step and can 
be described as follows:

\medskip

\noindent {\it Step 0:} We start with the assignment $$A\mapsto A\mod:(\on{DG-rings})^{\on{op}}\to \StinftyCat,$$
which is elementary enough that it can be carried out in any given model. We view it as 
a functor
$$\QCoh_{\affdgSch}:\affdgSch\to \StinftyCat,$$
with respect to $f_*$. 

\medskip

\noindent {\it Step 1:} Passing to left adjoints, we obtain a functor
$$\QCoh^*_{\affdgSch}:(\affdgSch)^{\on{op}}\to \StinftyCat,$$
with respect to $f^*$. 

\medskip

\noindent {\it Step 2:} We apply the right Kan extension along the natural functor
$$\affdgSch\to \inftydgprestack$$
(here $\inftydgprestack$ is the $(\infty,1)$-category
category of derived $\infty$-prestacks, see \cite{Stacks}, Sect. 1.1.1.)
to obtain a functor 
$$\QCoh^*_{\inftydgprestack}:(\inftydgprestack)^{\on{op}}\to \StinftyCat.$$

\medskip

\noindent {\it Step 3:} We restrict along the natural functor $\dgSch\to \inftydgprestack$
(see \cite{Stacks}, Sect. 3.1) to obtain a functor
$$\QCoh^*_{\dgSch}:(\dgSch)^{\on{op}}\to \StinftyCat,$$
with respect to $f^*$. 

\medskip

\noindent {\it Step 4:} Passing to right adjoints, we obtain a functor
$$\QCoh_{\dgSch}:\dgSch\to \StinftyCat,$$
with respect to $f_*$.

\medskip

\noindent {\it Step 5:} Restricting to $\dgSch_{\on{Noeth}}\subset \dgSch$, and taking the eventually
coconnective parts, we obtain a functor
$$\QCoh^+_{\dgSch_{\on{Noeth}}}:\dgSch_{\on{Noeth}}\to \StinftyCat.$$

\medskip

\noindent {\it Step 6:} Finally, using \propref{p:equiv on D plus}, in \propref{p:upgrading IndCoh to functor} we use 
the latter functor do construct the desired functor
$$\IndCoh_{\dgSch_{\on{Noeth}}}:\dgSch_{\on{Noeth}}\to \StinftyCat.$$

\sssec{}

In subsequent sections of the paper we will extend the functor $\IndCoh_{\dgSch_{\on{Noeth}}}$,
most ambitiously, to a functor out of a certain $(\infty,1)$-category of correspondences between prestacks.

\medskip

However, the procedure will always be of the same kind: we will never be able to do it ``by hand" by
saying where the objects and $1$-morphisms go, and specifying associativity up to coherent homotopy. 
Rather, we will iterate many times the operations of restriction, and left and right Kan extension,
and passage to the adjoint functor.
\footnote{One observes that higher algebra, i.e., algebra done in $\infty$-categories, loses one of
the key features of usual algebra, namely, of objects being rather concrete (such as a module over
an algebra is a concrete set with concrete pieces of structure).} 

\sssec{A disclaimer}

In this paper we did not set it as our goal to give complete proofs of statements
of $\infty$-categorical nature. Most of these statements have to do with 
\emph{categories of correspondences}, introduced in \secref{s:!}. 

\medskip

The corresponding proofs are largely combinatorial in nature, and would 
increase the length of the paper by a large factor. The proofs of most of such 
statements will be supplied in the forthcoming book \cite{GR3}. In this sense,
the emphasis of the present paper is to make sure that homological algebra
works (maps between objects in a given DG category are isomorphisms
when they are expected to be such), while the emphasis of \cite{GR3}
is to show that the assignments of the form
$$\bi\in \bI \mapsto \bC_i\in \inftyCat,$$ where $\bI$ is some $\infty$-category,
are indeed functors $\bI\to \inftyCat$.

\medskip

That said, we do take care to single out every statement that requires a 
non-standard manipulation at the level of $\infty$-categories. I.e., we avoid assuming
that higher homotopies can be automatically arranged in every given problem at hand. 

\ssec{Contents: the rest of the paper}

\sssec{}

In \secref{s:!} our goal is to define the set-up for the functor of !-pullback for arbitrary
morphisms (i.e., not necessarily proper, but still of finite type), such that the base change
formula \eqref{e:base change 0} holds. 

\medskip

As was explained to us by J.~Lurie, a proper formulation of the existence of !-pullback
together with the base change property is encoded by enlarging the category $\dgSch$:
we leave the same objects, but $1$-morphisms from $S_1$ to $S_2$ 
are now correspondences
\begin{equation} \label{e:corr 0}
\CD
S_{1,2} @>{g}>> S_1 \\
@V{f}VV \\
S_2.
\endCD
\end{equation}
and compositions of morphisms are given by forming Cartesian products. I.e., when we want to
compose a morphism $S_1\to S_2$ as above with a morphism $S_2\to S_3$ given by
$$
\CD
S_{2,3} @>{g'}>> S_2 \\
@V{f'}VV \\
S_3,
\endCD
$$
the composition is given by the diagram
$$
\CD
S_{1,2}\underset{S_2}\times S_{2,3}  @>>> S_1 \\
@VVV  \\
S_3.
\endCD
$$
For a morphism \eqref{e:corr 0}, the corresponding functor $\IndCoh(S_1)\to \IndCoh(S_2)$
is given by $$f_*^{\IndCoh}\circ g^!.$$

\medskip

Suppose now that we have managed to define $\IndCoh$ as a functor out of the category of 
correspondences. However, there are still multiple compatibilities that are supposed to hold
that express how the isomorphisms \eqref{e:base change 0} are compatible with the adjunction
$(f^{\IndCoh}_*,f^!)$ when $f$ is a proper morphism. 

\medskip

Another idea of J.~Lurie's is that this structure is most naturally encoded by further expanding 
our category of correspondences, by making it into a $2$-category, where we allow $2$-morphisms to 
be morphisms between correspondences
that are not necessarily isomorphisms, but rather proper maps. 

\sssec{}

In \secref{s:construction of !} we indicate the main steps in the construction of $\IndCoh$ as a functor out
of the category of correspondences. The procedure mimics the classical construction
of the !-pullback functor, but is done in the $\infty$-categorical language. Full details of this
construction will be supplied in \cite{GR3}.

\sssec{}

In \secref{s:Gorenstein} we study the behavior of the !-pullback functor under eventually
coconnective, Gorenstein and smooth morphisms.

\sssec{}

The goal of \secref{s:descent} is to prove that the category $\IndCoh$ satisfies
faithfully flat descent with respect to the !-pullback functor. The argument we give
was explained to us by J.~Lurie.

\sssec{}

In \secref{s:Serre} we discuss the self-duality property of the category $\IndCoh(S)$ for
a DG scheme $S$ almost of finite type over the ground field. The self-duality boils
down to the classical Serre duality anti-equivalence of the category $\Coh(S)$, and
is automatic from the formalism of $\IndCoh$ as a functor on the category of correspondences.

\sssec{}

In \secref{s:stacks} we take the theory as far as it goes when $\CY$ is an arbitrary prestack.

\sssec{}

In \secref{s:Artin} we specialize to the case of Artin stacks. The main feature of $\IndCoh(\CY)$
for Artin stacks is that this category can be recovered from looking at just affine schemes 
equipped with a \emph{smooth} map to $\CY$. This allows us to introduce a t-structure on 
$\IndCoh(\CY)$, and establish a number of properties that make the case of Artin stacks
close to that of schemes. 

\ssec{Conventions, terminology and notation}

\sssec{Ground field}

Throughout this paper we will be working with schemes and DG schemes defined over a ground field
$k$ of characteristic $0$. 

\sssec{$\infty$-categories}

By an $\infty$-category we shall always mean an $(\infty,1)$-category. By a slight abuse
of language we will sometimes talk about ``categories" when we actually mean 
$\infty$-categories.
 \footnote{Throughout the paper we will ignore set-theoretic
issues. The act of ignoring can be replaced by assuming that the $\infty$-categories
and functors involved are $\kappa$-accessible for some cardinal $\kappa$ in the sense of 
\cite{Lu0}, Sect. 5.4.2.}

\medskip

As was mentioned above, our usage of $\infty$-categories is not tied to any particular model,
however, the basic reference for us is Lurie's book \cite{Lu0}.

\medskip

For an $\infty$-category $\bC$ and $\bc_1,\bc_2\in \bC$, we let
$$\Maps_\bC(\bc_1,\bc_2)$$
denote the corresponding $\infty$-groupoid of maps. We also denote
$$\Hom_\bC(\bc_1,\bc_2)=\pi_0(\Maps_\bC(\bc_1,\bc_2)).$$

\medskip

For a functor $\Phi:\bC_1\to \bC_2$ between $\infty$-categories, and a functor 
$F:\bC_1\to \bD$, where $\bD$ is another $\infty$-category that contains limits
(resp., colimits), we let
$$\on{RKE}_\Phi(F):\bC_2\to \bD \text{ and } \on{LKE}_\Phi(F):\bC_2\to \bD$$
the right (resp., left) Kan extension of $F$ along $\Phi$.

\sssec{Subcategories}  \label{sss:subcategories}

Let $\bC$ and $\bC'$ be an $\infty$-category, and $\phi:\bC'\to \bC$ be a functor. 

\medskip

We shall say that $\phi$ is \emph{0-fully faithful}, or just \emph{fully faithful}
if for any $\bc'_1,\bc'_2\in \bC'$, the map
\begin{equation} \label{e:map on Homs}
\on{Maps}_{\bC'}(\bc'_1,\bc'_2)\to \on{Maps}_{\bC}(\phi(\bc'_1),\phi(\bc'_2))
\end{equation}
is an isomorphism (=homotopy equivalence)
of $\infty$-groupoids. In this case we shall say that $\phi$ makes $\bC'$ into a \emph{0-full}
(or just \emph{full}) subcategory of $\bC'$. 

\medskip

Below are two weaker notions:

\medskip

We shall say that $\phi$ is \emph{1-fully faithful}, or just \emph{faithful}, if for any $\bc'_1,\bc'_2\in \bC'$, 
the map \eqref{e:map on Homs} is a fully faithful map of $\infty$-groupoids. Equivalently, the map 
\eqref{e:map on Homs} induces an injection on $\pi_0$ and a bijection on the homotopy groups $\pi_i$, $i\geq 1$ 
on each connected component of the space $\on{Maps}_{\bC'}(\bc'_1,\bc'_2)$.

\medskip

I.e., $2$- and higher morphisms between $1$-morphisms in $\bC'$ are the same in $\bC'$
and $\bC$, up to homotopy. 

\medskip

We shall say that $\phi$ is \emph{faithful and groupoid-full} if it is faithful, and 
for any $\bc'_1,\bc'_2\in \bC'$, the map
\eqref{e:map on Homs} is surjective on those connected components of 
$\on{Maps}_{\bC}(\phi(\bc'_1),\phi(\bc'_2))$ that correspond to isomorphisms.
In this case we shall say that $\phi$ is an equivalence onto a \emph{1-full}
subactegory of $\bC$. 

\sssec{DG categories}   \label{sss:DG categories}

Our conventions on DG categories follow those of \cite{DG}. For the purposes of this 
paper one can replace DG categories by (the equivalent) $(\infty,1)$-category of
stable $\infty$-categories tensored over $\Vect$, where the latter is the DG category
of complexes of $k$-vector spaces. 

\medskip

By $\StinftyCat_{\on{non-cocomplete}}$ we shall denote the category of all DG-categories. 

\medskip

By $\StinftyCat$ we will denote the full subcategory of $\StinftyCat_{\on{non-cocomplete}}$ that consists of 
cocomplete DG categories (i.e., DG categories closed under direct sums, which is equivalent to
being closed under all colimits). 

\medskip

By $\StinftyCat_{\on{cont}}$ we shall denote the 1-full subcategory of $\StinftyCat$
where we restrict $1$-morphisms to be continuous (i.e., commute with directs sums,
or equivalently with all colimits). 

\medskip

Let $\bC$ be a DG category and $\bc_1,\bc_2\in \bC$ two objects. We shall denote by
$$\CMaps_\bC(\bc_1,\bc_2)$$
the corresponding object of $\Vect$. The $\infty$-groupoid $\Maps_\bC(\bc_1,\bc_2)$
is obtained from the truncation $\tau^{\leq 0}(\CMaps_\bC(\bc_1,\bc_2))$ by the
Dold-Kan functor
$$\Vect^{\leq 0}\to \inftygroup.$$








\sssec{t-structures}   \label{sss:t structures}

Given $\bC\in \StinftyCat_{\on{non-cocomplete}}$ equipped with a t-structure, we shall denote by
$\bC^+$ the corresponding full subcategory of $\bC$ that consists of \emph{eventually coconnective}
objects, i.e.,
$$\bC^+=\underset{n}\cup\, \bC^{\geq -n}.$$

Similarly, we denote
$$\bC^-=\underset{n}\cup\, \bC^{\leq n},\quad \bC^b=\bC^+\cap \bC^-,\quad 
\bC^\heartsuit=\bC^{\leq 0}\cap \bC^{\geq 0}.$$

\sssec{(Pre)stacks and DG schemes}  \label{sss:DG schemes}

Our conventions regarding (pre)stacks and DG schemes follow \cite{Stacks}. 

\medskip

We 
we shall abuse the terminology slightly and use the expression ``classical scheme" for a DG 
scheme which is $0$-coconnective, see \cite{Stacks}, Sect. 3.2.1.  For $S\in \Sch$, we shall 
use the same symbol $S$ to denote the corresponding derived scheme. Correspondingly, 
for a derived scheme $S$ we shall use the notation $^{cl}\!S$, rather
than $\tau^{cl}(S)$, to denote its $0$-coconnective truncation. 

\sssec{Quasi-coherent sheaves}    \label{sss:intr qc}

Conventions regarding the category of quasi-coherent sheaves
on (pre)stacks follow those of \cite{QCoh}. 

\medskip

In particular, we shall denote by $\QCoh^*_{\affdgSch}$ the corresponding functor 
$$(\affdgSch)^{\on{op}}\to \StinftyCat_{\on{cont}},$$
by $\QCoh^*_{\inftydgprestack}$ its right Kan extension along the tautological functor
$$(\affdgSch)^{\on{op}}\hookrightarrow (\inftydgprestack)^{\on{op}},$$
and by $\QCoh^*_{\bC}$ the restriction of the latter to various subcategories $\bC$ of
$\inftydgprestack$, such as $\bC=\dgSch,\inftydgstack, \inftydgstack_{\on{Artin}}$, etc. 

\medskip

The superscript ``$*$" stands for the fact that for a morphism $f:\CY_1\to \CY_2$,
the corresponding functor $\QCoh(\CY_2)\to \QCoh(\CY_1)$ is $f^*$.

\medskip

We remark (see \cite[Corollary 1.3.12]{QCoh}) that for a classical scheme $S$, the category
$\QCoh(S)$ is the same whether we undertand $S$ as a classical or DG scheme.

\sssec{Noetherian DG schemes}  \label{sss:loc Noeth}

We shall say that an affine DG scheme $S=\Spec(A)$ is Noetherian if 

\begin{itemize}

\item $H^0(A)$ is a Noetherian ring.

\item Each $H^i(A)$ is a finitely generated as a module over $H^0(A)$.

\end{itemize} 

We shall say that a DG scheme is locally Noetherian if it admits a Zariski cover by
Noetherian affine DG schemes. It is easy to see that this is equivalent to
requiring that any open affine subscheme is Noetherian. We shall say that a DG
scheme is Noetherian if it is locally Noetherian and quasi-compact; we shall denote
the full subcategory of $\dgSch$ spanned by Noetherian DG schemes by
$\dgSch_{\on{Noeth}}$.

\sssec{DG schemes almost of finite type} \label{sss:intr aft}

Replacing the condition on $H^0(A)$ of being Noetherian by that of being of finite type
over $k$, we obtain the categories of DG schemes \emph{locally almost of finite type} 
and \emph{almost of finite type} over $k$, denoted $\dgSch_{\on{laft}}$ and
$\dgSch_{\on{aft}}$, respectively. 

\ssec{Acknowledgements}

The author's debt to J.~Lurie is obvious. In particular, it should be emphasized that
the following two crucial points in this paper: the idea to use the category of 
correspondences as the source of the $\IndCoh$ functor, and the assertion that 
$\IndCoh$ satisfies faithfully flat descent, are both due to him.

\medskip

The author would like to thank V.~Drinfeld and N.~Rozenblyum 
for their collaboration on the project of writing down the foundations of 
ind-coherent sheaves for the purposes of Geometric Langlands, 
and for sharing their ideas related to the subject of this paper. 

\medskip

The author would also like to thank D.~Arinkin, J.~Barlev and S.~Raskin for many 
helpful discussions.

\medskip

Finally, the author would like to emphasize that part of the material in this paper has been
independently developed by T.~Preygel.

\medskip

The author was supported by NSF grant DMS-1063470.

\bigskip

\centerline{\bf Part I. Elementary properties.}

\bigskip

\section{Ind-coherent sheaves}   \label{s:ind-coherent}

Let $S$ be a DG scheme. In this section we will be assuming that $S$ is Noetherian, see \secref{sss:loc Noeth},
i.e., quasi-compact and locally Noetherian. 

\ssec{The set-up}   \label{ss:defn}

\sssec{}

Consider the category $\QCoh(S)$. It carries a natural t-structure, whose heart $\QCoh(S)^\heartsuit$
is canonically isomorphic to $\QCoh({}^{cl}\!S)^\heartsuit$, where $^{cl}\!S$ is the underlying classical 
scheme.

\begin{defn}  \label{d:IndCoh}
Let $\Coh(S)\subset \QCoh(S)$ be the full subcategory that 
consists of objects of bounded cohomological amplitude and {\it coherent}
cohomologies.
\end{defn}

The assumption that $^{cl}\!S$ is Noetherian, implies that the subcategory 
$\Coh(S)$ is stable under the operation of taking cones, so it is a DG subcategory
of $\QCoh(S)$.

\medskip

However, $\Coh(S)$ is, of course, \emph{not} cocomplete. 

\medskip

\begin{defn}
The DG category $\IndCoh(S)$ is defined as the ind-completion of $\Coh(S)$. 
\end{defn}

\begin{rem}
When $S$ is a classical scheme, the category $\IndCoh(S)$ was first introduced by
Krause in \cite{Kr}. Assertion (2) of Theorem 1.1 of {\it loc.cit.} can be stated as 
$\IndCoh(S)$ being equivalent to the DG category of \emph{injective complexes}
on $S$. Many of the results of this section are simple generalizations of Krause's
results to the case of a DG scheme.
\end{rem}

\sssec{}

By construction, we have a canonical 1-morphism in $\StinftyCat_{\on{cont}}$:
$$\Psi_S:\IndCoh(S)\to \QCoh(S),$$
obtained by ind-extending the tautological embedding $\Coh(X)\hookrightarrow \QCoh(X)$. 

\begin{lem} \label{l:smoothness}
Assume that $S$ is a regular classical scheme. Then $\Psi_S$ is an equivalence.
\end{lem}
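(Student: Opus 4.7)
The plan is to show that under the regularity hypothesis, the subcategory $\Coh(S)$ coincides with the subcategory $\QCoh(S)^{\on{perf}}$ of perfect complexes inside $\QCoh(S)$, and then invoke the Thomason--Trobaugh/Neeman compact generation theorem to conclude.

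First I would verify the identification $\Coh(S) = \QCoh(S)^{\on{perf}}$. The inclusion $\QCoh(S)^{\on{perf}} \subset \Coh(S)$ is automatic: perfect complexes are cohomologically bounded and have finitely generated (hence coherent, as $S$ is Noetherian) cohomology sheaves. For the reverse inclusion, I would use regularity of $S$. Since $S$ is quasi-compact and locally Noetherian, and being perfect is Zariski-local, I may reduce to the affine case $S = \Spec(R)$ with $R$ a regular Noetherian ring. Every finitely generated $R$-module, localized at any prime, has finite projective dimension bounded by the dimension of the corresponding regular local ring (Auslander--Buchsbaum--Serre), so any bounded complex in $\Coh(S)$ is locally quasi-isomorphic to a bounded complex of finitely generated projectives, hence perfect.

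Next I would invoke the fact (Thomason--Trobaugh, or Neeman for the non-affine case) that for a quasi-compact quasi-separated scheme $S$, the category $\QCoh(S)$ is compactly generated with compact objects precisely $\QCoh(S)^{\on{perf}}$. This gives a canonical equivalence
$$\Ind(\QCoh(S)^{\on{perf}}) \xrightarrow{\;\sim\;} \QCoh(S),$$
realized by ind-extending the inclusion $\QCoh(S)^{\on{perf}} \hookrightarrow \QCoh(S)$.

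Combining the two steps, $\IndCoh(S) = \Ind(\Coh(S)) = \Ind(\QCoh(S)^{\on{perf}})$, and under these identifications the ind-extension of $\Coh(S) \hookrightarrow \QCoh(S)$ is precisely the ind-extension of $\QCoh(S)^{\on{perf}} \hookrightarrow \QCoh(S)$. But the latter is an equivalence, and this ind-extension is by construction $\Psi_S$; hence $\Psi_S$ is an equivalence.

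The main obstacle to be careful about is the passage from ``regular local of finite Krull dimension'' to the global statement that bounded coherent complexes on $S$ are perfect: one should check that the quasi-compactness of $S$ and the Zariski-local nature of perfectness allow one to work one affine chart at a time, so that no global finiteness hypothesis on $\dim(S)$ is needed beyond what is already built into the Noetherian quasi-compact setup. Everything else is a direct application of the general machinery of compact generation.
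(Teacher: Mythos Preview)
Your proposal is correct and follows essentially the same approach as the paper: the paper's proof cites the compact generation of $\QCoh(S)$ by $\QCoh(S)^{\on{perf}}$ (via Neeman/BFN rather than Thomason--Trobaugh directly) and then notes that for a regular classical scheme $\Coh(S)=\QCoh(S)^{\on{perf}}$, from which the equivalence is immediate. Your version simply supplies more detail on the $\Coh(S)=\QCoh(S)^{\on{perf}}$ step via Auslander--Buchsbaum--Serre, which the paper leaves implicit.
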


\medskip

\begin{rem}
As we shall see in \secref{ss:inverse implic}, the converse is also true. 
\end{rem}

\begin{proof}

Recall that by \cite{BFN}, Prop. 3.19, extending the arguments of \cite{Ne}, the category $\QCoh(S)$
is compactly generated by its subcategory $\QCoh(S)^c=\QCoh(S)^{\on{perf}}$ consisting of perfect complexes. 

\medskip

Suppose that $S$ is a regular classical scheme. Then the subcategories
$$\Coh(S)\subset \QCoh(S)\supset \QCoh(S)^{\on{perf}}$$
coincide, and the assertion is manifest.

\end{proof}

\ssec{The t-structure}

\sssec{}

The category $\Coh(S)$ carries a natural t-structure. Hence, $\IndCoh(S)$ acquires a canonical
t-structure, characterized by the properties that

\begin{enumerate}

\item It is compatible with filtered colimits (i.e., the truncation functors commute with filtered colimits), and 

\item The tautological embedding $\Coh(S)\to \IndCoh(S)$ is t-exact.

\end{enumerate}

\medskip

We have:

\begin{lem}
The functor $\Psi_S$  is t-exact.
\end{lem}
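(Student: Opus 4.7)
The plan is to verify t-exactness by checking that $\Psi_S$ preserves both connective and coconnective subcategories, and to reduce each check to the defining properties of the t-structure on $\IndCoh(S)$ together with the (manifest) closure of $\QCoh(S)^{\leq 0}$ and $\QCoh(S)^{\geq 0}$ under filtered colimits.

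First, I would record the following description of the t-structure on $\IndCoh(S)$: the subcategory $\IndCoh(S)^{\leq 0}$ (resp.\ $\IndCoh(S)^{\geq 0}$) coincides with the closure of $\Coh(S)^{\leq 0}$ (resp.\ $\Coh(S)^{\geq 0}$) under filtered colimits inside $\IndCoh(S)$. Indeed, any $\CF \in \IndCoh(S)$ is a filtered colimit $\colim_i \CF_i$ of objects in $\Coh(S)$; by property (1) of the t-structure we have $\tau^{\leq 0}\CF = \colim_i \tau^{\leq 0}\CF_i$ and $\tau^{\geq 0}\CF = \colim_i \tau^{\geq 0}\CF_i$, and by property (2) the terms lie in $\Coh(S)^{\leq 0}$ and $\Coh(S)^{\geq 0}$ respectively. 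Hence if $\CF$ is connective (resp.\ coconnective) it is a filtered colimit of objects of $\Coh(S)^{\leq 0}$ (resp.\ $\Coh(S)^{\geq 0}$).

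Next, I would invoke that $\Psi_S$ is continuous, so in particular commutes with filtered colimits, and that on $\Coh(S)$ it is just the tautological embedding into $\QCoh(S)$. Therefore $\Psi_S$ sends $\Coh(S)^{\leq 0}$ into $\QCoh(S)^{\leq 0}$ and $\Coh(S)^{\geq 0}$ into $\QCoh(S)^{\geq 0}$. Combined with the fact that both $\QCoh(S)^{\leq 0}$ and $\QCoh(S)^{\geq 0}$ are stable under filtered colimits in $\QCoh(S)$ (the former is closed under all colimits, the latter closed under filtered colimits because filtered colimits are t-exact on $\QCoh$), the previous paragraph immediately yields
\[
\Psi_S(\IndCoh(S)^{\leq 0}) \subset \QCoh(S)^{\leq 0} \quad \text{and} \quad \Psi_S(\IndCoh(S)^{\geq 0}) \subset \QCoh(S)^{\geq 0},
\]
which is exactly t-exactness.

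There is no real obstacle here; the only subtlety is the compatibility of the t-structure on $\IndCoh(S)$ with filtered colimits, which however is built into its definition, and the closure of $\QCoh(S)^{\geq 0}$ under filtered colimits, which is a standard fact about the natural t-structure on $\QCoh$ of a (locally Noetherian) DG scheme.
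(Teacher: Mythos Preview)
Your proof is correct and takes essentially the same approach as the paper's one-line argument, which simply says ``Follows from the fact that the t-structure on $\QCoh(S)$ is compatible with filtered colimits.'' You have spelled out the mechanism behind this sentence: writing objects of $\IndCoh(S)^{\leq 0}$ and $\IndCoh(S)^{\geq 0}$ as filtered colimits of the corresponding parts of $\Coh(S)$, using continuity of $\Psi_S$, and then invoking closure of $\QCoh(S)^{\leq 0}$ and $\QCoh(S)^{\geq 0}$ under filtered colimits.
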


\begin{proof}
Follows from the fact that the t-structure on $\QCoh(S)$ is compatible with filtered colimits.
\end{proof}

\sssec{}

The next proposition, although simple, is crucial for the rest of the paper. It says that the eventually
coconnective subcategories of $\IndCoh(S)$ and $\QCoh(S)$ are equivalent:

\begin{prop}  \label{p:equiv on D plus}
For every $n$, the induced functor
$$\Psi_S:\IndCoh(S)^{\geq n}\to \QCoh(S)^{\geq n}$$
is an equivalence.
\end{prop}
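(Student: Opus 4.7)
Since $\Psi_S$ is $t$-exact (the preceding lemma), the functor does map $\IndCoh(S)^{\geq n}$ into $\QCoh(S)^{\geq n}$, and by translating with the shift functor it suffices to treat one fixed $n$, say $n=0$. I want to verify fully faithfulness and essential surjectivity separately. The filtered colimits in $\IndCoh(S)$ are $t$-exact by construction, so every object of $\IndCoh(S)^{\geq 0}$ can be written as a filtered colimit $\colim_\alpha F_\alpha$ with $F_\alpha\in \Coh(S)^{\geq 0}$, and the analogous statement is what one must \emph{prove} on the $\QCoh$ side for surjectivity.

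\textbf{Fully faithful.} Since the $F_\alpha$ are compact in $\IndCoh(S)$ and $\Psi_S$ preserves them, it is enough to show that for $F\in \Coh(S)^{\geq 0}$ and $\CG\in \QCoh(S)^{\geq 0}$ written as a filtered colimit $\colim_\beta G_\beta$ with $G_\beta\in \Coh(S)^{\geq 0}$, the natural map
\[
\colim_\beta \Maps_{\QCoh}(F,G_\beta)\longrightarrow \Maps_{\QCoh}(F,\CG)
\]
is an equivalence. This is the key technical lemma; to prove it, work locally on a Noetherian affine and resolve $F$ by a (possibly unbounded) complex of finitely generated free modules $P_\bullet$. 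For $\CG\in\QCoh(S)^{\geq 0}$ and $F\in\Coh(S)^{\leq N}$, the complex $\CMaps_{\QCoh}(F,\CG)$ is cohomologically bounded below (by $-N$), and each cohomology in that range is computed from only finitely many terms of $P_\bullet$. Each $\Maps_{\QCoh}(P_i,-)$ commutes with filtered colimits because $P_i$ is finitely generated free. Combined with uniform bounded-belowness, this gives the required interchange.

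\textbf{Essentially surjective.} I will show that every $\CG\in \QCoh(S)^{\geq 0}$ is a filtered colimit (taken inside $\QCoh(S)$) of objects of $\Coh(S)^{\geq 0}$; then the same colimit computed in $\IndCoh(S)$ gives the desired preimage. On the heart, the Noetherian hypothesis yields that $\QCoh(S)^\heartsuit$ is the filtered colimit of its coherent subobjects. For bounded $\CG\in\QCoh(S)^{[0,m]}$ I induct on amplitude using the fiber sequence
\[
\tau^{\leq m-1}\CG \longrightarrow \CG \longrightarrow H^m(\CG)[-m],
\]
lifting both outer terms to $\IndCoh(S)^{\geq 0}$ by induction/heart, and then lifting the extension class via the already-established fully faithfulness. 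For general $\CG\in \QCoh(S)^{\geq 0}$ I use that $\QCoh(S)^{\geq 0}$ is left-complete (Noetherian hypothesis) and that the same is true for $\IndCoh(S)^{\geq 0}$: for $\CF\in \IndCoh(S)^{\geq n}$ the Milnor-type computation of cohomology of $\lim_m \tau^{\leq m}\CF$ stabilizes in each degree, so $\CF\simeq \lim_m \tau^{\leq m}\CF$. Since $\Psi_S$ is $t$-exact, it commutes with these Postnikov limits (the limit reduces to the stable cohomologies degree by degree), which allows me to promote the bounded case to the general case.

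\textbf{Main obstacle.} The delicate point is the essential surjectivity, and within it the treatment of unbounded-below objects: one cannot simply filter by coherent subobjects as in an abelian category, and the argument must combine the (concrete) approximation in the heart with left completeness on both sides plus the compatibility of $\Psi_S$ with Postnikov towers on $\geq n$-parts. Once that package is set up, the fully-faithfulness step--an essentially homological-algebra computation via free resolutions--poses no serious obstacle.
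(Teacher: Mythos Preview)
Your proof is correct, but it diverges from the paper in both halves, and in one of them you are working much harder than necessary.

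For fully faithfulness, both you and the paper reduce to showing that for $F\in\Coh(S)$ the functor $\Maps_{\QCoh(S)}(F,-)$ commutes with filtered colimits taken in $\QCoh(S)^{\geq 0}$. The paper does this globally: it chooses a single perfect object $\CF_0$ with a map $\CF_0\to F$ whose cone lies in $\QCoh(S)^{<0}$ (such approximations exist because perfects compactly generate $\QCoh(S)$), so that $\Maps(F,-)\simeq\Maps(\CF_0,-)$ on $\QCoh(S)^{\geq 0}$, and the latter commutes with all colimits by compactness. Your free-resolution argument after reducing to an affine achieves the same thing; just note that the phrase ``work locally on a Noetherian affine'' deserves a word of justification---the claim being localized is purely about $\QCoh$, so Zariski descent for $\QCoh$ suffices, even though descent for $\IndCoh$ has not been established at this point in the paper.

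For essential surjectivity, the paper is a single line: every object of $\QCoh(S)^{\geq 0}$ is a filtered colimit of objects of $\Coh(S)^{\geq 0}$ (Noetherian approximation in the heart, plus the observation that $\CG\simeq\colim_m\tau^{\leq m}\CG$ is itself a filtered colimit in $\QCoh(S)^{\geq 0}$). Your route---lifting each $\tau^{\leq m}\CG$ separately, assembling a tower via the already-proven fully faithfulness, and passing to the inverse limit with a Milnor-type convergence check---is correct, but it is substantially more elaborate than required. In particular, the step you single out as the ``main obstacle'' (handling unbounded-above objects via Postnikov limits and compatibility of $\Psi_S$ with them) simply does not arise in the paper's approach: the filtered-colimit presentation of $\CG$ already covers the unbounded case directly, with no limits in sight.
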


\begin{proof}

With no restricion of generality we can assume that $n=0$. 
We first prove fully-faithfulness. We will show that the map
\begin{equation} \label{Psi ff}
\Maps_{\IndCoh(S)}(\CF,\CF')\to \Maps_{\QCoh(S)}(\Psi_S(\CF),\Psi_S(\CF'))
\end{equation}
is an isomorphism for $\CF'\in \IndCoh(S)^{\geq 0}$ and \emph{any}
$\CF\in \IndCoh(S)$.

\medskip

By definition, it suffuces to consider the case $\CF\in \Coh(S)$.

\medskip

Every $\CF'\in \IndCoh(S)^{\geq 0}$
can be written as a filtered colimit
$$\underset{i}{colim}\, \CF'_i,$$
where $\CF'_i\in \Coh(S)^{\geq 0}$.

\medskip

By the definition of $\IndCoh(S)$, the left-hand side of \eqref{Psi ff} is the colimit of 
$$\Maps_{\IndCoh(S)}(\CF,\CF'_i),$$ where each term, again by definition, is isomorphic
to 
$$\Maps_{\Coh(S)}(\CF,\CF'_i)\simeq \Maps_{\QCoh(S)}(\CF,\CF'_i).$$

\medskip

So, it suffices to show that for $\CF\in \Coh(S)$, the functor
$\Maps_{\QCoh(S)}(\CF,-)$ commutes with filtered colimits \emph{taken} within
$\QCoh(S)^{\geq 0}$. 

\medskip

The assumptions on $S$ imply that for any $\CF\in \Coh(S)$ there exists
an object $\CF_0\in \QCoh(S)^c$  (i.e., $\CF_0$ is perfect, see \cite{QCoh}, Sect. 4.1)
together with a map $\CF_0\to \CF$ such that
$$\on{Cone}(\CF_0\to \CF)\in \QCoh(S)^{<0}.$$

\medskip

The functor $\Maps_{\QCoh(S)}(\CF_0,-)$ does commute with filtered colimits since $\CF_0$
is compact, and the induced map
$$\Maps_{\QCoh(S)}(\CF_0,\CF')\to \Maps_{\QCoh(S)}(\CF,\CF')$$
is an isomorphism for any $\CF'\in \QCoh(S)^{\geq 0}$. This implies the needed result.

\bigskip

It remains to show that $\Psi_S:\IndCoh(S)^{\geq 0}\to \QCoh(S)^{\geq 0}$ is essentially surjective. 
However, this follows from the fact that any object $\CF\in \QCoh(S)^{\geq 0}$ can be written
as a filtered colimit $\underset{i}{colim}\, \CF_i$ with $\CF_i\in \Coh(S)^{\geq 0}$.

\end{proof} 

\begin{cor} \label{c:equiv on D plus}
An object of $\IndCoh(S)$ is connective if and only if its image in $\QCoh(S)$ under $\Psi_S$ is.
\end{cor}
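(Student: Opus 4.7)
The plan is to reduce the corollary to the preceding proposition together with the t-exactness of $\Psi_S$, which was already established.

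First I would note that one direction is essentially immediate: since $\Psi_S$ is t-exact (the lemma right after the definition of the t-structure), it sends $\IndCoh(S)^{\leq 0}$ into $\QCoh(S)^{\leq 0}$, so connectivity of $\CF \in \IndCoh(S)$ forces connectivity of $\Psi_S(\CF)$.

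For the nontrivial direction, I would argue by contrapositive. Suppose $\CF \in \IndCoh(S)$ and consider its truncation $\tau^{>0}\CF \in \IndCoh(S)^{\geq 1}$ with respect to the t-structure on $\IndCoh(S)$. By t-exactness of $\Psi_S$, we have a canonical identification
\[
\Psi_S(\tau^{>0}\CF) \simeq \tau^{>0}(\Psi_S(\CF)).
\]
Now if $\Psi_S(\CF) \in \QCoh(S)^{\leq 0}$, the right-hand side vanishes, so $\Psi_S(\tau^{>0}\CF) = 0$. But $\tau^{>0}\CF$ lies in $\IndCoh(S)^{\geq 1} \subset \IndCoh(S)^+$, on which $\Psi_S$ is an equivalence by \propref{p:equiv on D plus}. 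Hence $\tau^{>0}\CF = 0$, which means $\CF$ itself is connective.

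The argument is essentially a one-line deduction, so I do not anticipate a genuine obstacle; the only subtlety worth flagging is that we must apply \propref{p:equiv on D plus} with $n = 1$ (not $n = 0$) so as to conclude vanishing of $\tau^{>0}\CF$ from vanishing of its image, and that we need the t-exactness of $\Psi_S$ to commute $\Psi_S$ with $\tau^{>0}$. Both ingredients are already in hand.
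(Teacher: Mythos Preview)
Your argument is correct and is precisely the intended expansion of the paper's one-line remark that the corollary is immediate from \propref{p:equiv on D plus}. The paper does not spell out the truncation argument, but what you wrote is exactly what ``immediate'' means here.
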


The proof is immediate from \propref{p:equiv on D plus}.

\begin{cor}  \label{c:Karoubian}
The subcategory $\Coh(S)\subset \IndCoh(S)$ equals $\IndCoh(S)^c$, i.e., is the category
of \emph{all} compact objects of $\IndCoh(S)$.
\end{cor}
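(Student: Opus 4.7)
The corollary asserts that the compact objects of $\IndCoh(S)$ are exactly the objects of $\Coh(S)$. One direction is built into the construction of $\IndCoh(S) = \on{Ind}(\Coh(S))$: every object of $\Coh(S)$ is tautologically compact in its ind-completion. So the real content is the reverse inclusion $\IndCoh(S)^c \subset \Coh(S)$, and by general ind-completion nonsense this amounts to showing that $\Coh(S)$ is Karoubian (idempotent complete) inside $\IndCoh(S)$, i.e.\ closed under retracts.

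Here is the plan. Let $\CF \in \IndCoh(S)^c$. By the universal property of ind-completion, $\CF$ is a retract of some $\CF_0 \in \Coh(S)$. I want to show $\CF \in \Coh(S)$. First, since the t-structure on $\IndCoh(S)$ is compatible with filtered colimits and since $\CF_0$ lies in some $\IndCoh(S)^{\geq -n} \cap \IndCoh(S)^{\leq m}$, the retract $\CF$ inherits the same bounds. In particular $\CF \in \IndCoh(S)^+$, so by \propref{p:equiv on D plus} the functor $\Psi_S$ identifies $\CF$ with an object $\Psi_S(\CF) \in \QCoh(S)^+$, which is moreover a retract of $\Psi_S(\CF_0) = \CF_0$ in $\QCoh(S)^+$.

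Now I finish in $\QCoh(S)$. The retract $\Psi_S(\CF)$ again has cohomological amplitude bounded by that of $\CF_0$, so it suffices to see that each cohomology sheaf $H^i(\Psi_S(\CF))$ is coherent. But $H^i(\Psi_S(\CF))$ is a retract of $H^i(\CF_0)$ in the abelian category $\QCoh(S)^\heartsuit \simeq \QCoh({}^{cl}\!S)^\heartsuit$, and the Noetherian hypothesis on ${}^{cl}\!S$ guarantees that a retract (equivalently a direct summand) of a coherent $\CO_{{}^{cl}\!S}$-module is coherent. Therefore $\Psi_S(\CF) \in \Coh(S) \subset \QCoh(S)$, and since $\Psi_S$ restricts to the identity embedding on $\Coh(S)$, we conclude $\CF \in \Coh(S)$, as desired.

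The only nontrivial step is passing the retract through $\Psi_S$ in order to read off coherence at the level of cohomology sheaves; this is exactly where \propref{p:equiv on D plus} does its job, letting us replace a potentially subtle question about retracts in the ind-completion by a concrete question about retracts of coherent sheaves on a Noetherian scheme. Everything else is bookkeeping with the t-structure and the Noetherian hypothesis.
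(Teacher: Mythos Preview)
Your proof is correct and follows essentially the same route as the paper's: reduce to showing $\Coh(S)$ is closed under retracts, observe that any compact object is a retract of something in $\Coh(S)$ and hence lands in $\IndCoh(S)^+$, then invoke \propref{p:equiv on D plus} to transport the question to $\QCoh(S)$ where retracts of coherent objects are coherent by the Noetherian hypothesis. The paper's version is simply terser, omitting the cohomology-sheaf justification that you spell out.
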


\begin{proof}

A priori, every compact object of $\IndCoh(S)$ is a direct summand of an object of
$\Coh(S)$. Hence, all compact objects of $\IndCoh(S)$ are eventually coconnective, i.e.,
belong to $\IndCoh(S)^{\geq -n}$ for some $n$. Now, the assertion follows from 
\propref{p:equiv on D plus}, as a direct summand of every coherent object in $\QCoh(S)$
is itself coherent.

\end{proof}

\sssec{}
Let $\IndCoh(S)_{nil}\subset \IndCoh(S)$ be the full subcategory of infinitely connective, i.e.,
t-nil objects:
$$\IndCoh(S)_{nil}:=\underset{n\in \BN}\cap\, \IndCoh(S)^{\leq -n}.$$ 

Since the t-structure on $\QCoh(S)$ is separated (i.e., if an object has zero cohomologies with respect to the
t-structure, then it is zero), we have:
$$\IndCoh(S)_{nil}=\on{ker}(\Psi_S).$$

At the level of homotopy categories, we obtain that $\Psi_S$ factors as
\begin{equation} \label{mod nil}
\on{Ho}(\IndCoh(S))/\on{Ho}(\IndCoh(S)_{nil})\to \on{Ho}(\QCoh(S)).
\end{equation}

However, in general, the functor in \eqref{mod nil} is not an equivalence.

\ssec{$\QCoh$ as the left completion of $\IndCoh$}  \label{ss:QCos as left compl}

\sssec{}

Recall that a DG category $\bC$ equipped with a t-structure is said to be \emph{left-complete} in its
t-structure if the canonical functor
\begin{equation} \label{e:functor to left compl}
\bC\to \underset{n\in \BN^{\on{op}}}{lim}\, \bC^{\geq -n},
\end{equation}
is an equivalence, where for $n_1\geq n_2$, the functor
$$\bC^{\geq -n_1}\to \bC^{\geq -n_2}$$ is $\tau^{\geq -n_2}$. The functor in \eqref{e:functor to left compl}
is given by the family
$$n\mapsto \tau^{\geq -n}.$$

\sssec{}

Any DG category equipped with a t-structure admits a left completion. This is a DG category $\bC'$
equipped with a t-structure in which it is left-complete, such that it receives a t-exact functor
$\bC\to \bC'$, and which is universal with respect to these properties.

\medskip

Explicitly, the left completion of $\bC$ is given by the limit $\underset{n\in \BN^{\on{op}}}{lim}\, \bC^{\geq -n}$.

\sssec{}

We now claim:

\begin{prop}  \label{p:QCoh as left compl}
For $S\in \dgSch_{\on{Noeth}}$, the functor $\Psi_S:\IndCoh(S)\to \QCoh(S)$
identifies $\QCoh(S)$ with the left completion of $\IndCoh(S)$ in its t-structure.
\end{prop}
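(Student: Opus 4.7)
The strategy is to verify that $\Psi_S$ realizes $\QCoh(S)$ as the universal left-complete target for a t-exact functor out of $\IndCoh(S)$. Write $\IndCoh(S)^\wedge := \underset{n \in \BN^{\on{op}}}{lim}\, \IndCoh(S)^{\geq -n}$, the canonical left completion of $\IndCoh(S)$, and note that since $\Psi_S$ is t-exact it factors (uniquely up to coherent homotopy) through a t-exact functor $\IndCoh(S)^\wedge \to \QCoh(S)$ via the universal property of left completion. The task is to show this comparison functor is an equivalence.

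By \propref{p:equiv on D plus}, $\Psi_S$ induces equivalences $\IndCoh(S)^{\geq -n} \to \QCoh(S)^{\geq -n}$ for every $n$, and these are compatible with the truncation transitions $\tau^{\geq -m}$ because $\Psi_S$ is t-exact. Passing to the limit yields an equivalence
\[
\IndCoh(S)^\wedge \;\simeq\; \underset{n}{lim}\, \QCoh(S)^{\geq -n}.
\]
Thus the proposition reduces to showing that the canonical functor
\[
\QCoh(S) \to \underset{n}{lim}\, \QCoh(S)^{\geq -n}, \qquad \CF \mapsto (n \mapsto \tau^{\geq -n}\CF),
\]
is an equivalence, i.e., that $\QCoh(S)$ is left-complete in its t-structure.

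The latter is the technical core of the argument, and the step I expect to take the most work. I would reduce it to the affine case by Zariski descent: a finite open cover $S = \bigcup U_i$ presents $\QCoh(S)$ as a finite limit of categories $\QCoh(U_I)$ along t-exact restriction functors, and since the $n$-coconnective subcategories are cut out by t-exact conditions and finite limits commute with each other, left-completeness descends through such a cover. It therefore suffices to treat $S = \Spec(A)$ with $A$ Noetherian, where $\QCoh(S) = A\mod$. Here one verifies directly that for any complex $M$ of $A$-modules the map $M \to \underset{n}{lim}\, \tau^{\geq -n} M$ is a quasi-isomorphism: in each fixed cohomological degree $i$ the tower $\{\tau^{\geq -n} M\}$ is eventually constant for $n > -i$, so the derived inverse limit computes the correct cohomology and the relevant $\mathrm{lim}^1$ vanishes. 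The Noetherian hypothesis enters only to ensure that the standard t-structure behaves well (e.g., filtered colimits are exact).

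The anticipated obstacle is less a single hard computation and more bookkeeping around the comparison. One must check that the equivalence $\IndCoh(S)^\wedge \simeq \underset{n}{lim}\, \QCoh(S)^{\geq -n}$ produced from \propref{p:equiv on D plus} is intertwined with the Postnikov-tower functor $\QCoh(S) \to \underset{n}{lim}\, \QCoh(S)^{\geq -n}$ via $\Psi_S$; this amounts to the naturality of $\Psi_S$ with respect to truncations, which is immediate from its t-exactness. Granting left-completeness of $\QCoh(S)$, the proposition follows by composing the two equivalences.
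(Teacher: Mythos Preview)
Your proposal is correct and follows the same architecture as the paper: reduce via \propref{p:equiv on D plus} to the statement that $\QCoh(S)$ is left-complete, then reduce the latter to the affine case. The only difference is in how the affine case is dispatched: where you run the Milnor $\lim^1$ argument by hand, the paper simply observes that $\Gamma(S,-):\QCoh(S)\to \Vect$ is a conservative, limit-preserving, t-exact functor to a left-complete category, which forces $\QCoh(S)$ to be left-complete---a one-line argument that avoids any explicit computation (and incidentally shows the Noetherian hypothesis plays no role in this step).
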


\begin{proof}

First, it easy to see that the category $\QCoh(S)$ is left-complete in its t-structure. (Proof: the assertion
reduces to the case when $S$ is affine. In the latter case, the category $\QCoh(S)$ admits a conservative
limit-preserving t-exact functor to a left-complete category, namely, $\Gamma(S,-):\QCoh(S)\to \Vect$.)

\medskip

Now, the assertion of the proposition follows from \propref{p:equiv on D plus}.

\end{proof}

\ssec{The action of $\QCoh(S)$ on $\IndCoh(S)$}  \label{ss:action}

The category $\QCoh(S)$ has a natural (symmetric) monoidal structure. We claim that 
$\IndCoh(S)$ is naturally a module over $\QCoh(S)$.

\medskip

To define an action of $\QCoh(S)$ on $\IndCoh(S)$ it suffices to define the action of the
(non-cocomplete) monoidal DG category $\QCoh(S)^{\on{perf}}$ on the (non-cocomplete) DG category
$\Coh(S)$. 

\medskip

For the latter, it suffices to notice that the action
of $\QCoh(S)^{\on{perf}}$ on $\QCoh(S)$ preserves the non-cocomplete subcategory $\Coh(S)$. 

\medskip

By construction, the action functor
$$\QCoh(S)\otimes \IndCoh(S)\to \IndCoh(S)$$
sends compact objects to compact ones. 

\medskip

We will use the notation
$$\CE\in \QCoh(S),\CF\in \IndCoh(S)\mapsto \CE\otimes \CF\in \IndCoh(S).$$

\sssec{}

From the construction, we obtain:

\begin{lem}  \label{l:Psi compat with action}
The functor $\Psi_S$ has a natural structure of 1-morphism between
$\QCoh(S)$-module categories.
\end{lem}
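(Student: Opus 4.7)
The plan is to observe that the claim is essentially built into the construction of the $\QCoh(S)$-module structure on $\IndCoh(S)$, and then to transport this observation through ind-completion. I would organize the argument as three short steps.

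First, I would pass back to the non-cocomplete setting. The $\QCoh(S)^{\on{perf}}$-action on $\Coh(S)$ used in \secref{ss:action} is, by construction, the restriction of the tautological $\QCoh(S)^{\on{perf}}$-action on $\QCoh(S)$ along the inclusion $\iota_S\colon \Coh(S)\hookrightarrow \QCoh(S)$ (this is exactly the statement that the perfect-complex action preserves $\Coh(S)$). Hence $\iota_S$ tautologically upgrades to a $1$-morphism of $\QCoh(S)^{\on{perf}}$-module categories in $\StinftyCat_{\on{non-cocomplete}}$: the module-coherence data is the identity on the relevant action maps.

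Second, I would ind-complete. Ind-completion is a symmetric monoidal functor $\Ind\colon \StinftyCat_{\on{non-cocomplete}}\to \StinftyCat_{\on{cont}}$, so for any monoidal small DG category $\bA$ it promotes module functors between $\bA$-module categories to module functors between their ind-completions over $\Ind(\bA)$. Applying this to $\iota_S$ as a morphism of $\QCoh(S)^{\on{perf}}$-module categories, and using the identification $\Ind(\QCoh(S)^{\on{perf}})\simeq \QCoh(S)$ as symmetric monoidal objects of $\StinftyCat_{\on{cont}}$ (the compact generation of $\QCoh(S)$ by perfect complexes invoked in the proof of \lemref{l:smoothness}), I obtain that
$$\Psi_S = \Ind(\iota_S)\colon \IndCoh(S)\longrightarrow \QCoh(S)$$
acquires a canonical $\QCoh(S)$-module functor structure. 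One then checks that the induced $\QCoh(S)$-action on the source $\IndCoh(S) = \Ind(\Coh(S))$ matches the one defined in \secref{ss:action}; this is immediate from the definition, since the latter was itself defined as the ind-extension of the $\QCoh(S)^{\on{perf}}$-action on $\Coh(S)$.

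The main obstacle, such as it is, is purely $\infty$-categorical bookkeeping: one must trust that $\Ind$ coherently preserves module-functor data for module categories over small monoidal DG categories. This is a standard feature of the symmetric monoidal structure on $\Ind$ in the framework of presentable $\infty$-categories, and no further homotopical input beyond what was implicitly used in \secref{ss:action} to construct the action itself is needed.
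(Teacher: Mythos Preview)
Your proposal is correct and follows exactly the paper's approach: the paper simply states the lemma as following ``from the construction,'' and you have unpacked precisely what that means---namely that the $\QCoh(S)^{\on{perf}}$-action on $\Coh(S)$ was defined by restricting the tautological action on $\QCoh(S)$, so the inclusion $\Coh(S)\hookrightarrow\QCoh(S)$ is tautologically module-linear, and ind-extending preserves this. There is nothing to add.
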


At the level of individual objects, the assertion of \lemref{l:Psi compat with action} says that
for $\CE\in \QCoh(S)$ and $\CF\in \IndCoh(S)$, we have a canonical isomorphism
\begin{equation} \label{e:Psi compat with action}
\Psi_S(\CE\otimes \CF)\simeq \CE\otimes \Psi_S(\CF).
\end{equation}

\ssec{Eventually coconnective case}  \label{ss:eventually coconnective}

Assume now that $S$ is eventually coconnective, see \cite{Stacks}, Sect. 3.2.6
where the terminology is introduced.

\medskip

We remind that by definition, this means
that $S$ is covered by affines $\Spec(A)$ with $H^{-i}(A)=0$ for all $i$ large enough.
Equivalently, $S$ is eventually coconnective if and only if $\CO_S$ belongs to
$\Coh(S)$. 

\medskip

For example, any classical scheme is $0$-coconnective, and hence eventually
coconnective when viewed as a DG scheme. 

\medskip

\begin{rem} In \secref{ss:convergence} we will show that in a certain sense
the study of $\IndCoh$ reduces to the eventually coconnective case. 
\end{rem}

\sssec{}

Under the above circumstances we claim:

\begin{prop} \label{p:Xi}
The functor $\Psi_S$ admits a left adjoint $\Xi_S$. Moreover, $\Xi_S$ is fully
faithful, i.e., the functor $\Psi$ realizes $\QCoh(S)$ is a co-localization 
of $\IndCoh(S)$ with respect to $\IndCoh(S)_{nil}$.
\end{prop}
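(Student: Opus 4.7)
The plan is to construct $\Xi_S$ explicitly as the ind-extension of an inclusion at the level of compact generators, and then to verify adjunction and fully-faithfulness by reducing each to a statement on compact objects.

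The starting observation is that eventual coconnectivity of $S$ gives the inclusion $\QCoh(S)^{\on{perf}} \subset \Coh(S)$. Indeed, locally $\CO_S$ belongs to $\Coh(S)$, so a finite complex of finite-rank locally free sheaves is cohomologically bounded with coherent cohomologies (using that $H^0$ of the local rings is Noetherian). Since $\QCoh(S) = \Ind(\QCoh(S)^{\on{perf}})$ by \cite{BFN}, I would define
$$\Xi_S : \QCoh(S) \to \IndCoh(S)$$
as the continuous (ind-)extension of the composition $\QCoh(S)^{\on{perf}} \hookrightarrow \Coh(S) \hookrightarrow \IndCoh(S)$.

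To verify that $\Xi_S$ is left adjoint to $\Psi_S$, I would check the adjunction on compact generators. Both $\Psi_S$ and $\Xi_S$ are continuous, so it suffices to exhibit a natural isomorphism
$$\Maps_{\IndCoh(S)}(\Xi_S(\CE), \CF) \simeq \Maps_{\QCoh(S)}(\CE, \Psi_S(\CF))$$
for $\CE \in \QCoh(S)^{\on{perf}}$ and $\CF \in \Coh(S)$. But in this range $\Xi_S(\CE)$ is just $\CE$ regarded as an object of $\Coh(S)$, and both sides reduce to $\Maps_{\QCoh(S)}(\CE, \CF)$ — on the left by the definition of the ind-completion, on the right tautologically. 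Extending by filtered colimits in $\CE$ (using compactness of $\CE$ in $\QCoh(S)$) and in $\CF$ (using the definition of $\IndCoh(S)$ and continuity of $\Psi_S$) yields the full adjunction.

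For fully-faithfulness of $\Xi_S$, it is enough to show that the unit $\on{id}_{\QCoh(S)} \to \Psi_S \circ \Xi_S$ is an isomorphism. Both functors are continuous, so it suffices to check this on $\QCoh(S)^{\on{perf}}$, where it holds by construction: $\Psi_S \circ \Xi_S(\CE) = \CE$ tautologically. The last assertion of the proposition — that $\Psi_S$ is a colocalization with kernel $\IndCoh(S)_{\on{nil}}$ — then follows formally, since for a fully faithful left adjoint the kernel of the right adjoint equals $\on{ker}(\Psi_S)$, which was identified with $\IndCoh(S)_{\on{nil}}$ in the preceding subsection.

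There is no real obstacle here: given $\propref{p:equiv on D plus}$ and the compact generation of $\QCoh(S)$ by $\QCoh(S)^{\on{perf}}$, the only genuine input is the containment $\QCoh(S)^{\on{perf}} \subset \Coh(S)$, which is exactly where the eventual coconnectivity hypothesis is used. The mildly subtle point worth writing out carefully is that $\Xi_S$ is \emph{not} t-exact in general (it may fail to send $\QCoh(S)^{\geq 0}$ into $\IndCoh(S)^{\geq 0}$), so the adjunction must be established on compact generators rather than by invoking $\propref{p:equiv on D plus}$ directly.
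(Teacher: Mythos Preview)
Your argument is correct and follows exactly the same route as the paper: both define $\Xi_S$ by ind-extending the fully faithful inclusion $\QCoh(S)^{\on{perf}}\hookrightarrow \Coh(S)$, which exists precisely because $S$ is eventually coconnective. The paper's proof is simply terser, relying implicitly on the standard fact that a continuous functor out of an ind-completion is a fully faithful left adjoint as soon as it is so on compact generators, whereas you spell out the adjunction and the unit check explicitly.
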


Shortly, we shall see that
a left adjoint to $\Psi$ exists \emph{if and only if} $S$ 
is eventually coconnective.

\begin{proof}

To prove the proposition, we have to show that the left adjoint $\Xi_S$ is well-defined and fully faithful on 
$\QCoh(S)^{\on{perf}}=\QCoh(S)^c$.

\medskip

However, this is clear: for $S$ eventually coconnective, we have a natural fully faithful inclusion
$$\QCoh(S)^{\on{perf}}\hookrightarrow \Coh(S).$$

\end{proof}

\sssec{}

By adjunction, from \lemref{l:Psi compat with action}, and using \cite[Corollary 6.2.4]{DG},
we obtain:

\begin{cor}  \label{c:upgrading Xi}
The functor $\Xi_S$ has a natural structure of $1$-morphism between
$\QCoh(S)$-module categories.
\end{cor}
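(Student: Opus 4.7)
The plan is to deduce the $\QCoh(S)$-linear structure on $\Xi_S$ from that on $\Psi_S$ by invoking the general categorical principle that a continuous left adjoint to a morphism of module categories inherits a canonical module structure. Concretely, I would apply \cite[Corollary 6.2.4]{DG}: given a rigid symmetric monoidal $\bA\in\SymMonStinftyCat$ (for us $\bA=\QCoh(S)$) and a continuous $\bA$-linear functor $F:\bC_1\to \bC_2$ between $\bA$-module categories that admits a continuous left adjoint $G$, the functor $G$ is canonically upgraded to a 1-morphism of $\bA$-module categories. The hypotheses hold in our setting: \lemref{l:Psi compat with action} provides the $\QCoh(S)$-linear structure on $\Psi_S$, and \propref{p:Xi} provides $\Xi_S$ as a continuous left adjoint.

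For the reader's benefit I would also spell out the comparison map explicitly. For $\CE\in\QCoh(S)$ and $\CM\in\QCoh(S)$, the oplax structure map
$$\Xi_S(\CE\otimes \CM)\to \CE\otimes \Xi_S(\CM)$$
is defined as the $(\Xi_S,\Psi_S)$-adjoint of the composition
$$\CE\otimes \CM\xrightarrow{\on{id}_\CE\otimes\,\on{unit}} \CE\otimes \Psi_S(\Xi_S(\CM))\simeq \Psi_S(\CE\otimes \Xi_S(\CM)),$$
where the last isomorphism is the structure \eqref{e:Psi compat with action} of \lemref{l:Psi compat with action}.

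The substantive step is to check that this oplax structure is in fact strict, i.e.~that the displayed map is an equivalence for all $\CE$ and $\CM$. I would argue by reduction to compact generators: both sides are continuous in $\CE$ and in $\CM$ separately, the action of $\QCoh(S)$ on itself and on $\IndCoh(S)$ is by continuous functors, and $\QCoh(S)$ is compactly generated by $\QCoh(S)^{\on{perf}}$. Thus it suffices to verify the claim for $\CE,\CM\in\QCoh(S)^{\on{perf}}$. Since $S$ is eventually coconnective, $\QCoh(S)^{\on{perf}}\subset \Coh(S)$, so the functor $\Xi_S$ restricts to the tautological inclusion $\QCoh(S)^{\on{perf}}\hookrightarrow \Coh(S)\subset \IndCoh(S)$; on this subcategory the map in question becomes the identity on $\CE\otimes \CM$, viewed on both sides as an object of $\Coh(S)$.

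The main obstacle is really only notational: one must keep track of the difference between the \emph{existence} of the oplax structure (automatic from the $\infty$-categorical adjunction machinery encoded in \cite[Corollary 6.2.4]{DG}) and the \emph{strictness} of that structure, which is the concrete equivalence one checks on compact generators. Everything else is a formal consequence of passing to left adjoints in $\StinftyCat_{\on{cont}}$-valued functors out of the $\infty$-operad controlling $\QCoh(S)$-modules.
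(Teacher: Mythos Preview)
Your proposal is correct and matches the paper's approach exactly: the paper derives the corollary in one line by invoking \cite[Corollary 6.2.4]{DG} applied to the $\QCoh(S)$-linear functor $\Psi_S$ of \lemref{l:Psi compat with action} and its left adjoint $\Xi_S$ from \propref{p:Xi}. Your additional hand-verification of strictness on compact generators is harmless but redundant, since the rigidity of $\QCoh(S)$ is precisely the hypothesis that makes \cite[Corollary 6.2.4]{DG} output a strict (rather than merely oplax) module structure on the adjoint.
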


At the level of individual objects, the assertion of \corref{c:upgrading Xi} says that for
$\CE_1,\CE_2\in \QCoh(S)$, we have a canonical isomorphism
\begin{equation} \label{e:upgrading Xi}
\Xi_S(\CE_1\otimes \CE_2)\simeq \CE_1\otimes \Xi_S(\CE_2),
\end{equation}
where in the right-hand side $\otimes$ denotes the action of \secref{ss:action}.

\medskip

In particular, for $\CE\in \QCoh(S)$, we have
$$\Xi_S(\CE)\simeq \CE\otimes \Xi_S(\CO_S).$$

\sssec{}

We note the following consequence of \propref{p:Xi}:

\begin{cor}  \label{c:coloc}
If $S$ is eventually coconnective, the functor of triangulated categories
\eqref{mod nil} is an equivalence. 
\end{cor}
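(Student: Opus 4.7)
The plan is to deduce the equivalence directly from \propref{p:Xi} via standard facts about reflective localizations of triangulated categories; the corollary is essentially formal once that proposition is in hand.

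Recall from \propref{p:Xi} that in the eventually coconnective case we have an adjunction $\Xi_S\dashv \Psi_S$ in which $\Xi_S$ is fully faithful; equivalently, the unit $\on{id}_{\QCoh(S)}\to \Psi_S\circ \Xi_S$ is an isomorphism. Also, as noted just before \eqref{mod nil}, $\on{ker}(\Psi_S)=\IndCoh(S)_{nil}$, so $\Psi_S$ kills $\on{Ho}(\IndCoh(S)_{nil})$ and the universal property of the Verdier quotient produces the functor
$$\bar\Psi_S: \on{Ho}(\IndCoh(S))/\on{Ho}(\IndCoh(S)_{nil})\to \on{Ho}(\QCoh(S)).$$
Essential surjectivity is immediate: any $\CE\in \QCoh(S)$ is isomorphic to $\Psi_S(\Xi_S(\CE))$.

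The main intermediate step is to show that for every $\CF\in \IndCoh(S)$, the counit $\Xi_S\Psi_S(\CF)\to \CF$ has cone in $\IndCoh(S)_{nil}$. By one of the triangle identities, the composition
$$\Psi_S(\CF)\to \Psi_S\Xi_S\Psi_S(\CF)\to \Psi_S(\CF)$$
is the identity; the first map is the unit at $\Psi_S(\CF)$, which is an isomorphism because $\Xi_S$ is fully faithful. Hence $\Psi_S$ applied to the counit is an isomorphism, so the cone of the counit is annihilated by $\Psi_S$ and therefore lies in $\IndCoh(S)_{nil}$. Consequently the counit becomes an isomorphism in the Verdier quotient, and every object of the quotient is canonically identified with one of the form $\Xi_S(\CE)$.

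For fully faithfulness of $\bar\Psi_S$, it therefore suffices to compute $\Maps$ between objects of the form $\Xi_S(\CE_1)$ and $\Xi_S(\CE_2)$ in the quotient. Morphisms there are a filtered colimit over roofs $\Xi_S(\CE_1)\xleftarrow{s} \CF'\to \Xi_S(\CE_2)$ with $\on{cone}(s)\in \IndCoh(S)_{nil}$; applying $\Psi_S$ turns $s$ into an isomorphism $\Psi_S(\CF')\simeq \CE_1$, and the adjunction identifies
$$\Maps_{\IndCoh(S)}(\CF',\Xi_S(\CE_2))\simeq \Maps_{\QCoh(S)}(\Psi_S(\CF'),\CE_2)\simeq \Maps_{\QCoh(S)}(\CE_1,\CE_2).$$
Thus the filtered colimit is constant with value $\Maps_{\QCoh(S)}(\CE_1,\CE_2)$, and $\bar\Psi_S$ induces the identity on this space. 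Combined with essential surjectivity this gives the equivalence. The only ``hard'' piece is the verification that the counit becomes an isomorphism in the quotient, but this is a one-line consequence of the triangle identity together with $\on{ker}(\Psi_S)=\IndCoh(S)_{nil}$; everything else is bookkeeping for the Verdier quotient.
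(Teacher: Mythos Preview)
Your overall approach matches the paper's intent: the corollary is stated there as a direct consequence of \propref{p:Xi}, with no explicit argument given, and you are correctly unpacking the standard fact that a functor with a fully faithful left adjoint induces an equivalence with the Verdier quotient by its kernel. The essential surjectivity step and the verification that the counit $\Xi_S\Psi_S(\CF)\to\CF$ has cone in $\IndCoh(S)_{nil}$ are both correct and cleanly argued.

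However, there is a genuine error in the fully faithfulness step. You invoke ``the adjunction'' to obtain
\[
\Maps_{\IndCoh(S)}(\CF',\Xi_S(\CE_2))\simeq \Maps_{\QCoh(S)}(\Psi_S(\CF'),\CE_2),
\]
but this would require $\Psi_S\dashv\Xi_S$, whereas \propref{p:Xi} gives $\Xi_S\dashv\Psi_S$. The actual adjunction reads $\Maps_{\IndCoh(S)}(\Xi_S(\CE),\CF)\simeq\Maps_{\QCoh(S)}(\CE,\Psi_S(\CF))$, which goes the wrong way for your roof computation. Concretely, there is no reason for $\Maps_{\IndCoh(S)}(\CF',\Xi_S(\CE_2))$ to be independent of the roof: the cone of $s$ lies in $\IndCoh(S)_{nil}$, but $\IndCoh(S)_{nil}$ is \emph{right}-orthogonal to $\on{im}(\Xi_S)$ (from $\Maps(\Xi_S(\CE),N)\simeq\Maps(\CE,\Psi_S(N))=0$), not left-orthogonal.

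The fix is easy. Either compute morphisms in the Verdier quotient via co-roofs $\Xi_S(\CE_1)\to\CF''\xleftarrow{t}\Xi_S(\CE_2)$ with $\on{cone}(t)\in\IndCoh(S)_{nil}$: then the orthogonality $\Maps(\Xi_S(\CE_1),\on{cone}(t))=0$ does apply, the colimit is constant with value $\Maps_{\IndCoh(S)}(\Xi_S(\CE_1),\Xi_S(\CE_2))\simeq\Maps_{\QCoh(S)}(\CE_1,\CE_2)$, and you are done. Or, more directly, avoid roofs entirely and observe that $q\circ\Xi_S$ (where $q$ is the quotient functor) is a two-sided inverse to $\bar\Psi_S$: one composite is $\Psi_S\Xi_S\simeq\on{id}$, and for the other you already showed the counit becomes an isomorphism after applying $q$.
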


\medskip

The following observation is useful:

\begin{lem}  \label{l:Xi in Coh}
Let $\CF$ be an object of $\QCoh(S)$ such that $\Xi(S)\in \Coh(S)\subset\IndCoh(S)$.
Then $\CF\in \QCoh(S)^{\on{perf}}$.
\end{lem}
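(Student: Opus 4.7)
The plan is to show that the hypothesis forces $\CF$ to be compact in $\QCoh(S)$, and then invoke the fact that compact objects of $\QCoh(S)$ are exactly the perfect ones (which is cited in the proof of \lemref{l:smoothness} via \cite{BFN}).

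First I would observe that since $\Xi_S$ is fully faithful (by \propref{p:Xi}) and $\Psi_S$ is its right adjoint, the counit $\Psi_S\circ\Xi_S\to\on{id}$ is an isomorphism. In particular, $\CF\simeq\Psi_S(\Xi_S(\CF))$, and the hypothesis $\Xi_S(\CF)\in\Coh(S)$ already gives that $\CF$ lies in $\QCoh(S)^b$ (cohomologically bounded and coherent), but by itself this is not enough when $S$ is singular.

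The key step is to exploit that $\Xi_S(\CF)$ is \emph{compact} in $\IndCoh(S)$, using \corref{c:Karoubian} which identifies $\Coh(S)=\IndCoh(S)^c$. Given any filtered system $\{\CG_j\}$ in $\QCoh(S)$, I would chain the following identifications:
\begin{align*}
\Maps_{\QCoh(S)}(\CF,\on{colim}_j\CG_j)
&\simeq \Maps_{\IndCoh(S)}(\Xi_S(\CF),\Xi_S(\on{colim}_j\CG_j)) \\
&\simeq \Maps_{\IndCoh(S)}(\Xi_S(\CF),\on{colim}_j\Xi_S(\CG_j)) \\
&\simeq \on{colim}_j\Maps_{\IndCoh(S)}(\Xi_S(\CF),\Xi_S(\CG_j)) \\
&\simeq \on{colim}_j\Maps_{\QCoh(S)}(\CF,\CG_j),
\end{align*}
where the first and last isomorphisms use that $\Xi_S$ is fully faithful, the second uses that $\Xi_S$ is a left adjoint and hence continuous, and the third uses the compactness of $\Xi_S(\CF)$ in $\IndCoh(S)$. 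This shows $\CF\in\QCoh(S)^c=\QCoh(S)^{\on{perf}}$.

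There is really no serious obstacle here; the content of the lemma is entirely that $\Xi_S$ is a \emph{fully faithful} left adjoint whose image contains a given compact object, and such a setup automatically transports compactness back across the adjunction. The only thing to be careful about is to check that each equivalence above is valid without any boundedness hypothesis on the $\CG_j$ — this is fine because fully-faithfulness of $\Xi_S$ holds on all of $\QCoh(S)$, not merely on $\QCoh(S)^+$, and continuity of $\Xi_S$ similarly is unconditional.
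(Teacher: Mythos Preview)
Your proof is correct and follows essentially the same approach as the paper's. The paper's proof is a single sentence stating the general principle that a fully faithful continuous functor reflects compactness; your chain of isomorphisms is precisely the standard unpacking of that principle, together with the identification $\Coh(S)=\IndCoh(S)^c$ and $\QCoh(S)^c=\QCoh(S)^{\on{perf}}$.
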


\begin{proof}
Since $\Xi_S$ is a fully faithful functor that commutes with filtered colimits, 
an object of $\QCoh(S)$ is compact if its image under $\Xi_S$ is compact. 
\end{proof}

\ssec{Some converse implications}  \label{ss:inverse implic}

\sssec{}

We are now going to prove:

\begin{prop}  \label{p:Xi and event coconn}
Assume that the functor $\Psi_S:\IndCoh(S)\to \QCoh(S)$ admits a left adjoint. Then
$S$ is eventually coconnective. 
\end{prop}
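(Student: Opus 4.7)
The plan is to assume a left adjoint $\Xi_S$ exists and extract from it that $\CO_S \in \Coh(S)$, which by the discussion in \secref{ss:eventually coconnective} is equivalent to $S$ being eventually coconnective.

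First I would observe that since $\Psi_S$ is continuous (it is ind-extended from $\Coh(S) \hookrightarrow \QCoh(S)$), its left adjoint $\Xi_S$ automatically preserves compact objects. By \cite{BFN}, $\CO_S$ is compact in $\QCoh(S)$ (it lies in $\QCoh(S)^{\on{perf}}$), and by \corref{c:Karoubian}, compact objects of $\IndCoh(S)$ are precisely $\Coh(S)$. Hence $\Xi_S(\CO_S) \in \Coh(S)$. Since $\Psi_S$ restricted to $\Coh(S) \subset \IndCoh(S)$ is by construction the tautological embedding into $\QCoh(S)$, it sends $\Xi_S(\CO_S)$ into $\Coh(S) \subset \QCoh(S)$.

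The key step is to show that the unit map $\eta: \CO_S \to \Psi_S(\Xi_S(\CO_S))$ is an isomorphism in $\QCoh(S)$; the conclusion $\CO_S \in \Coh(S)$ then follows from the previous paragraph. By Yoneda and left-completeness of $\QCoh(S)$ (shown in the proof of \propref{p:QCoh as left compl}), it suffices to check that the induced map
\[
\Maps_{\QCoh(S)}(\Psi_S(\Xi_S(\CO_S)), \CF) \to \Maps_{\QCoh(S)}(\CO_S, \CF)
\]
is an isomorphism for every $\CF \in \QCoh(S)^+$. For such $\CF$, \propref{p:equiv on D plus} writes $\CF \simeq \Psi_S(\CG)$ for a unique $\CG \in \IndCoh(S)^+$. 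Using adjunction followed by \propref{p:equiv on D plus} applied to the pair $(\Xi_S(\CO_S), \CG)$, both of which lie in $\IndCoh(S)^+$ (the former since $\Coh(S) \subset \IndCoh(S)^b$), one computes
\[
\Maps_{\QCoh(S)}(\CO_S, \Psi_S(\CG)) \simeq \Maps_{\IndCoh(S)}(\Xi_S(\CO_S), \CG) \simeq \Maps_{\QCoh(S)}(\Psi_S(\Xi_S(\CO_S)), \Psi_S(\CG)),
\]
and chasing the definitions shows the composite is indeed the map induced by $\eta$. Thus $\eta$ becomes an isomorphism after testing against all of $\QCoh(S)^+$, and left-completeness upgrades this to all of $\QCoh(S)$, so $\eta$ itself is an isomorphism.

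I do not expect any serious obstacle here: the main conceptual point is the combination of (i) preservation of compact objects by $\Xi_S$, which immediately constrains $\Xi_S(\CO_S)$ to $\Coh(S)$, and (ii) using \propref{p:equiv on D plus} plus left-completeness to identify $\Psi_S(\Xi_S(\CO_S))$ with $\CO_S$ via the unit. The only mildly delicate bookkeeping is verifying that the map produced by the adjunction calculation agrees with the unit on Hom-spaces, so that Yoneda applies.
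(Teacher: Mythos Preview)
Your proposal is correct and follows essentially the same route as the paper's proof: both arguments observe that $\Xi_S$ preserves compacts (so $\Xi_S(\CO_S)\in\Coh(S)$ by \corref{c:Karoubian}), then show the unit $\CO_S\to\Psi_S(\Xi_S(\CO_S))$ is an isomorphism by testing against $\QCoh(S)^+$ via \propref{p:equiv on D plus}. The only cosmetic difference is that the paper invokes separatedness of the t-structure on $\QCoh(S)$ to conclude, whereas you invoke left-completeness; both are available and either suffices.
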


\begin{proof}

Let $\Xi_S:\QCoh(S)\to \IndCoh(S)$ denote the left adjoint in question. Since $\Psi_S$
commutes with colimits, the functor $\Xi_S$ sends compact objects to compact ones.
In particular, by \corref{c:Karoubian}, $\Xi_S(\CO_S)$ belongs to $\Coh(S)$, and in
particular, to $\IndCoh(S)^{\geq -n}$ for some $n$. Hence, by \propref{p:equiv on D plus},
the map $\CO_S\to \Psi_S(\Xi_S(\CO_S))$ induces an isomorphism on Homs to
any eventually coconnective object of $\QCoh(S)$. Since the t-structure on $\QCoh(S)$
is separated, we obtain that $\CO_S\to \Psi_S(\Xi_S(\CO_S))$ is an isomorphism. In 
particular,
$\CO_S\in \Coh(S)$.

\end{proof}

\sssec{}

Let us now prove the converse to \lemref{l:smoothness}:

\begin{prop}
Let $S$ be a DG scheme such that $\Psi_S$ is an equivalence. Then $S$ is a regular
classical scheme. 
\end{prop}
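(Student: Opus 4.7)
The plan is to combine \propref{p:Xi and event coconn} with the fact that any continuous equivalence of compactly generated DG categories restricts to an equivalence of compact objects, and then to reduce the remaining content to a local DG-algebraic statement.

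Since $\Psi_S$ is an equivalence, it in particular admits a left adjoint---its inverse $\Psi_S^{-1}$---so by \propref{p:Xi and event coconn} the DG scheme $S$ is eventually coconnective. Moreover, the equivalence $\Psi_S$ restricts to an equivalence of compact objects: by \corref{c:Karoubian} the compacts of $\IndCoh(S)$ are $\Coh(S)$, and the compacts of $\QCoh(S)$ are the perfect complexes $\QCoh(S)^{\on{perf}}$. Thus $\Psi_S$ identifies $\Coh(S)$ with $\QCoh(S)^{\on{perf}}$ as full subcategories of $\QCoh(S)$; in words, \emph{every coherent sheaf on $S$ is perfect}.

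The remaining task is to show: a Noetherian, eventually coconnective DG scheme $S$ with $\Coh(S) = \QCoh(S)^{\on{perf}}$ is a regular classical scheme. I would argue this Zariski-locally, writing $S = \Spec(A)$. First, apply the previous identification to the $A$-module $H^0(A)$ (coherent, since it is concentrated in one degree and finitely generated over $H^0(A)$): it is therefore perfect over $A$. The crux is to show that perfection of $H^0(A)$ over the connective Noetherian DG ring $A$ forces $A \to H^0(A)$ to be a quasi-isomorphism, i.e.\ that $S$ is classical. Once classicality is established, ``every coherent sheaf is perfect'' is precisely the Auslander--Buchsbaum--Serre characterization of regularity, and we are done.

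The main obstacle is the classicality step. A clean approach is to pass to stalks at a closed point, take a minimal finite free resolution of $H^0(A)$ (which exists by perfectness together with connectivity of $A$), and observe that any non-zero $H^{-i}(A)$ with $i > 0$ would, by minimality of the resolution, force an infinite cascade of new terms, contradicting the finite length of the resolution. This is the only step requiring genuine DG-algebraic input beyond the formal manipulations used in the earlier steps; everything else is essentially a formal consequence of \propref{p:Xi and event coconn}, \corref{c:Karoubian}, and the classical regularity criterion.
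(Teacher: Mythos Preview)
Your proposal is correct and follows essentially the same route as the paper: invoke \propref{p:Xi and event coconn} for eventual coconnectivity, pass to compact objects to get $\Coh(S)=\QCoh(S)^{\on{perf}}$, localize to $\Spec(A)$, use perfectness of $H^0(A)$ to force classicality, and finish with Serre's regularity criterion. The only cosmetic difference is that you spell out the classicality step via a minimal-resolution argument, whereas the paper simply asserts that a finite projective resolution of $H^0(A)$ over $A$ forces $H^{-i}(A)=0$ for $i>0$.
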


\begin{proof}

By \propref{p:Xi and event coconn} we obtain that $S$ is eventually coconnective. 
Since $\Psi_S$ is an equivalence, it induces an equivalence between the corresponding
categories of compact objects. Hence, we obtain that 
\begin{equation} \label{perf and coh}
\Coh(S)=\QCoh(S)^{\on{perf}}.
\end{equation}
as subcategories of $\QCoh(S)$.

\medskip

The question of being classical and regular, and the equality \eqref{perf and coh}, are local.
So, we can assume that $S$ is affine, $S=\Spec(A)$. 

\medskip

The inclusion $\subset$ in \eqref{perf and coh}
implies that $H^0(A)$ admits
a finite resolution by projective $A$-modules. However, it is easy to see that this
is only possible when all $H^i(A)$ with $i\neq 0$ vanish. Hence $A$ is classical. 

\medskip

In the latter case, the inclusion $\subset$ in \eqref{perf and coh} means
that every $A$ module is of finite projective dimension. Serre's theorem implies
that $A$ is regular.

\end{proof}

\section{$\IndCoh$ in the non-Noetherian case} \label{s:non-Noeth}

This section will not be used in the rest of the paper.  We will indicate the
definition of the category $\IndCoh(S)$ in the case when $S$ is not necessarily locally Noetherian.
We shall first treat the case when $S=\Spec(A)$ is affine, and then indicate how to treat
the case of a general scheme if some additional condition is satisfied. 

\ssec{The coherent case}

Following J.~Lurie, give the following definitions: 

\begin{defn}
A classical ring $A_0$ is said to be \emph{coherent} if the category of
finitely presented $A_0$ modules is abelian, i.e., is stable under 
taking kernels and cokernels. 
\end{defn}

\begin{defn} A DG ring $A$ is said to be coherent if:
\begin{enumerate}

\item The classical ring $H^0(A)$ is coherent. 

\item Each $H^i(A)$ is finitely presented as a $H^0(A)$-module.

\end{enumerate}
\end{defn}

\sssec{}

We note that the entire discussion in the preceeding section goes through when 
the assumption that $A$ be Noetherian is replaced by that of it being coherent. 

\medskip

In what follows, we shall show how to define $\IndCoh(S)$ without the 
coherence assumption either. 

\ssec{Coherent sheaves in the non-Noetherian setting}

\sssec{}

Informally, we say that an object $\QCoh(S)$ is coherent if it is cohomologically
bounded and can be approximated 
by a perfect object to any level of its Postnikov tower. 

\medskip

A formal definition is as follows: 

\medskip

\begin{defn}
A cohomologically bounded object $\CF\in \QCoh(S)$ is said to be coherent if for any $n$ there exists
an object $\CF_n\in \QCoh(S)^{\on{perf}}$ together with a
morphism $\CF_n\to \CF$ such that $\on{Cone}(\CF_n\to \CF)\in \QCoh(S)^{\leq -n}$.
\end{defn}

We have:

\begin{lem}  \label{l:crit coh}
For a cohomologically bounded object $\CF\in \QCoh(S)$ the following conditions are
equivalent: 

\begin{enumerate}

\item  $\CF$ is coherent.

\item For any $n$, the functor on $\QCoh(S)^\heartsuit$ given by $\CF'\mapsto \Hom(\CF,\CF'[n])$ 
commutes with filtered colimits.
\end{enumerate}

\end{lem}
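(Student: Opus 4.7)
I will prove the two implications separately. The forward direction is a short t-structure vanishing argument, while the converse requires an inductive cell-attaching construction that extracts the perfect approximations from the compactness hypothesis on $\Hom(\CF,-)$. I carry out the converse in the affine case $S = \Spec(A)$; the general case then reduces to this by Zariski descent.

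\medskip

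\textbf{(1) $\Rightarrow$ (2).} Given coherent $\CF$, fix $n$ and pick a perfect approximation $\CF_k \to \CF$ with $K := \on{Cone}(\CF_k \to \CF) \in \QCoh(S)^{\leq -k}$ for any $k \geq n+2$. For $\CF' \in \QCoh(S)^\heartsuit$, both $\CF'[n]$ and $\CF'[n+1]$ lie in $\QCoh(S)^{\geq -n-1}$, so the standard vanishing of $\Hom$ between disjoint t-supports gives $\Hom(K, \CF'[n]) = \Hom(K, \CF'[n+1]) = 0$. The long exact sequence attached to $\CF_k \to \CF \to K$ then produces an isomorphism $\Hom(\CF, \CF'[n]) \simeq \Hom(\CF_k, \CF'[n])$, and since $\CF_k \in \QCoh(S)^{\on{perf}}$ is compact, both sides preserve filtered colimits in $\CF'$.

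\medskip

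\textbf{(2) $\Rightarrow$ (1).} Shifting, assume $\CF \in \QCoh(S)^{\leq 0}$. Build inductively a sequence $\CF_0 = 0 \to \CF_1 \to \cdots$ of perfect objects with compatible maps to $\CF$, writing $C_n := \on{Cone}(\CF_n \to \CF)$ and ensuring $C_n \in \QCoh(S)^{\leq -n}$. For the inductive step, the long exact $\Hom$-sequence attached to $\CF_{n-1} \to \CF \to C_{n-1}$, combined with (2) and the compactness of $\CF_{n-1}$, shows that $\Hom(C_{n-1}, \CF'[k])$ also commutes with filtered colimits in $\CF'$ for every $k$. Specialising to $k = n-1$ and using that $C_{n-1} \in \QCoh(S)^{\leq -(n-1)}$ so that $\Hom(C_{n-1}, \CF'[n-1]) \simeq \Hom_{\QCoh(S)^\heartsuit}(H^{-(n-1)}(C_{n-1}), \CF')$, we conclude that $H^{-(n-1)}(C_{n-1})$ is a finitely presented $A$-module. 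Lift a surjection $A^m \twoheadrightarrow H^{-(n-1)}(C_{n-1})$ to a morphism $\alpha : A^m[n-1] \to C_{n-1}$, and compose with the connecting map $C_{n-1} \to \CF_{n-1}[1]$ to obtain $\psi : A^m[n-2] \to \CF_{n-1}$. The composite of $\psi$ with $\CF_{n-1} \to \CF$ is canonically null-homotopic (two consecutive arrows in an exact triangle), so setting $\CF_n := \on{Cone}(\psi)$, the null-homotopy extends $\CF_{n-1} \to \CF$ to a map $\CF_n \to \CF$. The octahedral axiom applied to $\CF_{n-1} \to \CF_n \to \CF$ identifies $\on{Cone}(\CF_n \to \CF) \simeq \on{Cone}(\alpha)$, which lies in $\QCoh(S)^{\leq -n}$ by surjectivity of $\alpha$ on top cohomology.

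\medskip

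\textbf{Main obstacle.} The essential difficulty lies in the converse: condition (2) must be leveraged to yield genuine \emph{finite presentation} of each $H^{-(n-1)}(C_{n-1})$---finite generation alone would be insufficient in the non-Noetherian setting---and the attaching-cell construction must be executed coherently so that the chosen null-homotopies propagate through the induction without introducing spurious higher obstructions. Once the affine case is in hand, the passage to a general scheme $S$ should be routine by localisation.
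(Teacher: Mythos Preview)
The paper states this lemma without proof, so there is nothing to compare against; your task reduces to whether the argument is correct, and it is. Both directions are carried out in the expected way: the forward direction by truncating the perfect approximation, and the converse by the standard cell-attachment induction in the affine case, using that a module $M$ with $\Hom_A(M,-)$ preserving filtered colimits is finitely generated (in fact finitely presented, though you only use the surjection $A^m\twoheadrightarrow M$). The octahedral step is the correct one: applying it to $A^m[n-1]\overset{\alpha}{\to} C_{n-1}\overset{\delta}{\to}\CF_{n-1}[1]$ yields the triangle $\on{Cone}(\alpha)\to \CF_n[1]\to \CF[1]$, which after rotation gives exactly $\on{Cone}(\CF_n\to\CF)\simeq \on{Cone}(\alpha)$. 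One cosmetic point: the phrase ``canonically null-homotopic'' is slightly imprecise---the null-homotopy is not canonical, but any choice produces a map $\CF_n\to\CF$ whose cone is $\on{Cone}(\alpha)$ by the octahedral argument, which is all you need. Also note that in the paper's context (\secref{s:non-Noeth}) the lemma is only asserted for $S$ affine, so your Zariski-descent remark, while plausible, is not required.
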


\sssec{}

Let $\Coh(S)\subset \QCoh(S)$ denote the full subcategory spanned by coherent objects. 
It follows from \lemref{l:crit coh} that $\Coh(S)$ is stable under taking cones, so it is a (non-cocomplete) 
DG subcategory of $\QCoh(S)$.

\begin{defn}
We define the DG category $\IndCoh(S)$ to be the ind-completion of $\Coh(S)$.
\end{defn}

\medskip

By construction, we have a tautological functor $\Psi_S:\IndCoh(S)\to \QCoh(S)$. 

\sssec{}

We shall now define a t-structure on $\IndCoh(S)$. We let $\IndCoh(S)^{\leq 0}$ be
generated under colimits by objects from $\Coh(S)\cap \QCoh(S)^{\leq 0}$. Since
all these objects are compact in $\IndCoh(S)$, the resulting t-structure on 
$\IndCoh(S)$ is compatible with filtered colimits. 

\begin{rem}
The main difference between the present situation and one when $S$ is coherent
is that now the subcategory $\Coh(S)\subset \IndCoh(S)$ is not necessarily preserved
by the truncation functors. In fact, it is easy to show that the latter condition is equivalent 
to $S$ being coherent.
\end{rem}

By construction, the functor $\Psi_S$ is right t-exact.

\ssec{Eventual coherence}

\sssec{}

Note that unless some extra conditions on $S$ are imposed, it is not clear that the
category $\Coh(S)$ contains any objects besides $0$. Hence, in general, we cannot
expect that an analog of \propref{p:equiv on D plus} should hold. We are now 
going to introduce a condition on $S$ which would guarantee that an appropriate
analog of \propref{p:equiv on D plus} does hold:

\medskip

\begin{defn}  \label{defn:event coherent}
We shall say that $S$ is eventually coherent if there exists an integer $N$, such that for 
all $n\geq N$ the truncation $\tau^{\geq -n}(\CO_S)$ is coherent. 
\end{defn}

We are going to prove:

\begin{prop} \label{p:equiv on D plus non-Noeth}
Assume that $S$ is eventually coherent. Then:

\smallskip

\noindent{\em(a)} The functor $\Psi_S$ is t-exact.

\smallskip

\noindent{\em(b)} For any $m$, the resulting functor $\Psi_S:\IndCoh(S)^{\geq m}\to \QCoh(S)^{\geq m}$
is an equivalence.

\end{prop}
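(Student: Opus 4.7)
The plan is to adapt the proof of \propref{p:equiv on D plus} to the non-Noetherian setting, using eventual coherence to supply the key technical inputs. The pivotal technical fact is that under eventual coherence, for every perfect $\CE\in \QCoh(S)^{\on{perf}}$ and every integer $m$, the truncation $\tau^{\geq m}(\CE)$ lies in $\Coh(S)$. This rests on the hypothesis $\tau^{\geq -n}(\CO_S)\in \Coh(S)$ for $n\geq N$ together with the stability of $\Coh(S)$ under tensor with perfects (from \lemref{l:crit coh}), propagated through the finite cell decomposition of any perfect complex.

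For part (a), right t-exactness of $\Psi_S$ is immediate, as noted in the text: the generators $\Coh(S)\cap \QCoh(S)^{\leq 0}$ of $\IndCoh(S)^{\leq 0}$ map into $\QCoh(S)^{\leq 0}$, which is closed under colimits, and $\Psi_S$ is continuous. For left t-exactness, given $\CF\in \IndCoh(S)^{\geq 0}$, I would show $\Maps_{\QCoh(S)}(\CG',\Psi_S(\CF))=0$ for every $\CG'\in \QCoh(S)^{\leq -1}$. Writing $\CG'\simeq \underset{j}{colim}\,\tau^{\leq -1}(\CE_j)$ with $\CE_j$ perfect (using compact generation of $\QCoh(S)$ by perfects and compatibility of $\tau^{\leq -1}$ with filtered colimits), the triangle $\tau^{\leq -1}(\CE_j)\to \CE_j\to \tau^{\geq 0}(\CE_j)$ combined with the technical input places $\tau^{\leq -1}(\CE_j)\in \Coh(S)\cap \QCoh(S)^{\leq -1}$. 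The vanishing $\Maps_{\IndCoh(S)}(\tau^{\leq -1}(\CE_j),\CF)=0$ then holds by definition of $\IndCoh(S)^{\geq 0}$, and this transfers to the required vanishing of $\Maps_{\QCoh(S)}$ via the fiber sequence and the perfectness of $\CE_j$.

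For part (b), the fully faithful part mimics the Noetherian proof verbatim: for $\CF\in \Coh(S)$, a perfect approximation $\CF_n\to \CF$ with cone in $\QCoh(S)^{\leq m-1}$ makes $\Maps_{\QCoh(S)}(\CF,-)\simeq \Maps_{\QCoh(S)}(\CF_n,-)$ on $\QCoh(S)^{\geq m}$, and the latter commutes with filtered colimits since $\CF_n$ is perfect, hence compact in $\QCoh(S)$. For essential surjectivity, write $\CG\in \QCoh(S)^{\geq m}$ as $\underset{i}{colim}\,\CE_i$ with $\CE_i$ perfect; applying $\tau^{\geq m}$ gives $\CG\simeq \underset{i}{colim}\,\tau^{\geq m}(\CE_i)$, and the technical input puts each $\tau^{\geq m}(\CE_i)$ in $\Coh(S)\cap \QCoh(S)^{\geq m}$. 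The formal filtered colimit in $\IndCoh(S)$ then defines a preimage in $\IndCoh(S)^{\geq m}$, the latter being closed under filtered colimits and containing $\Coh(S)\cap \QCoh(S)^{\geq m}$ by a direct orthogonality check against $\Coh(S)\cap \QCoh(S)^{\leq m-1}$.

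The main obstacle is establishing the technical input itself: under eventual coherence, $\tau^{\geq m}(\CE)\in \Coh(S)$ for every perfect $\CE$. The naive composite $\CE\to \tau^{\geq m}(\CE)$ yields a perfect approximation only of bounded depth (cone in $\QCoh(S)^{\leq m-1}$), so arbitrarily deep perfect approximations must be constructed by bootstrapping from the approximations of $\tau^{\geq -n}(\CO_S)$ supplied by eventual coherence, propagated through the finite cell structure expressing $\CE$ as an iterated cofiber of shifts of $\CO_S$. This is the sole substantive departure from the Noetherian argument and the step where the eventual coherence hypothesis plays its essential role.
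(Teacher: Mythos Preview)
Your proposal has a genuine gap at exactly the point you flag as ``the main obstacle'': the technical input that $\tau^{\geq m}(\CE)\in\Coh(S)$ for every perfect $\CE$ and every $m$. This does \emph{not} follow from eventual coherence, and the bootstrapping you sketch fails. Concretely, take $\CE=\CO_S$ and $m>-N$; you are then asking that $\tau^{\geq m}(\CO_S)\in\Coh(S)$ beyond the range guaranteed by the hypothesis. A perfect approximation $\CP\to\tau^{\geq -n}(\CO_S)$ with cone in $\QCoh(S)^{\leq -k}$ composes with $\tau^{\geq -n}(\CO_S)\to\tau^{\geq m}(\CO_S)$ to give a map whose cone, by the octahedral axiom, sits in an extension of $\tau^{[-n,m-1]}(\CO_S)[1]$ by something in $\QCoh(S)^{\leq -k}$; the first term lives in $\QCoh(S)^{\leq m-2}$ and does not get more connective as $k\to\infty$. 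So you never get approximations deeper than degree $m-2$. In fact, asking that $\Coh(S)$ be stable under truncation is exactly the coherent (not merely eventually coherent) case---this is the content of the Remark immediately preceding the proposition.

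There is also a circularity in your argument for (a): even granting $\tau^{\leq -1}(\CE_j)\in\Coh(S)$, the passage from $\Maps_{\IndCoh(S)}(\tau^{\leq -1}(\CE_j),\CF)=0$ to the vanishing of $\Maps_{\QCoh(S)}(\tau^{\leq -1}(\CE_j),\Psi_S(\CF))$ requires knowing that $\Maps_{\QCoh(S)}(\tau^{\leq -1}(\CE_j),-)$ commutes with the filtered colimit defining $\Psi_S(\CF)$. That commutation holds when the colimit is taken inside some $\QCoh(S)^{\geq m}$, but that is precisely what you are trying to prove. The paper avoids both problems: for (a) it tests coconnectivity affinely against the compact object $\CO_S$, then uses boundedness of each $\CF_i\in\Coh(S)$ to replace $\CO_S$ by $\tau^{\geq -n}(\CO_S)\in\Coh(S)$ for $n\gg 0$; for essential surjectivity in (b) it truncates $\CF_0^n:=\tau^{\geq -n}(\CO_S)\otimes\CF_0\in\Coh(S)$ inside $\IndCoh(S)$ (where truncation is always available) and invokes the t-exactness from (a), rather than claiming the $\QCoh$-truncation lands in $\Coh(S)$.
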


\begin{proof}

Let $\CF\simeq \underset{i,\IndCoh(S)}{colim}\, \CF_i$ be an object of $\IndCoh(S)^{> 0}$, where
$\CF_i\in \Coh(S)$. To prove that $\Psi_S$ is left t-exact, we need to show that 
$\underset{i,\QCoh(S)}{colim}\, \CF_i$ belongs to $\QCoh(S)^{> 0}$. Since $S$ is affine, the latter 
condition is equivalent to 
$$\Maps(\CO_S,\underset{i,\QCoh(S)}{colim}\, \CF_i)=0,$$
and since $\CO_S$ is a compact object of $\QCoh(S)$, the left-hand side of the latter expression is
\begin{equation} \label{e:colim to vanish 1}
\underset{i}{colim} \, \Maps_{\QCoh(S)}(\CO_S,\CF_i).
\end{equation}

Since each $\CF_i$ is cohomologically bounded, the map 
$$\Maps_{\QCoh(S)}(\tau^{\geq -n}(\CO_S),\CF_i)\to \Maps_{\QCoh(S)}(\CO_S,\CF_i)$$
is an isomorphism for $n\gg 0$. Hence, the map
$$\underset{n}{colim}\, \Maps_{\QCoh(S)}(\tau^{\geq -n}(\CO_S),\CF_i)\to \Maps_{\QCoh(S)}(\CO_S,\CF_i)$$
is an isomorphism. Therefore,
\begin{multline} \label{e:colim to vanish 2}
\underset{i}{colim} \, \Maps_{\QCoh(S)}(\CO_S,\CF_i)\simeq \underset{i}{colim}\,\underset{n}{colim}\, 
\Maps_{\QCoh(S)}(\tau^{\geq -n}(\CO_S),\CF_i)\simeq \\
\simeq \underset{n}{colim}\,\underset{i}{colim}\,
\Maps_{\QCoh(S)}(\tau^{\geq -n}(\CO_S),\CF_i).
\end{multline} 

However, by assumption, for $n\geq N$, $\tau^{\geq -n}(\CO_S)\in \Coh(S)$, and for such $n$
$$\Maps_{\QCoh(S)}(\tau^{\geq -n}(\CO_S),\CF_i)\simeq \Maps_{\IndCoh(S)}(\tau^{\geq -n}(\CO_S),\CF_i),$$
and hence
\begin{multline*}
\underset{i}{colim}\,\Maps_{\QCoh(S)}(\tau^{\geq -n}(\CO_S),\CF_i)\simeq 
\underset{i}{colim}\,\Maps_{\IndCoh(S)}(\tau^{\geq -n}(\CO_S),\CF_i)\simeq  \\
\simeq \Maps_{\IndCoh(S)}(\tau^{\geq -n}(\CO_S),\CF),
\end{multline*}
which vanishes by the definition of the t-structure on $\IndCoh(S)$. Hence, the expression in \eqref{e:colim to vanish 2}
vanishes, as required. This proves point (a).

\medskip

To prove point (b), it is enough to consider the case $m=0$. We first prove fully-faithfulness. 
As in the proof of \propref{p:equiv on D plus},
it is sufficient to show that for $\CF\in \Coh(S)$ and $\CF'\in \IndCoh(S)^{\geq 0}$, the map
$$\Maps_{\IndCoh(S)}(\CF,\CF')\to \Maps_{\QCoh(S)}(\Psi_S(\CF),\Psi_S(\CF'))$$ is an
isomorphism. 

\medskip

Let $i\mapsto \CF'_i$ be a diagram in $\Coh(S)$,
such that $\CF':=\underset{i,\IndCoh(S)}{colim}\, \CF'_i$
belongs to $\IndCoh(S)^{\geq 0}$. We need to show that the map
\begin{equation} \label{e:colim to comp 1}
\underset{i}{colim} \, \Maps_{\QCoh(S)}(\CF,\CF'_i)\to 
\Maps_{\QCoh(S)}(\CF,\underset{i,\QCoh(S)}{colim} \, \CF'_i)
\end{equation}
is an isomorphism. 

\medskip

Let $\CF_0\in \QCoh(S)^{\on{perf}}$ be an object equipped
with a map $\CF_0\to \CF$ such that $$\on{Cone}(\CF_0\to \CF)\in \QCoh(S)^{<0}.$$
Since $\CF$ is cohomologically bounded, for $n\gg 0$, the above morphism factors
through a morphism 
$$\tau^{\geq -n}(\CO_S)\otimes \CF_0=:\CF_0^n\to \CF;$$
moreover, by the eventually coherent assumption on $S$, we can take $n$ to be large enough 
so that $\CF^n_0\in \Coh(S)$.

\medskip

Consider the commutative diagram:
\begin{equation} \label{diag:colimits}
\CD
\underset{i}{colim} \, \Maps_{\QCoh(S)}(\CF_0,\CF'_i)  @>>>  \Maps_{\QCoh(S)}(\CF_0,\underset{i,\QCoh(S)}{colim} \, \CF'_i) \\
@AAA    @AAA   \\
\underset{i,n}{colim}\, \Maps_{\QCoh(S)}(\CF_0^n,\CF'_i)   @>>> 
\underset{n}{colim}  \, \Maps_{\QCoh(S)}(\CF_0^n,\underset{i,\QCoh(S)}{colim} \,\CF'_i) \\
@AAA   @AAA  \\
\underset{i}{colim} \, \Maps_{\QCoh(S)}(\CF,\CF'_i) @>>>  \Maps_{\QCoh(S)}(\CF,\underset{i,\QCoh(S)}{colim} \, \CF'_i).
\endCD
\end{equation}

We need to show that in the bottom horizontal arrow in this diagram is an isomorphism. We will do so
by showing that all other arrows are isomorphisms.

\medskip

For all $n\gg 0$ we have a commutative diagram
$$
\CD
\underset{i}{colim}\, \Maps_{\QCoh(S)}(\CF_0^n,\CF'_i)  @>{\sim}>> 
\Maps_{\IndCoh(S)}(\CF_0^n,\CF')  \\
@AAA    @AAA   \\
\underset{i}{colim} \, \Maps_{\QCoh(S)}(\CF,\CF'_i) @>{\sim}>> \Maps_{\IndCoh(S)}(\CF,\CF'),
\endCD
$$
in which the right vertical arrow is an isomorphism since $\on{Cone}(\CF_0^n\to \CF)\in \IndCoh(S)^{<0}$
and $\CF'\in \IndCoh(S)^{\geq 0}$. Hence, the left vertical arrow is an isomorphism as well. This implies that the lower
left vertical arrow in \eqref{diag:colimits} is an isomorphism. 

\medskip

The upper left vertical arrow in \eqref{diag:colimits} is an isomorphism by the same argument as in the proof of point (a)
above. The top horizontal arrow is an isomorphism since $\CF_0$ is compact in $\QCoh(S)$. Finally, both
right vertical arrows are isomorphisms since $\underset{i,\QCoh(S)}{colim} \, \CF'_i\in \QCoh(S)^{\geq 0}$,
as was established in point (a). 

\medskip

Finally, let us show that the functor
$$\Psi_S:\IndCoh(S)^{\geq 0}\to \QCoh(S)^{\geq 0}$$
is essentially surjective. By fully faithfulness, it suffices to show that the essential
image of $\Psi_S(\IndCoh(S)^{\geq 0})$ generates $\QCoh(S)^{\geq 0}$ under filtered
colimits.

\medskip

Since $\QCoh(S)$ is generated under filtered colimits by objects of the form $\CF_0\in \QCoh(S)^{\on{perf}}$,
the category $\QCoh(S)^{\geq 0}$ is generated under filtered colimits by objects of the form 
$\tau^{\geq 0}(\CF_0)$ for $\CF_0\in \QCoh(S)^{\on{perf}}$. Hence, it suffices to show that such objects
are in the essential image of $\Psi_S$. However,
$$\tau^{\geq 0}(\CF_0)\simeq \tau^{\geq 0}(\Psi_S(\CF_0^n))\simeq \Psi_S(\tau^{\geq 0}(\CF_0^n))$$
for $n\gg 0$ and $\CF_0^n=\tau^{\geq -n}(\CO_S)\otimes \CF_0\in \Coh(S)$. 

\end{proof}

\sssec{The eventually coconnective case}

Assume now that $S$ is eventually coconnective; in particular, 
it is automatically eventually coherent.

\medskip

In this case $\QCoh(S)^{\on{perf}}$ is contained in $\Coh(S)$. As in \secref{ss:eventually coconnective},
this gives rise to a functor 
$$\Xi_S:\QCoh(S)\to \IndCoh(S),$$
left adjoint to $\Psi_S$, obtained by ind-extension of the tautological functor
$$\QCoh(S)^{\on{perf}}\hookrightarrow \Coh(S).$$

\medskip

As in \propref{p:Xi}, it is immediate that the unit of adjunction
$$\on{Id}\to \Psi_S\circ \Xi_S$$
is an isomorphism. I.e., $\Xi_S$ is fully faithful, and $\Psi_S$
realizes $\QCoh(S)$ as a colocalization of $\IndCoh(S)$
with respect to the subcategory $\IndCoh(S)_{nil}$.

\begin{rem}
Here is a quick way to see that $\Psi_S$ is left t-exact in the eventually coconnective case: indeed, it is
the right adjoint of $\Xi_S$, and the latter is right t-exact by construction.
\end{rem}

\ssec{The non-affine case}

In this subsection we will indicate how to extend the definition of $\IndCoh(S)$ to the case when 
$S$ is not necessarily affine.

\sssec{}

First, we observe that if $S_1\hookrightarrow S_2$ is an open embedding of affine
DG schemes and $S_2$ is eventually coherent, then so is $S_1$. This observation gives
rise to the following definition:

\begin{defn}
We say that a scheme $S$ is locally eventually coherent if for it admits an open
cover by eventually coherent affine DG schemes.
\end{defn}

\sssec{}

Assume that $S$ is locally eventually coherent. We define the category $\IndCoh(S)$
as 
$$\underset{U\to S}{lim}\, \IndCoh(U),$$
where the limit is taken over the category of DG schemes $U$ that are disjoint unions
of eventually coherent affine DG schemes, equipped with an open embedding into $S$.

\medskip

\propref{p:equiv on D plus non-Noeth} allows us to prove an analog of \propref{p:Zariski descent},
which implies that for $S$ affine, we recover the original definition of $\IndCoh(S)$.

\section{Basic Functorialities}  \label{s:functorialities}

In this section all DG schemes will be assumed Noetherian. 

\ssec{Direct images}  \label{direct image}

Let $f:S_1\to S_2$ be a morphism of DG schemes.

\begin{prop}  \label{p:direct image}
There exists a unique continuous functor
$$f^{\IndCoh}_*:\IndCoh(S_1)\to \IndCoh(S_2),$$
which is left t-exact with respect to the t-structures, and which
makes the diagram
$$
\CD
\IndCoh(S_1) @>{f^{\IndCoh}_*}>> \IndCoh(S_2) \\
@V{\Psi_{S_1}}VV    @VV{\Psi_{S_2}}V   \\
\QCoh(S_1) @>{f_*}>> \QCoh(S_2)
\endCD
$$
commute.
\end{prop}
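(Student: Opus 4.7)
The plan is to use \propref{p:equiv on D plus} to transport the ordinary pushforward on $\QCoh$ to $\IndCoh$, first on compact objects, and then to ind-extend.

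First I would recall that $f_*: \QCoh(S_1) \to \QCoh(S_2)$ is left t-exact, since it is the right adjoint of the right t-exact functor $f^*$. Consequently, for any $\CF \in \Coh(S_1)$, which is cohomologically bounded and in particular eventually coconnective, the pushforward $f_*(\CF)$ lies in $\QCoh(S_2)^+$. By \propref{p:equiv on D plus}, the functor $\Psi_{S_2}$ restricts to an equivalence $\IndCoh(S_2)^+ \to \QCoh(S_2)^+$; let me denote its inverse on the $+$-part by $\Psi_{S_2}^{-1}$. I then define a functor
\[
\Coh(S_1) \longrightarrow \IndCoh(S_2), \qquad \CF \longmapsto \Psi_{S_2}^{-1}\bigl(f_*(\CF)\bigr),
\]
and set $f^{\IndCoh}_*$ to be its ind-extension to $\IndCoh(S_1) = \Ind(\Coh(S_1))$. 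By construction this functor is continuous.

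Next I would verify the commutativity of the square. On an object $\CF \in \Coh(S_1) \subset \IndCoh(S_1)$, we have $\Psi_{S_2}(f^{\IndCoh}_*(\CF)) = \Psi_{S_2}(\Psi_{S_2}^{-1}(f_*(\CF))) = f_*(\CF) = f_*(\Psi_{S_1}(\CF))$ by definition of $\Psi_{S_1}$ on coherent objects. Both compositions $\Psi_{S_2} \circ f^{\IndCoh}_*$ and $f_* \circ \Psi_{S_1}$ are continuous functors $\IndCoh(S_1) \to \QCoh(S_2)$ (the pushforward $f_*$ on $\QCoh$ is continuous because $f$ is quasi-compact and quasi-separated, and $\Psi_{S_1}$, $\Psi_{S_2}$ are continuous by construction), so agreement on the compact generators $\Coh(S_1)$ implies agreement everywhere. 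For left t-exactness, it suffices to check that objects of $\Coh(S_1)^{\geq 0}$ are sent to $\IndCoh(S_2)^{\geq 0}$; but an object of $\IndCoh(S_2)^+$ is coconnective if and only if its image under $\Psi_{S_2}$ is, by \corref{c:equiv on D plus}, so this follows from the left t-exactness of $f_*$ on $\QCoh$.

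Finally, for uniqueness, suppose $F: \IndCoh(S_1) \to \IndCoh(S_2)$ is any continuous, left t-exact functor making the square commute. Since $F$ is left t-exact and $\Coh(S_1) \subset \IndCoh(S_1)^b$, we have $F(\Coh(S_1)) \subset \IndCoh(S_2)^+$. On that subcategory, $\Psi_{S_2}$ is an equivalence by \propref{p:equiv on D plus}, so the commutation of the square forces $F|_{\Coh(S_1)} \simeq \Psi_{S_2}^{-1} \circ f_*|_{\Coh(S_1)} \simeq f^{\IndCoh}_*|_{\Coh(S_1)}$. By continuity, $F \simeq f^{\IndCoh}_*$ on all of $\IndCoh(S_1)$. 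The main conceptual step here is the initial passage from $\QCoh$ to $\IndCoh$ via \propref{p:equiv on D plus}; everything else is formal bookkeeping around continuity and t-exactness.
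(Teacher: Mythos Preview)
Your proof is correct and follows essentially the same approach as the paper: define $f^{\IndCoh}_*$ on $\Coh(S_1)$ by composing $f_*$ with the inverse of the equivalence $\Psi_{S_2}:\IndCoh(S_2)^+\to\QCoh(S_2)^+$ from \propref{p:equiv on D plus}, then ind-extend. The paper's proof is considerably terser, leaving the verification of commutativity, left t-exactness, and uniqueness implicit, whereas you spell these out; but the underlying idea is identical.
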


\begin{proof}

By definition, we need to construct a functor
$$\Coh(S_1)\to \IndCoh(S_2)^+,$$ such
that the diagram
$$
\CD
\Coh(S_1) @>>> \IndCoh(S_2)^+ \\
@V{\Psi_{S_1}}VV    @VV{\Psi_{S_2}}V   \\
\QCoh(S_1) @>{f_*}>> \QCoh(S_2)^+
\endCD
$$
commutes. Since the right vertical arrow is an equivalence (by \propref{p:equiv on D plus}),
the functor in question is uniquely determined to be the composition
$$\Coh(S_1) \hookrightarrow \QCoh(S_1)^+ \overset{f_*}\longrightarrow \QCoh(S_2)^+.$$

\end{proof}

\sssec{}

Recall that according to \secref{ss:action}, we can regard $\IndCoh(S_i)$ as a module category over the
monoidal category $\QCoh(S_i)$. 
In particular, we can view both $\IndCoh(S_1)$ and $\IndCoh(S_2)$
as module categories over $\QCoh(S_2)$ via the monoidal functor
$$f^*:\QCoh(S_2)\to \QCoh(S_1).$$

\medskip

We now claim:

\begin{prop}    \label{p:dir image as modules}
The functor $f^{\IndCoh}_*:\IndCoh(S_1)\to \IndCoh(S_2)$ has a unique
structure of $1$-morphism of $\QCoh(S_2)$-module categories that
makes the diagram
$$
\CD
\IndCoh(S_1)  @>{\Psi_{S_1}}>>  \QCoh(S_1)  \\
@V{f^\IndCoh_*}VV    @VV{f_*}V   \\
\IndCoh(S_2)  @>{\Psi_{S_2}}>>  \QCoh(S_2)  
\endCD
$$
commute.
\end{prop}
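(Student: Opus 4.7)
The plan is to transport the canonical $\QCoh(S_2)$-module structure on $f_*: \QCoh(S_1) \to \QCoh(S_2)$ (namely, the projection formula $f_*(f^*(\CE) \otimes \CF) \simeq \CE \otimes f_*(\CF)$) across the $\Psi$-equivalences of \propref{p:equiv on D plus}, working first on compact generators and then ind-extending. The key observation is that the action of $\QCoh(S_2)$ on $\IndCoh(S_1)$ restricts to an action of $\QCoh(S_2)^{\on{perf}}$ preserving $\Coh(S_1)$: indeed, $f^*$ sends perfect complexes to perfect complexes, and $\Coh(S_1)$ is stable under tensoring with perfect objects of $\QCoh(S_1)$.

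First I would construct a $\QCoh(S_2)^{\on{perf}}$-linear structure on the restricted functor $f^{\IndCoh}_*|_{\Coh(S_1)}: \Coh(S_1) \to \IndCoh(S_2)^+$. By the $\QCoh(S_2)$-linear equivalence $\IndCoh(S_2)^+ \simeq \QCoh(S_2)^+$ (combining \propref{p:equiv on D plus} with \lemref{l:Psi compat with action}), this is the same datum as a $\QCoh(S_2)^{\on{perf}}$-linear structure on the composite $\Coh(S_1) \hookrightarrow \QCoh(S_1)^+ \xrightarrow{f_*} \QCoh(S_2)^+$, which is obtained simply by restricting the projection formula datum on the full functor $f_*$. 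Ind-extending along $\Coh(S_1) \hookrightarrow \IndCoh(S_1)$ via the universal property of ind-completion for module categories (as in \cite[Cor.~6.2.4]{DG}) then produces a continuous $\QCoh(S_2)$-module functor $\IndCoh(S_1) \to \IndCoh(S_2)$; its underlying plain DG functor coincides with $f^{\IndCoh}_*$ by the uniqueness in \propref{p:direct image}. Commutativity of the square as a square of $\QCoh(S_2)$-module functors holds by construction on $\Coh(S_1)$, and hence everywhere by continuity.

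For uniqueness, any $\QCoh(S_2)$-module structure on $f^{\IndCoh}_*$ that makes the square commute in the $2$-category of $\QCoh(S_2)$-module categories restricts to a $\QCoh(S_2)^{\on{perf}}$-module structure on $f^{\IndCoh}_*|_{\Coh(S_1)}$ which, after applying $\Psi_{S_2}$, must agree with the canonical structure on $f_*|_{\Coh(S_1)}$; continuity together with compact generation of $\IndCoh(S_1)$ by $\Coh(S_1)$ then pins down the structure on all of $\IndCoh(S_1)$. The main obstacle I expect is $\infty$-categorical coherence: one must formulate precisely that a continuous module functor between compactly generated module categories is equivalent to its restriction to compact generators together with the action of the compact generators of the base monoidal category, and that all the higher coherences are transported along ind-extension. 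This is exactly the content of the tensor-product-of-presentable-categories formalism invoked in \corref{c:upgrading Xi}, i.e., \cite[Sect.~6]{DG}.
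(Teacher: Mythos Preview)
Your proposal is correct and follows essentially the same route as the paper: reduce to giving $f^{\IndCoh}_*|_{\Coh(S_1)}$ a $\QCoh(S_2)^{\on{perf}}$-linear structure, transport the projection-formula structure on $f_*$ across the equivalence $\Psi_{S_2}:\IndCoh(S_2)^+\simeq \QCoh(S_2)^+$, and then ind-extend. The paper's proof is terser but the architecture is identical.
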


At the level of individual objects, the assertion of \propref{p:dir image as modules} says that
for $\CE_2\in \QCoh(S_2)$ and $\CF_1\in \IndCoh(S_1)$ we have a canonical isomorphism

\begin{equation} \label{e:dir image as modules}
\CE_2\otimes f^\IndCoh_*(\CF_1)\simeq f^\IndCoh_*(f^*(\CE_2)\otimes\CF_1),
\end{equation}
where $\otimes$ is understood in the sense of the action of \secref{ss:action}.

\begin{proof}

It is enough to show that the functor
$$f^\IndCoh_*|_{\Coh(S_1)}:\Coh(S_1)\to \IndCoh(S_2)$$
has a unique structure of $1$-morphism of module categories
over $\QCoh(S_2)^{\on{perf}}$, which makes the diagram
$$
\CD
\Coh(S_1)  @>{\Psi_{S_1}}>>  \QCoh(S_1)  \\
@V{f^\IndCoh_*}VV    @VV{f_*}V   \\
\IndCoh(S_2)  @>{\Psi_{S_2}}>>  \QCoh(S_2)  
\endCD
$$
commute. 

\medskip

However, by construction, the latter structure equals the one induced by the functors
$$f_*|_{\Coh(S_1)}:\Coh(S_1)\to \QCoh(S_2)^+$$
and
$$\QCoh(S_2)^+\overset{\Psi_{S_2}}\simeq \IndCoh(S_2)^+\hookrightarrow \IndCoh(S_2).$$

\end{proof}

\ssec{Upgrading to a functor}  \label{ss:upgrading direct image to a functor}

\sssec{}

We now claim that the assigment
$$S\rightsquigarrow \IndCoh(S),\quad f\rightsquigarrow f^\IndCoh_*$$
upgrades to a functor
\begin{equation} \label{e:IndCoh functor}
\dgSch_{\on{Noeth}}\to \StinftyCat_{\on{cont}},
\end{equation}
to be denoted $\IndCoh_{\dgSch_{\on{Noeth}}}$.

\sssec{}

First, we recall the functor
$$\QCoh^*_{\dgSch}:(\dgSch)^{\on{op}}\to \StinftyCat_{\on{cont}},$$
see \secref{sss:intr qc}.

\medskip

Passing to right adjoints, we obtain a functor 
$$\QCoh_{\dgSch}:\dgSch\to \StinftyCat_{\on{cont}}.$$
Restricting to $\dgSch_{\on{Noeth}}\subset \dgSch$ we obtain a functor
$$\QCoh_{\dgSch_{\on{Noeth}}}:\dgSch_{\on{Noeth}}\to \StinftyCat_{\on{cont}}.$$

\sssec{}

Now,  we claim:

\begin{prop} \label{p:upgrading IndCoh to functor}
There exists a uniquely defined functor 
$$\IndCoh_{\dgSch_{\on{Noeth}}}:\dgSch_{\on{Noeth}}\to \StinftyCat_{\on{cont}},$$
equipped with a natural transformation
$$\Psi_{\dgSch_{\on{Noeth}}}:\IndCoh_{\dgSch_{\on{Noeth}}}\to \QCoh_{\dgSch_{\on{Noeth}}},$$
which at the level of objects and $1$-morphisms is given by the assignment
$$X\rightsquigarrow \IndCoh(X),\quad f\rightsquigarrow f^\IndCoh_*.$$
\end{prop}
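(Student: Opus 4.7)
The plan is to construct $\IndCoh_{\dgSch_{\on{Noeth}}}$ from $\QCoh_{\dgSch_{\on{Noeth}}}$ in two stages. First, I transport from $\QCoh^+$ to $\IndCoh^+$ using the pointwise equivalences $\Psi_S$ of \propref{p:equiv on D plus}. Second, I extend the result from the eventually coconnective part to the full $\IndCoh$, using that $\IndCoh(S) = \Ind(\Coh(S))$ with $\Coh(S) \subset \IndCoh(S)^+$ (as follows from \corref{c:Karoubian}).

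For the first stage, start with the given $\QCoh_{\dgSch_{\on{Noeth}}} : \dgSch_{\on{Noeth}} \to \StinftyCat_{\on{cont}}$ and restrict to eventually coconnective subcategories. For any $f: S_1 \to S_2$ of Noetherian DG schemes the pullback $f^*$ is right t-exact (locally it is derived tensor product with a connective algebra), hence $f_*$ is left t-exact and sends $\QCoh(S_1)^+$ into $\QCoh(S_2)^+$. Restriction therefore yields a functor $\QCoh^+_{\dgSch_{\on{Noeth}}}$ valued in (non-cocomplete) DG categories. By \propref{p:equiv on D plus} the maps $\Psi_S : \IndCoh(S)^+ \to \QCoh(S)^+$ are equivalences, and the construction of $f^\IndCoh_*$ in \propref{p:direct image} is designed precisely so that these equivalences intertwine $f_*$ with $f^\IndCoh_*$. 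Transporting along the pointwise inverses of $\Psi_S$ upgrades $\QCoh^+_{\dgSch_{\on{Noeth}}}$ to a functor $\IndCoh^+_{\dgSch_{\on{Noeth}}}$ together with a natural equivalence to the former.

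For the second stage, I use that a continuous functor out of $\IndCoh(S_1) = \Ind(\Coh(S_1))$ is the same as an arbitrary functor out of $\Coh(S_1)$. The desired pushforward is then defined as the continuous ind-extension of the composition
\[
\Coh(S_1) \hookrightarrow \IndCoh(S_1)^+ \longrightarrow \IndCoh(S_2)^+ \hookrightarrow \IndCoh(S_2),
\]
where the middle arrow is furnished by $\IndCoh^+_{\dgSch_{\on{Noeth}}}$. On objects and 1-morphisms this recovers $f^\IndCoh_*$ from \propref{p:direct image}, and the natural transformation $\Psi_{\dgSch_{\on{Noeth}}}$ is obtained by ind-extending the tautological inclusions $\Coh(S) \hookrightarrow \QCoh(S)$. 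Uniqueness is then automatic: any candidate functor must agree with this one on eventually coconnective parts by \propref{p:equiv on D plus}, hence on the compact generators $\Coh(S_1) \subset \IndCoh(S_1)$, hence everywhere by continuity.

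The principal obstacle is not geometric but $\infty$-categorical: exhibiting the pointwise and objectwise recipes above as honest functors of $(\infty,1)$-categories, rather than mere collections of compatible values on $0$- and $1$-cells. In particular, the transport step requires that the family $\{\Psi_S\}$ extend to an equivalence at the level of all higher simplices, and the final ind-extension must be realized as a functor applied in families (valued in $\StinftyCat_{\on{cont}}$). These are instances of the iterated Kan-extension and passage-to-adjoints machinery sketched in the introduction, with the detailed combinatorial bookkeeping belonging to the forthcoming \cite{GR3}.
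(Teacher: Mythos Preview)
Your proposal is correct and follows the same two-stage strategy as the paper: first identify $\IndCoh^+$ with $\QCoh^+$ via \propref{p:equiv on D plus}, then recover the full $\IndCoh$ from its eventually coconnective part. The difference is in how the second stage is packaged. You phrase it as ``ind-extend from $\Coh$ in families,'' acknowledging that the coherence of this operation is the real issue. The paper instead introduces two auxiliary $(\infty,1)$-categories $\StinftyCat^t_{\on{cont}}$ and $\StinftyCat^{+_{\on{cont}}}$ (cocomplete DG categories with t-structure compactly generated by $\bC^+$, respectively their $+$-parts), observes that the functor $\bC\mapsto\bC^+$ between them is $1$-fully faithful (\lemref{l:taking +}), and then invokes a general lifting lemma (Lemmas~\ref{l:factoring functor} and~\ref{l:factoring functor bis}) to pull a functor valued in $\StinftyCat^{+_{\on{cont}}}$ back to one valued in $\StinftyCat^t_{\on{cont}}$. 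This is exactly the content of your ind-extension step, but the $1$-full faithfulness framing makes the uniqueness claim and the higher coherences automatic rather than something deferred to \cite{GR3}.
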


The rest of this subsection is devoted to the proof of this proposition. 

\sssec{}  \label{sss:Stinfty +}

Consider the following $(\infty,1)$-categories:
$$\StinftyCat^{+_{\on{cont}}} \text{ and } \StinftyCat^{t}_{\on{cont}}:$$

\medskip

The category $\StinftyCat^{+_{\on{cont}}}$ consists if of non-cocomplete DG categories $\bC$, 
endowed with a t-structure, such that $\bC=\bC^+$. We also require that $\bC^{\geq 0}$ contain 
filtered colimits and that the embedding $\bC^{\geq 0}\hookrightarrow \bC$ commute with filtered colimits. 
As $1$-morphisms we take those exact functors $F:\bC_1\to \bC_2$ that are \emph{left t-exact up to a finite shift}, 
and such that $F|_{\bC_1^{\geq 0}}$ commutes with filtered colimits. The higher categorical structure is
uniquely determined by the requirement that the forgetful functor
$$\StinftyCat^{+_{\on{cont}}}\to \StinftyCat_{\on{non-cocomplete}}$$
be 1-fully faithful. 

\medskip

The category $\StinftyCat^{t}_{\on{cont}}$ consists of cocomplete DG categories $\bC$, 
endowed with a t-structure, such that $\bC^{\geq 0}$ is closed under filtered colimits, 
and such that $\bC$ is compactly generated by objects from $\bC^+$. As $1$-morphisms we allow those
exact functors $F:\bC_1\to \bC_2$ that are continuous and \emph{left t-exact up to a finite shift}. The higher 
categorical structure is uniquely determined by the requirement that the forgetful functor
$$\StinftyCat^{t}_{\on{cont}}\to \StinftyCat_{\on{cont}}$$
be 1-fully faithful. 

\medskip

We have a naturally defined functor
\begin{equation} \label{e:taking +}
\StinftyCat^{t}_{\on{cont}}\to \StinftyCat^{+_{\on{cont}}},\quad \bC\mapsto \bC^+.
\end{equation}

\begin{lem}  \label{l:taking +}
The functor \eqref{e:taking +} is 1-fully faihful.
\end{lem}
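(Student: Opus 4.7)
The plan is to unwind the definition of 1-fully faithfulness from \secref{sss:subcategories} and reduce the statement to a standard consequence of compact generation and the universal property of $\Ind$. For $\bC_1, \bC_2 \in \StinftyCat^{t}_{\on{cont}}$, we must show that the restriction map
$$\Maps_{\StinftyCat^{t}_{\on{cont}}}(\bC_1, \bC_2) \to \Maps_{\StinftyCat^{+_{\on{cont}}}}(\bC_1^+, \bC_2^+)$$
is a fully faithful map of $\infty$-groupoids.

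The key point is that $\bC_1$ is, by definition, compactly generated by a set of objects lying in $\bC_1^+$, so $\bC_1 \simeq \Ind(\bC_1^c)$ with $\bC_1^c \subset \bC_1^+$. By the universal property of $\Ind$, restriction to $\bC_1^c$ identifies continuous functors $\bC_1 \to \bC_2$ with arbitrary functors $\bC_1^c \to \bC_2$; the left t-exactness-up-to-a-finite-shift condition is then equivalent to this restriction landing in $\bC_2^+$. First I would use this to identify $\Maps_{\StinftyCat^{t}_{\on{cont}}}(\bC_1, \bC_2)$ with the $\infty$-groupoid of functors $\bC_1^c \to \bC_2^+$ subject to the finite-shift condition, and similarly for natural transformations between them.

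Second, I would show that an analogous restriction-to-$\bC_1^c$ map on the target side is also 1-fully faithful: a $1$-morphism $F^+ : \bC_1^+ \to \bC_2^+$ in $\StinftyCat^{+_{\on{cont}}}$ preserves filtered colimits on each aisle $\bC_1^{\geq -n}$, and since $\bC_1^{\geq -n}$ is generated under filtered colimits (within $\bC_1^+$) by appropriate truncations of the compact generators in $\bC_1^c$, the restriction to $\bC_1^c$ fully determines $F^+$ and the $\infty$-groupoid of natural transformations between such functors. Combining the two calculations yields the desired 1-fully-faithfulness, since both mapping $\infty$-groupoids are shown to sit fully faithfully inside the common $\infty$-groupoid of functors $\bC_1^c \to \bC_2^+$ (in fact, the source sits as a union of connected components of the target, which is what 1-fully-faithfulness asserts).

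The main obstacle will be to carry out the restriction-and-extension arguments cleanly at the $(\infty,1)$-categorical level. In particular, one must verify that the higher-homotopical data of natural transformations is correctly controlled by the restriction to $\bC_1^c$, using (a) the universal property of $\Ind(\bC_1^c)$ on the source side, and (b) the compatibility of the t-structure with filtered colimits together with the ``continuous on $\bC^{\geq 0}$'' assumption on the target side. A secondary point is to confirm that $\bC \mapsto \bC^+$ is well-defined as an $(\infty,1)$-functor in the first place; this relies on the 1-full-faithfulness of the forgetful embeddings $\StinftyCat^{t}_{\on{cont}} \hookrightarrow \StinftyCat_{\on{cont}}$ and $\StinftyCat^{+_{\on{cont}}} \hookrightarrow \StinftyCat_{\on{non-cocomplete}}$ built into the definitions in \secref{sss:Stinfty +}, which reduces the construction to the assignment $\bC \mapsto \bC^+$ viewed at the level of ambient DG categories.
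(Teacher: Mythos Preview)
The paper does not supply a proof of this lemma; it is stated and then immediately used, the author evidently regarding it as a routine consequence of the definitions in \secref{sss:Stinfty +}. Your proposal identifies the correct mechanism: since by hypothesis $\bC_1^c \subset \bC_1^+$ and $\bC_1 \simeq \Ind(\bC_1^c)$, the universal property of ind-completion gives an equivalence between $\Maps_{\StinftyCat_{\on{cont}}}(\bC_1,\bC_2)$ and exact functors $\bC_1^c \to \bC_2$, and this equivalence factors through restriction to $\bC_1^+$.

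Two minor comments. First, your second step is more than is needed: once you know the composite
\[
\Maps_{\StinftyCat^{t}_{\on{cont}}}(\bC_1, \bC_2) \to \Maps_{\StinftyCat^{+_{\on{cont}}}}(\bC_1^+, \bC_2^+) \to \on{Funct}^{\on{ex}}(\bC_1^c, \bC_2^+) \hookrightarrow \on{Funct}^{\on{ex}}(\bC_1^c, \bC_2)
\]
is a monomorphism of $\infty$-groupoids (it is a union of connected components of the equivalence from the universal property of $\Ind$), the first arrow is automatically a monomorphism, which is exactly 1-fully-faithfulness. You do not need to separately analyze restriction on the target side. Second, there is a small imprecision: left t-exactness up to a finite shift implies that the restriction to $\bC_1^c$ lands in $\bC_2^+$ (each compact generator is bounded below), but the converse need not hold with a \emph{uniform} shift. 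This does not affect the argument, since for 1-fully-faithfulness only the monomorphism direction is required, not a characterization of the essential image.
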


\sssec{}

We will use the following  general assertion. Let $T:\bD'\to \bD$ be a 1-fully faithful
functor between $(\infty,1)$-categories. Let $\bI$ be another $(\infty,1)$-category,
and let
\begin{equation} \label{e:assignment}
(\bi\in \bI)\rightsquigarrow (F'(\bi)\in \bD'),
\end{equation}
be an assignment, such that the assignment
$$\bi\mapsto T\circ F'(\bi)$$
has been extended to a functor $F:\bI\to \bD$. 

\begin{lem} \label{l:factoring functor}
Suppose that for every $\alpha\in \Maps_{\bI}(\bi_1,\bi_2)$, the point $F(\alpha)\in \Maps_{\bD}(F(\bi_1),F(\bi_2))$
lies in the connected component corresponding to the image of
$$\Maps_{\bD'}(F'(\bi_1),F'(\bi_2))\to \Maps_{\bD}(F(\bi_1),F(\bi_2)).$$
Then there exists a unique
extension of \eqref{e:assignment} to a functor $F':\bI\to \bD'$ equipped with an isomorphism $T\circ F'\simeq F$.
\end{lem}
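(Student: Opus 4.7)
The plan is to realize $T$ as the inclusion of a 1-full subcategory $\bD''\subset \bD$ and then check that the hypotheses of the lemma force $F$ to factor through $\bD''$; passing across the equivalence $\bD'\simeq \bD''$ will produce $F'$.

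First I would define $\bD''\subset \bD$ as follows. Its objects are those of $\bD$ equivalent to $T(d')$ for some $d'\in \bD'$. For such objects $T(d'_1), T(d'_2)$, its mapping space $\Maps_{\bD''}(T(d'_1), T(d'_2))$ is taken to be the union of those connected components of $\Maps_{\bD}(T(d'_1), T(d'_2))$ that lie in the image of $T$ on $\Maps_{\bD'}(d'_1, d'_2)$. Functoriality of $T$ ensures that this collection is closed under composition and contains the identities, so $\bD''$ is a well-defined 1-full subcategory of $\bD$ in the sense of Sect.~1.6.3. The induced functor $\bar T: \bD'\to \bD''$ is essentially surjective by construction. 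It is fully faithful because the map $\Maps_{\bD'}(d'_1, d'_2) \to \Maps_{\bD''}(T(d'_1), T(d'_2))$ is surjective on $\pi_0$ (by the definition of $\bD''$) and a bijection on $\pi_i$ for $i\geq 1$ on each connected component (inherited from the 1-fully faithfulness of $T: \bD'\to \bD$, since the higher homotopy of mapping spaces is the same in $\bD''$ and $\bD$). Hence $\bar T$ is an equivalence.

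Next I would argue that $F: \bI \to \bD$ factors through $\bD''\hookrightarrow \bD$. On objects this is automatic, since $F(\bi)\simeq T(F'(\bi))\in \bD''$. On 1-morphisms, the hypothesis of the lemma states precisely that each $F(\alpha)$ lies in one of the distinguished connected components defining $\bD''$. Because $\bD''$ is a 1-full subcategory of $\bD$, the datum of a factorization $\bI\to \bD''$ of $F$ amounts to exactly this information on objects and on $\pi_0$ of 1-morphism spaces: all higher coherences are inherited unchanged from $\bD$, so no further conditions arise. This produces a canonical lift $\tilde F: \bI\to \bD''$. Composing $\tilde F$ with a quasi-inverse of $\bar T$ yields the desired $F':\bI \to \bD'$, and the equivalence $T\circ F'\simeq F$ is built in.

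Uniqueness follows similarly: any two extensions $F'_1, F'_2$ give two factorizations of $F$ through $\bD''$, which must be canonically equivalent by the same 1-full-subcategory principle, and applying a quasi-inverse of $\bar T$ transports this equivalence back to $\bD'$. The crux of the argument, and the place where real work is required, is the assertion that a functor into $\bD$ lifts essentially uniquely to a given 1-full subcategory once its values on objects and on $\pi_0$ of 1-morphism spaces are prescribed. In a specific model such as quasi-categories, this can be verified using that inclusions of 1-full subcategories are inner fibrations enjoying appropriate lifting properties against $\bI$; the more model-independent formalism behind manipulations of this kind is precisely what is deferred to \cite{GR3}.
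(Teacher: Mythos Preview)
The paper does not actually prove this lemma; it is stated as a ``general assertion'' (alongside \lemref{l:factoring functor bis}) and then used without proof. So there is no comparison to be made on that front.

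Your argument is the natural one and is essentially correct: replace $\bD'$ by its essential image $\bD''$ in $\bD$, realized as the subcategory whose mapping spaces are the relevant unions of connected components; observe that $\bar T:\bD'\to\bD''$ is an equivalence (injectivity on $\pi_0$ from 1-fully faithfulness of $T$, surjectivity on $\pi_0$ by construction of $\bD''$, bijectivity on higher $\pi_i$ from both); and then invoke that factoring a functor through such a subcategory is a condition detected on objects and on $\pi_0$ of mapping spaces. You are right to flag this last step as the one with actual content, and your remark that in quasi-categories it comes down to the fact that such inclusions are inner fibrations with the appropriate lifting property is on target.

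One small terminological caution: in \secref{sss:subcategories} the paper reserves the phrase ``1-full subcategory'' for the \emph{faithful and groupoid-full} case, which additionally requires that all isomorphisms in $\bD$ between objects of $\bD''$ lie in $\bD''$. Your $\bD''$ need not satisfy this (the hypothesis of the lemma says nothing about isomorphisms outside the image of $T$), so strictly speaking what you have built is a subcategory whose inclusion is 1-fully faithful (i.e., ``faithful'' in the paper's terminology), not a 1-full subcategory in the paper's sense. This does not affect the mathematics---the lifting principle you need holds for any subcategory cut out by unions of connected components of mapping spaces---but you should adjust the wording to avoid a clash with the paper's conventions.
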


Let now $F'_1$ and $F'_2$ be two assignments as in \eqref{e:assignment}, satisfying the assumption
of \lemref{l:factoring functor}. Let us be given an assignment
\begin{equation} \label{e:assignment bis}
\bi\rightsquigarrow \psi'_\bi\in \Maps_{\bD'}(F'_1(\bi),F'_2(\bi)).
\end{equation}

\begin{lem}  \label{l:factoring functor bis}
Suppose that the assignment 
$$\bi \rightsquigarrow T(\psi'_\bi)\in \Maps_{\bD}(F_1(\bi),F_2(\bi))$$
has been extended to a natural transformation $\psi:F_1\to F_2$. Then there exists a unique extension of 
\eqref{e:assignment bis} to a natural transformation $\psi:F'_1\to F'_2$ equipped with an isomorphism
$T\circ \psi'\simeq \psi$.
\end{lem}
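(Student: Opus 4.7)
The plan is to deduce this from \lemref{l:factoring functor} applied to the source category $\bI\times [1]$. The key observation is that a natural transformation $F'_1\to F'_2$ is the same datum as a functor $G':\bI\times [1]\to \bD'$ whose restrictions to $\bI\times\{0\}$ and $\bI\times\{1\}$ are $F'_1$ and $F'_2$, respectively; and similarly a natural transformation $\psi:F_1\to F_2$ in $\bD$ corresponds to a functor $G:\bI\times [1]\to \bD$ with $G|_{\bI\times\{j\}}=F_j=T\circ F'_j$. So I will promote the given object-level data to an assignment on $\bI\times [1]$ in the sense of \eqref{e:assignment} and then apply the factoring lemma.

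First I would define the object assignment $(\bi,0)\mapsto F'_1(\bi)$, $(\bi,1)\mapsto F'_2(\bi)$, which after composing with $T$ agrees with $G$ on objects. To invoke \lemref{l:factoring functor} I need to check that $G(\beta)$ lies in the image of the appropriate mapping space in $\bD'$ for every morphism $\beta$ in $\bI\times [1]$. For morphisms within $\bI\times\{0\}$ or $\bI\times\{1\}$ the condition is automatic: these are handled by the already-constructed lifts $F'_1$ and $F'_2$, which were themselves produced by \lemref{l:factoring functor}. The remaining case is a ``diagonal'' morphism $(\alpha,0\to 1):(\bi_1,0)\to (\bi_2,1)$, for which $G$ assigns (up to canonical homotopy) $\psi_{\bi_2}\circ F_1(\alpha)\simeq F_2(\alpha)\circ \psi_{\bi_1}$ by naturality of $\psi$. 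Since $\psi_\bi=T(\psi'_\bi)$ and $F_j(\alpha)=T(F'_j(\alpha))$, the composite $\psi'_{\bi_2}\circ F'_1(\alpha)$ formed in $\bD'$ provides, via $T$, a point in the required connected component of $\Maps_\bD(F_1(\bi_1),F_2(\bi_2))$. This is exactly the hypothesis of \lemref{l:factoring functor}.

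Applying \lemref{l:factoring functor}, I obtain a unique functor $G':\bI\times [1]\to \bD'$ extending the object-level assignment, together with an isomorphism $T\circ G'\simeq G$. By the uniqueness clause of \lemref{l:factoring functor} applied to $\bI$ itself, the restrictions $G'|_{\bI\times\{j\}}$ must coincide with the previously constructed $F'_j$; hence $G'$ is precisely the datum of a natural transformation $\psi':F'_1\to F'_2$ whose image under $T$ is $\psi$. Uniqueness of $\psi'$ follows from the uniqueness of $G'$. I expect the only delicate point to be the verification at diagonal morphisms, where one uses both the naturality of $\psi$ and the fact that $T$ is 1-fully faithful (so that a lift at the $\pi_0$-level is unique up to a contractible choice); once this is in hand, everything else is formal from the two preceding lemmas.
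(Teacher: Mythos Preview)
The paper states \lemref{l:factoring functor bis} without proof (it is treated as an auxiliary lemma alongside \lemref{l:factoring functor}, and the text moves directly to applying both in the proof of \propref{p:upgrading IndCoh to functor}). So there is no argument in the paper to compare against.

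Your reduction to \lemref{l:factoring functor} via the cylinder $\bI\times[1]$ is correct and is the natural approach: identifying a natural transformation with a functor out of $\bI\times[1]$, checking the connected-component hypothesis on the three types of morphisms (horizontal in each slice, and diagonal), and then invoking uniqueness to pin down the restrictions to $\bI\times\{0\}$ and $\bI\times\{1\}$ as the already-constructed $F'_1$ and $F'_2$. The diagonal check is indeed the only nontrivial step, and your observation that $\psi_{\bi_2}\circ F_1(\alpha)$ lies in the connected component of $T(\psi'_{\bi_2}\circ F'_1(\alpha))$ is exactly what is needed. One minor comment: you do not actually need the 1-full faithfulness of $T$ at the verification stage (only the $\pi_0$-image condition is being checked); the 1-full faithfulness is what makes \lemref{l:factoring functor} itself true, and you are treating that lemma as a black box.
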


\sssec{}

We are now ready to prove \propref{p:upgrading IndCoh to functor}:

\medskip

\noindent{\it Step 1.} We start with the functor
$$\QCoh_{\dgSch_{\on{Noeth}}}:\dgSch_{\on{Noeth}}\to \StinftyCat_{\on{cont}},$$
and consider
$$\bI=\dgSch_{\on{Noeth}},\,\, \bD=\StinftyCat_{\on{cont}},\,\, \bD':=\StinftyCat^{t}_{\on{cont}},\,\, F=\QCoh_{\dgSch_{\on{Noeth}}},$$
and the assignment 
$$(X\in \dgSch_{\on{Noeth}}) \rightsquigarrow (\QCoh(X)\in \StinftyCat^t_{\on{cont}}).$$

Apping \lemref{l:factoring functor}, we obtain a functor
\begin{equation} \label{e:IndCoh cont + 1}
\QCoh^t_{\dgSch_{\on{Noeth}}}:\dgSch_{\on{Noeth}}\to \StinftyCat^t_{\on{cont}}.
\end{equation}

\medskip

\noindent{\it Step 2.} Note that \propref{p:direct image} defines a functor
$$\IndCoh^t_{\dgSch_{\on{Noeth}}}:\dgSch_{\on{Noeth}}\to \StinftyCat^t_{\on{cont}},$$
and the natural transformation 
$$\Psi^t_{\dgSch_{\on{Noeth}}}:\IndCoh^t_{\dgSch_{\on{Noeth}}}\to \QCoh^t_{\dgSch_{\on{Noeth}}}$$
at the level of objects and $1$-morphisms.

\medskip

Since the functor $\StinftyCat^t_{\on{cont}}\to\StinftyCat_{\on{cont}}$ is 1-fully faithful,
by Lemmas \ref{l:factoring functor} and \ref{l:factoring functor bis}, 
the existence and uniqueness of the pair $(\IndCoh_{\dgSch_{\on{Noeth}}},\Psi_{\dgSch_{\on{Noeth}}})$ with
a fixed behavior on objects and $1$-morphisms, is equivalent to that of
$(\IndCoh^t_{\dgSch_{\on{Noeth}}},\Psi^t_{\dgSch_{\on{Noeth}}})$.
 
\medskip

\noindent{\it Step 3.} By \lemref{l:taking +}, combined with Lemmas \ref{l:factoring functor} and \ref{l:factoring functor bis}, 
we obtain that the existence and uniqueness of the pair
$(\IndCoh^t_{\dgSch_{\on{Noeth}}},\Psi^t_{\dgSch_{\on{Noeth}}})$, with a fixed behavior on objects and $1$-morphisms
is equivalent to the existence and uniqueness of the pair 
$$(\IndCoh^+_{\dgSch_{\on{Noeth}}},\Psi^+_{\dgSch_{\on{Noeth}}}),$$
obtained by composing with the functor \eqref{e:taking +}. 

\medskip

The latter, however, is given by
$$\IndCoh^+_{\dgSch_{\on{Noeth}}}:=\QCoh^+_{\dgSch_{\on{Noeth}}},$$
obtained from \eqref{e:IndCoh cont + 1} by composing with \eqref{e:taking +}, 
and taking
$$\Psi^+_{\dgSch_{\on{Noeth}}}:=\on{Id}.$$

\qed

\ssec{The !-pullback functor for proper maps}  

\sssec{} 

Let $f:S_1\to S_2$ be a map between Noetherian schemes. 

\begin{defn}
We shall say that $f$ is of almost of finite type if the corresponding map of classical
schemes $^{cl}\!S_1\to {}^{cl}\!S_2$ is.
\end{defn}

In particular, for $S_2=\on{Spec}(k)$, the above notion is equivalent to $S_1$
being almost of finite type over $k$, see \secref{sss:intr aft}.

\medskip

We let $(\dgSch_{\on{Noeth}})_{\on{aft}}$ denote the 1-full subcategory of
$\dgSch_{\on{Noeth}}$, where we restrict 1-morphisms to be maps almost of finite type.

\medskip

\begin{defn}  \label{defn:closed embeddings}
A map $f:S_1\to S_2$ between DG schemes is said to be proper/finite/closed embedding
if the corresponding map of classical schemes $^{cl}\!S_1\to {}^{cl}\!S_2$ has this property.
\end{defn}

In particular, any proper map is almost of finite type, as the latter is included in the
definition of properness for morphisms between classical schemes. We let
$$(\dgSch_{\on{Noeth}})_{\on{cl.emb.}}\subset (\dgSch_{\on{Noeth}})_{\on{finite}}\subset 
(\dgSch_{\on{Noeth}})_{\on{proper}} \subset (\dgSch_{\on{Noeth}})_{\on{aft}}$$
denote the corresponding inclusions of 1-full subcategories.

\sssec{}


We have the following basic  feature of proper maps:
\begin{lem}  \label{l:preservation of coh}
If $f:S_1\to S_2$ is proper, then the functor $f_*:\QCoh(S_1)\to \QCoh(S_2)$
sends $\Coh(S_1)$ to $\Coh(S_2)$. 
\end{lem}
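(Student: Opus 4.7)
The plan is to verify the two defining properties of $\Coh(S_2)$ for $f_*\CF$ whenever $\CF \in \Coh(S_1)$: bounded cohomological amplitude and coherence of each cohomology sheaf. The strategy is a two-step reduction: first a dévissage using the t-structure reduces the problem to $\CF \in \Coh(S_1)^\heartsuit$, and then a passage to the underlying classical schemes reduces it to the classical Grothendieck finiteness theorem.

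\medskip

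For the dévissage, any $\CF \in \Coh(S_1)$ has only finitely many nonzero cohomology sheaves, each lying in $\Coh(S_1)^\heartsuit$. The truncation triangles
$$\tau^{\leq n-1}\CF \to \tau^{\leq n}\CF \to H^n(\CF)[-n]$$
express $\CF$ as a finite iterated cone of shifts of objects of $\Coh(S_1)^\heartsuit$. Since $f_*:\QCoh(S_1)\to \QCoh(S_2)$ is an exact functor of triangulated categories and $\Coh(S_2)$ is stable under cones inside $\QCoh(S_2)$ (as noted after \defnref{d:IndCoh}), it suffices to handle the case $\CF \in \Coh(S_1)^\heartsuit$.

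\medskip

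For the classical reduction, denote by $\iota:{}^{cl}\!S_1 \hookrightarrow S_1$ and $\iota':{}^{cl}\!S_2 \hookrightarrow S_2$ the canonical closed embeddings. Both are affine, so their pushforwards $\iota_*$ and $\iota'_*$ are t-exact; they implement the identifications $\QCoh({}^{cl}\!S_i)^\heartsuit \simeq \QCoh(S_i)^\heartsuit$ and preserve coherence between these hearts. Writing $\CF = \iota_*\CF_0$ for a coherent sheaf $\CF_0$ on ${}^{cl}\!S_1$, and using the commutative square $f\circ \iota = \iota' \circ {}^{cl}\!f$, we obtain
$$f_*\CF \simeq \iota'_* \circ ({}^{cl}\!f)_*(\CF_0).$$
Now ${}^{cl}\!f:{}^{cl}\!S_1 \to {}^{cl}\!S_2$ is a proper morphism of Noetherian classical schemes, so the classical Grothendieck finiteness theorem (EGA III.3.2.1) ensures that $({}^{cl}\!f)_* \CF_0$ has bounded cohomological amplitude and coherent cohomology sheaves on ${}^{cl}\!S_2$, i.e.\ lies in $\Coh({}^{cl}\!S_2)$. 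Applying the t-exact, coherence-preserving functor $\iota'_*$ then yields $f_*\CF \in \Coh(S_2)$.

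\medskip

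I do not anticipate any substantial obstacle: the lemma is essentially a bookkeeping reduction to the classical finiteness theorem. The only care required is in tracking the identification of hearts under $\iota$ and $\iota'$ and in confirming that boundedness of amplitude of $({}^{cl}\!f)_*\CF_0$ — a consequence of the finite cohomological dimension of proper pushforward over Noetherian schemes — is inherited by $f_*\CF$ via the t-exactness of $\iota'_*$.
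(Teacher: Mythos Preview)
Your proposal is correct and follows essentially the same approach as the paper: reduce to $\Coh(S_1)^\heartsuit$ by dévissage, identify the heart with that of the underlying classical scheme via $\iota_*$, use the commutative square $f\circ\iota\simeq\iota'\circ{}^{cl}\!f$, and invoke the classical Grothendieck finiteness theorem. The paper's proof is more terse but structurally identical.
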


\begin{proof}

It is enough to show that $f_*$ sends $\Coh(S_1)^\heartsuit$ to $\Coh(S_2)$.
Let $\imath_i$ denote the canonical maps $^{cl}\!S_i\to S_i$, $i=1,2$. Since
$$(\imath_i)_*:\Coh({}^{cl}\!S_i)^\heartsuit\to \Coh(S_i)^\heartsuit$$
are equivalences, the required assertion follows from the commutative
diagram
$$
\CD
\QCoh({}^{cl}\!S_1)  @>{(\imath_1)_*}>> \QCoh(S_1)  \\
@V{({}^{cl}f)_*}VV   @VV{f_*}V  \\
\QCoh({}^{cl}\!S_2)  @>{(\imath_2)_*}>> \QCoh(S_2),
\endCD
$$
and the well-known fact that direct image preserves coherence for proper maps
between classical Noetherian schemes.

\end{proof}

\begin{cor}
If $f$ is proper, the functor 
$$f_*^{\IndCoh}:\IndCoh(S_1)\to \IndCoh(S_2)$$
sends compact objects to compact ones. 
\end{cor}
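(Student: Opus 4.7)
The proof is essentially an assembly of facts already in hand, so the plan is very short.

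First, I would identify compact objects explicitly. By \corref{c:Karoubian}, the compact objects of $\IndCoh(S_i)$ are precisely $\Coh(S_i) \subset \IndCoh(S_i)$. So the statement to prove reduces to: $f^{\IndCoh}_*$ sends $\Coh(S_1)$ into $\Coh(S_2)$.

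Next, I would unwind the construction of $f^{\IndCoh}_*$ from \propref{p:direct image}. By that construction, on the subcategory $\Coh(S_1) \subset \IndCoh(S_1)^+$, the functor $f^{\IndCoh}_*$ is identified, via the equivalence $\Psi_{S_2}:\IndCoh(S_2)^+ \xrightarrow{\sim} \QCoh(S_2)^+$ of \propref{p:equiv on D plus}, with the restriction of the usual direct image functor $f_*:\QCoh(S_1)\to \QCoh(S_2)$.

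Finally, I would invoke \lemref{l:preservation of coh}, which gives exactly that $f_*$ sends $\Coh(S_1)$ into $\Coh(S_2)$ for $f$ proper. Composing with the inverse of $\Psi_{S_2}|_{\IndCoh(S_2)^+}$, this places $f^{\IndCoh}_*(\Coh(S_1))$ inside $\Coh(S_2) = \IndCoh(S_2)^c$, completing the proof. There is no genuine obstacle here — all the work has been done in \propref{p:direct image}, \propref{p:equiv on D plus}, \corref{c:Karoubian}, and \lemref{l:preservation of coh}; the corollary is simply the concatenation of these facts.
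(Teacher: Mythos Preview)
Your proof is correct and follows essentially the same approach as the paper's: reduce to showing $f^{\IndCoh}_*(\Coh(S_1))\subset \Coh(S_2)$, use the identification $\Psi_{S_2}\circ f^{\IndCoh}_*\simeq f_*\circ \Psi_{S_1}$ from \propref{p:direct image} together with the equivalence $\Psi_{S_2}:\IndCoh(S_2)^+\simeq \QCoh(S_2)^+$ of \propref{p:equiv on D plus}, and conclude via \lemref{l:preservation of coh}. The only cosmetic difference is that the paper does not explicitly invoke \corref{c:Karoubian}, since it suffices that objects of $\Coh(S_2)$ are compact (tautological from the ind-completion definition) rather than that all compacts lie in $\Coh(S_2)$.
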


\begin{proof}

We need to show that for $\CF\in \Coh(S_1)\subset \IndCoh(S_1)$, the object
$$f^\IndCoh_*(\CF)\in \IndCoh(S_2)$$ belongs to $\Coh(S_2)\subset \IndCoh(S_2)$.
By \propref{p:direct image}, $f^\IndCoh_*(\CF)\in \IndCoh(S_2)^+$. Hence, by
\propref{p:equiv on D plus}, it suffices to show that
$$\Psi_{S_2}(f^\IndCoh_*(\CF))\in \Coh(S_2)\subset \QCoh(S_2).$$
We have
$$\Psi_{S_2}(f^\IndCoh_*(\CF))\simeq f_*(\Psi_{S_1}(\CF)),$$
and the assertion follows from \lemref{l:preservation of coh}.

\end{proof}

\sssec{}

Consider the adjoint functor 
$$f^!:\IndCoh(S_2)\to \IndCoh(S_1)$$
(it exists for general $\infty$-category reasons, see \cite{Lu0}, Corollary 5.5.2.9).

\medskip

Now, the fact that $f_*^{\IndCoh}$ sends compacts to compacts implies that the functor
$f^!$ is continuous. I.e., $f^!$ is a 1-morphism in $\StinftyCat_{\on{cont}}$.

\sssec{}

By adjunction from \propref{p:upgrading IndCoh to functor} we obtain:

\begin{cor} \label{c:upper ! DG funct proper}
The assignment $S\mapsto \IndCoh(S)$ upgrades to a functor
$$\IndCoh^!_{(\dgSch_{\on{Noeth}})_{\on{proper}}}:((\dgSch_{\on{Noeth}})_{\on{proper}})^{\on{op}}\to \StinftyCat_{\on{cont}}.$$
\end{cor}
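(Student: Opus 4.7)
The plan is to deduce the corollary from \propref{p:upgrading IndCoh to functor} in three essentially formal steps, the nontrivial input being a general $(\infty,1)$-categorical ``passage-to-adjoints'' procedure.

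First, I would restrict the functor $\IndCoh_{\dgSch_{\on{Noeth}}}:\dgSch_{\on{Noeth}}\to \StinftyCat_{\on{cont}}$ along the 1-full subcategory inclusion $(\dgSch_{\on{Noeth}})_{\on{proper}}\hookrightarrow \dgSch_{\on{Noeth}}$ to obtain a functor
$$\IndCoh_{(\dgSch_{\on{Noeth}})_{\on{proper}}}:(\dgSch_{\on{Noeth}})_{\on{proper}}\to \StinftyCat_{\on{cont}},\quad f\rightsquigarrow f^{\IndCoh}_*.$$
By the immediately preceding corollary, for every proper $f:S_1\to S_2$ the functor $f^{\IndCoh}_*$ preserves compact objects. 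Since $\IndCoh(S_1)$ is compactly generated, the right adjoint $f^!$ exists (by \cite{Lu0}, Cor.~5.5.2.9) and, because $f^{\IndCoh}_*$ commutes with filtered colimits of compacts, $f^!$ is continuous. Thus every $1$-morphism in the image of $\IndCoh_{(\dgSch_{\on{Noeth}})_{\on{proper}}}$ is ``right-adjointable'' in $\StinftyCat_{\on{cont}}$.

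The second and main step is to invoke the general mechanism for passing to right adjoints within a diagram of presentable (or, in our setting, cocomplete DG) $\infty$-categories: given a functor $F:\bI\to \StinftyCat_{\on{cont}}$ all of whose arrows admit continuous right adjoints, there exists a canonically associated functor $F^R:\bI^{\on{op}}\to \StinftyCat_{\on{cont}}$ which agrees with $F$ on objects and sends each $1$-morphism to its right adjoint. This is a model-independent incarnation of the straightening/unstraightening equivalence together with the compatibility of left/right fibrations under passage to adjoints (cf.\ \cite{Lu0}); it produces $F^R$ together with all the higher coherences automatically from the pointwise adjointability condition. Applied to $\bI=(\dgSch_{\on{Noeth}})_{\on{proper}}$ and $F=\IndCoh_{(\dgSch_{\on{Noeth}})_{\on{proper}}}$, this yields the desired
$$\IndCoh^!_{(\dgSch_{\on{Noeth}})_{\on{proper}}}:\bigl((\dgSch_{\on{Noeth}})_{\on{proper}}\bigr)^{\on{op}}\to \StinftyCat_{\on{cont}}$$
with $f\rightsquigarrow f^!$ on $1$-morphisms.

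The hard part is not any computation but rather the invocation of this passage-to-adjoints functoriality at the level of $(\infty,1)$-categories: writing down $f^!$ on individual morphisms and checking $(f\circ g)^!\simeq g^!\circ f^!$ on homotopy categories is easy, but upgrading this to a genuine functor of $(\infty,1)$-categories requires the full package, rather than ad hoc cochain-level constructions. As the author emphasizes in the introductory discussion, this is precisely the type of manipulation that is treated as a black box here and whose details are deferred to \cite{GR3}; one therefore applies it and records the output. No further construction is needed, which is why the statement is a corollary.
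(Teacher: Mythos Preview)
Your proposal is correct and matches the paper's approach exactly: the paper states the corollary immediately after observing that $f^!$ is continuous (since $f^{\IndCoh}_*$ preserves compacts), with the one-line justification ``By adjunction from \propref{p:upgrading IndCoh to functor}.'' Your write-up simply unpacks what that phrase means---restrict to proper morphisms, check pointwise right-adjointability in $\StinftyCat_{\on{cont}}$, then invoke the general passage-to-adjoints procedure---and correctly flags that the higher-categorical coherence is treated as a black box.
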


\sssec{}   \label{sss:upgrading !}

By \propref{p:dir image as modules}, the functor $f_*^{\IndCoh}$ has a natural structure of $1$-morphism 
between $\QCoh(S_2)$-module categories. Hence, from \cite[Corollary 6.2.4]{DG} we obtain:

\begin{cor} \label{c:upgrading !}
The functor $f^!$ has a natural structure of 1-morphism between $\QCoh(S_2)$-module 
categories. 
\end{cor}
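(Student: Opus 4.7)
The plan is to deduce the corollary as a direct consequence of the general adjoint-functor principle for module categories stated as \cite[Corollary 6.2.4]{DG}. That result says: given a symmetric monoidal DG category $\bA$ and a continuous $\bA$-linear functor $F\colon \bC_1\to \bC_2$ between $\bA$-module categories, if $F$ admits a continuous right adjoint $F^R$, then $F^R$ inherits a canonical structure of $\bA$-module $1$-morphism, uniquely determined by the requirement that the unit and counit of the adjunction upgrade to $2$-morphisms in $\bA\mod$.

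Thus I need to verify two hypotheses for $F=f^{\IndCoh}_*$, viewed as a continuous functor $\IndCoh(S_1)\to \IndCoh(S_2)$ between $\QCoh(S_2)$-module categories (where the $\QCoh(S_2)$-action on $\IndCoh(S_1)$ is via the monoidal functor $f^*\colon \QCoh(S_2)\to \QCoh(S_1)$). First, $f^{\IndCoh}_*$ carries a $\QCoh(S_2)$-linear structure: this is exactly the content of \propref{p:dir image as modules}, and it is this structure to which we apply the general principle. Second, the right adjoint $f^!$ is continuous: by the preceding discussion (\lemref{l:preservation of coh} together with \propref{p:equiv on D plus}), the properness of $f$ implies that $f^{\IndCoh}_*$ preserves compact objects, hence its right adjoint commutes with all colimits and is a $1$-morphism in $\StinftyCat_{\on{cont}}$.

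Having these two inputs in hand, I would invoke \cite[Corollary 6.2.4]{DG} to produce the desired $\QCoh(S_2)$-linear structure on $f^!$, together with the compatibility that the adjunction $(f^{\IndCoh}_*,f^!)$ is an adjunction in the $(\infty,2)$-category of $\QCoh(S_2)$-module categories. The statement at the level of individual objects then reads: for $\CE_2\in \QCoh(S_2)$ and $\CF_2\in \IndCoh(S_2)$ there is a canonical isomorphism
\[
f^!(\CE_2\otimes \CF_2)\simeq f^*(\CE_2)\otimes f^!(\CF_2),
\]
obtained by passing to right adjoints in \eqref{e:dir image as modules}.

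I do not expect any genuine obstacle: once \propref{p:dir image as modules} and the continuity of $f^!$ are available, the corollary is a formal consequence of the cited general result. The only point requiring a moment of care is confirming that the hypotheses of \cite[Corollary 6.2.4]{DG} are stated in a form directly applicable here, namely for module categories over the symmetric monoidal DG category $\QCoh(S_2)$ and for continuous $1$-morphisms — both of which are in place by the preceding propositions.
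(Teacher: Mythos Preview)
Your proposal is correct and matches the paper's argument exactly: the paper deduces the corollary directly from \propref{p:dir image as modules} together with \cite[Corollary 6.2.4]{DG}, using the continuity of $f^!$ established just before. Your additional remarks on verifying the hypotheses (continuity of $f^!$ via preservation of compacts) only make explicit what the paper leaves implicit.
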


At the level of individual objects, the assertion of \corref{c:upgrading !} says that 
for $\CE\in \QCoh(S_2)$ and $\CF\in \IndCoh(S_2)$, we have a
canonical isomorphism:

\begin{equation} \label{e:upgrading !}
f^!(\CE\otimes \CF)\simeq f^*(\CE)\otimes f^!(\CF),
\end{equation}
where $\otimes$ is understood in the sense of the action of \secref{ss:action}.

\ssec{Proper base change}

\sssec{}

Let $f:S_1\to S_2$ be a proper map between Noetherian DG schemes, and let
$g_2:S'_2\to S_2$ be an arbitrary map (according to the conventions of this
section, the DG scheme $S'_2$ is also assumed Noetherian). 

\medskip

Since $f$ is almost of 
finite type, the Cartesian product $S'_1:=S'_2\underset{S_2}\times S_1$ is also Noetherian,
and the resulting map $f':S'_1\to S'_2$ is proper. Let $g_1$ denote the map
$S'_1\to S_1$:
$$
\CD
S'_1  @>{g_1}>>  S_1  \\
@V{f'}VV   @VV{f}V  \\
S'_2 @>{g_2}>>  S_2.
\endCD
$$

The isomorphism of functors
$$f^\IndCoh_*\circ (g_1)_*^{\IndCoh}\simeq (g_2)_*^{\IndCoh}\circ (f')^\IndCoh_*$$
by adjunction gives rise to a natural transformation
\begin{equation} \label{e:proper base change}
(g_1)_*^{\IndCoh}\circ (f')^!\to f^!\circ (g_2)_*^{\IndCoh}
\end{equation}
between the two functors $\IndCoh(S'_2)\rightrightarrows \IndCoh(S_1)$.

\begin{prop}  \label{p:proper base change}
The natural transformation \eqref{e:proper base change} is an isomorphism.
\end{prop}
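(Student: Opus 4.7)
My plan is to establish \eqref{e:proper base change} by reducing, via continuity and \propref{p:equiv on D plus}, to the classical proper base change in $\QCoh$.

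First, I observe that both composites $(g_1)^\IndCoh_* \circ (f')^!$ and $f^! \circ (g_2)^\IndCoh_*$ are continuous functors $\IndCoh(S'_2) \to \IndCoh(S_1)$: the pushforwards are continuous by construction, while the upper-shrieks $(f')^!$ and $f^!$ are continuous because $f'$ and $f$ are proper, so their left adjoints send compacts to compacts (as noted before \corref{c:upper ! DG funct proper}). Thus it suffices to verify \eqref{e:proper base change} is an isomorphism on $\CF' \in \Coh(S'_2)$. Since $\Coh(S_1) = \IndCoh(S_1)^c$ generates $\IndCoh(S_1)$, I further reduce to showing that for every $\CF_1 \in \Coh(S_1)$ the induced map on $\Maps_{\IndCoh(S_1)}(\CF_1, -)$ is an isomorphism.

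For the right-hand side, the $(f^\IndCoh_*, f^!)$-adjunction rewrites
$$\Maps_{\IndCoh(S_1)}(\CF_1, f^!(g_2)^\IndCoh_*\CF') \simeq \Maps_{\IndCoh(S_2)}(f^\IndCoh_*\CF_1, (g_2)^\IndCoh_*\CF').$$
By \lemref{l:preservation of coh}, $f^\IndCoh_*\CF_1 \in \Coh(S_2)$; by the left t-exactness of $(g_2)^\IndCoh_*$ from \propref{p:direct image}, $(g_2)^\IndCoh_*\CF' \in \IndCoh(S_2)^+$. Hence \propref{p:equiv on D plus} identifies this mapping space with $\Maps_{\QCoh(S_2)}(f_*\Psi_{S_1}\CF_1, (g_2)_*\Psi_{S'_2}\CF')$, and the $((g_2)^*,(g_2)_*)$-adjunction together with the $\QCoh$ proper base change $(g_2)^* f_* \simeq f'_*(g_1)^*$ (valid for proper $f$ and arbitrary $g_2$) turns this into $\Maps_{\QCoh(S'_2)}(f'_*(g_1)^*\Psi_{S_1}\CF_1, \Psi_{S'_2}\CF')$.

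For the left-hand side I run the dual computation. Properness of $f'$ implies $(f')^!\CF' \in \IndCoh(S'_1)^+$: indeed, the $((f')^\IndCoh_*, (f')^!)$-adjunction restricted to $\IndCoh^+$ corresponds under $\Psi$ to the classical Grothendieck-Serre adjunction $((f')_*, (f')^{!,\QCoh})$ on $\QCoh^+$, which is left-bounded. Consequently $(g_1)^\IndCoh_*(f')^!\CF' \in \IndCoh(S_1)^+$ by left t-exactness, and applying \propref{p:equiv on D plus}, then the $((g_1)^*, (g_1)_*)$-adjunction, and finally the $((f')_*, (f')^{!,\QCoh})$-adjunction, yields $\Maps_{\QCoh(S'_2)}(f'_*(g_1)^*\Psi_{S_1}\CF_1, \Psi_{S'_2}\CF')$ as well, matching the right-hand side. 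The main obstacle will be to verify that the resulting identification is the one induced by \eqref{e:proper base change} itself, not some other natural isomorphism. This is a Beck-Chevalley mate computation, unwinding \eqref{e:proper base change} as the adjoint of the functorial isomorphism $f^\IndCoh_*(g_1)^\IndCoh_* \simeq (g_2)^\IndCoh_*(f')^\IndCoh_*$ (itself coming from the commutativity of the square), and comparing it step-by-step with the $\QCoh$ proper base change under $\Psi$; the compatibility boils down to the uniqueness of the right adjoint $f^!$ restricted to $\IndCoh^+$.
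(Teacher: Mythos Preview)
Your argument is correct in outline and spirit, but takes a more computational route than the paper. You reduce to compacts, test against $\CF_1\in\Coh(S_1)$, and manipulate mapping spaces on each side separately via adjunctions until both land on the same expression in $\QCoh$; you then correctly flag that the remaining work is a mate-calculus check that the two chains of identifications are compatible with \eqref{e:proper base change}.

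The paper's approach sidesteps exactly that last verification. Rather than computing $\Maps$ on each side, it transports the \emph{entire natural transformation} through the equivalence $\Psi:\IndCoh^+\simeq\QCoh^+$ in one step. The key lemma (immediately preceding the proof) says that the square
\[
\begin{CD}
\IndCoh(S_1)^+ @>{\Psi_{S_1}}>> \QCoh(S_1)^+ \\
@A{f^!}AA @AA{f^{\QCoh,!}}A \\
\IndCoh(S_2)^+ @>{\Psi_{S_2}}>> \QCoh(S_2)^+
\end{CD}
\]
commutes, where $f^{\QCoh,!}$ is the (non-continuous) right adjoint of $f_*$. This reduces \eqref{e:proper base change} on $\IndCoh^+$ to the analogous natural transformation $(g_1)_*\circ (f')^{\QCoh,!}\to f^{\QCoh,!}\circ (g_2)_*$ on $\QCoh^+$. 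The latter extends to all of $\QCoh$ and is nothing but the right-adjoint mate of the usual base change isomorphism $(g_2)^*\circ f_*\simeq f'_*\circ (g_1)^*$, hence an isomorphism.

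What the paper's route buys: the compatibility check you defer to the end is absorbed into a single invocation of ``passing to right adjoints in a commutative square yields a commutative square,'' applied once globally rather than object-by-object. Your route is perfectly valid but leaves that bookkeeping explicit.
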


The rest of this subsection is devoted to the proof of this proposition.

\sssec{}

For a proper morphism $f:S_1\to S_2$, let 
$$f^{\QCoh,!}:\QCoh(S_2)\to \QCoh(S_1)$$
denote the \emph{not necessarily continuous} right adjoint to $f_*:\QCoh(S_1)\to \QCoh(S_2)$. 

\medskip

Since $f_*$ is right t-exact \emph{up to a cohomological shift}, the functor $f^{\QCoh,!}$
is left t-exact \emph{up to a cohomological shift}. Hence, it maps $\QCoh(S_2)^+$ to $\QCoh(S_1)^+$. 

\begin{lem}  \label{l:! and Psi}
The diagram
$$
\CD
\IndCoh(S_1)^+  @>{\Psi_{S_1}}>>  \QCoh(S_1)^+   \\
@A{f^!}AA  @AA{f^{\QCoh,!}}A   \\
\IndCoh(S_2)^+   @>{\Psi_{S_2}}>> \QCoh(S_2)^+
\endCD
$$
obtained by passing to right adjoints along the horizontal arrows in
$$
\CD
\IndCoh(S_1)^+  @>{\Psi_{S_1}}>>  \QCoh(S_1)^+   \\
@V{f^\IndCoh_*}VV    @VV{f_*}V   \\
\IndCoh(S_2)^+   @>{\Psi_{S_2}}>> \QCoh(S_2)^+,
\endCD
$$
commutes.
\end{lem}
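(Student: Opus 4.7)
The plan is to obtain the upper square as the right-adjoint square of the lower one, exploiting the fact that on the eventually coconnective subcategories the horizontal arrows $\Psi_{S_i}$ are \emph{equivalences}, so the operation of passing to right adjoints is completely unambiguous on that side.

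First I would verify that all four functors of the desired square really do restrict as claimed and that the adjunctions survive the restriction. By \propref{p:equiv on D plus}, $\Psi_{S_i}:\IndCoh(S_i)^+\to \QCoh(S_i)^+$ is an equivalence, so its own right adjoint is its inverse. For the vertical arrows, note that $f^{\IndCoh}_*$ is left t-exact by \propref{p:direct image}, and since $f$ is proper the direct image functors $f_*$ and $f^{\IndCoh}_*$ also have bounded cohomological amplitude. Their right adjoints $f^{\QCoh,!}$ and $f^!$ are therefore left t-exact up to a finite cohomological shift, and in particular send $+$-objects to $+$-objects. This preservation guarantees that $f^!|_{\IndCoh(S_2)^+}$ is still right adjoint to $f^{\IndCoh}_*|_{\IndCoh(S_1)^+}$, and similarly on the $\QCoh$ side, so no ambiguity remains as to which adjoint is being computed.

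The input from the existing theory is the commutativity of the bottom square, which unpacks to the isomorphism
$$\Psi_{S_2}\circ f^{\IndCoh}_*\;\simeq\; f_*\circ \Psi_{S_1}$$
of functors $\IndCoh(S_1)^+\to \QCoh(S_2)^+$, built into the very construction of $f^{\IndCoh}_*$ in \propref{p:direct image}. Passing to right adjoints of both composites --- a general categorical procedure that reverses order --- produces an isomorphism
$$f^!\circ \Psi_{S_2}^{-1}\;\simeq\; \Psi_{S_1}^{-1}\circ f^{\QCoh,!},$$
and composing with $\Psi_{S_1}$ on the left and $\Psi_{S_2}$ on the right gives exactly the desired isomorphism $\Psi_{S_1}\circ f^!\simeq f^{\QCoh,!}\circ \Psi_{S_2}$.

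There is no real obstacle to speak of: the entire argument is formal once one has the equivalence $\Psi_{S_i}|_+$ from \propref{p:equiv on D plus} and the t-exactness-up-to-shift properties secured by properness of $f$. The only mild point that deserves explicit verification is that the restrictions of $f^!$ and $f^{\QCoh,!}$ to the $+$-subcategories indeed agree with the adjoints computed intrinsically there, which is precisely what the $+$-preservation statement above gives us.
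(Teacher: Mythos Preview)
Your proof is correct and follows essentially the same approach as the paper. The paper's own proof is the single line ``Follows from the fact that the vertical arrows are equivalences, by \propref{p:equiv on D plus}'' (where ``vertical'' is evidently a slip for ``horizontal,'' i.e., the $\Psi_{S_i}$); your version simply unpacks this, and is more careful than the paper in verifying that $f^!$ and $f^{\QCoh,!}$ preserve the $+$-subcategories so that the restricted adjunctions make sense.
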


\begin{proof}
Follows from the fact that the vertical arrows are equivalences, by \propref{p:equiv on D plus}.
\end{proof}

\begin{rem}
It is not in general true that the diagram
$$
\CD
\IndCoh(S_1)  @>{\Psi_{S_1}}>>  \QCoh(S_1)   \\
@A{f^!}AA  @AA{f^{\QCoh,!}}A   \\
\IndCoh(S_2)   @>{\Psi_{S_2}}>> \QCoh(S_2)
\endCD
$$
obtained by passing to right adjoints along the horizontal arrows in
$$
\CD
\IndCoh(S_1)  @>{\Psi_{S_1}}>>  \QCoh(S_1)   \\
@V{f^\IndCoh_*}VV    @VV{f_*}V   \\
\IndCoh(S_2)   @>{\Psi_{S_2}}>> \QCoh(S_2),
\endCD
$$
commutes.

\medskip

For example, take $S_1=\on{pt}=\Spec(k)$, $S_2=\Spec(k[t]/t^2)$ and
$0\neq \CF\in \IndCoh(S_2)$ be in the kernel of the functor $\Psi_{S_2}$. 
Then $\Psi_X\circ f^!(\CF)\neq 0$. Indeed, $\Psi_{S_1}$ is an equivalence,
and $f^!$ is conservative, see \corref{c:reduced generation}.

\end{rem}

\begin{proof}[Proof of \propref{p:proper base change}]

Since all functors involved are continuous, it is enough to show that the map
$$(g_1)_*^{\IndCoh}\circ (f')^!\to f^!\circ (g_2)_*^{\IndCoh}(\CF)$$
is an isomorphism for $\CF\in \Coh(S_1)$. Hence, it is enough to show
that \eqref{e:proper base change} is an isomorphism when restricted to $\IndCoh(S_1)^+$.

\medskip

By \lemref{l:! and Psi} and \propref{p:equiv on D plus}, this reduces the assertion 
to showing that the natural transformation
\begin{equation} \label{e:proper BC QCoh}
(g_1)_*\circ (f')^{\QCoh,!}\to  f^{\QCoh,!} \circ (g_2)_*
\end{equation}
is an isomorphism for the functors
$$
\CD
\QCoh(S'_1)^+   @>{(g_1)_*}>>   \QCoh(S_1)^+  \\
@A{(f')^{\QCoh,!}}AA    @AA{f^{\QCoh,!}}A   \\
\QCoh(S'_2)^+   @>{(g_2)_*}>>   \QCoh(S_2)^+,
\endCD
$$
where the natural transformation comes from the commutative diagram
$$
\CD
\QCoh(S'_1)^+   @>{(g_1)_*}>>   \QCoh(S_1)^+  \\
@V{f'_*}VV    @VV{f_*}V   \\
\QCoh(S'_2)^+   @>{(g_2)_*}>>   \QCoh(S_2)^+
\endCD
$$
by passing to right adjoint along the vertical arrows. 

\medskip

We consider the commutative diagram
$$
\CD
\QCoh(S'_1)   @>{(g_1)_*}>>   \QCoh(S_1)  \\
@V{f'_*}VV    @VV{f_*}V   \\
\QCoh(S'_2)   @>{(g_2)_*}>>   \QCoh(S_2),
\endCD
$$
and the diagram
$$
\CD
\QCoh(S'_1)   @>{(g_1)_*}>>   \QCoh(S_1)  \\
@A{(f')^{\QCoh,!}}AA    @AA{f^{\QCoh,!}}A   \\
\QCoh(S'_2)   @>{(g_2)_*}>>   \QCoh(S_2),
\endCD
$$
obtained by passing to right adjoints along the vertical arrows.
(Note, however, that the functors involved are no longer
continuous).

\medskip

We claim that the resulting natural transformation
\begin{equation} \label{e:proper BC QCoh 1}
(g_1)_*\circ (f')^{\QCoh,!}\to  f^{\QCoh,!} \circ (g_2)_*
\end{equation}
between functors
$$\QCoh(S'_2)\rightrightarrows \QCoh(S_1)$$
is an isomorphism.

\medskip

Indeed, the map in \eqref{e:proper BC QCoh 1} is obtained by passing to right adjoints
in the natural transformation
\begin{equation} \label{e:proper BC QCoh 2}
(g_2)^*\circ f_*\to f'_*\circ (g_1)^*
\end{equation}
as functors 
$$\QCoh(S_1)\rightrightarrows \QCoh(S'_2)$$
in the commutative diagram
$$
\CD
\QCoh(S'_1)   @<{(g_1)^*}<<   \QCoh(S_1)  \\
@V{f'_*}VV    @VV{f_*}V   \\
\QCoh(S'_2)   @<{(g_2)^*}<<   \QCoh(S_2).
\endCD
$$

Now, \eqref{e:proper BC QCoh 2} is an isomorphism by the usual base change for $\QCoh$. Hence,
\eqref{e:proper BC QCoh 1} is an isomorphism as well.

\end{proof}

\ssec{The $({\IndCoh,*})$-pullback}  \label{ss:* pullback}

\sssec{}

Let $f:S_1\to S_2$ be a morphism between Noetherian DG schemes. 

\begin{defn}  \label{defn:event coconn}
We shall say that $f$ is \emph{eventually coconnective} if the functor
$$f^*:\QCoh(S_2)\to \QCoh(S_1)$$
sends $\Coh(S_2)$ to $\QCoh(S_1)^+$, in which case it automatically sends
it to $\Coh(S_1)$.
\end{defn}

\medskip

It is easy to see that the following conditions are equivalent:

\begin{itemize}

\item $f$ is eventually coconnective;

\item For a closed embedding $S'_2\to S_2$ with $S'_2$ eventually coconnective, the
Cartesian product $S'_2\underset{S_2}\times S_1$ is eventually coconnective. 

\item For a closed embedding $S'_2\to S_2$ with $S'_2$ classical, the
Cartesian product $S'_2\underset{S_2}\times S_1$ is eventually coconnective. 

\end{itemize}

\sssec{}

By the same logic as in \secref{direct image}, we have:

\begin{prop}  \label{p:* pullback}
Suppose that $f$ is eventually coconnective. Then there exists a unique functor
$$f^{\IndCoh,*}:\IndCoh(S_2)\to \IndCoh(S_1),$$
which makes the diagram
$$
\CD
\IndCoh(S_1) @>{\Psi_{S_1}}>>  \QCoh(S_1)   \\
@A{f^{\IndCoh,*}}AA   @AA{f^*}A  \\
\IndCoh(S_2) @>{\Psi_{S_2}}>>  \QCoh(S_2) 
\endCD
$$
commute. 

Furthermore, the functor $f^{\IndCoh,*}$ has a unique structure of 1-morphism of $\QCoh(S_2)$-module
categories, so that the above diagram commutes as a diagram of $\QCoh(S_2)$-modules. 
\end{prop}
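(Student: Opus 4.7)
The plan is to mimic the construction used for direct image in \propref{p:direct image}: define $f^{\IndCoh,*}$ first on compact objects and then ind-extend. The key input is the eventual coconnectivity hypothesis, which by \defnref{defn:event coconn} forces $f^*$ to send $\Coh(S_2)$ into $\QCoh(S_1)^+$ with coherent cohomologies, hence into $\Coh(S_1)$. Composing with the inclusion $\Coh(S_1) \hookrightarrow \IndCoh(S_1)$ gives a functor $\Coh(S_2) \to \IndCoh(S_1)$, whose ind-extension is the desired continuous functor $f^{\IndCoh,*}: \IndCoh(S_2) \to \IndCoh(S_1)$.

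Commutativity of the diagram with $\Psi_{S_1}$, $\Psi_{S_2}$, and $f^*$ is then checked by noting that both $\Psi_{S_1} \circ f^{\IndCoh,*}$ and $f^* \circ \Psi_{S_2}$ are continuous functors $\IndCoh(S_2) \to \QCoh(S_1)$; on the compact objects $\Coh(S_2) \subset \IndCoh(S_2)$ they agree tautologically, both being the restriction of $f^*$ to $\Coh(S_2)$. Hence they agree on all of $\IndCoh(S_2)$.

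For uniqueness, suppose $F: \IndCoh(S_2) \to \IndCoh(S_1)$ is any continuous functor making the diagram commute. For $\CF \in \Coh(S_2)$ we have $\Psi_{S_1}(F(\CF)) \simeq f^*(\CF) \in \Coh(S_1) \subset \QCoh(S_1)^+$, so $F(\CF)$ lies in $\IndCoh(S_1)^+$. Since $\Psi_{S_1}$ restricts to an equivalence on eventually coconnective subcategories by \propref{p:equiv on D plus}, this determines $F(\CF)$ canonically as an object of $\IndCoh(S_1)$. Continuity and compact generation of $\IndCoh(S_2)$ then pin down $F$ on all of $\IndCoh(S_2)$.

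Finally, the $\QCoh(S_2)$-module structure is obtained by an analogous ind-extension argument. At the non-cocomplete level, $f^*: \QCoh(S_2)^{\on{perf}} \to \QCoh(S_1)^{\on{perf}}$ is a symmetric monoidal functor, and by eventual coconnectivity its restriction to $\Coh(S_2) \to \Coh(S_1)$ is equivariant for the actions of $\QCoh(S_2)^{\on{perf}}$ (via $f^*$ on the target). Ind-extending this module-functor structure yields the required one on $f^{\IndCoh,*}$, and the resulting diagram of module categories commutes essentially tautologically on compacts. I do not anticipate any genuine obstacle beyond organizing higher coherences, and the same strategy used in \propref{p:dir image as modules} and \corref{c:upgrading Xi}---namely, that module-functor structures on continuous functors between compactly generated categories are determined by their restriction to the subcategories of compacts---applies verbatim here.
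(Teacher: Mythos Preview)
Your construction and the module-structure argument are correct and match the paper's intended ``same logic as in \secref{direct image}.'' However, your uniqueness argument has a genuine gap. From $\Psi_{S_1}(F(\CF)) \in \QCoh(S_1)^+$ you conclude $F(\CF) \in \IndCoh(S_1)^+$, but this implication is false: any nonzero object of $\IndCoh(S_1)_{nil}$ maps to $0 \in \QCoh(S_1)^+$ under $\Psi_{S_1}$ yet does not lie in $\IndCoh(S_1)^+$. So knowing only that the diagram commutes does not force $F|_{\Coh(S_2)}$ to land in $\IndCoh(S_1)^+$.

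The fix is the one the paper uses in \propref{p:direct image}: there the uniqueness statement explicitly includes the hypothesis that the functor be \emph{left t-exact} (up to a shift), and this is what guarantees that compacts land in $\IndCoh^+$, where $\Psi$ becomes an equivalence by \propref{p:equiv on D plus}. By ``the same logic,'' the uniqueness clause here should be read as uniqueness among continuous functors that are left t-exact up to a finite shift (which $f^{\IndCoh,*}$ visibly is, since $f^*$ sends $\Coh(S_2)$ to $\Coh(S_1)$). With that hypothesis added, your argument goes through verbatim.
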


We note that the last assertion in \propref{p:* pullback} at the level of individual objects says that for
$\CE\in \QCoh(S_2)$ and $\CF\in \IndCoh(S_2)$, we have a canonical isomorphism
\begin{equation} \label{e:* pullback}
f^{\IndCoh,*}(\CE\otimes \CF)\simeq f^*(\CE)\otimes f^{\IndCoh,*}(\CF),
\end{equation}
where $\otimes$ is understood in the sense of the action of \secref{ss:action}.

\sssec{}

Let $(\dgSch_{\on{Noeth}})_{\on{ev-coconn}}$ denote the 1-full subcategory of $\dgSch_{\on{Noeth}}$
where we allow only eventually coconnective maps as $1$-morphisms. As in \propref{p:upgrading IndCoh to functor}
we obtain: 

\begin{cor} \label{c:upper * DG funct}
The assignment 
$$S\mapsto \IndCoh(S) \text{ and } \Psi_S$$ 
upgrade to a functor
$$\IndCoh^*_{(\dgSch_{\on{Noeth}})_{\on{ev-coconn}}}:(\dgSch_{\on{Noeth}})_{\on{ev-coconn}}^{\on{op}}\to \StinftyCat_{\on{cont}},$$
and a natural transformation
\begin{multline*}
\Psi^*_{(\dgSch_{\on{Noeth}})_{\on{ev-coconn}}}:\IndCoh^*_{(\dgSch_{\on{Noeth}})_{\on{ev-coconn}}}\to 
\QCoh^*_{(\dgSch_{\on{Noeth}})_{\on{ev-coconn}}}:=\\
=\QCoh^*_{\dgSch_{\on{Noeth}}}|_{((\dgSch_{\on{Noeth}})_{\on{ev-coconn}})^{\on{op}}}.
\end{multline*}
\end{cor}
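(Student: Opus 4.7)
The plan is to parallel the proof of \propref{p:upgrading IndCoh to functor}, with $f^*$ playing the role of $f_*$ and with source category $((\dgSch_{\on{Noeth}})_{\on{ev-coconn}})^{\on{op}}$. I would begin with the restriction
$$\QCoh^*_{(\dgSch_{\on{Noeth}})_{\on{ev-coconn}}} := \QCoh^*_{\dgSch_{\on{Noeth}}}|_{((\dgSch_{\on{Noeth}})_{\on{ev-coconn}})^{\on{op}}}$$
and observe that for an eventually coconnective morphism $f$, the functor $f^*$ is left t-exact up to a finite shift: by \defnref{defn:event coconn}, $f^*$ sends $\Coh(S_2)$ into $\Coh(S_1)\subset \QCoh(S_1)^+$, and since each $\QCoh(S_2)^{\geq -n}$ is generated under filtered colimits by $\Coh(S_2)^{\geq -n}$ while $f^*$ is continuous, a uniform such bound follows. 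Consequently $\QCoh^*_{(\dgSch_{\on{Noeth}})_{\on{ev-coconn}}}$ factors through the 1-full subcategory $\StinftyCat^{t}_{\on{cont}}\subset \StinftyCat_{\on{cont}}$ of \secref{sss:Stinfty +}.

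Applying the functor $\bC\mapsto \bC^+$ from \eqref{e:taking +} then produces
$$\QCoh^{*,+}_{(\dgSch_{\on{Noeth}})_{\on{ev-coconn}}}\colon ((\dgSch_{\on{Noeth}})_{\on{ev-coconn}})^{\on{op}} \to \StinftyCat^{+_{\on{cont}}},$$
valued at $S\mapsto \QCoh(S)^+$ and $f\mapsto f^*|_{\QCoh(S_2)^+}$. By \propref{p:equiv on D plus} the functor $\Psi_S|_+$ is an equivalence $\IndCoh(S)^+\to \QCoh(S)^+$, and by the commutative diagram of \propref{p:* pullback} this equivalence intertwines $f^{\IndCoh,*}|_{\IndCoh(S_2)^+}$ with $f^*|_{\QCoh(S_2)^+}$. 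Thus the assignment $S\mapsto \IndCoh(S)^+$, $f\mapsto f^{\IndCoh,*}|_+$ inherits the structure of a functor into $\StinftyCat^{+_{\on{cont}}}$. I then invoke Lemmas \ref{l:factoring functor} and \ref{l:factoring functor bis}, combined with \lemref{l:taking +} and the 1-full embedding $\StinftyCat^{t}_{\on{cont}}\hookrightarrow \StinftyCat_{\on{cont}}$, to lift this assignment uniquely to the desired functor $\IndCoh^*_{(\dgSch_{\on{Noeth}})_{\on{ev-coconn}}}$, whose behavior on objects and 1-morphisms is pinned down by \propref{p:* pullback}. The natural transformation $\Psi^*_{(\dgSch_{\on{Noeth}})_{\on{ev-coconn}}}$ is produced similarly by \lemref{l:factoring functor bis}: on $+$-parts it reduces, under the $\Psi_S|_+$ identifications, to the identity, which transports back uniquely through the chain.

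The main obstacle is the very first step: the t-exactness bookkeeping for $f^*$. Unlike $f_*$ in \propref{p:upgrading IndCoh to functor}, which is manifestly left t-exact, $f^*$ is only right t-exact a priori, so one genuinely uses that eventual coconnectivity of $f$ enforces finite Tor-amplitude (uniformly, not merely on each object) in order to place $f^*$ in the $\StinftyCat^{t}_{\on{cont}}$ framework. Once that point is settled, the remainder is a formal replay of the three-step procedure used there, merely transporting the prescribed assignment through the chain of 1-fully faithful forgetful functors via the factoring lemmas.
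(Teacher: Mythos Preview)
Your overall approach—paralleling the proof of \propref{p:upgrading IndCoh to functor}—is precisely what the paper intends; it offers no separate argument beyond the phrase ``As in \propref{p:upgrading IndCoh to functor} we obtain.''

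However, the justification you give for the claim that $f^*$ is left t-exact up to a finite shift is flawed. From the fact that $f^*$ sends each object of $\Coh(S_2)^{\geq 0}$ into $\QCoh(S_1)^+$ one cannot conclude a \emph{uniform} lower bound: if there were objects $\CF_n\in\Coh(S_2)^{\geq 0}$ with $f^*(\CF_n)\in\QCoh(S_1)^{\geq -n}\setminus\QCoh(S_1)^{\geq -n+1}$, then each $f^*(\CF_n)$ would lie in $\QCoh(S_1)^+$, yet $f^*\bigl(\bigoplus_n\CF_n\bigr)=\bigoplus_n f^*(\CF_n)$ would not. Continuity of $f^*$ and colimit-generation of $\QCoh(S_2)^{\geq 0}$ by $\Coh(S_2)^{\geq 0}$ do not close this gap; they only show $f^*(\QCoh(S_2)^{\geq 0})$ lies in the closure of $\QCoh(S_1)^+$ under filtered colimits, which is all of $\QCoh(S_1)$. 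In your final paragraph you correctly flag this as the crux, but the assertion that eventual coconnectivity ``enforces finite Tor-amplitude (uniformly, not merely on each object)'' is exactly what requires proof—and \lemref{l:finite Tor} establishes this equivalence only under the extra hypothesis that $f$ is almost of finite type.

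To repair the argument one must either prove directly that an eventually coconnective morphism between Noetherian DG schemes is automatically of bounded Tor dimension (which the paper does not do in this generality), or relax the $\StinftyCat^{t}_{\on{cont}}$/$\StinftyCat^{+_{\on{cont}}}$ framework of \secref{sss:Stinfty +} so that $1$-morphisms are only required to preserve the eventually coconnective subcategory rather than satisfy a uniform shift bound, and then verify that the analogue of \lemref{l:taking +} survives. The paper, having reduced the corollary to a one-line reference, leaves this point implicit.
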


\sssec{The (inverse,direct) image adjunction}

We are now going to show that under the assumptions of \propref{p:* pullback}, the functors
$f^{\IndCoh}_*$ and $f^{\IndCoh,*}$ satisfy the usual adjunction property:

\begin{lem}  \label{l:* pullback adjoint}
Let $f:S_1\to S_2$ be eventually coconnective. Then 
the functor $$f^{\IndCoh,*}:\IndCoh(S_2) \to \IndCoh(S_1)$$ is the left adjoint of 
$f^{\IndCoh}_*:\IndCoh(S_1)\to \IndCoh(S_2) $.
\end{lem}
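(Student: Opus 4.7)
The plan is to construct the unit of the adjunction directly on $\Coh(S_2)$ by transferring the $\QCoh$-unit through the equivalence of \propref{p:equiv on D plus}, then ind-extend, and verify the adjunction property on compact objects where everything can be transferred to $\QCoh$. Both functors are continuous --- $f^{\IndCoh,*}$ by its construction as an ind-extension, and $f^{\IndCoh}_*$ by \propref{p:direct image} --- and moreover $f^{\IndCoh,*}$ sends compact objects to compact objects since by the definition of eventual coconnectivity $f^*$ restricts to $\Coh(S_2)\to \Coh(S_1)$ (and via \propref{p:equiv on D plus} and \corref{c:Karoubian} this gives a functor $\Coh(S_2)\to \Coh(S_1)$ at the level of $\IndCoh$).

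For $\CF\in \Coh(S_2)$, we have $f^{\IndCoh,*}(\CF)\in \Coh(S_1)\subset \IndCoh(S_1)^+$, and hence $f^{\IndCoh}_*(f^{\IndCoh,*}(\CF))\in \IndCoh(S_2)^+$ by the left t-exactness of $f^{\IndCoh}_*$ from \propref{p:direct image}. The defining compatibilities of $f^{\IndCoh,*}$ and $f^{\IndCoh}_*$ with $\Psi$ yield a canonical isomorphism
$$\Psi_{S_2}(f^{\IndCoh}_*(f^{\IndCoh,*}(\CF)))\simeq f_*(f^*(\Psi_{S_2}(\CF))),$$
and via the equivalence $\Psi_{S_2}\colon \IndCoh(S_2)^+\overset{\sim}{\to}\QCoh(S_2)^+$, the standard unit $\Psi_{S_2}(\CF)\to f_*(f^*(\Psi_{S_2}(\CF)))$ of the $(f^*,f_*)$-adjunction pulls back to a morphism $\CF\to f^{\IndCoh}_*(f^{\IndCoh,*}(\CF))$ in $\IndCoh(S_2)^+$. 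This assembles into a natural transformation on $\Coh(S_2)$ which ind-extends to a transformation $\on{Id}_{\IndCoh(S_2)}\to f^{\IndCoh}_*\circ f^{\IndCoh,*}$.

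To verify that this is the unit of an adjunction, by continuity of both functors and compact generation of $\IndCoh(S_2)$, it suffices to show that for $\CF\in \Coh(S_2)$ and $\CG\in \Coh(S_1)$ the induced map
$$\Maps_{\IndCoh(S_1)}(f^{\IndCoh,*}(\CF),\CG)\to \Maps_{\IndCoh(S_2)}(\CF, f^{\IndCoh}_*(\CG))$$
is an isomorphism. Both source-target pairs lie in the eventually coconnective subcategories, so by \propref{p:equiv on D plus} this map is identified with
$$\Maps_{\QCoh(S_1)}(f^*(\Psi_{S_2}(\CF)),\Psi_{S_1}(\CG))\to \Maps_{\QCoh(S_2)}(\Psi_{S_2}(\CF), f_*(\Psi_{S_1}(\CG))),$$
which is the $\QCoh$-adjunction isomorphism by construction.

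The main subtlety, which is the only real obstacle, is the $\infty$-categorical coherence: ensuring that what we produced is a genuine natural transformation between functors of DG categories, rather than merely an object-wise morphism. This is controlled by the facts that the equivalence $\Psi_S\colon \IndCoh(S)^+\to \QCoh(S)^+$ and the intertwining isomorphisms of \propref{p:* pullback} and \propref{p:direct image} are natural transformations between functors of DG categories; together they promote the object-wise recipe for the unit to a transformation between functors, and then the continuous extension along the ind-completion is automatic.
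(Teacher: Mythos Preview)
Your proof is correct and follows essentially the same approach as the paper: reduce to $\CF\in\Coh(S_2)$ and $\CG\in\Coh(S_1)$ using compact generation and the fact that $f^{\IndCoh,*}$ preserves compacts, then transfer the mapping-space isomorphism to $\QCoh$ via \propref{p:equiv on D plus} and invoke the $(f^*,f_*)$-adjunction. The paper argues directly with the mapping-space isomorphism rather than explicitly constructing the unit first, but the content is the same; your extra care with the unit construction and the coherence remark is in fact more explicit than the paper's treatment.
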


\begin{proof}

We need to construct a functorial isomorphism
\begin{equation} \label{e:* adj to constr}
\Maps_{\IndCoh(S_1)}(f^{\IndCoh,*}(\CF_2),\CF_1)\simeq \Maps_{\IndCoh(S_2)}(\CF_2,f^\IndCoh_*(\CF_1)),
\end{equation}
for $\CF_1\in \IndCoh(S_1)$ and $\CF_2\in \IndCoh(S_2)$.

\medskip

By the definition of $\IndCoh(S_2)$, it suffices to do this for $\CF_2\in \Coh(S_2)$. Now, since
the functor $f^{\IndCoh,*}$ sends compact objects to compact ones, for $\CF_2\in \Coh(S_2)$,
each side in \eqref{e:* adj to constr} commutes with filtered colimits. Hence, it suffices to
construct the isomorphism \eqref{e:* adj to constr} when $\CF_2\in \Coh(S_2)$ and 
$\CF_1\in \Coh(S_1)$.

\medskip

In this case, 
$$\Maps_{\IndCoh(S_1)}(f^{\IndCoh,*}(\CF_2),\CF_1)\simeq
\Maps_{\Coh(S_1)}(f^*(\CF_2),\CF_1)\simeq \Maps_{\QCoh(S_1)}(f^*(\CF_2),\CF_1),$$
and
$$\Maps_{\IndCoh(S_2)}(\CF_2,f^\IndCoh_*(\CF_1))
=\Maps_{\IndCoh(S_2)^+}(\CF_2,f^\IndCoh_*(\CF_1)),$$
which by \propref{p:equiv on D plus} identifies with
$$\Maps_{\QCoh(S_2)^+}(\CF_2,f_*(\CF_1))\simeq  \Maps_{\QCoh(S_2)}(\CF_2,f_*(\CF_1)).$$

\medskip

Hence, the required isomorphism follows from the isomorphism
$$\Maps_{\QCoh(S_1)}(f^*(\CF_2),\CF_1)\simeq \Maps_{\QCoh(S_2)}(\CF_2,f_*(\CF_1)),$$
which expresses the $(f^*,f_*)$-adjunction for $\QCoh$.

\end{proof}

In fact, a statement converse to \lemref{l:* pullback adjoint} holds:

\begin{prop}  \label{p:* pullback bis}
Let $f:S_1\to S_2$ be a morphism between DG schemes, such that the functor
$f^{\IndCoh}_*:\IndCoh(S_1)\to \IndCoh(S_2)$ admits a left adjoint. Then $f$ is
eventually coconnective.
\end{prop}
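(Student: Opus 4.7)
The plan is to mirror the strategy of \propref{p:Xi and event coconn}, replacing the identity morphism there by an arbitrary $f$. Let $L\colon\IndCoh(S_2)\to\IndCoh(S_1)$ denote the purported left adjoint of $f^\IndCoh_*$. Because $f^\IndCoh_*$ is continuous by construction, the functor $L$ automatically preserves compact objects, hence by \corref{c:Karoubian} it sends $\Coh(S_2)$ into $\Coh(S_1)$. Fix $\CF_2\in\Coh(S_2)$; then $L(\CF_2)\in\Coh(S_1)\subset\IndCoh(S_1)^+$, so $\Psi_{S_1}(L(\CF_2))$ lies in $\QCoh(S_1)^+$.

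The heart of the argument will be to identify this last object with $f^*(\Psi_{S_2}(\CF_2))$. The unit of the $(L,f^\IndCoh_*)$-adjunction supplies a map $\CF_2\to f^\IndCoh_*(L(\CF_2))$; applying $\Psi_{S_2}$ and using the intertwining $\Psi_{S_2}\circ f^\IndCoh_*\simeq f_*\circ\Psi_{S_1}$ of \propref{p:direct image}, this gives a map $\Psi_{S_2}(\CF_2)\to f_*(\Psi_{S_1}(L(\CF_2)))$ in $\QCoh(S_2)$, which by the $(f^*,f_*)$-adjunction in $\QCoh$ corresponds to a canonical morphism
\[
\alpha\colon f^*(\Psi_{S_2}(\CF_2))\to\Psi_{S_1}(L(\CF_2)).
\]
For every $\CG_1\in\QCoh(S_1)^+$, I would chase the composition of: (i) \propref{p:equiv on D plus} identifying $\CG_1$ with an object of $\IndCoh(S_1)^+$, (ii) the $(L,f^\IndCoh_*)$-adjunction in $\IndCoh$, (iii) the fact that $f^\IndCoh_*$ preserves eventual coconnectivity (\propref{p:direct image}), (iv) \propref{p:equiv on D plus} again on $S_2$, and (v) the $(f^*,f_*)$-adjunction in $\QCoh$. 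The upshot is that $\alpha$ induces an equivalence $\Maps(\Psi_{S_1}(L(\CF_2)),\CG_1)\simeq\Maps(f^*(\Psi_{S_2}(\CF_2)),\CG_1)$. Setting $\CK:=\on{cofib}(\alpha)$, this says $\Maps_{\QCoh(S_1)}(\CK,\CG_1)=0$ for every $\CG_1\in\QCoh(S_1)^+$; applying this to $\CG_1=H^i(\CK)[-i]$ kills each cohomology of $\CK$, and separatedness of the t-structure on $\QCoh(S_1)$ then forces $\CK=0$. Hence $\alpha$ is an isomorphism and $f^*(\Psi_{S_2}(\CF_2))$ lies in $\QCoh(S_1)^+$ for every $\CF_2\in\Coh(S_2)$, which is exactly the condition that $f$ be eventually coconnective.

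The main obstacle I anticipate is checking that the chain of adjunctions above is genuinely compatible with the tautologically constructed map $\alpha$, i.e., that it really computes the $\alpha$-induced map on mapping spaces rather than some unrelated identification; but since each intermediate step is either tautological, an instance of \propref{p:equiv on D plus}, or a standard base adjunction, this is formal. Once this compatibility is secured, the remainder (preservation of compacts, the cohomology-by-cohomology argument, and separatedness of the t-structure) follows the template of \propref{p:Xi and event coconn} essentially verbatim.
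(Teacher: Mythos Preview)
Your proof is correct and follows essentially the same approach as the paper: construct the comparison map $f^*\circ\Psi_{S_2}\to\Psi_{S_1}\circ L$ via the adjunctions, then show it induces an isomorphism on $\Maps(-,\CG_1)$ for all $\CG_1\in\QCoh(S_1)^+$ by chaining \propref{p:equiv on D plus} with the $(L,f^\IndCoh_*)$- and $(f^*,f_*)$-adjunctions. The only cosmetic difference is in the final step: the paper invokes left-completeness of the t-structure on $\QCoh(S_1)$ to deduce that an isomorphism on each $\tau^{\geq n}$ is an isomorphism, whereas you pass to the cofiber and use separatedness to kill each cohomology; these are equivalent maneuvers.
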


\begin{proof}
Suppose $f^{\IndCoh}_*$ admits a left adjoint; let us denote it by $$'\!f^{\IndCoh,*}:\IndCoh(S_2)\to \IndCoh(S_1).$$
Being a left adjoint to a functor that commutes with colimits, $'\!f^{\IndCoh,*}$ sends compact objects
to compacts, i.e., it is the ind-extension of a functor
$$'\!f^{\IndCoh,*}:\Coh(S_2)\to \Coh(S_1).$$

\medskip

To prove the proposition, it suffices to show that
the natural map 
\begin{equation} \label{f tilde}
f^*\circ \Psi_{S_2}\to \Psi_{S_1}\circ {}'\!f^{\IndCoh,*}
\end{equation}
is an isomorphism.

\medskip

Since the t-structure on $\QCoh(S_1)$ is left-complete, it suffices to show
that for $\CF\in \Coh(S_2)$ and any $n$, the induced map
$$\tau^{\geq n}\left(f^*\circ \Psi_{S_2}(\CF)\right)\to
\tau^{\geq n}\left(\Psi_{S_1}\circ {}'\!f^{\IndCoh,*}(\CF)\right)$$
is an isomorphism, i.e., that the induced map
\begin{equation} \label{weird Hom}
\Hom_{\QCoh(S_1)}(\Psi_{S_1}\circ {}'\!f^{\IndCoh,*}(\CF),\CF')\to \Hom_{\QCoh(S_1)}(f^*\circ \Psi_{S_2}(\CF),\CF')
\end{equation}
is an isomorphism for any $\CF'\in \QCoh(S_1)^{\geq n}$. By \propref{p:equiv on D plus}, we can
take $\CF'=\Psi_{S_1}(\CF_1)$ for some $\CF_1\in \IndCoh(S_1)^{\geq n}$.

\medskip

The object $'\!f^{\IndCoh,*}(\CF)$ belongs to $\Coh(S_1)\subset \IndCoh(S_1)^+$. Hence,
by \propref{p:equiv on D plus}, the left-hand side of \eqref{weird Hom} identifies with
$$\Hom_{\IndCoh(S_1)}({}'\!f^{\IndCoh,*}(\CF),\CF_1),$$
which in turn identifies with 
\begin{multline*}
\Hom_{\IndCoh(S_2)}(\CF,f^{\IndCoh}_*(\CF_1))\simeq
\Hom_{\QCoh(S_2)}(\Psi_{S_2}(\CF),\Psi_{S_2}(f^{\IndCoh}_*(\CF_1)))\simeq \\
\simeq \Hom_{\QCoh(S_2)}(\Psi_{S_2}(\CF),f_*(\Psi_{S_1}(\CF_1))),
\end{multline*}
which identifies by adjunction with the right-hand side of 
\eqref{weird Hom}.

\end{proof}

\sssec{}  

Assume again that $S_1$ and $S_2$ are eventually coconnective. In this case, by adjunction from
\propref{p:direct image}, we obtain:

\begin{lem}  \label{l:Xi and pullback}
The diagram
$$
\CD
\IndCoh(S_1) @<{\Xi_{S_1}}<<  \QCoh(S_1)   \\
@A{f^{\IndCoh,*}}AA   @AA{f^*}A  \\
\IndCoh(S_2) @<{\Xi_{S_2}}<<  \QCoh(S_2) 
\endCD
$$
commutes as well.
\end{lem}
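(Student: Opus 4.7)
The plan is to derive the commutativity by passing to left adjoints in the commutative square furnished by Proposition \ref{p:direct image}. Under the eventual-coconnectivity hypotheses on $S_1$, $S_2$ and $f$, each of the four functors in the square
$$
\CD
\IndCoh(S_1) @>{\Psi_{S_1}}>> \QCoh(S_1)  \\
@V{f^{\IndCoh}_*}VV @VV{f_*}V  \\
\IndCoh(S_2) @>{\Psi_{S_2}}>> \QCoh(S_2)
\endCD
$$
admits a continuous left adjoint: $\Psi_{S_i}$ has left adjoint $\Xi_{S_i}$ by Proposition \ref{p:Xi}, $f_*$ has left adjoint $f^*$, and $f^{\IndCoh}_*$ has left adjoint $f^{\IndCoh,*}$ by Lemma \ref{l:* pullback adjoint}. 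Given the isomorphism $\Psi_{S_2}\circ f^{\IndCoh}_*\simeq f_*\circ \Psi_{S_1}$, the standard mate construction produces a canonical natural transformation $f^{\IndCoh,*}\circ \Xi_{S_2}\to \Xi_{S_1}\circ f^*$; the claim is that this mate is an isomorphism.

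To verify this, I would test against an arbitrary $\CF\in \IndCoh(S_1)$ by Yoneda. Since both sides are continuous in the $\QCoh(S_2)$-argument, it suffices to test on a generating set, e.g.\ on objects $\CE\in \QCoh(S_2)^{\on{perf}}$. For such $\CE$ one computes
$$\Maps_{\IndCoh(S_1)}\bigl(f^{\IndCoh,*}\Xi_{S_2}(\CE),\CF\bigr)\simeq \Maps_{\IndCoh(S_2)}\bigl(\Xi_{S_2}(\CE), f^{\IndCoh}_*(\CF)\bigr)\simeq \Maps_{\QCoh(S_2)}\bigl(\CE,\Psi_{S_2}f^{\IndCoh}_*(\CF)\bigr),$$
which by Proposition \ref{p:direct image} equals $\Maps_{\QCoh(S_2)}(\CE, f_*\Psi_{S_1}(\CF))$, and the latter identifies with $\Maps_{\IndCoh(S_1)}(\Xi_{S_1}f^*(\CE),\CF)$ by the $(f^*,f_*)$- and $(\Xi_{S_1},\Psi_{S_1})$-adjunctions. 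Tracing the isomorphisms shows that the composite is precisely the Yoneda pairing with the mate transformation, which is therefore an isomorphism.

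I do not expect any serious obstacle: the content is entirely the 2-categorical fact that mates of commutative squares are commutative, applied in a situation where all four requisite left adjoints have already been constructed in the preceding subsections. The only point requiring minor care is checking that all adjunctions involved preserve enough structure (in particular, continuity, so that the mate is determined by its values on compact objects), which is guaranteed because $\Xi_{S_i}$ sends compacts to compacts (Corollary \ref{c:Karoubian} together with Proposition \ref{p:Xi}) and $f^{\IndCoh,*}$ does likewise under the eventual coconnectivity assumption, as recorded in the proof of Proposition \ref{p:* pullback bis}.
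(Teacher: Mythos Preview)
Your proposal is correct and follows the same approach as the paper, which simply states ``by adjunction from \propref{p:direct image}.'' Your Yoneda verification is more detailed than necessary: since you are replacing \emph{all four} functors in the commutative square by their left adjoints, the resulting square commutes automatically (left adjoints of isomorphic composites are canonically isomorphic), so there is no nontrivial mate to check.
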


\sssec{}   \label{sss:Xi and pullback}

In particular, we obtain that the assignment $S\mapsto \Xi_S$ extends to a natural
transformation
$$\Xi^*_{({}^{<\infty}\!\dgSch_{\on{Noeth}})_{\on{ev-coconn}}}:\QCoh^*_{({}^{<\infty}\!\dgSch_{\on{Noeth}})_{\on{ev-coconn}}}\to
\IndCoh^*_{({}^{<\infty}\!\dgSch_{\on{Noeth}})_{\on{ev-coconn}}},$$
where $$\QCoh^*_{({}^{<\infty}\!\dgSch_{\on{Noeth}})_{\on{ev-coconn}}} \text{ and }
\IndCoh^*_{({}^{<\infty}\!\dgSch_{\on{Noeth}})_{\on{ev-coconn}}}$$
denote he restrictions of the functors 
$$\QCoh^*_{(\dgSch_{\on{Noeth}})_{\on{ev-coconn}}} \text{ and }\IndCoh^*_{(\dgSch_{\on{Noeth}})_{\on{ev-coconn}}},$$
respectively,
to $(({}^{<\infty}\!\dgSch_{\on{Noeth}})_{\on{ev-coconn}})^{\on{op}}\subset ((\dgSch_{\on{Noeth}})_{\on{ev-coconn}})^{\on{op}}$.

\ssec{Morphisms of bounded Tor dimension}

\sssec{}

We shall say that a morphism $f:S_1\to S_2$ between DG schemes is \emph{of bounded Tor dimension}
if the functor 
$$f^*:\QCoh(S_2)\to \QCoh(S_1)$$
is left t-exact \emph{up to a finite shift}, i.e., is of bounded cohomological amplitude. 

\sssec{}  

First, we claim:

\begin{lem} \label{l:finite Tor}
Let $f:S_1\to S_2$ be a morphism almost of finite type. Then following conditions are equivalent:

\smallskip

\noindent{\em(a)} $f$ is eventually coconnective.

\smallskip

\noindent{\em(b)} $f$ is of bounded Tor dimension.

\end{lem}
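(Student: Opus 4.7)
For the easy direction $(b)\Rightarrow(a)$, I would observe that bounded amplitude of $f^*$ sends $\Coh(S_2)\subset\QCoh(S_2)^b$ into $\QCoh(S_1)^b$, and the almost-of-finite-type condition will ensure that the cohomology sheaves of the pullback remain finitely generated over $\CO_{S_1}$, placing $f^*\CF$ in $\Coh(S_1)\subset\QCoh(S_1)^+$.

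For the substantive direction $(a)\Rightarrow(b)$, the plan is to first reduce Zariski-locally to an affine morphism $A\to B$ of Noetherian DG rings that is almost of finite type. Writing $R:=H^0(A)$ and
$$X:=f^*(\CO_{{}^{cl}\!S_2})=B\otimes^L_A R,$$
the hypothesis $(a)$ together with the fact that $\CO_{{}^{cl}\!S_2}\in\Coh(S_2)^\heartsuit$ will force $X\in\Coh(S_1)\cap\QCoh(S_1)^{[-N_0,0]}$ for some $N_0$, and almost-of-finite-type will imply that each $H^i(X)$ is finitely generated over $R$, so that $X$ is a bounded complex of $R$-modules with finitely generated cohomology. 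The crucial observation is that $\QCoh(S_2)^\heartsuit$ is canonically identified with the abelian category of $R$-modules, so for $M$ in this heart one has $f^*(M)\simeq X\otimes^L_R M$ by associativity of the derived tensor product. Since $f^*$ is right t-exact and its amplitude is controlled by its restriction to the heart (by the usual Postnikov-tower argument, using that $\QCoh^{\geq -N}$ is closed under extensions and limits), $f$ will be of bounded Tor dimension if and only if $X$ has bounded Tor dimension over the classical Noetherian ring $R$.

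To establish the latter, I plan to use the residue fields $k(\mathfrak{p})$ at closed points $\mathfrak{p}\in\Spec R$: each lies in $\Coh(S_2)^\heartsuit$, so by hypothesis $(a)$ the object $X\otimes^L_R k(\mathfrak{p})$ is bounded. Localizing at $\mathfrak{p}$ and invoking a minimal free resolution of $X_\mathfrak{p}$ (which exists by finite generation of the cohomology) together with Nakayama's lemma, one concludes that $X_\mathfrak{p}$ is a perfect complex over $R_\mathfrak{p}$ and in particular has finite Tor amplitude. The main obstacle I anticipate is upgrading this pointwise finiteness into a single uniform global bound on the Tor amplitude of $X$ over $R$; this will rely on the classical fact that, for a pseudo-coherent bounded complex on a Noetherian scheme, projective dimension is upper semi-continuous on $\Spec R$, so that quasi-compactness of $\Spec R$ yields a single $N$ that works at every point.
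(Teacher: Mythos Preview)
Your direction $(b)\Rightarrow(a)$ is fine; the paper treats it as tautological since bounded amplitude of $f^*$ immediately lands $\Coh(S_2)$ in $\QCoh(S_1)^+$.

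For $(a)\Rightarrow(b)$, there is a real gap. You assert that ``almost-of-finite-type will imply that each $H^i(X)$ is finitely generated over $R$,'' but this is false: almost of finite type only says $H^0(B)$ is finitely generated over $R$ \emph{as an algebra}, not as a module. For instance, if $A=R=k$ and $B=k[t]$, then $X=k[t]$ has $H^0(X)=k[t]$, which is not finitely generated over $k$. Consequently $X$ is not pseudo-coherent over $R$ in general, and both pieces of your endgame collapse: there is no minimal free $R_\mathfrak{p}$-resolution of $X_\mathfrak{p}$ to run Nakayama against, and the semi-continuity of projective dimension you invoke on $\Spec R$ requires exactly the pseudo-coherence you do not have.

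The paper circumvents this by first factoring $f$ locally as $S_1\overset{f'}\to S_2\times\BA^n\to S_2$ and observing that $f$ is eventually coconnective (resp.\ of bounded Tor dimension) if and only if the closed embedding $f'$ is. Once $f$ is a closed embedding, $H^0(B)$ is module-finite over $R$, and after base-changing to $^{cl}S_2$ the object $f_*(\CO_{S_1})$ lies in $\Coh(S_2)$. Then the pointwise perfectness criterion (\lemref{l:pointwise perfectness}) --- which is precisely the Nakayama/semi-continuity argument you had in mind, but applied on $S_2$ to a genuinely coherent object --- gives the bound. Your strategy is salvageable if you either insert this reduction to a closed embedding, or rework the semi-continuity argument to run Zariski-locally on $\Spec H^0(B)$ (where $X$ \emph{is} coherent) rather than on $\Spec R$; but as written, the argument does not go through.
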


\begin{proof}

The implication (b) $\Rightarrow$ (a) is tautological. Let us prove that (a) implies (b).

\medskip

The question is local in Zariski topology, so we can assume that $f$ can be factored as a composition
$$S_1\overset{f'}\to S_2\times \BA^n\to S_2,$$
where $f'$ is a closed embedding. It is easy to see that $f$ satisfies condition (a) (resp., (b)) if and only 
if $f'$ does. Hence, we can assume that $f$ is itself a closed embedding.

\medskip

To test that $f$ is of bounded Tor dimension, it is sufficient to test it on objects from $\Coh(S_2)^\heartsuit$.
Since such objects come as direct images from $^{cl}S_2$, by base change, we can assume that $S_2$
is classical. Since $f$ was assumed eventually coconnective, we obtain that $S_1$ is itself eventually
coconnective, i.e., $f_*(\CO_{S_1})\in \Coh(S_2)$.  

\medskip

We need to show that $f_*(\CO_{S_1})$ is of bounded Tor dimension. This will
follow from the following general assertion: 

\begin{lem}  \label{l:pointwise perfectness}
Let $S$ is a Noetherian DG scheme, and let $\CF\in \Coh(S)$ be such that 
for every geometric point $s:\Spec(K)\to S$, the fiber $s^*(\CF)$ lives in finitely many degrees, then
$\CF$ is perfect.
\end{lem}

\end{proof}

\begin{proof}[Proof of \lemref{l:pointwise perfectness}]

We need to show that the functor $\CF\underset{\CO_S}\otimes -:\QCoh(S)\to \QCoh(S)$ is of 
bounded cohomological amplitude.

\medskip

In fact, we will show that if for \emph{some} geometric point $s:\Spec(K)\to S$, the fiber $s^*(\CF)$ lives in 
degrees $[-n,0]$, then on some Zariski neighborhood of $s$, the above functor is of amplitude
$[-n,0]$. 

\medskip

First, it is enough to test the functor $\CF\underset{\CO_S}\otimes -$ on objects from $\QCoh(S)^\heartsuit$.
Since such objects come as direct images under $^{cl}S\to S$, we can replace $S$ by $^{cl}S$. Now,
out assertion becomes a familiar statement from commutative algebra. Let us prove it for completeness:

\medskip

We shall argue by induction on $n$. For $n=-1$ the assertion follows from Nakayama's lemma:
if $s^*(\CF)=0$, then $\CF$ vanishes on a Zariski neighborhood of $s$.

\medskip

To execute the induction step, 
choose a locally free $\CO_S$-module $\CP$ equipped with a map $\CP\to \CF$ which induces an 
isomorphism
$$H^0(s^*(\CP))\to H^0(s^*(\CM)).$$ 
Set $\CF':=\on{Cone}(\CF\to \CP)[-1]$. By construction, $s^*(\CF')$ lives in cohomological degrees
$[-n+1,0]$, as desired.

\end{proof}

\begin{cor} \label{c:finite Tor}
If $f:S_1\to S_2$ is eventually coconnective and almost of finite type, 
the functor $f^*$ sends $\QCoh(S_2)^+$ to $\QCoh(S_1)^+$,
and the functor $f^{\IndCoh,*}$ sends $\IndCoh(S_2)^+$ to $\IndCoh(S_1)^+$.
\end{cor}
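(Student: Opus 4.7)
The plan is to deduce both statements from Lemma \ref{l:finite Tor}. Since $f$ is eventually coconnective and almost of finite type, that lemma provides an integer $N$ such that $f^*$ has cohomological amplitude bounded by $N$, i.e., $f^*(\QCoh(S_2)^{\geq -n}) \subset \QCoh(S_1)^{\geq -n-N}$ for every $n$. Since $\QCoh(S_i)^+ = \underset{n}\cup\, \QCoh(S_i)^{\geq -n}$, the first assertion about $f^*$ is immediate.

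For the assertion about $f^{\IndCoh,*}$, I would reduce to coherent objects. Fix $\CF \in \IndCoh(S_2)^{\geq -n}$. Since the t-structure on $\IndCoh(S_2)$ is compatible with filtered colimits and the embedding $\Coh(S_2) \hookrightarrow \IndCoh(S_2)$ is t-exact, $\CF$ can be written as a filtered colimit $\underset{i}{colim}\, \CF_i$ with $\CF_i \in \Coh(S_2)^{\geq -n}$: start from any presentation as a filtered colimit of objects in $\Coh(S_2)$ and apply $\tau^{\geq -n}$, which commutes with filtered colimits and preserves $\Coh(S_2)$.

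Now for each such $\CF_i$, the object $f^{\IndCoh,*}(\CF_i)$ lies in $\Coh(S_1) \subset \IndCoh(S_1)^+$, by the eventual coconnectivity hypothesis together with the construction of $f^{\IndCoh,*}$ in \propref{p:* pullback}. The commutative square of that proposition, combined with the equivalence $\Psi_{S_1}\colon \IndCoh(S_1)^+\overset{\sim}\to \QCoh(S_1)^+$ of \propref{p:equiv on D plus}, identifies $f^{\IndCoh,*}(\CF_i)$ with $f^*(\Psi_{S_2}(\CF_i))$, and the latter lies in $\QCoh(S_1)^{\geq -n-N}$ by the Tor bound; hence $f^{\IndCoh,*}(\CF_i) \in \IndCoh(S_1)^{\geq -n-N}$. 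Since $f^{\IndCoh,*}$ is continuous (it is a left adjoint by \lemref{l:* pullback adjoint}) and the t-structure on $\IndCoh(S_1)$ is compatible with filtered colimits, $f^{\IndCoh,*}(\CF) = \underset{i}{colim}\, f^{\IndCoh,*}(\CF_i) \in \IndCoh(S_1)^{\geq -n-N} \subset \IndCoh(S_1)^+$.

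There is no real obstacle here; the proof is just an unwinding of the construction of $f^{\IndCoh,*}$ together with \lemref{l:finite Tor}. The only subtle point worth flagging is that one cannot bypass the filtered colimit argument by invoking ``$\Psi$ is an equivalence on the $+$ part'': even when $\Psi_{S_1}(f^{\IndCoh,*}(\CF))=f^*(\Psi_{S_2}(\CF))$ is known to lie in $\QCoh(S_1)^+$, this alone does not force $f^{\IndCoh,*}(\CF)$ into $\IndCoh(S_1)^+$, because $\Psi_{S_1}$ has a kernel consisting of t-nil objects. Reducing to compact objects in $\Coh(S_2)^{\geq -n}$, for which $f^{\IndCoh,*}$ lands in $\Coh(S_1)\subset \IndCoh(S_1)^+$ by construction, sidesteps this issue entirely.
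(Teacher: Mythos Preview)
Your proof is correct and is essentially the natural way to fill in the details; the paper itself states the corollary without proof, treating it as immediate from \lemref{l:finite Tor}. Your observation at the end is apt: one really does need the reduction to $\Coh(S_2)^{\geq -n}$ (as used already in the proof of \propref{p:equiv on D plus}), since $\Psi_{S_1}$ detects coconnectivity only on objects already known to lie in $\IndCoh(S_1)^+$.
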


\sssec{}

Assume now that the DG schemes $S_1$ and $S_2$ are eventually coconnective. Note that the 
isomorphism
$$\Psi_{S_2}\circ f^{\IndCoh}_*\simeq f_*\circ \Psi_{S_1}$$
induces a natural transformation
\begin{equation} \label{e:Xi nat tr}
\Xi_{S_2}\circ f_*\to f^{\IndCoh}_*\circ \Xi_{S_1}.
\end{equation}

\begin{prop}  \label{p:Xi and *}
Assume that $f$ is of bounded Tor dimension and almost of finite type. Then \eqref{e:Xi nat tr} is an isomorphism.
\end{prop}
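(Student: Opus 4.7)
The plan is to reduce via continuity to checking the natural transformation on perfect complexes, and then exploit \propref{p:equiv on D plus} by showing both sides land in the eventually coconnective subcategory $\IndCoh(S_2)^+$, on which $\Psi_{S_2}$ is an equivalence.

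First, all four functors $\Xi_{S_2}$, $f_*$, $f^{\IndCoh}_*$, $\Xi_{S_1}$ are continuous, and $\QCoh(S_1)$ is compactly generated by $\QCoh(S_1)^{\on{perf}}$, so it suffices to check that the map $\Xi_{S_2}(f_*(\CE)) \to f^{\IndCoh}_*(\Xi_{S_1}(\CE))$ is an isomorphism for $\CE \in \QCoh(S_1)^{\on{perf}}$. For such $\CE$, the object $\Xi_{S_1}(\CE)$ coincides with $\CE$ viewed inside $\Coh(S_1) \subset \IndCoh(S_1)^+$. By the left t-exactness provided by \propref{p:direct image}, the right-hand side $f^{\IndCoh}_*(\Xi_{S_1}(\CE))$ lies in $\IndCoh(S_2)^+$, and $\Psi_{S_2}$ applied to it is $f_*(\CE) \in \QCoh(S_2)^+$.

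Next I would show that the left-hand side $\Xi_{S_2}(f_*(\CE))$ also lies in $\IndCoh(S_2)^+$. Using $\QCoh(S_2)$-linearity of $\Xi_{S_2}$ (\corref{c:upgrading Xi}), rewrite $\Xi_{S_2}(f_*(\CE)) \simeq f_*(\CE) \otimes \Xi_{S_2}(\CO_{S_2})$. Since $S_2$ is eventually coconnective, $\Xi_{S_2}(\CO_{S_2}) = \CO_{S_2}^{\IndCoh}$ lies in $\Coh(S_2)$, hence in $\IndCoh(S_2)^b \subset \IndCoh(S_2)^+$. The crux is to show that $f_*(\CE)$ has bounded Tor dimension as an object of $\QCoh(S_2)$: combining the projection formula $f_*(\CE) \otimes (-) \simeq f_*(\CE \otimes f^*(-))$ (valid because $\CE$ is perfect) with (i) the bounded Tor dimension of $f$ (which bounds the amplitude of $f^*$), (ii) the perfectness of $\CE$ (which bounds the amplitude of $\CE \otimes -$), and (iii) the finite cohomological amplitude of $f_*$ (available since $f$ is quasi-compact and almost of finite type between Noetherian DG schemes), one obtains that $f_*(\CE) \otimes -$ has bounded cohomological amplitude on $\QCoh(S_2)$. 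A $\QCoh(S_2)$-object of bounded Tor dimension then acts with bounded amplitude on $\IndCoh(S_2)$ via the action of \secref{ss:action}, so $f_*(\CE) \otimes \CO_{S_2}^{\IndCoh} \in \IndCoh(S_2)^+$.

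Finally, applying $\Psi_{S_2}$ to the natural transformation yields the identity $f_*(\CE) \to f_*(\CE)$; this is forced by the construction of the transformation as the $\Xi_{S_2} \dashv \Psi_{S_2}$ adjoint of the tautological identification $f_* \simeq \Psi_{S_2} \circ f^{\IndCoh}_* \circ \Xi_{S_1}$ (since $\Psi_{S_2} \circ \Xi_{S_2} = \on{Id}$). With both sides lying in $\IndCoh(S_2)^+$, the equivalence $\Psi_{S_2}: \IndCoh(S_2)^+ \xrightarrow{\sim} \QCoh(S_2)^+$ of \propref{p:equiv on D plus} forces the natural transformation itself to be an isomorphism.

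The principal obstacle is the bounded-Tor-dimension claim for $f_*(\CE)$ in Step 3: the projection formula reduces it to bounding the amplitude of the composite $f_* \circ (\CE \otimes -) \circ f^*$, and the nontrivial ingredient is that $f_*$ has finite cohomological amplitude in the DG almost-of-finite-type setting, together with the verification that bounded Tor dimension on $\QCoh(S_2)$ translates to bounded amplitude of the action on $\IndCoh(S_2)$ — both reasonable but requiring some care in the DG setting.
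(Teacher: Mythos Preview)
Your argument is correct and follows the same overall architecture as the paper's proof: reduce by continuity to $\CE\in\QCoh(S_1)^{\on{perf}}$, observe that after applying $\Psi_{S_2}$ the map becomes the identity, and then conclude by showing that both sides lie in $\IndCoh(S_2)^+$ so that \propref{p:equiv on D plus} applies.

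The difference lies in how you establish $\Xi_{S_2}(f_*(\CE))\in\IndCoh(S_2)^+$. The paper localizes in the Zariski topology, factors $f$ as a closed embedding followed by a projection $S_2\times\BA^n\to S_2$, and then treats the two cases by hand: for the projection, $f_*(\CO_{S_1})$ is a direct sum of copies of $\CO_{S_2}$; for the closed embedding, $f_*(\CO_{S_1})$ is perfect. You instead argue directly that $f_*(\CE)\in\QCoh(S_2)^b$ has bounded Tor dimension (via the projection formula, the boundedness of $f^*$, and the finite cohomological amplitude of $f_*$), and then invoke the claim that an object of $\QCoh(S_2)^b$ of bounded Tor dimension acts with bounded amplitude on $\IndCoh(S_2)$. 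That last claim is exactly \lemref{l:tensoring with bounded} in the paper, which is stated and proved shortly afterward in the proof of \propref{p:proj form}; its proof is independent of \propref{p:Xi and *}, so there is no circularity. Your route avoids the factorization and Zariski-localization steps at the cost of relying on this amplitude lemma, which itself requires a small reduction-to-the-classical-case argument. Both approaches are clean; yours is arguably more conceptual and the paper's is more elementary in its inputs.
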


\begin{proof}

\medskip

As we shall see in \lemref{l:localization}, the assertion is local in the Zariski topology on both $S_1$
and $S_2$. Therefore, we can assume that $S_1$ and $S_2$ are affine and that $f$ can be factored as
$$S_1\overset{f'}\to S_2\times \BA^n\to S_2,$$
where $f'$ is a closed embedding. 

\medskip

As in the proof of \lemref{l:finite Tor}, the assumption that $f$ is of bounded Tor dimension implies 
the same for $f'$. Hence, it suffices to prove the proposition separately in the following two cases:
(a) $f$ is the projection $S_1=S_2\times \BA^n \to S_2$; and (b) $f$ is a closed embedding.

\medskip

Note that the two sides of \eqref{e:Xi nat tr} become isomorphic after applying the functor $\Psi_{S_2}$. 
Hence, by \propref{p:equiv on D plus}, the assertion of the proposition is equivalent to the fact that for $\CF\in \QCoh(S_1)^{\on{perf}}$,
the object
$$\Xi_{S_2}(f_*(\CF))\in \IndCoh(S_2)$$
belongs to $\IndCoh(S_2)^+$. Since $S_1$ is assumed affine, it suffices to consider the case of $\CF=\CO_{S_1}$. 

\medskip

Thus, in case (a) we need to show that $\Xi_{S_2}(\CO_{S_2}[t_1,...,t_n])$ belongs to $\IndCoh(S_2)^+$. However,
$\CO_{S_2}[t_1,...,t_n]$ is isomorphic to a direct sum of copies of $\CO_{S_2}$, and therefore $\Xi_{S_2}(\CO_{S_2}[t_1,...,t_n])$
is isomorphic to a direct sum of copies of $\Xi_{S_2}(\CO_{S_2})$. Hence, the assertion follows from
the fact that the t-structure on $\IndCoh(S_2)$ is compatible with filtered colimits.

\medskip

In case (b) the object $f_*(\CO_{S_1})$ belongs to $\QCoh(S_2)^{\on{perf}}$,
and the assertion follows by definition.

\end{proof}

\sssec{A base change formula}   

Let $f:S_1\to S_2$ be again a map of bounded Tor dimension and almost of finite type.
Let $g_2:S'_2\to S_2$
be an arbitrary map between Noetherian DG schemes. 

\medskip

The almost finite type assumption implies that the Cartesian product $S'_1:=S'_2\underset{S_2}\times S_1$
is also Noetherian. Moreover, the resulting morphism $f':S'_1\to S'_2$ 
$$
\CD
S'_1  @>{g_1}>>  S_1  \\
@V{f'}VV    @VV{f}V   \\
S'_2  @>{g_2}>>  S_2
\endCD
$$
is also of bounded Tor dimension.

\medskip

\begin{lem}  \label{l:usual base change}
Under the above circumstances, the map 
$$f^{\IndCoh,*}\circ (g_2)^{\IndCoh}_*\to (g_1)^{\IndCoh}_*\circ (f')^{\IndCoh,*}$$
induced by the $(f^{\IndCoh,*},f^{\IndCoh}_*)$ adjunction, is an isomorphism.
\end{lem}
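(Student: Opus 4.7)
The plan is to reduce the statement to the corresponding base-change isomorphism in $\QCoh$, which in the DG-scheme setting is tautologically built into the derived fibre product. First, by \lemref{l:finite Tor}, the assumption that $f$ is almost of finite type and of bounded Tor dimension is equivalent to $f$ being eventually coconnective; the same is then true of $f'$. In particular, the functors $f^{\IndCoh,*}$ and $(f')^{\IndCoh,*}$ are defined and continuous (as left adjoints, by \lemref{l:* pullback adjoint}), while $(g_1)^{\IndCoh}_*$ and $(g_2)^{\IndCoh}_*$ are continuous by construction. Hence both sides of the natural transformation in question are continuous functors $\IndCoh(S'_2)\to \IndCoh(S_1)$, and it suffices to verify the isomorphism on the compact generators $\Coh(S'_2)$.

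Next I would observe that for $\CF\in \Coh(S'_2)$, both sides land in $\IndCoh(S_1)^+$: indeed, $(g_2)^{\IndCoh}_*$ is left t-exact by \propref{p:direct image}, so $(g_2)^{\IndCoh}_*(\CF)\in \IndCoh(S_2)^+$, and then \corref{c:finite Tor} yields $f^{\IndCoh,*}((g_2)^{\IndCoh}_*(\CF))\in \IndCoh(S_1)^+$; the same corollary combined with the left t-exactness of $(g_1)^{\IndCoh}_*$ handles the other composition. By \propref{p:equiv on D plus}, it therefore suffices to show that the map becomes an isomorphism after applying $\Psi_{S_1}$.

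By \propref{p:* pullback} and \propref{p:direct image}, $\Psi_{S_1}$ intertwines $f^{\IndCoh,*}$ with $f^*$ and $(g_i)^{\IndCoh}_*$ with $(g_i)_*$, and so converts the two sides into $f^*\circ (g_2)_*(\Psi_{S'_2}(\CF))$ and $(g_1)_*\circ (f')^*(\Psi_{S'_2}(\CF))$, respectively; the Beck--Chevalley transformation built from the $(f^{\IndCoh,*},f^{\IndCoh}_*)$-adjunction is thereby carried to the analogous one for $\QCoh$. The latter is an isomorphism: on affine charts $S_2=\Spec(B)$, $S_1=\Spec(A)$, $S'_2=\Spec(B')$ (so $S'_1=\Spec(A\otimes_B^L B')$), both sides tautologically compute $M\otimes_B^L A$ for a $B'$-module $M$, and the general case follows by Zariski gluing for $\QCoh$. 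The main bookkeeping obstacle is checking that the Beck--Chevalley transformation constructed in $\IndCoh$ is carried by $\Psi$ to the one constructed in $\QCoh$; this follows from the uniqueness of adjoints together with the compatibility of $\Psi$ with $f^{\IndCoh,*}\leftrightarrow f^*$ and $(g_i)^{\IndCoh}_*\leftrightarrow (g_i)_*$ recorded in \propref{p:* pullback} and \propref{p:direct image}.
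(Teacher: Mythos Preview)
Your proof is correct and follows essentially the same approach as the paper's: reduce to compact generators $\Coh(S'_2)$ by continuity, use \corref{c:finite Tor} to see both composites land in $\IndCoh(S_1)^+$, then apply \propref{p:equiv on D plus} and the compatibilities of $\Psi$ with $f^{\IndCoh,*}$ and $(g_i)^{\IndCoh}_*$ to reduce to base change for $\QCoh$. The paper's proof is the one-line version of what you wrote; your treatment of the bookkeeping (in particular, verifying that the Beck--Chevalley map in $\IndCoh$ is carried by $\Psi$ to the one in $\QCoh$) is a welcome elaboration.
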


\begin{proof}

Follows from \corref{c:finite Tor} and 
the usual base change formula for $\QCoh$,
by evaluating both functors on $\Coh(S_1)\subset \QCoh(S_1)^+$. 

\end{proof}

\sssec{A projection formula}

Let $f:S_1\to S_2$ be again a map of bounded Tor dimension. For
$\CE_1\in \QCoh(S_1)$ and $\CF_2\in \IndCoh(S_2)$ consider the canonical map
\begin{equation} \label{e:proj formula map}
f_*(\CE_1)\otimes \CF_2\to f^{\IndCoh}_*(\CE_1\otimes f^{\IndCoh,*}(\CF_2))
\end{equation}
that comes by adjunction from
$$f^{\IndCoh,*}(f_*(\CE_1)\otimes \CF_2)\simeq
f^*(f_*(\CE_1))\otimes f^{\IndCoh,*}(\CF_2)\to \CE_1\otimes f^{\IndCoh,*}(\CF_2).$$

Here $\otimes$ denotes the action of $\QCoh(-)$ on $\IndCoh(-)$ given by \secref{ss:action}.

\begin{prop} \label{p:proj form}
The map \eqref{e:proj formula map} is an isomorphism.
\end{prop}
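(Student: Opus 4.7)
The plan is to reduce \eqref{e:proj formula map} to the classical projection formula in $\QCoh$ via the comparison functor $\Psi_{S_2}$. Both sides of the map are continuous in each of $\CE_1 \in \QCoh(S_1)$ and $\CF_2 \in \IndCoh(S_2)$: the functor $f^{\IndCoh,*}$ is continuous as a left adjoint (\lemref{l:* pullback adjoint}), $f^{\IndCoh}_*$ is continuous by \propref{p:direct image}, the action of $\QCoh$ on $\IndCoh$ is continuous by construction, and $f_*:\QCoh(S_1)\to \QCoh(S_2)$ is continuous because its left adjoint $f^*$ sends perfect complexes to perfect complexes. Since $\QCoh(S_1)$ is compactly generated by $\QCoh(S_1)^{\on{perf}}$ and $\IndCoh(S_2)$ by $\Coh(S_2)$, we may assume $\CE_1 \in \QCoh(S_1)^{\on{perf}}$ and $\CF_2 \in \Coh(S_2)$.

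Under these assumptions, I claim both sides of \eqref{e:proj formula map} lie in $\IndCoh(S_2)^+$. For the right-hand side: since $f$ is of bounded Tor dimension, $f^{\IndCoh,*}$ preserves the eventually coconnective subcategory (cf.\ \corref{c:finite Tor}), so $f^{\IndCoh,*}(\CF_2) \in \IndCoh(S_1)^+$; tensoring with the perfect object $\CE_1$ (which has bounded Tor amplitude) preserves $\IndCoh(S_1)^+$; and $f^{\IndCoh}_*$ is left t-exact up to a finite shift, keeping us in $\IndCoh(S_2)^+$. For the left-hand side: $\CE_1$ perfect combined with bounded Tor dimension of $f$ yields $f_*(\CE_1) \in \QCoh(S_2)^+$, so tensoring with $\CF_2 \in \Coh(S_2) \subset \IndCoh(S_2)^+$ again lands in $\IndCoh(S_2)^+$.

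Since $\Psi_{S_2}:\IndCoh(S_2)^+ \to \QCoh(S_2)^+$ is an equivalence by \propref{p:equiv on D plus}, it suffices to check that \eqref{e:proj formula map} becomes an isomorphism after applying $\Psi_{S_2}$. Combining the compatibilities $\Psi_{S_2}\circ f^{\IndCoh}_* \simeq f_*\circ \Psi_{S_1}$ (\propref{p:direct image}), $\Psi_{S_1}\circ f^{\IndCoh,*} \simeq f^*\circ \Psi_{S_2}$ (\propref{p:* pullback}), and the $\QCoh$-linearity of $\Psi$ (\lemref{l:Psi compat with action}), the image of \eqref{e:proj formula map} is identified with the classical projection morphism
$$f_*(\CE_1)\otimes \Psi_{S_2}(\CF_2) \to f_*\bigl(\CE_1\otimes f^*(\Psi_{S_2}(\CF_2))\bigr)$$
in $\QCoh(S_2)$. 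For $\CE_1$ perfect (hence dualizable), this is the standard projection formula in $\QCoh$, which is an isomorphism.

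The main obstacle is verifying that the unit/counit maps of the $(f^{\IndCoh,*}, f^{\IndCoh}_*)$-adjunction used to define \eqref{e:proj formula map} intertwine under $\Psi$ with the unit/counit of the $(f^*, f_*)$-adjunction in $\QCoh$ to produce precisely the classical projection map. This is a formal compatibility, but it must be traced through the several natural transformations invoked; essentially, it follows from the fact that the adjunction $(f^{\IndCoh,*}, f^{\IndCoh}_*)$ was built in \lemref{l:* pullback adjoint} by transporting the $(f^*, f_*)$-adjunction along $\Psi$ on eventually coconnective subcategories.
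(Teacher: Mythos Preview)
Your overall strategy coincides with the paper's: reduce to $\CE_1\in\QCoh(S_1)^{\on{perf}}$ and $\CF_2\in\Coh(S_2)$, check that both sides are eventually coconnective, then apply $\Psi_{S_2}$ and invoke the projection formula in $\QCoh$. The argument for the right-hand side being in $\IndCoh(S_2)^+$ is fine and matches the paper.

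The gap is in your treatment of the left-hand side. You write: ``$f_*(\CE_1)\in\QCoh(S_2)^+$, so tensoring with $\CF_2\in\Coh(S_2)\subset\IndCoh(S_2)^+$ again lands in $\IndCoh(S_2)^+$.'' This inference is false in general: membership of $\CE$ in $\QCoh(S_2)^+$ does \emph{not} imply that the action functor $\CE\otimes-$ on $\IndCoh(S_2)$ preserves eventual coconnectivity. For a concrete counterexample, take $S_2=\Spec(k[x]/x^2)$, $\CE=k$ (the residue field, in $\QCoh(S_2)^\heartsuit$), and $\CF_2=k\in\Coh(S_2)^\heartsuit$; then $\Psi_{S_2}(\CE\otimes\CF_2)\simeq k\otimes_{k[x]/x^2}k$ has nonzero cohomology in every nonpositive degree, so by t-exactness of $\Psi_{S_2}$ the object $\CE\otimes\CF_2$ cannot lie in $\IndCoh(S_2)^+$.

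What the paper actually uses is sharper: $f_*(\CE_1)$ is not merely in $\QCoh(S_2)^+$ but is in $\QCoh(S_2)^b$ \emph{and of bounded Tor dimension} (this is where the bounded Tor dimension hypothesis on $f$ enters, via the projection formula in $\QCoh$). The paper then invokes \lemref{l:tensoring with bounded}, which asserts that for $\CE\in\QCoh(S)^b$ of bounded Tor dimension the functor $\CE\otimes-:\IndCoh(S)\to\IndCoh(S)$ has bounded left cohomological amplitude. This lemma is not a formality: its proof passes to the classical truncation, reduces to the affine case, and ultimately appeals to Lazard's lemma to write a flat module as a filtered colimit of free modules. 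Your argument is missing both the observation about bounded Tor dimension of $f_*(\CE_1)$ and any substitute for this lemma.
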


\begin{rem}
Note that there is another version of the projection formula: namely, \eqref{e:dir image as modules}.
It says that for
$\CE_2\in \QCoh(S_2)$ and $\CF_1\in \IndCoh(S_1)$ we have
$$f^\IndCoh_*(f^*(\CE_2)\otimes \CF_1)\simeq \CE_2\otimes f^\IndCoh_*(\CF_1).$$
The formula holds nearly tautologically and expresses the fact that the functor 
$f^\IndCoh_*$ is $\QCoh(S_2)$-linear, see \propref{p:dir image as modules}.
\end{rem}

\begin{proof} 

It is enough to show that the isomorphism holds for $\CE_1\in \QCoh(S_1)^{\on{perf}}$ and
$\CF_2\in \Coh(S_2)\subset \IndCoh(S_2)$.

\medskip

We also note that the map \eqref{e:proj formula map} becomes an isomorphism 
after applying the functor $\Psi_{S_2}$, by the usual projection formula for $\QCoh$. 
For $\CE_1\in \QCoh(S_1)^{\on{perf}}$ and $\CF_2\in \Coh(S_2)$ we have
$$f^\IndCoh_*(\CE_1\otimes f^{\IndCoh,*}(\CF_2))\in \IndCoh(S_2)^+.$$

Hence, by \propref{p:equiv on D plus}, it suffices to show that in this case
$$f_*(\CE_1)\otimes \CF_2\in \IndCoh(S_2)^+.$$

We note that the object $f_*(\CE_1)\in \QCoh(S_1)^b$ is of bounded Tor dimension.
The required fact follows from the next general observation:

\begin{lem} \label{l:tensoring with bounded}
For $S\in \dgSch_{\on{Noeth}}$ and $\CE\in \QCoh(S)^b$, whose Tor dimension 
is bounded on the left by an integer $n$, the functor
$$\CE\otimes -:\IndCoh(S)\to \IndCoh(S)$$
has a cohomological amplitude bounded on the left by $n$.
\end{lem}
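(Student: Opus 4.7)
The strategy is to approximate $\CE$ by a filtered colimit of perfect complexes sharing the same Tor-dimension bound, reducing the amplitude estimate on $\IndCoh(S)$ to its obvious analogue for perfect complexes. Since $\CE \otimes -$ commutes with filtered colimits, $\IndCoh(S)^{\geq -n}$ is closed under filtered colimits (by compatibility of the t-structure on $\IndCoh(S)$ with filtered colimits), and any $\CF \in \IndCoh(S)^{\geq 0}$ is a filtered colimit of objects of $\Coh(S)^{\geq 0}$, the question reduces to the case $\CF \in \Coh(S)^{\geq 0}$.

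\medskip

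The central step is to exhibit $\CE$ as a filtered colimit $\CE \simeq \underset{\alpha}{\on{colim}}\, P_\alpha$ in $\QCoh(S)$ with each $P_\alpha \in \QCoh(S)^{\on{perf}}$ of Tor dimension on the left at most $n$. One takes a bounded flat resolution $F^\bullet \to \CE$, available by standard homological algebra applied affine-locally, with $F^\bullet$ a complex of flat quasi-coherent sheaves in cohomological degrees $[-n, q]$ for some $q$: one starts from an unbounded-below flat resolution and truncates at degree $-n$, using the Tor-dimension hypothesis to guarantee that the resulting $(-n)$-th syzygy is itself flat. Writing each $F^i$ as a filtered colimit of finitely generated projective $\CO_S$-modules via Lazard's theorem and assembling these across the bounded range of degrees yields a filtered colimit presentation of $F^\bullet$ by bounded complexes $P_\alpha$ of finitely generated projectives concentrated in $[-n, q]$; each $P_\alpha$ is perfect and has Tor dimension on the left at most $n$.

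\medskip

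For each such $P_\alpha$ and $\CF \in \Coh(S)^{\geq 0}$, the Tor-dimension bound on $P_\alpha$ gives $P_\alpha \otimes \CF \in \QCoh(S)^{\geq -n}$; since $P_\alpha$ is perfect and $\CF$ coherent, this object lies in $\Coh(S)^{\geq -n}$, hence in $\IndCoh(S)^{\geq -n}$ via the t-exact embedding $\Coh(S) \hookrightarrow \IndCoh(S)$. By compatibility of the $\QCoh(S)$-action on $\IndCoh(S)$ with filtered colimits, $\CE \otimes \CF \simeq \underset{\alpha}{\on{colim}}\, P_\alpha \otimes \CF$, and closure of $\IndCoh(S)^{\geq -n}$ under filtered colimits gives the desired conclusion $\CE \otimes \CF \in \IndCoh(S)^{\geq -n}$.

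\medskip

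The substantive obstacle lies in the central step: both the existence of a bounded flat resolution in the DG setting and the assembly of the Lazard decompositions of the $F^i$'s into a coherent filtered system of perfect complexes (so that the differentials of $F^\bullet$ restrict compatibly across the indexing category) require technical care. Since the amplitude assertion is Zariski-local on $S$, one may reduce to the affine case; there, the construction can be made explicit, using the DG analogues of flat and projective resolutions over the Noetherian DG ring $A = \Gamma(S, \CO_S)$.
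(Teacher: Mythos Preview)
Your approach differs from the paper's in an instructive way. Both arguments ultimately invoke Lazard's theorem, but at different stages and in different generality.

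The paper does not attempt to approximate $\CE$ by perfect complexes on the original DG scheme. Instead, it first reduces by d\'evissage to $\CF \in \Coh(S)^\heartsuit$, then uses that $i^{\IndCoh}_*:\Coh({}^{cl}\!S)^\heartsuit \to \Coh(S)^\heartsuit$ is an equivalence (where $i:{}^{cl}\!S\to S$) together with the projection formula $\CE\otimes i^{\IndCoh}_*(\CF')\simeq i^{\IndCoh}_*(i^*(\CE)\otimes\CF')$ to push the entire problem down to the \emph{classical} truncation. After Zariski-localizing, one is on a classical affine Noetherian scheme, where representing $\CE$ by a complex of flat modules in degrees $\geq -n$ is genuinely elementary, and a further d\'evissage reduces to a single flat module in degree $0$, to which classical Lazard applies.

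Your ``substantive obstacle''---producing a bounded-below flat resolution over a general Noetherian DG ring and then coherently assembling the termwise Lazard presentations into a filtered system of perfect complexes---is precisely what the paper's reduction to ${}^{cl}\!S$ sidesteps. Your strategy can be made to work (the DG analogue, that a module of Tor-amplitude in $[-n,q]$ is a filtered colimit of perfect modules with the same Tor-amplitude bound, is available in Lurie's \emph{Higher Algebra}), but it requires nontrivial input that you correctly flag but do not supply. The paper's route needs only the classical Lazard statement. What your route buys, if completed, is that it stays entirely on the $\QCoh$-side of the action and avoids the projection formula and the classical-truncation trick; the cost is the DG resolution theory you identify as the obstacle.
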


\end{proof}

\sssec{Proof of \lemref{l:tensoring with bounded}}

We need to show that the functor $\CE\otimes -$ sends $\IndCoh(S)^{\geq 0}$ to $\IndCoh(S)^{\geq -n}$.
It is sufficient to show that this functor sends $\Coh(S)^{\geq 0}$ to $\IndCoh(S)^{\geq -n}$.
By cohomological devissage, the latter is equivalent to sending $\Coh(S)^\heartsuit$ to $\IndCoh(S)^{\geq -n}$.

\medskip

Let $i$ denote the closed embedding $^{cl}\!S=:S'\to S$. The functor $i^\IndCoh_*$ induces an equivalence
$\Coh(S')^\heartsuit\to \Coh(S)^\heartsuit$. So, it is enough to show that for $\CF'\in \Coh(S')^\heartsuit$, we have
$$\CE\otimes i^\IndCoh_*(\CF')\in \IndCoh(S)^{\geq -n}.$$

\medskip

We have:
$$\CE\otimes i^\IndCoh_*(\CF')\simeq i^\IndCoh_*(i^*(\CE)\otimes \CF').$$
Note that the functor $i^\IndCoh_*$ is t-exact (since $i_*$ is), and $i^*(\CE)$ has Tor dimension bounded by the same
integer $n$.

\medskip

This reduces the assertion of the lemma to the case when $S$ is classical. Further, by \corref{c:t structure local}
(which will be proved independently later), the statement is Zariski local, so we can assume that $S$ is affine.

\medskip

In the latter case, the assumption on $\CE$ implies that it can be represented by a complex of flat
$\CO_S$-modules that lives in the cohomological degrees $\geq -n$. This reduces the assertion further
to the case when $\CE$ is a flat $\CO_S$-module in degree $0$. In this case we claim that the functor
$$\CE\otimes -:\IndCoh(S)\to \IndCoh(S)$$
is t-exact. 

\medskip

The latter follows from Lazard's lemma: such $\CE$ is a filtered colimit of locally free $\CO_S$-modules
$\CE_0$, while for each such $\CE_0$, the functor $\CE_0\otimes -:\IndCoh(S)\to \IndCoh(S)$ is by definition the 
ind-extension of the functor
$$\CE_0\otimes -:\Coh(S)\to \Coh(S),$$ 
and the latter is t-exact.

\qed

\section{Properties of $\IndCoh$ inherited from $\QCoh$}  \label{s:properties}

In this section we retain the assumption that all DG schemes considered are Noetherian. 

\ssec{Localization}

Let $S$ be a DG scheme, and let $j:\oS\hookrightarrow S$ be an open embedding. By 
\lemref{l:* pullback adjoint}, we have a pair of mutually adjoint functors
$$j^{\IndCoh,*}:\IndCoh(S)\rightleftarrows \IndCoh(\oS):j^{\IndCoh}_*.$$

\begin{lem}  \label{l:localization}
The functor $j^{\IndCoh}_*$ is fully faithful. 
\end{lem}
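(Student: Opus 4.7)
The plan is to reduce the statement to the well-known fact that $j^*\circ j_*\simeq \on{Id}$ for an open embedding in $\QCoh$. Concretely, $j^{\IndCoh}_*$ is fully faithful if and only if the counit of adjunction
$$\epsilon_{\CF}:j^{\IndCoh,*}\circ j^{\IndCoh}_*(\CF)\to \CF$$
is an isomorphism for all $\CF\in \IndCoh(\oS)$. Note that $j^{\IndCoh}_*$ is continuous by \propref{p:direct image}, and $j^{\IndCoh,*}$ is continuous by construction; hence both sides of $\epsilon$ are continuous functors of $\CF$, and it suffices to verify the claim for $\CF\in \Coh(\oS)$.

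Fix such an $\CF$. Since $j^{\IndCoh}_*$ is left t-exact, $j^{\IndCoh}_*(\CF)\in \IndCoh(S)^+$. Moreover, $j$ is flat, hence eventually coconnective, so by \corref{c:finite Tor} the functor $j^{\IndCoh,*}$ preserves the eventually coconnective subcategories. Thus $j^{\IndCoh,*}\circ j^{\IndCoh}_*(\CF)\in \IndCoh(\oS)^+$ as well, and by \propref{p:equiv on D plus} it suffices to show that $\Psi_{\oS}(\epsilon_{\CF})$ is an isomorphism in $\QCoh(\oS)^+$.

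Now invoke the compatibilities $\Psi_S\circ j^{\IndCoh}_*\simeq j_*\circ \Psi_{\oS}$ (from \propref{p:direct image}) and $\Psi_{\oS}\circ j^{\IndCoh,*}\simeq j^*\circ \Psi_S$ (from \propref{p:* pullback}). Under these identifications $\Psi_{\oS}(\epsilon_{\CF})$ matches the counit $j^*\circ j_*(\Psi_{\oS}(\CF))\to \Psi_{\oS}(\CF)$ of the $(j^*,j_*)$-adjunction in $\QCoh$, which is an isomorphism since $j$ is an open embedding. Combined with the previous paragraph, this gives $\epsilon_{\CF}\simeq \on{Id}$, as required.

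The only mildly delicate point is the verification that $\Psi_{\oS}(\epsilon_{\CF})$ really agrees with the $\QCoh$ counit under the two compatibility isomorphisms. This is a formal consequence of the fact that the adjunction $(j^{\IndCoh,*},j^{\IndCoh}_*)$ was constructed (in \lemref{l:* pullback adjoint}) from the adjunction $(j^*,j_*)$ via precisely these compatibilities, so the units and counits match by construction; no further computation is needed.
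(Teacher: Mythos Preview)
Your proof is correct and follows essentially the same approach as the paper's: reduce to compact objects by continuity, observe both sides lie in $\IndCoh(\oS)^+$, apply \propref{p:equiv on D plus}, and then use the compatibility of $\Psi$ with $j^{\IndCoh,*}$ and $j^{\IndCoh}_*$ to reduce to the standard isomorphism $j^*\circ j_*\simeq \on{Id}$ in $\QCoh$. Your write-up is slightly more explicit in justifying why both sides are eventually coconnective (invoking \corref{c:finite Tor}) and why the counits match, but the argument is the same.
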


\begin{proof}
To prove that $j^{\IndCoh}_*$ is fully faithful, we have to show that the counit of the adjunction
\begin{equation} \label{e:localization}
j^{\IndCoh,*}\circ j^{\IndCoh}_*\to \on{Id}_{\IndCoh(\oS)}
\end{equation}
is an isomorphism.

Since both sides are continuous functors, it is sufficient to
do so when evaluated on objects of $\Coh(\oS)$, in which case
both sides of \eqref{e:localization} belong to $\IndCoh(\oS)^+$.
Therefore, by \propref{p:equiv on D plus}, it is sufficient to
show that the map
$$\Psi_{\oS}\circ j^{\IndCoh,*}\circ j^{\IndCoh}_*\to \Psi_{\oS}$$
is an isomorphism. However, by Propositions \ref{p:direct image}
and \ref{p:* pullback}, we have a commutative diagram
$$
\CD
\Psi_{\oS}\circ j^{\IndCoh,*}\circ j^{\IndCoh}_*   @>>>  \Psi_{\oS} \\
@V{\sim}VV     @VV{\on{id}}V   \\
j^*\circ j_*\circ \Psi_{\oS}   @>>>  \Psi_{\oS}
\endCD
$$
and the assertion follows from the fact that $j^*\circ j_*\to \on{Id}_{\QCoh(\oS)}$
is an isomorphism. 

\end{proof}

\sssec{}

Let now $i:S'\hookrightarrow S$ be a closed embedding whose image is
complementary to $\oS$. Let
$\IndCoh(S)_{S'}$ denote the full subcategory of $\IndCoh(S)$ equal to
$$\on{ker}\left(j^{\IndCoh,*}:\IndCoh(S)\to \IndCoh(\oS)\right);$$
we denote the tautological functor $\IndCoh(S)_{S'}\hookrightarrow \IndCoh(S)$
by $\wh{i}_*^{\IndCoh}$.

\begin{rem} \label{r:formal completion}
It is shown in \cite[Proposition 7.4.5]{GR2} that the category $\IndCoh(S)_{S'}$ is intrinsic
to the ind-scheme equal to the formal completion of $S$ at $S'$.
\end{rem}

\sssec{}

From \lemref{l:localization} we obtain:
\begin{cor} \label{c:localization}
The functor $\wh{i}_*^{\IndCoh}$ admits a continuous right adjoint (denoted $\wh{i}^!$)
making $\IndCoh(S)_{S'}$ into a colocalization of $\IndCoh(S)$. 
The kernel of $\wh{i}^!$ is the essential image of the functor $j^\IndCoh_*$.
the kernel of $\wh{i}^!$ is the essential image of the functor $j^\IndCoh_*$.
\end{cor}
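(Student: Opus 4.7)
The plan is to follow the standard recollement pattern. Since $j$ is an open embedding, it is eventually coconnective, so $j^{\IndCoh,*}$ is defined and continuous (it is a left adjoint, and by \corref{c:upper * DG funct}). The right adjoint $j^{\IndCoh}_*$ is continuous by \propref{p:upgrading IndCoh to functor}, and it is fully faithful by \lemref{l:localization}, which is equivalent to saying the counit $j^{\IndCoh,*}\circ j^{\IndCoh}_* \to \on{Id}$ is an isomorphism.

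Using this, I would \emph{define}
\[
\wh{i}^!(\CF) \;:=\; \on{fib}\bigl(\CF \,\longrightarrow\, j^{\IndCoh}_*\circ j^{\IndCoh,*}(\CF)\bigr),
\]
the map being the unit of the $(j^{\IndCoh,*}, j^{\IndCoh}_*)$-adjunction. This is a continuous endofunctor of $\IndCoh(S)$, since both constituents and the formation of fibers are continuous. To see that $\wh{i}^!(\CF)$ lies in $\IndCoh(S)_{S'}$, I would apply $j^{\IndCoh,*}$ to the defining fiber sequence: by the triangle identity for the unit, the induced map $j^{\IndCoh,*}\CF \to j^{\IndCoh,*}\circ j^{\IndCoh}_*\circ j^{\IndCoh,*}\CF$ becomes an isomorphism once composed with the (isomorphic) counit, so $j^{\IndCoh,*}(\wh{i}^!(\CF)) = 0$.

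Next I would verify the adjunction property. For $\CG\in \IndCoh(S)_{S'}$ and $\CF\in \IndCoh(S)$, applying $\Maps_{\IndCoh(S)}(\wh{i}^{\IndCoh}_*\CG,-)$ to the defining fiber sequence yields a fiber sequence
\[
\Maps(\wh{i}^{\IndCoh}_*\CG,\wh{i}^!\CF) \to \Maps(\wh{i}^{\IndCoh}_*\CG,\CF) \to \Maps(j^{\IndCoh,*}\CG, j^{\IndCoh,*}\CF),
\]
where the last term uses the $(j^{\IndCoh,*}, j^{\IndCoh}_*)$-adjunction. Since $j^{\IndCoh,*}\CG = 0$, the third term vanishes and the first map is an isomorphism. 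This simultaneously gives the right adjoint property of $\wh{i}^!$ and the colocalization structure, since the inclusion $\wh{i}^{\IndCoh}_*$ is fully faithful by construction.

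Finally, for the kernel: $\wh{i}^!(\CF) = 0$ if and only if the unit $\CF \to j^{\IndCoh}_*\circ j^{\IndCoh,*}(\CF)$ is an isomorphism. If this holds, then $\CF$ is manifestly in the essential image of $j^{\IndCoh}_*$; conversely, if $\CF \simeq j^{\IndCoh}_*(\CF_0)$, the unit at $\CF$ is $j^{\IndCoh}_*$ applied to the inverse of the counit at $\CF_0$, which is an isomorphism by \lemref{l:localization}. There is no serious obstacle here: the argument is entirely formal, relying on \lemref{l:localization} together with continuity properties that are built into the functorial framework of Section~\ref{ss:upgrading direct image to a functor}.
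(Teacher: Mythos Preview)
Your proof is correct and follows exactly the approach the paper has in mind: the paper states the corollary as an immediate consequence of \lemref{l:localization} without elaboration, and your explicit formula $\wh{i}^!(\CF)=\on{fib}(\CF\to j^{\IndCoh}_*\circ j^{\IndCoh,*}(\CF))$ is precisely the one the paper invokes later in the proof of \propref{p:generation by closed}. The recollement-style verification you give is the standard unpacking of this formal consequence.
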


We can depict the assertion of \corref{c:localization} as a ``short exact sequence of DG categories":
$$\IndCoh(S)_{S'}\overset{\wh{i}_*^{\IndCoh}}{\underset{\wh{i}^!}\rightleftarrows} \IndCoh(S)
\overset{j^{\IndCoh,*}}{\underset{j^\IndCoh_*}\rightleftarrows} \IndCoh(\oS).$$

\sssec{}

It is clear that the essential image of the functor
$$i^{\IndCoh}_*:\IndCoh(S')\to \IndCoh(S)$$
lies in $\IndCoh(S)_{S'}$, and that its right adjoint
$$i^!:\IndCoh(S)\to \IndCoh(S')$$ factors through the colocalization $\wh{i}^!$. 
We will denote the resulting pair of adjoint functors as follows:
$$'i^{\IndCoh}_*:\IndCoh(S')\rightleftarrows \IndCoh(S)_{S'}:{}'i^!.$$

\begin{prop}  \label{p:generation by closed}  
{\em(a)} The functor $'i^!$ is conservative. The essential image of $'i^{\IndCoh}_*$ generates
the category $\IndCoh(S)_{S'}$.

\smallskip

\noindent{\em(b)} The category $\IndCoh(S)_{S'}$ identifies with the ind-completion of
$$\Coh(S)_{S'}:=\on{ker}\left(j^*:\Coh(S)\to \Coh(\oS)\right).$$

\end{prop}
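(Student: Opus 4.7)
My plan is to establish (b) first and then deduce (a). For the setup of (b), the inclusion $\Coh(S)_{S'} \hookrightarrow \Coh(S) \hookrightarrow \IndCoh(S)$ ind-extends (since objects of $\Coh(S)$ are compact in $\IndCoh(S)$) to a continuous fully faithful functor $\Phi : \Ind(\Coh(S)_{S'}) \to \IndCoh(S)$. Its image lies in $\IndCoh(S)_{S'}$: for $\CF \in \Coh(S)_{S'}$, the object $j^{\IndCoh,*}(\CF)$ lies in $\IndCoh(\oS)^+$ and corresponds via $\Psi_{\oS}$ to $j^*(\CF) = 0$, so it vanishes by \propref{p:equiv on D plus}; continuity of $j^{\IndCoh,*}$ extends this to all of $\Ind(\Coh(S)_{S'})$.

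The content of (b) is the essential surjectivity of $\Phi$. By \corref{c:localization}, $\wh{i}^!$ is continuous, and since $\wh{i}_*^{\IndCoh}$ is fully faithful one has $\IndCoh(S)_{S'} = \wh{i}^!(\IndCoh(S))$. As $\Coh(S)$ generates $\IndCoh(S)$ under colimits, $\IndCoh(S)_{S'}$ is generated under colimits by $\wh{i}^!(\Coh(S))$. Hence it suffices to show that for every $\CF \in \Coh(S)$, $\wh{i}^!(\CF)$ can be written as a filtered colimit of objects of $\Coh(S)_{S'}$. This is the main obstacle of the proof and requires a local computation using the defining fiber sequence
\[
\wh{i}^!(\CF) \to \CF \to j^{\IndCoh}_*\, j^{\IndCoh,*}(\CF).
\]
Working Zariski-locally we reduce to $S$ affine and $\oS \subset S$ a basic open. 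In the principal case $\oS = D(f)$, the right-hand term identifies with the filtered colimit $\on{colim}_n(\CF \xrightarrow{f} \CF \xrightarrow{f} \cdots)$ computing the localization $\CF[f^{-1}]$, so $\wh{i}^!(\CF) \simeq \on{colim}_n \on{fib}(\CF \xrightarrow{f^n} \CF)$; each such fiber is a coherent complex annihilated by $f^n$ and thus belongs to $\Coh(S)_{S'}$. For a general open $\oS = S \setminus V(\CI)$ with $\CI$ generated by $f_1, \dots, f_k$, a \v{C}ech/Koszul style argument assembling the principal-open computations yields the analogous presentation of $\wh{i}^!(\CF)$.

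For (a), note that $'i^!$ is the continuous right adjoint of the continuous $'i^{\IndCoh}_*$ between cocomplete stable $\infty$-categories, so $'i^!$ is conservative if and only if the essential image of $'i^{\IndCoh}_*$ generates $\IndCoh(S)_{S'}$ under colimits; the two assertions of (a) are thus equivalent. To establish the generation, combine (b) with the classical filtration lemma: any $\CF \in \Coh(S)_{S'}^\heartsuit$ is annihilated by some power $\CI^n$ of the ideal sheaf of $S'$ (by Noetherianity and coherence of $\CF$), and therefore admits a finite $\CI$-adic filtration $0 = \CI^n \CF \subset \cdots \subset \CI\CF \subset \CF$ whose subquotients are $\CO_{S'}$-modules, hence lie in $i_*(\Coh(S')^\heartsuit)$. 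Cohomological devissage on bounded coherent complexes extends this to show that $\Coh(S)_{S'}$ is generated as a stable subcategory of $\Coh(S)$ by $i_*(\Coh(S'))$. Ind-extending and invoking (b), the category $\IndCoh(S)_{S'}$ is generated under colimits by $'i^{\IndCoh}_*(\IndCoh(S'))$, establishing (a).
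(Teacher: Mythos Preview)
Your overall strategy (prove (b), then deduce (a) via the $\CI$-adic filtration) is reasonable, and your filtration argument for (a) is essentially the same one the paper uses. The issue is in your proof of (b): the step ``Working Zariski-locally we reduce to $S$ affine'' is not justified and is a genuine gap. You want to show that $\Phi:\Ind(\Coh(S)_{S'})\to\IndCoh(S)_{S'}$ is essentially surjective, but this is not a statement that obviously localizes: knowing that $\Ind(\Coh(U)_{S'\cap U})\simeq\IndCoh(U)_{S'\cap U}$ for each affine $U$ does not formally imply the global statement, because you have no a priori Zariski descent for the presheaf $U\mapsto\Ind(\Coh(U)_{S'\cap U})$ independently of the statement you are trying to prove. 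Your Koszul presentation of $\wh{i}^!(\CF)$ is correct when $S$ is affine, but for general $S$ the ideal sheaf need not have global generators, and you give no global replacement for the argument.

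The paper avoids this entirely by reversing the order. It first observes that $j^{\IndCoh,*}$ is t-exact, so $\IndCoh(S)_{S'}$ inherits a t-structure from $\IndCoh(S)$; then, using the fiber sequence and the right-completeness of $\IndCoh(S)^+$ (via \propref{p:equiv on D plus}), it reduces the generation statement in (a) to the heart $\IndCoh(S)_{S'}^\heartsuit\simeq\ker(j^*:\QCoh(S)^\heartsuit\to\QCoh(\oS)^\heartsuit)$, where your $\CI$-adic filtration argument applies directly and globally. Once (a) is known, (b) is immediate: the fully faithful functor $\Ind(\Coh(S)_{S'})\to\IndCoh(S)_{S'}$ has essential image containing $i^{\IndCoh}_*(\Coh(S'))\subset\Coh(S)_{S'}$, which generates by (a). So the t-structure devissage replaces your local-cohomology computation and sidesteps the gluing problem.
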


\begin{proof}
First, we observe that the two statements of point (a) are equivalent. We will prove
the second statement.

\medskip

Observe also that the functor $j^{\IndCoh,*}$ is t-exact, so the subcategory 
$$\IndCoh(S)_{S'}\subset \IndCoh(S)$$ is stable under taking truncations. In particular,
it inherits a t-structure.

\medskip

The category $\IndCoh(S)$ is generated by $\IndCoh(S)^+$. The functor
$\wh{i}^!$ is explicitly given by
$$\CF\mapsto \on{Cone}\left(\CF\to j^{\IndCoh}_*\circ j^{\IndCoh,*}(\CF)\right)[-1],$$
from which it is clear that $\IndCoh(S)_{S'}$ is also generated by 
$$\IndCoh(S)^+_{S'}=\IndCoh(S)_{S'}\cap \IndCoh(S)^+.$$

\medskip

Note $\QCoh(S)$ is right-complete in its t-structure (every object is isomorphic to the colimit
to its $\tau^{\leq n}$-truncations).  Hence, by \propref{p:equiv on D plus}, the same is true for $\IndCoh(S)^+$, 
and threrefore also for $\IndCoh(S)^+_{S'}$. From here we obtain that $\IndCoh(S)_{S'}$
is generated by 
$$\IndCoh(S)^b_{S'}=\IndCoh(S)_{S'}\cap \IndCoh(S)^b,$$
and hence, by devissage, by 
$$\IndCoh(S)_{S'}^\heartsuit=\IndCoh(S)_{S'}\cap \IndCoh(S)^\heartsuit.$$

\medskip

Therefore, it is sufficient to show that every object of $\IndCoh(S)_{S'}^\heartsuit$
admits an (increasing) filtration with subquotients belonging to the essential
image of the functor 
$$i^{\IndCoh}_*:\IndCoh(S')^\heartsuit\to \IndCoh(S)^\heartsuit.$$
However, the latter is obvious: by \propref{p:equiv on D plus}, the functor
$\Psi$ gives rise to a commutative diagram
$$
\CD
\IndCoh(S')^\heartsuit   @>{i^{\IndCoh}_*}>>  \IndCoh(S)^\heartsuit  @>{j^{\IndCoh,*}}>>  \IndCoh(\oS)^\heartsuit \\
@V{\Psi_{S'}}VV      @VV{\Psi_S}V   @VV{\Psi_{\oS}}V   \\
\QCoh(S')^\heartsuit  @>{i_*}>>  \QCoh(S)^\heartsuit  @>{j^*}>>  \QCoh(\oS)^\heartsuit
\endCD
$$
where the vertical arrows are equivalences, and the corresponding assertion for
$$\on{ker}\left(j^*:\QCoh(S)^\heartsuit \to \QCoh(\oS)^\heartsuit\right)$$
is manifest.

\medskip

To prove point (b) we note that ind-extending the tautological embedding
$$\Coh(S)_{S'}\hookrightarrow \Coh(S),$$ we obtain a fully faithful functor
$$\Ind(\Coh(S)_{S'})\to \IndCoh(S)_{S'}.$$
Therefore, it remains to show that it is essentially surjective, but this is implied
by point (a).

\end{proof}

\begin{cor}  \label{c:reduced generation}
Let $f:S_1\to S_2$ be a morphism that induces an isomorphism of the 
underlying reduced classical schemes: $({}^{cl}\!S_1)_{red}\simeq ({}^{cl}\!S_2)_{red}$. Then the essential image
of $f^{\IndCoh}_*$ generates the target category, and, equivalently, the
functor $f^!$ is conservative.
\end{cor}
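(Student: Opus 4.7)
My plan is to reduce the corollary to \propref{p:generation by closed}(a) by observing that, after passing to reduced classical truncations, the map $f$ factors through a closed embedding whose open complement is empty.

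First I would dispose of the equivalence of the two assertions. Since $f^{\IndCoh}_*:\IndCoh(S_1)\to \IndCoh(S_2)$ is a colimit-preserving functor between presentable DG categories, its right adjoint $f^!$ exists in the $\infty$-categorical sense (though it need not be continuous, since $f$ is not assumed proper). By the adjunction
$$\Maps_{\IndCoh(S_2)}(f^{\IndCoh}_*(\CF),\CG)\simeq \Maps_{\IndCoh(S_1)}(\CF,f^!(\CG)),$$
the vanishing of $f^!(\CG)$ is equivalent to $\CG$ being right-orthogonal to the essential image of $f^{\IndCoh}_*$; hence $f^!$ is conservative if and only if that essential image generates $\IndCoh(S_2)$.

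Next I would reduce to a closed embedding case. Let $i_j:({}^{cl}\!S_j)_{red}\hookrightarrow S_j$ be the canonical closed embedding for $j=1,2$, and let $\phi:({}^{cl}\!S_1)_{red}\xrightarrow{\sim}({}^{cl}\!S_2)_{red}$ be the isomorphism hypothesized. By construction $f\circ i_1\simeq i_2\circ \phi$, so
$$f^{\IndCoh}_*\circ (i_1)^{\IndCoh}_*\simeq (i_2)^{\IndCoh}_*\circ \phi^{\IndCoh}_*,$$
and $\phi^{\IndCoh}_*$ is an equivalence. Therefore it suffices to show that the essential image of $(i_2)^{\IndCoh}_*$ generates $\IndCoh(S_2)$, i.e., to handle the case of $i_2$ itself.

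Finally, I would apply \propref{p:generation by closed}(a) to $i_2$. The complementary open substack $\oS_2\hookrightarrow S_2$ to $({}^{cl}\!S_2)_{red}$ is empty as a classical topological space (since $({}^{cl}\!S_2)_{red}$ and $S_2$ share the same underlying space), so $\IndCoh(\oS_2)=0$ and the functor $j^{\IndCoh,*}$ is identically zero. Consequently $\IndCoh(S_2)_{({}^{cl}\!S_2)_{red}}=\ker(j^{\IndCoh,*})=\IndCoh(S_2)$, and \propref{p:generation by closed}(a) says precisely that the essential image of ${}'(i_2)^{\IndCoh}_*$ generates $\IndCoh(S_2)_{({}^{cl}\!S_2)_{red}}=\IndCoh(S_2)$. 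Combined with the previous paragraph this gives the result. There is no real obstacle here: the content of the corollary is entirely packaged in \propref{p:generation by closed}, and the only work is the formal reduction via the factorization $f\circ i_1\simeq i_2\circ\phi$.
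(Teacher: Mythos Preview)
Your proposal is correct and is precisely the intended deduction: the paper states this as an immediate corollary of \propref{p:generation by closed} without proof, and the argument is exactly the one you give---factor through the closed embedding $i_2:({}^{cl}\!S_2)_{red}\hookrightarrow S_2$, whose complementary open is empty, so that $\IndCoh(S_2)_{({}^{cl}\!S_2)_{red}}=\IndCoh(S_2)$ and \propref{p:generation by closed}(a) applies directly. One trivial terminological slip: ``open substack'' should read ``open subscheme.''
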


\sssec{}

Let us note that an argument similar to that in the proof of \propref{p:generation by closed}
shows the following:

\medskip

Let $\eta$ be a generic point of $({}^{cl}S)_{red}$, and let $S_\eta$ be the corresponding localized DG scheme.
We have a natural restriction functor
$$\IndCoh(S)\to \IndCoh(S_\eta).$$

\begin{lem} \label{l:away from generic}
The subcategory $\on{ker}(\IndCoh(S)\to \IndCoh(S_\eta))$ is generated by the
union of the essential images of $\IndCoh(S')$ for closed embeddings $S'\to S$ such that
$({}^{cl}S')_{red}\cap \eta=\emptyset$.
\end{lem}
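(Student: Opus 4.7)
The plan is to follow the same strategy as in the proof of \propref{p:generation by closed}: reduce to an assertion about the heart of the t-structure, and there translate the question into commutative algebra. Denote by $j\colon S_\eta\to S$ the natural morphism (affine, from the spectrum of the local ring at $\eta$). This morphism is flat, hence eventually coconnective, so the functor $j^{\IndCoh,*}\colon \IndCoh(S)\to \IndCoh(S_\eta)$ of \propref{p:* pullback} is defined, and by the commutative diagram in \propref{p:* pullback} together with the flatness of $j$, it is t-exact. Write $\bK := \ker(j^{\IndCoh,*})$ and let $\bK'\subset \IndCoh(S)$ be the subcategory generated by the essential images of $i^{\IndCoh}_*\colon\IndCoh(S')\to \IndCoh(S)$ over closed embeddings $i\colon S'\hookrightarrow S$ with $({}^{cl}S')_{red}\cap \eta=\emptyset$.

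The inclusion $\bK'\subseteq \bK$ is easy: for such an $i$, the base change $S'\underset{S}\times S_\eta$ has empty underlying classical scheme, hence is zero, so by \lemref{l:usual base change} the composition $j^{\IndCoh,*}\circ i^{\IndCoh}_*$ vanishes. For the converse $\bK\subseteq \bK'$, I will mimic \propref{p:generation by closed}: since $j^{\IndCoh,*}$ is t-exact, $\bK$ is stable under the truncation functors and inherits a t-structure. As in loc.\ cit., $\bK$ is generated by $\bK^+$, which is right-complete in its t-structure via \propref{p:equiv on D plus}, hence generated by $\bK^b$, and by devissage along cohomological degrees, by $\bK^\heartsuit$.

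It therefore suffices to show that every object of $\bK^\heartsuit$ admits an increasing filtration whose subquotients lie in the essential image of some $i^{\IndCoh}_*$ with $i$ as above. Under the equivalence $\IndCoh(S)^\heartsuit\simeq \QCoh(S)^\heartsuit\simeq \QCoh({}^{cl}S)^\heartsuit$, the subcategory $\bK^\heartsuit$ corresponds to those quasi-coherent sheaves $\CF$ on ${}^{cl}S$ whose stalk at $\eta$ vanishes. Writing $\CF$ as a filtered colimit of its coherent subsheaves $\CF_\alpha$, each $\CF_\alpha$ still has vanishing stalk at $\eta$, hence its scheme-theoretic support is a closed subscheme $S'_\alpha\hookrightarrow {}^{cl}S\hookrightarrow S$ with $({}^{cl}S'_\alpha)_{red}\cap \eta=\emptyset$, and $\CF_\alpha$ comes from $\QCoh(S'_\alpha)^\heartsuit\simeq \IndCoh(S'_\alpha)^\heartsuit$ via $i^{\IndCoh}_*$.

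The main point requiring care is the first paragraph, namely justifying that the ``restriction to the generic point'' functor is t-exact and that \lemref{l:usual base change} applies to the non--finite-type morphism $j$; once this is in hand, the reduction to the heart and the commutative algebra computation on $\QCoh({}^{cl}S)^\heartsuit$ proceed exactly as in \propref{p:generation by closed}. The equivalence of the two formulations in the lemma (generation of $\bK$ versus conservativity of some companion functor, analogous to part (a) of \propref{p:generation by closed}) then follows formally from the adjunction $(i^{\IndCoh}_*,i^!)$.
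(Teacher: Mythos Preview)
Your proposal is correct and follows precisely the route the paper intends: the paper does not give a separate proof of this lemma but simply says ``an argument similar to that in the proof of \propref{p:generation by closed} shows the following,'' and what you have written is exactly that argument spelled out in detail.

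Regarding the point you flag as requiring care: the morphism $j:S_\eta\to S$ is flat between Noetherian DG schemes, so $j^{\IndCoh,*}$ is defined by \propref{p:* pullback} and is t-exact; the proof of \lemref{l:usual base change} only uses the almost-of-finite-type hypothesis via \corref{c:finite Tor} to ensure $f^*$ preserves the bounded-below part, which is automatic for flat $j$; and the analogue of \lemref{l:localization} (that $j^{\IndCoh}_*$ is fully faithful, giving the colocalization needed for ``$\bK$ is generated by $\bK^+$'') goes through verbatim since $j^*\circ j_*\simeq\on{Id}$ holds on $\QCoh$ for any flat monomorphism. So your concern is legitimate but easily resolved, and the rest of the argument is exactly as in \propref{p:generation by closed}.
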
 

\ssec{Zariski descent}

Let $f:S'\to S$ be a Zariski cover of a scheme $S$ (i.e., $S'$ is a finite disjoint union
of open subsets in $S$ that cover it). Let $S'{}^\bullet/S$ be its \v{C}ech 
nerve. 

\medskip

Restriction along open embedding makes $\IndCoh(S)\to \IndCoh(S'{}^\bullet/S)$ into
an augmented cosimplicial object in $\StinftyCat_{\on{cont}}$.

\begin{prop}  \label{p:Zariski descent}
Under the above circumstances, the natural map
$$\IndCoh(S)\to |\on{Tot}(\IndCoh(S'{}^\bullet/S))|$$
is an equivalence.
\end{prop}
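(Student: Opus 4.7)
The plan is to reduce the descent statement to a Mayer--Vietoris for two-open covers, and then to exhibit the Mayer--Vietoris square using the recollement of \corref{c:localization} combined with the description of supported subcategories in \propref{p:generation by closed}.

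The augmented cosimplicial object $\IndCoh(S)\to \IndCoh(S'^{\bullet}/S)$ is produced by \corref{c:upper * DG funct}, since the structural morphisms in the \v{C}ech nerve are open embeddings, hence eventually coconnective. A standard $\infty$-categorical reduction expresses descent for a finite cover as iterated Mayer--Vietoris for pairs of opens, so it suffices to treat the case $S=U_1\cup U_2$ and prove that the canonical functor
\[
\IndCoh(S)\;\longrightarrow\;\IndCoh(U_1)\underset{\IndCoh(U_{12})}{\times}\IndCoh(U_2)
\]
is an equivalence in $\StinftyCat_{\on{cont}}$, where $U_{12}:=U_1\cap U_2$.

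Set $Z_i:=S\setminus U_i$ with its reduced structure; the hypothesis $U_1\cup U_2=S$ forces $Z_1\subset U_2$ and $Z_2\subset U_1$. By \corref{c:localization} we have the recollement
\[
\IndCoh(S)_{Z_1}\;\rightleftarrows\;\IndCoh(S)\;\rightleftarrows\;\IndCoh(U_1),
\]
and the analogous recollement on $U_2$ with closed stratum $Z_1$. The desired equivalence will follow formally from the key claim that the restriction
\[
j_2^{\IndCoh,*}\colon \IndCoh(S)_{Z_1}\;\xrightarrow{\;\simeq\;}\;\IndCoh(U_2)_{Z_1}
\]
is an equivalence. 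Indeed, granting this, for any $\CG\in \IndCoh(S)$ the fiber of $\CG\to j_{1*}^{\IndCoh}j_1^{\IndCoh,*}\CG$ lies in $\IndCoh(S)_{Z_1}$, and transporting this fiber to $U_2$ via the key equivalence produces a Mayer--Vietoris fiber square realizing $\IndCoh(S)$ as the claimed pullback.

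To prove the key claim, \propref{p:generation by closed}(b) identifies $\IndCoh(S)_{Z_1}$ and $\IndCoh(U_2)_{Z_1}$ with the ind-completions of $\Coh(S)_{Z_1}$ and $\Coh(U_2)_{Z_1}$, respectively, so it suffices to produce an equivalence of these small DG subcategories. By t-structure devissage together with \propref{p:equiv on D plus}, this reduces to the elementary heart-level statement: a coherent sheaf on $S$ set-theoretically supported on $Z_1\subset U_2$ is recovered from its restriction to $U_2$ by gluing with zero on $U_1$ along the (vanishing) restriction to $U_{12}$. The main obstacle is not this classical input but the $\infty$-categorical bookkeeping of the preceding step: upgrading the pair of recollements and the equivalence of supported subcategories to a coherent Mayer--Vietoris pullback diagram in $\StinftyCat_{\on{cont}}$, and verifying that iteration for a general finite Zariski cover reproduces precisely the \v{C}ech totalization.
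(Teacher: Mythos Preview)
Your approach is correct but differs from the paper's. After the same reduction to $S = U_1 \cup U_2$, the paper constructs an explicit inverse via the Mayer--Vietoris cone
\[
\{\CF_1,\CF_2,\CF_{12}\} \mapsto \on{Cone}\bigl((j_1)^{\IndCoh}_*\CF_1 \oplus (j_2)^{\IndCoh}_*\CF_2 \to (j_{12})^{\IndCoh}_*\CF_{12}\bigr)[-1],
\]
and verifies both compositions are the identity using \lemref{l:localization}, \lemref{l:usual base change}, and---after restricting to $\Coh(S)$ and applying $\Psi_S$---Zariski descent for $\QCoh$; it does not invoke \propref{p:generation by closed} at all. Your route instead matches the recollement of $\IndCoh(S)$ along $(U_1,Z_1)$ with that of $\IndCoh(U_2)$ along $(U_{12},Z_1)$ via the key equivalence $\IndCoh(S)_{Z_1}\simeq\IndCoh(U_2)_{Z_1}$, deduced from \propref{p:generation by closed}(b). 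This is legitimate (that proposition is proved earlier and independently) and the argument is more structural. Two remarks: first, your heart-level step still implicitly uses Zariski descent for $\QCoh$---to conclude that the unit $\CF \to (j_2)_*(j_2)^*\CF$ is an isomorphism for $\CF \in \Coh(S)_{Z_1}$ you need that an object of $\QCoh(S)$ restricting to zero on both $U_1$ and $U_2$ vanishes---so both proofs rest on the same classical input, the paper's just making the reduction more visible via $\Psi_S$. Second, the recollement-matching step you flag as the main obstacle is a standard fact about stable $\infty$-categories: a functor between recollements inducing equivalences on both the closed and open strata is itself an equivalence.
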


\begin{proof}
The usual argument reduces the assertion of the proposition to the following. Let $S=U_1\cup U_2$; 
$U_{12}=U_1\cap U_2$. Let 
$$U_1\overset{j_1}\hookrightarrow S,\,\, U_2\overset{j_2}\hookrightarrow S,\,\, U_{12} 
\overset{j_{12}}\hookrightarrow S,\,\, U_{12} 
\overset{j_{12,1}}\hookrightarrow U_1,\,\, U_{12} 
\overset{j_{12,2}}\hookrightarrow U_2$$
denote the corresponding open embeddings. 

\medskip

We need to show that the functor
$$\IndCoh(S)\to \IndCoh(U_1)\underset{\IndCoh(U_{12})}\times  \IndCoh(U_1)$$
that sends $\CF\in \IndCoh(S)$ to the datum of 
$$\{j_1^{\IndCoh,*}(\CF),j_2^{\IndCoh,*}(\CF),
j_{12,1}^{\IndCoh,*}(j_1^{\IndCoh,*}(\CF))\simeq j_{12}^{\IndCoh,*}(\CF)\simeq 
j_{12,2}^{\IndCoh,*}(j_2^{\IndCoh,*}(\CF))\}$$
is an equivalence.

\medskip

We construct a functor
$$\IndCoh(U_1)\underset{\IndCoh(U_{12})}\times  \IndCoh(U_1)\to \IndCoh(S)$$
by sending 
$$\{\CF_1\in \IndCoh(U_1),\CF_2\in \IndCoh(U_2), \CF_{12}\in \IndCoh(U_{12}), 
j_{12,1}^{\IndCoh,*}(\CF_1)\simeq \CF_{12}\simeq j_{12,2}^{\IndCoh,*}(\CF_2)\}$$
to
$$\on{Cone}\biggl(\left((j_1)^\IndCoh_*(\CF_1)\oplus (j_2)^\IndCoh_*(\CF_1)\right)\to 
(j_{12})^\IndCoh_*(\CF_{12})\biggr)[-1],$$
where the maps $(j_i)^\IndCoh_*(\CF_i)\to (j_{12})^\IndCoh_*(\CF_{12})$
are
\begin{multline*}
(j_i)^\IndCoh_*(\CF_i)\to (j_i)^\IndCoh_*\circ (j_{12,i})_*^\IndCoh\circ (j_{12,i})^{\IndCoh,*}(\CF_i)=\\
=(j_{12})_*^\IndCoh\circ (j_{12,i})^{\IndCoh,*}(\CF_i)\simeq (j_{12})_*^\IndCoh(\CF_{12}).
\end{multline*}

\medskip

It is strightforward to see from Lemmas \ref{l:localization} and \ref{l:usual base change}
that the composition
$$\IndCoh(U_1)\underset{\IndCoh(U_{12})}\times  \IndCoh(U_1)\to \IndCoh(S)\to 
\IndCoh(U_1)\underset{\IndCoh(U_{12})}\times  \IndCoh(U_1)$$
is canonically isomorphic to the identity functor. 

\medskip

To prove that the composition
$$ \IndCoh(S)\to 
\IndCoh(U_1)\underset{\IndCoh(U_{12})}\times  \IndCoh(U_1)\to \IndCoh(S)$$
is also isomorphic to the identity functor, it is sufficient to show that for
$\CF\in \IndCoh(S)$, the canonical map from it to 
\begin{multline} \label{e:Cech Zar} 
\on{Cone}\biggl(\left((j_1)^\IndCoh_*\circ (j_1)^{\IndCoh,*}(\CF)\oplus (j_2)^\IndCoh_*\circ (j_2)^{\IndCoh,*}(\CF)\right)\to \\
\to (j_{12})^\IndCoh_*\circ j_{12}^{\IndCoh,*}(\CF)\biggr)[-1]
\end{multline}
is an isomorphism.

\medskip

Since all functors in question are continuous, it is sufficient to do so for $\CF\in \Coh(S)$. In this case, both 
sides of \eqref{e:Cech Zar} belong to $\IndCoh(S)^+$. So, it is enough to prove that the map in question becomes
an isomorphism after applying the functor $\Psi_S$. However, in this case we are dealing with the map
$$\Psi_S(\CF)\to 
\on{Cone}\biggl(\left((j_1)_*\circ (j_1)^*(\Psi_S(\CF))\oplus (j_2)_*\circ (j_2)^*(\Psi_S(\CF))\right)\to 
(j_{12})_*\circ j_{12}^*(\Psi_S(\CF))\biggr)[-1],$$
which is an isomorphism as it expresses Zariski descent for $\QCoh$. 

\end{proof}

\sssec{}

As a consequence of the above proposition we obtain:

\begin{cor}  \label{c:t structure local}
The t-structure on $\IndCoh(S)$ is Zariski-local. I.e., an object is connective/coconnective
if and only if it is such when restricted to a Zariski open cover.
\end{cor}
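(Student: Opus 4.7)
The plan is to derive \corref{c:t structure local} from two ingredients already on the table: the Zariski descent statement of \propref{p:Zariski descent} and the t-exactness of $j^{\IndCoh,*}$ for an open embedding. I will first establish the t-exactness, then deduce that restriction to a Zariski cover forms a conservative family of functors, and finally use the truncation trick to deduce locality of connectivity and coconnectivity.

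The first step is to check that for an open embedding $j\colon \oS\hookrightarrow S$ the functor $j^{\IndCoh,*}$ is t-exact. Right t-exactness follows from \propref{p:* pullback}: on generators $\CF\in \Coh(S)^{\leq 0}$, the object $j^{\IndCoh,*}(\CF)$ lies in $\Coh(\oS)$ (since $j$ is eventually coconnective, hence $j^{\IndCoh,*}$ preserves compacts), and $\Psi_{\oS}(j^{\IndCoh,*}(\CF))\simeq j^*(\Psi_S(\CF))\in \QCoh(\oS)^{\leq 0}$ because $j^*$ is t-exact on $\QCoh$; since the t-structure on $\Coh(\oS)$ agrees with that of $\QCoh(\oS)$, we conclude $j^{\IndCoh,*}(\CF)\in \IndCoh(\oS)^{\leq 0}$, and the statement extends to all of $\IndCoh(S)^{\leq 0}$ by continuity. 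For left t-exactness, if $\CF\in \IndCoh(S)^{\geq 0}$ then in particular $\CF\in \IndCoh(S)^+$, and by \corref{c:finite Tor} applied to the bounded-Tor-dimension morphism $j$ we have $j^{\IndCoh,*}(\CF)\in \IndCoh(\oS)^+$; under $\Psi_{\oS}$ it maps to $j^*(\Psi_S(\CF))\in \QCoh(\oS)^{\geq 0}$, so \propref{p:equiv on D plus} yields $j^{\IndCoh,*}(\CF)\in \IndCoh(\oS)^{\geq 0}$.

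The second step is to extract conservativity of the family $\{j_i^{\IndCoh,*}\}$ for a Zariski cover $\{U_i\hookrightarrow S\}$ from \propref{p:Zariski descent}. Indeed, the descent identification $\IndCoh(S)\simeq |\on{Tot}(\IndCoh(S'{}^\bullet/S))|$ exhibits $\IndCoh(S)$ as a limit over a cosimplicial diagram whose zeroth term is $\IndCoh(S')=\prod_i \IndCoh(U_i)$; the restriction functor to that term is conservative because an object of a limit that vanishes at level zero vanishes at every level of the \v{C}ech nerve (all higher terms are open subschemes of $S'$, to which one further restricts). Concretely, in the two-chart presentation written out in the proof of \propref{p:Zariski descent}, the fiber product description makes it manifest that simultaneous vanishing of $j_1^{\IndCoh,*}(\CF)$ and $j_2^{\IndCoh,*}(\CF)$ forces $\CF=0$.

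The third step combines the first two. Suppose $j_i^{\IndCoh,*}(\CF)\in \IndCoh(U_i)^{\leq 0}$ for every $i$ in a Zariski cover. Apply $\tau^{>0}$ on $S$; by t-exactness of each $j_i^{\IndCoh,*}$ established in step one,
\[
j_i^{\IndCoh,*}(\tau^{>0}\CF)\simeq \tau^{>0}(j_i^{\IndCoh,*}\CF)=0,
\]
so by conservativity $\tau^{>0}\CF=0$, i.e.\ $\CF\in \IndCoh(S)^{\leq 0}$. The argument for coconnectivity is symmetric, applying $\tau^{<0}$ instead. The converse directions are immediate from t-exactness. I do not anticipate any genuine obstacle here: the whole argument is formal once one has \propref{p:Zariski descent} in hand, and the only point that requires a small verification is the t-exactness of $j^{\IndCoh,*}$, which is handled cleanly by passing through $\Psi$ on the eventually coconnective part and using continuity on the connective side.
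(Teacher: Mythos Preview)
Your argument is correct. The key pieces---t-exactness of $j^{\IndCoh,*}$, conservativity of the family of restrictions extracted from \propref{p:Zariski descent}, and the truncation trick---are all sound, and your verification of t-exactness via $\Psi$ and \propref{p:equiv on D plus} is clean.

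The paper takes a slightly different, asymmetric route. For the connective direction it invokes \corref{c:equiv on D plus} directly: an object of $\IndCoh(S)$ is connective if and only if $\Psi_S$ of it is, so one reduces to Zariski locality of the t-structure on $\QCoh$. For the coconnective direction it does not use conservativity explicitly but instead computes $\Maps_{\IndCoh(S)}(\CF',\CF)$ via the descent identification with a totalization, observing that all terms of the cosimplicial $\Maps$-object vanish. Your approach has the virtue of being symmetric and uniform; the paper's approach avoids stating conservativity as a separate step but trades that for handling the two halves by different mechanisms. Both are short, and neither offers a substantive advantage over the other.
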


\begin{proof}
It is clear that the functor $j^{\IndCoh,*}$ for an open embedding is t-exact. So, if $\CF\in \IndCoh(S)$
is connective/coconnective, then so is $f^{\IndCoh,*}(\CF)$.

\medskip

Vice versa, suppose first that $f^{\IndCoh,*}(\CF)\in \IndCoh(S')$ is connective, and we wish to show
that $\CF$ itself is connective. By \corref{c:equiv on D plus}, it is sufficient to show that $\Psi_S(\CF)$
is connective as an object of $\QCoh(S)$. But the latter follows from the fact that $f^*(\Psi_S(\CF))\simeq 
\Psi_{S'}(f^{\IndCoh,*}(\CF))$ is connective. 

\medskip

Now, let $\CF\in \IndCoh(S)$ be such that $f^{\IndCoh,*}(\CF)\in \IndCoh(S')$ is coconnective. We need to show 
that $\CF$ itself is coconnective, i.e., that for every $\CF'\in \IndCoh(S')^{<0}$, we have $\Maps_{\IndCoh(S)}(\CF',\CF)=0$.
However, this follows from \propref{p:Zariski descent} as all the terms in 
$\Maps_{\IndCoh(S'{}^\bullet/S)}(\CF'|_{S'{}^\bullet/S},\CF|_{S'{}^\bullet/S}[i])$ are zero for $i\leq 0$.
\end{proof}

\ssec{The convergence property of $\IndCoh$}  \label{ss:convergence}

\sssec{}

Let us recall the notion of $n$-coconnective DG scheme (see \cite{Stacks}, Sect. 3.2). 
For a DG scheme $S$ and an integer $n$, let $\tau^{\leq n}(S)$ denote its $n$-coconnective
truncation (see \cite{Stacks}, Sects. 1.1.2, 1.1.3). 

\medskip

Let us denote by $i_n$ the corresponding map
$$\tau^{\leq n}(S)\hookrightarrow S,$$
and for $n_1\leq n_2$, by $i_{n_1,n_2}$ the map
$$\tau^{\leq n_1}(S)\hookrightarrow \tau^{\leq n_2}(S).$$

\begin{rem}
It follows from \cite{Stacks}, Lemma 3.1.5 and Sect. 1.2.6, the map
$$\underset{n}{colim}\, \tau^{\leq n}(S)\to S$$
is an isomorphism, where the colimit is taken in the category $\dgSch$.
\end{rem}

\sssec{}

We have an $\BN$-diagram of categories
$$n\mapsto \IndCoh(\tau^{\leq n}(S)),$$
with the functors 
$$\IndCoh(\tau^{\leq n_1}(S))\to \IndCoh(\tau^{\leq n_2}(S))$$
given by $(i_{n_1,n_2})^{\IndCoh}_*$. 

\medskip

These functors admit right adjoints, given by $(i_{n_1,n_2})^!$, which gives
rise to the corresponding $\BN^{\on{op}}$-diagram of categories. According to
\cite{DG}, Lemma. 1.3.3, we have:
\begin{equation} \label{limit and colimit}
\underset{\BN,(i_{n_1,n_2})^{\IndCoh}_*}{colim}\, \IndCoh(\tau^{\leq n}(S))\simeq 
\underset{\BN,(i_{n_1,n_2})^!}{lim}\, \IndCoh(\tau^{\leq n}(S)).
\end{equation}

\medskip

Moreover, the functors $(i_n)^{\IndCoh}_*$ define a functor
\begin{equation} \label{left convergence}
\underset{\BN,(i_{n_1,n_2})^{\IndCoh}_*}{colim}\, \IndCoh(\tau^{\leq n}(S)) \to \IndCoh(S),
\end{equation}
and the functors $(i_n)^!$ define a functor
\begin{equation} \label{right convergence}
\underset{\BN,(i_{n_1,n_2})^!}{lim}\, \IndCoh(\tau^{\leq n}(S))\leftarrow \IndCoh(S),
\end{equation}
which is the right adjoint of the functor \eqref{left convergence} under the identification
\eqref{limit and colimit}.

\begin{prop}  \label{p:convergence}
The functors \eqref{left convergence} and \eqref{right convergence} are mutually
inverse equivalences.
\end{prop}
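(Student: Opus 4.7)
Since \eqref{left convergence} and \eqref{right convergence} are mutually adjoint via the identification \eqref{limit and colimit}, it suffices to show that both the unit and counit of this adjunction are isomorphisms. Write $\Phi$ for \eqref{left convergence} and $\Phi^R$ for its right adjoint \eqref{right convergence}; under the limit description, $\Phi^R(\CF) = ((i_n)^!(\CF))_n$.

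The key input beyond material already established is that for every $n$ the functor
$$(i_n)^{\IndCoh}_*: \IndCoh(\tau^{\leq n}(S))^{\geq -n} \to \IndCoh(S)^{\geq -n}$$
is an equivalence. Granting that $(i_n)_*:\QCoh(\tau^{\leq n}(S))^{\geq -n}\to \QCoh(S)^{\geq -n}$ is an equivalence (a standard truncation statement for DG rings, which reduces to the affine case), this follows from \propref{p:equiv on D plus} applied to both $S$ and $\tau^{\leq n}(S)$, combined with the compatibility $\Psi_S \circ (i_n)^{\IndCoh}_* \simeq (i_n)_* \circ \Psi_{\tau^{\leq n}(S)}$ from \propref{p:direct image}. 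Independently, each $(i_n)^{\IndCoh}_*$ is fully faithful (direct image along a closed embedding is fully faithful on $\QCoh$, and this transfers to $\IndCoh$) and preserves compacts (by \lemref{l:preservation of coh}, $i_n$ being proper in the sense of \defnref{defn:closed embeddings}), so each $(i_n)^!$ is continuous.

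For the counit $\Phi \circ \Phi^R(\CF) \to \CF$, continuity of both sides reduces the question to $\CF \in \Coh(S)$, say with $\CF \in \IndCoh(S)^{\geq -m}$. For each $n \geq m$, the equivalence above writes $\CF \simeq (i_n)^{\IndCoh}_*(\tilde\CF_n)$ for a unique $\tilde\CF_n$. Writing $i_n = i_{n'}\circ i_{n,n'}$ for $n \leq n'$ and using fully-faithfulness of $(i_{n'})^{\IndCoh}_*$ (which gives $(i_{n'})^!\circ (i_{n'})^{\IndCoh}_* \simeq \on{id}$), one computes $(i_{n'})^!(\CF) \simeq (i_{n,n'})^{\IndCoh}_*(\tilde\CF_n)$ and hence $(i_{n'})^{\IndCoh}_*(i_{n'})^!(\CF) \simeq \CF$ naturally. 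Thus the colimit defining $\Phi\circ\Phi^R(\CF)$ is eventually constant at $\CF$ with identity transition maps, and evaluates to $\CF$.

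For the unit applied to a compatible system $(\CG_n)$, the $m$-th component of $\Phi^R\circ\Phi((\CG_n))$ is $(i_m)^!\underset{n}{colim}\,(i_n)^{\IndCoh}_*(\CG_n)$. Continuity of $(i_m)^!$ commutes it past the colimit; restricting to the cofinal subset $n\geq m$ and using the factorization $i_m = i_n\circ i_{m,n}$ together with the above fully-faithfulness yields the base change $(i_m)^!\circ(i_n)^{\IndCoh}_* \simeq (i_{m,n})^!$. Consequently the $n$-th term equals $(i_{m,n})^!(\CG_n)\simeq \CG_m$ by the compatibility of the system, and a triangle-identity check shows the induced transition morphisms become identities, so the colimit collapses to $\CG_m$. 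The main delicate point throughout is that these pointwise identifications must be promoted to $\infty$-categorical coherences, but since everything is obtained by manipulating adjunction data within the functorial setup of \propref{p:upgrading IndCoh to functor} and \corref{c:upper ! DG funct proper}, the required higher homotopies are automatic.
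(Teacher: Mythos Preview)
Your argument has a genuine gap: the ``key input'' that $(i_n)^{\IndCoh}_*:\IndCoh(\tau^{\leq n}(S))^{\geq -n}\to\IndCoh(S)^{\geq -n}$ is an equivalence is false, as is the weaker claim that $(i_n)^{\IndCoh}_*$ is fully faithful. The underlying assertion you invoke---that direct image along a closed embedding is fully faithful on $\QCoh$---fails already for ordinary schemes (e.g., $\Spec(k)\hookrightarrow\BA^1$, where $\on{Ext}^1_{k[x]}(k,k)\neq 0$), and it fails equally for the nil-thickenings $i_n$. Concretely, take $S=\Spec(A)$ with $A=k[\epsilon]/\epsilon^2$, $|\epsilon|=-1$; then $\tau^{\leq 0}(S)=\Spec(k)$, and $(i_0)_*$ is neither fully faithful (as $\CMaps_A(k,k)$ has nontrivial positive-degree part while $\CMaps_k(k,k)=k$) nor essentially surjective onto $(A\mod)^{\geq 0}$ (the module $A[-1]$ lies in degrees $\geq 0$ but carries a nontrivial $\epsilon$-action, so it is not restricted from $k$). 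Via $\Psi$ and \propref{p:equiv on D plus} this transports to the same failure for $(i_0)^{\IndCoh}_*$.

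What goes wrong structurally is that you are asserting a statement uniform in the truncation level, whereas the correct statement is relative to a fixed pair of bounded objects: the paper shows that for $\CF',\CF''\in\Coh(\tau^{\leq m}(S))$, the Hom between their pushforwards to $\tau^{\leq n}(S)$ stabilizes to the Hom computed in $\IndCoh(S)$ once $n$ exceeds a bound depending on the cohomological amplitudes of $\CF'$ and $\CF''$ (reducing, via Zariski descent, to the elementary lemma that $\Hom_{\tau^{\geq -n}(A)}(M',M'')\to\Hom_A(M',M'')$ is an isomorphism once $n>m'+m''$ when $M'\in(A\mod)^{\leq m'}$, $M''\in(A\mod)^{\geq -m''}$). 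This is exactly the flexibility your argument lacks: your counit computation needs, for $\CF\in\Coh(S)^{\geq -m}$, that $(i_n)^!\circ(i_n)^{\IndCoh}_*\simeq\on{id}$ and that $\CF$ lies in the image of $(i_n)^{\IndCoh}_*$ for each $n\geq m$, but neither holds. The paper instead verifies directly that the colimit functor \eqref{left convergence} is fully faithful on compacts and has generating image (the latter by \corref{c:reduced generation}), bypassing any need for $(i_n)^{\IndCoh}_*$ to be fully faithful.
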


\begin{rem}
\propref{p:convergence} can be viewed as an expression of the fact that the study of
the category $\IndCoh$ reduces to the case of eventually coconnective DG schemes.
\end{rem}

\begin{proof}

The essential image of the functor \eqref{left convergence} generates the target
category by \corref{c:reduced generation}. Since it also maps compact objects to 
compact ones, it is sufficient to show that it is fully faithful when restricted to
compact objects.

\medskip

In other words, we have to show that for $\CF',\CF''\in \Coh(\tau^{\leq m}(S))$
for some $m$, the map
\begin{equation} \label{colim of Hom}
\underset{n\geq m}{colim}\, \Hom\left((i_{n,m})_*(\CF'),(i_{n,m})_*(\CF'')\right)\to
\Hom\left((i_{n})_*(\CF'),(i_{n})_*(\CF'')\right)
\end{equation}
is an isomorphism. 

\medskip

We claim that the colimit \eqref{colim of Hom} stabilizes at some finite $n$, and the stable
value maps isomorphically to the right-hand side of \eqref{colim of Hom}. This follows from
\propref{p:Zariski descent} and the next general observation:

\medskip

Let $A$ be a connective DG ring, and let $M'$ and $M''$ be two $A$-modules
belonging to $(A\mod)^b$. In particular, we can view $M'$ and $M''$
as $\tau^{\geq -n}(A)$-modules for all $n$ large enough. Suppose that $M'\in (A\mod)^{\leq m'}$
and that $M''\in (A\mod)^{\geq -m''}$. 

\begin{lem}
Under the above circumstances, the map 
$$\Hom_{\tau^{\geq -n}\!(A)\mod}(M',M'')\to \Hom_{A\mod}(M',M'')$$
is an isomorphism whenever $n>m'+m''$.
\end{lem}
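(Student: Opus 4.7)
The plan is to identify the fiber of the natural map
$$\Phi: \Maps_B(M', M'') \to \Maps_A(M', M'')$$
(where $B := \tau^{\geq -n}(A)$) with a mapping space whose connectivity will be forced up by choosing $n$ large. First, the algebra map $A \to B$ gives a restriction functor $\on{res}: B\mod \to A\mod$ with left adjoint $- \otimes_A^L B$, and the adjunction identifies
$$\Maps_A(M', M'') \simeq \Maps_B(M' \otimes_A^L B, M''),$$
under which $\Phi$ becomes precomposition with the counit $\mu: M' \otimes_A^L B \to M'$ (the $B$-action on $M'$). This produces a fiber sequence
$$\Maps_B(M', M'') \to \Maps_A(M', M'') \to \Maps_B(F, M''), \quad F := \on{fib}(\mu),$$
so it suffices to show that $\Maps_B(F, M'')$ is contractible.

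Next I would identify $F$ using the truncation triangle $K \to A \to B$ in $A\mod$, with $K := \tau^{\leq -n-1}(A)$. Tensoring this triangle with $M'$ over $A$ and exploiting the splitting of $\mu$ by the unit $\eta: M' \to M' \otimes_A^L B$ (the triangle identities of the adjunction force $\mu \circ \eta = \on{id}_{M'}$), one finds $F \simeq (M' \otimes_A^L K)[1]$ in $B\mod$, whose underlying complex is that of $M' \otimes_A^L K$ shifted by one.

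Finally, I would carry out the amplitude estimate: since $A$ is connective, $M' \in (A\mod)^{\leq m'}$ and $K \in (A\mod)^{\leq -n-1}$, so $M' \otimes_A^L K$ has cohomological amplitude $\leq m' - n - 1$, and thus $F \in (B\mod)^{\leq m' - n - 2}$ (the amplitude of the underlying complex is unchanged by restriction). Any semi-free resolution of $F$ over $B$ then uses generators $B[s]$ with $s \geq n + 2 - m'$; each contributes $\CMaps_B(B[s], M'') = M''[-s]$, lying in cohomological degrees $\geq s - m'' \geq n + 2 - m' - m''$. For $n > m' + m''$ this lower bound is $\geq 3$, so $\CMaps_B(F, M'')$ is concentrated in strictly positive cohomological degrees, forcing $\Maps_B(F, M'')$ to be contractible by Dold--Kan, as needed.

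The main obstacle will be the careful identification of $F$ as a $B$-module (not merely as an $A$-module), together with the verification that restriction $\on{res}: B\mod \to A\mod$ preserves the cohomological amplitude of underlying complexes. Once these are in place, the amplitude bookkeeping is routine and in fact yields a bound slightly sharper than $n > m' + m''$.
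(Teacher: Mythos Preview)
The paper does not actually supply a proof of this lemma; it is stated as a ``general observation'' at the end of the proof of \propref{p:convergence} and left to the reader. So there is no argument to compare against, and your task is simply to give a correct proof.

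Your approach is correct. One small point: the splitting $\mu\circ\eta=\on{id}_{M'}$ that you invoke holds only in $A\mod$, not in $B\mod$, because the unit $\eta:M'\to M'\otimes_A B$ is $A$-linear but not $B$-linear (for $b\in B$, one has $\eta(bm)=1\otimes bm$ while $b\cdot\eta(m)=b\otimes m$, and these differ in general). So the identification $F\simeq(M'\otimes_A K)[1]$ you obtain is a priori only an identification of underlying complexes, i.e.\ an isomorphism $\on{res}(F)\simeq(M'\otimes_A K)[1]$ in $A\mod$. But this is already all you need: restriction along $A\to B$ is $t$-exact and conservative (it does not change the underlying complex), so $\on{res}(F)\in(A\mod)^{\leq m'-n-2}$ forces $F\in(B\mod)^{\leq m'-n-2}$, and your amplitude estimate for $\CMaps_B(F,M'')$ goes through verbatim. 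In particular, the ``main obstacle'' you flag in your final paragraph---identifying $F$ as a $B$-module rather than merely as an $A$-module---is not an obstacle at all; you can drop that concern.

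As you note, the bound you obtain ($\CMaps_B(F,M'')\in\Vect^{\geq n+2-m'-m''}$) is sharper than what the lemma asks for.
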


\end{proof}


\ssec{Tensoring up}  









\sssec{}  

Let $f:S_1\to S_2$ be a map of bounded Tor dimension. By \propref{p:* pullback}, $f^{\IndCoh,*}$ induces
a functor
\begin{equation} \label{e:induce IndCoh}
\QCoh(S_1)\underset{\QCoh(S_2)}\otimes \IndCoh(S_2)\to \IndCoh(S_1).
\end{equation}

\begin{prop}    \label{p:tensoring up IndCoh}
The functor in \eqref{e:induce IndCoh} is fully faithful.
\end{prop}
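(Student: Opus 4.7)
The plan is to exhibit $F$ as a $\QCoh(S_1)$-linear continuous functor with a $\QCoh(S_1)$-linear continuous right adjoint, then prove fully faithfulness by checking the unit is an isomorphism on a small generating family, where the check reduces to the projection formula \propref{p:proj form}. Since $f^{\IndCoh,*}$ is $\QCoh(S_2)$-linear by \propref{p:* pullback}, the functor $F$ is $\QCoh(S_1)$-linear by the universal property of the tensor product; moreover, $f^{\IndCoh,*}$ preserves compact objects (being the left adjoint to the continuous $f^{\IndCoh}_*$ of \lemref{l:* pullback adjoint}), and therefore so does $F$. Consequently $F$ has a continuous right adjoint $G$, which is $\QCoh(S_1)$-linear by \cite[Corollary 6.2.4]{DG}.

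To show $F$ is fully faithful, I would verify that the unit $\eta\colon \on{Id}\to G\circ F$ is an isomorphism. Being a natural transformation between $\QCoh(S_1)$-linear continuous endofunctors, $\eta$ is an isomorphism as soon as it is one on a set of objects whose $\QCoh(S_1)$-orbit generates the source under colimits. The tautological functor
$$
\iota\colon \IndCoh(S_2)\to \QCoh(S_1)\underset{\QCoh(S_2)}\otimes \IndCoh(S_2),\quad \CF_2\mapsto \CO_{S_1}\otimes \CF_2,
$$
has image satisfying this generation property by the universal property of the tensor product, so it suffices to prove $\eta_{\iota(\CF_2)}$ is an isomorphism for $\CF_2\in \Coh(S_2)$.

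For such $\CF_2$ one has $F(\iota(\CF_2)) = f^{\IndCoh,*}(\CF_2)$. I would test $\eta_{\iota(\CF_2)}$ against compact generators of the source, beginning with $\iota(\CF_2')$ for $\CF_2'\in \Coh(S_2)$. Using the two relevant adjunctions --- namely $(\iota,\iota^R)$ on the source side, whose monad $\iota^R\circ\iota$ on $\IndCoh(S_2)$ is $f_*(\CO_{S_1})\otimes -$ (reflecting the commutative algebra $f_*(\CO_{S_1})\in \QCoh(S_2)$ implicit in the tensor product), and $(f^{\IndCoh,*},f^{\IndCoh}_*)$ on the target side, whose monad $f^{\IndCoh}_*\circ f^{\IndCoh,*}$ is identified with $f_*(\CO_{S_1})\otimes -$ by \propref{p:proj form} applied with $\CE_1=\CO_{S_1}$ --- both Hom-groups compute to $\Hom_{\IndCoh(S_2)}(\CF_2', f_*(\CO_{S_1})\otimes \CF_2)$, and the map induced by $\eta$ is the tautological identification. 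Compact generators of the form $\CE_1\otimes \CF_2'$ with $\CE_1\in \QCoh(S_1)^{\on{perf}}$ reduce to the same computation using the full projection formula together with $\QCoh(S_1)$-linearity.

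The main obstacle is the $\infty$-categorical coherence: rigorously producing the right adjoint $\iota^R$ and identifying its monad $\iota^R\circ\iota$ with $f_*(\CO_{S_1})\otimes -$, and then verifying that the two adjunction-produced isomorphisms of Hom-groups above are compatible under the projection-formula identification of monads. Once this coherence is in place, fully faithfulness of $F$ is a direct consequence of \propref{p:proj form}.
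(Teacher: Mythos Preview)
Your approach is correct and is essentially the same as the paper's: both reduce fully faithfulness to the projection formula \propref{p:proj form} via the adjunction $(f^*\otimes\on{Id},\, f_*\otimes\on{Id})$ on the tensor-product side and the adjunction $(f^{\IndCoh,*},\, f^{\IndCoh}_*)$ on the target side, arriving at the identification of both Hom-spaces with $\Maps_{\IndCoh(S_2)}(\CF'_2,\, f_*(\CO_{S_1})\otimes\CF_2)$.

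The packaging differs slightly. The paper does not construct the right adjoint $G$ or the unit $\eta$; it works directly with $\Maps$ between compact generators $\CE_1\otimes\CF_2$ (with $\CE_1\in\QCoh(S_1)^{\on{perf}}$, $\CF_2\in\Coh(S_2)$), and absorbs $\CE'_1$ into the other slot via dualizability of perfect objects rather than invoking $\QCoh(S_1)$-linearity of $\eta$. This sidesteps the coherence issue you flag: instead of identifying two monads on $\IndCoh(S_2)$, the paper simply unwinds the two adjunctions on a fixed pair of objects and observes that the resulting map of Hom-spaces is precisely the one induced by the projection-formula map \eqref{e:proj formula map}, which is an isomorphism by \propref{p:proj form}. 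Your route is valid but carries a bit more overhead; the paper's version is the more economical implementation of the same idea.
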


\begin{proof}

We note that the left-hand
side in \eqref{e:induce IndCoh} is compactly generated by objects of the form
$$\CE_1\otimes \CF_2\in \QCoh(S_1)\underset{\QCoh(S_2)}\otimes \IndCoh(S_2),$$
where $\CE_1\in \QCoh(S_1)^{\on{perf}}$ and $\CF_2\in \Coh(S_2)$.
Moreover, the functor $(\on{Id}_{\QCoh(S_1)}\otimes f^{\IndCoh,*})$ sends these
objects to compact objects in $\IndCoh(S_1)$.

\medskip

Hence, it is enough to show that for $\CE'_1,\CE''_1$ and $\CF'_2,\CF''_2$ as above, the map
\begin{multline}\label{e:ff tensor up 1}
\Maps_{\QCoh(S_1)\underset{\QCoh(S_2)}\otimes \IndCoh(S_2)}(\CE'_1\otimes \CF'_2,\CE''_1\otimes \CF''_2)\to \\
\to \Maps_{\IndCoh(S_1)}(\CE'_1\otimes f^{\IndCoh,*}(\CF'_2), \CE''_1\otimes f^{\IndCoh,*}(\CF''_2))
\end{multline}
is an isomorphism, where in the right-hand side $\otimes$ denotes the
action of $\QCoh$ on $\IndCoh$.

\medskip

We can rewrite the map in \eqref{e:ff tensor up 1} as
\begin{multline} \label{e:ff tensor up 2}
\Maps_{\QCoh(S_1)\underset{\QCoh(S_2)}\otimes \IndCoh(S_2)}(\CO_{S_1}\otimes \CF'_2,\CE_1\otimes \CF''_2)\to \\
\to \Maps_{\IndCoh(S_1)}(\CO_{S_1}\otimes f^{\IndCoh,*}(\CF'_2), \CE_1\otimes f^{\IndCoh,*}(\CF''_2)),
\end{multline}
where $\CE_1\simeq \CE''_1\otimes (\CE'_1)^\vee$.

\medskip

We note that the functor right adjoint to
$$\IndCoh(S_2)\simeq \QCoh(S_2)\underset{\QCoh(S_2)}\otimes \IndCoh(S_2)\overset{f^*\otimes \on{Id}_{\IndCoh(S_2)}}
\longrightarrow \QCoh(S_1)\underset{\QCoh(S_2)}\otimes \IndCoh(S_2)$$
is given by
$$\IndCoh(S_2)\simeq \QCoh(S_2)\underset{\QCoh(S_2)}\otimes \IndCoh(S_2)\overset{f_*\otimes \on{Id}_{\IndCoh(S_2)}}
\longleftarrow \QCoh(S_1)\underset{\QCoh(S_2)}\otimes \IndCoh(S_2).$$

Hence, we can rewrite the map in \eqref{e:ff tensor up 2} as the map
$$\Maps_{\IndCoh(S_2)}(\CF'_2,f_*(\CE_1)\otimes \CF''_2) \to 
\Maps_{\IndCoh(S_2)}(\CF'_2,f^\IndCoh_*(\CE_1\otimes f^{\IndCoh,*}(\CF''_2)))$$
coming from \eqref{e:proj formula map}.

\medskip

Hence, the required isomorphism follows from \propref{p:proj form}.

\end{proof}

As a formal corollary we obtain:

\begin{cor}   \label{c:tensoring up}
In the situation of \propref{p:tensoring up IndCoh}, the natural map of endo-functors of $\IndCoh(S_2)$
$$f_*(\CO_{S_1})\otimes - \to f^\IndCoh_*\circ f^{\IndCoh,*}$$
is an isomorphism.
\end{cor}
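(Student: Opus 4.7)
The strategy is to deduce the corollary formally from the fully-faithfulness of the induction functor
$$F : \QCoh(S_1)\underset{\QCoh(S_2)}\otimes \IndCoh(S_2)\to \IndCoh(S_1)$$
established in \propref{p:tensoring up IndCoh}. The key observation is that the functor $f^{\IndCoh,*}$
factors as
$$\IndCoh(S_2) \simeq \QCoh(S_2)\underset{\QCoh(S_2)}\otimes \IndCoh(S_2) \xrightarrow{f^*\otimes \on{Id}}
\QCoh(S_1)\underset{\QCoh(S_2)}\otimes \IndCoh(S_2) \xrightarrow{F} \IndCoh(S_1),$$
where the compatibility of $f^{\IndCoh,*}$ with the $\QCoh$-action (see \eqref{e:* pullback}) justifies this factorization.

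Passing to right adjoints (using \cite[Corollary 6.2.4]{DG} to take adjoints in a $\QCoh(S_2)$-linear setting), we obtain a corresponding factorization of $f^\IndCoh_*$:
$$f^\IndCoh_* \simeq (f_* \otimes \on{Id}_{\IndCoh(S_2)}) \circ F^R,$$
where $F^R$ is the right adjoint of $F$ and we use that the right adjoint of $f^*\otimes \on{Id}$ is $f_*\otimes \on{Id}$.
Since $F$ is fully faithful by \propref{p:tensoring up IndCoh}, we have a canonical isomorphism $F^R\circ F \simeq \on{Id}$. Combining these, the composition is
$$f^\IndCoh_*\circ f^{\IndCoh,*} \simeq (f_*\otimes\on{Id}) \circ F^R\circ F\circ (f^*\otimes\on{Id}) \simeq (f_*\circ f^*)\otimes \on{Id}_{\IndCoh(S_2)}.$$
Finally, the classical projection formula for $\QCoh$ identifies $f_*\circ f^*: \QCoh(S_2)\to \QCoh(S_2)$ with
$\CE_2\mapsto f_*(\CO_{S_1})\otimes_{\CO_{S_2}} \CE_2$, and tensoring over $\QCoh(S_2)$ with $\IndCoh(S_2)$ yields the desired identification of endo-functors of $\IndCoh(S_2)$.

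The only subtle point is to check that the isomorphism produced by this formal manipulation agrees with the map in the statement, i.e., with the projection-formula map of \propref{p:proj form} specialized to $\CE_1=\CO_{S_1}$. This is a matter of tracing through units and counits of the relevant adjunctions, and is straightforward but requires care; this is the main bookkeeping obstacle. Alternatively, one can bypass the formal argument altogether and simply invoke \propref{p:proj form} with $\CE_1=\CO_{S_1}$, noting that $\CO_{S_1}\otimes f^{\IndCoh,*}(-)\simeq f^{\IndCoh,*}(-)$.
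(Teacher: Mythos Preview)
Your proposal is correct. The paper gives no proof beyond the phrase ``As a formal corollary we obtain,'' and your first argument is precisely the unpacking of what that phrase means: the factorization of $f^{\IndCoh,*}$ through the fully faithful functor of \propref{p:tensoring up IndCoh}, combined with the adjoint-pair identification $(f^*\otimes\on{Id},\,f_*\otimes\on{Id})$ (which is in fact used explicitly inside the proof of \propref{p:tensoring up IndCoh}), yields the result formally. Your alternative observation---that one can simply specialize \propref{p:proj form} to $\CE_1=\CO_{S_1}$---is even more direct and bypasses \propref{p:tensoring up IndCoh} entirely; since the proof of \propref{p:tensoring up IndCoh} itself rests on \propref{p:proj form}, this shortcut is logically cleaner, though the paper's phrasing suggests it had your first route in mind.
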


\sssec{}

The next corollary expresses the category $\IndCoh$ on an open subscheme:

\begin{cor}  \label{c:open tensoring up}
Ler $j:\oS\hookrightarrow S$ be an open embedding. Then the functor
$$\QCoh(\oS)\underset{\QCoh(S)}\otimes \IndCoh(S)\to \IndCoh(\oS)$$
of \eqref{e:induce IndCoh} is an equivalence.
\end{cor}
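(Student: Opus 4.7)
The plan is to deduce this from Proposition~\ref{p:tensoring up IndCoh} combined with Lemma~\ref{l:localization}. Denote the functor in question by
$$\Phi:\QCoh(\oS)\underset{\QCoh(S)}\otimes \IndCoh(S)\to \IndCoh(\oS).$$
Since $j$ is an open embedding, it is flat, hence of bounded Tor dimension (in fact of Tor dimension $0$) and almost of finite type. Proposition~\ref{p:tensoring up IndCoh} therefore applies and gives that $\Phi$ is fully faithful. So the remaining task is essential surjectivity.

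For essential surjectivity, I would observe that $\Phi$ is continuous (its construction only uses colimit-preserving operations), so its essential image is stable under colimits. Thus it suffices to check that the essential image contains a set of generators of $\IndCoh(\oS)$. A natural choice is $\Coh(\oS)$, which generates $\IndCoh(\oS)$ under filtered colimits by construction.

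Next I would use Lemma~\ref{l:localization}, which says that $j^{\IndCoh,*}\circ j^{\IndCoh}_*\to \on{Id}_{\IndCoh(\oS)}$ is an isomorphism. In particular, every $\CG\in \Coh(\oS)$ is isomorphic to $j^{\IndCoh,*}(j^{\IndCoh}_*(\CG))$, so it lies in the essential image of $j^{\IndCoh,*}:\IndCoh(S)\to \IndCoh(\oS)$. Now $j^{\IndCoh,*}$ factors as
$$\IndCoh(S)\simeq \QCoh(S)\underset{\QCoh(S)}\otimes \IndCoh(S)\xrightarrow{j^*\otimes \on{Id}} \QCoh(\oS)\underset{\QCoh(S)}\otimes \IndCoh(S)\xrightarrow{\Phi}\IndCoh(\oS),$$
so the essential image of $j^{\IndCoh,*}$ is contained in that of $\Phi$. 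This shows $\Coh(\oS)\subset \on{essim}(\Phi)$, and combined with continuity gives essential surjectivity.

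There is no genuine obstacle here: the real content is packaged in Proposition~\ref{p:tensoring up IndCoh} (which delivers fully-faithfulness) and in Lemma~\ref{l:localization} (which delivers the co-localization property of $j^{\IndCoh,*}$). The only thing to verify carefully is that $\Phi$ is indeed the composite through which $j^{\IndCoh,*}$ factors, which is just unwinding the definition of the induced functor on the relative tensor product.
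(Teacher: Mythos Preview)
Your proof is correct and follows essentially the same approach as the paper: fully-faithfulness from Proposition~\ref{p:tensoring up IndCoh}, then essential surjectivity by factoring $j^{\IndCoh,*}$ through $\Phi$ and invoking Lemma~\ref{l:localization}. The paper phrases the second step slightly more economically---it notes that the essential image of $j^{\IndCoh,*}$ already \emph{generates} $\IndCoh(\oS)$ (indeed, by Lemma~\ref{l:localization} it is all of $\IndCoh(\oS)$), so there is no need to single out $\Coh(\oS)$ or to separately argue that the essential image is closed under colimits---but the content is identical.
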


\begin{proof}
By \propref{p:tensoring up IndCoh}, the functor in question is fully faithful.
Thus, it remains to show that its essential image generates $\IndCoh(\oS)$,
for which it would suffice to show that the essential image of the
composition
$$\IndCoh(S)\to \QCoh(\oS)\underset{\QCoh(S)}\otimes \IndCoh(S)\to \IndCoh(\oS)$$
generates $\IndCoh(\oS)$. However, the latter follows from \lemref{l:localization}.
\end{proof}

\sssec{}

Let now  
$$
\CD
\oS_1 @>{j_1}>>   S_1  \\
@V{\of}VV  @VV{f}V \\
\oS_2 @>{j_2}>>   S_2
\endCD
$$
be a Cartesian diagram of DG schemes, where the maps $j_1$ and $j_2$ are open embeddings and
the map $f$ is proper. 

\medskip

The base change isomorphism
$$\of^\IndCoh_*\circ  j_2^{\IndCoh,*}\simeq j_1^{\IndCoh,*}\circ f^\IndCoh_*$$
of \lemref{l:usual base change}
gives rise to a natural transformation
\begin{equation} \label{e:open proper compatibilty}
j_1^{\IndCoh,*}\circ f^!\to \of^! \circ j_2^{\IndCoh,*}
\end{equation}
of functors $\IndCoh(S_2)\to \IndCoh(\oS_1)$.

\medskip

We claim:
\begin{cor}  \label{c:! pullback and open embed}
The natural transformation \eqref{e:open proper compatibilty} is an isomorphism.
\end{cor}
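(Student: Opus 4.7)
The plan is to deduce the claim from two applications of proper base change (\propref{p:proper base change}), glued by the colocalization triangle attached to $j_2$. Write $i_2\colon Z_2 \hookrightarrow S_2$ for the reduced closed embedding complementary to $\oS_2$, set $Z_1 := Z_2 \underset{S_2}\times S_1$ (a closed DG subscheme of $S_1$ with underlying space $S_1 \setminus \oS_1$), and denote by $i_1\colon Z_1 \hookrightarrow S_1$ and $f_0\colon Z_1 \to Z_2$ the evident base changes.

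\medskip

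Applying \propref{p:proper base change} to the given Cartesian square, with the proper map being $f$ and the arbitrary morphism being $j_2$, yields a canonical isomorphism
$$(j_1)^{\IndCoh}_* \circ \of^! \xrightarrow{\;\sim\;} f^! \circ (j_2)^{\IndCoh}_*.$$
For $\CF \in \IndCoh(S_2)$, the colocalization triangle of \corref{c:localization} reads
$$\wh{i}_*^{\IndCoh}\wh{i}^!(\CF) \to \CF \to (j_2)^{\IndCoh}_*\bigl(j_2^{\IndCoh,*}(\CF)\bigr),$$
whose leftmost term lies in $\IndCoh(S_2)_{Z_2} = \ker\bigl(j_2^{\IndCoh,*}\bigr)$. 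Applying $j_1^{\IndCoh,*} \circ f^!$ to this triangle, the third term becomes, by the displayed isomorphism together with $j_1^{\IndCoh,*} \circ (j_1)^{\IndCoh}_* \simeq \on{Id}$ (\lemref{l:localization}), canonically identified with $\of^!\bigl(j_2^{\IndCoh,*}(\CF)\bigr)$. Modulo identifying the resulting map with \eqref{e:open proper compatibilty} (see below), the claim therefore reduces to showing that the first term of the triangle is sent to zero, i.e., that $f^!$ carries $\IndCoh(S_2)_{Z_2}$ into $\IndCoh(S_1)_{Z_1} := \ker\bigl(j_1^{\IndCoh,*}\bigr)$.

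\medskip

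For this inclusion, \propref{p:generation by closed} shows that $\IndCoh(S_2)_{Z_2}$ is generated under colimits by the essential image of $(i_2)^{\IndCoh}_*$. Since $f^!$ is continuous and $\IndCoh(S_1)_{Z_1}$ is closed under colimits (being the kernel of the continuous functor $j_1^{\IndCoh,*}$), it suffices to check the inclusion on this generating family. A second application of \propref{p:proper base change}, now to the Cartesian square formed by $i_2$ and $f$, yields
$$f^! \circ (i_2)^{\IndCoh}_* \simeq (i_1)^{\IndCoh}_* \circ f_0^!,$$
and the essential image of $(i_1)^{\IndCoh}_*$ manifestly lies in $\IndCoh(S_1)_{Z_1}$.

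\medskip

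The main technical obstacle is the naturality check alluded to above: \eqref{e:open proper compatibilty} is the mate of the base change of \lemref{l:usual base change} with respect to the adjunctions $(f^{\IndCoh}_*, f^!)$ and $(\of^{\IndCoh}_*, \of^!)$, whereas our construction effectively takes the mate with respect to the adjunctions $(j_2^{\IndCoh,*}, (j_2)^{\IndCoh}_*)$ and $(j_1^{\IndCoh,*}, (j_1)^{\IndCoh}_*)$. That these two mate constructions, applied to the same Cartesian square, produce the same natural transformation is a standard but notationally heavy compatibility statement, which I expect to be the most tedious---though not conceptually difficult---step of the proof.
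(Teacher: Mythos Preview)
Your argument is correct and takes a genuinely different route from the paper's. You work through the open/closed recollement: apply $j_1^{\IndCoh,*}\circ f^!$ to the colocalization triangle for $j_2$, kill the closed-supported term via a second instance of proper base change, and identify the remaining map. The paper instead uses the $\QCoh(S_2)$-module structure: by \corref{c:open tensoring up} one has $\IndCoh(\oS_i)\simeq \IndCoh(S_i)\underset{\QCoh(S_2)}\otimes \QCoh(\oS_2)$, and since $f^!$ is $\QCoh(S_2)$-linear (\corref{c:upgrading !}), the square with $f^!$ and $\of^!$ is obtained from a tautologically commuting square by passing to left adjoints, which reduces to the original base change isomorphism of \lemref{l:usual base change}.

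Your approach is lighter on prerequisites (it avoids the tensor-product machinery of \propref{p:tensoring up IndCoh} and \corref{c:open tensoring up}) but pays the price of the naturality check you flag. That check is exactly the content of the Remark following the corollary: the transformation \eqref{e:open proper compatibilty bis} obtained from \propref{p:proper base change} via the $(j^{\IndCoh,*},j^{\IndCoh}_*)$-adjunctions agrees with \eqref{e:open proper compatibilty}, and the paper dismisses this as a diagram chase. The paper's proof sidesteps this comparison entirely by never leaving the $(f^{\IndCoh,*},f^{\IndCoh}_*)$-adjunction framework, which is arguably its main advantage.
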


\begin{rem}
As we shall see in \propref{p:! pullback and event coconn}, the assertion of \corref{c:! pullback and open embed}
holds more generally when the maps $j_i:\oS_i\to S_i$ just have a bounded Tor dimension.
\end{rem}

\begin{rem}
There exists another natural transformation
\begin{equation} \label{e:open proper compatibilty bis}
j_1^{\IndCoh,*}\circ f^!\to \of^! \circ j_2^{\IndCoh,*},
\end{equation}
namely one coming by adjunction from the base change isomorphism
$$f^!\circ (j_2)^\IndCoh_*\simeq (j_1)^\IndCoh_*\circ  \of^!$$
of \propref{p:proper base change}. A diagram chase
that the natural transformations \eqref{e:open proper compatibilty} and
\eqref{e:open proper compatibilty bis} are canonically isomorphic.
\end{rem}

\begin{proof}
Consider the diagram
$$
\CD
\IndCoh(S_1)  @>{\on{Id}_{\IndCoh(S_1)}\otimes j_2^*}>>  \IndCoh(S_1)\underset{\QCoh(S_2)}\otimes \QCoh(\oS_2) 
@>>>  \IndCoh(\oS_1)  \\
@A{f^!}AA    @AA{f^!\otimes \on{Id}_{\QCoh(\oS_2)}}A    @AA{\of^!}A  \\
\IndCoh(S_2)  @>{\on{Id}_{\IndCoh(S_2)}\otimes j_2^*}>>  \IndCoh(S_2)\underset{\QCoh(S_2)}\otimes \QCoh(\oS_2) 
@>>>  \IndCoh(\oS_2),
\endCD
$$
where both right horizontal arrows are those of \eqref{e:induce IndCoh}, and the middle vertical
arrow is well-defined in view of \corref{c:upgrading !}.

\medskip

The left square commutes tautologically. Hence, it remains to show that the right square commutes as well.
As the horizontal arrows are equivalences (by \corref{c:open tensoring up}), it would be sufficient to show
that the corresponding square commutes, when we replace the vertical arrows by their respective left adjoints.

\medskip

The commutativity of
$$
\CD
\IndCoh(S_1)\underset{\QCoh(S_2)}\otimes \QCoh(\oS_2)  @>>>  \IndCoh(\oS_1)  \\
@V{f^\IndCoh_*\otimes \on{Id}_{\QCoh(\oS_2)}}VV    @VV{\of^\IndCoh_*}V   \\
\IndCoh(S_2)\underset{\QCoh(S_2)}\otimes \QCoh(\oS_2) @>>>  \IndCoh(\oS_2)
\endCD
$$
as $\QCoh(\oS_2)$-module categories is equivalent to the commutativity of 
$$
\CD
\IndCoh(S_1)  @>{j_1^{\IndCoh,*}}>>   \IndCoh(\oS_1)  \\
@V{f^\IndCoh_*}VV    @VV{\of^\IndCoh_*}V   \\
\IndCoh(S_2)  @>{j_2^{\IndCoh,*}}>>   \IndCoh(\oS_2)  
\endCD
$$
as $\QCoh(S_2)$-module categories. However, the latter is the original isomorphism
of functors that gives rise to \eqref{e:open proper compatibilty}.
\end{proof}

\ssec{Smooth maps}  \label{sss:smooth maps}

\sssec{}

Let us recall that a morphism $f:S_1\to S_2$ of DG schemes is called \emph{flat} (resp., \emph{smooth}) if:

\begin{itemize}

\item The fiber product DG scheme $^{cl}\!S_2\underset{S_2}\times S_1$ is
classical;

\item The resulting map of classical schemes $^{cl}\!S_2\underset{S_2}\times S_1\to {}^{cl}\!S_2$
is flat (resp., smooth). 

\end{itemize}

\medskip

It is easy to see that a flat morphism is of bounded Tor dimension, and in particular, eventually coconnective. 

\sssec{}

We shall now prove a generalization of \corref{c:open tensoring up}
when instead of an open embedding we have a smooth map between DG schemes.

\begin{prop}  \label{p:smooth map}
Assume that in the situation of \propref{p:tensoring up IndCoh}, the map $f$ is smooth.
Then the functor
$$\QCoh(S_1)\underset{\QCoh(S_2)}\otimes \IndCoh(S_2)\to \IndCoh(S_1)$$
of \eqref{e:induce IndCoh} is an equivalence.
\end{prop}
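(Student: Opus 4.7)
A smooth morphism is flat, hence of bounded Tor dimension, so \propref{p:tensoring up IndCoh} supplies the full faithfulness half of the claim. The remaining task is to show that the essential image of \eqref{e:induce IndCoh} generates $\IndCoh(S_1)$ under colimits; since the functor preserves compact objects (a perfect sheaf on $S_1$ tensored with $f^{\IndCoh,*}$ of a coherent sheaf on $S_2$ lands in $\Coh(S_1)$ by bounded Tor), this is enough.

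I would reduce to the case where $S_1$ and $S_2$ are affine by Zariski descent on $S_1$: \corref{c:open tensoring up} shows that the comparison functor for $f$ restricts along an open $\oS_1 \hookrightarrow S_1$ to the comparison functor for $f|_{\oS_1}$, and \propref{p:Zariski descent} gives descent for the target, the source inheriting Zariski descent from $\QCoh$ via the tensor product construction. With $S_1, S_2$ affine, Zariski-locally on $S_1$ the smooth map factors as a composition $S_1 \overset{g}\to S_2 \times \BA^n \overset{p}\to S_2$ with $p$ the projection and $g$ étale; because \eqref{e:induce IndCoh} is natural under composition of bounded-Tor-dimension morphisms, it suffices to handle $p$ and $g$ separately.

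For the projection $p$, the scheme $\BA^n$ is smooth and regular over $k$, so by \lemref{l:smoothness} $\QCoh(\BA^n) = \IndCoh(\BA^n)$, and the claim reduces to a Künneth-type equivalence $\QCoh(\BA^n) \otimes_k \IndCoh(S_2) \simeq \IndCoh(S_2 \times \BA^n)$. Fully faithfulness is again from \propref{p:tensoring up IndCoh}; essential surjectivity follows from the support-devissage of $\Coh(S_2 \times \BA^n)$ combined with the Koszul resolution of the skyscrapers $\CO_x$ at closed points $x \in \BA^n$, which reduces generation of $\Coh(S_2 \times \BA^n)$ to external tensors $\CE' \boxtimes \CF_2$ with $\CE' \in \QCoh(\BA^n)^{\on{perf}}$ and $\CF_2 \in \Coh(S_2)$.

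The principal obstacle is the étale case. A further Zariski localization puts $g$ into standard étale form, and shrinking further one may assume $g$ is finite étale. Then \corref{c:tensoring up} identifies $g^{\IndCoh}_* \circ g^{\IndCoh,*}$ with the endofunctor $g_*(\CO_{S_1}) \otimes -$, where $g_*(\CO_{S_1})$ is locally free of finite rank on $S_2 \times \BA^n$; the étale trace $g_*(\CO_{S_1}) \to \CO_{S_2 \times \BA^n}$ composed with the unit $\CO_{S_2 \times \BA^n} \to g_*(\CO_{S_1})$ equals multiplication by the degree, which is invertible in characteristic zero, exhibiting $\on{Id}$ as a direct summand of $g^{\IndCoh}_* \circ g^{\IndCoh,*}$ and hence showing that the essential image of $g^{\IndCoh,*}$ generates $\IndCoh(S_2 \times \BA^n)$. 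An alternative, once the faithfully flat descent of \secref{s:descent} is available, is to invoke it for the étale cover $g$ directly.
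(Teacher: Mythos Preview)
Your overall strategy differs from the paper's: rather than factoring $f$ locally as \'etale over $\BA^n$, the paper runs a Noetherian induction on $S_2$. It first passes to $({}^{cl}S_2)_{red}$ via \corref{c:reduced generation}, then assumes the statement on all proper closed subschemes, and uses \lemref{l:away from generic} to reduce to $S_2=\Spec(K)$ for a field $K$; at that point $S_1$ is smooth over a field, hence regular, and \lemref{l:smoothness} gives $\IndCoh(S_1)=\QCoh(S_1)$, so the functor is trivially an equivalence. This is considerably shorter and never touches the structure theory of $f$.

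Your \'etale step has the splitting in the wrong direction. The trace argument you give shows that $\on{Id}_{\IndCoh(S_2\times\BA^n)}$ is a summand of $g^{\IndCoh}_*\circ g^{\IndCoh,*}$; that says the image of $g^{\IndCoh}_*$ generates the \emph{base} category, which is not what is needed. You must show that the image of $g^{\IndCoh,*}$ generates $\IndCoh(S_1)$, i.e., that $\on{Id}_{\IndCoh(S_1)}$ is a summand of $g^{\IndCoh,*}\circ g^{\IndCoh}_*$. For $g$ finite \'etale this does hold: base change (\lemref{l:usual base change}) identifies $g^{\IndCoh,*}\circ g^{\IndCoh}_*$ with $(\on{pr}_1)^{\IndCoh}_*\circ(\on{pr}_2)^{\IndCoh,*}$ along $S_1\times_{S_2\times\BA^n}S_1$, and since $g$ is unramified and separated the diagonal is an open-and-closed component, splitting off the identity. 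Separately, ``shrinking further'' does not make a general \'etale map finite \'etale---an open immersion is already \'etale but rarely finite. You should instead factor a separated \'etale map via Zariski's Main Theorem as an open immersion followed by a finite map, and handle the open immersion by \corref{c:open tensoring up}.
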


\begin{proof}

By \propref{p:tensoring up IndCoh}, the functor in question is fully faithful, so we only
need to show that it its essential image generates $\IndCoh(S_1)$.

\medskip

We have a canonical isomorphism
$$\QCoh(S_1)\underset{\QCoh(S_2)}\otimes \IndCoh(({}^{cl}\!S_2)_{red})\simeq
\QCoh(({}^{cl}\!S_1)_{red})\underset{\QCoh(({}^{cl}\!S_2)_{red})}\otimes \IndCoh(({}^{cl}\!S_2)_{red}),$$
and by \corref{c:reduced generation} this reduces the assertion to the case when $S_2$ is classical and
reduced. 

\medskip

By Noetherian induction, we can assume that the assertion holds for all proper
closed subschemes of $S_2$. By \lemref{l:away from generic}, this reduces the assertion to the case
when $S_2=\Spec(K)$, where $K$ is a field. 

\medskip

However, in the latter case, the assertion follows from 
\lemref{l:smoothness}.

\end{proof}

\ssec{Behavior with respect to products}

We shall now establish one more property of $\IndCoh$. In this 
we will be assuming that all our schemes are almost of finite type
over the ground field $k$, i.e., $\dgSch_{\on{aft}}\subset \dgSch_{\on{Noeth}}$. 

\sssec{}

Thus, let $S_1$ and $S_2$ be two DG schemes almost of finite type over $k$.  Consider their
product $S_1\times S_2$, which also has the same property.

\medskip

External tensor product defines a functor
$$\QCoh(S_1)\otimes \QCoh(S_2)\to \QCoh(S_1\times S_2),$$
such that if $\CF_i\in \Coh(S_i)$, we have $\CF_1\boxtimes \CF_2\in \Coh(S_1\times S_2)$. 

\medskip

Consider the resulting functor
$$\IndCoh(S_1)\otimes \IndCoh(S_2)\overset{\Psi_{S_1}\otimes \Psi_{S_2}}\longrightarrow
\QCoh(S_1)\otimes \QCoh(S_2)\to \QCoh(S_1\times S_2).$$

By the above, it sends compact objects in $\IndCoh(S_1)\otimes \IndCoh(S_2)$ to
$\Coh(S_1\times S_2)$. Hence, we obtain a functor
\begin{equation}  \label{product map}
\IndCoh(S_1)\otimes \IndCoh(S_2)\to \IndCoh(S_1\times S_2),
\end{equation}
which makes the diagram
\begin{equation} \label{e:Psi and external}
\CD
\IndCoh(S_1)\otimes \IndCoh(S_2)   @>>>  \IndCoh(S_1\times S_2) \\
@V{\Psi_{S_1}\otimes \Psi_{S_2}}VV     @VV{\Psi_{S_1\times S_2}}V  \\
\QCoh(S_1)\otimes \QCoh(S_2)   @>>>  \QCoh(S_1\times S_2)
\endCD
\end{equation}
commute.

\begin{prop}  \label{p:products}
The functor \eqref{product map} is an equivalence.
\end{prop}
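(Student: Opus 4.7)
The functor \eqref{product map} is a continuous functor between compactly generated DG categories which preserves compact objects: on the source $\IndCoh(S_1)\otimes\IndCoh(S_2)\simeq \Ind(\Coh(S_1)\otimes\Coh(S_2))$ the compact generators are the pure tensors $\CF_1\otimes\CF_2$ with $\CF_i\in \Coh(S_i)$, on the target the compact generators are $\Coh(S_1\times S_2)$ by \corref{c:Karoubian}, and the functor sends $\CF_1\otimes\CF_2$ to $\CF_1\boxtimes\CF_2\in \Coh(S_1\times S_2)$. It therefore suffices to check (A) full faithfulness on pure tensors of compact generators and (B) that the collection $\{\CF_1\boxtimes \CF_2:\CF_i\in \Coh(S_i)\}$ generates $\IndCoh(S_1\times S_2)$ under colimits.

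For (A) I would compute Hom in the Lurie tensor product between compact pure tensors as
$$\Maps_{\IndCoh(S_1)\otimes \IndCoh(S_2)}(\CF_1\otimes \CF_2,\CF'_1\otimes \CF'_2) \simeq \Maps_{\IndCoh(S_1)}(\CF_1,\CF'_1) \otimes_k \Maps_{\IndCoh(S_2)}(\CF_2,\CF'_2).$$
Since each $\CF'_i$ is cohomologically bounded, so is $\CF'_1\boxtimes \CF'_2$, and \propref{p:equiv on D plus} identifies both sides of the comparison with Maps in $\QCoh$. Full faithfulness is then reduced to the K\"unneth isomorphism
$$\Maps_{\QCoh(S_1)}(\CF_1,\CF'_1) \otimes_k \Maps_{\QCoh(S_2)}(\CF_2,\CF'_2) \simeq \Maps_{\QCoh(S_1\times S_2)}(\CF_1\boxtimes \CF_2,\CF'_1\boxtimes \CF'_2),$$
which I would extract from the equivalence $\QCoh(S_1)\otimes\QCoh(S_2)\simeq \QCoh(S_1\times S_2)$ of \cite{BFN}, using the standard approximation of each coherent input by perfect complexes to sufficient cohomological depth (permissible because the $\CF'_i$ are bounded below).

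For (B) my plan is a three-step reduction. First, by \corref{c:reduced generation} applied to the closed embedding $({}^{cl}\!S_1)_{red}\times ({}^{cl}\!S_2)_{red}\hookrightarrow S_1\times S_2$ (the identification of the underlying reduced classical scheme of a product with the product of underlying reduced classical schemes uses that $k$ has characteristic zero), we reduce to the case when $S_1,S_2$ are classical and reduced. Second, the generation claim is equivalent to ``$\Maps(\CF_1\boxtimes \CF_2,\CG)=0$ for all $\CF_i$ implies $\CG=0$''; by Zariski descent (\propref{p:Zariski descent}) this can be tested on opens $U\times V$ with $U\subset S_1$, $V\subset S_2$ affine, and using \lemref{l:* pullback adjoint} together with the continuity of $(j_U)^{\IndCoh}_*$ and the fact that pushforwards of coherent sheaves under open embeddings are filtered colimits of coherent sheaves on the ambient scheme, the vanishing hypothesis transfers to the restriction $\CG|_{U\times V}$. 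Third, in the affine classical case $S_i=\Spec A_i$, a finitely presented $A_1\otimes_k A_2$-module $M$ admits a surjection from $(A_1\otimes_k A_2)^n=A_1^n\boxtimes A_2$, already an external product, so iteration produces a resolution of $M$ by external products; d\'evissage through the t-structure then shows that every object of $\Coh(S_1\times S_2)$ lies in the thick closure of external products of coherents.

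I expect the main difficulty to be reconciling the Zariski descent used in step (B) with the tensor product of DG categories, since the latter does not commute with limits in general and so one cannot hope to ``descend'' a full equivalence affine-locally. The proposed workaround exploits that generation, unlike equivalence, is testable after restriction to a conservative family of opens, and that the $(j^{\IndCoh,*},j^\IndCoh_*)$-adjunction together with the continuity of pushforward transports the ``Maps from external products vanish'' hypothesis from $S_1\times S_2$ to each $U\times V$ without requiring any affine-local assertion about the two sides of \eqref{product map} themselves.
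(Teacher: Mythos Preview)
Your argument for full faithfulness (A) is correct and is essentially the paper's: reduce to the K\"unneth isomorphism in $\QCoh$, justified by approximating the coherent source objects by perfect ones (legitimate because the target objects are bounded below).

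The gap is in step (B), specifically the Zariski-descent reduction. The claim that ``generation, unlike equivalence, is testable after restriction to a conservative family of opens'' is false, and your proposed mechanism for transferring the vanishing hypothesis cannot work. Concretely: take $W=\BP^1$ with its standard affine cover, and the single object $\CO_{\BP^1}$. Its restriction to each $\BA^1$ generates $\IndCoh(\BA^1)$, yet $\CO_{\BP^1}$ does not generate $\IndCoh(\BP^1)$ since $\CO(-1)$ is right-orthogonal to it. The adjunction you invoke goes the wrong way: from $\Maps_{U\times V}(\CE_1\boxtimes\CE_2, j^{\IndCoh,*}\CG)$ one gets, after extending the $\CE_i$ to coherents $\CF_i$ on $S_i$, the group $\Maps_{S_1\times S_2}(\CF_1\boxtimes\CF_2, j^{\IndCoh}_* j^{\IndCoh,*}\CG)$, and the hypothesis says nothing about the colocalization $j^{\IndCoh}_* j^{\IndCoh,*}\CG$.

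The paper bypasses Zariski reduction entirely. After reducing to $S_1,S_2$ classical reduced, it runs Noetherian induction: assume the generation statement for all proper closed subschemes of each $S_i$; then by \lemref{l:away from generic} it suffices to treat non-empty opens $\oS_i\subset S_i$. Now the $\on{char}(k)=0$ hypothesis gives generic smoothness, so one may take $\oS_1,\oS_2$ smooth. Then $\oS_1\times\oS_2$ is smooth, hence regular, so all $\Psi$ functors in diagram \eqref{e:Psi and external} are equivalences and one is reduced to the $\QCoh$ K\"unneth equivalence. (Your affine step 3 is correct but the route to it is broken; if you want to salvage a descent-flavored argument, you would have to argue via the \v{C}ech resolution of $\CG$ and show that each $(j_{\alpha\beta})^{\IndCoh}_*(\CE_1\boxtimes\CE_2)$ lies in the colimit-closure of external products of coherents on $S_1\times S_2$ --- this requires K\"unneth for $j^{\IndCoh}_*$ and is not the argument you gave.)
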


\begin{proof}

Both categories are compactly generated, and the functor in question sends
compact objects to compacts, by construction.

\medskip

Hence, to prove that it is fully faithful, it
is sufficient to show that for
$$\CF'_1,\CF''_1\in \Coh(S_1),\,\, \CF'_2,\CF''_2\in \Coh(S_2),$$
the map
$$\CMaps_{\Coh(S_1)}(\CF'_1,\CF''_1)\otimes \CMaps_{\Coh(S_2)}(\CF'_2,\CF''_2)\to
\CMaps_{\Coh(S_1\times S_2)}(\CF'_1\boxtimes \CF'_2,\CF''_1\boxtimes \CF''_2)$$
is an isomorphism (see \secref{sss:DG categories} for the notation $\CMaps(-,-)$).

\medskip

I.e., we have to show that for $\CF'_i$, $\CF''_i$ as above, the map
\begin{equation} \label{e:ten prod 0}
\CMaps_{\QCoh(S_1)}(\CF'_1,\CF''_1)\otimes \CMaps_{\QCoh(S_2)}(\CF'_2,\CF''_2)\to
\CMaps_{\QCoh(S_1\times S_2)}(\CF'_1\boxtimes \CF'_2,\CF''_1\boxtimes \CF''_2)
\end{equation}
is an isomorphism. 

\medskip

Recall that if $\bC_1,\bC_2$ are DG categories and $\bc'_i,\bc''_i\in \bC_i$, $i=1,2$
are such that $\bc'_1$ and $\bc'_2$ are compact, then the natural map
\begin{equation} \label{e:ten prod abs}
\CMaps_{\bC_1}(\bc'_1,\bc''_1)\otimes  \CMaps_{\bC_2}(\bc'_2,\bc''_2)\to
\CMaps_{\bC_1\otimes \bC_2}(\bc'_1\otimes \bc'_2,\bc''_1\otimes \bc''_2)
\end{equation}
is an isomorphism.

\medskip

The isomorphism\eqref{e:ten prod 0} is not immediate since the objects $\CF'_i\in \QCoh(S_i)$
are not compact. To circumvent this, we proceed as follows.

\medskip

It is enough to show that
\begin{multline*}
\tau^{\leq -n}\left(\CMaps_{\QCoh(S_1)}(\CF'_1,\CF''_1)\otimes \CMaps_{\QCoh(S_2)}(\CF'_2,\CF''_2)\right)\to \\
\to \tau^{\leq -n}\left(\CMaps_{\QCoh(S_1\times S_2)}(\CF'_1\boxtimes \CF'_2,\CF''_1\boxtimes \CF''_2)\right)
\end{multline*}
is an isomorphism for any $n$.

\medskip

Choose $\alpha_1:\wt\CF'_1\to \CF'_1$ (resp., $\alpha_2:\wt\CF'_2\to \CF'_2$) with $\wt\CF'_1$ (resp., $\wt\CF'_2$)
in $\QCoh(S_1)^{\on{perf}}$ (resp., $\QCoh(S_2)^{\on{perf}}$), such that
$$\on{Cone}(\alpha_1)\in \QCoh(S_1)^{\leq -N} \text{ and } \on{Cone}(\alpha_2)\in \QCoh(S_2)^{\leq -N}$$
for $N\gg 0$. 

\medskip

We choosing $N$ large enough, we can ensure that 
$$\tau^{\leq -m}\left(\CMaps_{\QCoh(S_i)}(\CF'_i,\CF''_i)\right)\to 
\tau^{\leq -m}\left(\CMaps_{\QCoh(S_i)}(\wt\CF'_i,\CF''_i)\right)$$
is an isomorphism for $m$ large enough, which in turn implies that
\begin{multline*}
\tau^{\leq -n}\left(\CMaps_{\QCoh(S_1)}(\CF'_1,\CF''_1)\otimes \CMaps_{\QCoh(S_2)}(\CF'_2,\CF''_2)\right)\to \\
\to \tau^{\leq -n}\left(\CMaps_{\QCoh(S_1)}(\wt\CF'_1,\CF''_1)\otimes \CMaps_{\QCoh(S_2)}(\wt\CF'_2,\CF''_2)\right)
\end{multline*}
and
$$\tau^{\leq -n}\left(\CMaps_{\QCoh(S_1\times S_2)}(\CF'_1\boxtimes \CF'_2,\CF''_1\boxtimes \CF''_2)\right)\to
\tau^{\leq -n}\left(\CMaps_{\QCoh(S_1\times S_2)}(\wt\CF'_1\boxtimes \wt\CF'_2,\CF''_1\boxtimes \CF''_2)\right)$$
are isomorphisms.

\medskip

Hence, it is enough to show that 
$$\CMaps_{\QCoh(S_1)}(\wt\CF'_1,\CF''_1)\otimes \CMaps_{\QCoh(S_2)}(\wt\CF'_2,\CF''_2)\to
\CMaps_{\QCoh(S_1\times S_2)}(\wt\CF'_1\boxtimes \wt\CF'_2,\CF''_1\boxtimes \CF''_2)$$
is an isomorphism. But this follows from the isomorphism \eqref{e:ten prod abs} and the 
fact that the fuctor
$$\QCoh(S_1)\boxtimes \QCoh(S_2)\to \QCoh(S_1\times S_2)$$
is an equivalence, by \cite{QCoh}, Prop. 1.4.4.

\medskip

Thus, it remains to show that the essential image of \eqref{product map} generates
the target category. It is sufficient to show that the essential image of
\begin{multline*}
\IndCoh(({}^{cl}\!S_1)_{red})\otimes \IndCoh(({}^{cl}\!S_2)_{red})\to 
\IndCoh(({}^{cl}\!S_1)_{red}\times ({}^{cl}\!S_1)_{red})\to \\
\to \IndCoh(({}^{cl}\!(S_1\times S_2)_{red})\to
\IndCoh(S_1\times S_2)
\end{multline*}
generates $\IndCoh(S_1\times S_2)$. Hence, by \corref{c:reduced generation}, we can assume that
$S_1$ and $S_2$ are both classical and reduced. Moreover, by Noetherian induction, we can 
assume that the statement holds for all proper closed subschemes of $S_1$ and $S_2$. So,
it is sufficient to show that the functor 
$$\IndCoh(\oS_1)\otimes \IndCoh(\oS_2)\to \IndCoh(\oS_1\times \oS_2)$$
is an equivalence for some non-empty open subschemes $\oS_i\subset S_i$.

\medskip

Now, the $\on{char}(k)=0$ assumption implies that $S_1$ and $S_2$ are generically smooth
over $k$. I.e., we obtain that it is sufficient to show that the functor \eqref{product map} is
an equivalence when $S_i$ are smooth. In this case $S_1\times S_2$ is also smooth,
and in particular regular. Thus, all vertical maps in the diagram \eqref{e:Psi and external}
are equivalences. However, the bottom arrow in \eqref{e:Psi and external} is an equivalence
as well, which implies the required assertion. 

\end{proof}

\bigskip

\centerline{\bf Part II. Correspondences, !-pullback and duality.}

\bigskip

\section{The !-pullback functor for morphisms almost of finite type}  \label{s:!}

All DG schemes in this section will be Noetherian. 
The goal of this section is to extend the functor $f^!$ to maps between DG schemes
that are not necessarily proper. 



\ssec{The paradigm of correspondences}   \label{ss:corr}

\sssec{}  \label{sss:two classes}

Let $\bC$ be a $(\infty,1)$-category. Let $vert$ and $horiz$ be two classes of $1$-morphisms in $\bC$,
such that:

\begin{enumerate}

\item Both classes contain all isomorphisms.

\smallskip

\item If a given $1$-morphism belongs to a given class, then so do all isomorphic $1$-morphisms.

\smallskip

\item Both classes are stable under compositions.

\smallskip

\item Given a pair of morphisms $f:\bc_1\to\bc_2$ with $f\in vert$ and 
$g_2:\bc'_2\to \bc_2$ with $g_2\in horiz$, the Cartesian square square
\begin{equation} \label{e:all side Cart}
\CD
\bc'_1   @>{g_1}>>  \bc_1  \\
@V{f'}VV   @VV{f}V  \\
\bc'_2   @>{g_2}>>  \bc_2
\endCD
\end{equation}
exists, and $f'\in vert$ and $g_1\in horiz$.  

\end{enumerate}

\medskip

We let $\bC_{vert}\subset \bC$ and
$\bC_{horiz}\subset \bC$ denote the corresponding 1-full subcategories of $\bC$.

\sssec{}   

We let $\bC_{\on{corr}:vert;horiz}$ denote a new $(\infty,1)$-category whose objects are the same
as objects of $\bC$. For $\bc_1,\bc_2\in \bC_{\on{corr}}$, the groupoid of $1$-morphisms
$\Hom_{\bC_{\on{corr}:vert;horiz}}(\bc_1,\bc_2)$ is the $\infty$-groupoid of diagrams
\begin{equation} \label{e:basic correspondence}
\CD
\bc_{1,2}  @>{g}>> \bc_1  \\
@VV{f}V  \\
\bc_2  
\endCD
\end{equation}
where  $f\in \bC_{vert}$ and $g\in \bC_{horiz}$ and where compositions are
given by taking Cartesian squares. 

\medskip

The higher categorical structure on $\bC_{\on{corr}:vert;horiz}$ is described in terms of the
corresponding \emph{complete Segal space}, i.e., an object of $\inftygroup^{\bDelta^{\on{op}}}$, 
see \secref{sss:Segal review} for a brief review.

\medskip

Namely, the $\infty$-groupoid of $n$-fold compositions is that of diagrams

\begin{equation} \label{e:grids}
\CD
\bc_{0,n} @>>>  \bc_{0,n-1}  @>>>   ...   @>>>  \bc_{0,1}  @>>>  \bc_{0}  \\
@VVV   @VVV & & @VVV   \\
\bc_{1,n} @>>>  \bc_{2,n-1}  @>>>   ...   @>>>  \bc_{1}   \\
@VVV   @VVV  @VVV \\
... @>>> ... @>>> ...  \\
@VVV   @VVV \\
\bc_{n-1,n} @>>>  \bc_{n-1}   \\
@VVV    \\
\bc_{n},
\endCD
\end{equation}
where we require that all vertical (resp., horizontal) maps belong to $vert$ (resp., horiz),
and each square be Cartesian. 

\medskip

Note that $\bC_{\on{corr}:vert;horiz}$ contains $\bC_{vert}$
and $(\bC_{horiz})^{\on{op}}$ as 1-full subcategories, obtained by restricting $1$-morphisms
to those diagrams \eqref{e:basic correspondence}, for which $g$ (resp., $f$) is an isomorphism.

\sssec{}

Our input will be a functor
$$P_{\on{corr}:vert;horiz}:\bC_{\on{corr}:vert;horiz}\to \StinftyCat_{\on{cont}}.$$
We shall denote its value on objects simply by $P(\bc)$. 

\medskip

Consider the restriction of $P_{\on{corr}:vert;horiz}$ to $\bC_{vert}$, which we denote by 
$P_{vert}$. For $$(f:\bc_1\to \bc_2)\in \bC_{vert}$$ we let $P_{vert}(f)$ denote the corresponding
$1$-morphism $P(\bc_1)\to P(\bc_2)$ in $\StinftyCat_{\on{cont}}$. 

\medskip

Consider also the restriction of $P_{\on{corr}:vert;horiz}$ to $(\bC_{horiz})^{\on{op}}$, which we denote by 
$P^!_{horiz}$. For $$(g:\bc_1\to \bc_2)\in \bC_{horiz}$$
we let $P^!_{horiz}(g)$ denote the corresponding $1$-morphism $P(\bc_2)\to P(\bc_1)$ in $\StinftyCat_{\on{cont}}$.

\begin{rem}  \label{r:abstract base change}
Note that a datum of a functor $P_{\on{corr}:vert;horiz}$ can be thought of as assignment to every
Cartesian square as in \eqref{e:all side Cart}, with
$f,f'\in \bC_{vert}$ and $g_1,g_2\in \bC_{horiz}$
an isomorphism of functors $P(\bc_1)\rightrightarrows P(\bc'_2)$:
\begin{equation} \label{e:basic commutation}
P_{vert}(f')\circ P^!_{horiz}(g_1)\simeq P^!_{horiz}(g_2)\circ P_{vert}(f).
\end{equation}
Formulating it as a functor $P_{\on{corr}:vert;horiz}$ out of the category of correspondences
is a way to formulate the compatibilities
that the isomorphisms \eqref{e:basic commutation} need to satisfy. The author learned this idea from
J.~Lurie.
\end{rem}

\ssec{A provisional formulation}

\sssec{}

In the above framework we take $\bC:=\dgSch_{\on{Noeth}}$, and $vert$ to be all $1$-morphisms.
We take $horiz$ to be morphisms almost of finite type. 

\medskip

Let $(\dgSch_{\on{Noeth}})_{\on{corr:all;aft}}$ denote the corresponding 
category of correspondences. 

\begin{thm}   \label{t:upper shriek}
There exists a canonically defined functor
$$\IndCoh_{(\dgSch_{\on{Noeth}})_{\on{corr:all;aft}}}: (\dgSch_{\on{Noeth}})_{\on{corr:all;aft}}\to \StinftyCat_{\on{cont}}$$
with the following properties:

\smallskip

\noindent{\em(a)}
The restriction $\IndCoh_{(\dgSch_{\on{Noeth}})_{\on{corr:all;aft}}}|_{((\dgSch_{\on{Noeth}})_{\on{proper}})^{\on{op}}}$
is canonically isomorphic to the functor $\IndCoh^!_{(\dgSch_{\on{Noeth}})^{\on{op}}}$
of \corref{c:upper ! DG funct proper}. 

\smallskip

\noindent{\em(b)}
The restriction $\IndCoh_{(\dgSch_{\on{Noeth}})_{\on{corr:all;aft}}}|_{((\dgSch_{\on{Noeth}})_{\on{open}})^{\on{op}}}$
is canonically isomorphic to the functor 
$$\IndCoh^*_{(\dgSch_{\on{Noeth}})_{\on{open}}}:=
\IndCoh^*_{(\dgSch_{\on{Noeth}})_{\on{ev-coconn}}}|_{((\dgSch_{\on{Noeth}})_{\on{open}})^{\on{op}}}$$
given by \corref{c:upper * DG funct}.

\smallskip

\noindent{\em(c)}
The restriction $\IndCoh_{(\dgSch_{\on{Noeth}})_{\on{corr:all;aft}}}|_{\dgSch_{\on{Noeth}}}$
is canonically isomorphic to the functor $\IndCoh_{\dgSch_{\on{Noeth}}}$ of \propref{p:upgrading IndCoh to functor}. 

\end{thm}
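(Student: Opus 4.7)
The plan is to construct the functor in two conceptual stages: first define the pointwise assignment together with the base-change data, then organize this data into a genuine functor out of the $(\infty,1)$-category of correspondences. The guiding principle is that every morphism almost of finite type admits a Nagata-type factorization as an open embedding followed by a proper map, available here because the property of being almost of finite type depends only on the underlying classical schemes.

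For the pointwise construction, given $f:S_1\to S_2$ almost of finite type, choose a factorization $f=\bar f\circ j$ with $j$ an open embedding and $\bar f$ proper, and define
\[
f^!:=j^{\IndCoh,*}\circ \bar f^{\,!},
\]
using $\IndCoh^!_{(\dgSch_{\on{Noeth}})_{\on{proper}}}$ from \corref{c:upper ! DG funct proper} and $\IndCoh^*_{(\dgSch_{\on{Noeth}})_{\on{ev-coconn}}}$ from \corref{c:upper * DG funct}. Independence of the factorization up to canonical isomorphism is obtained by forming the fibre product of any two factorizations and applying \corref{c:! pullback and open embed} (and the generalization announced as \propref{p:! pullback and event coconn}) to pass between them. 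Functoriality under composition follows by choosing compatible Nagata factorizations of the composite. More generally, for a correspondence with vertical leg $f$ and horizontal leg $g$ one assigns $f^{\IndCoh}_*\circ g^!$, and base change for compositions of correspondences is assembled from \propref{p:proper base change} (for the proper part of $g$) and \lemref{l:usual base change} together with \corref{c:! pullback and open embed} (for the open-embedding part of $g$).

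The genuine obstacle is the second stage: upgrading objects, correspondences as $1$-morphisms, and the base-change isomorphisms just described into a functor of $(\infty,1)$-categories. This cannot be done by hand in any model of $\infty$-categories, as stressed in the introduction. The strategy I would adopt is to present $(\dgSch_{\on{Noeth}})_{\on{corr:all;aft}}$ as a universal recipient built from $\dgSch_{\on{Noeth}}$ together with its $1$-full subcategories $(\dgSch_{\on{Noeth}})_{\on{proper}}$ and $(\dgSch_{\on{Noeth}})_{\on{open}}$, glued via Nagata compactification; and then invoke a universal property that turns a compatible triple consisting of $\IndCoh_{\dgSch_{\on{Noeth}}}$, $\IndCoh^!_{(\dgSch_{\on{Noeth}})_{\on{proper}}}$ and $\IndCoh^*_{(\dgSch_{\on{Noeth}})_{\on{open}}}$, along with the base-change data relating them, into a single functor on the category of correspondences. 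Technically, this proceeds by iterated restriction, left and right Kan extension, and passage to adjoints, exactly in the style of \secref{ss:upgrading direct image to a functor}, but now applied to an auxiliary $2$-categorical enhancement in which $2$-morphisms are proper maps between correspondences (as outlined in the introduction). This coherence step is combinatorial rather than homological in nature, and the full argument is what the paper defers to \cite{GR3}.

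Once the functor is built, the three compatibility statements (a)–(c) are essentially tautological: each asserts that restriction to a specified $1$-full subcategory recovers a previously constructed functor. By construction, both sides agree on objects and on $1$-morphisms, and the uniqueness built into the Kan-extension procedure (combined with \lemref{l:factoring functor} and \lemref{l:factoring functor bis}, precisely as used in the proof of \propref{p:upgrading IndCoh to functor}) upgrades these pointwise agreements to canonical isomorphisms of functors. Thus I expect the entirety of the difficulty to lie in the $\infty$-categorical bookkeeping of the second stage, while the algebro-geometric content — Nagata compactification, proper base change, and compatibility of !-pullback with open embeddings — is already in hand from earlier sections.
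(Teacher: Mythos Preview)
Your overall strategy matches the paper's: the construction proceeds via Nagata-type factorizations, combining the proper $!$-pullback with the open $*$-pullback, and the $\infty$-categorical coherence is packaged into general extension theorems (in the paper, Theorems~\ref{t:extension by adjoints} and~\ref{t:construction}) whose full proofs are deferred to \cite{GR3}. However, there is a genuine gap: Nagata compactification is only available for \emph{separated} morphisms, so your claim that ``every morphism almost of finite type admits a Nagata-type factorization'' is false as stated. The paper handles this by first constructing the functor on $(\dgSch_{\on{Noeth}})_{\on{corr:all;aft-sep}}$ (horizontal arrows separated and almost of finite type), and only then passing to arbitrary aft horizontal arrows via a right Kan extension, using Zariski descent and the density of separated DG schemes (\propref{p:RKE corr} and \lemref{l:from sep to sep}). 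Without this step your construction does not reach the full category $(\dgSch_{\on{Noeth}})_{\on{corr:all;aft}}$.

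A smaller but related point: ``independence of the factorization'' via pairwise fiber products is not quite enough at the $\infty$-categorical level. What is needed is contractibility of the entire category $\mathsf{Factor}(f)$ of factorizations (condition~$(\star)$ in \secref{sss:star condition}), and the paper proves this in \secref{ss:check factorization} by first retracting onto the full subcategory of factorizations for which $S_1$ is dense in $S_{3/2}$ (using the closure construction of \propref{p:closed factorizations}), and then showing that this subcategory has products. Your fiber-product idea is the right starting move, but the na\"{\i}ve fiber product of two compactifications need not be a compactification; one must take the closure of the image of $S_1$ inside it, and it is this refined product that makes the argument go through.
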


\sssec{}

As one of the main corollaries of \thmref{t:upper shriek}, we obatin:

\begin{cor} \label{c:existence of !}
There exists a well-defined functor 
$$\IndCoh^!_{(\dgSch_{\on{Noeth}})_{\on{aft}}}:((\dgSch_{\on{Noeth}})_{\on{aft}})^{\on{op}}\to \StinftyCat_{\on{cont}},$$
such that

\smallskip

\noindent{\em(a)}
The restriction $\IndCoh_{(\dgSch_{\on{Noeth}})_{\on{aft}}}|_{((\dgSch_{\on{Noeth}})_{\on{proper}})^{\on{op}}}$
is canonically isomorphic to the functor $\IndCoh^!_{(\dgSch_{\on{Noeth}})_{\on{proper}}}$
of \corref{c:upper ! DG funct proper}. 

\smallskip

\noindent{\em(b)}
The restriction $\IndCoh_{(\dgSch_{\on{Noeth}})_{\on{aft}}}|_{((\dgSch_{\on{Noeth}})_{\on{open}})^{\on{op}}}$
is canonically isomorphic to the restriction of the functor $\IndCoh^*_{(\dgSch_{\on{Noeth}})_{\on{open}}}$
given by \corref{c:upper * DG funct}.

\end{cor}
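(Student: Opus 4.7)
The plan is to deduce this directly from \thmref{t:upper shriek} by restriction along a natural embedding. Recall from the discussion preceding that theorem that for any $\bC$ with classes $vert,horiz$, the correspondence category $\bC_{\on{corr}:vert;horiz}$ contains $(\bC_{horiz})^{\on{op}}$ as a 1-full subcategory: a horizontal morphism $g:\bc\to \bc'$ is encoded by the correspondence whose horizontal leg is $g$ and whose vertical leg is $\on{id}_{\bc}$. Applying this with $\bC=\dgSch_{\on{Noeth}}$, $vert=\on{all}$, and $horiz=\on{aft}$, I obtain a 1-fully faithful embedding
$$\iota:((\dgSch_{\on{Noeth}})_{\on{aft}})^{\on{op}}\hookrightarrow (\dgSch_{\on{Noeth}})_{\on{corr:all;aft}}.$$

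First I would define
$$\IndCoh^!_{(\dgSch_{\on{Noeth}})_{\on{aft}}}:=\IndCoh_{(\dgSch_{\on{Noeth}})_{\on{corr:all;aft}}}\circ \iota.$$
By construction this sends each $S$ to $\IndCoh(S)$, and on an aft morphism $g:S_2\to S_1$ it returns the $1$-morphism $\IndCoh(S_1)\to \IndCoh(S_2)$ obtained by evaluating $\IndCoh_{(\dgSch_{\on{Noeth}})_{\on{corr:all;aft}}}$ on the correspondence
$$
\CD
S_2 @>{g}>> S_1 \\
@V{\on{id}}VV \\
S_2.
\endCD
$$

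Next I would verify compatibilities (a) and (b). Since every proper morphism and every open embedding is almost of finite type, the subcategories $((\dgSch_{\on{Noeth}})_{\on{proper}})^{\on{op}}$ and $((\dgSch_{\on{Noeth}})_{\on{open}})^{\on{op}}$ sit inside $((\dgSch_{\on{Noeth}})_{\on{aft}})^{\on{op}}$ as 1-full subcategories, and $\iota$ restricts to the same embeddings into $(\dgSch_{\on{Noeth}})_{\on{corr:all;aft}}$ that appear in parts (a) and (b) of \thmref{t:upper shriek}. The required identifications with $\IndCoh^!_{(\dgSch_{\on{Noeth}})_{\on{proper}}}$ of \corref{c:upper ! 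DG funct proper} and with the restriction of $\IndCoh^*_{(\dgSch_{\on{Noeth}})_{\on{open}}}$ from \corref{c:upper * DG funct} therefore drop out immediately from parts (a) and (b) of the theorem.

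The hard part is not in this corollary at all, but entirely in \thmref{t:upper shriek} itself: the genuinely nontrivial content is the coherent packaging of the base change isomorphisms of \propref{p:proper base change} (together with their analogues for non-proper aft morphisms, built using \lemref{l:usual base change} and the compactification theorem of Nagata) into a single functor out of the 2-categorical category of correspondences. Once that functor has been constructed, the corollary is simply the observation that any functor out of $(\dgSch_{\on{Noeth}})_{\on{corr:all;aft}}$ automatically yields an ``upper-!'' functor on aft morphisms by restriction to the 1-full subcategory $\iota$.
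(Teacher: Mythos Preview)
Your proposal is correct and matches the paper's approach exactly: the paper presents this corollary as an immediate consequence of \thmref{t:upper shriek}, obtained precisely by restricting $\IndCoh_{(\dgSch_{\on{Noeth}})_{\on{corr:all;aft}}}$ along the 1-full embedding $((\dgSch_{\on{Noeth}})_{\on{aft}})^{\on{op}}\hookrightarrow (\dgSch_{\on{Noeth}})_{\on{corr:all;aft}}$, with parts (a) and (b) inherited directly from parts (a) and (b) of the theorem.
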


\sssec{}

A crucial property of the functor $\IndCoh^!_{(\dgSch_{\on{Noeth}})_{\on{aft}}}$ is the following base change property,
see Remark \ref{r:abstract base change}. Let 
\begin{equation} \label{e:basic Cart diag}
\CD
S'_1  @>{g_1}>>  S_1  \\
@V{f'}VV    @VV{f}V   \\
S'_2  @>{g_2}>>  S_2
\endCD
\end{equation}
be a Cartesian diagram in $\dgSch_{\on{Noeth}}$, where the horizontal arrows are almost of finite type. 
Then there exists a canonical isomorphism of functors
\begin{equation} \label{e:basic base change}
g_2^!\circ f^\IndCoh_*\simeq (f')^\IndCoh_*\circ g_1^!,
\end{equation}
which we shall refer to as \emph{base change}. 

\medskip

Note that, unlike the case when the morphisms $g_i$ are proper or open embeddings, there is a priori
no map in \eqref{e:basic base change} in either direction. 

\medskip

One can say that the real content of \thmref{t:upper shriek} is the well-definedness of the functor of
the !-pullback such that the isomorphism \eqref{e:basic base change} holds.

\ssec{Compatibility with adjunction for open embeddings}

\sssec{}

Let $j:\oS\hookrightarrow S$ be an open embedding. Note that the square 
$$
\CD
\oS  @>{\on{id}}>>  \oS \\
@V{\on{id}}VV   @VV{j}V  \\
\oS  @>{j}>>  S
\endCD
$$
is Cartesian.

\medskip

Hence, the base change isomorphism \eqref{e:basic base change} defines an isomorphism of
functors
$$j^!\circ j_*^{\IndCoh}\simeq \on{Id}_{\IndCoh(\oS)},$$
and in particular, a map in one direction
\begin{equation} \label{e:base change for open from corr}
j^!\circ j_*^{\IndCoh}\to \on{Id}_{\IndCoh(\oS)}.
\end{equation}

\medskip

It will follow from the construction of the functor $\IndCoh_{(\dgSch_{\on{Noeth}})_{\on{corr:all;aft}}}$
(specifically, from \thmref{t:extension by adjoints}) that the map \eqref{e:base change for open from corr}
is canonically isomorphic to the counit of the adjunction for $(j^!,j^\IndCoh_*)$, where we are using the
identification
$$j^!\simeq j^{\IndCoh,*}$$
of \corref{c:existence of !}(b). 

\medskip

We can interpret this is saying that the datum of the functor $\IndCoh_{(\dgSch_{\on{Noeth}})_{\on{corr:all;aft}}}$,
encodes the datum for the $(j^!,j^\IndCoh_*)$-adjunction for an open embedding $j$. 

\sssec{}

From here we are going to deduce:

\begin{prop}  \label{p:open compat}  Consider a Cartesian diagram \eqref{e:basic Cart diag}
and the corresponding isomorphism of functors 
\begin{equation} \label{e:basic base change open}
g_2^!\circ f^\IndCoh_*\simeq (f')^\IndCoh_*\circ g_1^!
\end{equation}
of \eqref{e:basic base change}.

\smallskip

\noindent{\em(a)} Suppose that the morphism $f$ (and hence $f'$) is an open embedding. Then the 
natural transformation $\to$ in \eqref{e:basic base change open} comes via the adjunctions of $(f^!,f^\IndCoh_*)$
and $((f')^!,(f')^\IndCoh_*)$ from the isomorphism
$$(f')^!\circ g_2^!\simeq g_1^!\circ f^!.$$

\smallskip

\noindent{\em(b)} Suppose that the morphism $g_2$ (and hence $g_1$) is an open embedding. Then the 
natural transformation $\to$ in \eqref{e:basic base change open} comes via the adjunctions of $(g_1^!,(g_1)^\IndCoh_*)$
and $(g_2^!,(g_2)^\IndCoh_*)$ from the isomorphism
$$f^\IndCoh_*\circ (g_1)^\IndCoh_*\simeq (g_2)^\IndCoh_*\circ (f')^\IndCoh_*.$$

\end{prop}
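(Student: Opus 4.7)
The essential input is the statement noted just before the proposition, to be established as a consequence of \thmref{t:extension by adjoints}: for an open embedding $j$, the base change isomorphism $j^!\circ j^\IndCoh_*\simeq \on{Id}$ produced by $\IndCoh_{(\dgSch_{\on{Noeth}})_{\on{corr:all;aft}}}$ on the trivial Cartesian square coincides with the counit of the $(j^!\simeq j^{\IndCoh,*},\, j^\IndCoh_*)$-adjunction. In particular, the correspondence functor encodes the entire adjunction datum for any open embedding. A second, structural input I shall use is that $\IndCoh_{(\dgSch_{\on{Noeth}})_{\on{corr:all;aft}}}$ is functorial under composition of Cartesian squares: the base change $2$-cell attached to an outer Cartesian rectangle equals the horizontal (or vertical) composition of the $2$-cells attached to its two constituent Cartesian sub-squares.

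For part (a), $f$ (and hence $f'$) is an open embedding, so $(f')^\IndCoh_*$ is fully faithful by \lemref{l:localization}. Let $\alpha$ denote the correspondence-derived base change $g_2^!\circ f^\IndCoh_*\to (f')^\IndCoh_*\circ g_1^!$, and let $\alpha_{\on{adj}}$ denote the candidate adjunction-derived map built from the unit of $((f')^!,(f')^\IndCoh_*)$, the isomorphism $(f')^!\circ g_2^!\simeq g_1^!\circ f^!$, and the counit of $(f^!,f^\IndCoh_*)$. Fully faithfulness of $(f')^\IndCoh_*$ means each of $\alpha,\alpha_{\on{adj}}$ is determined by whiskering on the left with $(f')^!$ and then applying the counit of $(f')$. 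To compute these composites, I paste the given square to the trivial Cartesian square on the left:
\[
\begin{CD}
S'_1 @= S'_1 @>g_1>> S_1 \\
@| @Vf'VV @VVfV \\
S'_1 @>f'>> S'_2 @>g_2>> S_2,
\end{CD}
\]
whose outer rectangle is Cartesian because $f$, being an open embedding, is a monomorphism (forcing $S_1\underset{S_2}\times S'_1\simeq S'_1$ via $g_1$). The functoriality input identifies the outer base change $(f')^!\circ g_2^!\circ f^\IndCoh_*\simeq g_1^!$ (noting $(g_2\circ f')^!\simeq g_1^!\circ f^!$) with the horizontal composition of the left inner base change (which by the essential input is the counit of $(f')$) and the right inner base change (which is $\alpha$). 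This identifies the above composite applied to $\alpha$ with the outer base change. A triangle-identity calculation using the adjunctions for $f$ and $f'$ identifies the composite applied to $\alpha_{\on{adj}}$ with the same map. Hence $\alpha=\alpha_{\on{adj}}$.

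Part (b) is the exact analog: paste the given square to a trivial Cartesian square on top whose nontrivial side is the open embedding $g_2$, use fully faithfulness of $(g_2)^\IndCoh_*$ from \lemref{l:localization}, and replace the isomorphism $(f')^!\circ g_2^!\simeq g_1^!\circ f^!$ with the pushforward isomorphism $f^\IndCoh_*\circ (g_1)^\IndCoh_*\simeq (g_2)^\IndCoh_*\circ (f')^\IndCoh_*$ coming from functoriality of $(\cdot)^\IndCoh_*$. The main obstacle is justifying the two structural inputs (the identification of trivial-square base change with the adjunction counit for open embeddings, and composition-of-squares functoriality of $\IndCoh_{(\dgSch_{\on{Noeth}})_{\on{corr:all;aft}}}$); both are intrinsic to the construction of the correspondence functor sketched in \secref{s:construction of !} and carried out in detail in \cite{GR3}. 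Granting them, the proof reduces to the diagrammatic verification above.
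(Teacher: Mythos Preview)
Your proof is correct and follows essentially the same route as the paper's. Both arguments pass via the $((f')^!,(f')^\IndCoh_*)$-adjunction to reduce to comparing two maps $(f')^!\circ g_2^!\circ f^\IndCoh_*\rightrightarrows g_1^!$, and both identify these with the base change attached to the single outer Cartesian square
\[
\begin{CD}
S'_1 @>{g_1}>> S_1 \\
@V{\on{id}}VV @VV{f}V \\
S'_1 @>{f\circ g_1=g_2\circ f'}>> S_2.
\end{CD}
\]
The only cosmetic difference: the paper explicitly writes down \emph{both} factorings of this outer square into two Cartesian sub-squares (one pasting the trivial $f$-square on the right, one pasting the trivial $f'$-square on the left), whereas you write out only the second factoring and subsume the first into the phrase ``a triangle-identity calculation.'' That calculation, unwound, is exactly the paper's first factoring, so nothing is missing.
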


\begin{proof}

We will prove point (a); point (b) is proved similarly. Applying the $(f^!,f^\IndCoh_*)$-adjunction, the assertion is
equivalent to the fact that the two natural transformations 
\begin{equation} \label{e:two nat trans open}
(f')^!\circ g_2^!\circ f^\IndCoh_*\rightrightarrows g_1^!
\end{equation}
are canonically isomorphic. The first natural transformation is 
$$(f')^!\circ g_2^!\circ f^\IndCoh_*\simeq g_1^!\circ f^!\circ f^\IndCoh_*\overset{\on{id}\circ \on{counit}}\longrightarrow g_1^!,$$
and the second natural transformation is 
$$(f')^!\circ g_2^!\circ f^\IndCoh_*\overset{\text{\eqref{e:basic base change}}}\simeq (f')^!\circ (f')^\IndCoh_*\circ g_1^!
\overset{\on{counit}\circ \on{id}}\longrightarrow g_1^!.$$

\medskip

Taking into account that the counit maps
$$f^!\circ f^\IndCoh_*\to \on{Id} \text{ and } (f')^!\circ (f')^\IndCoh_*\to \on{Id}$$
are given by base change for the squares
$$
\CD
S_1 @>{\on{id}}>>  S_1  \\
@V{\on{id}}VV   @VV{f}V   \\
S_1 @>{f}>>  S_2
\endCD
$$
and
$$
\CD
S'_1 @>{\on{id}}>>  S'_1  \\
@V{\on{id}}VV   @VV{f'}V   \\
S'_1 @>{f'}>>  S'_2,
\endCD
$$
respectively, we obtain that the two natural transformations in question come as base change from the Cartesian square
$$
\CD
S'_1   @>{g_1}>>      S_1 \\
@V{\on{id}}VV   @VVV  \\
S'_1 @>{g_2\circ f'=f\circ g_1}>>   S_2,
\endCD
$$
for the first one factoring it as
$$
\CD
S'_1 @>{g_1}>>   S_1   @>{\on{id}}>>  S_1  \\
@V{\on{id}}VV   @V{\on{id}}VV   @VV{f}V   \\
S'_1 @>{g_1}>>  S_1  @>{f}>> S_2
\endCD
$$
and for the second one factoring it as
$$
\CD
S'_1  @>{\on{id}}>>  S'_1  @>{g_1}>>  S_1  \\
@V{\on{id}}VV    @V{f'}VV  @VV{f}V \\
S'_1  @>{f'}>>  S'_2  @>{g_2}>>  S_2.
\endCD
$$

\end{proof}

\ssec{Compatibility with adjunction for proper maps}

\sssec{}

Let $f:S_1\to S_2$ be a proper map. By contrast with the case of open embeddings, the datum
of the functor $\IndCoh_{(\dgSch_{\on{Noeth}})_{\on{corr:all;aft}}}$ does not contain the datum 
for either the unit 
$$\on{Id}_{\IndCoh(S_1)}\to f^!\circ f^\IndCoh_*$$
or the counit
$$f^\IndCoh\circ f^!\to \on{Id}_{\IndCoh(S_2)}$$
for the $(f^\IndCoh_*,f^!)$-adjunction.  \footnote{We remark that it does encode this adjunction in the
world of D-modules, see \cite{FG}, Sect. 1.4.5.} The situation can be remedied by considering the
2-category of correspondences, see \secref{sss:two cat form} below. 

\medskip

The following property of the base change isomorphisms \eqref{e:basic base change} follows
from the construction of the functor $\IndCoh_{(\dgSch_{\on{Noeth}})_{\on{corr:all;aft}}}$:

\begin{prop}  \label{p:proper compat}  Consider a Cartesian diagram \eqref{e:basic Cart diag}
and the corresponding isomorphism of functors 
\begin{equation} \label{e:basic base change proper}
(f')^\IndCoh_*\circ g_1^!\simeq g_2^!\circ f^\IndCoh_*
\end{equation}
of \eqref{e:basic base change}.

\smallskip

\noindent{\em(a)} Suppose that the morphism $f$ (and hence also $f'$) is proper, and
$g_2$ (and hence $g_1$) almost of finite type. Then the map $\to$
in the isomorphism \eqref{e:basic base change proper} comes via the adjunctions of $(f^{\IndCoh}_*,f^!)$
and $((f')^{\IndCoh}_*,(f')^!)$ from the isomorphism
$$g_1^!\circ f^!\simeq (f')^!\circ g_2^!.$$

\smallskip

\noindent{\em(b)} Suppose that the morphism $g_2$ (and hence also $g_1$) is proper.
Then the map $\to$ in the isomorphism \eqref{e:basic base change proper} comes via the adjunctions of
$((g_1)^{\IndCoh}_*,g_1^!)$ and $((g_2)^{\IndCoh}_*,g_2^!)$ from the isomorphism
$$(g_2)^\IndCoh_*\circ (f')^\IndCoh_*\simeq f^\IndCoh_*\circ (g_1)^\IndCoh_*.$$

\end{prop}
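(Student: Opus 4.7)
The argument mirrors the proof of Proposition \ref{p:open compat}, with the roles of the units and counits of the relevant adjunctions interchanged because, for proper $f$, the functor $f^!$ is the \emph{right} adjoint of $f^\IndCoh_*$ rather than the left adjoint. For part (a), I would apply the $((f')^\IndCoh_*, (f')^!)$-adjunction to translate the asserted equality of natural transformations
\[(f')^\IndCoh_* \circ g_1^! \rightrightarrows g_2^! \circ f^\IndCoh_*\]
into an equality of the adjoint natural transformations
\[g_1^! \rightrightarrows (f')^! \circ g_2^! \circ f^\IndCoh_*.\]
The first of these comes from the base change map \eqref{e:basic base change proper} via the unit of the $((f')^\IndCoh_*, (f')^!)$-adjunction, while the second comes from the isomorphism $g_1^! \circ f^! \simeq (f')^! \circ g_2^!$ precomposed with the unit of the $(f^\IndCoh_*, f^!)$-adjunction, yielding
\[g_1^! \xrightarrow{\on{id}\,\circ\,\on{unit}} g_1^! \circ f^! \circ f^\IndCoh_* \simeq (f')^! \circ g_2^! \circ f^\IndCoh_*.\]

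To identify these two natural transformations, I would invoke the fact, inherent in the construction of $\IndCoh_{(\dgSch_{\on{Noeth}})_{\on{corr:all;aft}}}$ sketched in \secref{s:construction of !}, that the unit of the proper adjunction $(f^\IndCoh_*, f^!)$ is itself encoded as the base change isomorphism associated to the trivial Cartesian square with both horizontal arrows equal to the identity and both vertical arrows equal to $f$. This allows both natural transformations above to be exhibited as the base-change map attached to the same iterated Cartesian diagram, obtained by pasting the square \eqref{e:basic Cart diag} with the trivial identity square in two different orders. Functoriality of $\IndCoh$ on the category of correspondences then forces them to coincide.

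Part (b) follows by the symmetric argument, with the roles of the vertical map $f$ and the horizontal map $g_2$ interchanged, and the adjunctions $((g_1)^\IndCoh_*, g_1^!)$ and $((g_2)^\IndCoh_*, g_2^!)$ used in place of those above. The main obstacle in both cases is to make rigorous the identification of the unit (equivalently, counit) of a proper adjunction with a specific base change isomorphism extracted from the correspondence functor; this compatibility is not visibly present in the 1-categorical correspondence formalism. As the remark preceding the proposition suggests, it is most naturally formulated in the 2-categorical enhancement of the correspondence category, where properness produces a 2-morphism out of which the adjunction is extracted, so in practice one either invokes that enhancement or traces through the explicit construction in \secref{s:construction of !} by hand to verify the needed base-change presentation of the unit.
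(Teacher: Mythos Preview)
Your overall strategy matches the paper's: reduce via the adjunction to a comparison of two natural transformations $g_1^! \rightrightarrows (f')^! \circ g_2^! \circ f^\IndCoh_*$, then identify both with a single datum in the correspondence formalism. The paper carries this out explicitly for part (b) in \S5.4.3 using the 2-categorical enhancement, presenting both transformations as coming from one and the same 2-morphism diagram, factored in two different ways.

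However, there is a genuine error in your middle step. You claim that the unit of $(f^\IndCoh_*, f^!)$ is ``encoded as the base change isomorphism associated to the trivial Cartesian square with both horizontal arrows equal to the identity and both vertical arrows equal to $f$.'' That square gives the base change isomorphism $\on{id}^! \circ f^\IndCoh_* \simeq f^\IndCoh_* \circ \on{id}^!$, which is simply the identity on $f^\IndCoh_*$ --- not the unit $\on{Id} \to f^! \circ f^\IndCoh_*$. The analogy with the open case breaks down precisely here: for an open embedding $j$, the square with $j$ on two sides and $\on{id}$ on the other two \emph{is} Cartesian (since $\oS \times_S \oS \simeq \oS$), and its base change gives the counit. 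For a proper $f$ that is not a monomorphism, no such Cartesian square exists in the 1-categorical correspondence category. This is exactly the point made in \S5.4.1: the 1-categorical functor $\IndCoh_{(\dgSch_{\on{Noeth}})_{\on{corr:all;aft}}}$ does \emph{not} encode the proper adjunction data.

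The paper's remedy is the one you gesture at in your final paragraph: in the 2-category $(\dgSch_{\on{Noeth}})_{\on{corr:all;aft}}^{\on{2-Cat}}$, the unit arises from the 2-morphism given by the diagonal $\Delta_{S_1/S_2}: S_1 \to S_1 \times_{S_2} S_1$ (see diagram \eqref{e:diagram for adj}). The paper's illustration then shows that both natural transformations come from the same composite 2-morphism diagram, decomposed in two ways. Your diagnosis of the obstacle is right, but the specific ``trivial square'' claim should be replaced by this diagonal 2-morphism.
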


\sssec{A 2-categorical formulation}  \label{sss:two cat form}

As was explained to us by J.~Lurie, the most natural formulation of the existence of the !-pullback
functors that would encode the adjunction for proper maps used the language of $(\infty,2)$-categories.  

\medskip

Namely, according to Lurie, one needs to consider an $(\infty,2)$-category 
$(\dgSch_{\on{Noeth}})_{\on{corr:all;aft}}^{\on{2-Cat}}$, whose objects and
$1$-morphisms are the same as in $(\dgSch_{\on{Noeth}})_{\on{corr:all;aft}}$,
however, we allow non-invertible $2$-morphisms which are commutative diagrams
\begin{gather} \label{e:2categ diag}
\xy
(-15,10)*+{S_2}="A";
(15,10)*+{S_1}="B";
(0,15)*+{S}="C";
(0,30)*+{S'}="D";
{\ar@{->}^{f} "C";"A"};
{\ar@{->}_{f'} "D";"A"};
{\ar@{->}_{g} "C";"B"};
{\ar@{->}^{g'} "D";"B"};
{\ar@{->}^{h} "D";"C"};
\endxy
\end{gather}
where the map $h$ is required to be a proper. 

\medskip

The higher categorical structure on $\IndCoh_{(\dgSch_{\on{Noeth}})_{\on{corr:all;aft}}}^{\on{2-Cat}}$
is described via the corresponding higher Segal space. Namely, the corresponding $(\infty,1)$-category
of $n$-fold compositions is has as objects diagrams as in \eqref{e:grids}, where $1$-morphisms
between such diagrams are maps that induce proper maps on veritices. 

\medskip

In this framework, one will consider $\IndCoh_{(\dgSch_{\on{Noeth}})_{\on{corr:all;aft}}}^{\on{2-Cat}}$ as a functor
$$(\dgSch_{\on{Noeth}})_{\on{corr:all;aft}}^{\on{2-Cat}}\to \StinftyCat^{\on{2-Cat}}_{\on{cont}},$$ where 
$\StinftyCat{\on{2-Cat}}_{\on{cont}}$ is the 2-category of cocomplete DG categories and continuous functors,
where we now allow non-invertible natural transformations as 2-morphisms. 

\medskip

The construction of this functor will be carried out in \cite{GR3}. 

\sssec{}

Let explain how the datum of the functor $\IndCoh_{(\dgSch_{\on{Noeth}})_{\on{corr:all;aft}}}^{\on{2-Cat}}$
encodes the adjunction for $(f_*^{\IndCoh},f^!)$ for a proper map $f:S_1\to S_2$.

\medskip

Consider the diagram 
\begin{gather}   \label{e:diagram for adj}
\xy
(-25,0)*+{S_1}="A";
(25,0)*+{S_1,}="B";
(0,15)*+{S_1\underset{S_2}\times S_1}="C";
(0,40)*+{S_1}="D";
{\ar@{->}^{\tilde{f}^l} "C";"A"};
{\ar@{->}_{\on{id}_{S_1}} "D";"A"};
{\ar@{->}_{\tilde{f}^r} "C";"B"};
{\ar@{->}^{\on{id}_{S_1}} "D";"B"};
{\ar@{->}_{h} "D";"C"};
\endxy
\end{gather}
where $h=\Delta_{S_1/S_2}$.

\medskip

Then the resulting $2$-morphism 
$$\on{Id}_{\IndCoh(S)}\simeq (\on{id}_{S_1})_*^{\IndCoh}\circ \on{id}_{S_1}^!\to 
(\tilde{f}^l)^{\IndCoh}_*\circ (\tilde{f}^r)^!\simeq f^!\circ f^{\IndCoh}_*$$
is the unit of the $(f_*^{\IndCoh},f^!)$-adjunction.

\sssec{}

As an illustration, assuming the above description of the unit of the adjunction for $(f_*^{\IndCoh},f^!)$, let us deduce
the assertion of \propref{p:proper compat}(b). 

\medskip

As in the proof of \propref{p:open compat}, it suffices to show that
the two natural transformations
\begin{equation} \label{e:two nat trans prop}
g_1^!\rightrightarrows (f')^!\circ g_2^!\circ f^\IndCoh_*
\end{equation}
are canonically isomorphic, the first one being
$$g_1^!\overset{\on{id}\circ\on{unit}} \longrightarrow g_1^!\circ f^!\circ f^\IndCoh_*\simeq (f')^!\circ g_2^!\circ f^\IndCoh_*,$$
and the second one
$$g_1^!\overset{\on{unit}\circ \on{id}} \longrightarrow (f')^!\circ (f')^\IndCoh_*\circ g_1^!\overset{\text{\eqref{e:basic base change}}}
\longrightarrow (f')^!\circ g_2^!\circ f^\IndCoh_*.$$

Recall that the above unit maps
$$\on{Id}\to f^!\circ f^\IndCoh_* \text{ and } \on{Id}\to (f')^!\circ (f')^\IndCoh_*$$
are given by the $2$-morphisms coming from the diagrams
\begin{gather}   
\xy
(-25,0)*+{S_1}="A";
(25,0)*+{S_1,}="B";
(0,15)*+{S_1\underset{S_2}\times S_1}="C";
(0,40)*+{S_1}="D";
{\ar@{->}^{\tilde{f}^l} "C";"A"};
{\ar@{->}_{\on{id}_{S_1}} "D";"A"};
{\ar@{->}_{\tilde{f}^r} "C";"B"};
{\ar@{->}^{\on{id}_{S_1}} "D";"B"};
{\ar@{->}_{h} "D";"C"};
\endxy
\end{gather}
and
\begin{gather}  
\xy
(-25,0)*+{S'_1}="A";
(25,0)*+{S'_1,}="B";
(0,15)*+{S'_1\underset{S'_2}\times S'_1}="C";
(0,40)*+{S'_1}="D";
{\ar@{->}^{\tilde{f}'{}^l} "C";"A"};
{\ar@{->}_{\on{id}_{S'_1}} "D";"A"};
{\ar@{->}_{\tilde{f}'{}^r} "C";"B"};
{\ar@{->}^{\on{id}_{S'_1}} "D";"B"};
{\ar@{->}_{h'} "D";"C"};
\endxy
\end{gather}
respectively.

\medskip

Hence, both maps in \eqref{e:two nat trans prop} are given by the diagram
\begin{gather}   
\xy
(-40,0)*+{S'_1}="A";
(40,0)*+{S_1,}="B";
(0,15)*+{S'_1\underset{S_2}\times S_1=S'_1\underset{S'_2}\times S'_1}="C";
(0,40)*+{S'_1}="D";
{\ar@{->} "C";"A"};
{\ar@{->} "D";"A"};
{\ar@{->} "C";"B"};
{\ar@{->} "D";"B"};
{\ar@{->} "D";"C"};
\endxy
\end{gather}
one time presenting it as a composition 
\begin{gather}   
\xy
(25,0)*+{S_1}="A";
(75,0)*+{S_1,}="B";
(50,15)*+{S_1\underset{S_2}\times S_1}="C";
(50,40)*+{S_1}="D";
(-25,0)*+{S'_1}="E";
(0,15)*+{S'_1}="F";
(0,40)*+{S'_1}="G";
{\ar@{->} "C";"A"};
{\ar@{->} "D";"A"};
{\ar@{->} "C";"B"};
{\ar@{->} "D";"B"};
{\ar@{->} "D";"C"};
{\ar@{->} "F";"E"};
{\ar@{->} "G";"E"};
{\ar@{->} "F";"A"};
{\ar@{->} "G";"A"};
{\ar@{->} "F";"E"};
{\ar@{->} "G";"F"};
\endxy
\end{gather}
and another time as a composition
\begin{gather}  
\xy
(-25,0)*+{S'_1}="A";
(25,0)*+{S'_1}="B";
(0,15)*+{S'_1\underset{S'_2}\times S'_1}="C";
(0,40)*+{S'_1}="D";
(75,0)*+{S_1,}="E";
(50,15)*+{S'_1}="F";
(50,40)*+{S'_1}="G";
{\ar@{->} "C";"A"};
{\ar@{->} "D";"A"};
{\ar@{->} "C";"B"};
{\ar@{->} "D";"B"};
{\ar@{->} "D";"C"};
{\ar@{->} "F";"B"};
{\ar@{->} "G";"B"};
{\ar@{->} "F";"E"};
{\ar@{->} "G";"E"};
{\ar@{->} "G";"F"};
\endxy
\end{gather}

\ssec{Compatibility with the action of $\QCoh$}  \label{ss:compat with action of QCoh}

\sssec{}

Recall (see \secref{ss:action}) that for an individual DG scheme $S$, the category $\IndCoh(S)$
is naturally a module category for $\QCoh(S)$.

\medskip

Recall also that for a map $f:S_1\to S_2$, which is proper (resp., open), the functor
$$f^!:\IndCoh(S_2)\to \IndCoh(S_1)$$
has a natural structure of 1-morphism of $\QCoh(S_2)$-module categories.

\medskip

Indeed, for $f$ proper, this structure is given by \corref{c:upgrading !}. For $f$
an open embedding, we have $f^!=f^{\IndCoh,*}$ and this structure is given by
\propref{p:* pullback}.

\sssec{}  \label{sss:SymMonMod}

Consider now the $\infty$-category $\SymMonModStinftyCat_{\on{cont}}$, whose objects
are pairs
$$(\bO,\bC),$$
where $\bO\in \SymMonStinftyCat$ and $\bC$ is an $\bO$-module category.

\medskip

Morphisms in $\SymMonModStinftyCat_{\on{cont}}$ between $(\bO_1,\bC_1)$
and $(\bO_2,\bC_2)$ are pairs $(F_\bO,F_\bC)$, where 
$F_\bO:\bO_1\to \bO_2$ is a morphism in $\SymMonStinftyCat_{\on{cont}}$, and 
$$F_\bC:\bC_1\to \bC_2$$
is a morhism of $\bO_1$-module categories.

\medskip

The higher categorical structure on $\SymMonModStinftyCat_{\on{cont}}$ is defined naturally.

\sssec{}

The assignments
$$S\rightsquigarrow (\QCoh(S),\IndCoh(S)),\,\, (f:S_1\to S_2)\rightsquigarrow (f^*,f^!)$$
naturally upgrade to functors
$$(\QCoh^*,\IndCoh^!)_{(\dgSch_{\on{Noeth}})_{\on{proper}}}:
((\dgSch_{\on{Noeth}})_{\on{proper}})^{\on{op}}\to \SymMonModStinftyCat_{\on{cont}},$$
and 
$$(\QCoh^*,\IndCoh^!)_{(\dgSch_{\on{Noeth}})_{\on{open}}}:
((\dgSch_{\on{Noeth}})_{\on{open}})^{\on{op}}\to \SymMonModStinftyCat_{\on{cont}},$$
respectively.

\sssec{}

The following ehnancement of \corref{c:existence of !} will be proved in \cite{GR3}:

\begin{thm} \label{t:action of QCoh}
There exists a canonical upgrading of the functor 
$$\IndCoh^!_{\dgSch_{\on{Noeth}}}:((\dgSch_{\on{Noeth}})_{\on{aft}})^{\on{op}}\to \StinftyCat_{\on{cont}}$$
to a functor 
$$(\QCoh^*,\IndCoh^!)_{(\dgSch_{\on{Noeth}})_{\on{aft}}}:((\dgSch_{\on{Noeth}})_{\on{aft}})^{\on{op}}\to
\SymMonModStinftyCat_{\on{cont}},$$
whose restrictions to 
$$((\dgSch_{\on{Noeth}})_{\on{proper}})^{\on{op}} \text{ and }
((\dgSch_{\on{Noeth}})_{\on{open}})^{\on{op}}$$
identify with
$$(\QCoh^*,\IndCoh^!)_{(\dgSch_{\on{Noeth}})_{\on{proper}}} \text{ and }
(\QCoh^*,\IndCoh^!)_{(\dgSch_{\on{Noeth}})_{\on{open}}},$$
respectively.
\end{thm}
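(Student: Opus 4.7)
The plan is to parallel the construction of $\IndCoh_{(\dgSch_{\on{Noeth}})_{\on{corr:all;aft}}}$ from \thmref{t:upper shriek}, but work throughout in $\SymMonModStinftyCat_{\on{cont}}$ in place of $\StinftyCat_{\on{cont}}$. Concretely, I would first produce a functor
$$(\QCoh^*,\IndCoh)_{(\dgSch_{\on{Noeth}})_{\on{corr:all;aft}}}:(\dgSch_{\on{Noeth}})_{\on{corr:all;aft}}\to \SymMonModStinftyCat_{\on{cont}},$$
whose composition with the forgetful functor $(\bO,\bC)\mapsto \bC$ recovers $\IndCoh_{(\dgSch_{\on{Noeth}})_{\on{corr:all;aft}}}$, and then restrict along the inclusion of horizontal $1$-morphisms $((\dgSch_{\on{Noeth}})_{\on{aft}})^{\on{op}}\hookrightarrow (\dgSch_{\on{Noeth}})_{\on{corr:all;aft}}$. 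Along this subcategory the $\QCoh^*$-component agrees with the restriction of the standard functor $\QCoh^*_{\dgSch_{\on{Noeth}}}$, yielding in particular the stated compatibilities with the proper and open subcases.

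To construct the enhanced correspondence functor, I would mimic Steps 0--6 of \secref{ss:upgrading direct image to a functor} inside $\SymMonModStinftyCat_{\on{cont}}$. Step 0 uses the tautological module datum $A\mapsto (A\mmod, A\mmod)$; Steps 1--3 (left adjoint, right Kan extension, restriction) preserve the symmetric-monoidal-plus-module data since $f^*$ is symmetric monoidal; Step 4 (passage to right adjoints to get $f_*$) is compatible with the module structure by the projection formula for $\QCoh$; and Steps 5--6 carry the module structure across via \lemref{l:Psi compat with action}, thereby upgrading \propref{p:upgrading IndCoh to functor} to the $\SymMonModStinftyCat_{\on{cont}}$-valued setting. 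The extension from $\dgSch_{\on{Noeth}}$ to $(\dgSch_{\on{Noeth}})_{\on{corr:all;aft}}$ would then invoke \corref{c:upgrading !} for the proper building blocks and \propref{p:* pullback} for the open-embedding building blocks, both of which supply $\QCoh^*$-linear $!$-pullbacks; the base-change isomorphisms \eqref{e:basic base change} are $\QCoh^*$-linear in view of \eqref{e:dir image as modules} and \eqref{e:upgrading !}.

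The main obstacle is coherence in the $\infty$-categorical sense: while each individual fragment of module-linear structure is either constructed in Part I or follows from an easy adjunction argument, assembling them into a functor of $(\infty,1)$-categories requires checking that the base-change isomorphisms, upgraded to morphisms of $\QCoh$-module categories, fit into a homotopy-coherent family across the $n$-fold compositions of correspondences displayed in \eqref{e:grids}, and that this coherence is preserved by each of the Kan extensions and adjoint passages in Steps 0--6. This is precisely the kind of combinatorial $(\infty,1)$-categorical bookkeeping the paper defers to \cite{GR3}.
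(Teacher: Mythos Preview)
The paper does not prove this theorem: immediately after stating it, the text says it ``will be proved in \cite{GR3}'' and that ``a plausibility check for this theorem follows the outline of the proof of \thmref{t:upper shriek}.'' Your proposal is essentially that plausibility check, and you correctly identify both the relevant ingredients (\propref{p:dir image as modules}, \corref{c:upgrading !}, \propref{p:* pullback}) and the main obstacle (homotopy-coherent assembly, deferred to \cite{GR3}).

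There is, however, a wrinkle in your specific formulation. You propose to build a functor
$(\dgSch_{\on{Noeth}})_{\on{corr:all;aft}}\to \SymMonModStinftyCat_{\on{cont}}$
and then restrict to horizontal morphisms. But the full correspondence functor cannot land in $\SymMonModStinftyCat_{\on{cont}}$: a morphism there requires a symmetric monoidal functor $\bO_1\to\bO_2$ in the \emph{same} direction as $\bC_1\to\bC_2$. For a vertical arrow $f:S_1\to S_2$, the functor $f^{\IndCoh}_*:\IndCoh(S_1)\to\IndCoh(S_2)$ is $\QCoh(S_2)$-linear via $f^*:\QCoh(S_2)\to\QCoh(S_1)$, i.e.\ the $\bO$-arrow goes the \emph{opposite} way; this is the category $\SymMonModopStinftyCat_{\on{cont}}$ introduced later in \secref{ss:action and com}, not $\SymMonModStinftyCat_{\on{cont}}$. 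A general correspondence mixes both variances in the $\bO$-slot, so there is no single target of the form $\SymMonModStinftyCat_{\on{cont}}$ receiving the correspondence functor.

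The fix is not to upgrade the correspondence functor wholesale, but to carry the $\QCoh^*$-module structure through the \emph{construction} of $f^!$ in \secref{s:construction of !}: factor an aft morphism as open-then-proper, use that $j^{\IndCoh,*}$ and the proper $f^!$ are each $\QCoh^*$-linear (\propref{p:* pullback}, \corref{c:upgrading !}), and check that the gluing data (Conditions~(I)--(II) and $(\star)$) are compatible with these linear structures. This avoids ever needing the vertical direction to live in $\SymMonModStinftyCat_{\on{cont}}$. Your final paragraph already anticipates that the substance lies in making this coherent, which is precisely what the paper defers to \cite{GR3}.
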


A plausibility check for this theorem follows the outline of the proof of \thmref{t:upper shriek} given in the 
next section.

\sssec{}

Let us rewrite the statement of \thmref{t:action of QCoh} in concrete terms for an individual morphism
$f:S_1\to S_2$:

\medskip

It says that that for $\CE\in \QCoh(S_2)$ and $\CF\in \IndCoh(S_2)$, there exists a canonical
isomorphism
\begin{equation} \label{e:QCoh linearity for !}
f^!(\CE\otimes \CF)\simeq f^*(\CE)\otimes f^!(\CF).
\end{equation}

\ssec{The multiplicative structure}

In this subsection we will specialize to the full subcategory
$$\dgSch_{\on{aft}}\subset \dgSch_{\on{Noeth}}$$
of DG schemes almost of finite type over $k$. 

\sssec{}  \label{sss:IndCoh on aft}

We consider the full subcategory 
$$(\dgSch_{\on{aft}})_{\on{corr:all;all}}\subset (\dgSch_{\on{Noeth}})_{\on{corr:all;aft}},$$
obtained by taking as objects DG schemes that are almost of finite type. 

\medskip

Let us denote by $\IndCoh_{(\dgSch_{\on{aft}})_{\on{corr:all;all}}}$ the restriction of the functor 
$$\IndCoh_{(\dgSch_{\on{Noeth}})_{\on{corr:all;aft}}}:(\dgSch_{\on{Noeth}})_{\on{corr:all;aft}}\to \StinftyCat_{\on{cont}}$$ to 
the subcategory $(\dgSch_{\on{aft}})_{\on{corr:all;all}}$. 

\medskip

We let 
$$\IndCoh_{\dgSch_{\on{aft}}}:\dgSch_{\on{aft}}\to \StinftyCat_{\on{cont}}$$
denote the restriction of the functor $\IndCoh_{(\dgSch_{\on{aft}})_{\on{corr:all;all}}}$
to the 1-full subcategory 
$$\dgSch_{\on{aft}}\subset (\dgSch_{\on{aft}})_{\on{corr:all;all}}.$$

\medskip

Similarly, we let
$$\IndCoh^!_{\dgSch_{\on{aft}}}:(\dgSch_{\on{aft}})^{\on{op}}\to \StinftyCat_{\on{cont}}$$
denote the restriction of the functor $\IndCoh_{(\dgSch_{\on{aft}})_{\on{corr:all;all}}}$
to the 1-full subcategory 
$$(\dgSch_{\on{aft}})^{\on{op}}\subset (\dgSch_{\on{aft}})_{\on{corr:all;all}}.$$

\sssec{}

Let us observe that the categories $\dgSch_{\on{aft}}$ and $(\dgSch_{\on{aft}})_{\on{corr:all;all}}$ possess
natural symmetric monoidal structures given by Cartesian product over $\on{pt}:=\Spec(k)$.

\medskip

The category $\StinftyCat_{\on{cont}}$ also possesses a natural symmetric monoidal structure given by tensor
product. We state the following result without a proof, as it is obtained by retracing the argument
of \thmref{t:upper shriek} (the full proof will be given in \cite{GR3}):

\begin{thm} \label{t:upper shriek monoidal}
The functor 
$$\IndCoh_{(\dgSch_{\on{aft}})_{\on{corr:all;all}}}:(\dgSch_{\on{aft}})_{\on{corr:all;all}}\to \StinftyCat_{\on{cont}}$$ has a natural
right-lax symmetric monoidal structure.
\end{thm}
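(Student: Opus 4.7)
The plan is to retrace the multi-step construction of $\IndCoh_{(\dgSch_{\on{aft}})_{\on{corr:all;all}}}$ itself (the construction of $\IndCoh_{\dgSch_{\on{Noeth}}}$ from \secref{ss:upgrading direct image to a functor}, followed by the correspondence-extending steps indicated in \secref{s:!}), performing every step inside a suitable $\infty$-category of (right-lax) symmetric monoidal cocomplete DG categories rather than in $\StinftyCat_{\on{cont}}$. The starting point is the observation that $\affdgSch$ carries a symmetric monoidal structure under $\times_k$ (i.e., tensor product on DG rings), and the assignment
$$
A\mapsto A\mod,\qquad (f\colon S_1\to S_2)\mapsto f_*
$$
upgrades to a \emph{strong} symmetric monoidal functor $\QCoh_{\affdgSch}\colon \affdgSch\to \StinftyCat_{\on{cont}}$, by virtue of the isomorphism $(A_1\otimes_k A_2)\mod\simeq A_1\mod\otimes A_2\mod$.

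First I would verify that each manipulation used in \secref{ss:upgrading direct image to a functor} has a natural monoidal refinement: passage to left adjoints (which interchanges strong and right-lax structures in the opposite variance), right Kan extension along the symmetric monoidal inclusion $\affdgSch\to \inftydgprestack$, restriction along $\dgSch\to \inftydgprestack$, and passage back to right adjoints. This produces a right-lax symmetric monoidal functor enhancing $\QCoh_{\dgSch_{\on{Noeth}}}$. Restricting to eventually coconnective parts and invoking \propref{p:equiv on D plus} in its symmetric monoidal form, together with the symmetric monoidal enhancements of \lemref{l:factoring functor} and \lemref{l:factoring functor bis}, then yields a right-lax symmetric monoidal enhancement of the functor $\IndCoh_{\dgSch_{\on{Noeth}}}$ of \propref{p:upgrading IndCoh to functor}.

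Next I would endow the correspondence category with a symmetric monoidal structure. The key observation is that for any symmetric monoidal $\infty$-category $\bC$ whose classes $vert$ and $horiz$ are each closed under the monoidal product (and whose allowed Cartesian squares are preserved by that product), the complete Segal space description in \secref{ss:corr} gives a symmetric monoidal structure on $\bC_{\on{corr}:vert;horiz}$, compatible with the $1$-full inclusions of $\bC_{vert}$ and $(\bC_{horiz})^{\on{op}}$. For $\bC=\dgSch_{\on{aft}}$ under $\times_k$, both the class of all morphisms and the aft-class are preserved, so $(\dgSch_{\on{aft}})_{\on{corr:all;all}}$ acquires the expected symmetric monoidal structure. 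Retracing the correspondence-extending construction of \secref{s:construction of !} inside $\SymMonStinftyCat_{\on{cont}}$ (in its right-lax variant) then assembles the right-lax symmetric monoidal structure on $\IndCoh_{(\dgSch_{\on{aft}})_{\on{corr:all;all}}}$; the fact that the underlying binary comparison maps $\IndCoh(S_1)\otimes \IndCoh(S_2)\to \IndCoh(S_1\times S_2)$ are equivalences on objects is precisely \propref{p:products}.

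The main obstacle is coherence-theoretic rather than conceptual: one must check that every operation invoked (right Kan extension, passage to adjoints, and restriction along $1$-fully faithful subcategory inclusions) admits a well-behaved lift to the symmetric monoidal setting, so that all higher compatibilities assemble into a single right-lax symmetric monoidal functor out of the category of correspondences. This is of exactly the same flavour as the $\infty$-categorical combinatorics underlying \thmref{t:upper shriek} itself, and as stated in the excerpt its complete verification is deferred to the forthcoming \cite{GR3}.
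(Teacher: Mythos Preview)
Your proposal is correct and matches the paper's own treatment: the paper states this theorem without proof, remarking only that it is ``obtained by retracing the argument of \thmref{t:upper shriek}'' with full details deferred to \cite{GR3}. Your outline of how to perform each step of that construction inside a (right-lax) symmetric monoidal setting, together with your explicit acknowledgment that the coherence-theoretic verification is deferred to \cite{GR3}, is exactly the approach the paper indicates.
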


Combined with \propref{p:products}, we obtain:
\begin{cor} \label{c:upper shriek monoidal}
The right-lax symmetric monoidal structure on $\IndCoh_{(\dgSch_{\on{aft}})_{\on{corr:all;all}}}$ is strict,
i.e., is symmetric monoidal. 
\end{cor}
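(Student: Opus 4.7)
The plan is to unpack what it means for the right-lax symmetric monoidal structure of Theorem \ref{t:upper shriek monoidal} to be strict, identify the structure maps explicitly, and then invoke \propref{p:products}. Recall that for a right-lax symmetric monoidal functor $F: \bC \to \bD$ between symmetric monoidal $\infty$-categories, strictness (i.e., the right-lax structure being strong, hence symmetric monoidal) amounts to the assertion that the binary structure maps
$$F(c_1)\otimes F(c_2) \to F(c_1\otimes c_2)$$
and the unit constraint $\unit_{\bD} \to F(\unit_{\bC})$ are equivalences.

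In our situation the unit of $(\dgSch_{\on{aft}})_{\on{corr:all;all}}$ is $\on{pt} = \Spec(k)$, and the unit of $\StinftyCat_{\on{cont}}$ is $\Vect$. Since $\on{pt}$ is a regular classical scheme, \lemref{l:smoothness} gives $\Psi_{\on{pt}}: \IndCoh(\on{pt}) \xrightarrow{\sim} \QCoh(\on{pt}) = \Vect$, and under this identification the unit constraint of the lax monoidal structure is the identity, hence an equivalence.

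For the binary structure maps, one needs to argue that for $S_1, S_2 \in \dgSch_{\on{aft}}$, the canonical functor
$$\mu_{S_1,S_2}:\IndCoh(S_1)\otimes \IndCoh(S_2) \to \IndCoh(S_1\times S_2)$$
coming from the lax structure coincides with the external tensor product functor \eqref{product map} studied in \secref{ss:eventually coconnective}–\secref{ss:action}. This identification is essentially formal: both functors are uniquely characterized, after restriction to compact generators, by the assignment $(\CF_1,\CF_2) \mapsto \CF_1\boxtimes \CF_2$ for $\CF_i \in \Coh(S_i)$, together with continuity. The lax monoidal structure, when evaluated on identity correspondences $S_i = S_i$, must produce precisely this pairing by naturality in the Cartesian product of correspondences, and the image lies in $\Coh(S_1\times S_2) \subset \IndCoh(S_1\times S_2)$ as noted prior to \eqref{product map}.

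Once this identification is in hand, \propref{p:products} says exactly that $\mu_{S_1,S_2}$ is an equivalence for every pair $S_1, S_2 \in \dgSch_{\on{aft}}$. Together with the case of the unit, this shows that all the coherence maps of the right-lax structure are equivalences, so the right-lax symmetric monoidal structure on $\IndCoh_{(\dgSch_{\on{aft}})_{\on{corr:all;all}}}$ is strict. The main (and essentially only) obstacle is the bookkeeping in the second step, namely producing a clean identification between the abstract structure maps of Theorem \ref{t:upper shriek monoidal} and the concretely defined external tensor product; this amounts to chasing through the construction of the monoidal structure, which is carried out using the Cartesian monoidal structure on the category of correspondences and the formal compatibility with Ind-extension.
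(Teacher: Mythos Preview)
Your proposal is correct and follows the same approach as the paper: the paper's own argument is the single line ``Combined with \propref{p:products}, we obtain,'' and your write-up simply makes explicit the two verifications (unit and binary structure maps) that this invocation entails. One minor correction: the external tensor product functor \eqref{product map} is constructed in the subsection on behavior with respect to products, not in \secref{ss:eventually coconnective} or \secref{ss:action}; otherwise the argument is fine.
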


\sssec{}

Restricting the functor $\IndCoh_{(\dgSch_{\on{aft}})_{\on{corr:all;all}}}$ to
$$(\dgSch_{\on{aft}})^{\on{op}}\to \StinftyCat_{\on{cont}},$$
from \thmref{t:upper shriek monoidal} and \corref{c:upper shriek monoidal}, we obtain:

\begin{cor} \label{c:! monoidal}
The functor 
$$\IndCoh^!_{\dgSch_{\on{aft}}}:(\dgSch_{\on{aft}})^{\on{op}}\to \StinftyCat_{\on{cont}}$$
has a natural symmetric monoidal structure.
\end{cor}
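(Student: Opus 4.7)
The plan is to deduce the corollary directly from \thmref{t:upper shriek monoidal} and \corref{c:upper shriek monoidal}. By \secref{sss:IndCoh on aft}, the functor $\IndCoh^!_{\dgSch_{\on{aft}}}$ is, by construction, the restriction of $\IndCoh_{(\dgSch_{\on{aft}})_{\on{corr:all;all}}}$ along the tautological fully faithful embedding
$$\iota:(\dgSch_{\on{aft}})^{\on{op}}\hookrightarrow (\dgSch_{\on{aft}})_{\on{corr:all;all}},$$
which sends a morphism $g:S_2\to S_1$ in $\dgSch_{\on{aft}}$ (viewed as a morphism $S_1\to S_2$ in the opposite category) to the correspondence $S_1\xleftarrow{g}S_2\xrightarrow{\on{id}}S_2$. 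The claim then reduces to showing that $\iota$ is symmetric monoidal, after which the conclusion follows from the fact that restriction of a symmetric monoidal functor along a symmetric monoidal functor is itself symmetric monoidal.

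First, I would observe that the Cartesian product $\underset{\on{pt}}\times$ on $\dgSch_{\on{aft}}$ induces symmetric monoidal structures on both $(\dgSch_{\on{aft}})^{\on{op}}$ (simply by passing to the opposite $\infty$-category) and on $(\dgSch_{\on{aft}})_{\on{corr:all;all}}$ (by taking products of correspondences componentwise). The well-definedness of the latter uses that the formation of Cartesian products commutes with the formation of fiber products, so that the composition law in the correspondence category is compatible with the monoidal structure; this is the content used implicitly in \thmref{t:upper shriek monoidal}.

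Next, I would check that $\iota$ is strictly symmetric monoidal: the product of two correspondences of the form $S_i\xleftarrow{g_i}S'_i\xrightarrow{\on{id}}S'_i$ for $i=1,2$ is the correspondence
$$S_1\times S_2\xleftarrow{g_1\times g_2} S'_1\times S'_2\xrightarrow{\on{id}} S'_1\times S'_2,$$
which is manifestly $\iota(g_1\times g_2)$. Compatibility with compositions and with the symmetry constraint follows by the same inspection.

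Finally, by \corref{c:upper shriek monoidal}, the functor $\IndCoh_{(\dgSch_{\on{aft}})_{\on{corr:all;all}}}$ carries a (strict) symmetric monoidal structure with respect to the Cartesian product on the source and the tensor product on $\StinftyCat_{\on{cont}}$; composing with the symmetric monoidal functor $\iota$ produces the desired symmetric monoidal structure on $\IndCoh^!_{\dgSch_{\on{aft}}}$. The main substantive obstacle is entirely absorbed by \thmref{t:upper shriek monoidal}: once one knows that the functor out of correspondences is monoidal, the restriction to the opposite category is formal. In concrete terms, the resulting symmetric monoidal structure encodes the K\"unneth-type isomorphism $(f_1\times f_2)^!(\CF_1\boxtimes \CF_2)\simeq f_1^!(\CF_1)\boxtimes f_2^!(\CF_2)$, together with the higher coherence data expressing compatibility with products of more than two morphisms.
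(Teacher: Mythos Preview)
Your proof is correct and follows exactly the approach the paper takes: the paper deduces the corollary in one sentence by restricting the symmetric monoidal functor $\IndCoh_{(\dgSch_{\on{aft}})_{\on{corr:all;all}}}$ (from \thmref{t:upper shriek monoidal} and \corref{c:upper shriek monoidal}) along the inclusion $(\dgSch_{\on{aft}})^{\on{op}}\hookrightarrow (\dgSch_{\on{aft}})_{\on{corr:all;all}}$. Your write-up simply makes explicit why this restriction preserves the symmetric monoidal structure, which the paper leaves tacit.
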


Note that at the level of objects, the statement of \corref{c:! monoidal} coincides with that of
\propref{p:products}. 

\medskip

At the level of 1-morphisms, it says that for two pairs of objects of
$\dgSch_{\on{aft}}$:
$$(f_1:S_1\to S'_1) \text{ and } (f_2:S_2\to S'_2),$$
the diagram
$$
\CD
\IndCoh(S_1)\otimes \IndCoh(S_2)  @>{\boxtimes}>> \IndCoh(S_1\times S_2)  \\
@A{f_1^!\otimes f_2^!}AA   @AA{(f_1\times f_2)^!}A    \\
\IndCoh(S'_1)\otimes \IndCoh(S'_2)  @>{\boxtimes}>> \IndCoh(S'_1\times S'_2) 
\endCD
$$
canonically commutes.

\sssec{}   \label{sss:sotimes}

Note now that for any $S\in \dgSch_{\on{aft}}$, the diagonal morphism on $S$ defines on it a structure
of commutative coalgebra in $\dgSch_{\on{aft}}$. Hence, from \corref{c:! monoidal} we obtain:

\begin{cor}  \label{c:sotimes}
For $S\in \dgSch_{\on{aft}}$, the category $\IndCoh(S)$ has a natural symmetric monoidal structure.
Furthermore, the assignement
$$S\rightsquigarrow (\IndCoh(S),\sotimes)$$
naturally extends to a functor
$$(\dgSch_{\on{aft}})^{\on{op}}\to \SymMonStinftyCat_{\on{cont}}.$$
\end{cor}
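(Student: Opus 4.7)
The plan is to deduce this corollary formally from \corref{c:! monoidal} by passing to commutative algebra objects. The key observation is a standard $\infty$-categorical fact: in a Cartesian symmetric monoidal $\infty$-category $\bC$ (i.e., one whose monoidal product is the categorical product), every object carries a canonical structure of commutative comonoid, with comultiplication given by the diagonal. More precisely, the forgetful functor induces an equivalence $\on{CoCAlg}(\bC)\overset{\sim}\to \bC$, or dually $\on{CAlg}(\bC^{\on{op}})\overset{\sim}\to \bC^{\on{op}}$, where $\bC^{\on{op}}$ is equipped with the inherited symmetric monoidal structure (same tensor product on objects, morphisms reversed). I would apply this to $\bC=\dgSch_{\on{aft}}$, whose symmetric monoidal structure by fiber product over $\Spec(k)$ is indeed Cartesian.

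Next, by \corref{c:! monoidal}, the functor
$$\IndCoh^!_{\dgSch_{\on{aft}}}:(\dgSch_{\on{aft}})^{\on{op}}\to \StinftyCat_{\on{cont}}$$
is symmetric monoidal. Any symmetric monoidal functor $F:\bD_1\to\bD_2$ of symmetric monoidal $\infty$-categories induces a functor on commutative algebra objects $\on{CAlg}(F):\on{CAlg}(\bD_1)\to \on{CAlg}(\bD_2)$. Applying this to $\IndCoh^!$ and precomposing with the equivalence from the first step, I obtain
$$(\dgSch_{\on{aft}})^{\on{op}}\simeq \on{CAlg}((\dgSch_{\on{aft}})^{\on{op}})\to \on{CAlg}(\StinftyCat_{\on{cont}})=\SymMonStinftyCat_{\on{cont}},$$
which is the required enhancement of $\IndCoh^!$ to a functor into symmetric monoidal DG categories.

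Unpacking what this produces on objects: the symmetric monoidal structure $\sotimes$ on $\IndCoh(S)$ is the composition
$$\IndCoh(S)\otimes \IndCoh(S) \overset{\boxtimes}\longrightarrow \IndCoh(S\times S) \overset{\Delta_S^!}\longrightarrow \IndCoh(S),$$
where the first arrow is the external product functor (an equivalence by \propref{p:products}, whose symmetric monoidal compatibility is the content of \corref{c:! monoidal}) and the second is $!$-pullback along the diagonal. The unit object is $\omega_S:=p_S^!(k)$ for $p_S:S\to \Spec(k)$. For a morphism $f:S_1\to S_2$, functoriality amounts to the commutation of $f^!$ with external products (which is \corref{c:! monoidal}) together with the compatibility of the diagonal maps $\Delta_{S_i}$ under $f\times f$; the latter is built into the fact that the equivalence $\on{CAlg}(\bC^{\on{op}})\simeq \bC^{\on{op}}$ is natural in $\bC^{\on{op}}$.

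The only nontrivial point is the bookkeeping in the first paragraph: one must be careful that the symmetric monoidal structure on $(\dgSch_{\on{aft}})^{\on{op}}$ is indeed the opposite of the Cartesian structure, so that what is a canonical comonoid in $\dgSch_{\on{aft}}$ becomes a canonical monoid in $(\dgSch_{\on{aft}})^{\on{op}}$, and that the symmetric monoidal functor $\IndCoh^!$ of \corref{c:! monoidal} is a functor with this opposite symmetric monoidal source. Once this formal setup is in place, no further homological-algebraic work is needed; the entire substantive content has been absorbed into \corref{c:! monoidal}.
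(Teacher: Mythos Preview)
Your proposal is correct and follows essentially the same approach as the paper: the paper simply observes that the diagonal makes every $S\in\dgSch_{\on{aft}}$ a commutative coalgebra in $\dgSch_{\on{aft}}$ and then invokes \corref{c:! monoidal}, while you have spelled out the underlying $\infty$-categorical bookkeeping (the equivalence $\on{CAlg}(\bC^{\on{op}})\simeq \bC^{\on{op}}$ for Cartesian $\bC$ and the induced functor on commutative algebras) more explicitly. Your concrete description of $\sotimes$ and the unit $\omega_S$ also matches the paper's.
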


Concretely, the monoidal operation on $\IndCoh(S)$ is the functor
$$\IndCoh(S)\otimes \IndCoh(S)\overset{\boxtimes}\longrightarrow
\IndCoh(S\times S)\overset{\Delta_S^!}\longrightarrow \IndCoh(S).$$

We shall use the notation
$$\CF_1,\CF_2\in \IndCoh(S)\, \mapsto \, \CF_1\sotimes \CF_2\in \IndCoh(S).$$

\medskip

The unit in this symmetric monoidal category is $\omega_S$, defined as 
$$\omega_S:=(p_S)^!(k)\in \IndCoh(S).$$

We shall refer to $\omega_S$ as the ``dualizing sheaf of $S$."

\ssec{Compatibility between the multiplicative structure and the action of $\QCoh$}

We are now going to discuss a common refinement of \corref{c:! monoidal} and \thmref{t:action of QCoh}:

\sssec{}  \label{sss:action and com}

Note that the category $\SymMonModStinftyCat_{\on{cont}}$ also has a natural symmetric monoidal
structure, given by
$$(\bO_1,\bC_1)\otimes (\bO_2,\bC_2):=(\bO_1\otimes \bO_2,\bC_1\otimes \bC_2).$$

We have:

\begin{thm} \label{t:action of QCoh and mult}
The symmetric monoidal structures on the functors
$$\IndCoh^!_{\dgSch_{\on{aft}}}:(\dgSch_{\on{aft}})^{\on{op}}\to \StinftyCat_{\on{cont}}$$
and
$$\QCoh^*_{\dgSch_{\on{aft}}}:(\dgSch_{\on{aft}})^{\on{op}}\to \SymMonStinftyCat_{\on{cont}}$$
naturally combine to a symmetric monoidal structure on the functor
$$(\QCoh^*,\IndCoh^!)_{\dgSch_{\on{aft}}}:(\dgSch_{\on{aft}})^{\on{op}}\to
\SymMonModStinftyCat_{\on{cont}}.$$
\end{thm}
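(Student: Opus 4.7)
The plan is to obtain the symmetric monoidal structure on $(\QCoh^*, \IndCoh^!)$ valued in $\SymMonModStinftyCat_{\on{cont}}$ by combining the two previously-established symmetric monoidal structures --- on $\QCoh^*$ (standard) and on $\IndCoh^!$ via \corref{c:! monoidal} --- and promoting their pointwise compatibility to a coherent $\infty$-categorical structure via the correspondence-category machinery.

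At the level of objects and $1$-morphisms the statement reduces to concrete compatibilities. First, the K\"unneth maps
$$\QCoh(S_1)\otimes \QCoh(S_2) \to \QCoh(S_1\times S_2), \quad \IndCoh(S_1)\otimes \IndCoh(S_2) \to \IndCoh(S_1\times S_2)$$
should together define a morphism in $\SymMonModStinftyCat_{\on{cont}}$, which means that the second map must be linear over the first. This is transparent from the constructions: the external product on $\IndCoh$ in \propref{p:products} is defined by restricting the external product on $\QCoh$, and the action of $\QCoh$ on $\IndCoh$ in \secref{ss:action} is similarly inherited from the self-action of $\QCoh$. Second, for a morphism $f: S\to T$, the pair $(f^*, f^!)$ must commute with the K\"unneth equivalences, which follows from \corref{c:! monoidal} on the $\IndCoh$ side, the symmetric monoidality of $f^*$ on the $\QCoh$ side, and the $\QCoh$-linearity identity \eqref{e:QCoh linearity for !}.

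The hard part is promoting these pointwise verifications to a symmetric monoidal structure as a functor of $\infty$-categories. Following the general strategy laid out in the introduction, I would upgrade the entire picture to the category of correspondences $(\dgSch_{\on{aft}})_{\on{corr:all;all}}$: first extend \thmref{t:action of QCoh} to a $\SymMonModStinftyCat_{\on{cont}}$-valued functor on the correspondence category (which absorbs the base change compatibilities \eqref{e:basic base change} automatically, since they are already known to be $\QCoh$-linear by \propref{p:dir image as modules} and \corref{c:upgrading !}), then endow this extended functor with a symmetric monoidal structure as in \thmref{t:upper shriek monoidal}, and finally restrict back to $(\dgSch_{\on{aft}})^{\on{op}}$. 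The main obstacle is that the symmetric monoidal structure on a functor out of a correspondence category cannot be specified by writing down values on low-dimensional cells; its existence must be extracted by universal procedures (iterated Kan extensions and passage to adjoints) from the tautological affine case, as emphasized throughout the paper, with the detailed combinatorics belonging to the framework of \cite{GR3}.
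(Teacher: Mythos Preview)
The paper does not actually prove this theorem: it is stated without proof in \secref{sss:action and com} and immediately followed by \corref{c:sotimes otimes}, consistent with the paper's general disclaimer that statements of this $\infty$-categorical nature (especially those involving the symmetric monoidal structure on functors out of correspondence categories) are deferred to \cite{GR3}. Your outline is therefore not competing with a proof in the paper, and your closing remark correctly anticipates exactly this deferral.

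That said, your sketch is a reasonable account of what such an argument would involve. The pointwise checks you give (K\"unneth compatibility via \propref{p:products} and the construction in \secref{ss:action}; compatibility with pullbacks via \corref{c:! monoidal} and \eqref{e:QCoh linearity for !}) are the right ones, and your proposed route through $(\dgSch_{\on{aft}})_{\on{corr:all;all}}$---upgrading \thmref{t:action of QCoh} to correspondences, imposing the symmetric monoidal structure as in \thmref{t:upper shriek monoidal}, and restricting---is exactly the architecture the paper sets up. The only thing to flag is that even \thmref{t:action of QCoh} itself is deferred to \cite{GR3}, so you are building on an unproved input; this is fine as long as you are aware that the entire tower of module-category compatibilities is being treated as a package to be handled by the general machinery, not something you can bootstrap from what is actually proved in this paper.
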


\sssec{}

As a corollary, we obtain:

\begin{cor} \label{c:sotimes otimes}
For $S\in \dgSch_{\on{aft}}$, the symmetric monoidal structure on $\IndCoh(S)$ 
has a natural $\QCoh(S)$-linear structure. Furthermore, the assignment
$$S\rightsquigarrow (\QCoh(S),\otimes)\to (\IndCoh(S),\sotimes)$$
naturally extends to a functor
$$(\dgSch_{\on{aft}})^{\on{op}}\to \on{Funct}([1],\SymMonStinftyCat_{\on{cont}}),$$
where $[1]$ is the category $0\to 1$. 
\end{cor}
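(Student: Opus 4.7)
The plan is to deduce this corollary as a purely formal consequence of \thmref{t:action of QCoh and mult}, by exactly the pattern that yields \corref{c:sotimes} from \corref{c:! monoidal}: pass from the symmetric monoidal functor to its effect on commutative (co)algebra objects, and then observe that a symmetric monoidal module category is the same as a symmetric monoidal arrow.

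First, with respect to the Cartesian symmetric monoidal structure on $\dgSch_{\on{aft}}$, every object $S$ carries a canonical commutative coalgebra structure, with comultiplication $\Delta_S:S\to S\times S$ and counit $p_S:S\to \Spec(k)$; equivalently, $S$ is canonically a commutative algebra object in $(\dgSch_{\on{aft}})^{\on{op}}$. This is functorial: the assignment $S\rightsquigarrow S$ upgrades to a symmetric monoidal functor $(\dgSch_{\on{aft}})^{\on{op}}\to \on{ComAlg}\bigl((\dgSch_{\on{aft}})^{\on{op}}\bigr)$ for general reasons about Cartesian symmetric monoidal structures. Applying the symmetric monoidal functor $(\QCoh^*,\IndCoh^!)_{\dgSch_{\on{aft}}}$ provided by \thmref{t:action of QCoh and mult}, and using that a symmetric monoidal functor induces one on categories of commutative algebras, we obtain a functor
$$(\dgSch_{\on{aft}})^{\on{op}}\longrightarrow \on{ComAlg}\bigl(\SymMonModStinftyCat_{\on{cont}}\bigr).$$

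Next, we unwind what a commutative algebra in $\SymMonModStinftyCat_{\on{cont}}$ is. The definition of $\SymMonModStinftyCat_{\on{cont}}$ in \secref{sss:SymMonMod} immediately gives: such a commutative algebra is a pair $(\bO,\bC)$ where $\bO$ is a commutative algebra in $\SymMonStinftyCat_{\on{cont}}$ (i.e., a symmetric monoidal DG category, which $\QCoh(S)$ already is compatibly with pullback) together with a commutative algebra structure on $\bC$ in the category of $\bO$-modules, i.e., an $\bO$-linear symmetric monoidal structure on $\bC$. Applied to the pair $(\QCoh(S),\IndCoh(S))$, this precisely furnishes the first assertion of the corollary: the symmetric monoidal structure on $\IndCoh(S)$ constructed in \corref{c:sotimes} is $\QCoh(S)$-linear, and this is natural in $S$.

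Finally, for the ``furthermore'' part, one needs the general $\infty$-categorical construction that, given a symmetric monoidal $\bO$-module category $\bC$, the action $\bO\otimes\bC\to\bC$ composed with the unit $\unit_\bO\mapsto\unit_\bC$ assembles into a symmetric monoidal functor $\bO\to\bC$, $o\mapsto o\otimes\unit_\bC$. This construction organizes into a functor
$$\on{ComAlg}\bigl(\SymMonModStinftyCat_{\on{cont}}\bigr)\longrightarrow \on{Funct}\bigl([1],\SymMonStinftyCat_{\on{cont}}\bigr);$$
composing with the functor constructed in the previous step yields the desired assignment. The main (and only) nontrivial point is the existence of this last universal ``action realizes as arrow'' functor, but this is a purely formal $\infty$-categorical manipulation, independent of the specifics of $\IndCoh$, and will be carried out in \cite{GR3}. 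In particular, at the level of objects the resulting $\QCoh(S)\to\IndCoh(S)$ is concretely $\CE\mapsto \CE\otimes\omega_S$, and at the level of $1$-morphisms $f:S_1\to S_2$, the resulting commutative square of symmetric monoidal functors
$$
\CD
\QCoh(S_2) @>>>  \IndCoh(S_2) \\
@V{f^*}VV     @VV{f^!}V   \\
\QCoh(S_1) @>>>  \IndCoh(S_1)
\endCD
$$
is encoded in the naturality inherent in \thmref{t:action of QCoh and mult}.
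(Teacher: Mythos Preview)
Your proposal is correct and follows precisely the approach the paper intends: the corollary is stated immediately after \thmref{t:action of QCoh and mult} with no proof beyond ``As a corollary, we obtain,'' and your argument spells out the implicit deduction---applying the symmetric monoidal functor $(\QCoh^*,\IndCoh^!)_{\dgSch_{\on{aft}}}$ to the canonical cocommutative coalgebra structure on each $S$, exactly parallel to how \corref{c:sotimes} is derived from \corref{c:! monoidal}. The paper's subsequent text confirms your concrete identification of the resulting arrow as $\CE\mapsto\CE\otimes\omega_S$.
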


In the above corollary, the symmetric monoidal functor
\begin{equation} \label{e:QCoh to IndCoh}
\QCoh(S)\to \IndCoh(S),
\end{equation} 
is given by the action on the unit, i.e., 
$$\CE\mapsto \CE\otimes \omega_S,$$
where the action is understood in the sense of \secref{ss:action}. We shall denote the functor by the symbol $\Upsilon_S$.

\medskip

In concrete terms, the structure of $\QCoh(S)$-linearity on the symmetric monoidal category
$\IndCoh(S)$ means that for $\CF_1,\CF_2\in \IndCoh(S)$ and $\CE\in \QCoh(S)$, we
have canonical isomorphisms
\begin{equation} \label{e:QCoh linearity on sotimes}
\CE\otimes (\CF_1\sotimes \CF_2)\simeq (\CE\otimes \CF_1)\sotimes \CF_2\simeq 
\CF_1\sotimes (\CE\otimes \CF_2).
\end{equation}

\sssec{}  \label{sss:intr Upsilon}

Thus, we can consider a natural transformation

\begin{equation} \label{e:Upsilon}
\Upsilon_{\dgSch_{\on{aft}}}:\QCoh^*_{\dgSch_{\on{aft}}}\to \IndCoh^!_{\dgSch_{\on{aft}}},
\end{equation}
where $\QCoh^*_{\dgSch_{\on{aft}}}$ and $\IndCoh^!_{\dgSch_{\on{aft}}}$ are both
considered as functors
$$(\dgSch_{\on{aft}})^{\on{op}}\rightrightarrows \SymMonStinftyCat_{\on{cont}}.$$

\medskip

The functoriality statement of \corref{c:sotimes otimes} says that at the level of 1-morphisms,
for $f:S_1\to S_2$, we have a commutative diagram of symmetric monoidal categories. 
\begin{equation} \label{e:Upsilon funct}
\CD
\IndCoh(S_1)  @<{\Upsilon_{S_1}}<< \QCoh(S_1) \\
@A{f^!}AA  @AA{f^*}A  \\
\IndCoh(S_2)  @<{\Upsilon_{S_2}}<< \QCoh(S_2).
\endCD
\end{equation}

\sssec{}

Note that if we regard $\QCoh^*_{\dgSch_{\on{aft}}}$ and $\IndCoh^!_{\dgSch_{\on{aft}}}$ just as functors
$$(\dgSch_{\on{aft}})^{\on{op}}\rightrightarrows \StinftyCat_{\on{cont}},$$
then the structure of natural transfomration on $\Upsilon_{\dgSch_{\on{aft}}}$ is given by 
\thmref{t:action of QCoh}, i.e., we do not need to consider the finer structure given by
\thmref{t:action of QCoh and mult}.

\section{Proof of \thmref{t:upper shriek}}  \label{s:construction of !}

\ssec{Framework for the construction of the functor}

\sssec{}   \label{sss:by adjunction}

We first discuss the most basic example of a construction of a functor out of a category of correspondences. 

\medskip

Let $(\bC,vert,horiz)$ be as in \secref{sss:two classes}, and suppose that $horiz\subset vert$.

\medskip

Let $P_{vert}:\bC_{vert}\to \StinftyCat_{\on{cont}}$ be a functor. Assume that for every $(g:\bc_1\to \bc_2)\in \bC_{horiz}$,
the functor $P_{vert}(g):P(\bc_1)\to P(\bc_2)$ admits a continuous right (resp., left) adjoint; we denote
it $P^!_{horiz}(g)$. 

\medskip

The passage to adjoints defines a functor
$$P^!_{horiz}:(\bC_{horiz})^{\on{op}}\to \StinftyCat_{\on{cont}}.$$ 

\medskip

Consider a Cartesian square \eqref{e:all side Cart}. By adjunction, we obtain 
a map
\begin{equation} \label{e:abstract base change one}
P_{vert}(f')\circ P^!_{horiz}(g_1)\to P^!_{horiz}(g_2)\circ P_{vert}(f)
\end{equation}
(in the case of right adjoints), and
\begin{equation} \label{e:abstract base change two}
P^!_{horiz}(g_2)\circ P_{vert}(f) \to P_{vert}(f')\circ P^!_{horiz}(g_1)
\end{equation}
(in the case of left adjoints).

\medskip

We shall say that $P_{vert}$ satisfies the \emph{left base change} (resp., \emph{right base change})
condition with respect to $horiz$ if the corresponds adjoints $P^!_{horiz}(g)$, $g\in {horiz}$ exist,
and map \eqref{e:abstract base change one} (resp., \eqref{e:abstract base change two})
is an isomorphism for any Cartesian diagram as above.

\medskip

\begin{thm}  \label{t:extension by adjoints}  
Suppose that $P_{vert}$ satisfies the left (resp., right) base change condition with respect to
$horiz$. Then there exists
a canonically defined functor 
$$P_{\on{corr}:vert;horiz}:\bC_{\on{corr}:vert;horiz}\to \StinftyCat_{\on{cont}},$$
equipped with isomorphisms $$P_{vert}\simeq P_{\on{corr}:vert;horiz}|_{\bC_{vert}} \text{ and }
P^!_{horiz}\simeq P_{\on{corr}:vert;horiz}|_{(\bC_{horiz})^{\on{op}}}.$$ 
\end{thm}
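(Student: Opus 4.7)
The plan is to construct $P_{\on{corr}:vert;horiz}$ via a Grothendieck-style unstraightening. I describe the case where \eqref{e:abstract base change one} is an isomorphism (right adjoints); the left-adjoint case is dual. First, by the straightening/unstraightening equivalence, I pass from the functors $P_{vert}$ and $P^!_{horiz}$ to, respectively, a coCartesian fibration $\pi_{vert}: \CE_{vert} \to \bC_{vert}$ and a Cartesian fibration $\pi_{horiz}: \CE_{horiz} \to \bC_{horiz}$, each with fiber $P(\bc)$ over $\bc$. Since $P^!_{horiz}(g)$ is the continuous right adjoint of $P_{vert}(g)$ for $g \in horiz$, and since passage to adjoints is a functor on the level of pairs of adjointable $\infty$-categories, the restrictions $\CE_{vert}\times_{\bC_{vert}}\bC_{horiz}$ and $\CE_{horiz}$ are canonically equivalent over $\bC_{horiz}$, with coCartesian edges of the first corresponding under adjunction to Cartesian edges of the second.

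Next I build a new $\infty$-category $\pi_{\on{corr}}: \CE_{\on{corr}} \to \bC_{\on{corr}:vert;horiz}$ whose objects agree with those of $\CE_{vert}$ and whose morphism from $(\bc_1, x_1)$ to $(\bc_2, x_2)$ lying over a correspondence $\bc_1\xleftarrow{g}\bc_{1,2}\xrightarrow{f}\bc_2$ is, informally, a Cartesian lift $x_{1,2}\simeq P^!_{horiz}(g)(x_1)$ together with a morphism $P_{vert}(f)(x_{1,2})\to x_2$ in $P(\bc_2)$. At the Segal level, the space of $n$-fold composable $1$-morphisms is obtained by enhancing a grid \eqref{e:grids} with an object of $P(\bc_{i,j})$ at each vertex, Cartesian lifting data along each horizontal arrow, and coCartesian lifting data along each vertical arrow. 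The base change isomorphism \eqref{e:abstract base change one} is precisely what makes composition of correspondences well-defined: it provides the canonical identification $P_{vert}(f')\circ P^!_{horiz}(g_1)\simeq P^!_{horiz}(g_2)\circ P_{vert}(f)$ needed to reconcile the two ways of transporting $x_1$ across a Cartesian square.

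The key technical step is to verify that $\pi_{\on{corr}}$ is a coCartesian fibration; then straightening produces the desired $P_{\on{corr}:vert;horiz}$, and the identifications with $P_{vert}$ and $P^!_{horiz}$ on $\bC_{vert}$ and $(\bC_{horiz})^{\on{op}}$ are tautological from the construction, since restricting to correspondences with $g$ (resp.\ $f$) an isomorphism recovers the coCartesian (resp.\ Cartesian) data with which we started. The main obstacle is not conceptual but combinatorial: verifying coCartesian lifting on all higher simplices forces one to organize the iterated applications of \eqref{e:abstract base change one}, one for each Cartesian square in a grid \eqref{e:grids}, into a coherent system of higher homotopies compatible with the complete Segal space presentation of $\bC_{\on{corr}:vert;horiz}$. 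Realizing this data in any specific model (e.g.\ as a marked simplicial set) is exactly the bookkeeping the author defers to \cite{GR3}, and explains why the functor cannot plausibly be written down ``by hand'' at the level of objects and $1$-morphisms.
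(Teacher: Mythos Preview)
The paper does not give a proof of this theorem: it says only that ``the proof of this theorem when $\bC$ is an ordinary category is easy'' and that ``the higher-categorical version will appear in \cite{GR3}.'' So there is nothing to compare your argument against in this paper.

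That said, your strategy---pass from $P_{vert}$ to a coCartesian fibration, use adjointability to match it with the Cartesian fibration for $P^!_{horiz}$ over $\bC_{horiz}$, then assemble a fibration over $\bC_{\on{corr}:vert;horiz}$ whose straightening is the desired functor---is a standard and correct approach to results of this type, and is in the spirit of what \cite{GR3} (and related constructions such as Barwick's unfurling) actually do. You have also correctly isolated the role of the base change isomorphism \eqref{e:abstract base change one}: it is exactly the Beck--Chevalley condition needed for the two ways of transporting across a Cartesian square to agree, which is what makes the composition law on $\bC_{\on{corr}:vert;horiz}$ lift to $\CE_{\on{corr}}$.

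Two small comments. First, your informal description of a morphism in $\CE_{\on{corr}}$ as ``a Cartesian lift $x_{1,2}\simeq P^!_{horiz}(g)(x_1)$ together with a morphism $P_{vert}(f)(x_{1,2})\to x_2$'' is the right picture, but be aware that turning this into an honest $\infty$-category (rather than a homotopy category) requires specifying the simplicial set at the level of grids \eqref{e:grids} with compatible lifts along every edge, not just $1$-morphisms; your Segal-level description gestures at this but is not yet a definition. Second, when you write that $P^!_{horiz}(g)$ is right adjoint to $P_{vert}(g)$ for $g\in horiz$, you are implicitly using the hypothesis $horiz\subset vert$ from \secref{sss:by adjunction}; it is worth making that explicit. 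Neither point is a gap in the idea, but both are places where the actual work in \cite{GR3} lives.
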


The proof of this theorem when $\bC$ is an ordinary category is easy. The higher-categorical
version will appear in \cite{GR3}. 

\sssec{}  \label{sss:letter conditions}

Let $\bC,vert,horiz$ be as in \secref{sss:two classes}. Let ${adm}$ be a subclass of $vert$,
which satisfies the following conditions:

\medskip

\noindent(A) ${adm}$ satisfies conditions (1)-(3) of \secref{sss:two classes}.

\medskip

\noindent(B) The pairs of classes $(vert,{adm})$, $({adm},horiz)$ 
satisfy condition (4) of \secref{sss:two classes}. (Note that since 
${adm}\subset vert$, the pair $({adm},{adm})$ also satisfies 
condition (4).) 

\medskip

\noindent(C) If in a diagram \eqref{e:all side Cart}, we take $g_2=f$ and it belongs
to both $horiz$ and ${adm}$, then the arrows $f'$ and $g_1$ are isomorphisms. 

\medskip

\noindent(D) For $horiz$ and ${adm}$, if $h=h_1\circ h_2$ and $h$ and $h_1$ belong to
the given class, then so does $h_2$.

%

\sssec{}  \label{sss:roman numeral conditions}

Consider the category $\bC_{\on{corr}:vert;horiz}$, and let 
$P_{\on{corr}:vert;horiz}$ be a functor $$P_{\on{corr}:vert;horiz}:\bC_{\on{corr}:vert;horiz}\to \StinftyCat_{\on{cont}}.$$

\medskip

We will impose the following conditions:

\medskip

\noindent(I) The functor $P_{vert}:\bC_{vert}\to\StinftyCat_{\on{cont}}$ satisfies the left base change condition
with respect to adm. 

\medskip

\noindent(II) For a Cartesian square as in \eqref{e:all side Cart}, with
$f,f'\in {adm}$ and $g_1,g_2\in horiz$ the morphism between the resulting two functors
$P(\bc_2)\rightrightarrows P(\bc'_1)$
$$P^!_{horiz}(g_1)\circ P^!_{adm}(f)\to P^!_{adm}(f')\circ P^!_{horiz}(g_2)$$ that comes by the
$(P_{vert}(-),P^!_{adm}(-))$-adjunction from
the isomorphism \eqref{e:basic commutation}, is an isomorphism.

\sssec{}  \label{sss:former III}

For a morphism $f:\bc_1\to \bc_2$ with $f\in adm\cap horiz$ consider the diagram
$$
\CD
\bc_1  @>{\on{id}}>>  \bc_1 \\
@V{\on{id}}VV  @VV{f}V     \\
\bc_1  @>{f}>>  \bc_2,
\endCD
$$
which is Cartesian due to Condition (C). 

\medskip

Note that from Condition (II) we obtain a canonical isomorphism
$$P^!_{horiz}(f)\simeq P^!_{adm}(f).$$



\sssec{}  \label{sss:small numeral conditions}

We let ${horiz_{\on{new}}}$ be yet another class of $1$-morphisms in $\bC$. We impose the following two conditions on ${horiz_{\on{new}}}$: 

\medskip

\noindent(i) ${horiz_{\on{new}}}$ satisfies conditions (1)-(3) of \secref{sss:two classes}. 

\medskip

\noindent(ii) ${horiz_{\on{new}}}$ contains both $horiz$ and ${adm}$.

\medskip

\noindent(iii) Every morphism $h\in {horiz_{\on{new}}}$ can be factored
as $h=f\circ g$ with $g\in horiz$ and $f\in {adm}$. 

\begin{rem}
It follows formally that ${horiz_{\on{new}}}$ is precisely the class of $1$-morphisms that can be factored
as in condition (iii). Thus, the actual condition is that this class of $1$-morphisms is stable
under compositions.
\end{rem}

\sssec{}   \label{sss:star condition}

Finally, we impose the following crucial condition on the classes $horiz$ and ${adm}$.
Let us call it condition $(\star)$:

\medskip

For given $1$-morphism $(h:\bc_1\to \bc_2)\in {horiz_{\on{new}}}$ consider the category ${\mathsf{Factor}}(h)$, 
whose objects are factorizations
of $h$ into a composition
$$\bc_1\overset{g}\longrightarrow \bc_{3/2}\overset{f}\longrightarrow \bc_2$$
with $g\in horiz$ and $f\in {adm}$. Morphisms in this category are commutative diagrams
\begin{gather}   \label{e:new diamond}
\xy
(-10,0)*+{\bc_1}="A";
(10,10)*+{\bc'_{3/2}}="B";
(10,-10)*+{\bc''_{3/2}}="C";
(30,0)*+{\bc_2.}="D";
{\ar@{->}^{g'} "A";"B"};
{\ar@{->}_{g''} "A";"C"};
{\ar@{->}^{e} "B";"C"};
{\ar@{->}^{f'} "B";"D"};
{\ar@{->}_{f''} "C";"D"};
\endxy
\end{gather}
(Note that by condition (D), the arrow $e:\bc'_{3/2}\to \bc''_{3/2}$ is also in ${adm}$.)

\medskip

Condition $(\star)$ reads as follows: the above category ${\mathsf{Factor}}(h)$ is contractible.

\sssec{}

Consider the category $\bC_{\on{corr}:vert;{horiz_{\on{new}}}}$. 
Note that $\bC_{\on{corr}:vert;{horiz_{\on{new}}}}$ contains as 1-full subcategories both
$\bC_{\on{corr}:vert;{adm}}$ and $\bC_{\on{corr}:vert;horiz}$. 

\medskip

We have the following assertion, generalizing \thmref{t:extension by adjoints}

\begin{thm}  \label{t:construction}
There exists a canonically define functor
$$P_{\on{corr},vert,{horiz_{\on{new}}}}:\bC_{\on{corr}:vert;{horiz_{\on{new}}}}\to \StinftyCat_{\on{cont}},$$
equipped with identifications
$$P_{\on{corr},vert,{horiz_{\on{new}}}}|_{\bC_{\on{corr}:vert;horiz}}\simeq
P_{\on{corr},vert,{horiz}}$$
and
$$
P_{\on{corr},vert,{horiz_{\on{new}}}}|_{\bC_{\on{corr}:vert;{adm}}}\simeq
P_{\on{corr},vert,{adm}},$$
which is compatible with the further restriction to
$$\bC_{\on{corr}:vert;{adm}}\leftarrow \bC_{vert}\to \bC_{\on{corr}:vert;horiz}.$$
\end{thm}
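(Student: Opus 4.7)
The plan is to glue the already-constructed functors on $\bC_{\on{corr}:vert;horiz}$ and on $\bC_{\on{corr}:vert;adm}$ along their common restriction to $\bC_{vert}$, using the factorization ${horiz_{\on{new}}} = adm \circ horiz$ supplied by condition (iii) and the contractibility of the spaces of factorizations supplied by condition $(\star)$.

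First I would apply \thmref{t:extension by adjoints} to the pair $(vert,adm)$, which by Condition (I) produces a functor
$$P_{\on{corr}:vert;adm}:\bC_{\on{corr}:vert;adm}\to \StinftyCat_{\on{cont}}$$
extending $P_{vert}$ via the right adjoints $P^!_{adm}$. Next I would observe that $P_{\on{corr}:vert;adm}$ and the given $P_{\on{corr}:vert;horiz}$ have a canonical identification on the overlap where a horizontal arrow lies in $adm\cap horiz$: the identification on such $1$-morphisms is the isomorphism $P^!_{adm}(f)\simeq P^!_{horiz}(f)$ noted in \secref{sss:former III}, and the compatibility of base change for mixed Cartesian squares (one horizontal arrow in $adm$, one in $horiz$) is precisely Condition (II).

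The main construction then proceeds by factorization. Given a $1$-morphism in $\bC_{\on{corr}:vert;horiz_{\on{new}}}$, i.e., a correspondence
$$\bc_1 \xleftarrow{h} \bc_{1,2} \xrightarrow{v} \bc_2$$
with $h\in horiz_{\on{new}}$ and $v\in vert$, any factorization $\bc_{1,2} \xrightarrow{g} \bc_{3/2} \xrightarrow{f} \bc_1$ with $g\in horiz$, $f\in adm$ (which exists by condition (iii)) exhibits our correspondence as the composition of the $\bC_{\on{corr}:vert;adm}$-morphism $(\bc_1 \xleftarrow{f} \bc_{3/2} \xrightarrow{\on{id}} \bc_{3/2})$ followed by the $\bC_{\on{corr}:vert;horiz}$-morphism $(\bc_{3/2} \xleftarrow{g} \bc_{1,2} \xrightarrow{v} \bc_2)$. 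One is thereby led to define the value of the sought functor on this $1$-morphism to be
$$P_{vert}(v) \circ P^!_{horiz}(g) \circ P^!_{adm}(f).$$
The contractibility of $\mathsf{Factor}(h)$ provided by $(\star)$, combined with Condition (II) read off each diagram \eqref{e:new diamond} (so that any two factorizations give canonically isomorphic values through a base-change isomorphism along the Cartesian square produced by $e$), is what makes this well-defined.

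The main obstacle is promoting this to an $\infty$-functor rather than a map on homotopy categories. The approach, which will be carried out in detail in \cite{GR3}, is to introduce an auxiliary $\infty$-category $\wt\bC$ with the same objects as $\bC$, whose $n$-fold compositions are grids as in \eqref{e:grids} in $\bC_{\on{corr}:vert;horiz_{\on{new}}}$ in which every horizontal arrow comes equipped with a chosen factorization through $adm\circ horiz$, compatibly along all Cartesian squares. The forgetful projection $\wt\bC\to \bC_{\on{corr}:vert;horiz_{\on{new}}}$ has fibers that are, by condition $(\star)$ applied termwise, products of contractible spaces; hence it is an equivalence of $\infty$-categories. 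On $\wt\bC$ the desired functor can be built directly as a gluing of $P_{\on{corr}:vert;adm}$ on the $adm$-layer and $P_{\on{corr}:vert;horiz}$ on the $horiz$-layer, with the compatibility at the intermediate vertex $\bc_{3/2}$ controlled by Condition (II). Transporting along the equivalence $\wt\bC\simeq \bC_{\on{corr}:vert;horiz_{\on{new}}}$ yields $P_{\on{corr}:vert;horiz_{\on{new}}}$. The claimed restriction isomorphisms are then automatic: a $1$-morphism lying in $\bC_{\on{corr}:vert;horiz}$ (resp.\ $\bC_{\on{corr}:vert;adm}$) admits the trivial factorization with $f=\on{id}$ (resp.\ $g=\on{id}$), which lies in $\wt\bC$ and recovers $P_{\on{corr}:vert;horiz}$ (resp.\ $P_{\on{corr}:vert;adm}$) on the nose, and these identifications are compatible on the further common restriction to $\bC_{vert}$ by construction.
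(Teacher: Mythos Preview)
Your proposal is correct and follows the same approach as the paper's sketch: factor each $h\in horiz_{\on{new}}$ as $f\circ g$ with $f\in adm$, $g\in horiz$, set $P^!_{horiz_{\on{new}}}(h):=P^!_{horiz}(g)\circ P^!_{adm}(f)$, and invoke condition $(\star)$ to make this independent of the factorization. One point where the paper is more explicit: the diagram \eqref{e:new diamond} is not itself Cartesian, so Condition (II) does not apply to it directly. The paper instead reduces the comparison of two factorizations to the statement that for any \emph{commutative} square with vertical arrows in $adm$ and horizontal arrows in $horiz$, the two composites $P^!_{horiz}\circ P^!_{adm}$ and $P^!_{adm}\circ P^!_{horiz}$ agree; this in turn is proved by factoring through the Cartesian product, where the comparison map lies in $adm\cap horiz$ by Condition (D), and then applying Condition (II) together with the identification of \secref{sss:former III}. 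Conversely, your auxiliary $\infty$-category $\wt\bC$ of correspondences equipped with chosen factorizations, with the forgetful functor to $\bC_{\on{corr}:vert;horiz_{\on{new}}}$ an equivalence by $(\star)$, is a clean way to organize the higher-categorical lift; the paper only gestures at this step and defers it to \cite{GR3}.
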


\sssec{Sketch of proof of \thmref{t:construction}}

In this section we will indicate the proof of \thmref{t:construction}, modulo homotopy-theoretic issues
(i.e., a proof that works when $\bC$ is an ordinary category). The full proof will appear in \cite{GR3}. 

\medskip

First, we are going to construct the functor $P_{horiz_{\on{new}}}:(\bC_{horiz_{\on{new}}})^{\on{op}}\to \StinftyCat_{\on{cont}}$. 
Let $h:\bc_1\to \bc_2$ be a $1$-morphism in ${horiz_{\on{new}}}$, and let us factor it as a composition 
$f\circ g$ as in Condition (iii). First, we need to show that 
the functor $$P^!_{horiz}(g)\circ P^!_{adm}(f):P(\bc_2)\to P(\bc_1)$$ is canonically independent of the factorization.
Since by condition $(\star)$ the category of factorizations is contractible, it suffices to show that for
any $1$-morphism between factorizations give as in diagram \eqref{e:new diamond}, the resulting two
functors
$$P^!_{horiz}(g')\circ P^!_{adm}(f') \text{ and } P^!_{horiz}(g'')\circ P^!_{adm}(f'')$$
are canonically isomorphic.

\medskip

This easily reduces to the case of a diagram 
\begin{gather*} 
\xy
(-10,0)*+{\bc}="A";
(10,10)*+{\bc_1}="B";
(10,-10)*+{\bc_2}="C";
{\ar@{->}^{g_1} "A";"B"};
{\ar@{->}_{g_2} "A";"C"};
{\ar@{->}^{f} "B";"C"};
\endxy
\end{gather*}
with $g',g''\in horiz$ and $f\in {adm}$, and we need to establish an isomorphism 
$$P^!_{horiz}(g_2)\simeq P^!_{horiz}(g_1)\circ P^!_{adm}(f)$$
as functors $P(\bc_2)\rightrightarrows P(\bc)$.

\medskip

We will show more generally that given a commutative (but not necessarily Cartesian) square
$$
\CD
\bc'_1  @>{g_1}>>  \bc_1 \\
@V{\tilde{f}}VV    @VV{f}V    \\
\bc'_2  @>{g_2}>>  \bc_2
\endCD
$$
with $f,\tilde{f}\in {adm}$ and $g_1,g_2\in horiz$, we have a canonical isomorphism
\begin{equation} \label{e:for compos}
P^!_{horiz}(g_1)\circ P^!_{adm}(f)\simeq P^!_{adm}(\tilde{f})\circ P^!_{horiz}(g_2)
\end{equation}
as functors $P(\bc_2)\rightrightarrows P(\bc'_1)$.

\medskip

Consider the diagram
$$
\CD
\bc'_1   @>{h}>>  \bc'_2\underset{\bc_2}\times \bc_1   @>{g'_2}>>   \bc_1  \\
& &  @V{f'}VV    @VV{f}V   \\ 
& & \bc'_2 @>{g_2}>>  \bc_2
\endCD
$$
where $g'_2\circ h\simeq g_1$ and $f'\circ h\simeq \tilde{f}$. 

\medskip

By Condition (D)
we obtain that 
$$h\in {adm}\cap horiz.$$ 
Therefore, we have:
\begin{multline*} 
P^!_{horiz}(g_1)\circ P^!_{adm}(f)\simeq P^!_{horiz}(h)\circ P^!_{horiz}(g'_2)\circ P^!_{adm}(f)\overset{\text{Condition II}}\simeq \\
\simeq P^!_{horiz}(h)\circ P^!_{adm}(f')\circ P^!_{horiz}(g_2) \overset{\text{\secref{sss:former III}}}\simeq \\
\simeq  P^!_{adm}(h)\circ P^!_{adm}(f')\circ P^!_{horiz}(g_2)\simeq P^!_{adm}(\tilde{f})\circ P^!_{horiz}(g_2).
\end{multline*}

\medskip

Isomorphism \eqref{e:for compos} allows one to define the functor $P_{horiz_{\on{new}}}$
on compositions of morphisms. 

\medskip

The base change data, needed to extend the functors
$P_{horiz_{\on{new}}}$ and $P_{vert}$ to a functor 
$$P_{\on{corr},vert,{horiz_{\on{new}}}}:\bC_{\on{corr}:vert;{horiz_{\on{new}}}}\to \StinftyCat_{\on{cont}},$$
follows from the corresponding data of $P_{\on{corr}:vert;horiz}$ and $P_{\on{corr}:vert;{adm}}$
by construction.

\ssec{Construction of the functor $\IndCoh_{(\dgSch_{\on{Noeth}})_{\on{corr:all;aft}}}$}

\sssec{Step 1}

We start with $\bC:=\dgSch_{\on{Noeth}}$, and we take $vert$ to be the class
of all morphisms, and $horiz$ to be the class of open embeddings.

\medskip

Consider the functor $P_{vert}:=\IndCoh_{\dgSch_{\on{Noeth}}}$ of \propref{p:upgrading IndCoh to functor}.
It satisfies the right base change condition with respect to the class of open embeddings by 
\propref{p:upgrading IndCoh to functor}.

\medskip

Applying \thmref{t:extension by adjoints}, we obtain a functor 
$$\IndCoh_{(\dgSch_{\on{Noeth}})_{\on{corr}:\on{all};\on{open}}}:(\dgSch_{\on{Noeth}})_{\on{corr}:\on{all};\on{open}}\to 
\StinftyCat_{\on{cont}}.$$

\sssec{Step 2}

We take $\bC:=\dgSch_{\on{Noeth}}$, $vert$ to be the class of all morphisms,
$horiz$ to be the class of open embeddings, and ${adm}$ to be the class
of proper morphisms. It is easy to see that conditions (A)-(D) of \secref{sss:letter conditions}
are satisfied.

\medskip

We consider the functor 
$$\IndCoh_{(\dgSch_{\on{Noeth}})_{\on{corr}:\on{all};\on{open}}}:
(\dgSch_{\on{Noeth}})_{\on{corr}:\on{all};\on{open}}\to \StinftyCat_{\on{cont}}$$
constructed in Step 1. 

\medskip

We claim that it satisfies Conditions (I) and (II) of \secref{sss:roman numeral conditions}.
Indeed, Condition (I) is given by \propref{p:proper base change}, and Condition (II)
is given by \corref{c:! pullback and open embed}.  

\medskip

We take ${horiz_{\on{new}}}$ to be the class of separated morphisms almost of finite type. We claim
that it satisfies Conditions (i), (ii) and (iii) of \secref{sss:small numeral conditions} and condition $(\star)$
of \secref{sss:star condition}.

\medskip

Conditions (i) and (ii) are evident. Condition $(\star)$, which contains Condition (iii) as a particular
case, will be verified in \secref{ss:check factorization}. 

\medskip

Applying \thmref{t:construction}, we obtain a functor
$$\IndCoh_{(\dgSch_{\on{Noeth}})_{\on{corr}:\on{all};\on{aft-sep}}}:
(\dgSch_{\on{Noeth}})_{\on{corr}:\on{all};\on{aft-sep}}\to \StinftyCat_{\on{cont}},$$
where
$$\on{aft-sep}\subset \on{aft}$$
denotes the class of separated morphisms almost of finite type. 

\medskip

It is clear from the construction that the functor $\IndCoh_{(\dgSch_{\on{Noeth}})_{\on{corr}:\on{all};\on{aft-sep}}}$
satisfies the Conditions (a), (b) and (c) of \thmref{t:upper shriek}.

\sssec{Interlude}  \label{sss:RKE corr}

In order to extend the functor 
$$\IndCoh_{(\dgSch_{\on{Noeth}})_{\on{corr}:\on{all};\on{aft-sep}}}:
(\dgSch_{\on{Noeth}})_{\on{corr}:\on{all};\on{aft-sep}}\to \StinftyCat_{\on{cont}},$$
to a functor
$$\IndCoh_{(\dgSch_{\on{Noeth}})_{\on{corr}:\on{all};\on{aft}}}:
(\dgSch_{\on{Noeth}})_{\on{corr}:\on{all};\on{aft}}\to \StinftyCat_{\on{cont}}$$
we will use the following construction.

\medskip

Let 
$$(\bC^1,vert^1,horiz^1) \text{ and } (\bC^2,vert^2,horiz^2)$$
be a pair of categories and classes of morphisms as in \secref{sss:two classes}.

\medskip

Let $\Phi:\bC^1\to \bC^2$ be a functor. Assume that $\Phi$ sends morphisms from $vert^1$ (resp., $horiz^1$) to
morphisms from $vert^2$ (resp., $horiz^2$). In particular, $\Phi$ induces functors
$$\Phi_{vert}:\bC^1_{vert}\to \bC^2_{vert}  \text{ and } \Phi_{horiz}:\bC^1_{horiz}\to \bC^2_{horiz},$$
and a functor
$$\Phi_{\on{corr}:vert;horiz}:\bC^1_{\on{corr}:vert;horiz}\to \bC^2_{\on{corr}:vert;horiz}.$$

\medskip

Let now $$P_{\on{corr}:vert;horiz}:\bC^1_{\on{corr}:vert;horiz}\to \bD$$
be a functor, where $\bD$ is an $(\infty,1)$-category that contains limits. 
Consider the functors
$$\on{RKE}_{\Phi_{\on{corr}:vert;horiz}}(P_{\on{corr}:vert;horiz}):\bC^2_{\on{corr}:vert;horiz}\to \bD$$
and
$$\on{RKE}_{(\Phi_{horiz})^{\on{op}}}(P^!_{horiz}):
(\bC^2_{horiz})^{\on{op}}\to \bD.$$

\medskip

We claim:

\begin{prop} \label{p:RKE corr}
Assume that for any $\bc^1\in \bC^1$, the functor $\Phi$ induces an equivalence
$$(\bC^1_{vert})_{/\bc^1}\to (\bC^2_{vert})_{/\Phi(\bc^1)}.$$
Then the natural map
$$\on{RKE}_{\Phi_{\on{corr}:vert;horiz}}(P_{\on{corr}:vert;horiz})|_{(\bC^2_{horiz})^{\on{op}}}\to
\on{RKE}_{(\Phi_{horiz})^{\on{op}}}(P^!_{horiz})$$
is an equivalence.
\end{prop}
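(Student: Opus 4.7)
The plan is to compute both right Kan extensions at a fixed object $\bc^2 \in \bC^2_{horiz}$ as limits over comma $\infty$-categories, and then exhibit the comparison map as induced by a functor between index $\infty$-categories that is \emph{initial} (limit-cofinal in the sense of \cite{Lu0}). Concretely, set
$$\mathbf{A}(\bc^2) := (\bc^2 \downarrow \Phi_{\on{corr}:vert;horiz}),$$
whose objects are pairs $(\bc^1, \alpha)$ with $\alpha$ a correspondence $\bc^2 \xleftarrow{g} \bc^2_{\on{med}} \xrightarrow{f} \Phi(\bc^1)$, $g\in horiz^2$, $f\in vert^2$; and set
$$\mathbf{B}(\bc^2) := (\bc^2 \downarrow (\Phi_{horiz})^{\on{op}}),$$
whose objects are pairs $(\bc^1, h:\Phi(\bc^1)\to \bc^2)$ with $h\in horiz^2$. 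The tautological inclusion $\iota:\mathbf{B}(\bc^2)\to \mathbf{A}(\bc^2)$ sends $(\bc^1,h)$ to the correspondence $(\bc^2_{\on{med}}=\Phi(\bc^1),\,g=h,\,f=\on{id})$. Because $P_{\on{corr}:vert;horiz}$ restricts to $P^!_{horiz}$ on $(\bC^1_{horiz})^{\on{op}}$, the two limits are taken against the same underlying diagram, and the map in the statement is literally the map on limits induced by $\iota$.

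The core step is to show that $\iota$ is initial, i.e., that for each $x = (\bc^1, \bc^2_{\on{med}}, g, f) \in \mathbf{A}(\bc^2)$ the comma $\infty$-category $\mathbf{B}(\bc^2)\times_{\mathbf{A}(\bc^2)}\mathbf{A}(\bc^2)_{/x}$ is weakly contractible. I intend to do this by exhibiting a terminal object. Apply the hypothesis to $f$, regarded as an object of $(\bC^2_{vert})_{/\Phi(\bc^1)}$: the equivalence $(\bC^1_{vert})_{/\bc^1}\simeq (\bC^2_{vert})_{/\Phi(\bc^1)}$ yields, in a contractible space of choices, a vertical morphism $\tilde{f}:\tilde\bc^1\to \bc^1$ in $\bC^1_{vert}$ together with a coherent identification $\Phi(\tilde\bc^1)\simeq \bc^2_{\on{med}}$ intertwining $\Phi(\tilde{f})$ with $f$. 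Set $h$ to be the composite $\Phi(\tilde\bc^1)\simeq \bc^2_{\on{med}}\xrightarrow{g}\bc^2$; then $(\tilde\bc^1,h)\in \mathbf{B}(\bc^2)$ and $\tilde f$ assembles into a canonical morphism $\iota(\tilde\bc^1,h)\to x$ in $\mathbf{A}(\bc^2)$. A general object of $\mathbf{B}(\bc^2)\times_{\mathbf{A}(\bc^2)}\mathbf{A}(\bc^2)_{/x}$ consists of $(\tilde\bc^1_0,h_0)\in \mathbf{B}(\bc^2)$ with a morphism $\iota(\tilde\bc^1_0,h_0)\to x$; unpacking, the latter is a correspondence $\bc^1\xleftarrow{f'} \bc^1_{\on{med}}\xrightarrow{g'} \tilde\bc^1_0$ in $\bC^1_{\on{corr}:vert;horiz}$ whose $\Phi$-image, composed with the trivial correspondence underlying $h_0$, realizes $(g,f)$. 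The hypothesis forces the vertical leg $(\bc^1_{\on{med}}, f')$ to be identified with $(\tilde\bc^1, \tilde f)$, and the remaining horizontal leg $g':\tilde\bc^1\to \tilde\bc^1_0$ covering $h_0$ is precisely the data of a morphism from $(\tilde\bc^1_0, h_0)$ to $(\tilde\bc^1, h)$ in $\mathbf{B}(\bc^2)$, confirming terminality.

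The main obstacle is the $\infty$-categorical bookkeeping. At the level of ordinary categories the argument above is a sequence of formal manipulations; for $\infty$-categories one must verify that the slice equivalence supplies coherent lifts of all higher cells in the comma category, not merely of objects and $1$-morphisms, and that $(\tilde\bc^1, h)$ enjoys the full strength of terminality as a point of the relevant mapping space. This amounts to unraveling the Segal space model of $\bC^i_{\on{corr}:vert;horiz}$ in terms of grids of the form \eqref{e:grids}, and tracking that the hypothesized equivalence of vertical slices propagates through each such grid — i.e., that for every column of vertical arrows terminating in $\Phi(\bc^1)$, the lift to $\bC^1$ is itself essentially unique. Consistently with the conventions announced in the introduction, this combinatorial verification will be carried out in detail in \cite{GR3}; the conceptual content is as above.
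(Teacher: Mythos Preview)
Your proposal is correct and follows essentially the same approach as the paper. Both arguments compute the two right Kan extensions at a fixed $\bc^2$ as limits over the respective comma $\infty$-categories and then show that the inclusion $\iota:\mathbf B(\bc^2)\hookrightarrow \mathbf A(\bc^2)$ is initial; the only difference is organizational. The paper factors the argument in two steps: first it uses the slice hypothesis to replace $\mathbf A(\bc^2)$ by the equivalent category $\mathbf A'(\bc^2)$ of diagrams whose vertical leg lies in the image of $\Phi$, and then observes (declaring it ``clear'') that the full subcategory where that vertical leg is an isomorphism is cofinal in the opposite. You instead prove initiality of $\iota$ in one shot by exhibiting $(\tilde\bc^1,h)$ as a terminal object of each slice $\mathbf B(\bc^2)\times_{\mathbf A(\bc^2)}\mathbf A(\bc^2)_{/x}$; your construction of $(\tilde\bc^1,h)$ via the slice equivalence is exactly the composite of the paper's two steps, and your deferral of the higher-coherence bookkeeping to \cite{GR3} matches the paper's own conventions.
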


\begin{proof}

It is enough to show that the map in question induces an isomorphism at the level
of objects. 

\medskip

The value of $\on{RKE}_{\Phi_{\on{corr}:vert;horiz}}(P_{\on{corr}:vert;horiz})$ on $\bc^2\in \bC^2$ is 
$$lim\, P(\bc^1),$$
where the limit is taken over the category of diagrams
$$
\CD
\bc'{}^2  @>{g}>>  \bc_2  \\
@V{f^2}VV  \\
\Phi(\bc^1),
\endCD
$$
with $f^2\in vert^2$ and $g\in horiz^2$.

\medskip

The condition of the proposition implies that this category is equivalent to that of diagrams
$$
\CD
\Phi(\bc'{}^1)  @>{g}>>  \bc_2  \\
@V{\Phi(f^1)}VV  \\
\Phi(\bc^1),
\endCD
$$
with $f^1\in vert^1$ and $g\in horiz^2$. 

\medskip

However, it is clear that cofinal in the (opposite of the above) category is the
full subcategory consisting of diagrams with $f^1$ being an isomorphism. The latter category 
is the same as
$$(\bC^1\underset{\bC^2}\times (\bC^2_{horiz})_{/\bc^2})^{\on{op}}.$$

I.e., the value of $\on{RKE}_{\Phi_{\on{corr}:vert;horiz}}(P_{\on{corr}:vert;horiz})$ on $\bc^2$ maps isomorphically
to 
$$\underset{\bc^1\in (\bC^1\underset{\bC^2}\times (\bC^2_{horiz})_{/\bc^2})^{\on{op}}}{lim}\, P^!(\bc^1),$$
while the latter limit computes the value of $\on{RKE}_{(\Phi_{horiz})^{\on{op}}}(P^!_{horiz})$ on $\bc^2$.

\end{proof}

\sssec{Step 3}   

In the set-up of \secref{sss:RKE corr} we take $\bC^1=\bC^2=\dgSch_{\on{Noeth}}$,
$vert^1=vert^2=\on{all}$ and
$$horiz^1=\on{aft-sep} \text{ and } horiz^2=\on{aft}.$$

\medskip

We define the functor 
$$\IndCoh_{(\dgSch_{\on{Noeth}})_{\on{corr}:\on{all};\on{aft}}}:
(\dgSch_{\on{Noeth}})_{\on{corr}:\on{all};\on{aft}}\to \StinftyCat_{\on{cont}}$$
as the right Kan extension of 
$$\IndCoh_{(\dgSch_{\on{Noeth}})_{\on{corr}:\on{all};\on{aft-sep}}}:
(\dgSch_{\on{Noeth}})_{\on{corr}:\on{all};\on{aft-sep}}\to \StinftyCat_{\on{cont}}$$
along the tautological functor
$$(\dgSch_{\on{Noeth}})_{\on{corr}:\on{all};\on{aft-sep}}\to (\dgSch_{\on{Noeth}})_{\on{corr}:\on{all};\on{aft}}.$$

\medskip

We claim that the resulting functor $\IndCoh_{(\dgSch_{\on{Noeth}})_{\on{corr}:\on{all};\on{aft}}}$
satisfies the conditions of \thmref{t:upper shriek}. By Step 2, it remains to show that for
$S\in \dgSch_{\on{Noeth}}$, the natural map
$$\IndCoh(S)\to \IndCoh_{(\dgSch_{\on{Noeth}})_{\on{corr}:\on{all};\on{aft}}}(S)$$
is an isomorphism in $\StinftyCat_{\on{cont}}$.

\medskip

Denote 
$$\IndCoh^!_{(\dgSch_{\on{Noeth}})_{\on{aft-sep}}}:=\IndCoh_{(\dgSch_{\on{Noeth}})_{\on{corr}:\on{all};\on{aft-sep}}}
|_{((\dgSch_{\on{Noeth}})_{\on{corr}:\on{all};\on{aft-sep}})^{\on{op}}}$$
and
$$\IndCoh^!_{(\dgSch_{\on{Noeth}})_{\on{aft}}}:=\IndCoh_{(\dgSch_{\on{Noeth}})_{\on{corr}:\on{all};\on{aft}}}
|_{((\dgSch_{\on{Noeth}})_{\on{corr}:\on{all};\on{aft}})^{\on{op}}}.$$

\medskip

Note that by \propref{p:RKE corr}, we have
$$\IndCoh^!_{(\dgSch_{\on{Noeth}})_{\on{aft}}}\simeq 
\on{RKE}_{((\dgSch_{\on{Noeth}})_{\on{aft-sep}})^{\on{op}}\to ((\dgSch_{\on{Noeth}})_{\on{aft}})^{\on{op}}}
(\IndCoh^!_{(\dgSch_{\on{Noeth}})_{\on{aft-sep}}}).$$

\medskip

Consider the full subcategory
$$\dgSch_{\on{Noeth,sep}}\subset \dgSch_{\on{Noeth}}$$
that consists of separated DG schemes. 
Denote
$$\IndCoh^!_{(\dgSch_{\on{Noeth,sep}})_{\on{aft}}}:=\IndCoh^!_{(\dgSch_{\on{Noeth}})_{\on{aft-sep}}}|_{((\dgSch_{\on{Noeth,sep}})_{\on{aft}})^{\on{op}}}.$$

We claim:

\begin{lem}  \label{l:from sep to sep}
The map 
\begin{multline} \label{e:sep}
\IndCoh^!_{(\dgSch_{\on{Noeth}})_{\on{aft-sep}}}\to \\
\on{RKE}_{((\dgSch_{\on{Noeth,sep}})_{\on{aft}})^{\on{op}}\to ((\dgSch_{\on{Noeth}})_{\on{aft-sep}})^{\on{op}}}
(\IndCoh^!_{(\dgSch_{\on{Noeth,sep}})_{\on{aft}}})
\end{multline}
is an isomorphism.
\end{lem}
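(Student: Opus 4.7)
At $S \in \dgSch_{\on{Noeth}}$ the map \eqref{e:sep} reads
$$\alpha\colon \IndCoh(S) \longrightarrow \underset{(T \to S,\, T\text{ sep.})}{\lim}\ \IndCoh(T) =: \on{RKE}(S),$$
where the limit runs over separated Noetherian DG schemes $T$ equipped with an aft-sep morphism to $S$, and the structure maps in the limit are the $!$-pullbacks. If $S$ is itself separated, then $\on{id}_S$ is initial in the indexing category and $\alpha$ is tautologically an equivalence. For general $S$, I would choose a finite affine Zariski cover $\pi\colon U=\sqcup_i U_i\to S$. Each term of the \v{C}ech nerve $U^{n+1/S}$ decomposes as $\sqcup\, U_{i_0}\cap\cdots\cap U_{i_n}$, each component open in the affine $U_{i_0}$ and hence separated; all structure maps to $S$ are aft-sep. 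This cosimplicial diagram lands in the indexing category of $\on{RKE}(S)$ and produces a restriction map
$$\rho\colon \on{RKE}(S)\longrightarrow \underset{[n] \in \Delta}{\lim}\ \IndCoh(U^{n+1/S}).$$
By construction $\rho\circ\alpha$ coincides with the Zariski descent equivalence of \propref{p:Zariski descent} (using $j^! = j^{\IndCoh,*}$ for open embeddings $j$), so it suffices to prove $\rho$ is an equivalence.

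For this, I would show that $\on{RKE}$ itself satisfies \v{C}ech descent for the chosen cover. For any $(T\to S,\, T\text{ sep.})$ in the indexing category, the base-changed cover $T \times_S U \to T$ is Zariski, and each $T \times_S U^{n+1/S}$ is an open subscheme of the separated $T$, hence separated. Applying \propref{p:Zariski descent} to each such $T$ and interchanging the two limits yields
$$\on{RKE}(S)\simeq \underset{[n] \in \Delta}{\lim}\ \underset{(T \to S)}{\lim}\ \IndCoh(T\times_S U^{n+1/S}).$$
For each fixed $n$, write $W := U^{n+1/S}$; then the functor $\Psi\colon (T\to S)\mapsto (T\times_S W\to W)$ from the indexing category of $\on{RKE}(S)$ to that of $\on{RKE}(W)$ is initial (i.e., cofinal for limits). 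Indeed, for every $(R\to W,\, R\text{ sep.})$ the comma category $\Psi/(R\to W)$ admits as initial object the triple $(T=R,\ R\to W\to S,\ R\xrightarrow{(\on{id}_R,\,R\to W)} R\times_S W)$: any competing $(T',\, T'\to S,\, R\to T'\times_S W)$ receives the unique compatible morphism $R\to T'$ obtained by projecting $R\to T'\times_S W$ onto its $T'$-factor.

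Granting the initiality of $\Psi$, the inner limit at each $n$ collapses to $\on{RKE}(W) = \IndCoh(W)$ by the separated case handled above. Consequently $\on{RKE}(S) \simeq \underset{[n]}{\lim}\ \IndCoh(U^{n+1/S}) \simeq \IndCoh(S)$ by \propref{p:Zariski descent}, so $\rho$ (and hence $\alpha$) is an equivalence. The principal technical obstacle is the rigorous $\infty$-categorical justification of the interchange of limits and of the initial-object argument; this is the sort of combinatorial bookkeeping deferred to \cite{GR3}.
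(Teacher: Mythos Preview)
Your argument is correct. The paper proves this lemma by invoking \propref{p:density}, a general ``density'' result: if $\bC'\subset\bC$ is dense for a Grothendieck topology and $F$ satisfies descent, then $F\simeq\on{RKE}(F|_{\bC'})$. The paper simply checks that separated schemes are Zariski-dense in all Noetherian schemes and that $\IndCoh^!$ satisfies Zariski descent. Your proof instead works directly: you inline the relevant instance of \propref{p:density}, replacing the paper's explicit retraction-diagram argument (\secref{sss:proof of density}) by a cleaner cofinality step. Both approaches rest on the same input, \propref{p:Zariski descent}; yours is more self-contained for this particular lemma, while the paper's isolates a reusable black box.

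One notational quibble: what you call ``$\Psi/(R\to W)$'' is really the under-category $(R\to W)\downarrow\Psi$ (your objects carry maps $R\to T'\times_S W$, not the other way). What you verify is that this category has an initial object; since the limit is indexed by the \emph{opposite} of the comma category, the relevant condition is that $\Psi^{\on{op}}$ be initial, i.e.\ that $\Psi^{\on{op}}/(R\to W)\simeq\bigl((R\to W)\downarrow\Psi\bigr)^{\on{op}}$ be contractible, which follows since its opposite has an initial object. So the conclusion stands, but the labeling of which functor is ``initial'' and which comma category is in play should be cleaned up.
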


The proof of this lemma will be given in \secref{sss:proof of sep to sep}.

\medskip

From the above lemma, we obtain:
$$\IndCoh^!_{(\dgSch_{\on{Noeth}})_{\on{aft}}}\simeq 
\on{RKE}_{((\dgSch_{\on{Noeth,sep}})_{\on{aft}})^{\on{op}}\to ((\dgSch_{\on{Noeth}})_{\on{aft}})^{\on{op}}}
(\IndCoh^!_{(\dgSch_{\on{Noeth,sep}})_{\on{aft}}}).$$

\medskip

Hence, it remains to show that for $S\in \dgSch_{\on{Noeth}}$, the map 
$$\IndCoh(S)\to \underset{S'}{lim}\, \IndCoh(S')$$
is an isomorphism in $\StinftyCat_{\on{cont}}$, where the limit is taken over the category (opposite to)
$$(\dgSch_{\on{Noeth,sep}})_{\on{aft}}\underset{(\dgSch_{\on{Noeth}})_{\on{aft}}}\times ((\dgSch_{\on{Noeth}})_{\on{aft}})_{/S}.$$

However, the above category is the same as
\begin{equation} \label{e:separated slice}
(\dgSch_{\on{Noeth,sep}})_{\on{aft}}\underset{(\dgSch_{\on{Noeth}})_{\on{aft}}}\times ((\dgSch_{\on{Noeth}})_{\on{aft-sep}})_{/S}
\end{equation}
(indeed, a map from a separated DG scheme to any DG scheme is separated). Note, however, that the limit of
$\IndCoh(S')$ over the category (opposite to one) in \eqref{e:separated slice}
is the value of
$$\on{RKE}_{((\dgSch_{\on{Noeth,sep}})_{\on{aft}})^{\on{op}}\to ((\dgSch_{\on{Noeth}})_{\on{aft-sep}})^{\on{op}}}
(\IndCoh^!_{(\dgSch_{\on{Noeth,sep}})_{\on{aft}}})$$
on $S$. In particular, the map to it from $\IndCoh(S)$ is an isomorphism because \eqref{e:sep} is an isomorphism.

\qed

\ssec{Factorization of separated morphisms}  \label{ss:check factorization}

Let $f:S_1\to S_2$ be a separated map between Noetherian DG schemes.
In this subsection we will prove that the 
category ${\mathsf{Factor}}(f)$ of factorizations of $f$ as 
\begin{equation} \label{e:factor}
S_1 \overset{j}\longrightarrow S_{3/2}  \overset{g}\longrightarrow S_2,
\end{equation}
where $j$ is an open embedding and $g$ proper, is contractible (in particular, non-empty).

\sssec{Step 1}
First we show that ${\mathsf{Factor}}(f)$ is non-empty. By Nagata's theorem, we can factor the morphism
$$({}^{cl}\!S_1)_{red}\to ({}^{cl}S_2)_{red}$$
as 
$$({}^{cl}\!S_1)_{red}\to S'_{3/2}\to ({}^{cl}S_2)_{red},$$
where $S'_{3/2}$ is a reduced classical scheme, with the morphism
$({}^{cl}\!S_1)_{red}\to S'_{3/2}$ being an open embedding and $S'_{3/2}\to ({}^{cl}S_2)_{red}$
proper.

\medskip

We define an object of ${\mathsf{Factor}}(f)$ by setting
$$S_{3/2}:=S_1\underset{({}^{cl}\!S_1)_{red}}\sqcup S'_{3/2}.$$
(we refer the reader to \cite[Sect. 3.3]{GR2}, where the existence and properties 
of push-out for DG schemes are reviewed). 

\sssec{Digression}

For a map of Noetherian DG schemes $h:T_1\to T_2$ we let
$$(\dgSch_{\on{Noeth}})_{T_1/,\,\on{closed}\,\on{in}\,T_2} \subset 
(\dgSch_{\on{Noeth}})_{T_1/\,/T_2}$$
be the full subcategory spanned by those objects
$$T_1\to T_{3/2}\to T_2,$$
where the map $T_{3/2}\to T_2$ is a closed embedding (see Definition \ref{defn:closed embeddings}).

\medskip

The following is established in \cite[Sect. 3.1]{GR2}:

\begin{prop}  \label{p:closed factorizations}  \hfill

\smallskip

\noindent{\em(a)} The category $(\dgSch_{\on{Noeth}})_{T_1/,\,\on{closed}\,\on{in}\,T_2}$
contains finite colimits (in particular, an initial object). 

\smallskip

\noindent{\em(b)} The formation of finite colimits in $(\dgSch_{\on{Noeth}})_{T_1/,\,\on{closed}\,\on{in}\,T_2}$
commutes with Zariski localization with respect to $T_2$.

\end{prop}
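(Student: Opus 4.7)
The plan is to establish part (a) by reducing the existence of finite colimits in an $\infty$-category to the existence of the initial object and of pushouts, and then constructing each of these by hand. Recall that in any $\infty$-category, finite colimits can be generated from an initial object together with pushouts (cf.\ \cite{Lu0}), so these two constructions suffice.

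\textbf{Initial object.} Affine-locally (we will bootstrap via (b)), take $T_2 = \Spec(A)$ and let $I \subseteq H^0(A)$ be the kernel of $H^0(A) \to H^0(\Gamma(T_1,\CO_{T_1}))$. Let $Z := \Spec(H^0(A)/I) \hookrightarrow {}^{cl}T_2$ be the classical scheme-theoretic image of ${}^{cl}T_1 \to {}^{cl}T_2$, and define
$$T_{3/2}^{\on{init}} := Z \underset{{}^{cl}T_2}\times T_2.$$
By construction, ${}^{cl}T_{3/2}^{\on{init}} \to {}^{cl}T_2$ factors through $Z$, so $T_{3/2}^{\on{init}} \to T_2$ is a closed embedding in the sense of Definition \ref{defn:closed embeddings}, and the classical factorization ${}^{cl}T_1 \to Z$ lifts canonically to give $T_1 \to T_{3/2}^{\on{init}}$. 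The universal property against any other factorization $T_1 \to T'_{3/2} \to T_2$ reduces to the classical universal property of $Z$ combined with the observation that a map from $T_{3/2}^{\on{init}}$ over $T_2$ into $T'_{3/2}$ is the same as a factorization of ${}^{cl}T_{3/2}^{\on{init}} \to {}^{cl}T_2$ through ${}^{cl}T'_{3/2}$ (since $T'_{3/2} \to T_2$ is a closed embedding, mapping data out of $T_{3/2}^{\on{init}}$ over $T_2$ is controlled by classical data plus the Cartesian-square datum witnessing the closed embedding).

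\textbf{Pushouts.} Given a diagram $T_{3/2}^{(1)} \leftarrow T_{3/2}^{(0)} \to T_{3/2}^{(2)}$ in our slice category, form the pushout in $\dgSch_{\on{Noeth}}$ using the formalism of \cite[Sect. 3.3]{GR2}. We must verify that the induced map $T_{3/2}^{(1)} \sqcup_{T_{3/2}^{(0)}} T_{3/2}^{(2)} \to T_2$ is still a closed embedding, i.e., that the induced map on classical truncations is a closed embedding. Affine-locally this amounts to: given DG rings $A \to B_i$ ($i=1,2$) whose $H^0$'s are surjective, and a common quotient $B_i \to C$, the map $A \to B_1 \otimes^L_C B_2$ has surjective $H^0$. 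This follows from the identification $H^0(B_1 \otimes^L_C B_2) = H^0(B_1) \otimes_{H^0(C)} H^0(B_2)$, together with the elementary classical fact that the tensor product of two surjections $H^0(A) \twoheadrightarrow H^0(B_i)$ over any common quotient is again a quotient of $H^0(A)$.

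\textbf{Part (b).} Both constructions above are built from Cartesian products and derived tensor products of DG schemes, each of which commutes with Zariski localization in the target. Concretely, if $U \hookrightarrow T_2$ is Zariski open and we base-change along $U \to T_2$, then $Z \underset{{}^{cl}T_2}\times T_2$ restricts to $(Z \cap {}^{cl}U) \underset{{}^{cl}U}\times U$, which is the analogous initial object for $T_1 \underset{T_2}\times U \to U$; similarly for pushouts.

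The main obstacle is the $\infty$-categorical verification of the universal property of $T_{3/2}^{\on{init}}$, namely that the mapping space from it into any other closed factorization is \emph{contractible} rather than merely non-empty. In the classical setting the slice category is a poset so there is nothing to check, but in the DG setting one must control higher coherences; this is handled by reducing, via the fact that the closed-embedding condition is classical while the DG structure on $T'_{3/2}$ is controlled by its map to $T_2$, to the contractibility of the corresponding space of maps in $\dgSch_{\on{Noeth}}$ itself, which in turn rests on the representability results for DG-scheme push-outs from \cite[Sect. 3.3]{GR2}.
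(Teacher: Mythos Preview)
The paper does not prove this statement; it defers to \cite[Sect.~3.1]{GR2}. So there is no in-paper argument to compare against, and your attempt must stand on its own.

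Your reduction to initial objects and pushouts is the right shape, but both explicit constructions have genuine gaps.

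\textbf{Initial object.} The fiber product $Z \underset{{}^{cl}T_2}\times T_2$ is not defined: for it to make sense you would need a map $T_2 \to {}^{cl}T_2$, and no such map exists for a non-classical DG scheme (the canonical map goes the other way, ${}^{cl}T_2 \hookrightarrow T_2$). Even interpreting your construction charitably as the classical scheme $Z$ itself, regarded as a closed subscheme of $T_2$ via $Z \hookrightarrow {}^{cl}T_2 \hookrightarrow T_2$, it still fails: for a factorization $T_1 \to Z$ compatible with $h:T_1\to T_2$ to exist, the ring map $A \to B$ would have to factor through $H^0(A)$, which is false whenever $A$ has negative-degree homotopy mapping nontrivially to $B$. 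The closure $\overline{h(T_1)}$ is a genuinely derived object and cannot be read off from the classical scheme-theoretic image alone.

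\textbf{Pushouts.} Your affine formula is backwards. The span of schemes $\Spec(B_1) \leftarrow \Spec(C) \to \Spec(B_2)$ corresponds to the cospan of rings $B_1 \to C \leftarrow B_2$, and the pushout of affine schemes is $\Spec(B_1 \times_C B_2)$, not $\Spec(B_1 \otimes^L_C B_2)$; the latter expression is not even well-formed, since you have maps $B_i \to C$, not $C \to B_i$. With the correct fiber product in place, the surjectivity of $H^0(A) \to H^0(B_1 \times_C B_2)$ does hold (after first checking $H^0(B_1 \times_C B_2) \simeq H^0(B_1) \times_{H^0(C)} H^0(B_2)$, which uses that each $H^0(B_i)\to H^0(C)$ is surjective), but the relevant classical fact is about fiber products of quotient rings, not the tensor-product statement you invoke.
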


The initial object in $(\dgSch_{\on{Noeth}})_{T_1/,\,\on{closed}\,\on{in}\,T_2}$ will be denoted
$\ol{f(T_1)}$ and referred to as \emph{the closure of $T_1$ in $T_2$}.

\medskip

It is easy to see that if $h$ is a closed embedding that the canonical map
$$T_1\to \ol{f(T_1)}$$
is an isomorphism.

\medskip

We will need the following transitivity property of the operation of taking the closure. Let 
$$T_1\overset{h_{1,2}}\longrightarrow T_2 \overset{h_{2,3}}\longrightarrow T_3$$
be a pair of morphisms between Noetherian DG schemes. Set $h_{1,3}=h_{2,3}\circ h_{1,2}$ and
$T'_2:=\ol{h_{1,2}(T_1)}$.

\medskip

By the universal property of closure, we have a canonically defined map
\begin{equation} \label{e:trans closure}
\ol{h_{1,3}(T_1)}\to \ol{h_{1,2}(T'_2)}
\end{equation}
in $(\dgSch_{\on{Noeth}})_{T_1/,\,\on{closed}\,\on{in}\,T_3}$. We have:

\begin{lem} \label{l:trans closure}
The map \eqref{e:trans closure} is an isomorphism.
\end{lem}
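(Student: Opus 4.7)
The plan is to produce a morphism in the opposite direction to \eqref{e:trans closure} and then observe that, as both objects are initial in the appropriate overcategories, the two morphisms must be mutually inverse. (We interpret the right-hand side of \eqref{e:trans closure} as $\ol{h_{2,3}(T'_2)}$, the closure of $T'_2$ in $T_3$ via the canonical map $T'_2\hookrightarrow T_2\overset{h_{2,3}}\to T_3$.)

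The key geometric input is a base change. Consider the DG fiber product
$$Z:=\ol{h_{1,3}(T_1)}\underset{T_3}\times T_2,$$
equipped with its two projections to $\ol{h_{1,3}(T_1)}$ and $T_2$. By \defnref{defn:closed embeddings}, being a closed embedding is tested on the underlying classical schemes, and classical closed embeddings are stable under base change; thus the projection $Z\to T_2$ is a closed embedding, and $Z$ is again Noetherian. Moreover, the map $T_1\to \ol{h_{1,3}(T_1)}$ and the map $h_{1,2}:T_1\to T_2$ determine a canonical map $T_1\to Z$ over $T_2$. Hence $Z$ is an object of $(\dgSch_{\on{Noeth}})_{T_1/,\,\on{closed}\,\on{in}\,T_2}$.

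Applying the universal property of $T'_2=\ol{h_{1,2}(T_1)}$ as the initial object of this category (\propref{p:closed factorizations}(a)), we get a canonical map $T'_2\to Z$. Composing with the projection $Z\to \ol{h_{1,3}(T_1)}$, followed by $\ol{h_{1,3}(T_1)}\to T_3$, we see that $\ol{h_{1,3}(T_1)}$ carries a natural structure of object of $(\dgSch_{\on{Noeth}})_{T'_2/,\,\on{closed}\,\on{in}\,T_3}$, and the map $T_1\to \ol{h_{1,3}(T_1)}$ arising this way agrees with the tautological one (by the universal property in the $T_1$-undercategory). Applying the universal property of $\ol{h_{2,3}(T'_2)}$ as the initial object of $(\dgSch_{\on{Noeth}})_{T'_2/,\,\on{closed}\,\on{in}\,T_3}$ now yields a canonical map
$$\ol{h_{2,3}(T'_2)}\to \ol{h_{1,3}(T_1)}$$
that is a morphism in $(\dgSch_{\on{Noeth}})_{T_1/,\,\on{closed}\,\on{in}\,T_3}$.

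Both $\ol{h_{1,3}(T_1)}$ and $\ol{h_{2,3}(T'_2)}$ are objects of $(\dgSch_{\on{Noeth}})_{T_1/,\,\on{closed}\,\on{in}\,T_3}$, and the first is initial there. The map \eqref{e:trans closure} and the map just constructed go between them, and by initiality each composition is forced to be the identity. Hence \eqref{e:trans closure} is an isomorphism, as claimed. The main point requiring care is the verification that the DG base change $Z\to T_2$ is indeed a closed embedding landing in $\dgSch_{\on{Noeth}}$; as noted above, this is immediate from \defnref{defn:closed embeddings} since the classical truncation of $Z$ is the ordinary scheme-theoretic fiber product of closed subschemes of classical schemes.
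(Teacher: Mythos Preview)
The paper states \lemref{l:trans closure} without proof, so there is nothing to compare against directly; your reading of the target of \eqref{e:trans closure} as $\ol{h_{2,3}(T'_2)}$ (the closure of $T'_2$ in $T_3$) is the intended one.

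Your approach via the fiber product $Z=\ol{h_{1,3}(T_1)}\underset{T_3}\times T_2$ is correct and is the natural argument. There is, however, a small gap in the last step. Initiality of $\ol{h_{1,3}(T_1)}$ in $(\dgSch_{\on{Noeth}})_{T_1/,\,\on{closed}\,\on{in}\,T_3}$ forces the composition $\ol{h_{1,3}(T_1)}\to \ol{h_{2,3}(T'_2)}\to \ol{h_{1,3}(T_1)}$ to be the identity, but it does \emph{not} by itself force the other composition to be the identity: $\ol{h_{2,3}(T'_2)}$ is not initial in the $T_1$-undercategory, and closed embeddings of DG schemes are not monomorphisms in general (for instance $\Spec(k)\hookrightarrow \BA^1$, since $k\underset{k[x]}\otimes k\not\simeq k$), so endomorphisms of $\ol{h_{2,3}(T'_2)}$ in that category need not be unique.

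The fix is short and uses your own construction once more: check that the forward map \eqref{e:trans closure} is also a morphism in $(\dgSch_{\on{Noeth}})_{T'_2/,\,\on{closed}\,\on{in}\,T_3}$. Both candidate maps $T'_2\to \ol{h_{2,3}(T'_2)}$ (the tautological one and the one via $\ol{h_{1,3}(T_1)}$) factor through $\ol{h_{2,3}(T'_2)}\underset{T_3}\times T_2$, which is closed in $T_2$ and receives the same map from $T_1$ in either case; hence the two factorizations $T'_2\to \ol{h_{2,3}(T'_2)}\underset{T_3}\times T_2$ lie in $(\dgSch_{\on{Noeth}})_{T_1/,\,\on{closed}\,\on{in}\,T_2}$ and agree by initiality of $T'_2$ there. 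Once \eqref{e:trans closure} is under $T'_2$, the remaining composition is an endomorphism of the initial object of $(\dgSch_{\on{Noeth}})_{T'_2/,\,\on{closed}\,\on{in}\,T_3}$, hence the identity, and you are done.
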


\sssec{Step 2}

Let  ${\mathsf{Factor}}_{\on{dense}}(f)\subset  {\mathsf{Factor}}(f)$ be the full subcategory consisting of those objects
$$S_1\overset{j}\to S_{3/2}\overset{g}\to S_2,$$
for which the map
$$\ol{j(S_1)}\to S_{3/2}$$
is an isomorphism.

\medskip

We claim that the tautological embedding 
$${\mathsf{Factor}}_{\on{dense}}(f)\hookrightarrow  {\mathsf{Factor}}(f)$$
admits a right adjoint that sends a given object \eqref{e:factor} to 
$$S_1\to \ol{j(S_1)}\to S_2.$$

Indeed, the fact that the map $S_1\to \ol{j(S_1)}$ is an open embedding follows from \propref{p:closed factorizations}(b).
The fact that the above operation indeed produces a right adoint follows from \lemref{l:trans closure}.

\medskip

Hence, it suffices to show that the category ${\mathsf{Factor}}_{\on{dense}}(f)$ is contractible.

\sssec{Step 3} We will show that the category ${\mathsf{Factor}}_{\on{dense}}(f)$ contains
products. 

\medskip

Given two objects
$$(S_1\to S'_{3/2}\to S_2) \text{ and } (S_1\to S''_{3/2}\to S_2)$$
of ${\mathsf{Factor}}_{\on{dense}}(f)$ consider
$$T:=S'_{3/2}\underset{S_2}\times S''_{3/2},$$
and let $h$ denote the resulting map $S_1\to T$. 

\medskip

Set $S_{3/2}:=\ol{h(S_1)}$. We claim that the map $S_1\to S_{3/2}$ is an open embedding. Indeed, consider the open
subscheme of $\oT\subset T$ equal to $S_1\underset{S_2}\times S_1$. By \propref{p:closed factorizations}(b),
$$\oS_{3/2}:=S_{3/2}\cap \oT$$
is the closure of the map
$$\Delta_{S_1/S_2}:S_1\to S_1\underset{S_2}\times S_1.$$
However, $S_1\to \ol{\Delta_{S_1/S_2}(S_1)}$ is an isomorphism since $\Delta_{S_1/S_2}$ is a closed embedding. 

\sssec{Step 4} 

Finally, we claim that the resulting object
$$S_1\to S_{3/2}\to S_2$$
is the product of $S_1\to S'_{3/2}\to S_2$ and $S_1\to S''_{3/2}\to S_2$ in ${\mathsf{Factor}}_{\on{dense}}(f)$.

\medskip

Indeed, let 
$$S_1\to \wt{S}_{3/2}\to S_2$$ be another object of ${\mathsf{Factor}}_{\on{dense}}(f)$, endowed with maps
to $$S_1\to S'_{3/2}\to S_2 \text{ and } S_1\to S''_{3/2}\to S_2.$$ Let $i$ denote the resulting morphism 
$$\wt{S}_{3/2}\to S'_{3/2}\underset{S_2}\times S''_{3/2}=T.$$

\medskip

We have a canonical map in ${\mathsf{Factor}}(f)$
$$(S_1\to \wt{S}_{3/2}\to S_2) \to (S_1\to \ol{i(\wt{S}_{3/2})}\to S_2).$$

However, from \lemref{l:trans closure} we obtain that the natural map
$$S_{3/2}\to \ol{i(\wt{S}_{3/2})}$$
is an isomorphism. This gives rise to the desired map
$$(S_1\to \wt{S}_{3/2}\to S_2) \to (S_1\to S_{3/2}\to S_2).$$

\qed

\ssec{The notion of density}  \label{ss:density}

In this subsection we shall discuss the notion of density of a full subcategory 
in an $\infty$-category.

\sssec{}  \label{sss:setting for density}

Let $\bC$ be an $\infty$-category with fiber products, and equipped with 
a Grothendieck topology. Let
$$i:\bC'\hookrightarrow \bC$$ 
be a full subcategory. 

\medskip

We define a Grothendieck topology on $\bC'$ be declaring that a 1-morphism
is a covering if its image in $\bC$ is.

\medskip

We shall say that $\bC'$ is dense in $\bC$ if:

\begin{itemize}

\item Every object in $\bC$ admits a covering $\underset{\alpha}\sqcup\, \bc'_\alpha\to \bc$, $\bc'_\alpha\in \bC'$. 

\item If $\underset{\alpha}\sqcup\, \bc'_{1,\alpha}\to \bc$, and $\underset{\alpha}\sqcup\, \bc'_{2,\beta}\to \bc$,
are coverings and $\bc'_{1,\alpha},\bc'_{2,\beta}\in \bC'$, then each $\bc'_{1,\alpha}\underset{\bc}\times \bc'_{2,\beta}$
belongs to $\bC'$.

\end{itemize}

\sssec{}

Let $\bD$ be an $\infty$-category that contains limits. We let
$$\on{Funct}(\bC^{op},\bD)_{\on{descent}}\subset \on{Funct}(\bC^{op},\bD)$$
be the full subcategory of functors that satisfy descent with respect to the
given Grothendieck topology. I.e., these are $\bD$-valued sheaves as a subcategory
of $\bD$-valued presheaves.

\medskip

We will prove:

\begin{prop} \label{p:density}
Suppose $\bC'\subset \bC$ is dense. Then for any $\bD$ as above, the adjoint functors
$$\on{Res}_i:\on{Funct}(\bC^{\on{op}},\bD)\rightleftarrows \on{Funct}(\bC'{}^{\on{op}},\bD):\on{RKE}_i$$
define mutually inverse equivalences
$$\on{Funct}(\bC^{\on{op}},\bD)_{\on{descent}}\rightleftarrows \on{Funct}(\bC'{}^{\on{op}},\bD)_{\on{descent}}.$$
\end{prop}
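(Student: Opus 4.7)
The plan is to exploit the automatic full faithfulness of $\on{RKE}_i$, valid because $i$ is fully faithful, together with the density hypothesis. Since $i$ is fully faithful, the counit
\[\on{Res}_i\circ \on{RKE}_i\to \on{Id}\]
is an equivalence on all of $\on{Funct}(\bC'{}^{\on{op}},\bD)$; in particular, $\on{RKE}_i$ is fully faithful on presheaves and, by the triangle identity, the composition $\on{Res}_i\to \on{Res}_i\on{RKE}_i\on{Res}_i\to \on{Res}_i$ identifies the restriction of the unit to $\bC'$ with an isomorphism. The fact that $\on{Res}_i$ sends sheaves to sheaves is immediate from the definition of the topology on $\bC'$. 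Thus the proposition reduces to two claims:
\begin{itemize}
\item[(a)] $\on{RKE}_i$ sends sheaves to sheaves.
\item[(b)] For any sheaf $F$ on $\bC$, the unit $F\to \on{RKE}_i\on{Res}_i F$ is an isomorphism.
\end{itemize}

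Claim (b) is formal given (a). Fix a sheaf $F$ and $\bc\in\bC$; by the first density condition pick a covering $\underset{\alpha}{\sqcup}\,\bc'_\alpha\to\bc$ with $\bc'_\alpha\in\bC'$. An induction using the second density condition on iterated fiber products over $\bc$ shows that every term $C_n$ of the \v{C}ech nerve $C_\bullet$ lies in $\bC'$. Descent for $F$ and, by (a), for $\on{RKE}_i\on{Res}_i F$, gives
\[F(\bc)\simeq \underset{[n]\in\bDelta}{\on{lim}}\, F(C_n),\qquad (\on{RKE}_i\on{Res}_i F)(\bc)\simeq \underset{[n]\in\bDelta}{\on{lim}}\, (\on{RKE}_i\on{Res}_i F)(C_n).\]
Since $C_n\in\bC'$, the unit induces isomorphisms term by term, and naturality of descent upgrades these to an isomorphism of totalizations, i.e., of the unit at $\bc$.

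The main obstacle is (a). Given a sheaf $F'$ on $\bC'$ and a $\bC$-covering $\{\wt\bc_\alpha\to\bc\}$ with \v{C}ech nerve $\wt C_\bullet$, I would establish descent for $\on{RKE}_i F'$ by refining via density. Pull back $\{\wt\bc_\alpha\to\bc\}$ along a $\bC'$-cover $\underset{\beta}{\sqcup}\,\bc'_\beta\to\bc$ to obtain the combined family $\{\wt\bc_\alpha\underset{\bc}{\times}\bc'_\beta\to\bc\}$; by the second density condition, every term of the associated bi-simplicial \v{C}ech nerve lies in $\bC'$. Using the counit isomorphism, evaluation of $\on{RKE}_i F'$ on this bisimplicial object coincides with evaluation of $F'$, so the sheaf property of $F'$ on $\bC'$ applies. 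A Fubini-type interchange of totalizations then converts this into the desired descent statement for $\on{RKE}_i F'$ on $\wt C_\bullet$. The technical heart of the argument---which I expect to be the main difficulty---is the identification of $(\on{RKE}_i F')(\bc)$ with the totalization of $F'$ along the \v{C}ech nerve of the $\bc'_\beta$-covering; this requires a cofinality argument inside the slice $\bC'_{/\bc}$, comparing its opposite with the simplicial indexing category of the \v{C}ech nerve.
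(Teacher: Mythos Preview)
Your reduction of (b) to (a) is correct and clean: once $\on{RKE}_i\on{Res}_iF$ is known to be a sheaf, the unit is an isomorphism on the $\bC'$-valued \v{C}ech nerve, hence on $\bc$. The difficulty is genuinely in (a), as you say.

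However, your sketch of (a) has a real gap. The second density condition only asserts that $\bc'_{1,\alpha}\underset{\bc}{\times}\bc'_{2,\beta}\in\bC'$ when \emph{both} families $\{\bc'_{1,\alpha}\}$ and $\{\bc'_{2,\beta}\}$ are $\bC'$-covers of $\bc$. In your bi-simplicial object the factors $\wt\bc_\alpha$ are arbitrary members of a $\bC$-cover and need not lie in $\bC'$; consequently the terms $\wt\bc_{\alpha_0}\underset{\bc}{\times}\cdots\underset{\bc}{\times}\wt\bc_{\alpha_m}\underset{\bc}{\times}\bc'_{\beta_0}\underset{\bc}{\times}\cdots\underset{\bc}{\times}\bc'_{\beta_n}$ are not guaranteed to be in $\bC'$, so you cannot replace $\on{RKE}_iF'$ by $F'$ there, and the Fubini step does not go through as written. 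Refining each $\wt\bc_\alpha$ by a $\bC'$-cover helps, but you are then thrown back on precisely the statement you flag at the end: that $(\on{RKE}_iF')(\bd)$ agrees with the \v{C}ech totalization along a $\bC'$-cover of an arbitrary $\bd\in\bC$. This is not a cofinality statement---the \v{C}ech diagram is \emph{not} cofinal in $(\bC'_{/\bd})^{\on{op}}$---and needs the sheaf condition in an essential way.

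The paper organizes the proof oppositely. It asserts (a) in passing and invests the work in a direct proof of (b), via a retraction diagram
\[
F(\bc)\;\to\;\underset{\bc'\in(\bC'_{/\bc})^{\on{op}}}{\lim}F(\bc')\;\to\;\on{Tot}\bigl(F(\bc'_A{}^{\bullet}/\bc)\bigr)
\]
with a further node $\on{Tot}\bigl(\lim_{\bc'}F(\bc'\underset{\bc}{\times}\bc'_A{}^{\bullet}/\bc)\bigr)$. The crucial isomorphism (the right vertical arrow) is obtained by comparing three homotopic maps into this node and using that $F$, being a sheaf on all of $\bC$, satisfies descent for the covers $\bc'\underset{\bc}{\times}\bc'_A\to\bc'$---covers whose terms need \emph{not} lie in $\bC'$. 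This is exactly the leverage unavailable in your setup, where you start from a $\bC'$-sheaf $F'$ rather than a $\bC$-sheaf $F$.
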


The proposition is apparently well-known. For the sake of completeness, we will give a proof 
in \secref{sss:proof of density}.

\sssec{}  \label{sss:proof of sep to sep}

Let us show how \propref{p:density} implies \lemref{l:from sep to sep}. Indeed, we take
$$\bC:=(\dgSch_{\on{Noeth}})_{\on{aft-sep}},\,\, \bC':=(\dgSch_{\on{Noeth,sep}})_{\on{aft}},$$
and we consider the functor
$$((\dgSch_{\on{Noeth}})_{\on{aft-sep}})^{\on{op}}\to \StinftyCat_{\on{cont}}$$
equal to $\IndCoh^!_{(\dgSch_{\on{Noeth}})_{\on{aft-sep}}}$.

\medskip

We equip $\bC:=(\dgSch_{\on{Noeth}})_{\on{aft-sep}}$ with the Zariski topology, i.e., coverings are surjective maps
$\underset{\alpha}\sqcup\ S'_\alpha\to S$, where each $S'_\alpha$ is an open DG subscheme of $S$. 

\medskip

The fact that the functor $\IndCoh^!_{(\dgSch_{\on{Noeth}})_{\on{aft-sep}}}$ satisfies Zariski descent 
follows from \propref{p:Zariski descent} and the fact that the functor $\IndCoh_{(\dgSch_{\on{Noeth}})_{\on{corr}:\on{all};\on{aft-sep}}}$
satisfies condition (b) of \thmref{t:upper shriek}.

\qed 

\sssec{Proof of \propref{p:density}} \label{sss:proof of density}

First, it easy to see that the functors $\on{Res}_i$ and $\on{RKE}_i$ indeed send the subcategories
in question to one another. It is equally easy to see that the functor $\on{Res}_i$ is conservative.

\medskip

Hence, it remains to show that for $F:\bC^{\on{op}}\to \bD$ that satisfies descent, the natural map
$$F\to \on{RKE}_i(F|_{\bC'{}^{\on{op}}})$$
is an isomorphism.

\medskip

For $\bc\in \bC$ we have
$$\on{RKE}_i(F|_{\bC'{}^{\on{op}}})\simeq \underset{\bc'\in (\bC'_{/\bc})^{\on{op}}}{lim}\, F(\bc').$$

\medskip

Choose a covering $\bc'_A:=\underset{\alpha}\sqcup\, \bc'_\alpha\to \bc$, with $\bc'_\alpha\in \bC'$.
Let $\bc'_A{}^\bullet/\bc$ be its \v{C}ech nerve. Restriction defines
a map
$$\underset{\bc'\in (\bC'_{/\bc})^{\on{op}}}{lim}\, F(\bc') \to \on{Tot}(F( \bc'_A{}^\bullet/\bc)),$$
and the composition
$$F(\bc)\to \underset{\bc'\in (\bC'_{/\bc})^{\on{op}}}{lim}\, F(\bc') \to \on{Tot}(F( \bc'_A{}^\bullet/\bc))$$
is the natural map
$$F(\bc)\to \on{Tot}(F( \bc'_A{}^\bullet/\bc)),$$
and hence is an isomorphism, since $F$ satisfies descent.

\medskip

Consider now the object
$$\on{Tot}\left(\underset{\bc'\in (\bC'_{/\bc})^{\on{op}}}{lim}\, F(\bc'\underset{\bc}\times ( \bc'_A{}^\bullet/\bc))\right).$$

We shall complete the above maps to a commutative diagram
\begin{equation} \label{sss:retr diag}
\xy
(-25,15)*+{F(\bc)}="A";
(25,15)*+{\underset{\bc'\in (\bC'_{/\bc})^{\on{op}}}{lim}\, F(\bc').}="B";
(-25,-15)*+{\on{Tot}(F( \bc'_A{}^\bullet/\bc))}="C";
(25,-15)*+{\on{Tot}\left(\underset{\bc'\in (\bC'_{/\bc})^{\on{op}}}{lim}\, F(\bc'\underset{\bc}\times ( \bc'_A{}^\bullet/\bc))\right).}="D";
{\ar@{->} "A";"B"};
{\ar@{->} "A";"C"};
{\ar@{->} "B";"D"};
{\ar@{->} "C";"D"};
{\ar@{->} "B";"C"};
\endxy
\end{equation}
in which the right vertical arrow is an isomorphism. This will prove that the top horizontal arrow is also an isomorphism,
as required. 

\medskip

The map
$$\on{Tot}(F( \bc'_A{}^\bullet/\bc))\to \on{Tot}\left(\underset{\bc'\in (\bC'_{/\bc})^{\on{op}}}{lim}\, F(\bc'\underset{\bc}\times ( \bc'_A{}^\bullet/\bc))\right)$$
is the simplex-wise map 
$$F( \bc'_A{}^\bullet/\bc)\to \underset{\bc'\in (\bC'_{/\bc})^{\on{op}}}{lim}\, F(\bc'\underset{\bc}\times ( \bc'_A{}^\bullet/\bc)),$$
given by restriction.

\medskip

We shall now consider three maps 
\begin{multline} \label{e:3 maps}
\underset{\bc'\in (\bC'_{/\bc})^{\on{op}}}{lim}\, F(\bc')\to  
\on{Tot}\left(\underset{\bc'\in (\bC'_{/\bc})^{\on{op}}}{lim}\, F(\bc'\underset{\bc}\times ( \bc'_A{}^\bullet/\bc))\right)\simeq \\
\simeq \underset{([n]\times \bc')\in \bDelta\times (\bC'_{/\bc})^{\on{op}}}{lim}\, F(\bc'\underset{\bc}\times ( \bc'_A{}^n/\bc))\simeq
\underset{\bc'\in (\bC'_{/\bc})^{\on{op}}}{lim}\, \on{Tot}(F(\bc'\underset{\bc}\times ( \bc'_A{}^\bullet/\bc))).
\end{multline}

\medskip

The first map in \eqref{e:3 maps} corresponds to the map of index categories that sends
$$([n]\times \bc')\in \bDelta\times (\bC'_{/\bc})^{\on{op}}\, \mapsto  \bc'_A{}^n/\bc\in (\bC'_{/\bc})^{\on{op}}$$
and the map
$$F( \bc'_A{}^n/\bc)\to F(\bc'\underset{\bc}\times ( \bc'_A{}^n/\bc))$$ is given by the projection
$$\bc'\underset{\bc}\times ( \bc'_A{}^n/\bc)\to  \bc'_A{}^n/\bc.$$

\medskip

It is clear that for this map the lower traingle in \eqref{sss:retr diag} commutes. 

\medskip

The second map in \eqref{e:3 maps} corresponds to the map of index categories that sends
$$([n]\times \bc')\in \bDelta\times (\bC'_{/\bc})^{\on{op}}\, \mapsto \bc'\underset{\bc}\times ( \bc'_A{}^n/\bc)\in (\bC'_{/\bc})^{\on{op}},$$
and the identity map
$$F(\bc'\underset{\bc}\times ( \bc'_A{}^n/\bc))\to  F(\bc'\underset{\bc}\times ( \bc'_A{}^n/\bc)).$$

We note, however, that the first and the second maps are canonically homotopic.

\medskip

The third map in \eqref{e:3 maps} corresponds to the map of index categories that sends
$$([n]\times \bc')\in \bDelta\times (\bC'_{/\bc})^{\on{op}}\, \mapsto \bc'\in (\bC'_{/\bc})^{\on{op}},$$
and the map
$$F(\bc')\to F(\bc'\underset{\bc}\times ( \bc'_A{}^n/\bc))$$ is given by the projection
$$\bc'\underset{\bc}\times ( \bc'_A{}^n/\bc)\to \bc'.$$

Again, we note that the third and the second map are canonically homotopic.

\medskip

Finally, it remains to see that the third map is an isomorphism. This follows from the fact that for
each $\bc'\in (\bC'_{/\bc})^{\on{op}}$, the map
$$F(\bc')\to \on{Tot}(F(\bc'\underset{\bc}\times ( \bc'_A{}^\bullet/\bc)))$$
is an isomorphism, since $F$ satisfies descent.

\qed 

\sssec{}

In what follows we will use the following variant of \propref{p:density}. Let $\bC$
be a category with fiber products and equipped with a Grothendick topology.

\medskip

Let $\bC_0$ be a 1-full subcategory of $\bC$. We will assume that the following
is satisfied: 

\medskip

\noindent Whenever $\underset{\alpha}\sqcup\, \bc_\alpha\to \bc$ in $\bC$
is a covering, then each arrow $\bc_\alpha\to \bc$ belongs to $\bC_0$.

\medskip

Note that in this case we can talk about descent for presheaves on $\bC_0$:
the fiber products used in the formulation of the are taken inside the
ambient category $\bC$. 

\sssec{}

Let $\bC'\subset \bC$ be a full subactegory satisfying the assumptions of
\secref{sss:setting for density}. Let $\bC'_0$ be the corresponding 1-full subcategory of $\bC'$. 

\medskip

We shall assume that the following additional condition on $\bC_0\to \bC$ is satisfied:

\begin{itemize}

\item If $\underset{\alpha}\sqcup\, \bc_\alpha\to \bc$ is a covering and $\wt\bc\to \bc$ is a $1$-morphism
in $\bC_0$, then each of the maps $\bc_\alpha\underset{\bc}\times \wt\bc\to \bc_\alpha$
belongs to $\bC_0$. 

\end{itemize}

In this case, the proof of \propref{p:density} applies and gives the following:

\begin{prop} \label{p:density bis}
The adjoint functors
$$\on{Res}_i:\on{Funct}((\bC_0)^{\on{op}},\bD)\rightleftarrows \on{Funct}((\bC'_0)^{\on{op}},\bD):\on{RKE}_i$$
define mutually inverse equivalences
$$\on{Funct}((\bC_0)^{\on{op}},\bD)_{\on{descent}}\rightleftarrows \on{Funct}((\bC'_0)^{\on{op}},\bD)_{\on{descent}}.$$
\end{prop}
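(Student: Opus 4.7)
The plan is to adapt verbatim the argument of \secref{sss:proof of density}, tracking the morphism classes through each step. First, both $\on{Res}_i$ and $\on{RKE}_i$ visibly send descent-objects to descent-objects, and $\on{Res}_i$ is conservative on descent-objects: every object of $\bC$ --- hence of $\bC_0$ --- admits a cover by objects of $\bC'$, and by the first hypothesis on $\bC_0$ such a cover automatically consists of $\bC_0$-morphisms with source in $\bC'_0$. Thus it remains to show that for $F\in \on{Funct}((\bC_0)^{\on{op}},\bD)_{\on{descent}}$ and $\bc\in \bC_0$, the unit map
$$F(\bc)\to \on{RKE}_i(F|_{(\bC'_0)^{\on{op}}})(\bc) = \underset{\bc'\in ((\bC'_0)_{/\bc})^{\on{op}}}{lim}\, F(\bc')$$
is an isomorphism, where $(\bC'_0)_{/\bc}$ denotes the category of arrows $\bc'\to\bc$ in $\bC_0$ with $\bc'\in\bC'_0$.

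For this I would choose a cover $\bc'_A := \underset{\alpha}\sqcup\, \bc'_\alpha \to \bc$ with $\bc'_\alpha \in \bC'$, and form the commutative square
\begin{equation*}
\begin{CD}
F(\bc) @>>> \underset{\bc' \in ((\bC'_0)_{/\bc})^{\on{op}}}{lim}\, F(\bc') \\
@VVV @VVV \\
\on{Tot}(F(\bc'_A{}^\bullet/\bc)) @>>> \on{Tot}\bigl(\underset{\bc' \in ((\bC'_0)_{/\bc})^{\on{op}}}{lim}\, F(\bc' \underset{\bc}\times \bc'_A{}^\bullet/\bc)\bigr),
\end{CD}
\end{equation*}
exactly as in \eqref{sss:retr diag}. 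The left vertical arrow is an isomorphism by descent for $F$ on $\bc'_A\to \bc$; reproducing the threefold comparison of natural maps in \eqref{e:3 maps} shows that the right vertical arrow is also an isomorphism, which forces the top arrow to be an isomorphism.

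The only genuinely new content --- and the main obstacle --- is checking that both fiber-product constructions used above produce only $\bC_0$-morphisms. The first hypothesis on $\bC_0$ ensures that every face map of the \v{C}ech nerve $\bc'_A{}^\bullet/\bc$ is a $\bC_0$-morphism with vertices in $\bC'_0$, which is what is needed for the bottom-left corner of the square and for the descent relation computed there. The second hypothesis, applied to the cover $\bc'_A\to \bc$ and to an arrow $\bc'\to \bc$ in $\bC_0$, ensures that each projection $\bc' \underset{\bc}\times \bc'_A{}^n/\bc \to \bc'$ is a $\bC_0$-morphism; this is what is needed to make sense of the bottom-right corner and, crucially, for the third leg of the comparison \eqref{e:3 maps}, whose isomorphism ultimately follows from descent for $F$ applied to the pulled-back cover $\bc' \underset{\bc}\times \bc'_A \to \bc'$. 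Once these two bookkeeping statements are in place, the remainder of the argument of \secref{sss:proof of density} applies without modification.
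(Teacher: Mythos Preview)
Your proposal is correct and is precisely the argument the paper has in mind: the paper's own proof consists of the single sentence ``the proof of \propref{p:density} applies,'' and you have unpacked exactly that, tracking which of the two hypotheses on $\bC_0$ is invoked at each step where a fiber product must be shown to lie in $\bC_0$.
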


\section{Eventually coconnective, Gorenstein and smooth morphisms}  \label{s:Gorenstein}

In this section we continue to assume that all DG schemes are Noetherian. 

\medskip

The results in this section were obtained in collaboration with D.~Arinkin and some
of them appear also in \cite[Appendix E]{AG}.

\ssec{The !-pullback functor for eventually coconnective morphisms}

\sssec{}

First, we notice:

\begin{lem}  \label{l:! vs * finite}
Let $S_1\to S_2$ be a morphism almost of finite type. Then the following conditions are equivalent:

\begin{itemize}

\item The morphism $f$ is eventually coconnective.

\item The functor $f^!:\IndCoh(S_2)\to \IndCoh(S_1)$ sends $\Coh(S_2)$ to $\Coh(S_1)$.

\end{itemize}

In this case, the functor $f^!$ has a bounded cohomological amplitude. 

\end{lem}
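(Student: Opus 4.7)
My plan is to prove the two implications by working Zariski-locally and exploiting the Nagata-type factorization established in \secref{ss:check factorization}, together with the basic comparison \propref{p:equiv on D plus} between $\IndCoh^+$ and $\QCoh^+$.

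First I would prove (a) $\Rightarrow$ (b) along with the boundedness of $f^!$. Since coherence and bounded cohomological amplitude of a continuous functor may be tested Zariski-locally on both source and target (using \corref{c:t structure local} and \corref{c:! pullback and open embed} to commute $f^!$ with restrictions to open subschemes), I may assume $S_1$ and $S_2$ affine. Factoring $f=g\circ j$ with $j$ an open embedding and $g$ proper, using \secref{ss:check factorization}, and noting $j^!\simeq j^{\IndCoh,*}$ by \corref{c:existence of !}, the open part is t-exact and trivially preserves coherence. Since $j$ is flat the eventual coconnectivity of $f$ transfers to $g$ on a neighborhood of its image, so it remains to treat proper eventually coconnective $g$. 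Further factoring $g$ locally as $i\circ \pi$, where $\pi\colon S_2\times \BP^n\to S_2$ is the projection and $i$ a closed embedding, the smooth map $\pi$ contributes only a shifted twist (its $!$-pullback, studied later in this section, is $\pi^*$ up to suspension and tensoring with a relative dualizing line bundle, hence t-bounded and coherence-preserving), while $i$ inherits eventual coconnectivity. For such $i$, \lemref{l:finite Tor} gives that $i$ has bounded Tor dimension; since $i_*\CO_{S_1}$ is moreover coherent on the Noetherian scheme $S_2\times \BP^n$, it is a perfect complex. Now $i^!$ agrees on $\IndCoh^+$ with the $\QCoh$-level right adjoint of $i_*$ (by \lemref{l:! and Psi}), which for $\CF\in \Coh$ is
\[
\uHom_{\CO_{S_2\times \BP^n}}(i_*\CO_{S_1},\CF)\simeq (i_*\CO_{S_1})^\vee\otimes \CF,
\]
manifestly sending coherent objects to coherent ones with amplitude bounded by the Tor amplitude of $i_*\CO_{S_1}$.

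For (b) $\Rightarrow$ (a), the same Zariski and Nagata reductions, combined with the known smooth case, reduce the problem to showing that a closed embedding $i\colon S_1\hookrightarrow S_2$ almost of finite type for which $i^!$ preserves coherence must be eventually coconnective. The hypothesis implies in particular that $i^!$ has bounded cohomological amplitude on $\Coh(S_2)$. Translated back to $\QCoh$ for an arbitrary $\CF\in \Coh(S_2)^+$ via \propref{p:equiv on D plus} and \lemref{l:! and Psi}, this says $\uHom_{\CO_{S_2}}(i_*\CO_{S_1},\CF)$ is uniformly bounded in $\CF$. For a coherent module over a Noetherian ring, this uniform Ext-boundedness against all coherent targets is equivalent to being a perfect complex. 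Thus $i_*\CO_{S_1}$ is perfect, equivalently (using \lemref{l:finite Tor} again) $i$ has bounded Tor dimension, and therefore is eventually coconnective.

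The main obstacle I expect is the identification of $i^!$, defined abstractly as the right adjoint of the proper pushforward, with the classical $\uHom$-formula for closed embeddings, in a way sufficiently functorial to track cohomological amplitude. This identification is clean on $\IndCoh^+\simeq \QCoh^+$ by \lemref{l:! and Psi}, which is precisely why the argument funnels into a perfect-complex criterion; the remaining ingredients are essentially bookkeeping with the Nagata factorization and with the smooth-projection computation handled later in this section.
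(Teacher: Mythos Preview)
Your core idea matches the paper's: reduce to a closed embedding and show both conditions are equivalent to $i_*\CO_{S_1}$ being perfect. But the paper's reduction is far more direct. It simply factors $f$ Zariski-locally as a closed embedding $i:S_1\hookrightarrow S_2\times\BA^n$ followed by the projection, exactly as in the proof of \propref{p:Xi and *}, and then invokes the argument of \lemref{l:finite Tor} (pointwise perfectness via \lemref{l:pointwise perfectness}). Your route through Nagata compactification and a $\BP^n$-factorization is unnecessarily heavy.

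There is also a genuine gap in your reduction for (b) $\Rightarrow$ (a). Writing $f=g\circ j$ with $j$ open and $g$ proper, you assert that ``the same Zariski and Nagata reductions'' let you assume $i^!$ preserves $\Coh$. But from $f^!=j^{\IndCoh,*}\circ g^!$ preserving $\Coh$ you only learn that $g^!(\CF)|_{S_1}\in\Coh(S_1)$ for $\CF\in\Coh(S_2)$; this says nothing about $g^!(\CF)$ on the complement of the open, so you cannot conclude that $g^!$ (or the subsequent closed embedding $i^!$) preserves $\Coh$. The paper's $\BA^n$-factorization places the closed embedding \emph{first} ($f=\pi\circ i$), which avoids this obstruction and lets one argue fiberwise as in \lemref{l:finite Tor}.

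Finally, your appeal to the smooth $!$-pullback formula ``studied later in this section'' is logically delicate: that formula (\corref{c:! and * pullback under smooth}) is derived from \propref{p:pullback under Gorenstein}, which rests on \propref{p:QCoh !}, whose statement about bounded cohomological amplitude of $f^{\QCoh,!}$ is exactly what the final clause of the present lemma supplies. You could rescue this by computing $\pi^!$ for $\BP^n$- or $\BA^n$-projections directly from the $(f^{\IndCoh}_*,f^!)$-adjunction, but as written the forward reference risks circularity.
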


\begin{proof}

As in the proof of \propref{p:Xi and *}, the assertion reduces to the case when $f$ is a closed embedding.
Denote $\CE:=f_*(\CO_{S_1})\in \Coh(S_2)$. Then the same argument as in \lemref{l:finite Tor}
shows that both conditions in the lemma are equivalent to $\CE$ being perfect.

\end{proof}
 
\sssec{}

Let now 
$$
\CD
S'_1  @>{g_1}>>  S_1  \\
@V{f'}VV   @VV{f}V   \\
S'_2  @>{g_2}>>  S_2
\endCD
$$
be a Cartesian diagram where horizontal arrows are almost of finite type, and vertical arrows are of bounded Tor dimension
(in particular, eventually coconnective). 

\medskip

Starting from the base change isomorphism
$$g_2^!\circ f^{\IndCoh}_*\simeq (f')^{\IndCoh}_*\circ g_1^!$$
by the $(f^{\IndCoh,*},f^\IndCoh_*)$- and $((f')^{\IndCoh,*},(f')^\IndCoh_*)$-adjunctions, we obtain a
natural transformation
\begin{equation} \label{e:! and * IndCoh}
(f')^{\IndCoh,*}\circ g_2^! \to g_1^! \circ f^{\IndCoh,*}.
\end{equation}

\begin{rem}
If the map $g_2$ (and hence $g_1$) is proper, then diagram chase shows that the map
in \eqref{e:! and * IndCoh} is canonically isomorphic to one obtained from the isomorphism
$$(g_1)^\IndCoh_*\circ (f')^{\IndCoh,*}\simeq f^{\IndCoh,*}\circ (g_2)^\IndCoh_*$$
of \lemref{l:usual base change}.
\end{rem}

\sssec{}

We claim:

\begin{prop}  \label{p:! pullback and event coconn}
The map \eqref{e:! and * IndCoh} is an isomorphism.
\end{prop}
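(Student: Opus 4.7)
The plan is to factor $g_2$ via Nagata compactification into an open embedding composed with a proper morphism, reducing to separate analyses of the two resulting special cases. A preliminary step, using Zariski descent (\propref{p:Zariski descent}) together with the compatibility of $!$-pullback with open embeddings provided by \corref{c:! pullback and open embed}, brings us to the situation where $S_2$ and $S'_2$ are affine, so that $g_2$ is a separated morphism almost of finite type. The Nagata-type argument from Section \ref{ss:check factorization}, Step 1, then factors $g_2 = p \circ j$ with $j \colon S'_2 \hookrightarrow \ol S$ an open embedding and $p \colon \ol S \to S_2$ proper. Base changing along $f$ produces
$$
\CD
S'_1 @>{j'}>> S''_1 @>{p'}>> S_1 \\
@V{f'}VV @V{f''}VV @VV{f}V \\
S'_2 @>{j}>> \ol S @>{p}>> S_2
\endCD
$$
in which $f''$ inherits bounded Tor dimension (this property is stable under base change). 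By construction the natural transformation \eqref{e:! and * IndCoh} for the outer square is the horizontal composition of the corresponding transformations for the two inner squares, so it suffices to handle the two special cases separately.

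For $g_2 = j$ an open embedding, one has $j^! = j^{\IndCoh,*}$ and $(j')^! = (j')^{\IndCoh,*}$ by \corref{c:existence of !}(b). The functoriality of the $\IndCoh$-pullback expressed by \corref{c:upper * DG funct} identifies both $(f')^{\IndCoh,*} \circ j^{\IndCoh,*}$ and $(j')^{\IndCoh,*} \circ f^{\IndCoh,*}$ canonically with $(j \circ f')^{\IndCoh,*} = (f \circ j')^{\IndCoh,*}$. That the specific map \eqref{e:! and * IndCoh} agrees with this functoriality isomorphism follows by chasing through the construction of the underlying base change isomorphism, together with \propref{p:open compat}(b), which describes that base change as coming by adjunction from the tautological functoriality isomorphism $f^{\IndCoh}_* \circ (j')^{\IndCoh}_* \simeq j^{\IndCoh}_* \circ (f')^{\IndCoh}_*$.

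For $g_2 = p$ proper, I would first note that the proof of \lemref{l:usual base change} goes through when the ``almost of finite type'' hypothesis on $f$ is dropped, since the relevant conclusion of \corref{c:finite Tor} (namely that $f^{\IndCoh,*}$ preserves eventual coconnectivity) holds under the bounded Tor dimension hypothesis alone. This yields a base change isomorphism
$$\alpha \colon f^{\IndCoh,*} \circ g_2^{\IndCoh}_* \xrightarrow{\sim} g_1^{\IndCoh}_* \circ (f')^{\IndCoh,*}.$$
By the compatibility of \propref{p:proper compat}(b), the base change isomorphism $g_2^! \circ f^{\IndCoh}_* \simeq (f')^{\IndCoh}_* \circ g_1^!$ which underlies \eqref{e:! and * IndCoh} is the mate of the functoriality isomorphism $g_2^{\IndCoh}_* \circ (f')^{\IndCoh}_* \simeq f^{\IndCoh}_* \circ g_1^{\IndCoh}_*$ under the $(g^{\IndCoh}_*, g^!)$-adjunctions. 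A diagram chase of mates then shows that \eqref{e:! and * IndCoh} is itself the mate of $\alpha$ under the same $(g^{\IndCoh}_*, g^!)$-adjunctions. To verify this mate is an isomorphism, I would evaluate on a compact $\CF \in \Coh(S_2)$ and test against a compact $\CG \in \Coh(S'_1)$: the $(g_1^{\IndCoh}_*, g_1^!)$- and $(f^{\IndCoh,*}, f^{\IndCoh}_*)$-adjunctions rewrite $\CMaps(\CG, g_1^!(f^{\IndCoh,*}(\CF)))$ as $\CMaps_{\QCoh(S_2)}(g_{2,*}(f'_*(\CG)), \CF)$; the left hand side $\CMaps(\CG, (f')^{\IndCoh,*}(g_2^!(\CF)))$ is analyzed by writing $g_2^!(\CF) = \underset{\alpha}{\colim}\, \CK_\alpha$ with $\CK_\alpha \in \Coh(S'_2)$ (which is possible since $g_2^!$ is continuous), using that $(f')^{\IndCoh,*}$ sends compacts to compacts, and again reducing to the corresponding $\QCoh$-computation via the equivalence $\Psi_{S'_1}|_{\Coh(S'_1)}$.

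The main obstacle is the proper case, and specifically the coherence verification that \eqref{e:! and * IndCoh} really is the mate, under the $(g^{\IndCoh}_*, g^!)$-adjunctions, of the isomorphism $\alpha$ from (the strengthened) \lemref{l:usual base change}. This amounts to checking that two base change data arising from distinct routes in the formalism of Section \ref{s:!}---the one encoded in the correspondence-category functor $\IndCoh_{(\dgSch_{\on{Noeth}})_{\on{corr:all;aft}}}$ and the one provided directly by adjunction for $f^{\IndCoh,*}$---are compatible. This compatibility is built into \thmref{t:upper shriek} through \propref{p:proper compat}, but the bookkeeping to turn it into an isomorphism statement for the specific composite mate is the genuinely non-formal part of the argument.
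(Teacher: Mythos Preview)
Your overall strategy---reduce via Zariski localization and Nagata-type factorization to the cases where $g_2$ is an open embedding or proper---matches the paper's. The open embedding case is essentially the same in both approaches. The proper case, however, is handled quite differently, and your version has a genuine gap.

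In the proper case, the paper does not attempt to identify \eqref{e:! and * IndCoh} as the mate of another base change isomorphism. Instead, it further localizes in $S_1$ so that $f$ (hence $f'$) is affine, restricts attention to $\IndCoh(S_2)^+$ (enough since compacts generate), observes via \lemref{l:! left exact} that both sides land in $\IndCoh(S'_1)^+$, and then uses that $(f')^{\IndCoh}_*$ is conservative on $\IndCoh(S'_1)^+$. After applying $(f')^{\IndCoh}_*$, both sides are rewritten using \corref{c:tensoring up} (which gives $(f')^{\IndCoh}_*\circ (f')^{\IndCoh,*}\simeq f'_*(\CO_{S'_1})\otimes -$) together with the $\QCoh(S_2)$-linearity of $g_2^!$ (\corref{c:upgrading !}). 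Both sides become $g_2^!(f_*(\CO_{S_1})\otimes \CF)$ and the map unwinds to the identity. This avoids the mate calculus entirely.

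Your proposed computation in step 4 does not go through as written. From $\CMaps(\CG, g_1^!(f^{\IndCoh,*}(\CF)))$ the $((g_1)^{\IndCoh}_*, g_1^!)$-adjunction gives $\CMaps((g_1)^{\IndCoh}_*(\CG), f^{\IndCoh,*}(\CF))$, but now $f^{\IndCoh,*}$ sits on the right and is a \emph{left} adjoint, so you cannot move it across by adjunction to reach $\CMaps_{\QCoh(S_2)}(g_{2,*}(f'_*(\CG)), \CF)$. (That expression would arise from $\CMaps(\CG, (f')^! g_2^!(\CF))$, not from the left-hand side of \eqref{e:! and * IndCoh}.) Combined with the coherence check you yourself flag as the ``main obstacle,'' the proper case remains unproved in your proposal. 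The paper's conservative-functor-plus-projection-formula argument sidesteps both difficulties.
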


\begin{proof}

First, it is easy to see that the assertion is Zariski-local in $S_2$ and $S'_2$. By the
construction of the !-pullback functor, we can consider separately the cases of $g_2$
(and hence $g_1$) proper and an open embedding, respectively. 

\medskip

The case of an open embedding is immediate. So, in what follows we shall assume that
the maps $g_1$ and $g_2$ are proper.

\medskip 

Note that the statement is Zariski-local also in $S_1$. Hence, we can assume
that the morphism $f$ (and hence $f'$) is affine. 

\medskip

Since all the functors involved are continuous, it is enough to show that the map 
\eqref{e:! and * IndCoh} is an isomorphism when evaluated on $\Coh(S_2)$. We will
show more generally that it is an isomorphism, when evaluated on objects of $\IndCoh(S_2)^+$.

\medskip

The assumption of $f$ implies that the functor $f^{\IndCoh,*}$ (resp., $(f')^{\IndCoh,*}$)
sends the category $\IndCoh(S_2)^+$ (resp., $\IndCoh(S'_2)^+$) to $\IndCoh(S_1)^+$ (resp., $\IndCoh(S'_1)^+$).

\medskip

We claim:

\begin{lem}  \label{l:! left exact}
For any morphism $g:T_1\to T_2$ amost of finite type, the functor $g^!$ is left t-exact, up to a finite
shift.
\end{lem}
The proof of the lemma is given below.

\medskip

Assuminh the lemma, we obtain that both functors
in \eqref{e:! and * IndCoh} send the category $\IndCoh(S_2)^+$ to $\IndCoh(S'_1)^+$. Since $f$ (and hence $f'$)
is affine, the functor $f'_*$ is conservative. It follows from \propref{p:equiv on D plus}
that the functor $(f')^{\IndCoh}_*$ is conservative when restricted to $\IndCoh(S'_1)^+$.

\medskip

Hence, it remains to show that the map
\begin{equation}  \label{e:! and * IndCoh composed}
(f')^{\IndCoh}_*\circ (f')^{\IndCoh,*}\circ g_2^! (\CF)\to (f')^{\IndCoh}_*\circ g_1^! \circ f^{\IndCoh,*}(\CF)
\end{equation}
is an isomorphism for any $\CF\in \IndCoh(S_2)$. 

\medskip

By \corref{c:tensoring up}, we have canonical isomorphisms
$$(f')^{\IndCoh}_*\circ (f')^{\IndCoh,*}\simeq f'_*(\CO_{S'_1})\otimes - \text{ and }
f^{\IndCoh}_*\circ f^{\IndCoh,*}\simeq f'_*(\CO_{S_1})\otimes -.$$ 

Note also that
$$f'_*(\CO_{S'_1})\simeq g_2^*(f_*(\CO_{S_1})).$$

Hence, we can rewrite the left-hand side in \eqref{e:! and * IndCoh composed} as
$$g_2^*(f_*(\CO_{S_1})) \otimes g_2^! (\CF)\overset{\text{Equation \eqref{e:upgrading !}}}\simeq g_2^!(f_*(\CO_{S_1})\otimes \CF).$$

We rewrite the right-hand side in \eqref{e:! and * IndCoh composed} as
$$g_2^!\circ f^\IndCoh_*\circ f^{\IndCoh,*}(\CF)\simeq g_2^!(f_*(\CO_{S_1})\otimes \CF).$$

It follows by unwinding the constructions that the map in \eqref{e:! and * IndCoh composed} identifies
with the identity map on $g_2^!(f_*(\CO_{S_1})\otimes \CF)$.

\end{proof}

\begin{proof}[Proof of \lemref{l:! left exact}]

By the construction of $g^!$, it suffices to consider separately the cases of $g$ being an open
embedding and a proper map. The case of an open embedding is evident; in this case the functor
$g^!=g^{\IndCoh,*}$ is t-exact.

\medskip

For a proper map, the functor $g^!$ is by definition the right adjoint of $g^\IndCoh_*$. The required
assertion follows from the fact that the functor $g^\IndCoh_*$ is right t-exact, up to a finite shift,
which in turn follows from the corresponding property of $g_*$.

\end{proof}

\ssec{The !-pullback functor on $\QCoh$}

In this subsection we let $f:S_1\to S_2$ be an eventually coconnective morphism almost of finite type. 

\sssec{}

We claim:

\begin{prop} \label{p:QCoh !}
There exists a uniquely defined continuous functor
$$f^{\QCoh,!}:\QCoh(S_2)\to \QCoh(S_1),$$
which is of bounded cohomological amplitude, and makes the following diagram 
$$
\CD
\IndCoh(S_1)  @>{\Psi_{S_1}}>>  \QCoh(S_1) \\
@A{f^!}AA   @AA{f^{\QCoh,!}}A   \\ 
\IndCoh(S_2)  @>{\Psi_{S_2}}>>  \QCoh(S_2) 
\endCD
$$
commute. Furthermore, the functor $f^{\QCoh,!}$ has a unique structure of $1$-morphism of
$\QCoh(S_2)$-module categories, for which the above diagram commutes as
a diagram of $\QCoh(S_2)$-module categories.
\end{prop}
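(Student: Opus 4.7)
The plan is to exploit the fact that $\QCoh(S_2)$ is compactly generated by perfect complexes, all of which lie in $\QCoh(S_2)^+$, where $\Psi$ is an equivalence. Since $f$ is eventually coconnective and almost of finite type, \lemref{l:! vs * finite} guarantees that $f^!$ has bounded cohomological amplitude and sends $\Coh(S_2)$ into $\Coh(S_1)$. In particular, $f^!$ maps $\IndCoh(S_2)^+$ into $\IndCoh(S_1)^+$.

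First, I define $f^{\QCoh,!}$ on $\QCoh(S_2)^{\on{perf}} \subset \QCoh(S_2)^+$ by transport across the equivalence $\Psi^+_{S_i}:\IndCoh(S_i)^+ \overset{\sim}\to \QCoh(S_i)^+$ of \propref{p:equiv on D plus}, setting
$$f^{\QCoh,!}|_{\QCoh(S_2)^{\on{perf}}} := \Psi_{S_1} \circ f^! \circ (\Psi^+_{S_2})^{-1}.$$
Then I take the continuous ind-extension to all of $\QCoh(S_2) = \Ind(\QCoh(S_2)^{\on{perf}})$, which automatically produces a continuous functor $f^{\QCoh,!}:\QCoh(S_2)\to \QCoh(S_1)$. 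The bounded cohomological amplitude holds on the generators (inherited from the amplitude of $f^!$) and propagates to all of $\QCoh(S_2)$ because the t-structure on $\QCoh(S_1)$ is compatible with filtered colimits.

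To verify the square commutes on all of $\IndCoh(S_2)$ (not merely on compact generators), I would compare the two continuous functors $\Psi_{S_1}\circ f^!$ and $f^{\QCoh,!}\circ \Psi_{S_2}$ from $\IndCoh(S_2)$ to $\QCoh(S_1)$. They agree on $\Coh(S_2)$ by construction, and since both are continuous and $\Coh(S_2)$ compactly generates $\IndCoh(S_2)$, they agree on the whole category. Uniqueness of $f^{\QCoh,!}$ with the stated properties is immediate: any such continuous functor is determined by its restriction to $\QCoh(S_2)^{\on{perf}}$, where commutativity of the square and the equivalence $\Psi^+_{S_i}$ force the formula above. An alternative route is to use left-completeness of the t-structure on $\QCoh(S_1)$ (\propref{p:QCoh as left compl}) together with bounded amplitude to define and uniquely extend from $\QCoh(S_2)^+$.

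For the $\QCoh(S_2)$-module structure, I would invoke \thmref{t:action of QCoh} to equip $f^!$ with its canonical $\QCoh(S_2)$-module structure, combined with \lemref{l:Psi compat with action}, which realizes each $\Psi_{S_i}$ as a morphism of $\QCoh(S_i)$-module categories. These data endow $f^{\QCoh,!}|_{\QCoh(S_2)^{\on{perf}}}$ with a $\QCoh(S_2)$-module structure that then ind-extends uniquely. The main obstacle is the uniqueness assertion for the module structure as an enhancement (not merely for the underlying functor): pinning down this structure canonically requires some $\infty$-categorical bookkeeping, namely, showing that the restriction functor from $\QCoh(S_2)$-module continuous functors $\QCoh(S_2)\to \QCoh(S_1)$ to their restrictions to $\QCoh(S_2)^{\on{perf}}$ is fully faithful; this is a standard ind-completion argument but is the one point that requires genuine care.
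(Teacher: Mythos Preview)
Your primary route has a genuine gap in the verification step. You ind-extend $f^{\QCoh,!}$ from $\QCoh(S_2)^{\on{perf}}$ and then claim that $\Psi_{S_1}\circ f^!$ and $f^{\QCoh,!}\circ \Psi_{S_2}$ ``agree on $\Coh(S_2)$ by construction.'' But by construction they only agree on those objects of $\IndCoh(S_2)^+$ whose image under $\Psi_{S_2}$ is perfect; for a general $\CF\in\Coh(S_2)$, the value $f^{\QCoh,!}(\Psi_{S_2}(\CF))$ is computed as a filtered colimit over perfect approximations, and there is no a priori reason this colimit recovers $\Psi_{S_1}(f^!(\CF))$. Since perfect objects do not compactly generate $\IndCoh(S_2)$ (unless $S_2$ is regular), the continuity argument cannot close this gap on its own. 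Fixing it requires controlling truncations: approximate $\Psi_{S_2}(\CF)$ by a perfect object modulo $\QCoh(S_2)^{\leq -N}$, use bounded amplitude to match the two functors in degrees $\geq -N+c$, and then invoke left-completeness of $\QCoh(S_1)$ to conclude. But that is precisely your ``alternative route,'' and it is what the paper does.

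The paper's proof bypasses ind-extension entirely. It uses \propref{p:QCoh as left compl} to identify $\QCoh(S_i)$ as the left completion of $\IndCoh(S_i)$, and then invokes a general lemma: a continuous functor between cocomplete DG categories with t-structures, right t-exact up to a finite shift, extends uniquely and continuously to the left completions. Applied to $f^!:\IndCoh(S_2)\to\IndCoh(S_1)$ (which has bounded amplitude by \lemref{l:! vs * finite}), this produces $f^{\QCoh,!}$ together with the commuting square in one stroke; the $\QCoh(S_2)$-linear refinement follows by running the same lemma in $\QCoh(S_2)$-module categories. So your alternative route is the paper's route, and it is the one that actually works; the ind-extension detour only creates a problem that the alternative then has to solve anyway.
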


\begin{proof}

This follows using \propref{p:QCoh as left compl} from the following general observation. 

\medskip

Let $F:\bC_1\to \bC_2$ be a continuous functor
between cocomplete DG categories. Suppose that $\bC_1$ and $\bC_2$ are endowed with t-structures
compatible with filtered colimits. Suppose that $F$ is right t-exact up to a finite shift. 

\medskip

Let $\bC'_1$ and $\bC'_2$ be the left completions of $\bC_1$ and $\bC_2$, respectively, in their
t-structures. 

\begin{lem}  \label{l:extension to left completions}
Under the above circumstances there exists a uniquely defined continuous functor $F':\bC'_1\to \bC'_2$ which
makes the diagram
$$
\CD
\bC_1 @>>>  \bC'_1  \\
@V{F}VV  @VV{F'}V   \\
\bC_2 @>>>  \bC'_2
\endCD
$$
commute.
\end{lem}

\end{proof}

\sssec{}

Note that the $\QCoh(S_2)$-linearity of $f^{\QCoh,!}$ tautologically implies:

\begin{cor}  \label{c:exp for QCoh !}
There is a canonical isomorphism
$$f^*(\CE)\otimes f^{\QCoh,!}(\CO_{S_2})\simeq f^{\QCoh,!}(\CE),\quad \CE\in \QCoh(S_2).$$
\end{cor}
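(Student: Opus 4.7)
The plan is to unwind what it means for $f^{\QCoh,!}$ to carry a structure of $1$-morphism of $\QCoh(S_2)$-module categories, as granted by \propref{p:QCoh !}. Concretely, this structure is a system of coherent isomorphisms expressing compatibility with the action; at the level of objects it says that for any $\CE\in \QCoh(S_2)$ and any $\CF\in \QCoh(S_2)$ there is a canonical isomorphism
\begin{equation*}
f^{\QCoh,!}(\CE\otimes \CF)\simeq f^*(\CE)\otimes f^{\QCoh,!}(\CF),
\end{equation*}
where the tensor product on the right-hand side is the action of $\QCoh(S_1)$ on itself (obtained by restriction from the $\QCoh(S_2)$-action via $f^*$).

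Given this, I would simply specialize to $\CF=\CO_{S_2}$, the unit of the symmetric monoidal structure on $\QCoh(S_2)$. Then $\CE\otimes \CO_{S_2}\simeq \CE$ canonically, and the isomorphism above becomes
\begin{equation*}
f^{\QCoh,!}(\CE)\simeq f^*(\CE)\otimes f^{\QCoh,!}(\CO_{S_2}),
\end{equation*}
which is precisely the claim. There is no obstacle here beyond invoking the $\QCoh(S_2)$-linearity of $f^{\QCoh,!}$; the argument is the standard observation that a module functor out of a rigid/unital monoidal category is determined on general objects by its value on the unit, together with the action.
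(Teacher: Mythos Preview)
Your proposal is correct and matches the paper's approach exactly: the paper simply notes that the $\QCoh(S_2)$-linearity of $f^{\QCoh,!}$ (established in \propref{p:QCoh !}) tautologically implies the corollary, which is precisely the specialization to $\CF=\CO_{S_2}$ that you describe.
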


We now claim:

\begin{prop} \label{p:! and * pullback under eventually coconn}
There exists a uniquely defined natural transformation
\begin{equation} \label{e:! and * pullback under eventually coconn}
 f^{\QCoh,!}(\CO_{S_2})\otimes f^{\IndCoh,*}(\CF)\to f^!(\CF),\quad \CF\in \IndCoh(S_2)
\end{equation}
that makes the diagram
$$
\CD
\Psi_{S_1}(f^!(\CF))   @>{\sim}>>  f^{\QCoh,!}(\Psi_{S_2}(\CF))  \\
@A\Psi_{S_1}({\text{\eqref{e:! and * pullback under eventually coconn}})}AA   @A{\sim}A{\text{\corref{c:exp for QCoh !}}}A   \\
\Psi_{S_1}(f^{\QCoh,!}(\CO_{S_2})\otimes f^{\IndCoh,*}(\CF))  & &  f^*(\Psi_{S_2}(\CF))\otimes f^{\QCoh,!}(\CO_{S_2}) \\
@A{\sim}AA   @AA{\sim}A  \\ 
f^{\QCoh,!}(\CO_{S_2})\otimes \Psi_{S_1}(f^{\IndCoh,*}(\CF)) @>{\sim}>{\text{\propref{p:* pullback}}}>  
f^{\QCoh,!}(\CO_{S_2})\otimes f^*(\Psi_{S_2}(\CF))
\endCD
$$
commute. 
\end{prop}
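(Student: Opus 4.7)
The plan is to construct the natural transformation by specifying it on compact generators and extending by continuity. Both functors $\CF \mapsto f^{\QCoh,!}(\CO_{S_2}) \otimes f^{\IndCoh,*}(\CF)$ and $\CF \mapsto f^!(\CF)$ are continuous functors $\IndCoh(S_2) \to \IndCoh(S_1)$: the former because $f^{\IndCoh,*}$ and the $\QCoh(S_1)$-action are both continuous, the latter by \corref{c:existence of !}. Hence, by the universal property of $\IndCoh(S_2) = \Ind(\Coh(S_2))$, the natural transformation in \eqref{e:! and * pullback under eventually coconn} is determined uniquely by its restriction to $\Coh(S_2)$, and it suffices to produce it there (and verify the commutativity of the displayed diagram there).

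First I would check that on $\CF \in \Coh(S_2)$, both functors take values in $\IndCoh(S_1)^+$. Indeed, eventual coconnectivity of $f$ gives $f^{\IndCoh,*}(\Coh(S_2)) \subset \Coh(S_1)$ by \defnref{defn:event coconn} and $f^!(\Coh(S_2)) \subset \Coh(S_1)$ by \lemref{l:! vs * finite}, while $f^{\QCoh,!}(\CO_{S_2}) \in \QCoh(S_1)^b$ by \propref{p:QCoh !}, so tensoring preserves the property of being eventually coconnective.

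By \propref{p:equiv on D plus}, $\Psi_{S_1}$ restricts to an equivalence $\IndCoh(S_1)^+ \xrightarrow{\sim} \QCoh(S_1)^+$. Therefore specifying a natural transformation between the two functors $\Coh(S_2) \rightrightarrows \IndCoh(S_1)^+$ is equivalent to specifying one after composition with $\Psi_{S_1}$. The chain of canonical natural isomorphisms appearing along the boundary of the displayed diagram --- namely \lemref{l:Psi compat with action} for the $\QCoh(S_1)$-linearity of $\Psi_{S_1}$, \propref{p:* pullback} for $\Psi_{S_1}\circ f^{\IndCoh,*}\simeq f^*\circ \Psi_{S_2}$, \corref{c:exp for QCoh !} for $f^{\QCoh,!}(\CO_{S_2}) \otimes f^*(-) \simeq f^{\QCoh,!}(-)$, and \lemref{l:! and Psi} for $\Psi_{S_1}\circ f^! \simeq f^{\QCoh,!} \circ \Psi_{S_2}$ --- directly identify the $\Psi_{S_1}$-images of the two functors. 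Transporting this identification back through the equivalence $\Psi_{S_1}$ produces the required natural transformation on $\Coh(S_2)$, and the commutativity of the diagram together with the full faithfulness of $\Psi_{S_1}$ on $\IndCoh(S_1)^+$ forces uniqueness.

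The point I expect to require the most care is verifying that each of the cited isomorphisms is genuinely natural in $\CF$ (so their composition yields a natural transformation of functors, rather than just a collection of pointwise isomorphisms). This is essentially a bookkeeping matter, since the referenced statements are each phrased as natural isomorphisms of functors; one only needs to check that the compositions one forms respect the natural structures. Once this is secured, continuous extension from $\Coh(S_2)$ to all of $\IndCoh(S_2)$ is automatic, and uniqueness descends along the extension. As a byproduct, the natural transformation is actually an isomorphism, since it is one on the compact generators $\Coh(S_2)$ and both sides are continuous --- though the proposition records only its existence.
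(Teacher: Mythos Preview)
Your argument contains a genuine gap that invalidates both the construction and the final ``byproduct'' claim. The unjustified step is the assertion that for $\CF\in\Coh(S_2)$, the object $f^{\QCoh,!}(\CO_{S_2})\otimes f^{\IndCoh,*}(\CF)$ lies in $\IndCoh(S_1)^+$. Having $f^{\QCoh,!}(\CO_{S_2})\in\QCoh(S_1)^b$ is not enough: tensoring with a bounded object preserves eventual coconnectivity only when that object has bounded Tor dimension (cf.\ \lemref{l:tensoring with bounded}), and $f^{\QCoh,!}(\CO_{S_2})$ need not. In fact, by \propref{p:Drinfeld}, if $f^{\QCoh,!}(\CO_{S_2})$ were perfect then $f$ would be Gorenstein; and the paper shows (Propositions~\ref{p:pullback under Gorenstein} and~\ref{p:pullback under Gorenstein converse}) that the map \eqref{e:! and * pullback under eventually coconn} is an isomorphism \emph{if and only if} $f$ is Gorenstein. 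So your concluding remark that the map is always an isomorphism is false, and the error traces directly back to this step.

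Because the source is not in $\IndCoh(S_1)^+$, you cannot simply transport the identification of $\Psi_{S_1}$-images back through the equivalence $\Psi_{S_1}|_{\IndCoh(S_1)^+}$. The paper's proof works around this asymmetrically: it uses only that the \emph{target} $f^!(\CF)$ lies in $\IndCoh(S_1)^{\geq -n_0}$ for some $n_0$, so that specifying a map into it is the same as specifying compatible maps out of the truncations $\tau^{\geq -n}$ of the source. Each such truncation does lie in $\IndCoh(S_1)^+$, and there the $\Psi_{S_1}$-equivalence applies. Since $\Psi_{S_1}$ of the untruncated source already computes to $f^{\QCoh,!}(\Psi_{S_2}(\CF))\in\QCoh(S_1)^+$ via the chain of isomorphisms, the truncations on the $\QCoh$ side stabilize and the required map is the identity. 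This produces a genuine map in $\IndCoh(S_1)$ that need not be an isomorphism---exactly what the proposition claims.
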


\begin{proof}

It suffuces to construct (and prove the uniquness of) the isomorphism in question on the compact generators of
$\IndCoh(S_2)$, i.e., for $\CF\in \Coh(S_2)$. For such $\CF$, we have $f^!(\CF)\in \IndCoh(S_1)^+$. 
Hence, it suffices to construct (and prove the uniquness of) a map
$$\tau^{\geq -n}\left( f^{\QCoh,!}(\CO_{S_2})\otimes f^{\IndCoh,*}(\CF)\right)\to f^!(\CF)$$
for $n\gg 0$ that makes the corresponding diagram commute.

\medskip

By \propref{p:equiv on D plus}, the latter map is equivalent to a map
\begin{multline}  \label{e:! and * pullback under eventually coconn to construct}
\tau^{\geq -n}\left(\Psi_{S_1}\left(f^{\QCoh,!}(\CO_{S_2})\otimes f^{\IndCoh,*}(\CF)\right)\right)\simeq \\
\simeq \Psi_{S_1}\left(\tau^{\geq -n}\left(f^{\QCoh,!}(\CO_{S_2})\otimes f^{\IndCoh,*}(\CF)\right)\right)\to
\Psi_{S_1}\left(f^!(\CF)\right).
\end{multline}

We have:
\begin{multline*}
\Psi_{S_1}\left(f^{\QCoh,!}(\CO_{S_2})\otimes f^{\IndCoh,*}(\CF)\right)\simeq 
f^{\QCoh,!}(\CO_{S_2})\otimes \left(\Psi_{S_1}\circ f^{\IndCoh,*}(\CF)\right)\simeq \\
\simeq f^{\QCoh,!}(\CO_{S_2})\otimes \left(f^*\circ \Psi_{S_2}(\CF)\right),
\end{multline*}
which, by \corref{c:exp for QCoh !}, identifies with 
$$f^{\QCoh,!}(\Psi_{S_2}(\CF))\simeq \Psi_{S_1}(f^!(\CF)).$$

In particular, 
$$\tau^{\geq -n}\left(\Psi_{S_1}\left(f^{\QCoh,!}(\CO_{S_2})\otimes f^{\IndCoh,*}(\CF)\right)\right)\simeq
\Psi_{S_1}(f^!(\CF))$$
for all $n\gg 0$, and the required map in \eqref{e:! and * pullback under eventually coconn to construct}
is the identity.

\end{proof}

\begin{rem}
As we shall see in Propositions \ref{p:pullback under Gorenstein} and \ref{p:pullback under Gorenstein converse},
the map in \eqref{e:! and * pullback under eventually coconn} is an isomorphism \emph{if and only if}
the map $f$ is Gorenstein.
\end{rem}

\sssec{}

Here are some properties of the functor of the functor $f^{\QCoh,!}$ introduced in \propref{p:QCoh !}.

\begin{prop} \label{p:properties QCoh !} \hfill
\smallskip

\noindent{\em(a)}
If $f$ is proper, the functor $f^{\QCoh,!}$ is the right adjoint of $f_*$.

\smallskip

\noindent{\em(b)} Let 
$$
\CD
S'_1  @>{g_1}>>  S_1  \\
@V{f'}VV   @VV{f}V   \\
S'_2  @>{g_2}>>  S_2
\endCD
$$ 
be a Cartesian diagram. Then there exists a canonical isomorphism of functors
\begin{equation} \label{e:base change for QCoh !}
(g_1)_* \circ (f')^{\QCoh,!}\simeq  f^{\QCoh,!}\circ (g_2)_*
\end{equation}
Moreover, if $f$ is proper, the map $\to$ in \eqref{e:base change for QCoh !}
comes by the $(f_*,f^{\QCoh,!})$- and $(f'_*,(f')^{\QCoh,!})$-adjunctions from the isomorphism
$$f_*\circ (g_1)_*\simeq (g_2)_*\circ f'_*.$$

\smallskip

\noindent{\em(c)} 
The map
\begin{equation} \label{e:! and * QCoh}
g_1^*\circ f^{\QCoh,!}\to (f')^{\QCoh,!} \circ g_2^*
\end{equation}
obtained from \eqref{e:base change for QCoh !} by the $(g_1^*,(g_1)_*)$- and
$(g_2^*,(g_2)_*)$-adjunctions, is an isomorphism.

\end{prop}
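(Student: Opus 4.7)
The strategy throughout is to transport the corresponding structures from $\IndCoh$ via the natural transformation $\Psi$ of \propref{p:equiv on D plus}: on eventually coconnective parts $\Psi^+$ is an equivalence, so $\IndCoh$ statements transfer to $\QCoh$ on $\QCoh^+$; on all of $\QCoh$, one extends by continuity, using compact generation by perfect complexes (all lying in $\QCoh^+$).

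For part (a), the adjunction $(f^\IndCoh_*, f^!)$ of \corref{c:upper ! DG funct proper} combined with the defining commutative square for $f^{\QCoh,!}$ in \propref{p:QCoh !} yields an adjunction $(f_*, f^{\QCoh,!})$ on the eventually coconnective subcategories. To upgrade to an adjunction on all of $\QCoh(S_1)\times\QCoh(S_2)$, I write $\CE_1\in\QCoh(S_1)$ as a filtered colimit of compact objects $\CE_{1,\alpha}\in \QCoh(S_1)^{\on{perf}}\subset \QCoh(S_1)^+$; then compactness of $\CE_{1,\alpha}$, continuity of $f^{\QCoh,!}$, and the fact (\lemref{l:preservation of coh}) that $f_*(\CE_{1,\alpha})\in \Coh(S_2)$ — so that $\Maps(f_*(\CE_{1,\alpha}),-)$ commutes with filtered colimits taken within $\QCoh(S_2)^+$, by the argument used in \propref{p:equiv on D plus} — reduce the required $\Maps$-isomorphism to its version on $\QCoh^+$.

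For part (b), I apply $\Psi_{S_1}$ to the $\IndCoh$ base change iso of \propref{p:proper base change}; compatibilities of $\Psi$ with direct images (\propref{p:direct image}) and with $(-)^{\QCoh,!}$ (\propref{p:QCoh !}) yield \eqref{e:base change for QCoh !} precomposed with $\Psi_{S'_2}$, i.e., on the essential image $\QCoh(S'_2)^+$. Both sides of \eqref{e:base change for QCoh !} are continuous, so the iso extends to all of $\QCoh(S'_2)$ via compact generation of $\QCoh(S'_2)$ by perfect complexes. For the second statement, when $f$ is proper, part (a) identifies $f^{\QCoh,!}$ and $(f')^{\QCoh,!}$ as right adjoints to $f_*$ and $f'_*$; tracing the $\IndCoh$-to-$\QCoh$ transport and using the analogous comparison for $(f^\IndCoh_*, f^!)$ confirms that \eqref{e:base change for QCoh !} is induced from $f_*\circ (g_1)_*\simeq (g_2)_*\circ f'_*$ via the double adjunction.

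For part (c), the map \eqref{e:! and * QCoh} is a natural transformation between two continuous functors $\QCoh(S_2)\to \QCoh(S'_1)$, both carrying canonical $\QCoh(S_2)$-linear structures — via \corref{c:exp for QCoh !} and the pullback $\QCoh$-linearity of $g_1^*$, $g_2^*$, $(f')^*$ — and the transformation itself is $\QCoh(S_2)$-linear (derived from the $\QCoh$-linear iso in (b)). It therefore suffices to check the map is an isomorphism on the monoidal unit $\CO_{S_2}$, reducing to an identification $g_1^*\, f^{\QCoh,!}(\CO_{S_2})\simeq (f')^{\QCoh,!}(\CO_{S'_2})$ of relative dualizing sheaves. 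The main obstacle is verifying this last isomorphism for \emph{arbitrary} $g_2$ — not assumed almost of finite type — so that \propref{p:! pullback and event coconn} is not directly available. I intend to handle this by factoring $f$ Zariski-locally as $\bar{f}\circ j$ with $j$ an open embedding and $\bar{f}$ proper (using \secref{ss:check factorization}): the open case reduces to the triviality $g_1^*\circ j^* = (j')^*\circ g_2^*$, while the proper case — where $f^{\QCoh,!}$ is the right adjoint to $f_*$ by part (a) — reduces to classical base change for the relative dualizing sheaf, which is checked on compact generators of $\QCoh(S_2)$ via the adjunction already established.
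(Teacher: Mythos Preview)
Your argument for (a) has a genuine gap in the $\CE_2$-variable. Reducing $\CE_1$ to a perfect object is fine, but with $\CE_1$ perfect the functor $\CE_2\mapsto\Maps(\CE_1,f^{\QCoh,!}(\CE_2))$ is continuous while $\CE_2\mapsto\Maps(f_*(\CE_1),\CE_2)$ is continuous only if $f_*(\CE_1)$ is compact in $\QCoh(S_2)$---and $f_*(\CE_1)$ lands in $\Coh(S_2)$, not in $\QCoh(S_2)^{\on{perf}}$, so it typically is not. The fact you quote from the proof of \propref{p:equiv on D plus}, that $\Maps_{\QCoh}(\CF,-)$ for $\CF\in\Coh$ commutes with filtered colimits \emph{taken within} $\QCoh^{\geq 0}$, does not apply: writing an arbitrary $\CE_2$ as a colimit of perfect objects is not a colimit staying in any fixed $\QCoh^{\geq -n}$. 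The paper handles this by using left-completeness of $\QCoh(S_i)$ together with the bounded cohomological amplitude of $f^{\QCoh,!}$: this reduces first to $\CE_2\in\QCoh(S_2)^+$, and then (since $f_*$ is left t-exact and $f^{\QCoh,!}$ is left t-exact up to a shift) to $\CE_1\in\QCoh(S_1)^+$, where the $\IndCoh$ adjunction transports directly via \propref{p:equiv on D plus}.

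For (c), the reduction to $\CO_{S_2}$ via $\QCoh(S_2)$-linearity is a legitimate alternative opening, but your endgame is circular: after factoring $f=\bar f\circ j$ and disposing of the open part, you are left with showing $g_1^*\,\bar f^{\QCoh,!}(\CO_{S_2})\simeq(\bar f')^{\QCoh,!}(\CO_{S'_2})$ for $\bar f$ proper, which is precisely the special case of (c) at the unit and does not follow from the $(f_*,f^{\QCoh,!})$-adjunction by itself. The paper's route is different and closes this gap: reduce to $f$ proper and $g_2$ affine (hence $g_1$ affine), so that $(g_1)_*$ is conservative; then it suffices to check that $(g_1)_*\circ g_1^*\circ f^{\QCoh,!}\to(g_1)_*\circ(f')^{\QCoh,!}\circ g_2^*$ is an isomorphism, and both sides evaluate on $\CE$ to $f^{\QCoh,!}\bigl((g_2)_*(\CO_{S'_2})\otimes\CE\bigr)$ using the projection formula, $\QCoh(S_2)$-linearity of $f^{\QCoh,!}$, and part (b).

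Your (b) is essentially correct; note only that the relevant $\IndCoh$ input is the general base change isomorphism \eqref{e:basic base change} from \thmref{t:upper shriek}, not \propref{p:proper base change}, since $f$ here is only assumed eventually coconnective almost of finite type.
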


\begin{rem}
A diagram chase shows that if $f$ (and hence $f'$) is proper, the map in \eqref{e:! and * QCoh} is canonically
isomorphic to one obtained by the $(f_*,f^{\QCoh,!})$- and $(f'_*,(f')^{\QCoh,!})$-adjunctions
from the usual base change isomorphism for $\QCoh$:
$$f'_*\circ g_1^*\simeq g_2^*\circ f_*.$$
\end{rem}

\begin{proof}

We first prove point (a). Since $\QCoh(S_1)$ and $\QCoh(S_2)$ are left-complete in their respective t-structures
and since $f^{\QCoh,!}$ is right t-exact up to a finite shift, it suffices
construct a canonical isomorphism
\begin{equation} \label{e:adj for QCoh}
\Maps_{\QCoh(S_2)}(f_*(\CE_1),\CE_2)\to \Maps_{\QCoh(S_2)}(\CE_1,f^{\QCoh,!}(\CE_2))
\end{equation} 
for $\CE_2\in \QCoh(S_2)^+$.  

\medskip

Since $f^{\QCoh,!}$ is also left t-exact up to a finite shift, and since 
$f_*$ is left t-exact, we can also take $\CE_1\in \QCoh(S_1)^+$. In this case, 
\propref{p:equiv on D plus} reduces the isomorphism \eqref{e:adj for QCoh} for
one for $\IndCoh$. 

\medskip

Point (b) follows from \lemref{l:extension to left completions} and the base change isomorphism
for $\IndCoh$.

\medskip

Let us prove point (c). As in the proof of \propref{p:! pullback and event coconn}, we can
assume that $f$ (and hence $f'$) is proper, and $g_2$ (and hence $g_1$) is affine. 

\medskip

In this case, the functor
$(g_1)_*$ is conservative. Hence, it is sufficient to show that the natural transformation 
\begin{equation} \label{e:! and * QCoh composed}
(g_1)_*\circ g_1^*\circ f^{\QCoh,!}\to (g_1)_*\circ (f')^{\QCoh,!} \circ g_2^*
\end{equation}
is an isomorphism.

\medskip

The value of the left-hand side of \eqref{e:! and * QCoh composed} on $\CE\in \QCoh(S_2)$
is canonically isomorphic to 
$$(g_1)_*(\CO_{S'_1})\otimes f^{\QCoh,!}(\CE)\simeq
f^*((g_2)_*(\CO_{S'_2}))\otimes f^{\QCoh,!}(\CE)\simeq
f^{\QCoh,!}((g_2)_*(\CO_{S'_2})\otimes \CE).$$

The value of the right-hand side of \eqref{e:! and * QCoh composed} on $\CE\in \QCoh(S_2)$
is, by point (b), canonically isomorphic to
$$f^{\QCoh,!}\circ (g_2)_*\circ  g_2^*(\CE)\simeq f^{\QCoh,!}((g_2)_*(\CO_{S'_2})\otimes \CE).$$

By unwinding the constructions, we obtain that the map in \eqref{e:! and * QCoh composed} is
the identity map on $f^{\QCoh,!}((g_2)_*(\CO_{S'_2})\otimes \CE)$.

\end{proof}

\ssec{Gorenstein morphisms}

\sssec{}

Let $f:S_1\to S_2$ be a morphism between Noetherian DG schemes. 

\begin{defn}
We shall say that $f$ is Gorenstein if it is eventually coconnective, locally almost of finite type,
and the object $f^{\QCoh,!}(\CO_{S_2})\in \QCoh(S_1)$ is a graded line bundle. 
\end{defn}

In what follows, for a Gorenstein morphism morphism $f:S_1\to S_2$, we shall denote 
by $\CK_{S_1/S_2}$ the graded line bundle appearing in the above definition, and refer
to it as the ``relative dualizing line bundle."

\sssec{}

We shall say that $S\in \dgSch_{\on{Noeth}}$ is Gorenstein if the map
$$p_S:S\to \on{pt}:=\Spec(k)$$
is Gorenstein. I.e., if $S$ is almost of finite type, eventually coconnective and the object
$$\omega_S:=(p_S)^!(k)\in \Coh(S),$$
thought of as an object of $\QCoh(S)$, is a graded line bundle.

\sssec{}

It follows from \propref{p:properties QCoh !} that the class of Gorenstein morphisms is stable under base change and that 
the formation of $\CK_{S_1/S_2}$ is compatible with base change, i.e., for
a Cartesian square
$$
\CD
S'_1  @>{g_1}>>  S_1  \\
@V{f'}VV   @VV{f}V   \\
S'_2  @>{g_2}>>  S_2
\endCD
$$ 
we have a canonical isomorphism
\begin{equation} \label{e:relative dualizing}
g_1^*(\CK_{S_1/S_2})\simeq \CK_{S'_1/S'_2}.
\end{equation}

In addition, we claim:

\begin{cor} \label{c:Goren via fibers} 
Let $f:S_1\to S_2$ be an eventually coconnective morphism almost
of finite type. Then $f$ is Gorenstein if only if its geometric fibers
are Gorenstein.
\end{cor}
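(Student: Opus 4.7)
The plan is to reduce everything to the base change formula \eqref{e:relative dualizing} for the relative dualizing sheaf $\CK_{S_1/S_2}:=f^{\QCoh,!}(\CO_{S_2})$. Concretely, for any geometric point $s:\Spec(K)\to S_2$, forming the Cartesian square with $g_1:S'_1\to S_1$ and $f':S'_1\to\Spec(K)$, that formula gives
$$\CK_{S'_1/\Spec(K)}\simeq g_1^*(\CK_{S_1/S_2}).$$
Thus the geometric fiber $S'_1$ is Gorenstein (as a scheme over $K$, equivalently as a DG scheme, by the unit morphism $\Spec(K)\to\Spec(k)$ being Gorenstein) precisely when $g_1^*(\CK_{S_1/S_2})$ is a graded line bundle on $S'_1$, while by definition $f$ itself is Gorenstein precisely when $\CK_{S_1/S_2}$ is a graded line bundle on $S_1$. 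The forward implication is then tautological, since the pullback of a graded line bundle along any morphism is again a graded line bundle.

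For the reverse implication, I observe that any geometric point $t:\Spec(K')\to S_1$ factors through the geometric fiber over its image in $S_2$; so the hypothesis translates into the pointwise assertion that $t^*(\CK_{S_1/S_2})$ is a one-dimensional $K'$-vector space placed in a single cohomological degree, for \emph{every} geometric point $t$ of $S_1$. Since being a graded line bundle is Zariski local on $S_1$, and both the formation of $\CK_{S_1/S_2}$ and the hypothesis are compatible with Zariski localization on $S_2$ by \propref{p:properties QCoh !}(c), I may assume $S_1$ and $S_2$ are affine.

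Next I would verify that $\CK_{S_1/S_2}\in\Coh(S_1)$. The functor $f^{\QCoh,!}$ is of bounded cohomological amplitude by \propref{p:QCoh !}, so $\CK_{S_1/S_2}$ is cohomologically bounded; for coherence one notes that, via the commutative diagram of \propref{p:QCoh !} and \propref{p:equiv on D plus}, $\CK_{S_1/S_2}$ is $\Psi_{S_1}$ applied to $f^!$ of an object of $\IndCoh(S_2)^+$ lifting $\CO_{S_2}$ (approximating by $\tau^{\geq -n}(\CO_{S_2})\in\Coh(S_2)$ if $S_2$ itself is not eventually coconnective), and each such $f^!$ lands in $\Coh(S_1)$ by \lemref{l:! vs * finite}, the eventually-coconnective hypothesis on $f$ being in force.

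Having placed $\CK_{S_1/S_2}$ in $\Coh(S_1)$ with one-dimensional fibers, \lemref{l:pointwise perfectness} immediately forces $\CK_{S_1/S_2}$ to be perfect. A standard Nakayama-type argument then finishes the proof: near any point of $S_1$, lift a nonzero element of the fiber in the single nonzero degree $-n$ to a local section, producing a map $\CO_{S_1}[n]\to\CK_{S_1/S_2}$ whose cone is a perfect complex with vanishing fiber at the chosen point, and hence vanishes on a Zariski neighborhood; so $\CK_{S_1/S_2}$ is locally a shift of the structure sheaf, i.e., a graded line bundle. The main obstacle is the bookkeeping needed to get $\CK_{S_1/S_2}\in\Coh(S_1)$ in the DG setting (especially if $S_2$ is not itself eventually coconnective); once this is done, \lemref{l:pointwise perfectness} does the substantive work and the Nakayama step is routine.
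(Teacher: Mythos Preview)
Your proposal is correct and follows essentially the same approach as the paper. The paper's proof is much terser: it simply says ``As in \lemref{l:finite Tor}, it suffices to show that the $*$-restrictions of $f^{\QCoh,!}(\CO_{S_2})$ to the geometric fibers of $f$ are graded line bundles,'' and then invokes \propref{p:properties QCoh !}(c) for the base change. All the work you spell out---the coherence of $\CK_{S_1/S_2}$, the appeal to \lemref{l:pointwise perfectness}, and the Nakayama step from perfect to graded line bundle---is hidden in the phrase ``as in \lemref{l:finite Tor},'' whose proof contains exactly this reduction-to-classical and pointwise argument. Your approximation by $\tau^{\geq -n}(\CO_{S_2})$ to establish coherence is a valid way to handle the case where $S_2$ is not eventually coconnective; the bounded amplitude of $f^{\QCoh,!}$ ensures that for $n\gg 0$ this truncation computes $\CK_{S_1/S_2}$ exactly.
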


\begin{proof}

Assume that the geometric fibers of $f$ are Gorenstein. We need to show that
$f^{\QCoh,!}(\CO_{S_2})$ is a graded line bundle. As in \lemref{l:finite Tor},
it suffices to show that the *-restrictions of $f^{\QCoh,!}(\CO_{S_2})$ to the
geometric fibers of $f$ are graded line bundles. The latter follows from 
\propref{p:properties QCoh !}(c). 

\end{proof}

\begin{cor} \label{c:Goren via class} 
Let $f:S_1\to S_2$ be an eventually coconnective morphism almost
of finite type. If the base change of $f$ by $i:{}^{cl}\!S_2\to S_2$
is Gorenstein, then $f$ is Gorenstein.
\end{cor}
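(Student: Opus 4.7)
The plan is to reduce \corref{c:Goren via class} to \corref{c:Goren via fibers}. By that earlier corollary, the Gorenstein property of an eventually coconnective morphism almost of finite type is detected on geometric fibers, so it suffices to show that the geometric fibers of $f$ coincide with those of its base change $f': {}^{cl}\!S_2 \underset{S_2}\times S_1 \to {}^{cl}\!S_2$.

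First I would observe that any geometric point $s: \Spec(K) \to S_2$, with $K$ a field, factors canonically through $i: {}^{cl}\!S_2 \hookrightarrow S_2$, since $\Spec(K)$ is a classical (in fact $0$-coconnective) scheme. Hence for every such geometric point, the base-changed Cartesian square
$$
\CD
\Spec(K)\underset{S_2}\times S_1  @>>>  {}^{cl}\!S_2\underset{S_2}\times S_1  @>>>  S_1  \\
@VVV    @V{f'}VV   @VV{f}V   \\
\Spec(K)   @>{s}>>  {}^{cl}\!S_2   @>{i}>>  S_2
\endCD
$$
identifies the geometric fiber of $f$ over $s$ with the geometric fiber of $f'$ over the same point (now viewed as a map into ${}^{cl}\!S_2$).

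Next, since by hypothesis $f'$ is Gorenstein, \corref{c:Goren via fibers} applied to $f'$ tells us that every geometric fiber of $f'$ is Gorenstein. By the preceding paragraph, these are exactly the geometric fibers of $f$. Hence all geometric fibers of $f$ are Gorenstein. Since $f$ was assumed eventually coconnective and almost of finite type, the other direction of \corref{c:Goren via fibers} applies and yields that $f$ itself is Gorenstein.

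There is essentially no obstacle here beyond the verification that the base change $f'$ makes sense as a morphism (which uses that ${}^{cl}\!S_2 \underset{S_2}\times S_1$ is Noetherian, automatic since $f$ is of almost finite type) and that the formation of geometric fibers commutes with composition of base changes. I would explicitly note that Gorenstein-ness is a condition on the geometric fibers being Gorenstein DG schemes over the residue field (not on the scheme-theoretic fiber being classical), so no additional coconnectivity hypothesis enters through the base change $i$.
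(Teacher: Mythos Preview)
Your argument is correct and is precisely the intended one: the paper states \corref{c:Goren via class} without proof, immediately after \corref{c:Goren via fibers}, because any geometric point of $S_2$ factors through $^{cl}\!S_2$ and hence the two morphisms have the same geometric fibers.
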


\sssec{}

We are going to prove:

\begin{prop}  \label{p:pullback under Gorenstein}
Let $f:S_1\to S_2$ be Gorenstein. Then the map 
$$\CK_{S_1/S_2}\otimes f^{\IndCoh,*}(\CF)\to f^!(\CF),\quad \CF\in \IndCoh(S_2)$$
of \eqref{e:! and * pullback under eventually coconn} is an isomorphism.
\end{prop}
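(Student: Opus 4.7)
The plan is to reduce the claim to the compact generators of $\IndCoh(S_2)$, show that on those generators both sides land in the eventually coconnective subcategory $\IndCoh(S_1)^+$, and then transfer the question across $\Psi_{S_1}$ to $\QCoh(S_1)$, where it becomes the already-established isomorphism of \corref{c:exp for QCoh !}.

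First, since $\CK_{S_1/S_2}$ is a graded line bundle, the functor $\CK_{S_1/S_2}\otimes -$ is continuous and t-exact (up to a shift); the functors $f^{\IndCoh,*}$ and $f^!$ are continuous as well. Hence, to show that the map in question is an isomorphism, it is enough to check it on $\CF\in \Coh(S_2)\subset \IndCoh(S_2)$. Here the Gorenstein hypothesis becomes essential for the second step: since $f$ is in particular eventually coconnective, \secref{ss:eventually coconnective} and \propref{p:* pullback} give $f^{\IndCoh,*}(\CF)\in \Coh(S_1)$, and tensoring with the perfect complex $\CK_{S_1/S_2}$ preserves $\Coh(S_1)$. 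On the other hand, by \lemref{l:! vs * finite}, the functor $f^!$ also sends $\Coh(S_2)$ to $\Coh(S_1)$. Thus both sides of the map lie in $\IndCoh(S_1)^+$.

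By \propref{p:equiv on D plus}, $\Psi_{S_1}$ is an equivalence on $\IndCoh(S_1)^+$, so it suffices to verify that $\Psi_{S_1}$ applied to our map is an isomorphism. But this is built into the very definition of \eqref{e:! and * pullback under eventually coconn}: the defining commutative diagram of \propref{p:! and * pullback under eventually coconn} identifies $\Psi_{S_1}$ of our map, up to the isomorphisms of \propref{p:* pullback} and of \propref{p:QCoh !}, with the map
$$\CK_{S_1/S_2}\otimes f^*(\Psi_{S_2}(\CF))\to f^{\QCoh,!}(\Psi_{S_2}(\CF))$$
of \corref{c:exp for QCoh !}, evaluated on $\CE:=\Psi_{S_2}(\CF)\in \QCoh(S_2)$. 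Since \corref{c:exp for QCoh !} is an isomorphism (it expresses $\QCoh(S_2)$-linearity of $f^{\QCoh,!}$ on the unit), the proposition follows.

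There is no real obstacle: the Proposition is essentially tautological given the construction of the natural transformation \eqref{e:! and * pullback under eventually coconn}. The entire role of the Gorenstein hypothesis is to guarantee that the left-hand side remains in $\IndCoh(S_1)^+$ after tensoring with $\CK_{S_1/S_2}$, so that \propref{p:equiv on D plus} can be invoked to reduce the question to the $\QCoh$ side, where the desired isomorphism holds automatically from the $\QCoh(S_2)$-linearity of $f^{\QCoh,!}$ recorded in \corref{c:exp for QCoh !}.
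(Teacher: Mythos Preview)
Your proof is correct and follows essentially the same approach as the paper's. The paper's argument is more terse: it simply says that, going back to the proof of \propref{p:! and * pullback under eventually coconn}, it suffices to check that $\CK_{S_1/S_2}\otimes f^{\IndCoh,*}(\CF)\in \IndCoh(S_1)^+$ for $\CF\in\Coh(S_2)$, which follows since $\CK_{S_1/S_2}\otimes-$ shifts degrees by a finite amount; you spell out the same reduction explicitly and trace through the defining diagram, arriving at the same conclusion via \propref{p:equiv on D plus} and \corref{c:exp for QCoh !}.
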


\begin{proof}

Going back to the proof of \propref{p:! and * pullback under eventually coconn},
it suffices to show that for $\CF\in \Coh(S)$, the object
$$\CK_{S_1/S_2}\otimes f^{\IndCoh,*}(\CF)\in \IndCoh(S_1)$$
belongs to $\IndCoh(S_1)^+$. We have: $f^{\IndCoh,*}(\CF)\in \IndCoh(S_1)^+$,
and the required assertion follows from the fact that $\CK_{S_1/S_2}\otimes-$
shifts degrees by a finite amount.

\end{proof}

\ssec{Characterizations of Gorenstein morphisms}

The material of this subsection is included for the sake of completeness and will not be needed elsewhere in the paper.

\sssec{}

We have:

\begin{prop} \label{p:Drinfeld}
Let $f:S_1\to S_2$ be an eventually coconnective morphism almost
of finite type. Suppose that the object $f^{\QCoh,!}(\CO_{S_2})$ belongs to 
$\QCoh(S_1)^{\on{perf}}$. Then $f$ is Gorenstein.
\end{prop}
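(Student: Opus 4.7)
The plan is a reduction to a duality argument over a geometric fiber. By \corref{c:Goren via fibers}, it suffices to check that the geometric fibers of $f$ are Gorenstein, so the first goal is to reduce to the case $S_2 = \Spec(K)$ for $K$ a field, and the second is to exhibit $f^{\QCoh,!}(\CO_{S_2})$ as an invertible object of the symmetric monoidal category $\QCoh(S_1)$ (graded line bundles being exactly the invertible objects).

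The first step is automatic given that the hypothesis propagates under base change. Indeed, for any $g_2\colon S'_2 \to S_2$ with associated Cartesian square as in \propref{p:properties QCoh !}, the isomorphism \eqref{e:! and * QCoh} gives
$$g_1^*(f^{\QCoh,!}(\CO_{S_2})) \simeq (f')^{\QCoh,!}(\CO_{S'_2});$$
since $g_1^*$ is symmetric monoidal and sends $\CO_{S_1}$ to $\CO_{S'_1}$, it preserves perfectness. Hence $(f')^{\QCoh,!}(\CO_{S'_2})$ is again perfect, and we may indeed take $S_2 = \Spec(K)$. Working Zariski-locally on $S_1$, I may also assume $S_1$ is affine.

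In this setting, set $\omega := f^{\QCoh,!}(\CO_{S_2})$ and $\omega^\vee := \uHom_{\QCoh(S_1)}(\omega, \CO_{S_1})$. Being perfect, $\omega$ is dualizable in $\QCoh(S_1)$ with dual $\omega^\vee$, and the evaluation map
$$\omega \otimes \omega^\vee \to \uHom_{\QCoh(S_1)}(\omega, \omega)$$
is an isomorphism. The assertion that $\omega$ is invertible --- hence a graded line bundle --- thereby reduces to the identity
$$\uHom_{\QCoh(S_1)}(\omega, \omega) \simeq \CO_{S_1}.$$
This is the biduality property of the dualizing complex $\omega$: the contravariant functor $\BD_{S_1} := \uHom(-, \omega)$ is an involutive autoequivalence of $\Coh(S_1)$, so $\BD_{S_1}(\BD_{S_1}(\CO_{S_1})) = \BD_{S_1}(\omega) \simeq \CO_{S_1}$.

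The main obstacle is that this biduality is the substance of Grothendieck--Serre duality, which is only developed systematically in \secref{s:Serre}, after the present section. One must therefore verify non-circularity: the instance of Serre duality we invoke here, for DG schemes $S_1$ almost of finite type over a field $K$ applied to the single object $\CO_{S_1}$, can be established using only Nagata compactification together with the proper-pushforward/!-pullback adjunction of \corref{c:upper ! DG funct proper}, and in particular does not require knowing in advance that $f$ itself is Gorenstein.
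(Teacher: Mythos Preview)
Your argument is correct and follows essentially the same route as the paper: reduce to geometric fibers via base-change compatibility of $f^{\QCoh,!}$, then prove invertibility of the dualizing object by showing $\uHom(\omega,\omega)\simeq\CO$. The paper likewise defers this last step to \secref{s:Serre} (specifically \secref{sss:proof of Drinf}), so your forward reference is exactly parallel; the only cosmetic difference is that the paper derives the endomorphism identity from the categorical self-duality $\bD_S^{\on{Serre}}$ via $\Xi_S^\vee\circ\Psi_S^\vee\simeq\on{Id}$, whereas you phrase it as the classical biduality $\BD_S\circ\BD_S\simeq\on{Id}$ applied to $\CO_{S_1}$---these are identified by \propref{p:comparing Serre dualities}.
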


As a particular case, we have the following characterization of Gorenstein
DG schemes, suggested by Drinfeld:

\begin{cor} \label{c:Drinfeld}
Let $S\in \dgSch_{\on{aft}}$ be eventually coconnective. Suppose that the object
$$\omega_S\in \Coh(S)$$ belongs to $\QCoh(S)^{\on{perf}}\subset \Coh(S)$. 
Then $S$ is Gorenstein.
\end{cor}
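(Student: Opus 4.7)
The plan is to derive the corollary as a direct specialization of \propref{p:Drinfeld}, applied to the structure morphism $p_S\colon S\to\on{pt}=\Spec(k)$. Two of the three hypotheses of \propref{p:Drinfeld} for $p_S$ are immediate from the hypotheses on $S$: namely, $p_S$ is almost of finite type (as $S\in\dgSch_{\on{aft}}$) and $p_S$ is eventually coconnective (as $S$ itself is). What remains is to convert the assumption that $\omega_S\in\QCoh(S)^{\on{perf}}$ into the statement that the \emph{$\QCoh$-variant} of the !-pullback, namely $(p_S)^{\QCoh,!}(\CO_{\on{pt}})$, is perfect.

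For that, I would invoke \propref{p:QCoh !}: since $p_S$ is eventually coconnective and almost of finite type, there is a commutative diagram relating the $\IndCoh$ and $\QCoh$ versions of $!$-pullback, i.e., $\Psi_S\circ (p_S)^!\simeq (p_S)^{\QCoh,!}\circ \Psi_{\on{pt}}$. Evaluating on $k\in\IndCoh(\on{pt})$ and using that $\Psi_{\on{pt}}$ is an equivalence (\lemref{l:smoothness}, since $\on{pt}$ is a regular classical scheme), one obtains a canonical identification $\Psi_S(\omega_S)\simeq (p_S)^{\QCoh,!}(\CO_{\on{pt}})$. The hypothesis that $\omega_S\in\QCoh(S)^{\on{perf}}$ is precisely the statement that its image in $\QCoh(S)$ under $\Psi_S$ is perfect; hence the right-hand side is perfect as well, verifying the third hypothesis of \propref{p:Drinfeld}.

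Applying \propref{p:Drinfeld} to $p_S$ then yields that $p_S$ is Gorenstein, which by the definition recalled just above \corref{c:Goren via fibers} is exactly the statement that $S$ is Gorenstein.

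There is essentially no serious obstacle here: the content of the argument is bookkeeping between the $\IndCoh$-theoretic object $\omega_S=(p_S)^!(k)$ and its $\QCoh$-theoretic incarnation $(p_S)^{\QCoh,!}(\CO_{\on{pt}})$, together with the trivial observation that $\Psi_{\on{pt}}$ is the identity. The real work has already been done in \propref{p:Drinfeld}.
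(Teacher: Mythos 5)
Your argument is circular relative to the paper's logical structure. The proof environment that appears immediately after the statement of \corref{c:Drinfeld} is in fact the proof of \propref{p:Drinfeld}, and it runs in the opposite direction: it reduces the proposition to the corollary (by checking that $f^{\QCoh,!}(\CO_{S_2})$ is a line bundle on geometric fibers, via \propref{p:properties QCoh !}(c)), and then defers the corollary itself to an independent argument given in \secref{sss:proof of Drinf}. So you cannot invoke \propref{p:Drinfeld} to prove \corref{c:Drinfeld}: within this paper the corollary is the base case on which the proposition rests, not a formal specialization of an already-established result. Your bookkeeping identifying $\Psi_S(\omega_S)$ with $(p_S)^{\QCoh,!}(k)$ is correct, but it is not where the content lies.

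The missing mathematical step is precisely the implication ``$\Psi_S(\omega_S)$ perfect $\Rightarrow$ $\Psi_S(\omega_S)$ a graded line bundle,'' which is what the Gorenstein condition for $p_S$ requires. The paper proves this in \secref{sss:proof of Drinf} by exhibiting an explicit inverse: for perfect $\CE$ the natural map $\CE\otimes \underline\Hom_{\QCoh(S)}(\CF_1,\CF_2)\to \underline\Hom_{\QCoh(S)}(\CF_1,\CE\otimes\CF_2)$ is an isomorphism, and applying this with $\CE=\Psi_S(\omega_S)$, $\CF_1=\omega_S$, $\CF_2=\Xi_S(\CO_S)$, together with $\Psi_S(\omega_S)\otimes\Xi_S(\CO_S)\simeq \Xi_S(\Psi_S(\omega_S))\simeq \omega_S$ (using \corref{c:upgrading Xi} and perfectness), one obtains
$$\Psi_S(\omega_S)\otimes \underline\Hom_{\QCoh(S)}(\omega_S,\Xi_S(\CO_S))\simeq
\underline\Hom_{\QCoh(S)}(\omega_S,\omega_S)\simeq \Xi^\vee_S\circ \Psi^\vee_S(\CO_S)\simeq \CO_S,$$
so that $\Psi_S(\omega_S)$ is invertible. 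Some argument of this kind, relying on the Serre self-duality of $\IndCoh(S)$ (or an equivalent local computation), has to be supplied; without it your proposal proves nothing beyond restating that the corollary is the case $S_2=\on{pt}$ of the proposition.
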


\begin{proof}

It is easy to see that an object of $\QCoh(S_1)^{\on{perf}}$ is a line bundle if and only
if such is the retsriction to all of its geometric fibers over $S_2$. Using \propref{p:properties QCoh !}(c), 
this reduces the statement of the proposition to that of \corref{c:Drinfeld}. The latter will
be proved in \secref{sss:proof of Drinf}.

\end{proof}

\sssec{}

We are now going to prove a converse to \propref{p:pullback under Gorenstein}.

\begin{prop} \label{p:pullback under Gorenstein converse}
Let $f:S_1\to S_2$ be an eventually coconnective morphism almost
of finite type, such that the natural transformation \eqref{e:! and * pullback under eventually coconn} is an isomorphism.
Then $f$ is Gorenstein.
\end{prop}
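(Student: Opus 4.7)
The plan is to show that $f^{\QCoh,!}(\CO_{S_2}) \in \QCoh(S_1)^{\on{perf}}$, whence \propref{p:Drinfeld} will yield Gorensteinness of $f$. The argument proceeds cleanly when $S_2$ is eventually coconnective, in which case $f$ being eventually coconnective forces $S_1$ to be eventually coconnective as well (apply $f^*$ to $\CO_{S_2}\in\Coh(S_2)$), so both $\Xi_{S_1}$ and $\Xi_{S_2}$ are available. I would then evaluate the given natural isomorphism \eqref{e:! and * pullback under eventually coconn} at $\CF := \Xi_{S_2}(\CO_{S_2})\in \Coh(S_2)\subset \IndCoh(S_2)$. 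By \lemref{l:Xi and pullback} we have $f^{\IndCoh,*}(\Xi_{S_2}(\CO_{S_2}))\simeq \Xi_{S_1}(\CO_{S_1})$, and combining with the $\QCoh(S_1)$-linearity \eqref{e:upgrading Xi} the left-hand side rewrites as $\Xi_{S_1}(f^{\QCoh,!}(\CO_{S_2}))$. By \lemref{l:! vs * finite} the right-hand side $f^!(\Xi_{S_2}(\CO_{S_2}))$ belongs to $\Coh(S_1)$. Therefore $\Xi_{S_1}(f^{\QCoh,!}(\CO_{S_2}))\in \Coh(S_1)$, and \lemref{l:Xi in Coh} forces $f^{\QCoh,!}(\CO_{S_2})\in \QCoh(S_1)^{\on{perf}}$, completing the proof in this case.

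To handle general $S_2$ I would invoke \corref{c:Goren via class}, reducing to the base change $f'\colon S'_1\to S'_2 := {}^{cl}\!S_2$, where $S'_2$ is classical (hence eventually coconnective), $f'$ is again eventually coconnective and aft, and the previous paragraph applies. This requires transferring the iso hypothesis from $f$ to $f'$. For this I would evaluate \eqref{e:! and * pullback under eventually coconn} on objects of the form $\CF := (i_2)^{\IndCoh}_*(\CF')$ for $\CF'\in \IndCoh(S'_2)$, with $i_2\colon S'_2\hookrightarrow S_2$ the closed embedding. Since $f$ is of bounded Tor dimension and aft (\lemref{l:finite Tor}), \lemref{l:usual base change} commutes $f^{\IndCoh,*}$ past $(i_2)^{\IndCoh}_*$, the basic base change \eqref{e:basic base change} commutes $f^!$ past $(i_2)^{\IndCoh}_*$, the projection formula \propref{p:proj form} commutes the action of $f^{\QCoh,!}(\CO_{S_2})$ past $(i_1)^{\IndCoh}_*$, and \propref{p:properties QCoh !}(c) identifies $i_1^*(f^{\QCoh,!}(\CO_{S_2}))\simeq (f')^{\QCoh,!}(\CO_{S'_2})$. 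The resulting map identifies, by the uniqueness clause of \propref{p:! and * pullback under eventually coconn}, with $(i_1)^{\IndCoh}_*$ of the analogous natural transformation for $f'$ at $\CF'$. Conservativity of $(i_1)^{\IndCoh}_*$ on $\IndCoh(S'_1)^+$---obtained from \propref{p:equiv on D plus} combined with conservativity of the classical $(i_1)_*$ on $\QCoh(S'_1)^+$ for the derived closed embedding $i_1$---then propagates the isomorphism to $f'$, and \corref{c:Goren via class} yields Gorensteinness of $f$.

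The main obstacle will be justifying the reflection step: one needs both source and target of the relevant arrow in $\IndCoh(S'_1)$ to lie in $\IndCoh(S'_1)^+$ in order for conservativity of $(i_1)^{\IndCoh}_*$ to apply to the cone. While the target $(f')^!(\Xi_{S'_2}(\CO_{S'_2}))\in \Coh(S'_1)$ is manifestly eventually coconnective, verifying that the source $\Xi_{S'_1}((f')^{\QCoh,!}(\CO_{S'_2}))$ also lies in $\IndCoh(S'_1)^+$ is subtle, since $\Xi_{S'_1}$ is not left t-exact in general (it may introduce infinitely connective ``nil" contributions for non-perfect coherent inputs). This amplitude control should follow from the fact that $(f')^!$ has bounded cohomological amplitude (\lemref{l:! left exact} together with the bounded amplitude of $(f')^{\IndCoh,*}$ for eventually coconnective aft $f'$), allowing one to write $(f')^{\QCoh,!}(\CO_{S'_2})$ as a $\QCoh(S'_1)^b$ object approximated by perfects of bounded amplitude, but the bookkeeping requires care in execution.
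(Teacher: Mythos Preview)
Your overall strategy matches the paper's exactly: reduce to the case where $S_2$ is eventually coconnective (the paper reduces to $S_2$ classical), then evaluate the natural transformation at $\Xi_{S_2}(\CO_{S_2})$, rewrite the source as $\Xi_{S_1}(f^{\QCoh,!}(\CO_{S_2}))$, observe it lands in $\Coh(S_1)$, and conclude via \lemref{l:Xi in Coh} and \propref{p:Drinfeld}. That part of your argument is correct and identical to the paper's.

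The difference is in how the reduction is justified. The paper dispatches it in one line: ``By \propref{p:! pullback and event coconn}, the assumption of the proposition is stable under base change.'' The intended argument runs through $!$-pullback rather than pushforward: applying $g_1^!$ to $\alpha_f$ and using $\QCoh$-linearity of $g_1^!$ together with \propref{p:! pullback and event coconn} and \propref{p:properties QCoh !}(c), one obtains a canonical identification $g_1^!\circ\alpha_f \simeq \alpha_{f'}\circ g_2^!$, so $\alpha_{f'}$ is an isomorphism on the essential image of $g_2^!$. Your route instead pushes forward along $(i_1)^{\IndCoh}_*$ and tries to reflect the isomorphism back---and the obstacle you identify is genuine for that route: $(i_1)^{\IndCoh}_*$ is \emph{not} conservative on all of $\IndCoh(S'_1)$ (only on $\IndCoh(S'_1)^+$), and $\Xi_{S'_1}$ applied to a non-perfect bounded object typically does \emph{not} land in $\IndCoh^+$. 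Your proposed fix via ``amplitude bookkeeping'' will not work: for instance over $k[t]/t^2$ one has $\Xi(k)\notin\IndCoh^+$, since otherwise \propref{p:equiv on D plus} would force $\Xi(k)$ to coincide with the coherent object $k$, contradicting non-perfectness.

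So the obstacle you flag is real for your chosen mechanism, but it disappears if you follow the paper and argue via $!$-pullback instead of via pushforward-and-reflect. (If one wants to be fully explicit about why the image of $g_2^!$ suffices, the cleanest route is to base change to geometric points $s:\Spec(K)\to S_2$ rather than to ${}^{cl}S_2$: then $\IndCoh(\Spec K)=\Vect_K$ is generated by $K\simeq s^!(\omega_{S_2})$, so the image of $s^!$ manifestly generates, and one concludes via \corref{c:Goren via fibers}.)
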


\begin{proof}

By \propref{p:! pullback and event coconn}, the assumption of the proposition is stable under base change.
Using \corref{c:Goren via class}, we reduce the assertion to the case when $S_2$ is classical, in particular,
eventually coconnective as a DG scheme. 

\medskip

By assumption, 
$$f^{\QCoh,!}(\CO_{S_2})\otimes f^{\IndCoh,*}(\Xi_{S_2}(\CO_{S_2}))\simeq f^!(\Xi_{S_2}(\CO_{S_2}))\in \Coh(S_1).$$

However,
$$f^{\QCoh,!}(\CO_{S_2})\otimes f^{\IndCoh,*}(\Xi_{S_2}(\CO_{S_2}))\simeq
f^{\QCoh,!}(\CO_{S_2})\otimes \Xi_{S_1}(\CO_{S_1})\simeq \Xi_{S_1}(f^{\QCoh,!}(\CO_{S_2})).$$

From \lemref{l:Xi in Coh}, we obtain that $f^{\QCoh,!}(\CO_{S_2})\in \QCoh(S_1)^{\on{perf}}$.
Hence, the assertion follows from \propref{p:Drinfeld}.

\end{proof}

\sssec{}

We end this subsection with the following observation. Let $f:S_1\to S_2$ be an eventually coconnective morphism almost
of finite type, where $S_2$ (and hence $S_1$) is itself eventually coconnective. Then from the isomorphism
$$f^{\QCoh,!}\circ \Psi_{S_2}\simeq \Psi_{S_1}\circ f^!$$
we obtain a natural transformation:

\begin{equation}  \label{e:Xi and QCoh !}
\Xi_{S_1}\circ f^{\QCoh,!}\to f^!\circ \Xi_{S_2}.
\end{equation}

We claim:

\begin{prop}
The map $f$ is Gorenstein if and only if the natural transformation \eqref{e:Xi and QCoh !}
is an isomorphism.
\end{prop}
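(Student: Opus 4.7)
The strategy is to reduce the statement to the single value
$\phi_{\CO_{S_2}}\colon \Xi_{S_1}(f^{\QCoh,!}(\CO_{S_2}))\to f^!(\Xi_{S_2}(\CO_{S_2}))$
of the natural transformation $\phi:=\text{\eqref{e:Xi and QCoh !}}$, and then exploit the
dichotomy $\QCoh^{\on{perf}}\subset \Coh$ together with \lemref{l:Xi in Coh} and \propref{p:Drinfeld}.

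First, both functors $\Xi_{S_1}\circ f^{\QCoh,!}$ and $f^!\circ \Xi_{S_2}$ from $\QCoh(S_2)$ to $\IndCoh(S_1)$ are continuous and carry natural $\QCoh(S_2)$-linear structures (via $f^*$), by combining \propref{p:QCoh !}, \corref{c:upgrading Xi} and \thmref{t:action of QCoh}; moreover, $\phi$ itself is $\QCoh(S_2)$-linear, since it is the $(\Xi,\Psi)$-adjunct of the $\QCoh(S_2)$-linear isomorphism $f^{\QCoh,!}\circ \Psi_{S_2}\simeq \Psi_{S_1}\circ f^!$. It follows that for arbitrary $\CE\in \QCoh(S_2)$ one has $\phi_\CE\simeq f^*(\CE)\otimes \phi_{\CO_{S_2}}$, so $\phi$ is an isomorphism iff $\phi_{\CO_{S_2}}$ is. Unwinding the construction of $\phi$ via the adjunction, $\Psi_{S_1}(\phi_\CE)$ is, modulo the unit isomorphism $\on{Id}\simeq \Psi_{S_1}\circ \Xi_{S_1}$ (invertible by \propref{p:Xi} since $S_1$ is eventually coconnective), the canonical isomorphism $f^{\QCoh,!}(\CE)\simeq \Psi_{S_1}(f^!(\Xi_{S_2}(\CE)))$; in particular $\Psi_{S_1}(\phi_{\CO_{S_2}})$ is always an isomorphism.

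For the forward direction, suppose $f$ is Gorenstein. Then $f^{\QCoh,!}(\CO_{S_2})\simeq \CK_{S_1/S_2}\in \QCoh(S_1)^{\on{perf}}$, and hence $\Xi_{S_1}(f^{\QCoh,!}(\CO_{S_2}))\in \Coh(S_1)\subset \IndCoh(S_1)^+$, using that $\Xi_{S_1}$ sends $\QCoh(S_1)^{\on{perf}}$ into $\Coh(S_1)$ by the very construction of $\Xi$ in the eventually coconnective case (\propref{p:Xi}). On the other hand $\Xi_{S_2}(\CO_{S_2})\in \Coh(S_2)$, and \lemref{l:! vs * finite} (applicable since $f$ is eventually coconnective and almost of finite type) gives $f^!(\Xi_{S_2}(\CO_{S_2}))\in \Coh(S_1)\subset \IndCoh(S_1)^+$. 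Since $\Psi_{S_1}$ is an equivalence on $\IndCoh(S_1)^+$ by \propref{p:equiv on D plus}, and $\Psi_{S_1}(\phi_{\CO_{S_2}})$ is an isomorphism, we conclude that $\phi_{\CO_{S_2}}$ itself is an isomorphism, whence so is $\phi$.

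For the converse, assume $\phi$ is an isomorphism. Then $\phi_{\CO_{S_2}}$ yields $\Xi_{S_1}(f^{\QCoh,!}(\CO_{S_2}))\simeq f^!(\Xi_{S_2}(\CO_{S_2}))$; by \lemref{l:! vs * finite} the right-hand side lies in $\Coh(S_1)$, hence so does $\Xi_{S_1}(f^{\QCoh,!}(\CO_{S_2}))$. Applying \lemref{l:Xi in Coh}, we deduce $f^{\QCoh,!}(\CO_{S_2})\in \QCoh(S_1)^{\on{perf}}$, and \propref{p:Drinfeld} then forces $f$ to be Gorenstein. The main point requiring care is the first paragraph: verifying the $\QCoh(S_2)$-linearity of $\phi$ and identifying $\Psi_{S_1}(\phi)$ with the canonical isomorphism. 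This is formal, but involves unwinding the $(\Xi,\Psi)$-adjunction together with the $\QCoh$-linearity of each piece; once this is done, the rest of the argument is essentially a t-structure computation.
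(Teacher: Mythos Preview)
Your proof is correct. The converse direction is essentially identical to the paper's: both apply \lemref{l:Xi in Coh} and then \propref{p:Drinfeld} to the value of the natural transformation at $\CO_{S_2}$.

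In the forward direction, you take a genuinely different route. The paper invokes \propref{p:pullback under Gorenstein} to compute $f^!\circ \Xi_{S_2}(\CE)$ explicitly as $\CK_{S_1/S_2}\otimes f^*(\CE)\otimes \Xi_{S_1}(\CO_{S_1})$, then matches this against the analogous expression for $\Xi_{S_1}\circ f^{\QCoh,!}(\CE)$ and declares the isomorphism ``manifest'' (leaving implicit the check that $\phi$ is the obvious map under these identifications). Your argument bypasses \propref{p:pullback under Gorenstein} entirely: you observe that $\Psi_{S_1}(\phi)$ is always an isomorphism by the very construction of $\phi$ via the $(\Xi,\Psi)$-adjunction, and then use Gorenstein-ness only to place both the source and target of $\phi_{\CO_{S_2}}$ in $\IndCoh(S_1)^+$ (the source via $\CK_{S_1/S_2}\in \QCoh(S_1)^{\on{perf}}$, the target via \lemref{l:! vs * finite}), where $\Psi_{S_1}$ is conservative by \propref{p:equiv on D plus}. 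This is cleaner: it avoids the explicit formula and the implicit identification of $\phi$ with a concrete map. The paper's approach, on the other hand, makes the isomorphism completely explicit, which may be useful if one wants to compute with it.
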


\begin{proof}

For $\CE\in \QCoh(S_2)$ we have 
$$\Xi_{S_1}\circ f^{\QCoh,!}(\CE)\simeq f^*(\CE)\otimes (\Xi_{S_1}\circ f^{\QCoh,!}(\CO_{S_2}))
\simeq f^*(\CE)\otimes f^{\QCoh,!}(\CO_{S_2})\otimes \Xi_{S_1}(\CO_{S_1}).$$

If $f$ is Gorenstein, then by \propref{p:pullback under Gorenstein}, we have:
\begin{multline*}
f^!\circ \Xi_{S_2}(\CE)\simeq \CK_{S_1/S_2}\otimes (f^{\IndCoh,*}\circ \Xi_{S_2}(\CE))\simeq
\CK_{S_1/S_2}\otimes f^*(\CE)\otimes (f^{\IndCoh,*}\circ \Xi_{S_2}(\CO_{S_2}))\simeq \\
\simeq \CK_{S_1/S_2}\otimes f^*(\CE)\otimes (\Xi_{S_1}\circ f^*(\CO_{S_2}))\simeq 
\CK_{S_1/S_2}\otimes f^*(\CE)\otimes \Xi_{S_1}(\CO_{S_1}),
\end{multline*}
and the isomorphism is manifest. 

\medskip

Vice versa, assume that \eqref{e:Xi and QCoh !} holds. We obtain that
$$\Xi_{S_1}\circ f^{\QCoh,!}(\CO_{S_2})\in \Coh(S_1).$$

Applying \lemref{l:Xi in Coh}, we deduce that $f^{\QCoh,!}(\CO_{S_2})\in \QCoh(S_1)^{\on{perf}}$.
Applying \propref{p:Drinfeld}, we deduce that $f$ is Gorenstein.

\end{proof}

\ssec{The functor of !-pullback for smooth maps}

\sssec{}

Note that from \corref{c:Goren via class} we obtain:

\begin{cor}
A smooth map is Gorenstein. 
\end{cor}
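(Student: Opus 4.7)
The plan is to reduce the statement to the classical fact that a smooth scheme over a field is Gorenstein, by chaining together \corref{c:Goren via class} and \corref{c:Goren via fibers}.

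First I would verify that a smooth morphism $f: S_1 \to S_2$ is automatically eventually coconnective and almost of finite type, as required by the definition of Gorenstein. The second property is immediate from the paper's definition of smoothness: the classical truncation ${}^{cl}S_2\underset{S_2}\times S_1 \to {}^{cl}S_2$ is smooth in the classical sense, hence of finite type, which implies $f$ is almost of finite type. For eventual coconnectivity, one observes (as stated in \secref{sss:smooth maps}) that smoothness implies flatness, and a flat map is of bounded Tor dimension, hence eventually coconnective by \lemref{l:finite Tor}.

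Next, I would apply \corref{c:Goren via class} to reduce to the case where $S_2$ is classical. Since smoothness is stable under base change and, by definition, the base change along $i:{}^{cl}S_2\to S_2$ yields a classical scheme mapping smoothly to $^{cl}S_2$, we are reduced to showing that a smooth morphism between classical Noetherian schemes is Gorenstein. Equivalently, using \corref{c:Goren via fibers}, it suffices to treat the geometric fibers, i.e., smooth schemes $X$ of finite type over an algebraically closed field $K$.

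The final step then reduces to the classical assertion: for a smooth scheme $X$ of dimension $n$ over a field $K$, the object $p_X^{\QCoh,!}(K) \in \QCoh(X)$ is (canonically) $\Omega^n_{X/K}[n]$, which is a graded line bundle. This is classical Grothendieck duality for smooth morphisms. The only thing that requires a moment's thought is the compatibility between the functor $p_X^{\QCoh,!}$ constructed in \propref{p:QCoh !} (as the transport of the $\IndCoh$ !-pullback via $\Psi$) and the classical Grothendieck duality functor; but this follows from \propref{p:equiv on D plus}, the uniqueness clause in \propref{p:QCoh !}, and the fact that $\omega_X = p_X^!(K) \in \Coh(X)$ agrees with the classical dualizing complex of $X/K$. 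I do not expect any real obstacle in this argument; the content is entirely contained in the classical theory, and the passage from the classical world to the DG setting has been packaged into the two corollaries invoked.
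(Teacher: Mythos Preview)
Your proof is correct and follows the same route as the paper: the paper simply records the corollary as an immediate consequence of \corref{c:Goren via class}, leaving the classical statement (smooth morphisms of classical Noetherian schemes are Gorenstein) implicit, whereas you spell out the preliminary checks and go one step further via \corref{c:Goren via fibers} to reduce to smooth schemes over a field. This is the same argument with more detail filled in, not a different approach.
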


\sssec{}

As a corollary of \propref{p:pullback under Gorenstein}, we obtain:

\begin{cor}  \label{c:! and * pullback under smooth}
Let $f:S_1\to S_2$ be smooth. There exists a unique isomorphism
of functors
\begin{equation} \label{e:! and * pullback under smooth}
\CK_{S_1/S_2}\otimes f^{\IndCoh,*}(\CF)\simeq f^!(\CF),\quad \CF\in \IndCoh(S_2)
\end{equation}
that makes the diagram
$$
\CD
\Psi_{S_1}(f^!(\CF))   @>{\sim}>>  f^{\QCoh,!}(\Psi_{S_2}(\CF))  \\
@A{\text{\eqref{e:! and * pullback under smooth}}}AA   @A{\sim}A{\text{\corref{c:exp for QCoh !}}}A   \\
\Psi_{S_1}(\CK_{S_1/S_2}\otimes f^{\IndCoh,*}(\CF))  & &  f^*(\Psi_{S_2}(\CF))\otimes \CK_{S_1/S_2}) \\
@A{\sim}AA   @AA{\sim}A  \\ 
\CK_{S_1/S_2}\otimes \Psi_{S_1}(f^{\IndCoh,*}(\CF)) @>{\sim}>{\text{\propref{p:* pullback}}}>  
\CK_{S_1/S_2} \otimes f^*(\Psi_{S_2}(\CF))
\endCD
$$
commute. 
\end{cor}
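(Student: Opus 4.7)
The plan is to deduce this statement immediately from its Gorenstein counterpart. Since every smooth morphism is Gorenstein (as noted just before the corollary, and as follows either from \corref{c:Goren via fibers} applied to the smooth geometric fibers, or from \corref{c:Goren via class} applied to the smooth base change to $^{cl}\!S_2$), the hypothesis of \propref{p:pullback under Gorenstein} is satisfied. Therefore the canonical natural transformation
$$\CK_{S_1/S_2}\otimes f^{\IndCoh,*}(\CF)\to f^!(\CF)$$
constructed in \propref{p:! and * pullback under eventually coconn} is already an isomorphism. This provides the desired isomorphism \eqref{e:! and * pullback under smooth}.

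For the commutativity of the compatibility diagram, I would simply observe that the candidate isomorphism in \eqref{e:! and * pullback under smooth} is by construction the map produced by \propref{p:! and * pullback under eventually coconn}, whose defining property is precisely the commutativity of that square (with $f^{\QCoh,!}(\CO_{S_2})$ identified with $\CK_{S_1/S_2}$ via the definition of ``Gorenstein''). Uniqueness likewise is inherited directly from the uniqueness clause of \propref{p:! and * pullback under eventually coconn}: any natural transformation making the diagram commute must coincide with \eqref{e:! and * pullback under eventually coconn}, and there is only one such.

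There is essentially no obstacle here; the substance of the statement is entirely packaged in \propref{p:pullback under Gorenstein}, and the corollary amounts to recording the special case $f$ smooth together with the observation that $f^{\QCoh,!}(\CO_{S_2})$ is then the relative dualizing line bundle $\CK_{S_1/S_2}$. The only conceptual point worth flagging is that in the smooth setting $\CK_{S_1/S_2}$ agrees up to a shift with the top exterior power of the relative cotangent bundle, but this identification is not needed for the statement of the corollary itself.
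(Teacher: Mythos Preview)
Your proposal is correct and matches the paper's approach exactly: the paper presents this statement with the preamble ``As a corollary of \propref{p:pullback under Gorenstein}, we obtain,'' giving no further proof, and your argument spells out precisely this deduction together with the observation that the compatibility and uniqueness are inherited from \propref{p:! and * pullback under eventually coconn}.
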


\begin{cor}  \label{c:differ by line}
For a smooth map $f$, the functors $f^!$ and $f^{\IndCoh,*}$ differ by an $\QCoh(S_1)$-linear
automorphism of $\IndCoh(S_1)$.
\end{cor}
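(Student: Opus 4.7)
The plan is to exhibit the required automorphism explicitly as the endofunctor $\Phi := \CK_{S_1/S_2}\otimes -$ on $\IndCoh(S_1)$, where the action is that of \secref{ss:action}, and to read off the statement directly from \corref{c:! and * pullback under smooth}. Indeed, that corollary already provides a natural isomorphism of functors
$$f^!(\CF)\simeq \CK_{S_1/S_2}\otimes f^{\IndCoh,*}(\CF),\quad \CF\in \IndCoh(S_2),$$
so once we check that $\Phi$ is an autoequivalence of $\IndCoh(S_1)$ and carries a natural $\QCoh(S_1)$-linear structure, the proof is complete.

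For the $\QCoh(S_1)$-linearity, one simply unwinds the definition of the action in \secref{ss:action}: it arises from the symmetric monoidal structure on $\QCoh(S_1)$, and hence for any $\CE\in \QCoh(S_1)$ and $\CF\in \IndCoh(S_1)$ there is a canonical associativity/commutativity isomorphism
$$\CE\otimes(\CK_{S_1/S_2}\otimes \CF)\simeq (\CE\otimes\CK_{S_1/S_2})\otimes\CF\simeq
(\CK_{S_1/S_2}\otimes \CE)\otimes\CF\simeq \CK_{S_1/S_2}\otimes(\CE\otimes\CF),$$
and these isomorphisms assemble into the structure of a $1$-morphism of $\QCoh(S_1)$-module categories on $\Phi$ in the sense of \corref{c:upgrading Xi}.

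For the fact that $\Phi$ is an equivalence, we use that $\CK_{S_1/S_2}\in \QCoh(S_1)$ is a graded line bundle, and hence invertible in the symmetric monoidal category $(\QCoh(S_1),\otimes)$: there exists $\CK_{S_1/S_2}^{-1}\in\QCoh(S_1)$ together with an isomorphism $\CK_{S_1/S_2}\otimes \CK_{S_1/S_2}^{-1}\simeq \CO_{S_1}$. The endofunctor $\CK_{S_1/S_2}^{-1}\otimes -$ is then inverse to $\Phi$, since $\CO_{S_1}\otimes -$ is the identity on $\IndCoh(S_1)$ by construction of the action. Since no steps beyond those already developed in \secref{ss:action} and \corref{c:! and * pullback under smooth} are required, there is no serious obstacle; the only mild point of vigilance is simply to confirm that the $\QCoh(S_1)$-module structure produced on $\Phi$ agrees, via the isomorphism of \corref{c:! and * pullback under smooth}, with those carried by $f^!$ (from \corref{c:upgrading !}) and $f^{\IndCoh,*}$ (from \propref{p:* pullback}), which follows from the uniqueness clauses in those statements.
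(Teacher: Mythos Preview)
Your argument is correct and is exactly the intended one: the paper states the corollary without proof, as an immediate consequence of \corref{c:! and * pullback under smooth} together with the fact that $\CK_{S_1/S_2}$ is a graded line bundle, and you have simply spelled this out. The final paragraph is unnecessary (the statement only concerns $\QCoh(S_1)$-linearity of the automorphism $\Phi$, not any compatibility with the $\QCoh(S_2)$-linear structures on $f^!$ or $f^{\IndCoh,*}$), but it does no harm.
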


\sssec{}

Combining this with \propref{p:smooth map}, we obtain:

\begin{cor} \label{c:! tensor up smooth}
For a smooth map $f:S_1\to S_2$, the functor
$f^!:\IndCoh(S_2)\to \IndCoh(S_1)$ defines an equivalence
$$\QCoh(S_1)\underset{\QCoh(S_2)}\otimes \IndCoh(S_2)\to \IndCoh(S_1).$$
\end{cor}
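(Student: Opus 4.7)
The strategy is to reduce this corollary to Proposition \ref{p:smooth map} by exploiting the fact that $f^!$ differs from $f^{\IndCoh,*}$ by tensoring with a line bundle. Concretely, I will show that the functor
$$\Phi^!\colon \QCoh(S_1)\underset{\QCoh(S_2)}\otimes \IndCoh(S_2)\to \IndCoh(S_1)$$
induced by $f^!$ factors as the composition of the analogous functor $\Phi^*$ induced by $f^{\IndCoh,*}$ with an autoequivalence of $\IndCoh(S_1)$. Since \propref{p:smooth map} tells us $\Phi^*$ is an equivalence, this will conclude the argument.

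First, I would invoke \corref{c:! and * pullback under smooth}, which provides a canonical natural isomorphism $f^!(\CF)\simeq \CK_{S_1/S_2}\otimes f^{\IndCoh,*}(\CF)$ for $\CF\in \IndCoh(S_2)$, where $\CK_{S_1/S_2}\in \QCoh(S_1)$ is a graded line bundle (the relative dualizing bundle of the smooth, hence Gorenstein, morphism $f$). Next, I need to check that this isomorphism is compatible with the natural $\QCoh(S_2)$-module structures on $f^!$ and on $f^{\IndCoh,*}$, coming from Corollaries \ref{c:upgrading !} and \ref{p:* pullback}, respectively. This compatibility is built into the construction in \corref{c:! and * pullback under smooth}, which identifies both sides as $\QCoh(S_1)$-module maps (and in particular $\QCoh(S_2)$-module maps via $f^*$).

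Consequently, passing to the induced functors out of the tensor product $\QCoh(S_1)\underset{\QCoh(S_2)}\otimes \IndCoh(S_2)$, we obtain a commutative diagram
\begin{equation*}
\Phi^!\simeq T_{\CK_{S_1/S_2}}\circ \Phi^*,
\end{equation*}
where $T_{\CK_{S_1/S_2}}\colon \IndCoh(S_1)\to \IndCoh(S_1)$ is the functor $\CF\mapsto \CK_{S_1/S_2}\otimes \CF$ in the sense of \secref{ss:action}. Because $\CK_{S_1/S_2}$ is a graded line bundle, it is invertible in $\QCoh(S_1)^{\on{perf}}$ with inverse $\CK_{S_1/S_2}^{-1}$; by the associativity of the $\QCoh(S_1)$-action on $\IndCoh(S_1)$, the functor $T_{\CK_{S_1/S_2}^{-1}}$ is inverse to $T_{\CK_{S_1/S_2}}$, so $T_{\CK_{S_1/S_2}}$ is an autoequivalence of $\IndCoh(S_1)$.

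Finally, combining this with \propref{p:smooth map}, which asserts that $\Phi^*$ is an equivalence, we conclude that $\Phi^!=T_{\CK_{S_1/S_2}}\circ \Phi^*$ is an equivalence as well. No step in this outline is substantively difficult; the only point that requires a small amount of care is verifying that the isomorphism of \corref{c:! and * pullback under smooth} is genuinely an isomorphism of $\QCoh(S_2)$-linear functors, so that it descends to a statement about functors out of the relative tensor product. Since both sides have already been equipped with their $\QCoh(S_2)$-module structures making the diagram in that corollary commute as a diagram of $\QCoh(S_2)$-modules, this compatibility is automatic.
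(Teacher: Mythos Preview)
Your proposal is correct and follows essentially the same route as the paper. The paper derives this corollary in one line by combining \propref{p:smooth map} with \corref{c:differ by line} (which packages your line-bundle argument as the statement that $f^!$ and $f^{\IndCoh,*}$ differ by a $\QCoh(S_1)$-linear automorphism of $\IndCoh(S_1)$); your write-up simply unpacks that corollary explicitly. One minor point: your citation of \corref{c:upgrading !} for the $\QCoh(S_2)$-linearity of $f^!$ is slightly off, since that corollary is stated for proper maps; the relevant statement for general (in particular smooth) $f$ is \thmref{t:action of QCoh} and the isomorphism \eqref{e:QCoh linearity for !}.
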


\sssec{}

We are now going to use \corref{c:! tensor up smooth} to deduce:

\begin{prop} \label{p:! tensor up general}
Let $f:S_1\to S_2$ be eventually coconnective and almost of finite type. Then the functor
$$\QCoh(S_1)\underset{\QCoh(S_2)}\otimes \IndCoh(S_2)\to \IndCoh(S_1),$$
induced by $f^!$, is fully faithful.
\end{prop}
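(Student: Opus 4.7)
The approach is to reduce the assertion to two tractable cases---smooth morphisms and eventually coconnective closed embeddings---via a Zariski-local factorization, and handle each case separately.

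First, I would check that the statement is Zariski-local in both $S_1$ and $S_2$. For an open embedding $\oS_2\hookrightarrow S_2$ with preimage $\oS_1 = \oS_2\underset{S_2}\times S_1 \hookrightarrow S_1$, Corollary~\ref{c:open tensoring up} identifies $\QCoh(\oS_1)\underset{\QCoh(\oS_2)}\otimes \IndCoh(\oS_2)$ with the appropriate base change of $\QCoh(S_1)\underset{\QCoh(S_2)}\otimes \IndCoh(S_2)$, and compatibility of $f^!$ with open restriction (Corollary~\ref{c:! pullback and open embed}) together with Zariski descent for $\IndCoh$ (Proposition~\ref{p:Zariski descent}) promote the iso-check on mapping spaces to the local situation. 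A parallel argument handles localization on $S_1$. Hence we may assume that $S_1$ and $S_2$ are affine.

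Second, in the affine case, factor $f$ as
\[
S_1\overset{i}\hookrightarrow S_{3/2}:=S_2\times \BA^n \overset{g}\longrightarrow S_2,
\]
where $g$ is the smooth projection and $i$ is a closed embedding; since $f$ and $g$ are both eventually coconnective, so is $i$. The functoriality $f^!\simeq i^!\circ g^!$ (Corollary~\ref{c:existence of !}) together with the equivalence
\[
\QCoh(S_{3/2})\underset{\QCoh(S_2)}\otimes \IndCoh(S_2)\simeq \IndCoh(S_{3/2})
\]
of Corollary~\ref{c:! tensor up smooth} (induced by $g^!$) reduce the problem to showing that for any eventually coconnective closed embedding $i:S_1\hookrightarrow S_{3/2}$, the functor
\[
F_i:\QCoh(S_1)\underset{\QCoh(S_{3/2})}\otimes \IndCoh(S_{3/2})\to \IndCoh(S_1)
\]
induced by $i^!$ is fully faithful.

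Third, for the closed-embedding case, the strategy is to compare $F_i$ with the corresponding functor $\wt F_i$ induced by $i^{\IndCoh,*}$. Since $i$ is of bounded Tor dimension by Lemma~\ref{l:finite Tor}, Proposition~\ref{p:tensoring up IndCoh} applies and yields fully faithfulness of $\wt F_i$. Both $F_i$ and $\wt F_i$ send compact generators $\CE\otimes\CF$, with $\CE\in\QCoh(S_1)^{\on{perf}}$ and $\CF\in\Coh(S_{3/2})$, to objects of $\Coh(S_1)\subset \IndCoh(S_1)^+$ by Lemma~\ref{l:! vs * finite}; hence Proposition~\ref{p:equiv on D plus} reduces the relevant mapping-space iso-check to a statement in $\QCoh(S_1)^+$. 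There, the computation for $\wt F_i$ identifies $\Maps_\bC(\CE_1\otimes\CF_1,\CE_2\otimes\CF_2)$ with $\Maps_{\QCoh(S_{3/2})}(\CF_1,i_*(\CE)\otimes\CF_2)$ via the $(i^*,i_*)$-adjunction and the projection formula, while the analogous computation for $F_i$ proceeds via the $(i_*,i^{\QCoh,!})$-adjunction of Proposition~\ref{p:properties QCoh !}(a), the base-change isomorphism of Proposition~\ref{p:properties QCoh !}(b), and the $\QCoh(S_{3/2})$-linearity of $i^{\QCoh,!}$ recorded in Corollary~\ref{c:exp for QCoh !}.

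The main obstacle is the equality of these two mapping-space computations: the natural transformation
\[
\alpha_\CF:i^{\QCoh,!}(\CO_{S_{3/2}})\otimes i^{\IndCoh,*}(\CF)\to i^!(\CF)
\]
of Proposition~\ref{p:! and * pullback under eventually coconn} is an isomorphism precisely when $i$ is Gorenstein (Propositions~\ref{p:pullback under Gorenstein}, \ref{p:pullback under Gorenstein converse}), so one cannot simply deduce fully faithfulness of $F_i$ from that of $\wt F_i$ by tensoring with a line bundle. What rescues the argument is that the potentially nontrivial cone of $\alpha$ is annihilated by $\Psi_{S_1}$; thus at the level of maps into eventually coconnective targets---which is what the fully-faithfulness check in $\IndCoh(S_1)^+$ amounts to by Proposition~\ref{p:equiv on D plus}---the distinction between $i^!$ and $i^{\QCoh,!}(\CO_{S_{3/2}})\otimes i^{\IndCoh,*}$ disappears, reducing the desired equality to the corresponding projection-formula identity in $\QCoh$ underlying the proof of Proposition~\ref{p:tensoring up IndCoh}.
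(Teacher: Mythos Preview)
Your reduction matches the paper exactly: Zariski-localize via \corref{c:open tensoring up}, factor $f$ as an eventually coconnective closed embedding followed by a smooth projection, and dispose of the smooth case by \corref{c:! tensor up smooth}. The divergence is in how you handle the closed embedding $i$.

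The paper does \emph{not} compare $F_i$ with $\wt F_i$. It simply reruns the proof of \propref{p:tensoring up IndCoh} with the adjunction $(i^{\IndCoh}_*,i^!)$ (available because $i$ is proper) in place of $(i^{\IndCoh,*},i^{\IndCoh}_*)$. Concretely: the compact generators $\CE_1\otimes\CF_2$ go to $\CE_1\otimes i^!(\CF_2)\in\Coh(S_1)$ by \lemref{l:! vs * finite}; moving $\CE_1$ across and applying $(i^{\IndCoh}_*,i^!)$ in the second slot reduces the mapping-space comparison to a $!$-projection formula of the shape $i^{\IndCoh}_*(\CE_1^\vee\otimes i^!(\CF))\simeq i_*(\CE_1)^\vee\otimes\CF$ (equivalently $i_*\underline\Hom(\CE_1,i^{\QCoh,!}\CF)\simeq\underline\Hom(i_*\CE_1,\CF)$), which is the $!$-analog of \propref{p:proj form} and uses that $i_*(\CE_1)$ is perfect.

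Your fourth paragraph goes astray. Once you pass through $\Psi_{S_1}$ you have $F_i(X)\simeq i^{\QCoh,!}(\CO_{S_{3/2}})\otimes \wt F_i(X)$ in $\IndCoh(S_1)^+$, so you are asking whether
\[
\Maps(\wt F_i(X),\wt F_i(Y))\;\simeq\;\Maps\bigl(\CL\otimes\wt F_i(X),\,\CL\otimes\wt F_i(Y)\bigr),\qquad \CL:=i^{\QCoh,!}(\CO_{S_{3/2}}).
\]
This is \emph{not} a consequence of $\on{Cone}(\alpha_\CF)$ being $\Psi$-nil; tensoring by $\CL$ is fully faithful only when $\CL$ is invertible, i.e., when $i$ is Gorenstein---precisely the case you acknowledged you cannot assume. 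The ``projection-formula identity underlying \propref{p:tensoring up IndCoh}'' is the $*$-formula $i_*(\CE\otimes i^*(-))\simeq i_*(\CE)\otimes(-)$, which does not help here; what you actually need is the $!$-version above, and once you use it the whole detour through $\wt F_i$ and $\alpha$ becomes unnecessary. In short: your third paragraph already contains the right adjunction $(i_*,i^{\QCoh,!})$; carry that computation to its conclusion via the $!$-projection formula, and drop the comparison with $\wt F_i$ entirely.
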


\begin{proof}[Proof of \propref{p:! tensor up general}]
By \corref{c:open tensoring up}, the assertion of the proposition is Zariski-local with respect
to both $S_1$ and $S_2$. 

\medskip

As in the proof of \lemref{l:finite Tor}, we can assume that $f$
can be factored as a composition $f'\circ f''$, where $f'$ is smooth, and $f''$
is an eventually coconnective closed embedding. By transitivity, it suffices to prove
the assertion for each of these two cases separately. 

\medskip

For smooth maps, the required assertion is given by \corref{c:! tensor up smooth}. 
For an eventually coconnective closed embedding, the argument repeats that given in
\propref{p:tensoring up IndCoh}, where we use the 
$(f^\IndCoh_*,f^!)$-adjunction instead of the
$(f^{\IndCoh,*},f^\IndCoh_*)$-adjunction.

\end{proof}

\ssec{A higher-categorical compatibility of $!$ and $*$-pullbacks}

The material of this subsection will be needed for developing $\IndCoh$
on Artin stacks, and may be skipped on the first pass.  

\medskip

We will introduce
a setting in which we can view $\IndCoh$ as a functor out of the category
of Noetherian DG schemes, where there are two kinds of pullback functors:
one with $!$ that can be applied to morphisms locally almost of finite type, and another
with $*$ that can be applied to morphisms of bounded Tor dimension. 

\medskip

Specifically,
we will explain an $\infty$-categorical framework that encodes the compatibiltiy
of the two pullbacks, which at the level of 1-morphisms is given by 
\propref{p:! pullback and event coconn}.

\sssec{}  \label{sss:Segal review}

For an $\infty$-category $\bC$ we let 
$$\on{Seg}^\bullet(\bC):=\on{Funct}([\bullet],\bC)\in \inftygroup^{\bDelta^{\on{op}}}$$
denote the Segal construction applied to $\bC$. 

\medskip

I.e., $\on{Seg}^\bullet(\bC)$ is a simplicial $\infty$-groupoid, whose $n$-simplices is the 
$\infty$-groupoid of functors 
$$[n]\to \bC,$$
where $[n]$ is the category corresponding to the ordered set $$0\to 1\to...\to n.$$

\medskip

The assignment $$\bC\rightsquigarrow \on{Seg}^\bullet(\bC)$$
is a functor 
$$\on{Seg}^\bullet(-):\inftyCat\to \inftygroup^{\bDelta^{\on{op}}}.$$

\medskip

The following is well-known:

\begin{thm} \label{t:Segal}
The functor $\on{Seg}^\bullet(-)$ is fully faithful.
\end{thm}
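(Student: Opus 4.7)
The plan is to view $\on{Seg}^\bullet(-)$ as the ``restricted Yoneda'' embedding along the canonical cosimplicial object $[\bullet]:\bDelta \to \inftyCat$, and then deduce full faithfulness from a density statement for the simplices in $\inftyCat$. First I would identify, for any $\bC \in \inftyCat$, the $n$-simplices of $\on{Seg}^\bullet(\bC)$ with the mapping $\infty$-groupoid $\on{Maps}_{\inftyCat}([n],\bC)$, since the underlying groupoid of $\on{Funct}([n],\bC)$ is precisely this mapping space. Thus $\on{Seg}^\bullet(\bC)$ is the presheaf on $\bDelta$ represented by $\bC$ through the cosimplicial diagram $[\bullet]$.

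The key input I would then invoke is \emph{density} of $[\bullet]:\bDelta \to \inftyCat$: for every $\bC \in \inftyCat$, the canonical map
\begin{equation*}
\underset{([n]\to \bC)\in (\bDelta_{/\bC})}{\on{colim}}\, [n]\to \bC
\end{equation*}
is an equivalence in $\inftyCat$. This is a standard fact (it underlies both Lurie's model via quasi-categories and Rezk's model via complete Segal spaces), but is the substantive point; see e.g.\ \cite{Lu0}, Sect.~1.2, for the version in the quasi-category model, where it reduces to the fact that every simplicial set is the colimit of its simplices and the nerve preserves such colimits up to homotopy.

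Given density, for any $\bD \in \inftyCat$ I would compute
\begin{equation*}
\on{Maps}_{\inftyCat}(\bC,\bD) \simeq \on{Maps}_{\inftyCat}\!\left(\underset{[n]\to \bC}{\on{colim}}\,[n],\,\bD\right)
\simeq \underset{[n]\to \bC}{\on{lim}}\, \on{Maps}_{\inftyCat}([n],\bD) \simeq \underset{[n]\to \bC}{\on{lim}}\, \on{Seg}^n(\bD).
\end{equation*}
On the other hand, since $\on{Seg}^\bullet(\bC)$ is itself the colimit of the representable presheaves $h_{[n]}$ indexed by $(\bDelta_{/\bC})^{\on{op}}$ (because colimits in $\inftygroup^{\bDelta^{\on{op}}}$ are computed pointwise and the Yoneda-type identification for presheaves of spaces applies), the ordinary Yoneda lemma in $\inftygroup^{\bDelta^{\on{op}}}$ identifies the last expression with $\on{Maps}_{\inftygroup^{\bDelta^{\on{op}}}}(\on{Seg}^\bullet(\bC),\on{Seg}^\bullet(\bD))$. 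Tracing through the identifications shows that the resulting equivalence is precisely the map induced by the functor $\on{Seg}^\bullet(-)$.

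The main obstacle is the density step: all of the categorical manipulation in the last paragraph is purely formal once density is known, but the density of $\bDelta$ inside $\inftyCat$ is a genuinely non-trivial homotopical input and depends on one's choice of model. In the quasi-category model one proves it by exhibiting any quasi-category as a filtered colimit of its finite sub-simplicial-sets together with homotopy-coherent gluing; alternatively, one can bypass it by invoking Rezk's theorem directly, which states that $\on{Seg}^\bullet$ induces an equivalence between $\inftyCat$ and the subcategory of complete Segal spaces in $\inftygroup^{\bDelta^{\on{op}}}$, from which full faithfulness is immediate.
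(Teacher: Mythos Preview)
The paper does not supply a proof of this theorem: it is stated as ``well-known'' and used as a black box (the sentence immediately following it identifies the essential image as complete Segal spaces). So there is no argument in the paper to compare yours against.

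Your outline is the standard one: identify $\on{Seg}^\bullet$ as the restricted Yoneda along $[\bullet]:\bDelta\to\inftyCat$, and then invoke density of $\bDelta$ in $\inftyCat$. The formal manipulation you describe (density implies full faithfulness of the restricted nerve) is correct. However, be aware that density of $\bDelta$ in $\inftyCat$ and full faithfulness of $\on{Seg}^\bullet$ are \emph{equivalent} statements by exactly that formal argument, so you have not reduced the theorem to something easier --- you have reformulated it. You recognize this yourself in your final paragraph, where you note that the density step is the ``genuinely non-trivial homotopical input'' and that one could instead cite Rezk's theorem directly. That is the right assessment: the actual mathematical content lives in establishing density (equivalently, in the Rezk or Joyal--Tierney comparison between $\inftyCat$ and complete Segal spaces), and what you have written is a correct packaging of that content rather than an independent proof. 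As a proof plan for a result the paper is content to cite, this is entirely adequate.
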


The objects of $\inftygroup^{\bDelta^{\on{op}}}$ that lie in the essential image of the functor
$\on{Seg}^\bullet(-)$ are called ``complete Segal spaces." Thus, \thmref{t:Segal} implies that
the category of complete Segal spaces is equivalent to $\inftyCat$.

\sssec{}

We let 
$$\on{Seg}^{\bullet,\bullet}(\bC):=\on{Seg}^\bullet(\on{Seg}^\bullet(\bC))\in \inftygroup^{(\bDelta\times \bDelta)^{\on{op}}}$$
be the iteration of the above construction. I.e.,
$$\on{Seg}^{m,n}(\bC):=\on{Funct}([m]\times [n],\bC)\in \inftygroup.$$

\medskip

Let $\pi_h,\pi_v$ denote the two maps $\bDelta\times \bDelta\rightrightarrows \bDelta$. For 
a simplicial object $\be^\bullet$ of some $\infty$-category $\bE$ we let
$$\pi_h(\be^\bullet) \text{ and } \pi_v(\be^\bullet)$$ denote the corresponding
bi-simplicial objects of $\bE$.

\medskip

We have canonically defined maps
$$\pi_h(\on{Seg}^\bullet(\bC))\to \on{Seg}^{\bullet,\bullet}(\bC)\leftarrow \pi_h(\on{Seg}^\bullet(\bC))$$
corresponding to taking the functors
$$[m]\times [n]\to \bC$$
that constant along the second (for $h$) and first (for $v$) coordinate, respectively.

\sssec{}   \label{sss:bivariant}

Let $\bC$ be an $\infty$-category, with two distinguished classes of 1-morphisms $vert$ and $horiz$, satisfying
the assumptions of \secref{sss:two classes}.

\medskip

We let
$$\on{Cart}^{\bullet,\bullet}_{vert;horiz}(\bC)$$
denote the bi-simplicial groupoid, whose $(m,n)$-simplices is the full subgroupoid of
$$\on{Funct}([m]\times [n],\bC),$$
that consists of commutative diagrams
\begin{equation} \label{e:grid}
\CD
\bc_{0,0} @>>>  \bc_{1,0}  @>>>   ...   @>>>  \bc_{m-1,0}  @>>>  \bc_{m,0}  \\
@VVV   @VVV @VVV @VVV   @VVV \\
\bc_{0,1} @>>>  \bc_{1,1}  @>>>   ...   @>>>  \bc_{m-1,1}  @>>>  \bc_{m,1}  \\
@VVV   @VVV @VVV @VVV   @VVV \\
... @>>> ... @>>> ... @>>> ... @>>> ... \\
@VVV   @VVV @VVV @VVV   @VVV \\
\bc_{0,n-1} @>>>  \bc_{1,n-1}  @>>>   ...   @>>>  \bc_{m-1,n-1}  @>>>  \bc_{m,n-1}  \\
@VVV   @VVV @VVV @VVV   @VVV \\
\bc_{0,n} @>>>  \bc_{1,n}  @>>>   ...   @>>>  \bc_{m-1,n}  @>>>  \bc_{m,n}
\endCD
\end{equation}
in which every square is Cartesian, and in which all vertical arrows belong to $vert$ and all
horizontal arrows belong to $horiz$. 

\medskip

Note that we have a canonically defined maps in $\inftygroup^{(\bDelta\times \bDelta)^{\on{op}}}$
$$\pi_h(\on{Seg}^\bullet(\bC_{horiz}))\to \on{Cart}^{\bullet,\bullet}_{vert;horiz}(\bC)\leftarrow \pi_v(\on{Seg}^\bullet(\bC_{vert})).$$

At the level of $(m,n)$-simplices, they correspond to diagrams \eqref{e:grid}, in which (for $h$) 
the vertical maps are identity maps, and (for $v$) the horizontal maps are identity maps.

\sssec{}

Consider the functors
$$\IndCoh^!_{(\dgSch_{\on{Noeth}})_{\on{aft}}}:((\dgSch_{\on{Noeth}})_{\on{aft}})^{\on{op}}\to \StinftyCat_{\on{cont}}$$
(see \propref{p:upgrading IndCoh to functor}), and 
$$\IndCoh^*_{(\dgSch_{\on{Noeth}})_{\on{bdd-Tor}}}:((\dgSch_{\on{Noeth}})_{\on{bdd-Tor}})^{\on{op}}\to \StinftyCat_{\on{cont}}$$
(see \corref{c:upper * DG funct}),
where
$$(\dgSch_{\on{Noeth}})_{\on{bdd-Tor}}\subset (\dgSch_{\on{Noeth}})_{\on{ev-conconn}}$$
is the 1-full subcategory, where we retsrict 1-morphisms to those of bounded Tor dimension.

\medskip

Consider the corresponding maps in $\inftygroup^{\bDelta^{\on{op}}}$
$$\on{Seg}^\bullet(\IndCoh^!_{(\dgSch_{\on{Noeth}})_{\on{aft}}}):
\on{Seg}^\bullet(((\dgSch_{\on{Noeth}})_{\on{aft}})^{\on{op}}) \to \on{Seg}^\bullet((\StinftyCat_{\on{cont}})^{\on{op}}),$$
\text{ and }
$$\on{Seg}^\bullet(\IndCoh^*_{(\dgSch_{\on{Noeth}})_{\on{bdd-Tor}}}):
\on{Seg}^\bullet(((\dgSch_{\on{Noeth}})_{\on{bdd-Tor}})^{\on{op}})\to
\on{Seg}^\bullet((\StinftyCat_{\on{cont}})^{\on{op}}),$$
respectively. 

\sssec{}

We consider the category $\dgSch_{\on{Noeth}}$ equipped with the following classes of 1-morphisms:
$$vert=\on{bdd-Tor} \text{ and } horiz=\on{aft}.$$

\medskip

We will prove:

\begin{prop} \label{p:!* IndCoh}
There exists a canonically defined map in $\inftygroup^{(\bDelta\times \bDelta)^{\on{op}}}$
$$\IndCoh^{*!}_{(\dgSch_{\on{Noeth}})_{\on{bdd-Tor;aft}}}: 
\on{Cart}^{\bullet,\bullet}_{\on{bdd-Tor;aft}}(\dgSch_{\on{Noeth}})\to 
\on{Seg}^{\bullet,\bullet}((\StinftyCat_{\on{cont}})^{\on{op}})$$
that makes the following diagrams commute
$$
\CD
\on{Cart}^{\bullet,\bullet}_{\on{bdd-Tor;aft}}(\dgSch_{\on{Noeth}})  @>{\IndCoh^{*!}_{(\dgSch_{\on{Noeth}})_{\on{bdd-Tor;aft}}}}>>  
\on{Seg}^{\bullet,\bullet}((\StinftyCat_{\on{cont}})^{\on{op}}) \\
@AAA        @AAA  \\
\pi_h(\on{Seg}^\bullet((\dgSch_{\on{Noeth}})_{\on{aft}}))  @>>{\on{Seg}^\bullet(\IndCoh^!_{(\dgSch_{\on{Noeth}})_{\on{aft}}})}>
\pi_h(\on{Seg}^\bullet((\StinftyCat_{\on{cont}})^{\on{op}}))  
\endCD
$$
and
$$
\CD
\on{Cart}^{\bullet,\bullet}_{\on{bdd-Tor;aft}}(\dgSch_{\on{Noeth}})  @>{\IndCoh^{*!}_{(\dgSch_{\on{Noeth}})_{\on{bdd-Tor;aft}}}}>>  
\on{Seg}^{\bullet,\bullet}((\StinftyCat_{\on{cont}})^{\on{op}}) \\
@AAA        @AAA  \\
\pi_v(\on{Seg}^\bullet((\dgSch_{\on{Noeth}})_{\on{bdd-Tor}}))  @>>{\on{Seg}^\bullet(\IndCoh^*_{(\dgSch_{\on{Noeth}})_{\on{bdd-Tor}}})}>
\pi_v(\on{Seg}^\bullet((\StinftyCat_{\on{cont}})^{\on{op}})).
\endCD
$$
\end{prop}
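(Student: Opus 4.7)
My plan is to package the two functors $\IndCoh^!_{(\dgSch_{\on{Noeth}})_{\on{aft}}}$ and $\IndCoh^*_{(\dgSch_{\on{Noeth}})_{\on{bdd-Tor}}}$ into a single functor out of an auxiliary correspondence category, and then extract the bi-simplicial map from it. The pointwise content --- namely the base change isomorphism $(f')^{\IndCoh,*}\circ g_2^! \simeq g_1^!\circ f^{\IndCoh,*}$ of \propref{p:! pullback and event coconn} --- is exactly what makes the combined functor well-defined on 1-morphisms; the full higher-categorical coherences follow the general machinery of correspondence categories in the style of \thmref{t:upper shriek}.

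Concretely, I would first introduce the $(\infty,1)$-category $(\dgSch_{\on{Noeth}})_{\on{corr}:\on{bdd-Tor};\on{aft}}$ whose objects are Noetherian DG schemes and whose $1$-morphisms from $S_1$ to $S_2$ are correspondences
\[
S_1 \overset{g}\longleftarrow S_{1,2} \overset{f}\longrightarrow S_2
\]
with $g\in\on{aft}$ and $f\in\on{bdd-Tor}$, composed via fiber products. The pair of classes $(\on{bdd-Tor},\on{aft})$ satisfies the axioms of \secref{sss:two classes}: both are stable under arbitrary base change (Tor-boundedness because the cohomological amplitude bound is preserved under pullback; aft because it is a condition on underlying classical schemes), so the required Cartesian diagrams exist and lie in the relevant classes. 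The next step is to construct a functor
\[
\IndCoh_{\on{bdd-Tor};\on{aft}} \colon (\dgSch_{\on{Noeth}})_{\on{corr}:\on{bdd-Tor};\on{aft}} \to (\StinftyCat_{\on{cont}})^{\on{op}}
\]
sending $S\mapsto \IndCoh(S)$ and each correspondence as above to $g^!\circ f^{\IndCoh,*}$. Its restrictions to the two $1$-full subcategories obtained by requiring $f$ (respectively, $g$) to be an isomorphism recover $\IndCoh^!_{(\dgSch_{\on{Noeth}})_{\on{aft}}}$ and $\IndCoh^*_{(\dgSch_{\on{Noeth}})_{\on{bdd-Tor}}}$; compatibility under composition of correspondences is precisely \propref{p:! pullback and event coconn}.

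Finally, the bi-simplicial map $\IndCoh^{*!}_{(\dgSch_{\on{Noeth}})_{\on{bdd-Tor;aft}}}$ is extracted from $\IndCoh_{\on{bdd-Tor};\on{aft}}$ by observing that a Cartesian grid as in \eqref{e:grid} may be read as a bi-simplex in $(\dgSch_{\on{Noeth}})_{\on{corr}:\on{bdd-Tor};\on{aft}}$: horizontal edges correspond to correspondences whose $f$-leg is the identity, vertical edges to correspondences whose $g$-leg is the identity, and the Cartesian condition on each square encodes precisely the compatibility needed for these to commute in the correspondence category. This produces a natural map
\[
\on{Cart}^{\bullet,\bullet}_{\on{bdd-Tor;aft}}(\dgSch_{\on{Noeth}}) \to \on{Seg}^{\bullet,\bullet}\bigl((\dgSch_{\on{Noeth}})_{\on{corr}:\on{bdd-Tor};\on{aft}}\bigr),
\]
and composing with $\on{Seg}^{\bullet,\bullet}$ applied to $\IndCoh_{\on{bdd-Tor};\on{aft}}$ yields the desired bi-simplicial map. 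The asserted compatibilities with $\pi_h$ and $\pi_v$ are then tautological from the identification of $\IndCoh_{\on{bdd-Tor};\on{aft}}$ on the two sub-$1$-categories with $\IndCoh^!$ and $\IndCoh^*$.

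\textbf{Main obstacle.} The principal difficulty lies in constructing $\IndCoh_{\on{bdd-Tor};\on{aft}}$ at the full $\infty$-categorical level, including coherences for arbitrary $n$-fold composition of correspondences. The $1$- and $2$-cell data are supplied by the existing results (\corref{c:existence of !}, \corref{c:upper * DG funck}, \propref{p:! pullback and event coconn}, respectively), but organizing these into an actual functor out of a correspondence category requires the systematic treatment analogous to \thmref{t:extension by adjoints}, the full details of which are deferred to \cite{GR3}. The rest of the argument --- identifying the map of bi-simplicial spaces and verifying compatibility with the two projections --- is a formal unwinding once that functor is in hand.
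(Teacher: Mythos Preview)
Your proposed correspondence functor does not type-check: with $g:S_{1,2}\to S_1$ in $\on{aft}$ and $f:S_{1,2}\to S_2$ in $\on{bdd-Tor}$, both $g^!:\IndCoh(S_1)\to\IndCoh(S_{1,2})$ and $f^{\IndCoh,*}:\IndCoh(S_2)\to\IndCoh(S_{1,2})$ land in $\IndCoh(S_{1,2})$, so there is no composite $g^!\circ f^{\IndCoh,*}$ giving a functor between $\IndCoh(S_1)$ and $\IndCoh(S_2)$. More conceptually, a correspondence category in the sense of \secref{ss:corr} packages one \emph{covariant} and one \emph{contravariant} operation related by base change; it is not a natural vehicle for two contravariant pullbacks, and the map $\on{Cart}^{\bullet,\bullet}\to \on{Seg}^{\bullet,\bullet}(\bC_{\on{corr}})$ you sketch does not exist in the form claimed.

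The paper's proof sidesteps your ``main obstacle'' entirely. It restricts the already-constructed functor $\IndCoh_{(\dgSch_{\on{Noeth}})_{\on{corr:all;aft}}}$ of \thmref{t:upper shriek} to the $1$-full subcategory where the vertical leg lies in $\on{bdd-Tor}$. For each Cartesian grid \eqref{e:grid Sch} this produces, coherently in $[m]\times[n]$, a commutative diagram in $\StinftyCat_{\on{cont}}$ with $(f_?)^!$ along the horizontal edges and $(g_?)^{\IndCoh}_*$ along the vertical edges---the base change here is \eqref{e:basic base change}, already encoded by the correspondence functor. One then passes to left adjoints along the vertical arrows, replacing each $(g_?)^{\IndCoh}_*$ by $(g_?)^{\IndCoh,*}$; the resulting squares commute because the natural transformations arising from adjunction are isomorphisms by \propref{p:! pullback and event coconn}. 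Thus no new correspondence-type functor needs to be built: all higher coherences are inherited from \thmref{t:upper shriek}, and the passage to adjoints is a formal operation performed diagramwise.
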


\begin{proof}

Given a Cartesian diagram
\begin{equation} \label{e:grid Sch}
\CD
S_{0,0} @>{f_?}>>    ...   @>{f_?}>>  S_{m,0}  \\
@V{g_?}VV    @VVV    @VV{g_?}V \\
...  @>>>   ...  @>>> ...   \\
@V{g_?}VV    @VVV   @VV{g_?}V \\
S_{0,n}  @>>{f_?}>   ...   @>>{f_?}>   S_{m,n}
\endCD
\end{equation}
in $(\dgSch_{\on{Noeth}})$ with horizontal arrows being locally almost of finite type, and vertical arrows
of bounded Tor dimension, we need to construct a commutative diagram in $(\StinftyCat_{\on{cont}})^{\on{op}}$:

\begin{equation} \label{e:diag up}
\CD
\IndCoh(S_{0,0}) @<{(f_?)^!}<<    ...   @<{(f_?)^!}<<  \IndCoh(S_{m,0})  \\
@A{(g_?)^{\IndCoh,*}}AA    @AAA    @AA{(g_?)^{\IndCoh,*}}A \\
...  @>>>   ...  @>>> ...   \\
@A{(g_?)^{\IndCoh,*}}AA    @AAA   @AA{(g_?)^{\IndCoh,*}}A \\
\IndCoh(S_{0,n})  @<<{(f_?)^!}<    ...   @<<{(f_?)^!}<    \IndCoh(S_{m,n}).
\endCD
\end{equation}

Moreover, the assignment
$$\text{\eqref{e:grid Sch}}\, \rightsquigarrow \, \text{\eqref{e:diag up}}$$
must be compatible with maps
$$[m]\times [n]\to [m']\times [n']$$
in $\bDelta\times \bDelta$.

\medskip

We start with the commutative diagram
\begin{equation} \label{e:diag left}
\CD
\IndCoh(S_{0,0}) @<{(f_?)^!}<<    ...   @<{(f_?)^!}<<  \IndCoh(S_{m,0})  \\
@V{(g_?)^\IndCoh_*}VV    @VVV    @VV{(g_?)^\IndCoh_*}V \\
...  @>>>   ...  @>>> ...   \\
@V{(g_?)^\IndCoh_*}VV    @VVV   @VV{(g_?)^\IndCoh_*}V \\
\IndCoh(S_{0,n})  @<<{(f_?)^!}<    ...   @<<{(f_?)^!}<   \IndCoh(S_{m,n})
\endCD
\end{equation}
whose datum is is given by the functor
$$\IndCoh_{(\dgSch_{\on{Noeth}})_{\on{corr:bdd-Tor;aft}}}:=
\IndCoh_{(\dgSch_{\on{Noeth}})_{\on{corr:all;aft}}}|_{(\dgSch_{\on{Noeth}})_{\on{corr:bdd-Tor;aft}}}.$$

The diagram \eqref{e:diag up} is obtained from \eqref{e:diag left} by passage to the left adjoints
along the vertical arrows. The fact that the arising natural transformations are isomorphisms is given by 
\propref{p:! and * pullback under eventually coconn}.

\end{proof}

\section{Descent}  \label{s:descent}

In this section we continue to assume that all DG schemes are Noetherian.

\ssec{A result on conservativeness}

In this subsection we will establish a technical result that will be useful in the sequel. 

\sssec{}

Let $f:S_1\to S_2$ be a map almost of finite type. 

\begin{prop}   \label{p:cons}
Assume that $f$ is surjective at the level of geometric points. Then 
the functor $f^!:\IndCoh(S_2)\to \IndCoh(S_1)$ is conservative.
\end{prop}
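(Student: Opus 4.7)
The strategy is to reduce the conservativity of $f^!$ to the case where $S_2=\Spec K$ is the spectrum of a field, via a cascade of base-change arguments that use the functorial framework established in \secref{s:!}. First I would reduce to $S_2$ being classical and reduced: the inclusion $i\colon ({}^{cl}\!S_2)_{\on{red}}\to S_2$ has $i^!$ conservative by \corref{c:reduced generation}, and if $f'\colon S'_1\to ({}^{cl}\!S_2)_{\on{red}}$ denotes the base change of $f$ (still surjective at geometric points), the compositional identity $g_1^!\circ f^!\simeq (f')^!\circ i^!$ reduces conservativity of $f^!$ to that of $(f')^!$. Zariski descent (\propref{p:Zariski descent}), together with the fact that surjectivity at geometric points is preserved by Zariski localization, reduces further to the case $S_2=\Spec A$ with $A$ Noetherian, classical, and reduced.

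\textbf{Reduction to $S_2 = \Spec K$.}  For $\CF\in\IndCoh(S_2)$ nonzero, the goal is to find a point $s\in S_2$ and an almost-of-finite-type morphism $\sigma_s\colon \Spec\kappa(s)\to S_2$ --- concretely, the composite of an open embedding $U\hookrightarrow S_2$ with the closed embedding of a closed point of $U$ --- such that $\sigma_s^!(\CF)\neq 0$. Base changing $f$ along $\sigma_s$ yields $f_s\colon (S_1)_s \to \Spec\kappa(s)$, still almost of finite type and with nonempty source (the latter precisely by the surjectivity of $f$ at geometric points), and the base-change identity $g_1^!\circ f^!\simeq f_s^!\circ \sigma_s^!$ combined with conservativity of $f_s^!$ (the base case below) gives $f^!(\CF)\neq 0$. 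The production of $s$ proceeds by Noetherian induction on $S_2$: either $\CF$ restricts nontrivially to the local scheme at some generic point $\eta$, in which case I would pick a small affine $U\ni\eta$ with $\CF|_U\neq 0$ and locate a closed point of $U$ in the support of a coherent approximation to $\CF|_U$, deducing $\sigma_s^!(\CF)\neq 0$ from a classical local-cohomology computation; or else \lemref{l:away from generic} places $\CF$ in the essential image of pushforward from a proper closed subscheme of $S_2$, and the inductive hypothesis applies.

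\textbf{Base case $S_2=\Spec K$.}  Here $\IndCoh(\Spec K)\simeq \Vect_K$, and the $\QCoh(\Spec K)$-linearity of $f^!$ (equation \eqref{e:QCoh linearity for !} from \thmref{t:action of QCoh}) gives
\[
f^!(V)\simeq f^*(V)\otimes f^!(K)\simeq V\otimes_K f^!(K), \qquad V\in \Vect_K,
\]
where $\otimes$ on the right denotes the action of $\QCoh(S_1)$ on $\IndCoh(S_1)$ from \secref{ss:action}. Hence $f^!$ is conservative as soon as $f^!(K)\neq 0$. Picking a closed point $s\colon \Spec L\hookrightarrow S_1$ with $L/K$ finite (which exists by Nullstellensatz, since $S_1$ is nonempty and almost of finite type over $K$), the $((f\circ s)^{\IndCoh}_*,(f\circ s)^!)$-adjunction for the finite morphism $f\circ s\colon \Spec L\to \Spec K$ yields $s^!(f^!(K))\simeq (f\circ s)^!(K) \simeq \Hom_K(L,K)\neq 0$, hence $f^!(K)\neq 0$.

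\textbf{Main obstacle.}  The hardest part will be the third paragraph: constructing a residue-field map $\sigma_s$ under which a given nonzero $\CF\in\IndCoh(S_2)$ has nonzero $!$-pullback. The t-structure on $\IndCoh(S_2)$ need not be separated --- the subcategory $\IndCoh(S_2)_{nil}$ can be nonzero --- so ``non-zero'' need not mean ``has nonzero cohomology'', and residue-field maps at non-closed points of $S_2$ are not of finite type, so one cannot simply pull back to a generic point. Both issues will be handled by working with closed points of Zariski opens (which provides almost-of-finite-type morphisms $\Spec\kappa(s)\to S_2$) and by invoking \lemref{l:away from generic} to execute the Noetherian induction in the ``nilpotent'' case.
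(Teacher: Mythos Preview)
Your overall architecture --- reduce to classical reduced $S_2$, run a Noetherian induction, and settle the field case directly --- matches the paper's, and your treatment of the base case $S_2=\Spec K$ via $\QCoh$-linearity of $f^!$ is correct and pleasant.  But the middle step, reducing to the field case by \emph{locating a single closed point $s$ with $\sigma_s^!(\CF)\neq 0$}, does not work.

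Here is a concrete counterexample to your Case~A.  Take $S_2=\BA^1=\Spec k[t]$ and $\CF=\Xi_{\BA^1}(k(t))\in\IndCoh(\BA^1)$.  Since $\Xi_{\BA^1}$ is fully faithful (\propref{p:Xi}) and $k(t)\neq 0$ in $\QCoh(\BA^1)$, we have $\CF\neq 0$; moreover $\CF|_\eta\simeq k(t)\neq 0$, so Case~A applies.  But for any closed point $s$, the formula $\sigma_s^!(\CE\otimes\CG)\simeq\sigma_s^*(\CE)\otimes\sigma_s^!(\CG)$ of \eqref{e:upgrading !} with $\CE=k(t)$ and $\CG=\CO_{\BA^1}$ gives $\sigma_s^!(\CF)\simeq\sigma_s^*(k(t))\otimes\sigma_s^!(\CO_{\BA^1})=0$, since $k(t)\otimes_{k[t]}\kappa(s)=0$.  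So no closed point of any open has nonzero $!$-fiber, and your ``coherent approximation plus local cohomology'' heuristic cannot succeed.  (The deeper reason is that $\IndCoh$ is not left-separated: the family $\{\sigma_s^!\}$ over closed points is simply not jointly conservative.)  Your Case~B has a parallel issue: \lemref{l:away from generic} says the kernel of restriction to $S_\eta$ is \emph{generated} by pushforwards from proper closed subschemes, not that every object in the kernel \emph{is} such a pushforward, so the inductive hypothesis does not apply as you state it.

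The paper sidesteps both problems by inserting generic smoothness.  After reducing to classical reduced $S_2$ and assuming the result for proper closed subschemes, one finds (using $\on{char}(k)=0$) a nonempty open $U\subset S_2$ over which $f$ becomes smooth.  If $\CF|_U=0$ then $\CF\in\IndCoh(S_2)_Z$ for $Z=S_2\setminus U$, and \propref{p:generation by closed}(a) gives a \emph{specific} nonzero object $i^!(\CF)\in\IndCoh(Z)$ to which base change plus the inductive hypothesis applies.  If $\CF|_U\neq 0$, one is in the smooth case, where \corref{c:differ by line} says $f^!$ and $f^{\IndCoh,*}$ differ by tensoring with a line bundle; conservativity of $f^{\IndCoh,*}$ for a smooth surjection then reduces (again via \lemref{l:away from generic} and Noetherian induction, this time legitimately) to the case $S_2=\Spec K$ with $S_1$ smooth over $K$, which is immediate.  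The key point you are missing is that one should shrink $S_2$ to make $f$ smooth \emph{before} trying to detect $\CF$ pointwise.
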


\begin{proof}

By \corref{c:reduced generation} we can assume that both $S_1$ are $S_2$ are classical and reduced. 
By Noetherian induction, we can assume that the statement is true for all proper
closed subschemes of $S_2$. 

\medskip

Hence, we can replace $S_1$ and $S_2$ be open subschemes and thus assume that $f$ is smooth.
In this case, by \corref{c:differ by line}, $f^!$ differs from $f^{\IndCoh,*}$ by 
an automorphism of $\IndCoh(S_1)$. So, it is enough to show that $f^{\IndCoh,*}$ is fully faithful.

\medskip

By \lemref{l:away from generic}, we can assume that $S_2$ is the spectrum of a field. In this case
$S_1$ is a smooth scheme over this field, and the assertion becomes manifest.

\end{proof}

\begin{cor} \label{c:proper generation}
Let $f:S_1\to S_2$ be a proper map, surjective at the level of geometric points.
Then the essential image of the functor $f_*:\Coh(S_1)\to \Coh(S_2)$
generates $\IndCoh(S_2)$.
\end{cor}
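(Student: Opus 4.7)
The plan is to deduce this corollary directly from \propref{p:cons} via the $(f^{\IndCoh}_*, f^!)$-adjunction for proper morphisms. Recall that since $f$ is proper, by \lemref{l:preservation of coh} and its corollary the functor $f^{\IndCoh}_*$ restricts to a functor $\Coh(S_1)\to \Coh(S_2)$, whose ind-extension is the continuous functor $f^{\IndCoh}_*:\IndCoh(S_1)\to \IndCoh(S_2)$ with continuous right adjoint $f^!$.

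First I would reduce the statement about generation to a vanishing statement. Since $\IndCoh(S_2)$ is compactly generated by $\Coh(S_2)$, showing that the essential image of $f^{\IndCoh}_*|_{\Coh(S_1)}$ generates $\IndCoh(S_2)$ amounts to showing that any object $\CF\in \IndCoh(S_2)$ satisfying
\[
\Maps_{\IndCoh(S_2)}(f^{\IndCoh}_*(\CG),\CF)=0 \quad \text{for all } \CG\in \Coh(S_1)
\]
must vanish. By the $(f^{\IndCoh}_*,f^!)$-adjunction, this mapping space is isomorphic to $\Maps_{\IndCoh(S_1)}(\CG,f^!(\CF))$, so the hypothesis is equivalent to $f^!(\CF)$ being right-orthogonal to the compact generators $\Coh(S_1)\subset \IndCoh(S_1)$, which forces $f^!(\CF)=0$.

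Next I would invoke \propref{p:cons}: because $f$ is surjective at the level of geometric points and (being proper) almost of finite type, the functor $f^!:\IndCoh(S_2)\to \IndCoh(S_1)$ is conservative. Thus $f^!(\CF)=0$ implies $\CF=0$, completing the argument.

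There is no real obstacle here beyond assembling the pieces; the substantive input is \propref{p:cons}, whose proof (via Noetherian induction, reduction to the smooth case using \corref{c:differ by line}, and the final reduction to a field via \lemref{l:away from generic}) already does the geometric work. The corollary is then a formal consequence of adjunction plus compact generation.
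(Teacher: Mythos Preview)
Your proof is correct and is exactly the argument the paper has in mind: the corollary is stated without proof immediately after \propref{p:cons}, and the intended deduction is precisely the $(f^{\IndCoh}_*,f^!)$-adjunction argument you give, reducing generation by the essential image to conservativity of $f^!$.
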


\ssec{h-descent} \label{ss:cosimp general}

\sssec{}

Let $f:S_1\to S_2$ be a map locally almost of finite type. We can form the cosimplicial category 
$\IndCoh^!(S_1^\bullet/S_2)$ \emph{using the !-pullback functors},
and !-pullback defines an augmentation
\begin{equation} \label{e:map to total general}
\IndCoh(S_2)\to \on{Tot}\left(\IndCoh^!(S_1^\bullet/S_2)\right).
\end{equation}

We shall say that $\IndCoh$ satisfies !-descent for $f$ if \eqref{e:map to total general}
is an equivalence.

\medskip

The main theorem of this section is:

\begin{thm}  \label{t:h descent}
$\IndCoh$ satisfies !-descent for maps that are covers for the h-topology. 
\end{thm}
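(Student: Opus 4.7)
The plan is to reduce the statement to two special cases—proper surjections and faithfully flat covers—and to establish $!$-descent in each separately. The reduction relies on a structure theorem for the $h$-topology of Noetherian schemes: every $h$-cover admits a refinement by a composition of a faithfully flat (indeed smooth) cover and a proper surjection. Combined with the standard facts that $!$-descent is stable under composition of coverings and under refinement by a covering for which descent is known, this leaves the two special cases above. The verification that descent is stable under these operations is bookkeeping, using that the formation of the \v{C}ech nerve $S_1^\bullet/S_2$ and the functor $\IndCoh^!$ are compatible with Cartesian squares.

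For a proper surjection $f : S_1 \to S_2$, I would apply Lurie's Barr-Beck-Lurie theorem to the adjunction $(f^{\IndCoh}_*, f^!)$. The left adjoint preserves compact objects (\lemref{l:preservation of coh}), so $f^!$ is continuous; conservativeness of $f^!$ is \propref{p:cons}; and the Beck-Chevalley base change condition is the iterated form of \propref{p:proper base change} applied to the Cartesian squares among the fiber products $S_1 \underset{S_2}\times \cdots \underset{S_2}\times S_1$. The theorem then identifies $\IndCoh(S_2)$ with the category of comodules for the comonad $f^{\IndCoh}_* \circ f^!$, and a formal unwinding reidentifies this with $\on{Tot}\bigl(\IndCoh^!(S_1^\bullet/S_2)\bigr)$.

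The heart of the proof is faithfully flat descent, following the strategy indicated to the author by Lurie. One first treats smooth surjections $f : S_1 \to S_2$: by \propref{p:smooth map} applied term by term to the \v{C}ech nerve, one has $\IndCoh(S_1^n/S_2) \simeq \QCoh(S_1^n/S_2) \underset{\QCoh(S_2)}\otimes \IndCoh(S_2)$, while faithfully flat descent for $\QCoh$ gives $\QCoh(S_2) \simeq \on{Tot}\bigl(\QCoh(S_1^\bullet/S_2)\bigr)$. One then checks, using the $\QCoh(S_2)$-linearity of the $!$-pullback functors supplied by \thmref{t:action of QCoh}, that the action functor $- \underset{\QCoh(S_2)}\otimes \IndCoh(S_2)$ commutes with the relevant totalization; combining these facts yields smooth $!$-descent. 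Passage from smooth to general faithfully flat covers is achieved by observing that, after a suitable proper (in fact finite) modification of the source, any faithfully flat cover is refined by a smooth one, and the residual proper contribution is absorbed into the proper case already established.

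The main obstacle, and the step where Lurie's input is decisive, is precisely the commutation of $\QCoh(S_2)$-linear cosimplicial totalization with the action $- \underset{\QCoh(S_2)}\otimes \IndCoh(S_2)$ in the smooth case. In contrast to the proper case, which is a direct application of Barr-Beck-Lurie, this does not reduce to soft adjoint-functor formalism and requires genuine input from the higher-categorical structure of $\IndCoh$ as a module category over $\QCoh$.
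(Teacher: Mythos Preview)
Your overall strategy is sound, but it diverges from the paper's route in one key respect. The paper does \emph{not} reduce $h$-descent to proper surjections plus fppf (or smooth) covers. Instead, it invokes the Goodwillie--Lichtenbaum result \cite[Theorem 4.1]{GoLi}: any presheaf satisfying Zariski descent and descent for proper surjections automatically satisfies $h$-descent. Since Zariski descent for $\IndCoh^!$ was already established in \propref{p:Zariski descent}, the entire theorem reduces to the single proper-surjective case (\propref{p:proper descent}), handled via Barr--Beck--Lurie exactly as you outline. Your decomposition into proper plus fppf is correct as well, but it forces you to prove smooth/fppf $!$-descent, which the paper treats as a separate result (\thmref{t:fppf descent}, \propref{p:!-descent smooth}) rather than an ingredient of \thmref{t:h descent}. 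The paper's route is shorter precisely because Zariski descent is cheaper than smooth descent.

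Two smaller points. First, in the proper case you write that Barr--Beck--Lurie identifies $\IndCoh(S_2)$ with comodules for the comonad $f^{\IndCoh}_* \circ f^!$; this is backwards. That comonad lives on $\IndCoh(S_2)$ and its comodules recover $\IndCoh(S_1)$. What you want is the monad $f^! \circ f^{\IndCoh}_*$ on $\IndCoh(S_1)$: the paper shows that both the augmentation $\IndCoh(S_2)$ and the totalization $\on{Tot}(\IndCoh^!(S_1^\bullet/S_2))$ are monadic over $\IndCoh(S_1)$ for this same monad. Second, the step you flag as the main obstacle---commuting $-\underset{\QCoh(S_2)}\otimes \IndCoh(S_2)$ with the cosimplicial totalization in the smooth case---is handled in the paper rather softly: $\QCoh(S_2)$ is rigid and $\IndCoh(S_2)$ is dualizable (being compactly generated), and then \cite[Corollaries 6.4.2 and 4.3.2]{DG} give the required commutation. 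So this step is less delicate than you suggest.
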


Taking into account \propref{p:Zariski descent} and \cite[Theorem 4.1]{GoLi}, to prove
\thmref{t:h descent}, it suffices to prove the following:

\begin{prop} \label{p:proper descent}
$\IndCoh$ satisfies !-descent for maps that are proper and surjective at the level of
geometric points.
\end{prop}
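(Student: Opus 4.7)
The plan is to apply the Barr--Beck--Lurie monadic theorem to the adjunction $(f_*^{\IndCoh}, f^!)$, which exists because $f$ is proper. The goal is to identify $\IndCoh(S_2)$ with modules over the monad $T := f^! \circ f_*^{\IndCoh}$ on $\IndCoh(S_1)$, and then to identify this category of modules with the desired totalization $\on{Tot}(\IndCoh^!(S_1^\bullet/S_2))$ by unpacking the Beck--Chevalley isomorphism along the \v{C}ech nerve.

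First I would verify the hypotheses of Barr--Beck for $f^!$. The functor $f^!$ is continuous: for $f$ proper one has that $f_*^{\IndCoh}$ sends compact objects to compact objects (as recorded in \secref{s:functorialities}), so its right adjoint commutes with all colimits. The functor $f^!$ is conservative by \propref{p:cons}, using the surjectivity of $f$ on geometric points. Since $f^!$ is a continuous right adjoint between stable $\infty$-categories that is moreover conservative, the ``preserves $f^!$-split simplicial colimits'' clause of Barr--Beck is automatic, so $f^!$ is monadic; thus
$$\IndCoh(S_2)\ \simeq\ T\text{-mod}(\IndCoh(S_1)).$$

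Next I would identify $T\text{-mod}(\IndCoh(S_1))$ with $\on{Tot}(\IndCoh^!(S_1^\bullet/S_2))$. Applying proper base change (\propref{p:proper base change}) to the square with projections $p_1,p_2\colon S_1\underset{S_2}\times S_1 \rightrightarrows S_1$ yields $T \simeq (p_2)_*^{\IndCoh}\circ p_1^!$; by the $((p_2)_*^{\IndCoh},p_2^!)$-adjunction a $T$-action on $M\in \IndCoh(S_1)$ converts to a morphism $p_1^!M\to p_2^!M$ in $\IndCoh(S_1\underset{S_2}\times S_1)$, i.e.\ a piece of descent datum. Iterating this observation over higher products $S_1^n/S_2$, and invoking base change for each square inside the \v{C}ech diagram, assembles the monadic bar resolution into the totalization of $\IndCoh^!(S_1^\bullet/S_2)$; running the comparison in the other direction recovers the monadic description.

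The main obstacle is the last identification: while the correspondence between monadic modules and descent data is standard at the level of $1$-categories, promoting it to an equivalence of $\infty$-categories requires compatibility of the Beck--Chevalley isomorphisms across all simplicial degrees with the full higher-homotopical coherence of the monad bar construction. The natural home for such a verification is the formalism of the category of correspondences developed in \secref{s:!}: the functor $\IndCoh_{(\dgSch_{\on{Noeth}})_{\on{corr:all;aft}}}$ of \thmref{t:upper shriek} packages precisely the base change data for the entire \v{C}ech nerve, and so supplies exactly the coherence structure needed to turn the plausible identification above into a rigorous equivalence. Beyond this bookkeeping, the only geometric input is the conservativity of $f^!$ afforded by \propref{p:cons}.
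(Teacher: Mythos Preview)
Your proposal is correct in its overall strategy---Barr--Beck--Lurie applied to the $(f_*^{\IndCoh},f^!)$-adjunction, with conservativity supplied by \propref{p:cons}---and this is exactly the paper's approach. The difference lies in how you handle the identification of $T$-modules with the totalization.

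You apply Barr--Beck once (to $f^!$) and then attempt to identify $T\text{-mod}(\IndCoh(S_1))$ with $\on{Tot}(\IndCoh^!(S_1^\bullet/S_2))$ directly, correctly flagging the coherence issue this raises. The paper instead applies Barr--Beck \emph{twice}: once to $f^!$ and once to the evaluation functor $\on{ev}_0:\on{Tot}(\IndCoh^!(S_1^\bullet/S_2))\to\IndCoh(S_1)$, and then shows the two monads on $\IndCoh(S_1)$ agree. The point is that $\on{ev}_0$ has a left adjoint $\on{ev}_0^L$ built by noting that the base-changed simplicial object $S_1\underset{S_2}\times(S_1^\bullet/S_2)$ is \emph{split} over $S_1$, so that $\on{Tot}(\IndCoh^!(S_1\underset{S_2}\times(S_1^\bullet/S_2)))\simeq\IndCoh(S_1)$ automatically; the left adjoint is then termwise $(f^\bullet)^{\IndCoh}_*$ composed with this equivalence. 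Proper base change identifies $\on{ev}_0\circ\on{ev}_0^L$ with $\on{pr}_1^!\circ(\on{pr}_2)_*^{\IndCoh}\simeq f^!\circ f_*^{\IndCoh}$, matching the monads. This two-monad comparison sidesteps the coherence bookkeeping you identified: rather than assembling descent data from monad modules by hand, the paper lets the split simplicial object do the work.
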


\begin{rem}
In \secref{ss:ff descent} we will give a direct proof of the fact that 
$\IndCoh$ satisfies !-descent for fppf maps. 
\end{rem}

\begin{proof}[Proof of \propref{p:proper descent}]

Consider the forgetful functor 
$$\on{ev}_0:\on{Tot}\left(\IndCoh^!(S_1^\bullet/S_2)\right)\to \IndCoh(S_1)$$
given by evaluation on the 0-th term. Its composition with the functor
\eqref{e:map to total general} is the functor $f^!$. 

\medskip

We will show that $\on{ev}_0$ admits a left adjoint, to be denoted $\on{ev}_0^L$, such that the natural map
\begin{equation} \label{e:map between monads}
\on{ev}_0\circ \on{ev}_0^L\to f^!\circ f^{\IndCoh}_{*}
\end{equation}
is an isomorphism. We will also show that both pairs
$$(\on{ev}_0^L,\on{ev}_0) \text{ and } (f^{\IndCoh}_{*},f^!)$$
satisfy the conditions of the Barr-Beck-Lurie theorem (see \cite{DG}, Sect. 3.1). This would imply
the assertion of the proposition, as the isomorphism \eqref{e:map between monads}
would imply that the resulting two monads acting on $\IndCoh(S_1)$ are isomorphic.

\medskip

Consider the augmented simplicial scheme $S_1\underset{S_2}\times (S_1^\bullet/S_2)$, which
is equipped with a map of simplicial schemes
$$f^\bullet:S_1\underset{S_2}\times (S_1^\bullet/S_2)\to S_1^\bullet/S_2.$$
The operation of !-pullback defines a functor between the corresponding cosimplicial categories
$$(f^\bullet)^!:\IndCoh(S_1^\bullet/S_2)\to \IndCoh(S_1\underset{S_2}\times (S_1^\bullet/S_2)).$$
By \propref{p:proper base change}, the term-wise adjoint $(f^\bullet)^{\IndCoh}_*$
is also a functor of cosimplicial categories. 

\medskip

In particular, we obtain a pair of adjoint functors
$$\on{Tot}((f^\bullet)^{\IndCoh}_*):\on{Tot}\left(\IndCoh^!(S_1\underset{S_2}\times (S_1^\bullet/S_2))\right)\rightleftarrows
\on{Tot}\left(\IndCoh^!(S_1^\bullet/S_2)\right):\on{Tot}((f^\bullet)^!).$$

\medskip

However, the augmented simplicial scheme $S_1\underset{S_2}\times (S_1^\bullet/S_2)$
is split by $S_1$, and therefore, the functors of !-pullback from the augmentation and evaluation on the splitting
define mutual inverse equivalences
$$\on{Tot}\left(\IndCoh^!(S_1\underset{S_2}\times (S_1^\bullet/S_2))\right)\simeq \IndCoh(S_1).$$
The resulting functor
$$\on{Tot}\left(\IndCoh^!(S_1^\bullet/S_2)\right)\overset{\on{Tot}((f^\bullet)^!)}\longrightarrow
\on{Tot}\left(\IndCoh^!(S_1\underset{S_2}\times (S_1^\bullet/S_2))\right)\simeq \IndCoh(S_1)$$
is easily seen to be canonically isomorphic to the functor $\on{ev}_0$. Thus,
the functor
$$\IndCoh(S_1)\simeq \on{Tot}\left(\IndCoh^!(S_1\underset{S_2}\times (S_1^\bullet/S_2))\right)
\overset{\on{Tot}((f^\bullet)^{\IndCoh}_*)}\longrightarrow 
\on{Tot}\left(\IndCoh^!(S_1^\bullet/S_2)\right)$$ provides
the desired functor $\on{ev}_0^L$.  

\medskip

The composition $\on{ev}_0\circ \on{ev}_0^L$ identifies with the functor 
$$\on{pr}_1^!\circ \on{pr}_2{}_*^{\IndCoh},$$
where 
\begin{gather*}
\xy
(-15,0)*+{S_1}="A";
(0,15)*+{S_1\underset{S_2}\times S_1}="B";
(15,0)*+{S_1,}="C";
{\ar@{->}_{\on{pr}_1} "B";"A"};
{\ar@{->}^{\on{pr}_2} "B";"C"};
\endxy
\end{gather*}
and the fact that the map \eqref{e:map between monads} is an isomorphism follows from
\propref{p:proper base change}.

\medskip

Finally, let us verify the conditions of the Barr-Beck-Lurie theorem. The functors $f^!$ and $\on{ev}_0$,
being continuous, commute with all colimits. The functor $\on{ev}_0$ is conservative by definition, and the functor
$f^!$ is conservative by \propref{p:cons}.

\end{proof}

\ssec{Faithfully flat descent}  \label{ss:ff descent}

\sssec{}

Let is recall that a map $f:S_1\to S_2$ is said to be fppf if:

\begin{itemize}

\item It is almost of finite type;

\item It is flat;

\item It is surjective on geometric points.

\end{itemize}

The following is a corollary of \thmref{t:h descent}: 

\begin{thm} {\em(Lurie)} \label{t:fppf descent}
$\IndCoh$ satisfies !-descent with respect to fppf morphisms.
\end{thm}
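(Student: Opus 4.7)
The plan is to deduce \thmref{t:fppf descent} directly from \thmref{t:h descent} by observing that every fppf morphism is a cover in the h-topology. Granted this, for any fppf $f:S_1\to S_2$ the augmentation
$$\IndCoh(S_2) \to \on{Tot}\left(\IndCoh^!(S_1^\bullet/S_2)\right)$$
is an equivalence simply by applying the stronger h-descent statement, with no additional input.

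To justify the topological inclusion, I would argue as follows. Let $f:S_1\to S_2$ be fppf in the sense of \secref{sss:smooth maps}: almost of finite type, flat, and surjective on geometric points. On the level of underlying classical schemes, flatness combined with surjectivity on points gives universal openness, hence universal submersiveness, which together with the finite-type hypothesis is exactly the definition of an h-cover in the sense of Voevodsky used in \cite{GoLi}. Alternatively, one can invoke Raynaud--Gruson platification to refine any fppf cover by a composition of a Zariski cover and a proper surjection, placing it squarely inside the h-topology generated by the classes already appearing in \propref{p:Zariski descent} and \propref{p:proper descent}.

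The main---and essentially only---obstacle is a matter of conventions: one must check that the h-topology on $\dgSch_{\on{Noeth}}$, as implicitly used in \thmref{t:h descent}, really does contain fppf covers in the DG setting. Here the DG subtleties are mild because fppf maps are flat and hence of bounded Tor dimension, so the iterated fiber products $S_1^\bullet/S_2$ are well-behaved and the cosimplicial diagram $\IndCoh^!(S_1^\bullet/S_2)$ is unambiguously defined via the !-pullback of \thmref{t:upper shriek}. The comparison of topologies therefore reduces to the classical case recalled above, and the theorem follows formally.
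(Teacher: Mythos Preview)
Your approach is correct and matches the paper's primary argument exactly: the theorem is stated as an immediate corollary of \thmref{t:h descent}, and your justification that fppf covers are h-covers (flat finite-type surjections are universally open, hence universally submersive) supplies the one step left implicit there.

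It is worth knowing that the paper then gives a second, self-contained proof that avoids \cite{GoLi} and hence does not pass through h-descent at all. That route combines \propref{p:proper descent} with two further inputs: \propref{p:Nisnevich descent} (!-descent for Nisnevich covers, reduced via \corref{c:! tensor up smooth} to descent for $\QCoh$), and \propref{p:Nisnevich+finite flat}, asserting that any presheaf with both proper-surjective and Nisnevich descent automatically satisfies fppf descent. The latter rests on an explicit factorization (\propref{p:factorization}): every fppf map of classical Noetherian schemes is refined, Nisnevich-locally on the base, by a finite flat surjection, proved using Noether normalization and Zariski's Main Theorem rather than Raynaud--Gruson platification. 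Your aside points in a related direction but is not quite the mechanism at work in the paper.
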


Below we will give an alternative proof that does not rely on \cite{GoLi}. This will
result from \propref{p:proper descent}, and the combination of the following two assertions:

\begin{prop}  \label{p:Nisnevich descent}
$\IndCoh$ satisfies !-descent with respect to Nisnevich covers.
\end{prop}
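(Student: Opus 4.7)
The strategy is to reduce Nisnevich !-descent to a Mayer--Vietoris property for elementary distinguished (``Nisnevich'') squares and then to establish it via the localization sequence of \corref{c:localization}. Following the standard Morel--Voevodsky analysis of the Nisnevich topology, combined with Noetherian induction on the base, !-descent for every Nisnevich cover reduces to the following assertion: for every Cartesian square
$$
\CD
V' @>{j'}>> S'\\
@V{f'}VV @VV{f}V\\
V @>{j}>> S,
\endCD
$$
in which $j$ is an open embedding, $f$ is \'etale, and $f$ restricts to an isomorphism from the reduced scheme underlying $S'\setminus V'$ onto the reduced scheme underlying $S\setminus V$, the natural functor
$$\IndCoh(S) \to \IndCoh(S')\underset{\IndCoh(V')}\times \IndCoh(V)$$
assembled from $f^!$ and $j^{\IndCoh,*}$ is an equivalence.

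To establish this Mayer--Vietoris, set $Z := (S\setminus V)_{\on{red}}$ and let $i\colon Z\hookrightarrow S$ and $i'\colon Z\hookrightarrow S'$ be the corresponding closed embeddings (with $i'$ coming from the DG-scheme identification $Z\simeq S'\underset{S}\times Z$, justified below). By \corref{c:localization} we have short exact sequences of DG categories
$$
\IndCoh(S)_Z \rightleftarrows \IndCoh(S) \rightleftarrows \IndCoh(V),\qquad
\IndCoh(S')_Z \rightleftarrows \IndCoh(S') \rightleftarrows \IndCoh(V'),
$$
and by \propref{p:open compat}(b) the functor $f^!$ respects them, inducing $f^!_Z\colon \IndCoh(S)_Z \to \IndCoh(S')_Z$. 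The main step is to show that $f^!_Z$ is an equivalence. By \propref{p:generation by closed}(a), the two sides are generated by the essential image of $i^\IndCoh_*$ (resp., $i'{}^\IndCoh_*$). Since $f$ is \'etale (hence of bounded Tor dimension), \lemref{l:usual base change} applied to the Cartesian square with vertical arrows $\on{id}_Z$ and $f$ and horizontal arrows $i'$ and $i$ yields $f^{\IndCoh,*}\circ i^\IndCoh_* \simeq i'{}^\IndCoh_*$, which by \corref{c:! and * pullback under smooth} (and the triviality of the relative dualizing line bundle for an \'etale morphism) upgrades to $f^!\circ i^\IndCoh_* \simeq i'{}^\IndCoh_*$. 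Fully faithfulness of $f^!_Z$ on these generators then follows from the $(i^\IndCoh_*,i^!)$- and $(i'{}^\IndCoh_*,i'{}^!)$-adjunctions together with the identification
$$i^!\circ i^\IndCoh_* \simeq (f\circ i')^!\circ i^\IndCoh_* \simeq i'{}^!\circ f^!\circ i^\IndCoh_* \simeq i'{}^!\circ i'{}^\IndCoh_*.$$

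Assembling, the resulting morphism of localization sequences
$$
\CD
\IndCoh(S)_Z @>>> \IndCoh(S) @>{j^{\IndCoh,*}}>> \IndCoh(V)\\
@V{f^!_Z}V{\simeq}V @V{f^!}VV @VV{(f')^!}V\\
\IndCoh(S')_Z @>>> \IndCoh(S') @>>{j'{}^{\IndCoh,*}}> \IndCoh(V')
\endCD
$$
has an equivalence on the fibers of the horizontal localization sequences, so in $\StinftyCat_{\on{cont}}$ the right-hand square is Cartesian. This is exactly the Mayer--Vietoris property required in Step 1.

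The main obstacle is promoting the Nisnevich hypothesis, which \emph{a priori} only identifies the underlying reduced classical closed subschemes $(S'\setminus V')_{\on{red}} \simeq Z$, to the DG-scheme identification $Z\simeq S'\underset{S}\times Z$ that \lemref{l:usual base change} requires. This uses both the flatness of $f$ (so the DG fiber product is classical whenever $Z$ is) and the residue-field-preserving condition built into the Nisnevich topology; once this DG-level identification is secured, all subsequent manipulations are formal.
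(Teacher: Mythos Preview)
Your argument is correct and takes a genuinely different route from the paper.

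The paper does not prove Nisnevich descent directly. Instead, it deduces it from the strictly stronger \propref{p:!-descent smooth} (smooth surjective !-descent), whose proof is quite different from yours: using \corref{c:! tensor up smooth}, the cosimplicial category $\IndCoh^!(S_1^\bullet/S_2)$ is identified with $\QCoh^*(S_1^\bullet/S_2)\underset{\QCoh(S_2)}\otimes \IndCoh(S_2)$; since $\QCoh(S_2)$ is rigid and $\IndCoh(S_2)$ dualizable, tensoring commutes with the totalization, and one is reduced to faithfully flat descent for $\QCoh$. So the paper leverages the $\QCoh$-module structure of $\IndCoh$ and the known descent for $\QCoh$, while you argue geometrically via the localization sequence and Nisnevich excision in the Thomason--Trobaugh style. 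The paper's approach buys the stronger smooth-descent statement essentially for free, but depends on \corref{c:! tensor up smooth} (and ultimately on \propref{p:smooth map}, proved by Noetherian induction). Your approach is more self-contained relative to the localization machinery of \secref{s:properties} and isolates exactly what the Nisnevich condition contributes.

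Two small remarks. First, rather than invoking \corref{c:! and * pullback under smooth} plus triviality of $\CK_{S'/S}$ for \'etale $f$ (which the paper does not prove explicitly), you can get $f^!\circ i^\IndCoh_*\simeq i'{}^\IndCoh_*$ directly from the base change isomorphism \eqref{e:basic base change} applied to your Cartesian square with vertical arrows $i,i'$ and horizontal arrows $\on{id}_Z,f$. Second, your final step---that a map of localization sequences inducing an equivalence on the ``closed'' pieces makes the ``open'' square Cartesian---is correct but not entirely formal; it is the standard excision lemma for Verdier sequences of stable presentable categories (as in Blumberg--Gepner--Tabuada or related sources). In the case at hand one can also verify it concretely: for $a\in\IndCoh(S)_Z$, the unit $a\to f^\IndCoh_*f^{\IndCoh,*}a$ has cofiber $\on{cofib}(\CO_S\to f_*\CO_{S'})\otimes a$ by \corref{c:tensoring up}, and this vanishes since $i^*\on{cofib}(\CO_S\to f_*\CO_{S'})=0$ (by flat base change and the Nisnevich isomorphism over $Z$) together with \eqref{e:dir image as modules}.
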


\begin{prop} \label{p:Nisnevich+finite flat}
Any presheaf on $\dgSch_{\on{Noeth}}$ that satisfies descent 
with respect to proper surjective maps and Nisnevich covers
satisfies fppf descent.
\end{prop}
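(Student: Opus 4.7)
The plan is to reduce an arbitrary fppf cover, via a Nisnevich refinement of the base, to a finite flat surjective cover, since the latter is proper surjective and hence handled by hypothesis. Let $F$ be a presheaf on $\dgSch_{\on{Noeth}}$ satisfying descent for proper surjective maps and for Nisnevich covers, and let $f:S_1\to S_2$ be an fppf cover. We wish to show that
$$F(S_2)\to \on{Tot}(F(S_1^\bullet/S_2))$$
is an equivalence.

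First I would reduce to the case in which $S_2$ is affine and $f$ is an fppf cover of affines. Since Nisnevich (in particular Zariski) descent holds for $F$ by hypothesis, both sides of the map are sheaves on $S_2$ in the Zariski/Nisnevich topology, so after restricting to a Zariski cover by affines we may assume $S_2$ is affine. Choosing a Zariski cover of $S_1$ by affines and replacing $S_1$ by their disjoint union only produces an fppf cover of the same kind, so we may also assume $S_1$ is affine.

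The key ingredient is the following classical theorem (due to Gabber, and reproved for instance in Rydh's work on the fppf and h-topologies): for every fppf cover $f:S_1\to S_2$ of Noetherian affine schemes, there exists a Nisnevich cover $\{V_i\to {}^{cl}S_2\}$ and, for each $i$, a finite flat surjective map $W_i\to V_i$ admitting a factorization $W_i\to V_i\times_{{}^{cl}S_2}{}^{cl}S_1\to V_i$. Since the property of being an fppf / Nisnevich / finite flat surjective morphism depends only on the underlying classical schemes, and DG pullbacks along flat maps agree with classical ones, this transports to the DG setting: replacing $V_i$ by $V_i\times_{S_2}S_2 := V_i$ viewed as a DG scheme via its flat structure over $S_2$, we obtain a Nisnevich cover $\{V_i\to S_2\}$ and finite flat surjective maps $W_i\to V_i$ factoring through $V_i\times_{S_2}S_1$.

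Given such a refinement, the proof of descent is a formal manipulation. Using Nisnevich descent for $F$, it is enough to show that $F$ satisfies descent for each composite $W_i\to V_i\to S_2$ restricted to $V_i$, i.e., for $W_i\to V_i$; but $W_i\to V_i$ is finite flat surjective, hence proper surjective, so descent holds by hypothesis. A standard comparison of \v{C}ech totalizations (cf.\ the general principle that if $h:T\to T'$ is a morphism of covers of $S$ and $F$ satisfies descent for both $T\to S$ and the term-wise map $T^\bullet/S\to T'{}^\bullet/S$, then it satisfies descent for $T'\to S$) then deduces descent for $V_i\times_{S_2}S_1\to V_i$, and reassembling over the Nisnevich cover $\{V_i\to S_2\}$ yields descent for $S_1\to S_2$.

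The main obstacle is the Gabber-type local refinement theorem in step two: while it is classical for schemes of finite type over a field or over a Noetherian base, invoking it cleanly for Noetherian DG schemes (as opposed to classical Noetherian schemes) requires the elementary but crucial observation that flatness, Nisnevich covers, and finite flat surjections are entirely controlled by the underlying classical truncations, so that the classical refinement can be promoted to a refinement in $\dgSch_{\on{Noeth}}$. Once this is in place, the rest of the argument is purely formal descent bookkeeping in $\inftygroup$-valued (or $\StinftyCat_{\on{cont}}$-valued) presheaves.
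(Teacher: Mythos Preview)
Your overall strategy matches the paper's: reduce fppf descent to a Nisnevich refinement followed by a finite (hence proper) surjective map. The paper states this reduction as \propref{p:factorization}: for any faithfully flat map of finite type $f:S_1\to S_2$ between \emph{classical} Noetherian schemes, there exist $S''_2\overset{g''}\to S'_2\overset{g'}\to S_2$ with $g'$ a Nisnevich cover, $g''$ finite flat surjective, and $g'\circ g''$ lifting to $S_1$. Rather than citing this as a black box attributed to Gabber or Rydh, the paper proves it directly in three steps: (1) a limit argument reduces to $S_2$ the spectrum of a local Henselian ring; (2) Noether normalization on the closed fiber reduces to $f$ quasi-finite; (3) Zariski's Main Theorem plus the Henselian hypothesis extracts a connected component of $S_1$ that is finite over $S_2$.

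One point where the paper is cleaner than your write-up is the passage from DG to classical schemes. You propose to lift the classical Nisnevich cover $\{V_i\to {}^{cl}S_2\}$ to a Nisnevich cover of $S_2$ and then regard the classical $W_i$ as sitting over the DG base; but then $W_i\to V_i^{DG}$ is no longer flat, only proper surjective (being the composite of a closed embedding and a finite flat map). This suffices, but you should say so explicitly rather than asserting that the refinement ``transports to the DG setting.'' The paper avoids the issue entirely by first applying proper-surjective descent along the closed embedding ${}^{cl}S_2\hookrightarrow S_2$ (via \corref{c:strategy}), thereby reducing to classical schemes at the outset and then invoking \propref{p:factorization} on the nose.
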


\sssec{}

To prove \propref{p:Nisnevich descent}, we will prove a more general statement,
which is itself a particular case of \thmref{t:fppf descent}:

\begin{prop}  \label{p:!-descent smooth}
$\IndCoh$ satisfies !-descent with respect to smooth surjective maps.
\end{prop}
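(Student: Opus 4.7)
The plan is to mimic the structure of the proof of \propref{p:proper descent} while adapting it to the smooth setting. The main difficulty is that for smooth (non-proper) $f$, the functor $f^!$ no longer admits a left adjoint of the form $f^\IndCoh_*$; however, by \corref{c:! and * pullback under smooth} we have a canonical isomorphism $f^! \simeq \CK_{S_1/S_2} \otimes f^{\IndCoh,*}(-)$, with $\CK_{S_1/S_2}$ a graded line bundle. The transitivity of relative dualizing line bundles under composition of smooth maps (i.e., $\CK_{g \circ h} \simeq \CK_h \otimes h^*(\CK_g)$) makes the twist by $\CK$'s compatible with all the face maps of the \v{C}ech simplicial scheme $S_1^\bullet/S_2$, producing a canonical equivalence of augmented cosimplicial DG categories
\[
\IndCoh^!(S_1^\bullet/S_2) \simeq \IndCoh^*(S_1^\bullet/S_2),
\]
where the right-hand side denotes the analogous \v{C}ech construction with structure maps $(-)^{\IndCoh,*}$. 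It therefore suffices to prove the corresponding $*$-descent statement.

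For the $*$-descent I would follow the skeleton of the proof of \propref{p:proper descent}, with the adjunction $(f^{\IndCoh,*}, f^\IndCoh_*)$ of \lemref{l:* pullback adjoint} playing the role of $(f^\IndCoh_*, f^!)$ in the proper case. Define
\[
\on{ev}_0 : \on{Tot}\bigl(\IndCoh^*(S_1^\bullet/S_2)\bigr) \to \IndCoh(S_1)
\]
to be the $0$-th evaluation. Since the cosimplicial structure maps are now \emph{left} adjoints, $\on{ev}_0$ admits a \emph{right} adjoint $\on{ev}_0^R$, constructed using the split simplicial scheme $S_1 \underset{S_2}\times (S_1^\bullet/S_2) \to S_1^\bullet/S_2$: its $*$-totalization computes $\IndCoh(S_1)$, and term-wise pushforward by $(f^\bullet)^\IndCoh_*$ yields a functor into $\on{Tot}(\IndCoh^*(S_1^\bullet/S_2))$, the commutation with the cosimplicial structure being flat base change (\lemref{l:usual base change}). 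The same base change furnishes a canonical isomorphism of comonads on $\IndCoh(S_1)$ between $\on{ev}_0 \circ \on{ev}_0^R$ and the native comonad $f^{\IndCoh,*} \circ f^\IndCoh_*$. Applying comonadic Barr--Beck--Lurie to both adjunctions then identifies $\IndCoh(S_2)$ and $\on{Tot}(\IndCoh^*(S_1^\bullet/S_2))$ with the category of comodules over this common comonad.

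The main obstacle is verifying the hypotheses of comonadic Barr--Beck--Lurie for the left adjoint $f^{\IndCoh,*}$: conservativity and preservation of $f^{\IndCoh,*}$-split totalizations. Conservativity is immediate, since $f^{\IndCoh,*}$ differs from the conservative $f^!$ (see \propref{p:cons}) by an invertible twist. Preservation of $f^{\IndCoh,*}$-split totalizations is the delicate point, because $f^{\IndCoh,*}$ is a left adjoint and need not preserve arbitrary limits; however, for smooth (hence flat) $f$, \corref{c:! tensor up smooth} realizes $f^{\IndCoh,*}$ as the base change functor $\QCoh(S_1) \underset{\QCoh(S_2)}\otimes (-)$ along a flat ring map, reducing the required preservation to the classical fact that tensoring with a flat module preserves totalizations of split cosimplicial objects. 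An analogous argument handles the Barr--Beck--Lurie hypotheses for $\on{ev}_0$, using the simplicial structure to propagate splitness from the $0$-th term.
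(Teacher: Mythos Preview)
Your approach differs substantially from the paper's and carries a genuine gap in the Barr--Beck step.

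The paper's proof is much shorter and avoids Barr--Beck entirely. Using \corref{c:! tensor up smooth}, it identifies the entire cosimplicial category $\IndCoh^!(S_1^\bullet/S_2)$ with $\QCoh^*(S_1^\bullet/S_2)\underset{\QCoh(S_2)}\otimes \IndCoh(S_2)$. Since $\QCoh(S_2)$ is rigid and $\IndCoh(S_2)$ is dualizable, the functor $-\underset{\QCoh(S_2)}\otimes \IndCoh(S_2)$ commutes with limits, so $\on{Tot}$ passes through the tensor product, and one is reduced to faithfully flat descent for $\QCoh$. No twisting by $\CK$, no comonadic machinery.

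Your reduction from $!$-descent to $*$-descent via the twists $\CK_{S_1^n/S_2}\otimes -$ is correct at the level of individual face and degeneracy maps, but you are skating over the coherence required to promote these termwise isomorphisms to an equivalence of augmented cosimplicial objects in $\StinftyCat_{\on{cont}}$; this needs the transitivity isomorphisms for relative dualizing line bundles to satisfy higher cocycle conditions. This is fixable but not free.

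The real problem is your verification of the comonadic Barr--Beck--Lurie hypothesis that $f^{\IndCoh,*}$ preserves totalizations of $f^{\IndCoh,*}$-split cosimplicial objects. Your justification (``tensoring with a flat module preserves totalizations of split cosimplicial objects'') conflates split totalizations, which are absolute and preserved by any functor, with $G$-split totalizations, which are not. In the derived setting, totalization is an infinite limit and a left adjoint such as $f^{\IndCoh,*}$ has no reason to preserve it. For $\QCoh$, Lurie's proof of flat descent uses left-completeness of the t-structure in an essential way; $\IndCoh$ is \emph{not} left-complete, so that route is blocked. To make your argument work you would need exactly the dualizability input the paper uses---at which point the tensor-product reduction to $\QCoh$-descent is both shorter and more transparent than threading it through Barr--Beck.
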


\begin{proof} 

By \corref{c:! tensor up smooth}, the co-simplicial category $\IndCoh^!(S_1^\bullet/S_2)$
identifies with
$$\QCoh^*(S_1^\bullet/S_2)\underset{\QCoh(S_2)}\otimes \IndCoh(S_2),$$
where $\QCoh^*(S_1^\bullet/S_2)$ is the usual co-simplicial category attached to
the simplicial DG schemes $S_1^\bullet/S_2$ and the functor
$$\QCoh^*_{\dgSch}:(\dgSch)^{\on{op}}\to \StinftyCat_{\on{cont}}.$$

\medskip

Since $\QCoh(S_2)$ is rigid and $\IndCoh(S_2)$ is dualizable, by \cite{DG}, Corollaries 6.4.2 and 4.3.2,
the operation
$$\IndCoh(S_2)\underset{\QCoh(S_2)}\otimes -:\QCoh(S_2)\mmod\to \StinftyCat_{\on{cont}}$$
commutes with limits.

\medskip

Hence, from we obtain that the natural map
$$\on{Tot}\left(\QCoh^*(S_1^\bullet/S_2)\right)\underset{\QCoh(S_2)}\otimes \IndCoh(S_2)\to 
\on{Tot}\left(\QCoh^*(S_1^\bullet/S_2)\underset{\QCoh(S_2)}\otimes \IndCoh(S_2)\right)$$
is an equivalence. Thus, we obtain an equivalence
\begin{equation} \label{e:tensoring with Tot}
\on{Tot}\left(\QCoh^*(S_1^\bullet/S_2)\right)\underset{\QCoh(S_2)}\otimes \IndCoh(S_2)\simeq 
\on{Tot}\left(\IndCoh^!(S_1^\bullet/S_2)\right).
\end{equation}

\medskip

By faithfully flat descent for $\QCoh$, we obtain that the natural map
$$\QCoh(S_2)\to \on{Tot}\left(\QCoh^*(S_1^\bullet/S_2)\right)$$
is an equivalence. So, the assertion of the proposition follows from
\eqref{e:tensoring with Tot}. 

\end{proof}

\sssec{}

Note that the same proof (using \propref{p:smooth map} instead of \corref{c:! tensor up smooth}) implies
the following statement:

\medskip

Let $f:S_1\to S_2$ be an eventually coconnective map. We can form the cosimplicial category 
$\IndCoh^*(S_1^\bullet/S_2)$ \emph{using the *-pullback functors},
and *-pullback defines an augmentation
\begin{equation} \label{e:map to total general *}
\IndCoh(S_2)\to \on{Tot}\left(\IndCoh^*(S_1^\bullet/S_2)\right).
\end{equation}

We shall say that $\IndCoh$ satisfies *-descent for $f$ if \eqref{e:map to total general *}
is an equivalence. We have:

\begin{prop}  \label{p:*-descent smooth}
$\IndCoh$ satisfies *-descent with respect to smooth surjective maps.
\end{prop}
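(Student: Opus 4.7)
The plan is to imitate the proof of \propref{p:!-descent smooth} almost verbatim, swapping out the role of $!$-pullback for $*$-pullback and \corref{c:! tensor up smooth} for \propref{p:smooth map}. The key input is that for a smooth morphism $f\colon S_1\to S_2$, the functor $f^{\IndCoh,*}$ induces an equivalence
\[
\QCoh(S_1)\underset{\QCoh(S_2)}\otimes \IndCoh(S_2)\longrightarrow \IndCoh(S_1),
\]
which is exactly \propref{p:smooth map}. Applied simplex-wise to the \v{C}ech nerve $S_1^\bullet/S_2$, this identifies the cosimplicial object $\IndCoh^*(S_1^\bullet/S_2)$ with
\[
\QCoh^*(S_1^\bullet/S_2)\underset{\QCoh(S_2)}\otimes \IndCoh(S_2),
\]
where the $*$-structure maps on the left correspond to the usual $*$-pullback structure maps on $\QCoh^*(S_1^\bullet/S_2)$ tensored with $\on{Id}_{\IndCoh(S_2)}$ (here we use the compatibility of the equivalence in \propref{p:smooth map} with further $*$-pullbacks, which is built into its construction as induced from $f^{\IndCoh,*}$).

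Next, I would commute totalization past the tensor product. Since $\QCoh(S_2)$ is rigid as a symmetric monoidal DG category and $\IndCoh(S_2)$ is dualizable as a $\QCoh(S_2)$-module, \cite{DG}, Corollaries 6.4.2 and 4.3.2 imply that the functor
\[
-\underset{\QCoh(S_2)}\otimes \IndCoh(S_2)\colon \QCoh(S_2)\mmod \longrightarrow \StinftyCat_{\on{cont}}
\]
preserves limits. Applying this to the cosimplicial $\QCoh(S_2)$-module category $\QCoh^*(S_1^\bullet/S_2)$ yields a canonical equivalence
\[
\on{Tot}\!\left(\QCoh^*(S_1^\bullet/S_2)\right)\underset{\QCoh(S_2)}\otimes \IndCoh(S_2)\;\simeq\;
\on{Tot}\!\left(\IndCoh^*(S_1^\bullet/S_2)\right).
\]

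Finally, I invoke faithfully flat descent for $\QCoh$: since $f$ is smooth surjective, it is in particular fppf, so the augmentation $\QCoh(S_2)\to \on{Tot}(\QCoh^*(S_1^\bullet/S_2))$ is an equivalence of $\QCoh(S_2)$-module categories. Tensoring with $\IndCoh(S_2)$ over $\QCoh(S_2)$ then shows that
\[
\IndCoh(S_2)\;\simeq\;\QCoh(S_2)\underset{\QCoh(S_2)}\otimes \IndCoh(S_2)\;\longrightarrow\;\on{Tot}\!\left(\IndCoh^*(S_1^\bullet/S_2)\right)
\]
is an equivalence, which is the desired $*$-descent. The only real point to verify with any care is that the equivalence in \propref{p:smooth map} upgrades to an equivalence of cosimplicial objects, i.e.\ is functorial in the simplicial variable; this is essentially tautological because the equivalence is induced by $\QCoh(S_2)$-linear base change of $f^{\IndCoh,*}$, and the structure maps in $\IndCoh^*(S_1^\bullet/S_2)$ are by definition the $*$-pullbacks along the face and degeneracy maps of $S_1^\bullet/S_2$, matching the corresponding $\QCoh^*$-structure maps under the identification. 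This is the step where a fully rigorous $\infty$-categorical treatment would need care, but it is the same issue already handled in the proof of \propref{p:!-descent smooth}.
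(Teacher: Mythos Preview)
Your proposal is correct and is exactly the approach the paper takes: the paper simply remarks that the same proof as for \propref{p:!-descent smooth} goes through, replacing \corref{c:! tensor up smooth} by \propref{p:smooth map}. Your write-up spells out precisely those steps.
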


\ssec{Proof of \propref{p:Nisnevich+finite flat}}

The assertion of the proposition, as well as the proof given below, are apparently well-known.
The author has learned it from J.~Lurie.

\sssec{}

We have the following two general lemmas, valid for any $(\infty,1)$-category $\bC$ and
a presheaf of $(\infty,1)$-categories $P$ on it: 

\begin{lem}
If a morphism $f$ admits a section, then any presheaf satisfies descent with respect to $f$.
\end{lem}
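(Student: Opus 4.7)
The plan is to exhibit the Čech nerve $S_1^\bullet/S_2$, augmented over $S_2$, as a \emph{split} augmented simplicial object in $\bC$, with the extra degeneracy provided by the section $s: S_2 \to S_1$. Applying the presheaf $P$ (which is contravariant) will then produce a split augmented cosimplicial object in $\bD$, and the standard fact about split cosimplicial objects—that their totalization is canonically the augmentation—will give the desired equivalence $P(S_2) \xrightarrow{\sim} \on{Tot}(P(S_1^\bullet/S_2))$.

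Concretely, first I would construct the extra degeneracy. At cosimplicial level $n$, one needs a map $S_1^{n+1}/S_2 \to S_1^n/S_2$ in $\bC$, which at the level of the universal property of the iterated fiber product $S_1^{n+1}/S_2 = S_1 \times_{S_2} \cdots \times_{S_2} S_1$ corresponds to dropping (or duplicating via $s$) one of the factors; the simplicial identities with the face and degeneracy maps of $S_1^\bullet/S_2$ are guaranteed by $f \circ s \simeq \on{id}_{S_2}$. In the $\infty$-categorical framework, the cleanest way to package this is to invoke the characterization of split simplicial objects in \cite{Lu0}, Sect.~6.1.3 (or \cite{Lu0}, Lemma 6.1.3.16): the datum of a section of $f$ is equivalent to lifting the augmented simplicial object $S_1^\bullet/S_2 \to S_2$ to a functor $\bDelta_+^{\on{op}} \to \bC$ that further extends to $\bDelta_{-\infty}^{\on{op}} \to \bC$, where $\bDelta_{-\infty}$ is the category of (possibly empty) finite linearly ordered sets.

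Second, the functor $P:\bC^{\on{op}}\to\bD$ sends this extended diagram to an extended cosimplicial diagram $\bDelta_{-\infty}\to \bD$, and by the dual of the cited result, the augmentation map
$$P(S_2) \to \on{Tot}\bigl(P(S_1^\bullet/S_2)\bigr)$$
is an equivalence in $\bD$. This formally concludes the proof.

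The one place that requires some care is the construction of the extended diagram $\bDelta_{-\infty}^{\on{op}} \to \bC$ from the mere data of a section $s$ of $f$, since in the $\infty$-categorical setting one must supply coherent higher simplicial data rather than just strict simplicial identities. However, this is handled uniformly by the theory of split simplicial objects in \cite{Lu0}: any section in the underlying homotopy category lifts canonically to the required coherent datum, because the Čech nerve itself is determined functorially by the morphism $f$. Once this lift is in place, the argument proceeds entirely formally.
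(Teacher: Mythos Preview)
Your proof is correct and is the standard argument. The paper itself does not prove this lemma: it simply states it as one of ``two general lemmas, valid for any $(\infty,1)$-category $\bC$ and a presheaf of $(\infty,1)$-categories $P$ on it,'' treating the result as well-known. Your argument via the split augmented simplicial object (with the extra degeneracy furnished by the section) is exactly the expected justification, and your care about the coherence issues in the $\infty$-categorical setting, resolved by appeal to \cite{Lu0}, Sect.~6.1.3, is appropriate.
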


\begin{lem}
Let $S''\overset{g''}\to S'\overset{g'}\to S$ be maps,
such that $P$ satisfies descent with respect to $g''$. Then $P$
satisfies descent with respect to 
$g'$ if and only if satisfies descent with respect to $g'\circ g''$. 
\end{lem}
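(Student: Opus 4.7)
My plan is to construct a bisimplicial scheme relating the Čech nerves of $g'$, $g''$, and $f := g'\circ g''$, and then extract the two descent statements via a Fubini-type argument on double totalizations.

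The bisimplicial scheme I would use is $B^{m,n} := \bigl((S'')^{n+1}_{S}\bigr)^{m+1}_{(S')^{n+1}_S}$, where the map $(S'')^{n+1}_S \to (S')^{n+1}_S$ is the product $(g'')^{n+1}$. Concretely, $B^{m,n}$ parametrizes $(m{+}1)\times(n{+}1)$ matrices of points of $S''$ whose entries in each column share a common image in $S'$ and whose columns share a common image in $S$. A direct inspection gives two identifications: (i) $B^{\bullet,n}$ is the Čech nerve of $(g'')^{n+1}:(S'')^{n+1}_S\to (S')^{n+1}_S$, obtained from $g''$ by a tower of base changes; (ii) $B^{m,\bullet}$ is the Čech nerve of $T_m := S''^{m+1}/S' \to S$, where the augmentation factors as $T_m \to S'' \overset{f}{\to} S$. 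The crucial elementary observation is that the first map $T_m \to S''$ admits a section (the diagonal $x\mapsto (x,\dots,x)$), so by the preceding lemma $P$ satisfies descent for it; this implies a canonical equivalence $\mathrm{Tot}_n P(B^{m,n}) \simeq \mathrm{Tot}_n P((S'')^{n+1}_S)$ for every $m$ (i.e., the Čech totalization for $T_m\to S$ agrees with that for $f$).

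Now assume descent for $g''$; I use its stability under the base changes producing $(g'')^{n+1}$, so that columnwise $\mathrm{Tot}_m P(B^{m,n}) \simeq P((S')^{n+1}_S)$. Write the double totalization $X := \mathrm{Tot}_{m,n}\, P(B^{m,n})$; by Fubini this equals both $\mathrm{Tot}_n P((S')^{n+1}_S)$ (columns first) and $\mathrm{Tot}_m \mathrm{Tot}_n P(B^{m,n})$, which by the section observation equals $\mathrm{Tot}_m \bigl(\mathrm{Tot}_n P((S'')^{n+1}_S)\bigr) = \mathrm{Tot}_n P((S'')^{n+1}_S)$ (since the inner expression is independent of $m$, a constant cosimplicial object has totalization itself). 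Both sides receive the natural augmentation from $P(S)$, so $P(S)\to \mathrm{Tot}_n P((S')^{n+1}_S)$ is an equivalence iff $P(S)\to \mathrm{Tot}_n P((S'')^{n+1}_S)$ is, giving the desired equivalence between descent for $g'$ and descent for $f$.

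The main obstacle is the implicit stability assumption used in the columnwise computation: descent for $g''$ must imply descent for the iterated base change $(g'')^{n+1}$. This is the standard automatic consequence in any descent-theoretic framework where the class of "descent morphisms" is closed under base change and composition—implicit here because in practice $P$ is assumed to be a sheaf for a pretopology rather than literally satisfying descent for a single morphism. The remaining verifications (that the section argument indeed yields the equivalence of cosimplicial totalizations, and that the Fubini identification is functorial in the relevant $\infty$-categorical sense) are routine once the bisimplicial object is in hand.
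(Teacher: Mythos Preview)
The paper states this lemma without proof, treating it as a standard general fact; there is no argument in the paper to compare against.

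Your bisimplicial approach is the standard one and the overall architecture is correct, but two points deserve attention.

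First, the justification of your column step is not quite right. Descent for $T_m \to S''$ gives information about $\mathrm{Tot}\bigl(P(T_m^\bullet/S'')\bigr)$, not about $\mathrm{Tot}\bigl(P(T_m^\bullet/S)\bigr)$; invoking it to compare the latter with $\mathrm{Tot}\bigl(P((S'')^\bullet/S)\bigr)$ is essentially a circular appeal to the very lemma you are proving. The correct mechanism is that the section makes the map of \emph{simplicial objects over $S$}
\[
T_m^{\,\bullet}/S \longrightarrow (S'')^{\bullet}/S
\]
a simplicial homotopy equivalence (via the explicit homotopy interpolating between $(s_m h_m)^{\bullet}$ and the identity), so $P$ of it is a cosimplicial homotopy equivalence and the totalizations agree. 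Moreover, the diagonals $s_m:S''\to T_m$ assemble into a map from the constant simplicial object $S''$ to $T_\bullet$, which is what makes these equivalences natural in $m$ and legitimizes taking $\mathrm{Tot}_m$ afterwards.

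Second, the gap you flag in the row step---needing descent for $(g'')^{n+1}$ rather than for $g''$ alone---is genuine. Your diagnosis is accurate: since $(g'')^{n+1}$ factors as an iterated composite of base changes of $g''$, the gap disappears once descent is assumed for a class of maps stable under base change, which is exactly the situation in every application the paper makes of this lemma (indeed, in \S8.4.2 the paper explicitly assumes descent for ``any map obtained as base change'' of the relevant morphisms). Whether the lemma holds under the literal hypothesis of descent for the single map $g''$ is a separate question that neither the paper nor your argument settles.
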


Consider now a Cartesian square
$$
\CD
S'_1 @>{g_1}>>  S_1 \\
@V{f'}VV   @VV{f}V   \\
S'_2 @>{g_2}>>  S_2.
\endCD
$$

\begin{cor}  \label{c:strategy}
If a presheaf $P$ satisfies descent with respect to $g_1$, $g_2$ and $f'$, then it also
satisfies descent with respect to $f$. 
\end{cor}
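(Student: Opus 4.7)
The plan is to apply the two preceding lemmas in succession, using the Cartesian square to identify the composite $g_2 \circ f'$ with $f \circ g_1$, and then triangulating along each of these two compositions.

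First I would consider the composition $S'_1 \overset{f'}\to S'_2 \overset{g_2}\to S_2$. By assumption, $P$ satisfies descent with respect to $f'$, so the second lemma applies: descent for $g_2$ is equivalent to descent for $g_2 \circ f'$. Since we are given descent for $g_2$, we deduce descent for $g_2 \circ f'$.

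Next, using that the square is Cartesian, we identify $g_2 \circ f' \simeq f \circ g_1$. Now I would consider the composition $S'_1 \overset{g_1}\to S_1 \overset{f}\to S_2$. By assumption, $P$ satisfies descent with respect to $g_1$, so the second lemma again applies: descent for $f$ is equivalent to descent for $f \circ g_1$. Since we have just established descent for $f \circ g_1 \simeq g_2 \circ f'$, we conclude that $P$ satisfies descent for $f$, as desired.

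There is essentially no obstacle here; the corollary is a purely formal consequence of the two lemmas, and the only content is the bookkeeping of which map plays the role of $g''$ and which plays the role of $g'$ in each application. The sole substantive point is the identification $g_2 \circ f' \simeq f \circ g_1$ furnished by the Cartesian square, which is precisely what allows the two applications of the second lemma to chain together.
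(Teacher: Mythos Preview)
Your proof is correct and is precisely the intended argument; the paper states this as a corollary of the two preceding lemmas without writing out the details, and your two-step application of the composition lemma is the natural way to extract it. One small remark: you invoke the Cartesian property of the square to identify $g_2\circ f' \simeq f\circ g_1$, but in fact mere commutativity suffices for that step (the Cartesian hypothesis is used elsewhere in the paper, not in this corollary).
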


\sssec{}

First, applying \corref{c:strategy} and \propref{p:proper descent}, we can replace
$S_2$ by ${}^{cl}\!S_2$, and $S_1$ by $S_1\underset{S_2}\times {}^{cl}\!S_2\simeq {}^{cl}\!S_1$. 
Thus, we can assume that the DG schemes involved are classical.

\sssec{}

We are going to show that for any faithfully flat map $f$ of finite type, there exists a diagram
$$S_2''\overset{g''}\to S_2'\overset{g'}\to S_2,$$
such that 

\begin{itemize}

\item The composition $g'\circ g'':S''_2\to S_2$ lifts to a map $S''_2\to S_1$.

\item $\IndCoh$ satisfies !-descent with respect to any map obtained as 
base change of either $g'$ or $g''$.

\end{itemize}

In view of \corref{c:strategy}, this will prove \thmref{t:fppf descent}. 

\medskip

Thus, it remains to show:

\begin{prop}  \label{p:factorization}
For any faithfully flat map of finite type between classical Noetherian schemes $f:S_1\to S_2$, there exist maps
$S''_2\overset{g''}\to S'_2\overset{g'}\to S_2$ such that $g'\circ g''$ lifts to a map $S''_2\to S_1$,
and such that (a) $g'$ is a Nisnevich cover, and (b) $g''$ is finite, flat and surjective. 
\end{prop}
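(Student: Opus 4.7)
My plan is to construct the desired factorization \emph{pointwise} on $S_2$ and then glue via quasi-compactness of $S_2$. That is, for each $y\in S_2$ I shall produce an étale neighborhood $(U_y,u_y)\to(S_2,y)$ with $\kappa(u_y)=\kappa(y)$, together with a finite flat surjective $T_y\to U_y$ equipped with a lift $T_y\to S_1$ over $S_2$. Finitely many such $(U_y)$ will then form a Nisnevich cover of $S_2$, and taking $S'_2:=\bigsqcup_i U_{y_i}$ and $S''_2:=\bigsqcup_i T_{y_i}$ finishes the construction.

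\textbf{Local construction at $y$.} Fix $y\in S_2$ and consider the fiber $X_y:=f^{-1}(y)$, which is nonempty (by surjectivity) and of finite type over $\kappa(y)$. Since $\operatorname{char}(k)=0$, generic smoothness yields a closed point $x\in X_y$ at which $X_y$ is smooth over $\kappa(y)$; in particular $\kappa(x)/\kappa(y)$ is finite separable and $\mathcal O_{X_y,x}$ is a regular local ring of dimension $d:=\dim_x X_y$. Pick a regular system of parameters $\bar g_1,\dots,\bar g_d$ at $x$ in $X_y$ and lift them to sections $g_1,\dots,g_d\in\mathcal O_{S_1}$ in some affine open neighborhood of $x$. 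Then the closed subscheme $Z:=V(g_1,\dots,g_d)\subset S_1$ is flat over $S_2$ at $x$ by iterated application of the local criterion of flatness (the $\bar g_i$ form a regular sequence in the smooth fiber), while the equality $\mathcal O_{Z\cap X_y,x}=\kappa(x)$ makes $Z\to S_2$ quasi-finite at $x$. Now pass to the Henselization $S_2^h$ of $S_2$ at $y$, which is the cofiltered limit of étale neighborhoods $U_\lambda$ of $y$ satisfying $\kappa(u_\lambda)=\kappa(y)$. Applying the decomposition theorem for quasi-finite separated morphisms over a Henselian local base (EGA~IV, 18.5.11) to $Z\times_{S_2}S_2^h\to S_2^h$, one extracts the clopen component $T^h$ of $Z\times_{S_2}S_2^h$ containing the lift $x^h$ of $x$; this $T^h$ is finite over $S_2^h$, is flat (as an open subscheme of $Z\times_{S_2}S_2^h$, which is flat in a neighborhood of $x^h$), and is surjective since any nonempty finite scheme over a Henselian local base meets the closed point. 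Standard finite-presentation descent along $S_2^h=\varprojlim_\lambda U_\lambda$ then descends both $T^h\to S_2^h$ and its composite with $Z\times_{S_2}S_2^h\hookrightarrow S_1\times_{S_2}S_2^h\to S_1$ down to some $U_\lambda$, yielding the required $(U_y,T_y)$.

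\textbf{Main obstacle.} The essential technical input is the construction of the auxiliary quasi-finite flat subscheme $Z\subset S_1$ through a well-chosen closed point of $X_y$. This depends on locating a \emph{smooth} closed point of the fiber (hence the characteristic-zero hypothesis) so that $\mathcal O_{X_y,x}$ is regular and admits the regular sequence feeding the local criterion of flatness; without smoothness, flatness of $Z$ over $S_2$ would generally fail. Once $Z$ is in hand, the Henselian decomposition, surjectivity check, and limit descent are routine applications of EGA~IV, and the Noetherian hypothesis on $S_2$ produces the finite covering family needed for globalization.
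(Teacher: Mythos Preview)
Your approach is essentially the paper's: pass to a Henselian local base via a limit/spreading-out argument, slice the source to something quasi-finite and flat, then invoke the Henselian decomposition of a quasi-finite separated map (the paper does this via Zariski's Main Theorem, \lemref{l:qf}) to extract a finite flat piece. The paper merely organizes things in a different order---reducing to the Henselian base first---and phrases the slicing as Noether normalization of the special fiber $h':S'_1\to\BA^n_{k'}$ followed by taking the fiber over $0\times S_2$; your ``cut by a regular sequence lifted from the fiber'' is the same maneuver.

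There is, however, one genuine gap. Your appeal to generic smoothness to locate a smooth closed point $x\in X_y$ presupposes that $X_y$ is generically reduced, but fibers of flat maps need not be: already $S_1=\Spec(k[t,\epsilon]/\epsilon^2)\to S_2=\Spec(k)$ has $X_y$ nowhere smooth. The repair is immediate: replace ``smooth'' by ``Cohen--Macaulay.'' For a scheme of finite type over a field the CM locus is open and contains the generic points, hence contains a closed point $x$; a system of parameters in the CM local ring $\CO_{X_y,x}$ is then a regular sequence, so your iterated local criterion of flatness goes through verbatim, and $\CO_{Z\cap X_y,x}$ is Artinian (not necessarily $\kappa(x)$, but that is all you need for quasi-finiteness). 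With this fix the characteristic-zero hypothesis becomes irrelevant, in agreement with the paper's argument, which sidesteps the issue by using flatness of $h'$ at a suitable point rather than smoothness of the fiber.
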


\ssec{Proof of \propref{p:factorization}}

\sssec{Step 1}

First, we claim that we can assume that $S_2$ is the spectrum of a local Henselian Noetherian ring.

\medskip

Indeed, let $x_2\in S_2$ be a point. Let $A_2$ denote the Henselisation of $S_2$ at $x_2$. 
Let $$\Spec(B_2)\to \Spec(A_2)$$ be a finite, flat and surjective map such that the composition
$$\Spec(B_2)\to \Spec(A_2)\to S_2$$
lifts to a map $\Spec(B_2)\to S_1$.

\medskip

Write $A_2=\underset{i\in I}{colim}\, A^i_2$, where $I$ is a filtered category, and where each 
$\Spec(A^i_2)$ is endowed with an \'etale map to $S_2$, and contains a point $x^i_2$ that
maps to $x_2$ inducing anisomorphism on residue fields. With no restriction of generality,
we can assume that each $\Spec(A^i_2)$ is connected.

\medskip

Since $\Spec(B_2)\to \Spec(A_2)$ is finite and flat, there exists an index $i_0\in I$, and 
a finite and flat (and automatically surjective) map 
$$\Spec(B^{i_0}_2)\to \Spec(A^{i_0}_2)$$ equipped
with an isomorphism
$$\Spec(B_2)\simeq \Spec(B^{i_0}_2)\underset{\Spec(A^{i_0}_2)}\times \Spec(A_2).$$

For $i\in I_{i_0}$ set 
$$\Spec(B^i_2)\simeq \Spec(B^{i_0}_2)\underset{\Spec(A^{i_0}_2)}\times \Spec(A^i_2).$$

Replacing $I$ be $I_{/i_0}$, we obtain an isomorphism 
$$B_2 \simeq \underset{i\in I}{colim}\, B^i_2,$$
where $I$ is filtered.

\medskip

Since $S_1$ is of finite type over $S_2$, the morphism
$$\Spec(B_2)\to S_1$$
of schemes over $S_2$ factors as
$$\Spec(B_2)\to \Spec(B^{i'}_2)\to S_1$$
for some index $i'\in I$.

\medskip

We let $S'_2$ be the (finite) disjoint union of the above schemes 
$\Spec(A^{i'}_2)$ that covers $S_2$, and we let $S''_2$ to be 
the corresponding union of the schemes $\Spec(B^{i'}_2)$.

\sssec{Step 2}

Next, we are going to show that we can assume that the map $f:S_1\to S_2$ is quasi-finite.

\medskip

With no restriction of generality, we can assume that $S_1$ is affine. 
Let $x_2$ be the closed point of $S_2$, and let $S'_1$ be the fiber of $S_1$ over it. 
By assumption, $S'_1$ is a scheme of finite type over $k'$,  the residue field
of $x_2$. By Noether normalization, there exists a map $h':S'_1\to \BA^n_{k'}$,
which is finite and flat \emph{at some} closed point $x_1\in S'_1$. Replacing
$S_1$ by an open subset containing $x_1$, we can assume that $h'$ comes 
from a map $h:X\to \BA^n\times S_2$.  By construction, the map $h$ is quasi-finite 
and flat at $x_1$. We can replace $S_1$ by an even smaller open subset 
containing $x_1$, and thus assume that $h$ is quasi-finite and flat on all of $S_1$.

\medskip

The sought-for quasi-finite fppf map is
$$(0\times S_2) \underset{\BA^n\times S_2}\times S_1 \to S_2.$$

\sssec{Step 3} 

Thus, we have reduced the situation to the case when $S_2$ is the spectrum of a local Henselian
Noetherian ring, and $S_1$ is affine and its map to $S_2$ is quasi-finite. We claim that in this
case $S_1$ contains a connected component over which $f$ is finite, see \lemref{l:qf} below. 
This proves \propref{p:factorization}. 

\qed

\begin{lem} \label{l:qf}
Let $\phi:Z\to Y=\Spec(R)$ be a quasi-finite and separated map, where $R$ is a local
Henselian Noetherian ring. Suppose, moreover, that the closed point of $Y$ is in the image
of $\phi$. Then $Z$ contains a connected component $Z'$, such that
$\phi|_{Z'}$ is finite.
\end{lem}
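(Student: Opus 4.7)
The plan is to invoke Zariski's Main Theorem (in the form due to Grothendieck, \emph{cf.} \cite[EGA IV, 18.12.13]{}) to factor $\phi$ as an open immersion followed by a finite map, and then exploit the splitting of finite algebras over a Henselian local ring into a product of local factors. The hard part is really bookkeeping: once the factorization and the decomposition are in hand, the statement follows by a short point-chase.

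First, I would apply Zariski's Main Theorem to the quasi-finite separated morphism $\phi:Z\to Y$ to obtain a factorization
\[
Z\overset{j}\hookrightarrow \bar Z\overset{\bar\phi}\longrightarrow Y,
\]
where $j$ is an open immersion and $\bar\phi$ is finite. Write $\bar Z=\Spec(\bar A)$ with $\bar A$ a finite $R$-algebra. Since $R$ is Henselian Noetherian, every finite $R$-algebra is a finite product of local (Henselian) rings, so
\[
\bar A\simeq \prod_{i\in I}\bar A_i,\qquad \bar Z=\bigsqcup_{i\in I}\bar Z_i,
\]
with each $\bar Z_i=\Spec(\bar A_i)$ a connected (indeed, irreducible in the sense of having a unique closed point) component of $\bar Z$. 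Each $\bar Z_i$ has a unique closed point $\bar z_i$, and this point is the unique preimage in $\bar Z_i$ of the closed point $y_0\in Y$ because $\bar A_i$ is local and finite over $R$.

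Next, pick a point $z\in Z$ with $\phi(z)=y_0$ (which exists by hypothesis), and let $i_0\in I$ be the index such that $z\in \bar Z_{i_0}$. Since $\bar Z_{i_0}$ is local with unique closed point $\bar z_{i_0}$, and $z$ maps to the closed point of $Y$, necessarily $z=\bar z_{i_0}$. Now the trace $Z\cap \bar Z_{i_0}$ is an open subscheme of the local scheme $\bar Z_{i_0}$ containing its unique closed point, and hence
\[
Z\cap \bar Z_{i_0}=\bar Z_{i_0}.
\]
Set $Z':=\bar Z_{i_0}\subset Z$.

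Finally, $Z'$ is closed in $\bar Z$ (as a summand) and contained in $Z$, so it is closed in $Z$; it is also open in $\bar Z$ and hence open in $Z$. Being the spectrum of a local ring, $Z'$ is connected, so it is a connected component of $Z$. The restriction $\phi|_{Z'}$ coincides with $\bar\phi|_{\bar Z_{i_0}}:\bar Z_{i_0}\to Y$, which is finite. The only genuine subtlety in the argument is the appeal to Zariski's Main Theorem for the initial compactification; everything else is a formal consequence of the Henselian hypothesis together with the fact that an open neighborhood of the closed point of a local scheme exhausts it.
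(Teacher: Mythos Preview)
Your proof is correct and follows essentially the same route as the paper's: factor $\phi$ via Zariski's Main Theorem as an open immersion into a finite $Y$-scheme $\bar Z$, use the Henselian hypothesis to decompose $\bar Z$ into local connected components, and observe that an open subscheme of a local scheme containing the closed point must be everything. The only cosmetic difference is that the paper phrases the last step as ``$z$ is closed in $\bar Z'$ since $\bar Z'\to Y$ is finite,'' whereas you argue directly that the unique prime of the local ring $\bar A_{i_0}$ lying over $\mathfrak m_R$ is its maximal ideal; these are the same observation.
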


\begin{proof}

By Zariski's Main Theorem, we can factor $\phi$ as 
\begin{gather*}
\xy
(-10,0)*+{Z}="A";
(10,0)*+{\ol{Z}}="B";
(0,-10)*+{Y,}="C";
{\ar@{->}^{j} "A";"B"};
{\ar@{->}_{\phi} "A";"C"};
{\ar@{->}^{\ol\phi} "B";"C"};
\endxy
\end{gather*}
where $j$ is an open embedding and $\ol\phi$ is finite. 

\medskip

Since $\ol{Z}$ is finite over $Y$, and $Y$ is Henselian, $\ol{Z}$ is a union
of connected components, each of which is local. Let $z$ be a closed point of
$Z$ that maps to the closed point $y$ of $Y$. Let $\ol{Z}{}'$ be the
connected component of $\ol{Z}$ that contains $z$. Set $Z':=\ol{Z}{}'\cap Z$.
It remains to show that the open embedding $Z'\hookrightarrow \ol{Z}{}'$
is an equality.

\medskip

However, since $\ol{Z}{}'$ is finite over $Y$, we obtain that $z$ is closed in 
$\ol{Z}{}'$. We obtain that $Z'$ contains the \emph{unique} closed point
of $\ol{Z}{}'$, which implies the assertion.

\end{proof}

\section{Serre duality}  \label{s:Serre}

In this section we restrict our attention to the category $\dgSch_{\on{aft}}$ of DG schemes almost
of finite type over $k$. 

\ssec{Self-duality of $\IndCoh$}

\sssec{}   \label{sss:intr self-duality}

Consider the category $(\dgSch_{\on{aft}})_{\on{corr:all;all}}$ and the functor
$$\IndCoh_{(\dgSch_{\on{aft}})_{\on{corr:all;all}}}:(\dgSch_{\on{aft}})_{\on{corr:all;all}}\to
 \StinftyCat_{\on{cont}}.$$

\medskip

Note that by construction, there exists a canonical involutive equivalence
$$\varpi:(\dgSch_{\on{aft}})_{\on{corr:all;all}}^{\on{op}}\simeq (\dgSch_{\on{aft}})_{\on{corr:all;all}},$$
obtained by interchanging the roles of vertical and horizontal arrows. 

\sssec{}

Let $\StinftyCat^{\on{dualizable}}_{\on{cont}}$ denote the full subcategory of $\StinftyCat_{\on{cont}}$
formed by dualizable categories. Recall also (see., e.g., \cite{DG}, Sect. 2.3) that the category
$\StinftyCat^{\on{dualizable}}_{\on{cont}}$ carries a canonical involutive equivalence
$$\on{dualization}:(\StinftyCat^{\on{dualizable}}_{\on{cont}})^{\on{op}}\simeq \StinftyCat^{\on{dualizable}}_{\on{cont}}.$$
given by taking the dual category: $\bC\mapsto \bC^\vee$.

\medskip

By construction, the functor $\IndCoh_{(\dgSch_{\on{aft}})_{\on{corr:all;all}}}$ takes values in the subcategory
$$\StinftyCat^{\on{comp.gener.}}_{\on{cont}}\subset 
\StinftyCat^{\on{dualizable}}_{\on{cont}}\subset \StinftyCat_{\on{cont}}.$$

\sssec{}

We will encode the Serre duality structure of the functor $\IndCoh$ by the following theorem:

\begin{thm} \label{t:Serre duality}
The following diagram of functors 
\begin{equation} \label{e:Serre duality diag}
\CD
(\dgSch_{\on{aft}})_{\on{corr:all;all}}^{\on{op}}   @>{\IndCoh_{(\dgSch_{\on{aft}})_{\on{corr:all;all}}}^{\on{op}}}>>  
(\StinftyCat^{\on{dualizable}}_{\on{cont}})^{\on{op}}  \\
@V{\varpi}VV     @VV{\on{dualization}}V   \\
(\dgSch_{\on{aft}})_{\on{corr:all;all}}   @>{\IndCoh_{(\dgSch_{\on{aft}})_{\on{corr:all;all}}}}>> \StinftyCat^{\on{dualizable}}_{\on{cont}}.
\endCD
\end{equation}
is canonically commutative. Moreover, this structure is canonically compatible
with the involutivity structure on the equivalences $\varpi$ and $\on{dualization}$.
\end{thm}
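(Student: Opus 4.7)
The plan is to deduce the theorem from a general principle about symmetric monoidal functors between symmetric monoidal $(\infty,1)$-categories: such a functor automatically preserves duals, sending the duality data of a dualizable object to duality data in the target. Here the source category $(\dgSch_{\on{aft}})_{\on{corr:all;all}}$ carries the Cartesian symmetric monoidal structure of \thmref{t:upper shriek monoidal}, and the functor $\IndCoh_{(\dgSch_{\on{aft}})_{\on{corr:all;all}}}$ is symmetric monoidal by \corref{c:upper shriek monoidal}. The target $\StinftyCat^{\on{dualizable}}_{\on{cont}}$ carries the standard symmetric monoidal structure given by tensor product of DG categories, in which dualizable objects form a rigid full subcategory.

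First, I would show that every object $S\in (\dgSch_{\on{aft}})_{\on{corr:all;all}}$ is canonically self-dual, with evaluation and coevaluation correspondences
\[
\on{ev}_S=\bigl(S\times S\xleftarrow{\Delta_S} S \xrightarrow{p_S}\on{pt}\bigr),\qquad
\on{coev}_S=\bigl(\on{pt}\xleftarrow{p_S} S\xrightarrow{\Delta_S}S\times S\bigr).
\]
The two triangle identities reduce to checking that certain specific iterated compositions of correspondences are canonically isomorphic to the identity correspondence on $S$. Both compositions unwind to grids of Cartesian squares involving $\Delta_S$ and the projections $S\times S\to S$, and the verification is a diagram chase that follows from the symmetry and associativity of the Cartesian product together with the identity $\Delta_S^*\circ (\Delta_S\times\on{id})_*\simeq\on{id}$ at the level of correspondences.

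Second, I would identify the resulting involution on $(\dgSch_{\on{aft}})_{\on{corr:all;all}}$ induced by passage to duals with $\varpi$. On objects this is tautological, since each $S$ is its own dual. At the level of $1$-morphisms, for a correspondence $(S_1\xleftarrow{g} T\xrightarrow{f} S_2)$, its transpose with respect to the self-dualities above is computed by composing $\on{coev}_{S_1}\otimes\on{id}_{S_2}$, then the correspondence itself tensored with $S_1$ on the opposite side, then $\on{id}_{S_1}\otimes\on{ev}_{S_2}$; unwinding, this produces the correspondence $(S_2\xleftarrow{f} T\xrightarrow{g}S_1)$, which is exactly $\varpi$ applied to the original $1$-morphism. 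The higher coherences, including the involutivity datum relating $\varpi\circ\varpi$ to the identity, are automatic since duality data in a rigid symmetric monoidal $(\infty,1)$-category form a contractible space.

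Third, applying the symmetric monoidal functor $\IndCoh_{(\dgSch_{\on{aft}})_{\on{corr:all;all}}}$ transports the self-duality of $S$ to a canonical self-duality of $\IndCoh(S)$, whose evaluation is
\[
\IndCoh(S)\otimes\IndCoh(S)\simeq\IndCoh(S\times S)\xrightarrow{\Delta_S^!}\IndCoh(S)\xrightarrow{(p_S)^{\IndCoh}_*}\Vect,
\]
and whose coevaluation is $(\Delta_S)^{\IndCoh}_*\circ p_S^!$; this matches classical Serre duality on $\Coh(S)$. The commutativity of \eqref{e:Serre duality diag}, together with its compatibility with the involutive structures, then follows from the universal property of dualization in a rigid symmetric monoidal category: both routes around the square compute the dual of $\IndCoh(S)$ (which is canonically $\IndCoh(S)$ itself) and send a correspondence to its transpose. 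The main obstacle will be the rigorous $(\infty,1)$-categorical formulation of ``symmetric monoidal functors preserve duals,'' and more generally packaging the self-duality construction and the involution $\varpi$ as $(\infty,1)$-functors rather than assignments on objects and $1$-morphisms; organizing the relevant higher coherences coming from the Cartesian structure on $(\dgSch_{\on{aft}})_{\on{corr:all;all}}$ is nontrivial and, as with \thmref{t:upper shriek monoidal} itself, is deferred to \cite{GR3}.
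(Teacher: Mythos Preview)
Your proposal is correct and follows essentially the same approach as the paper: both exploit that $(\dgSch_{\on{aft}})_{\on{corr:all;all}}$ is rigid symmetric monoidal under Cartesian product with every object self-dual via the diagonal correspondences, identify the resulting dualization involution with $\varpi$, and then transport this through the symmetric monoidal functor $\IndCoh_{(\dgSch_{\on{aft}})_{\on{corr:all;all}}}$ of \corref{c:upper shriek monoidal}. The paper packages the first step as a general lemma about rigid symmetric monoidal categories and likewise defers the $(\infty,1)$-categorical verification of the identification $\on{dualization}\simeq\varpi$ to \cite{GR3}.
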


The rest of this subsection is devoted to the proof of this theorem.

\sssec{}  \label{sss:abstract duality}

Let us consider the following general paradigm. Let $\bC$ be a symmetric monoidal category,
which is rigid. Then the assignment $\bc\mapsto \bc^\vee$ defines a canonical involutive
self-equivalence
$$\bC^{\on{op}}\overset{\on{dualization}}\longrightarrow \bC.$$

\medskip

If $\bC_1$ is another rigid symmetric monoidal category, and $F:\bC\to \bC_1$ 
is a symmetric monoidal functor, then the diagram
$$
\CD
\bC^{\on{op}}   @>{F^{\on{op}}}>>  (\bC_1)^{\on{op}}  \\
@V{\on{dualization}}VV     @VV{\on{dualization}}V   \\
\bC  @>{F}>> \bC_1
\endCD
$$
naturally commutes in a way compatible with the involutive structure on the vertical arrows. 

\sssec{}  \label{sss:abs dual corr}

Let $\bC^0$ be a category with fiber products and a final object, 
and let $\bC^0_{\on{corr:all;all}}$ be the resulting category of correspondences.
We make $\bC^0_{\on{corr:all;all}}$ into a symmetric monoidal category by means of the product 
operation.

\medskip

The category $\bC^0_{\on{corr:all;all}}$ is automatically rigid: every $\bc\in \bC^0_{\on{corr:all;all}}$ is self-dual,
where the counit of the duality datum is given by the correspodence
$$
\CD
\bc  @>{\Delta_{\bc}}>>  \bc\times \bc \\
@VVV  \\
\on{pt},
\endCD
$$
(here $\on{pt}$ is the final object in $\bC^0$) and the unit is given by the correspondence
$$
\CD
\bc  @>>>  \on{pt} \\
@V{\Delta_{\bc}}VV  \\
\bc\times \bc.
\endCD
$$

\medskip

The following results from the definitions (a full proof will be given in \cite{GR3}):

\begin{lem}
The involutive self-equivalence 
$$\bC^0_{\on{corr:all;all}} \overset{\on{dualization}}\longrightarrow (\bC^0_{\on{corr:all;all}})^{\on{op}},$$
is canonically isomorphic to $\varpi$.
\end{lem}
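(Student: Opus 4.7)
The plan is to verify the identification of $\on{dualization}$ with $\varpi$ in three stages of increasing homotopical complexity: pointwise on objects, on 1-morphisms by direct computation, and then upgrade to a natural isomorphism of $\infty$-functors compatible with their respective involutive structures. On objects the statement is empty, since every $\bc \in \bC^0$ is its own dual with the unit/counit given by the diagonal correspondences described just before the lemma --- so both functors are the identity at the level of objects, and the triangle identities needed to confirm self-duality amount to observing that, under composition of correspondences by Cartesian product, the two compositions $\bc \underset{\bc\times\bc}{\times}(\bc\times \bc)\underset{\bc\times\bc}{\times}\bc$ built from $\Delta_\bc$ both collapse canonically to $\bc$. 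This fixes the duality datum, and hence the abstract functor $\on{dualization}$, uniquely.

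Next, at the level of 1-morphisms I would apply the standard rigid-category formula to a correspondence $\bc_1 \xleftarrow{g} \bc_{1,2} \xrightarrow{f} \bc_2$. The dual morphism is obtained as the composition $\bc_2 \to \bc_1\times\bc_1\times\bc_2 \to \bc_1\times\bc_2\times\bc_2 \to \bc_1$ of $\eta_{\bc_1}\otimes\on{id}$, $\on{id}\otimes\phi\otimes\on{id}$, and $\on{id}\otimes\epsilon_{\bc_2}$. Each composition in $\bC^0_{\on{corr:all;all}}$ is computed as a Cartesian product of the relevant legs, and because $\eta$ and $\epsilon$ have $\Delta_{\bc_1}$ and $\Delta_{\bc_2}$ as their non-trivial legs, the successive fiber products telescope: the total space is simply $\bc_{1,2}$, and the resulting correspondence is $\bc_2 \xleftarrow{f}\bc_{1,2}\xrightarrow{g}\bc_1$. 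This is precisely the image of the original correspondence under $\varpi$.

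Finally, the pointwise identifications must be assembled into a natural isomorphism of functors compatible with the involutive structure on each side. For $\varpi$ the involutive structure is tautological since swapping legs twice is literally the identity; for $\on{dualization}$ it comes from the canonical biduality isomorphism $\bc\simeq (\bc^\vee)^\vee$ available in any rigid symmetric monoidal category. Once the pointwise computation above is in hand, the required compatibilities (with composition of correspondences, with the symmetric monoidal structure of Cartesian product, and with the involutive structures) are all formal consequences of the universal property of $\on{dualization}$ in a rigid symmetric monoidal category. The main obstacle is not conceptual but rather the higher-categorical bookkeeping needed to promote the pointwise 1-morphism identification to a full natural transformation of $\infty$-functors between the appropriate complete Segal spaces --- i.e., checking coherence at every simplicial level. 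This is precisely the model-dependent step that the paper defers to \cite{GR3}; with that machinery in place, the lemma reduces cleanly to the pointwise Cartesian-product computation carried out above.
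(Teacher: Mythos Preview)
Your proposal is correct and, in fact, more detailed than what the paper itself provides: the paper gives no proof of this lemma at all, stating only that ``the following results from the definitions (a full proof will be given in \cite{GR3}).'' Your sketch --- verifying on objects that the self-duality data is as stated, computing on 1-morphisms that the rigid-category dualization formula applied to a correspondence returns the same correspondence with legs swapped, and then acknowledging that the coherent $\infty$-categorical upgrade is the step deferred to \cite{GR3} --- is precisely the natural unpacking of ``results from the definitions,'' and your identification of where the genuine work lies matches the paper's own assessment.
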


\sssec{}

To prove \thmref{t:Serre duality} we apply the discussion of Sections
\ref{sss:abstract duality} and \ref{sss:abs dual corr} to $$\bC^0=\dgSch_{\on{aft}},\,\,
\bC:=(\dgSch_{\on{aft}})_{\on{corr:all;all}},\,\, \bC_1:=\StinftyCat^{\on{dualizable}}_{\on{cont}},$$ and 
$F:=\IndCoh_{(\dgSch_{\on{aft}})_{\on{corr:all;all}}}$, where the 
symmetric monoidal structure on $F$ is given by \thmref{t:upper shriek monoidal} and
\corref{c:upper shriek monoidal}. 

\qed

\ssec{Properties of self-duality}

In this subsection shall explain what \thmref{t:Serre duality} says in concrete terms. 

\sssec{}

The assertion of \thmref{t:Serre duality} at the level of objects means that 
for every $S\in \dgSch_{\on{aft}}$ we have a canonical equivalence
\begin{equation} \label{e:Serre duality}
\bD_S^{\on{\on{Serre}}}:(\IndCoh(S))^\vee\simeq \IndCoh(S),
\end{equation}
which in the sequel we will refer to as Serre duality on a given scheme (see \secref{ss:classical Serre}
for the relation to a more familiar formulation of Serre duality). 

\medskip

Moreover, the equivalence obtained by iterating \eqref{e:Serre duality}
$$((\IndCoh(S))^\vee)^\vee\simeq (\IndCoh(S))^\vee\simeq \IndCoh(S)$$
is canonically isomorphic to the tautological equivalence $((\IndCoh(S))^\vee)^\vee\simeq \IndCoh(S)$.

\sssec{} \label{sss:explicit duality data}

The construction of the commutative diagram \eqref{e:Serre duality diag} at the level of objects amounts
to the following description of the duality data 
$$\epsilon_S:\IndCoh(S)\otimes \IndCoh(S)\to \Vect  \text{ and } \mu_S:\Vect\to \IndCoh(S)\otimes \IndCoh(S).$$

\medskip

The functor $\epsilon_S$ is the composition
$$\IndCoh(S)\otimes \IndCoh(S) \overset{\boxtimes}\longrightarrow \IndCoh(S\times S)
\overset{\Delta_S^!}\longrightarrow \IndCoh(S)\overset{(p_S)^{\IndCoh}_*}\to \Vect,$$
where $p_S$ is the projection $S\to \on{pt}$.

\medskip

The functor $\mu_S$ is the composition
$$\Vect\overset{p_S^!}\longrightarrow \IndCoh(S) \overset{\Delta^{\IndCoh}_*}\longrightarrow  \IndCoh(S\times S)\simeq
\IndCoh(S)\otimes \IndCoh(S).$$

\medskip

The fact that the functors $(\epsilon_S,\mu_S)$ specified above indeed define a duality data for $\IndCoh(S)$
is an easy diagram chase. So, the content of \thmref{t:Serre duality} is the higher categorical
functoriality of this construction.

\sssec{}

At the level of $1$-morphisms, the construction in \thmref{t:Serre duality} implies that 
for a morphism $f:S_1\to S_2$ we have the following commutative diagrams of functors
\begin{equation} \label{e:dual of f_*}
\CD
(\IndCoh(S_1))^\vee  @>{\bD_{S_1}^{\on{\on{Serre}}}}>> \IndCoh(S_1)  \\
@A{(f^{\IndCoh}_*)^\vee}AA   @AA{f^!}A  \\
(\IndCoh(S_2))^\vee  @>{\bD_{S_2}^{\on{\on{Serre}}}}>> \IndCoh(S_2)
\endCD
\end{equation}
and
\begin{equation} \label{e:dual of f^!}
\CD
(\IndCoh(S_1))^\vee  @>{\bD_{S_1}^{\on{\on{Serre}}}}>> \IndCoh(S_1)  \\
@V{(f^!)^\vee}VV   @VV{f^{\IndCoh}_*}V  \\
(\IndCoh(S_2))^\vee  @>{\bD_{S_2}^{\on{\on{Serre}}}}>> \IndCoh(S_2)
\endCD
\end{equation}

Moreover, the isomorphism obtained by iteration 
$$((f^{\IndCoh}_*)^\vee)^\vee\simeq (f^!)^\vee\simeq f^{\IndCoh}_*$$
is canonically isomorphic to the tautological isomorphism 
$((f^{\IndCoh}_*)^\vee)^\vee\simeq f^{\IndCoh}_*$.

\medskip

In particular, the dual of the functor $(p_S)^{\IndCoh}_*:\IndCoh(S)\to \Vect$ is $(p_S)^!$,
and vice versa. 

\medskip

Again, we note that the explicit description of the duality data given in \secref{sss:explicit duality data}
makes the commutativity of the above diagrams evident. 

\ssec{Compatibility with duality on $\QCoh$}

\sssec{}

Recall that the category $\QCoh(S)$, being a rigid monoidal category,
is also self-dual (see \cite{DG}, Sect. 6.1.).  We denote the corresponding functor by
\begin{equation} \label{e:naive duality}
\bD_S^{\on{naive}}:(\QCoh(S))^\vee\to \QCoh(S).
\end{equation}


\medskip

The corresponding pairing
$$\QCoh(S)\otimes \QCoh(S)\to \Vect$$
is by definition given by
$$\QCoh(S)\otimes \QCoh(S)\overset{\boxtimes}\longrightarrow \QCoh(S\times S)\overset{\Delta_S^*}\longrightarrow
\QCoh(S)\overset{\Gamma(S,-)}\longrightarrow k.$$

\sssec{}

Recall the functor 
$$\Psi_S:\IndCoh(S)\to \QCoh(S).$$

Passing to dual functors, and using the identifications $\bD_S^{\on{naive}}$ and $\bD_S^{\on{Serre}}$,
we obtain a functor
\begin{equation} \label{e:Psi check}
\Psi^\vee_S:\QCoh(S)\to \IndCoh(S).
\end{equation}

We claim:

\begin{prop}  \label{p:Psi check}
The functor $\Psi^\vee_S$ identifies canonically with the functor $\Upsilon_S$ of 
\eqref{e:QCoh to IndCoh}, i.e., 
$$\CE\mapsto \CE\otimes \omega_S,$$
where the latter is understood in the sense of the action of $\QCoh(S)$ on $\IndCoh(S)$.  
\end{prop}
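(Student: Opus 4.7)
The plan is to verify that $\Upsilon_S$ satisfies the universal property characterizing $\Psi_S^\vee$. Namely, once $\IndCoh(S)$ is self-dual via $\bD_S^{\on{Serre}}$ and $\QCoh(S)$ is self-dual via $\bD_S^{\on{naive}}$, a continuous functor $G:\QCoh(S) \to \IndCoh(S)$ is identified with $\Psi_S^\vee$ if and only if the two pairings $\QCoh(S) \otimes \IndCoh(S) \rightrightarrows \Vect$ given by
$$\QCoh(S) \otimes \IndCoh(S) \overset{G\otimes \on{Id}}{\longrightarrow} \IndCoh(S)\otimes \IndCoh(S) \overset{\epsilon_S^{\IndCoh}}{\longrightarrow} \Vect$$
and
$$\QCoh(S) \otimes \IndCoh(S) \overset{\on{Id}\otimes \Psi_S}{\longrightarrow} \QCoh(S)\otimes \QCoh(S) \overset{\epsilon_S^{\QCoh}}{\longrightarrow} \Vect$$
agree. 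Here $\epsilon_S^{\IndCoh}(\CF_1,\CF_2) = (p_S)^{\IndCoh}_*(\CF_1\sotimes \CF_2)$ by the explicit description of the Serre duality data in \secref{sss:explicit duality data}, while $\epsilon_S^{\QCoh}(\CE_1,\CE_2) = \Gamma(S,\CE_1 \otimes \CE_2)$ is the naive pairing on $\QCoh(S)$.

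For $\CE \in \QCoh(S)$ and $\CF \in \IndCoh(S)$, I would then compute the first pairing applied to $G = \Upsilon_S$ as follows:
\begin{multline*}
(p_S)^{\IndCoh}_*\bigl((\CE\otimes \omega_S) \sotimes \CF\bigr)
\simeq (p_S)^{\IndCoh}_*\bigl(\CE\otimes (\omega_S\sotimes \CF)\bigr) \\
\simeq (p_S)^{\IndCoh}_*(\CE\otimes \CF),
\end{multline*}
using the $\QCoh(S)$-linearity \eqref{e:QCoh linearity on sotimes} of $\sotimes$ together with the fact that $\omega_S$ is the unit for $\sotimes$ (\secref{sss:sotimes}). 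Invoking next the compatibility of $\Psi$ with pushforward (\propref{p:direct image}), combined with the identification $\IndCoh(\on{pt}) = \Vect$ and the $\QCoh$-linearity of $\Psi$ (\lemref{l:Psi compat with action}), this further becomes
$$(p_S)^{\IndCoh}_*(\CE\otimes \CF) \simeq (p_S)_*\Psi_S(\CE\otimes \CF) \simeq \Gamma(S,\CE\otimes \Psi_S(\CF)),$$
which is $\epsilon_S^{\QCoh}(\CE,\Psi_S(\CF))$. This yields the object-level matching of the two pairings.

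The main obstacle is to promote this pointwise equality to a canonical isomorphism $\Upsilon_S \simeq \Psi_S^\vee$ in $\StinftyCat_{\on{cont}}$. Each of the isomorphisms invoked above is structural in origin, being supplied by the $\QCoh(S)$-action on $\IndCoh(S)$ of \secref{ss:action}, its enhancement to a $\QCoh(S)$-linear symmetric monoidal structure (\corref{c:sotimes otimes}), the $\QCoh(S)$-linearity of $\Psi_S$ (\lemref{l:Psi compat with action}), and the compatibility of $\Psi$ with pushforward (\propref{p:direct image}). Assembling them into a homotopy-coherent equivalence of the two pairing functors is the kind of higher-algebraic bookkeeping the paper defers to \cite{GR3}; alternatively, one may exhibit the natural transformation in closed form by applying $\on{Id}\otimes \Psi_S$ to the unit $\mu_S^{\IndCoh}(k) = \Delta^{\IndCoh}_*(\omega_S)$ from \secref{sss:explicit duality data} and tracing through the resulting identification, which makes the functoriality manifest.
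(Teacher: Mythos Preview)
Your argument is correct and essentially identical to the paper's proof: both verify the identification by showing that the two pairings $\epsilon_S^{\IndCoh}(\Upsilon_S(\CE),\CF)$ and $\epsilon_S^{\QCoh}(\CE,\Psi_S(\CF))$ agree, using the $\QCoh(S)$-linearity of $\sotimes$, the fact that $\omega_S$ is the unit for $\sotimes$, and the compatibility of $\Psi_S$ with pushforward and the $\QCoh$-action. The paper carries out exactly this computation (with the roles of the two slots interchanged) and, like you, stops at the object-level identification without further comment on homotopy coherence.
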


\begin{proof}
Let 
$$\langle-,-\rangle_{\IndCoh(S)} \text{ and } \langle-,-\rangle_{\QCoh(S)}$$
denote the functors
$$\IndCoh(S)\times \IndCoh(S)\to \Vect \text{ and } \QCoh(S)\times \QCoh(S)\to \Vect$$
given by the counits of the duality isomorphisms $\bD_S^{\on{Serre}}$ and $\bD_S^{\on{naive}}$,
respectively.

\medskip

We need to show that for $\CF\in \IndCoh(S)$ and $\CE\in \QCoh(S)$ we have a canonical
isomorphism
\begin{equation} \label{e:two dualities}
\langle\CF,\CE\otimes \omega_S\rangle_{\IndCoh(S)}\simeq 
\langle\Psi_S(\CF),\CE\rangle_{\QCoh(S)}.
\end{equation}

By the construction of $\bD^{\on{Serre}}_S$, we have:
$$\langle\CF,\CE\otimes \omega_S\rangle_{\IndCoh(S)}\simeq
\Gamma\left(S,\Psi_S(\CF\sotimes (\CE\otimes \omega_S))\right).$$

Using \eqref{e:QCoh linearity on sotimes}, we have:
$$\CF\sotimes (\CE\otimes \omega_S)\simeq \CE\otimes (\CF\sotimes \omega_S)\simeq \CE\otimes \CF.$$

Hence, the left-hand side in \eqref{e:two dualities} identifies with
$$\Gamma\left(S,\Psi_S(\CE\otimes \CF)\right)\simeq \Gamma(S,\CE\otimes \Psi_S(\CF)),$$
while the latter identifies with the left-hand side in \eqref{e:two dualities}.

\end{proof}

\ssec{A higher-categorical compatibility of the dualities}

The material in this subsection will not be used elsewhere in the paper,
and is included for the sake of completeness.

\sssec{}

According to \thmref{t:Serre duality}, the functor obtained from $\IndCoh_{\dgSch_{\on{aft}}}$
by passing to dual categories, identifies canonically with $\IndCoh^!_{\dgSch_{\on{aft}}}$.

\medskip

Similarly, the functor obtained from $\QCoh_{\dgSch_{\on{aft}}}$
by passing to dual categories, identifies canonically with $\QCoh^*_{\dgSch_{\on{aft}}}$.

\medskip

Hence, the natural transformation
$$\Psi_{\dgSch_{\on{aft}}}:\IndCoh_{\dgSch_{\on{aft}}}\to \QCoh_{\dgSch_{\on{aft}}}$$
gives rise to the natural transformation
$$\Psi^\vee_{\dgSch_{\on{aft}}}:\QCoh^*_{\dgSch_{\on{aft}}}\to \IndCoh^!_{\dgSch_{\on{aft}}}.$$

\medskip

\propref{p:Psi check} says that at the level of objects, we have a canonical isomorphism:
\begin{equation}  \label{e:Psi and Ups}
\Psi^\vee_S\simeq \Upsilon_S,\quad S\in \dgSch_{\on{aft}}.
\end{equation}

Moreover, the following assertion follows from the construction of the isomorphism in 
\propref{p:Psi check}:

\begin{lem}
For a morphism $f:S_1\to S_2$, the isomorphisms
$$\Psi^\vee_{S_1}\simeq \Upsilon_{S_1} \text{ and } \Psi^\vee_{S_2}\simeq \Upsilon_{S_2}$$
are compatible with the data of commutativity for the diagrams
$$
\CD
\IndCoh(S_1) @<{\Psi^\vee_{S_1}}<<  \QCoh(S_1)  \\
@A{f^!}AA   @AA{f^*}A \\
\IndCoh(S_2) @<{\Psi^\vee_{S_2}}<<  \QCoh(S_2)
\endCD
$$
and 
$$
\CD
\IndCoh(S_1) @<{\Upsilon_{S_1}}<<  \QCoh(S_1)  \\
@A{f^!}AA   @AA{f^*}A \\
\IndCoh(S_2) @<{\Upsilon_{S_2}}<<  \QCoh(S_2)
\endCD
$$
\end{lem}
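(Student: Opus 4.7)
My plan is to verify the compatibility at the level of the Serre duality pairings that govern the two squares, reducing both sides to the same canonical chain of identifications built from the projection formula, the $\QCoh$-linearity of $\Psi$ and $f^!$, and the fact that $\omega_S$ is the unit of $\sotimes$. Recall from \secref{sss:explicit duality data} that the Serre pairing is $\langle \CF_1, \CF_2 \rangle_{\IndCoh(S)} = \Gamma(S, \Psi_S(\CF_1 \sotimes \CF_2))$, and that the duality data between $f^{\IndCoh}_*$ and $f^!$ encoded by diagrams \eqref{e:dual of f_*}--\eqref{e:dual of f^!} amount to the natural adjunction isomorphism $\langle f^{\IndCoh}_*(\CF), \CG \rangle_{\IndCoh(S_2)} \simeq \langle \CF, f^!(\CG) \rangle_{\IndCoh(S_1)}$, with the analogous statement for $f^*$ versus $f_*$ on $\QCoh$.

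I would first unwind the commutativity data for the first diagram as, by construction, the dual of $f_* \circ \Psi_{S_1} \simeq \Psi_{S_2} \circ f^{\IndCoh}_*$ from \propref{p:direct image}; expressed via pairings it is the composition
\begin{multline*}
\langle \CF, f^!(\Psi^\vee_{S_2}(\CE)) \rangle_{\IndCoh(S_1)} \simeq \langle f^{\IndCoh}_*(\CF), \Psi^\vee_{S_2}(\CE) \rangle_{\IndCoh(S_2)} \simeq \\
\simeq \langle \Psi_{S_2}(f^{\IndCoh}_*(\CF)), \CE \rangle_{\QCoh(S_2)} \simeq \langle f_*(\Psi_{S_1}(\CF)), \CE \rangle_{\QCoh(S_2)} \simeq \\
\simeq \langle \Psi_{S_1}(\CF), f^*(\CE) \rangle_{\QCoh(S_1)} \simeq \langle \CF, \Psi^\vee_{S_1}(f^*(\CE)) \rangle_{\IndCoh(S_1)}.
\end{multline*}
The commutativity of the second diagram is the $\QCoh$-linearity of $f^!$ from \thmref{t:action of QCoh} combined with the canonical identification $\omega_{S_1} \simeq f^!(\omega_{S_2})$ coming from $p_{S_1} = p_{S_2} \circ f$ and the functoriality of $\IndCoh^!$; explicitly, $f^!(\CE \otimes \omega_{S_2}) \simeq f^*(\CE) \otimes f^!(\omega_{S_2}) \simeq f^*(\CE) \otimes \omega_{S_1}$. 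To compare, I would substitute $\Upsilon_{S_i}$ for $\Psi^\vee_{S_i}$ via \propref{p:Psi check} and expand the left endpoint of the above chain using \eqref{e:QCoh linearity on sotimes}, the identity $\CF \sotimes \omega_{S_1} \simeq \CF$ and \eqref{e:Psi compat with action}:
\begin{multline*}
\langle \CF, f^!(\Upsilon_{S_2}(\CE)) \rangle_{\IndCoh(S_1)} \simeq \Gamma\bigl(S_1, \Psi_{S_1}(\CF \sotimes (f^*(\CE) \otimes \omega_{S_1}))\bigr) \simeq \\
\simeq \Gamma\bigl(S_1, f^*(\CE) \otimes \Psi_{S_1}(\CF)\bigr) \simeq \Gamma\bigl(S_2, \CE \otimes f_*(\Psi_{S_1}(\CF))\bigr),
\end{multline*}
which matches the fourth term of the earlier chain once one invokes $f_* \circ \Psi_{S_1} \simeq \Psi_{S_2} \circ f^{\IndCoh}_*$; the remaining steps then coincide tautologically with the argument in the proof of \propref{p:Psi check}.

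The main obstacle I foresee is coherence: the present lemma asks for more than a pointwise identification of $\Psi^\vee_{S}$ with $\Upsilon_S$ --- it asks that this identification be compatible with pullback, i.e.\ that it refine to an isomorphism of natural transformations $\Psi^\vee_{\dgSch_{\on{aft}}} \simeq \Upsilon_{\dgSch_{\on{aft}}}$ of functors $\QCoh^*_{\dgSch_{\on{aft}}} \to \IndCoh^!_{\dgSch_{\on{aft}}}$. At the level of $1$-morphisms this is precisely what the pairing computation above provides; the higher-categorical coherence comes from the rigidity of $\QCoh(S)$ (as used in the proof of \propref{p:Psi check}) together with the universal properties by which $\Psi^\vee$ and $\Upsilon$ were constructed as natural transformations (see \secref{sss:intr Upsilon}). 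Granting these, the pointwise verification suffices.
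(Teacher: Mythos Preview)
Your proposal is correct and is precisely the natural unfolding of what the paper asserts. The paper itself offers no proof beyond the sentence ``the following assertion follows from the construction of the isomorphism in \propref{p:Psi check}''; your pairing computation is exactly that construction, carried one step further to include the morphism $f$, using the same ingredients (the explicit form of $\epsilon_S$, the $\QCoh$-linearity of $\sotimes$ from \eqref{e:QCoh linearity on sotimes}, and the projection formula). Your final paragraph also correctly situates the lemma: it is the $1$-morphism level of the stronger statement \thmref{t:Psi check and Upsilon}, and the higher coherence is deferred to \cite{GR3}.
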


The following theorem, strengthening the above lemma will be proved in \cite{GR3}:

\begin{thm} \label{t:Psi check and Upsilon}
There exists a canonical isomorphism 
$$\Psi^\vee_{\dgSch_{\on{aft}}}\simeq \Upsilon_{\dgSch_{\on{aft}}}$$
as natural transformations
$\QCoh^*_{\dgSch_{\on{aft}}}\rightrightarrows \IndCoh^!_{\dgSch_{\on{aft}}}$
between the functors
$$(\dgSch_{\on{aft}})^{\on{op}}\to \StinftyCat_{\on{cont}}.$$ 
\end{thm}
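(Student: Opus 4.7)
The plan is to enhance both natural transformations $\Psi^\vee_{\dgSch_{\on{aft}}}$ and $\Upsilon_{\dgSch_{\on{aft}}}$ to natural transformations in the richer setting of $\SymMonModStinftyCat_{\on{cont}}$-valued functors, and then to exploit the fact that in such a setting any $\QCoh$-linear, symmetric monoidal natural transformation is determined by its value on the structure sheaf.

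First I would invoke \thmref{t:action of QCoh and mult} to view the pair $(\QCoh^*,\IndCoh^!)_{\dgSch_{\on{aft}}}$ as a single symmetric monoidal functor
$$(\dgSch_{\on{aft}})^{\on{op}}\to \SymMonModStinftyCat_{\on{cont}}.$$
Inside this framework, $\Upsilon_{\dgSch_{\on{aft}}}$ is already promoted to a natural transformation of $\QCoh$-linear symmetric monoidal functors by \corref{c:sotimes otimes}. The crux of the proof is to produce an analogous enhancement for $\Psi^\vee_{\dgSch_{\on{aft}}}$. For this I would run the dualization formalism of \secref{sss:abstract duality}--\secref{sss:abs dual corr} not in $\StinftyCat^{\on{dualizable}}_{\on{cont}}$ but in its $\SymMonModStinftyCat$-enhanced analogue: every object of $(\dgSch_{\on{aft}})_{\on{corr:all;all}}$ is still canonically self-dual via the diagonal correspondence, and by \thmref{t:action of QCoh and mult} the functor $\IndCoh_{(\dgSch_{\on{aft}})_{\on{corr:all;all}}}$ upgrades to take values in $\QCoh$-module categories, so dualizing produces a canonical identification $\bD^{\on{Serre}}_S\colon (\IndCoh(S))^\vee\simeq \IndCoh(S)$ as $\QCoh(S)$-module categories, functorially in $S$. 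Applied to $\Psi_{\dgSch_{\on{aft}}}$ (which is $\QCoh$-linear by \lemref{l:Psi compat with action}, upgraded at the level of correspondences), this realizes $\Psi^\vee_{\dgSch_{\on{aft}}}$ as a morphism of $\QCoh$-linear symmetric monoidal functors.

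Once both $\Psi^\vee_{\dgSch_{\on{aft}}}$ and $\Upsilon_{\dgSch_{\on{aft}}}$ live in the $\infty$-category of $\QCoh$-linear symmetric monoidal natural transformations from $\QCoh^*_{\dgSch_{\on{aft}}}$ to $\IndCoh^!_{\dgSch_{\on{aft}}}$, I would invoke rigidity of $\QCoh^*(S)$ and the universal property that any $\QCoh$-linear functor out of $\QCoh(S)$ is determined (up to coherent equivalence) by its value on the unit $\CO_S$, together with its symmetric monoidal structure. Both natural transformations send $\CO_S$ to $\omega_S$: for $\Upsilon_S$ this holds by definition, and for $\Psi^\vee_S$ it is precisely the content of \propref{p:Psi check}. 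The lemma preceding the theorem already pins down the identification at the level of $1$-morphisms, so the higher coherences propagate automatically from the universal property.

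The main obstacle will be the $\infty$-categorical bookkeeping of the second step: upgrading \thmref{t:Serre duality} so that the duality isomorphism $\bD^{\on{Serre}}$ carries not merely an equivalence of plain DG categories but a fully coherent morphism in $\SymMonModStinftyCat_{\on{cont}}$ over the base $\QCoh^*_{\dgSch_{\on{aft}}}$. Pointwise this is transparent from the explicit description of $\epsilon_S,\mu_S$ in \secref{sss:explicit duality data}, where every building block ($f^!$, $f^\IndCoh_*$, $\boxtimes$) has already been shown to be $\QCoh$-linear; what needs care is the coherent compatibility of these $\QCoh$-linear structures with the dualization equivalence on $(\dgSch_{\on{aft}})_{\on{corr:all;all}}$. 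This amounts to running the construction of $\IndCoh_{(\dgSch_{\on{aft}})_{\on{corr:all;all}}}$ throughout \secref{s:construction of !} in the enhanced category $\SymMonModStinftyCat_{\on{cont}}$, and is precisely the kind of combinatorial enhancement deferred to \cite{GR3}.
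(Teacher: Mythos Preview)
Your proposal is correct and aligned with the paper's own treatment: the paper does not prove this theorem here but defers it to \cite{GR3}, indicating only that it is a consequence of the more general \thmref{t:QCoh-linear duality} (the $\QCoh$-linear enhancement of Serre duality), which is exactly the upgrade you identify as the crux and whose $\infty$-categorical bookkeeping you correctly flag as the content deferred to \cite{GR3}. Your reduction---once $\bD^{\on{Serre}}$ is $\QCoh$-linear, both $\Psi^\vee$ and $\Upsilon$ are $\QCoh$-linear symmetric monoidal functors out of the rigid category $\QCoh(S)$ and hence determined by their value $\omega_S$ on the unit---is the intended argument.
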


\sssec{}

\thmref{t:Psi check and Upsilon} is a consequence of a more general statement, described below.
Recall the category $\SymMonModStinftyCat$ introduced in \secref{sss:SymMonMod}.

\medskip

We introduce another category $\SymMonModopStinftyCat$, whose objects are the same
as for $\SymMonModStinftyCat$, but 1-morphisms 
$$(\bO_1,\bC_1)\to (\bO_2,\bC_2)$$
are now pairs $(F_\bO,F_\bC)$, where $F_\bO$ is a symmetric monoidal functor
$F_\bO:\bO_2\to \bC_1$ (not the direction of the arrow!), and $F_\bC:\bC_1\to \bC_2$,
which is a morphism of $\bO_2$-module categories.

\medskip

We let
$$(\SymMonModStinftyCat)^{\on{dualizable}}\subset \SymMonModStinftyCat$$
$$(\SymMonModopStinftyCat)^{\on{dualizable}}\subset \SymMonModopStinftyCat$$
denote full subcategories, spanned by those $(\bO,\bC)$, for which $\bC$
is dualizable as an $\bO$-module category, see \cite[Sect. 4]{DG}. (Note that
if $\bO$ is rigid, this condition is equivalent to $\bC$ being dualizable as a plain
DG category, see \cite[Corollary, 6.4.2]{DG}.)

\medskip

Passing to duals $$(\bO,\bC)\mapsto (\bO,\bC^\vee)$$ defines an equivalence
$$\on{dualization}:((\SymMonModStinftyCat)^{\on{dualizable}})^{\on{op}}\to
(\SymMonModopStinftyCat)^{\on{dualizable}}.$$

\sssec{}

Recall (see \thmref{t:action of QCoh}) that we have a functor
$$(\QCoh^*,\IndCoh^!)_{\dgSch_{\on{aft}}}:(\dgSch_{\on{aft}})^{\on{op}}\to
\SymMonModStinftyCat_{\on{cont}}.$$

\medskip

Note that the constructions in \secref{ss:upgrading direct image to a functor} and
\propref{p:dir image as modules} combine to a functor
$$(\QCoh^*,\IndCoh)_{\dgSch_{\on{aft}}}:(\dgSch_{\on{aft}})^{\on{op}}\to
\SymMonModopStinftyCat_{\on{cont}}.$$

\medskip

We have the following assertion which will be proved in \cite{GR3}:

\begin{thm}   \label{t:QCoh-linear duality}
There is a commutative diagram of functors
$$
\CD
\dgSch_{\on{aft}}   @>{((\QCoh^*,\IndCoh^!)_{\dgSch_{\on{aft}}})^{\on{op}}}>>  ((\SymMonModStinftyCat)^{\on{dualizable}})^{\on{op}} \\
@V{\on{Id}}VV  @VV{\on{dualization}}V  \\
\dgSch_{\on{aft}}   @>{(\QCoh^*,\IndCoh)_{\dgSch_{\on{aft}}}}>>  (\SymMonModopStinftyCat)^{\on{dualizable}}.
\endCD
$$
Moreover, the above isomorphism is compatible with the symmetric monoidal structure on both functors.
\end{thm}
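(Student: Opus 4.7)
The plan is to upgrade the proof of \thmref{t:Serre duality} to the $\QCoh$-linear setting by carrying the $\QCoh$-module structure through the correspondence category formalism. First, I will construct an enhanced correspondence functor
$$(\QCoh^*,\IndCoh)_{(\dgSch_{\on{aft}})_{\on{corr:all;all}}}: (\dgSch_{\on{aft}})_{\on{corr:all;all}} \to \SymMonModStinftyCat_{\on{cont}}$$
(valued in an appropriate 2-categorical enhancement, or equivalently formulated via a bi-simplicial/Segal-space description in the spirit of \secref{sss:Segal review}). At the level of objects this sends $S$ to the pair $(\QCoh(S),\IndCoh(S))$, and on a correspondence $S_1 \overset{g}\leftarrow S_{1,2}\overset{f}\to S_2$ it sends $\QCoh^*$ along the composition $f\circ g$ (which only uses the horizontal direction in the obvious combined way) and $\IndCoh$ to the composition $f^{\IndCoh}_*\circ g^!$ as a morphism of module categories; the compatibility with the module action is furnished by \corref{c:upgrading !} and \propref{p:dir image as modules}. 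Construction of this enhanced functor, and its symmetric monoidal structure relative to Cartesian product, will be carried out along the lines of the sketch in \secref{s:construction of !}, iterated versions of which appear in \cite{GR3}.

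Next I will observe that $(\dgSch_{\on{aft}})_{\on{corr:all;all}}$ is a rigid symmetric monoidal category (\secref{sss:abs dual corr}), and the target category $\SymMonModStinftyCat_{\on{cont}}$, equipped with the symmetric monoidal structure of \secref{sss:action and com}, has the property that on its full subcategory $(\SymMonModStinftyCat)^{\on{dualizable}}$ every object is dualizable, with dual $(\bO,\bC^\vee)$ — here one uses that $\QCoh(S)$ is rigid as a symmetric monoidal category (hence the notion of dualizable $\QCoh(S)$-module reduces to being dualizable as a plain DG category, \cite[Corollary 6.4.2]{DG}), and $\IndCoh(S)$ is dualizable by \corref{c:Karoubian}. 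The enhanced functor takes values in the dualizable part.

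Then I will apply the abstract duality principle of \secref{sss:abstract duality}: any symmetric monoidal functor between rigid symmetric monoidal categories intertwines the dualization equivalences in a canonical way compatible with their involutive structure. Applied to our enhanced correspondence functor, and composed with the identification $\varpi \simeq \on{dualization}$ on the source (the $\QCoh$-linear analogue of the lemma in \secref{sss:abs dual corr}, whose proof is identical because the duality data are supplied by the diagonal $\Delta_S$), this yields a canonically commutative diagram
$$
\CD
(\dgSch_{\on{aft}})_{\on{corr:all;all}}^{\on{op}} @>>> ((\SymMonModStinftyCat)^{\on{dualizable}})^{\on{op}} \\
@V{\varpi}VV @VV{\on{dualization}}V \\
(\dgSch_{\on{aft}})_{\on{corr:all;all}} @>>> (\SymMonModopStinftyCat)^{\on{dualizable}},
\endCD
$$
and the compatibility with the symmetric monoidal structure is built in. The theorem is then extracted by restricting to the 1-full subcategories $(\dgSch_{\on{aft}})^{\on{op}}$ (top horizontal arrow, which recovers $(\QCoh^*,\IndCoh^!)_{\dgSch_{\on{aft}}}$ by \thmref{t:action of QCoh}) and $\dgSch_{\on{aft}}$ (bottom horizontal arrow, which recovers $(\QCoh^*,\IndCoh)_{\dgSch_{\on{aft}}}$ by the construction of \propref{p:dir image as modules}); the involutivity of $\varpi$ matches the involutivity of dualization, reproducing the double-dual identifications.

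The main obstacle is the first step: constructing the enhanced correspondence functor as a genuine symmetric monoidal $\infty$-functor into $\SymMonModStinftyCat_{\on{cont}}$, rather than merely specifying it on objects and $1$-morphisms. All of the requisite pointwise compatibilities are already available (the $\QCoh$-linearity of $f^\IndCoh_*$ from \propref{p:dir image as modules}, of $f^!$ from \corref{c:upgrading !}, of $f^{\IndCoh,*}$ from \propref{p:* pullback}, and the compatibility with base change from \thmref{t:upper shriek}); the issue is organizing them into coherent higher homotopies, which is exactly the kind of combinatorial manipulation of categories of correspondences whose full treatment is deferred to \cite{GR3}. Once that functor is in hand, the abstract rigidity argument and the identification of its two restrictions are essentially formal.
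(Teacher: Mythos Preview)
The paper does not prove this theorem: immediately before the statement it says ``We have the following assertion which will be proved in \cite{GR3}.'' So there is no proof in the paper to compare against. Your overall strategy---upgrade the rigidity argument used for \thmref{t:Serre duality} by constructing a symmetric monoidal correspondence functor that carries the $\QCoh$-module structure, then invoke \secref{sss:abstract duality} and \secref{sss:abs dual corr}---is exactly the expected approach and is consistent with how the paper handles the non-linear case. You also correctly identify that the genuine work (the coherent construction of the enhanced functor) is of the kind deferred to \cite{GR3}.

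One point of imprecision worth flagging: your description of the target of the enhanced correspondence functor, and of what it does on the $\QCoh$ component of a correspondence, is not quite right as stated. The two restrictions you want to recover land in \emph{different} categories, $\SymMonModStinftyCat$ versus $\SymMonModopStinftyCat$ (the $\QCoh$-arrow reverses direction between them), so the target of the correspondence functor cannot literally be $\SymMonModStinftyCat_{\on{cont}}$; it must be a category that contains both of these as 1-full subcategories, with objects pairs $(\bO,\bC)$ and morphisms incorporating a correspondence-style structure on the $\bO$-component as well. Your parenthetical ``valued in an appropriate 2-categorical enhancement'' gestures at this, but the sentence ``it sends $\QCoh^*$ along the composition $f\circ g$'' does not type-check. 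This is not a fatal gap in the strategy---it is part of the same packaging problem you already acknowledge---but it is worth being explicit that the target category itself needs to be set up with some care.
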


\sssec{}

In concrete terms, \thmref{t:QCoh-linear duality} says that for $S\in \dgSch_{\on{aft}}$, 
the duality isomorphism
$$\bD_S^{\on{Serre}}:(\IndCoh(S))^\vee\simeq \IndCoh(S)$$
is compatible with the action of $\QCoh(S)$ on both sides. 

\medskip

Furthermore, for a map $f:S_1\to S_2$, the isomorphism of functors
$$(f^!)^\vee\simeq f^{\IndCoh}_*$$
is compatible with the $\QCoh(S)$-linear structures on both sides. 

\ssec{Relation to the classical Serre duality}  \label{ss:classical Serre}

In this subsection we will explain the relation between the self-duality functor
$$\bD_S^{\on{\on{Serre}}}:\IndCoh(S)^\vee\simeq \IndCoh(S)$$
and the classical Serre duality functor
$$\BD_S^{\on{Serre}}:\Coh(S)^{\on{op}}\to \Coh(S).$$

\sssec{}

Recall that if $\bC_1,\bC_2$ are compactly generated categories, then the datum of an equivalence
$\bC_1^{\on{op}}\simeq \bC_2$ is equivalent to the datum of an equivalence $(\bC_1^c)^{\on{op}}\simeq \bC^c_2$.

\medskip

For example, the self-duality functor
$$\bD_S^{\on{naive}}:(\QCoh(S))^\vee\to \QCoh(S)$$
of \eqref{e:naive duality} is induced by
the ``naive" duality on the category $\QCoh(S)^{\on{perf}}\simeq \QCoh(S)^c$:
$$\BD^{\on{naive}}_S:(\QCoh(S)^{\on{perf}})^{\on{op}}\to \QCoh(S)^{\on{perf}},\quad \CE\mapsto \CE^\vee,$$
the latter being the passage to the dual object in $\QCoh(S)^{\on{perf}}$ as a symmetric monoidal category.

\medskip

In particular, we obtain that the equivalence $\bD_S^{\on{\on{Serre}}}$ of \eqref{e:Serre duality} induces a certain involutive equivalence
\begin{equation} \label{e:abstract Serre}
'\BD_S^{\on{Serre}}:\Coh(S)^{\on{op}}\to \Coh(S).
\end{equation}

\sssec{}   \label{sss:internal Hom}

Recall the action of the monoidal category $\QCoh(S)$ on $\IndCoh(S)$, see \secref{ss:action}.
For $\CF\in \IndCoh(S)$ we can consider the relative internal Hom functor
$$\underline\Hom_{\QCoh(S)}(\CF,-):\IndCoh(S)\to \QCoh(S)$$
as defined in \cite{DG}, Sect. 5.1. Explicitly, for $\CE\in \QCoh(S)$ and $\CF'\in \IndCoh(S)$ we have
$$\Maps_{\QCoh(S)}(\CE,\underline\Hom_{\QCoh(S)}(\CF,\CF')):=
\Maps_{\IndCoh(S)}(\CE\otimes \CF,\CF'),$$
where $-\otimes-$ denotes the action of $\QCoh(S)$ on $\IndCoh(S)$ of \secref{ss:action}. 

\medskip

\begin{lem}  \label{l:inner Hom continuous}
If $\CF\in \IndCoh(S)^c=\Coh(S)$, the functor $\underline\Hom_{\QCoh(S)}(\CF,-)$ is continuous.
\end{lem}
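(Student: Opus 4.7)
The plan is to reduce the claim to the statement that the action functor $-\otimes\CF:\QCoh(S)\to \IndCoh(S)$ (for fixed $\CF\in\Coh(S)$) preserves compact objects, which is then immediate from the construction of the action recalled in \secref{ss:action}.

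First, I would unwind the defining adjunction: by the formula in \secref{sss:internal Hom}, the functor $\underline\Hom_{\QCoh(S)}(\CF,-):\IndCoh(S)\to \QCoh(S)$ is, by construction, the right adjoint of the functor $-\otimes\CF:\QCoh(S)\to \IndCoh(S)$. Being a right adjoint, it preserves all limits; the task is to show that it also preserves colimits (equivalently, by cocompleteness and exactness, that it commutes with direct sums, or with filtered colimits).

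Next, I would invoke the general principle that for an adjoint pair $G\dashv F$ between compactly generated DG categories, $F$ is continuous if and only if $G$ preserves compact objects. Concretely, to verify that the natural map
$$\underset{i}{\colim}\,\underline\Hom_{\QCoh(S)}(\CF,\CF'_i)\to \underline\Hom_{\QCoh(S)}(\CF,\underset{i}{\colim}\,\CF'_i)$$
is an isomorphism for any filtered system $\{\CF'_i\}$ in $\IndCoh(S)$, it suffices to test against compact generators $\CE\in\QCoh(S)^{\on{perf}}=\QCoh(S)^c$ of $\QCoh(S)$. By the adjunction this amounts to checking that
$$\underset{i}{\colim}\,\Maps_{\IndCoh(S)}(\CE\otimes\CF,\CF'_i)\to \Maps_{\IndCoh(S)}(\CE\otimes\CF,\underset{i}{\colim}\,\CF'_i)$$
is an isomorphism, which holds precisely when $\CE\otimes\CF$ is compact in $\IndCoh(S)$.

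Finally, the required compactness is built into the very construction of the $\QCoh(S)$-action on $\IndCoh(S)$: as noted at the end of \secref{ss:action}, the action functor sends $\QCoh(S)^c\otimes \IndCoh(S)^c$ into $\IndCoh(S)^c$. Since $\CE\in \QCoh(S)^{\on{perf}}=\QCoh(S)^c$ and $\CF\in \Coh(S)=\IndCoh(S)^c$ by \corref{c:Karoubian}, we conclude that $\CE\otimes\CF\in\Coh(S)$ is compact, finishing the argument. There is no real obstacle here; the only thing to verify is the compatibility of compact generators under the action, and this has already been recorded. \qed
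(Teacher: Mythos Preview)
Your proposal is correct and follows essentially the same approach as the paper's proof: test continuity by mapping from compact generators $\CE\in\QCoh(S)^{\on{perf}}$, use the defining adjunction to reduce to compactness of $\CE\otimes\CF$ in $\IndCoh(S)$, and invoke the fact (recorded in \secref{ss:action}) that the action sends compacts to compacts. The paper's argument is just a terser version of yours.
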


\begin{proof}
It is enough to show that for a set of compact generators $\CE\in \QCoh(S)$, the functor 
$$\CF'\mapsto \Maps_{\QCoh(S)}(\CE,\underline\Hom_{\QCoh(S)}(\CF,\CF')),\quad 
\IndCoh(S)\to \inftygroup$$
commutes with filtered colimits. 

\medskip

We take $\CE\in \QCoh(S)^{\on{perf}}$. In this case $\CE\otimes \CF\in \IndCoh(S)^c$,
and the assertion follows.

\end{proof}

\sssec{}

We now claim:

\begin{lem}
For $\CF\in \Coh(S)$, 
the functor $\CF\mapsto \underline\Hom_{\QCoh(S)}(\CF,\omega_S)$ sends to 
$\Coh(S)$ to $\Coh(S)\subset \QCoh(S)$. 
\end{lem}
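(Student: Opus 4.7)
The plan is to reduce the coherence of $\underline\Hom_{\QCoh(S)}(\CF,\omega_S)$ to the classical fact that the polynomial ring $k[x_1,\dots,x_N]$ has finite global dimension.

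First, I would identify the object $\underline\Hom_{\QCoh(S)}(\CF,\omega_S)\in\QCoh(S)$ with the derived sheaf Hom $\underline\Hom_{\CO_S}(\CF,\Psi_S(\omega_S))$, i.e.\ the inner Hom in the symmetric monoidal category $\QCoh(S)$. Testing against any perfect $\CE\in\QCoh(S)^{\on{perf}}$, the defining adjunction gives
\[
\Maps_{\QCoh(S)}(\CE,\underline\Hom_{\QCoh(S)}(\CF,\omega_S))=\Maps_{\IndCoh(S)}(\CE\otimes\CF,\omega_S).
\]
Both $\CE\otimes\CF\in\Coh(S)$ (perfect tensor coherent is coherent) and $\omega_S\in\Coh(S)$ (a standard consequence of $S$ being almost of finite type over $k$: locally embed $S$ in a smooth scheme $S'$, so the embedding $i$ has bounded Tor dimension by regularity of $S'$, whence $\omega_S\simeq i^!(\omega_{S'})$ is coherent by \lemref{l:! vs * finite}) lie in $\IndCoh(S)^+$, so \propref{p:equiv on D plus} rewrites the right-hand side as $\Maps_{\QCoh(S)}(\CE\otimes\Psi_S(\CF),\Psi_S(\omega_S))=\Maps_{\QCoh(S)}(\CE,\underline\Hom_{\CO_S}(\CF,\Psi_S(\omega_S)))$ via the inner Hom adjunction in $\QCoh(S)$. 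Since perfect objects generate $\QCoh(S)$, the two objects coincide.

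Second, coherence being Zariski-local, I reduce to $S=\Spec(A)$ affine with a closed embedding $i:S\hookrightarrow S'=\BA^N$ obtained by lifting a finite generating set of $H^0(A)$. Then $p_S=p_{S'}\circ i$ gives $\omega_S\simeq i^!(\omega_{S'})$, and \lemref{l:! and Psi} yields $\Psi_S(\omega_S)\simeq i^{\QCoh,!}(\CO_{S'}[N])$. Since $\BA^N$ has global dimension $N$, $\on{Ext}^m_{\QCoh(S')}(\mathcal{L},\CO_{S'})=0$ for $m>N$ and any $\mathcal{L}\in\QCoh(S')^\heartsuit$. By the $(i_*,i^{\QCoh,!})$-adjunction on $\QCoh^+$ (\propref{p:properties QCoh !}(a)),
\[
\on{Ext}^j_{\QCoh(S)}(\CM,\Psi_S(\omega_S))\simeq\on{Ext}^{j+N}_{\QCoh(S')}(i_*\CM,\CO_{S'}),
\]
which vanishes for all $\CM\in\QCoh(S)^\heartsuit$ and $j>0$. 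Hence $\Psi_S(\omega_S)\in\Coh(S)$ has finite injective dimension in $\QCoh(S)$.

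Finally, for $\CF\in\Coh(S)$ and $\CG:=\Psi_S(\omega_S)\in\Coh(S)$ coherent of finite injective dimension, the standard classical argument shows $\underline\Hom_{\CO_S}(\CF,\CG)\in\Coh(S)$: cohomological boundedness follows by devissage on $\CF$ from finite injective dimension of $\CG$ together with boundedness of $\CF$; coherence of each cohomology sheaf follows Zariski-locally by taking a finite free resolution $P^\bullet\to\CF$ over the Noetherian ring $A$ of sufficient length and noting that $\Hom(P^i,\CG)$ is coherent, whence so are its cohomologies by Noetherianity. This yields $\underline\Hom_{\QCoh(S)}(\CF,\omega_S)\in\Coh(S)$ as required. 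The main obstacle is the injective-dimension bound for $\Psi_S(\omega_S)$; the closed embedding into $\BA^N$ reduces it to the classical finite global dimension of polynomial rings.
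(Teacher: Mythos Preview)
Your argument is correct, and it takes a route that differs from the paper's in an instructive way.

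The paper proceeds in two steps: first it uses the push-forward/inner-Hom compatibility
\[
\underline\Hom_{\QCoh(S)}\bigl(i^{\IndCoh}_*(\CF'),\omega_S\bigr)\simeq i_*\Bigl(\underline\Hom_{\QCoh({}^{cl}\!S)}(\CF',\omega_{{}^{cl}\!S})\Bigr)
\]
for the closed embedding $i:{}^{cl}\!S\to S$ (together with d\'evissage on $\CF$) to reduce to the case of a \emph{classical} scheme, and then cites the well-known classical result, itself proved by locally embedding ${}^{cl}\!S$ into $\BA^n$.

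You instead bypass the passage through ${}^{cl}\!S$ entirely. Your first step --- identifying the relative inner Hom $\underline\Hom_{\QCoh(S)}(\CF,\omega_S)$ with the ordinary $\QCoh$-inner Hom $\underline\Hom_{\CO_S}(\CF,\Psi_S(\omega_S))$ via \propref{p:equiv on D plus} --- is not in the paper's proof and allows you to embed the DG scheme $S$ itself directly into $\BA^N$. The key observation that this closed embedding is automatically eventually coconnective (because the target is regular, so $\Coh(\BA^N)=\QCoh(\BA^N)^{\on{perf}}$) is what makes \lemref{l:! vs * finite} and \propref{p:properties QCoh !}(a) available, and the injective-dimension bound for $\Psi_S(\omega_S)$ then drops out of the finite global dimension of $k[x_1,\dots,x_N]$. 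The final ``standard Noetherian argument'' works in the DG setting as well: approximate $\CF$ by perfect complexes to arbitrary connective depth (as in the proof of \propref{p:equiv on D plus}) and use that $\underline\Hom$ from a perfect complex into $\CG\in\Coh(S)$ is coherent.

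What each approach buys: the paper's route is more modular and makes explicit the useful compatibility of $\underline\Hom_{\QCoh(S)}(-,-)$ with proper push-forward, which is reused elsewhere. Your route is more self-contained and avoids introducing that formula; it also makes the role of the polynomial-ring embedding uniform across the classical and DG cases rather than deferring the latter to the former.

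One small remark: in your first paragraph you only need $\omega_S\in\IndCoh(S)^+$, not $\omega_S\in\Coh(S)$, to invoke \propref{p:equiv on D plus}; this weaker fact already follows from \lemref{l:! left exact}. The stronger coherence claim is then a consequence of your own computation in the second paragraph.
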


\begin{proof}
Let $i$ denote the canonical map $^{cl}\!S\to S$. It is enough to prove the lemma 
for $\CF$ of the form $i^\IndCoh_*(\CF')$ for $\CF'\in \Coh({}^{cl}\!S)$.

\medskip

It is easy to see from \eqref{e:upgrading !} that for a proper map $f:S_1\to S_2$
$\CF_i\in \IndCoh(S_i)$, we have
$$\underline\Hom_{\QCoh(S_2)}(f^\IndCoh_*(\CF_1),\CF_2)\simeq
f_*\left(\underline\Hom_{\QCoh(S_1)}(\CF_1,f^!(\CF_2)\right).$$

Hence, 
$$\underline\Hom_{\QCoh(S)}(\CF,\omega_S)\simeq i_*\left( 
\underline\Hom_{\QCoh({}^{cl}\!S)}(\CF',\omega_{^{cl}\!S})\right).$$

This reduces the assertion to the case of classical schemes, where it
is well-known (proved by the same manipulation as above by locally
embedding into $\BA^n$).

\end{proof}

\sssec{}

From the above lemma we obtain a well-defined functor $\Coh(S)^{\on{op}}\to \Coh(S)$ that we denote
$\BD_S^{\on{Serre}}$.

\begin{prop} \label{p:comparing Serre dualities}
The functors $\BD_S^{\on{Serre}}$ and $'\BD_S^{\on{Serre}}$ of \eqref{e:abstract Serre} are
canonically isomorphic.
\end{prop}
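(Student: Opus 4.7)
The proof proceeds by showing that both $\BD_S^{\on{Serre}}$ and ${}'\BD_S^{\on{Serre}}$ represent the same universal functor, so that Yoneda yields the canonical isomorphism. From the explicit description of the duality datum $(\epsilon_S,\mu_S)$ in \secref{sss:explicit duality data} and the standard relation between a self-duality of a compactly-generated DG category and the involution it induces on compact objects (cf.\ \cite[Sect.~2.3]{DG}), the functor ${}'\BD_S^{\on{Serre}}$ is characterized by natural isomorphisms
\begin{equation} \label{e:plan-key}
\Maps_{\IndCoh(S)}({}'\BD_S^{\on{Serre}}(\CF),\CF')\simeq \epsilon_S(\CF,\CF')=\Gamma^{\IndCoh}(S,\CF\sotimes\CF'),
\end{equation}
functorial in $\CF\in\Coh(S)$ and $\CF'\in\IndCoh(S)$. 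It thus suffices to exhibit, functorially in these variables, an isomorphism of the form \eqref{e:plan-key} with $\underline{\Hom}_{\QCoh(S)}(\CF,\omega_S)$ (viewed in $\IndCoh(S)$ via the embedding $\Coh(S)\subset\IndCoh(S)$) in place of ${}'\BD_S^{\on{Serre}}(\CF)$.

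I would construct the natural comparison map using three ingredients: the counit $\on{ev}\colon\underline{\Hom}_{\QCoh(S)}(\CF,\omega_S)\otimes\CF\to\omega_S$ of the adjunction $-\otimes\CF\dashv\underline{\Hom}_{\QCoh(S)}(\CF,-)$ of \secref{sss:internal Hom}; the $\QCoh(S)$-linearity identification $\CE\otimes\CG\simeq\Upsilon_S(\CE)\sotimes\CG$ coming from \eqref{e:QCoh linearity on sotimes}; and the unit property $\omega_S\sotimes\CF'\simeq\CF'$ of $\sotimes$. Concretely, given $\alpha\colon\omega_S\to\CF\sotimes\CF'$, one $\sotimes$-tensors $\alpha$ with $\underline{\Hom}_{\QCoh(S)}(\CF,\omega_S)$ and composes with (a $\sotimes$-avatar of) $\on{ev}$ to produce the required morphism $\underline{\Hom}_{\QCoh(S)}(\CF,\omega_S)\to\CF'$. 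Both sides of \eqref{e:plan-key} are continuous in $\CF'$—since $\underline{\Hom}_{\QCoh(S)}(\CF,\omega_S)\in\Coh(S)$ and $\omega_S$ are compact in $\IndCoh(S)$—so verification of the isomorphism reduces to $\CF'\in\Coh(S)$.

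To complete the argument, I would use that the objects $\Upsilon_S(\CE)=\CE\otimes\omega_S$ for $\CE\in\QCoh(S)^{\on{perf}}$ generate $\Coh(S)$ as a thick subcategory, because $\omega_S$ generates $\IndCoh(S)$ as a module over $\QCoh(S)$ (whose compact generator is $\CO_S$). Combined with the exactness of both sides of \eqref{e:plan-key} in $\CF\in\Coh(S)^{\on{op}}$, this reduces the verification to $\CF=\Upsilon_S(\CE)$ with $\CE$ perfect. For such $\CF$, the compatibility $\Upsilon_S=\Psi_S^\vee$ from \propref{p:Psi check} gives $\epsilon_S(\Upsilon_S(\CE),\CF')\simeq\Gamma(S,\CE\otimes\Psi_S(\CF'))\simeq\Maps_{\QCoh(S)}(\CE^\vee,\Psi_S(\CF'))$, while an analogous unwinding of $\underline{\Hom}_{\QCoh(S)}(\CE\otimes\omega_S,\omega_S)$ through the internal-Hom adjunction and \propref{p:equiv on D plus} (to transport between $\IndCoh(S)^+$ and $\QCoh(S)^+$) yields the same expression for the LHS of \eqref{e:plan-key}; a diagram chase confirms the natural map of the previous paragraph coincides with this identification. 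The main obstacle is the bookkeeping required to compare the $\QCoh(S)$-action $\otimes$ and the symmetric monoidal $!$-tensor $\sotimes$: because $(\IndCoh(S),\sotimes)$ is not rigid in general, $\BD_S^{\on{Serre}}(\CF)$ is not a $\sotimes$-dual of $\CF$, so every identification must be mediated through \eqref{e:QCoh linearity on sotimes} and the self-duality compatibility $\Upsilon_S=\Psi_S^\vee$.
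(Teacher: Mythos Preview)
Your Yoneda strategy is the right idea and matches the paper's approach in spirit, but the reduction step in your third paragraph has a genuine gap. The claim that the objects $\Upsilon_S(\CE)=\CE\otimes\omega_S$ for $\CE\in\QCoh(S)^{\on{perf}}$ generate $\Coh(S)$ as a thick subcategory is false in general. Take $S$ Gorenstein but not regular, e.g.\ $S=\Spec(k[x]/x^2)$: then $\omega_S$ is a graded line bundle, so $\CE\otimes\omega_S$ is perfect whenever $\CE$ is, and you are asserting that $\QCoh(S)^{\on{perf}}$ thickly generates $\Coh(S)$. But $k=k[x]/x$ has infinite projective dimension, so it lies in $\Coh(S)\setminus\QCoh(S)^{\on{perf}}$ and is not in the thick subcategory generated by perfects. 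Your justification, that $\omega_S$ generates $\IndCoh(S)$ as a $\QCoh(S)$-module, at best gives generation under \emph{colimits}, which does not pass to thick generation at the level of compact objects. (Relatedly, the parenthetical that $\omega_S$ is compact in $\IndCoh(S)$ is also wrong unless $S$ is eventually coconnective; see \propref{p:other functor event coconn}. The continuity of $\epsilon_S(\CF,-)$ you need is true, but for a different reason: $\epsilon_S$ is a composite of continuous functors.)

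The paper avoids any reduction on $\CF$. For arbitrary $\CF_1,\CF_2\in\Coh(S)$ it computes both $\Hom_{\IndCoh(S)}(\CF_1,{}'\BD_S^{\on{Serre}}(\CF_2))$ and $\Hom_{\IndCoh(S)}(\CF_1,\BD_S^{\on{Serre}}(\CF_2))$ directly. The first unwinds, via the unit $\mu_S$, to $\Hom_{\IndCoh(S\times S)}(\CF_1\boxtimes\CF_2,(\Delta_S)^{\IndCoh}_*(\omega_S))$; the second, via the internal-Hom adjunction, to $\Hom_{\IndCoh(S)}(\Psi_S(\CF_1)\otimes\CF_2,\omega_S)$. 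The key move is that all objects appearing here lie in $\IndCoh^+$ (one needs only $\omega_S\in\IndCoh(S)^+$, which follows from \lemref{l:! left exact}, not compactness), so \propref{p:equiv on D plus} transports both expressions to $\QCoh$, where \lemref{l:Psi compat with action} and the $(\Delta_S^*,(\Delta_S)_*)$-adjunction identify them with $\Hom_{\QCoh(S)}(\Psi_S(\CF_1)\underset{\CO_S}\otimes\Psi_S(\CF_2),\Psi_S(\omega_S))$. No special form of $\CF_1$ or $\CF_2$ is needed, and the argument does not invoke \propref{p:Psi check}.
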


\begin{proof}

Let $\CF_1,\CF_2$ be two objects of $\Coh(S)$. By definition,
\begin{multline} \label{e:Hom into Serre dual one}
\Hom_{\IndCoh(S)}(\CF_1,{}'\BD_S^{\on{Serre}}(\CF_2))\simeq
\Hom_{\IndCoh(S)\otimes \IndCoh(S)}(\CF_1\boxtimes \CF_2,\mu_{\IndCoh(S)}(k))\simeq \\
\simeq \Hom_{\IndCoh(S\times S)}(\CF_1\boxtimes \CF_2,\Delta_S{}_*^{\IndCoh}(\omega_S)).
\end{multline}

Note that both $\CF_1\boxtimes \CF_2$ and $\Delta_S{}_*^{\IndCoh}(\omega_S)$ belong to 
$\IndCoh(S)^+$, so by \propref{p:equiv on D plus}, we can rewrite the right-hand side
of \eqref{e:Hom into Serre dual one} as
\begin{multline} \label{e:Hom into Serre dual two}
\Hom_{\QCoh(S\times S)}(\Psi_{S}(\CF_1)\boxtimes \Psi_S(\CF_2),\Delta_S{}_*(\Psi_S(\omega_S)))\simeq \\
\simeq \Hom_{\QCoh(S)}(\Psi_{S}(\CF_1)\underset{\CO_S}\otimes \Psi_S(\CF_2),\Psi_S(\omega_S)).
\end{multline}

\medskip

By definition, $\Hom_{\IndCoh(S)}(\CF_1,\BD_S^{\on{Serre}}(\CF_2))$ is isomorphic to
\begin{multline} \label{e:Hom into Serre dual three} 
\Hom_{\QCoh(S)}(\Psi_S(\CF_1),\Psi_S(\BD_S^{\on{Serre}}(\CF_2))):=\\
\Hom_{\QCoh(S)}(\Psi_S(\CF_1),\underline\Hom_{\QCoh(S)}(\CF_2,\omega_S)):= \\
\Hom_{\IndCoh(S)}(\Psi_S(\CF_1)\underset{\CO_S}\otimes \CF_2,\omega_S),
\end{multline}
where $\Psi_S(\CF_1)\underset{\CO_S}\otimes \CF_2$ is understood in the sense
of the action of $\QCoh(S)$ on $\IndCoh(S)$.

\medskip

However, since $\omega_S\in \IndCoh(S)^+$, by \propref{p:equiv on D plus}, the right-hand side
of \eqref{e:Hom into Serre dual three} can be rewritten as 
$$\Hom_{\QCoh(S)}(\Psi_S(\Psi_S(\CF_1)\underset{\CO_S}\otimes \CF_2),\Psi(\omega_S)),$$
which by \lemref{l:Psi compat with action} is isomorphic to the right-hand side of
\eqref{e:Hom into Serre dual two}.

\end{proof}

\sssec{}

Combining \propref{p:comparing Serre dualities} with \cite[Lemma 2.3.3]{DG}, we obtain:

\begin{cor} \label{c:D conjugates two} 
Let $S_1\to S_2$ be a morphism in $\dgSch_{\on{aft}}$.

\smallskip

\noindent{\em(a)} Suppose that $f$ is eventually coconnective. Then 
we have canonical isomorphisms of functors $\Coh(S_2)^{\on{op}}\to \Coh(S_1)$:
$$\BD_{S_1}^{\on{Serre}}\circ (f^{\IndCoh,*})^{\on{op}}\simeq f^!\circ \BD^{\on{Serre}}_{S_2};$$
and 
$$\BD_{S_1}^{\on{Serre}}\circ (f^!)^{\on{op}}\simeq f^{\IndCoh,*}\circ \BD^{\on{Serre}}_{S_2}.$$

\smallskip

\noindent{\em(b)} Suppose that $f$ is proper. Then we have a canonical isomorphism
of functors $\Coh(S_1)^{\on{op}}\to \Coh(S_2)$
$$\BD_{S_2}^{\on{Serre}}\circ (f^\IndCoh_*)^{\on{op}}\simeq 
f^\IndCoh_* \circ \BD_{S_1}^{\on{Serre}}.$$

\end{cor}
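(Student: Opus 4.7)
The argument will pass the two commutative squares \eqref{e:dual of f_*} and \eqref{e:dual of f^!} furnished by \thmref{t:Serre duality} to the level of compact objects, and then substitute two identifications: \propref{p:comparing Serre dualities}, which identifies the restriction of $\bD_S^{\on{Serre}}$ to compact objects with the classical Serre duality functor $\BD_S^{\on{Serre}}$; and \cite[Lemma 2.3.3]{DG}, which asserts that for a continuous functor $F:\bC_1\to\bC_2$ between compactly generated DG categories admitting a continuous left adjoint $F^L$, the dual $F^\vee$ preserves compact objects, and under the identification $(\bC_i^\vee)^c\simeq (\bC_i^c)^{\on{op}}$ its restriction to compacts is canonically $(F^L)^{\on{op}}$.

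For part (b), the properness of $f$ yields the adjunction $f^{\IndCoh}_*\dashv f^!$; since $f^{\IndCoh}_*$ preserves coherent sheaves by \lemref{l:preservation of coh}, \cite[Lemma 2.3.3]{DG} applied to $F=f^!$ identifies $(f^!)^\vee|^c$ with $(f^{\IndCoh}_*)^{\on{op}}:\Coh(S_1)^{\on{op}}\to\Coh(S_2)^{\on{op}}$. Restricting diagram \eqref{e:dual of f^!} to compact objects (the right vertical preserves compacts by properness, the left vertical by the above, and the horizontals restrict to $\BD^{\on{Serre}}_{S_i}$ via \propref{p:comparing Serre dualities}) produces exactly the commutative square asserting (b). For the first isomorphism of (a), eventual coconnectivity of $f$ supplies the adjunction $f^{\IndCoh,*}\dashv f^{\IndCoh}_*$ (\lemref{l:* pullback adjoint}), with both $f^{\IndCoh,*}$ and $f^!$ preserving coherence by \lemref{l:! vs * finite}; applying \cite[Lemma 2.3.3]{DG} to $F=f^{\IndCoh}_*$ with left adjoint $f^{\IndCoh,*}$ identifies $(f^{\IndCoh}_*)^\vee|^c$ with $(f^{\IndCoh,*})^{\on{op}}$, and restriction of \eqref{e:dual of f_*} to compact objects delivers the first isomorphism. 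The second isomorphism of (a) then follows from the first by substituting $\CF\leftrightarrow \BD^{\on{Serre}}_{S_2}(\CF)$ and applying $\BD^{\on{Serre}}_{S_1}$ to both sides, exploiting that $\BD^{\on{Serre}}$ is an anti-equivalence of order two, which is a consequence of the involutivity clause of \thmref{t:Serre duality} combined with \propref{p:comparing Serre dualities}.

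The main obstacle will be the bookkeeping of variances when passing to compacts: one must verify that each corner of \eqref{e:dual of f_*} and \eqref{e:dual of f^!} restricts sensibly to compact objects (in each case predicated on the existence and continuity of one of $f^{\IndCoh,*}$, $f^{\IndCoh}_*$, or $f^!$ as an adjoint), and that the direction reversal inherent in dualization combines with the ``$\on{op}$'' appearing in \cite[Lemma 2.3.3]{DG} to produce functors of the expected variance on all four corners. Once these compatibilities are in place, the two asserted isomorphisms follow by transport of structure through the restricted diagrams.
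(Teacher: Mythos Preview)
Your proposal is correct and follows exactly the approach the paper indicates: the paper simply writes ``Combining \propref{p:comparing Serre dualities} with \cite[Lemma 2.3.3]{DG}, we obtain'' and states the corollary, and you have spelled out precisely how those two ingredients combine via the diagrams \eqref{e:dual of f_*} and \eqref{e:dual of f^!}. Your derivation of the second isomorphism in (a) from the first via the involutivity of $\BD^{\on{Serre}}$ is a clean way to avoid needing a separate adjunction argument for that case.
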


\ssec{Serre duality in the eventually coconnective case}  \label{other functor event coconn}

In this subsection we will assume that $S\in \dgSch_{\on{aft}}$ is eventually coconnective. 

\sssec{}

From \propref{p:Xi} and \cite{DG}, Sect. 2.3.2, we obtain:

\begin{cor}  \label{c:dual of Psi}
The functor $\Psi_S^\vee$ sends compact objects
to compact ones, and is fully faithful. The functor $\Psi^\vee_S$ realizes $\QCoh(S)$ as a colocalization
of $\IndCoh(S)$. 
\end{cor}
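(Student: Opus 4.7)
The plan is to derive both assertions from the $(\Xi_S,\Psi_S)$-adjunction of \propref{p:Xi} together with the explicit identification of $\Psi_S^\vee$ supplied by \propref{p:Psi check}.

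For the fully-faithfulness and the colocalization statement, I would dualize the adjunction $\Xi_S \dashv \Psi_S$ in $\StinftyCat_{\on{cont}}$. Since both $\QCoh(S)$ and $\IndCoh(S)$ are compactly generated and hence dualizable, applying $(-)^\vee$ termwise to the unit $\on{Id}_{\QCoh(S)} \to \Psi_S \circ \Xi_S$ and counit $\Xi_S \circ \Psi_S \to \on{Id}_{\IndCoh(S)}$ of the original adjunction produces morphisms $\Xi_S^\vee \circ \Psi_S^\vee \to \on{Id}_{\QCoh(S)^\vee}$ and $\on{Id}_{\IndCoh(S)^\vee} \to \Psi_S^\vee \circ \Xi_S^\vee$, which satisfy the triangle identities exhibiting $\Xi_S^\vee \dashv \Psi_S^\vee$ (with $\Xi_S^\vee$ as left and $\Psi_S^\vee$ as right adjoint). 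By \propref{p:Xi}, $\Xi_S$ is fully faithful, i.e., the original unit is invertible; this translates to the dualized counit being invertible, which is precisely the condition that the right adjoint $\Psi_S^\vee$ is fully faithful. Hence $\Psi_S^\vee$ exhibits $\QCoh(S)$ as a colocalization of $\IndCoh(S)$, after identifying $\QCoh(S)^\vee$ and $\IndCoh(S)^\vee$ with $\QCoh(S)$ and $\IndCoh(S)$ via $\bD_S^{\on{naive}}$ and $\bD_S^{\on{Serre}}$.

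For the compact-to-compact assertion, I would invoke \propref{p:Psi check} to identify $\Psi_S^\vee$ with $\Upsilon_S$, i.e., $\CE \mapsto \CE \otimes \omega_S$. Since $S$ is eventually coconnective and almost of finite type, the structure morphism $p_S \colon S \to \on{pt}$ is almost of finite type and eventually coconnective — the latter because $p_S^*(k) = \CO_S$ belongs to $\QCoh(S)^+$ by the eventual coconnectivity of $S$. \lemref{l:! vs * finite} then gives $\omega_S = p_S^!(k) \in \Coh(S)$. For any compact $\CE \in \QCoh(S)^c = \QCoh(S)^{\on{perf}}$, the tensor product $\CE \otimes \omega_S$ is the tensor of a coherent object with a perfect complex, and hence again lies in $\Coh(S) = \IndCoh(S)^c$.

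The only subtle step is the bookkeeping of left/right adjoints and of units/counits under the dualization in the second paragraph; the compact preservation then reduces cleanly to the explicit fact $\omega_S \in \Coh(S)$ supplied by \lemref{l:! vs * finite}, so I do not anticipate any substantive obstacle.
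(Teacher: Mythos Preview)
Your overall strategy matches the paper's—derive everything by dualizing the $(\Xi_S,\Psi_S)$-adjunction of \propref{p:Xi}—but the bookkeeping you flagged as ``the only subtle step'' is indeed where things go wrong. Dualization in $\StinftyCat_{\on{cont}}$ reverses $1$-morphisms but is \emph{covariant} on $2$-morphisms: the dual of $\eta:\on{Id}_{\QCoh(S)}\to \Psi_S\circ\Xi_S$ is a map $\on{Id}_{\QCoh(S)^\vee}\to \Xi_S^\vee\circ\Psi_S^\vee$, not the reversed arrow you wrote, and similarly for the counit. Tracing this correctly, the dualized adjunction is $\Psi_S^\vee\dashv \Xi_S^\vee$, with $\Psi_S^\vee$ as the \emph{left} adjoint (the paper states this explicitly just after the corollary, and it is also visible from the identification $\Psi_S^\vee\simeq\Upsilon_S$ together with \lemref{l:expression for Xi dual}). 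The invertibility of $\eta$ then gives invertibility of the \emph{unit} of the new adjunction, so the left adjoint $\Psi_S^\vee$ is fully faithful, and $\QCoh(S)$ is coreflective in $\IndCoh(S)$—this is what ``colocalization'' means here. Your version would make $\Psi_S^\vee$ a fully faithful right adjoint, i.e.\ a localization, contradicting the statement.

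Your argument for preservation of compacts is correct and is a legitimate alternative to the paper's purely formal route via \cite[Sect.~2.3.2]{DG}. The paper deduces compact preservation abstractly from the existence of the left adjoint $\Xi_S$, and then \emph{derives} $\omega_S\in\Coh(S)$ as a consequence; you go the other way, invoking \lemref{l:! vs * finite} to get $\omega_S\in\Coh(S)$ directly and concluding that $\Upsilon_S\simeq\Psi_S^\vee$ preserves compacts. Both are fine.
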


Taking into account \propref{p:Psi check}, we obtain: 

\begin{cor}  \label{c:ppties of Upsilon}
The functor $\Upsilon_S\simeq  -\underset{\CO_S}\otimes \omega_S$ 
sends compact objects to compact ones and is fully faithful. 
\end{cor}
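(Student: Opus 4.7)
The plan is to deduce the corollary as a formal consequence of two results that have already been established in the preceding subsections: \propref{p:Psi check}, which identifies $\Upsilon_S$ with the dual functor $\Psi_S^\vee$, and \corref{c:dual of Psi}, which records the corresponding properties for $\Psi_S^\vee$ in the eventually coconnective setting. Since both assertions of the corollary (preservation of compactness and fully faithfulness) are properties of functors that transport through canonical isomorphisms, there is essentially nothing new to prove.

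First I would invoke \propref{p:Psi check} to produce a canonical isomorphism $\Upsilon_S \simeq \Psi_S^\vee$ of continuous functors $\QCoh(S)\to\IndCoh(S)$, where the dual is formed with respect to the Serre self-duality $\bD_S^{\on{Serre}}$ on $\IndCoh(S)$ and the ``naive'' self-duality $\bD_S^{\on{naive}}$ on $\QCoh(S)$. Note that the eventually coconnective hypothesis on $S$ is used only through \corref{c:dual of Psi} (and ultimately through \propref{p:Xi}, which supplies the continuous left adjoint $\Xi_S$ to $\Psi_S$); the identification $\Upsilon_S\simeq \Psi_S^\vee$ itself is valid for any $S \in \dgSch_{\on{aft}}$.

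Given the identification, the two claimed properties of $\Upsilon_S$ are obtained by transport. Preservation of compactness follows because a continuous functor whose dual (equivalently, whose image under $\on{dualization}$ in $\StinftyCat^{\on{dualizable}}_{\on{cont}}$) has a specified form admits a continuous right adjoint, and therefore sends compacts to compacts; concretely, the dual of $\Psi_S$ admits $(\Xi_S)^\vee$ as a continuous right adjoint. Fully faithfulness of $\Upsilon_S$ is dual to the statement that $\Psi_S$ realizes $\QCoh(S)$ as a colocalization of $\IndCoh(S)$, i.e., that $\Xi_S\dashv \Psi_S$ with $\Xi_S$ fully faithful; dualizing the adjunction and using that a fully faithful left adjoint dualizes to a fully faithful right adjoint gives the desired conclusion for $\Psi_S^\vee \simeq \Upsilon_S$.

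There is no real obstacle: the only nontrivial input is the identification $\Upsilon_S \simeq \Psi_S^\vee$, and this has already been supplied by \propref{p:Psi check}. The remaining work is a bookkeeping exercise with adjoints and duals of compactly generated DG categories, which can be made entirely formal by citing \cite[Sect.~2.3]{DG}.
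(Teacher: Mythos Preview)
Your proposal is correct and follows exactly the paper's own argument: the paper deduces the corollary immediately from \corref{c:dual of Psi} by invoking the identification $\Upsilon_S\simeq\Psi_S^\vee$ of \propref{p:Psi check}, and you do the same, merely unpacking the duality bookkeeping that underlies \corref{c:dual of Psi}.
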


In particular:

\begin{cor}
For $S$ eventually coconnective, the object $\omega_S\in \IndCoh(S)$ is compact.
\end{cor}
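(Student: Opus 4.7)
The plan is very short: deduce this directly from \corref{c:ppties of Upsilon}. The key observation is that $\omega_S$ is literally the value of $\Upsilon_S$ on the structure sheaf. Indeed, by the description of $\Upsilon_S$ given in the excerpt, $\Upsilon_S(\CE)\simeq \CE\otimes\omega_S$, where the tensor denotes the action of $\QCoh(S)$ on $\IndCoh(S)$ of \secref{ss:action}; specializing to $\CE=\CO_S$, which is the unit of the symmetric monoidal category $\QCoh(S)$, we obtain a canonical isomorphism $\Upsilon_S(\CO_S)\simeq \omega_S$.

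Now I would recall that $\CO_S$ is always compact in $\QCoh(S)$: it is a perfect complex, so $\CO_S\in \QCoh(S)^{\on{perf}}=\QCoh(S)^c$. Under the eventually coconnective assumption on $S$, \corref{c:ppties of Upsilon} applies and tells us that $\Upsilon_S$ sends compact objects to compact objects. Applying this to $\CO_S$ yields that $\omega_S\in \IndCoh(S)^c=\Coh(S)$, which is the desired conclusion.

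There is no genuine obstacle: once the identification $\omega_S\simeq \Upsilon_S(\CO_S)$ is made explicit, the statement is a one-line consequence of the preceding corollary. The only subtlety worth flagging is that the eventual coconnectivity hypothesis is what is ultimately used, since it is this hypothesis that enters \propref{p:Xi} and hence \corref{c:dual of Psi} and \corref{c:ppties of Upsilon}; indeed, without it $\Upsilon_S$ need not preserve compact objects, and in fact $\omega_S$ need not lie in $\Coh(S)$ at all.
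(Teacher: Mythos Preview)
Your proposal is correct and follows exactly the paper's approach: the corollary is stated with the words ``In particular:'' immediately after \corref{c:ppties of Upsilon}, and your argument---that $\omega_S\simeq \Upsilon_S(\CO_S)$ with $\CO_S\in \QCoh(S)^c$---is precisely the intended one-line deduction.
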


\begin{rem}
We have an isomorphism 
\begin{equation}
'\BD_S^{\on{Serre}}\circ (\Xi_S)^{\on{op}}\simeq \Psi^\vee_S\circ \BD_S^{\on{naive}}
\end{equation}
as functors $(\QCoh(S)^{\on{perf}})^{\on{op}}\rightrightarrows \Coh(S)$, see \cite[Lemma 2.3.3]{DG}. 

\medskip

Applying \propref{p:comparing Serre dualities} and \propref{p:Psi check} we obtain an isomorphism
\begin{equation}  \label{e:conjugate Ups}
\BD_S^{\on{Serre}}\circ (\Xi_S)^{\on{op}}\simeq \Upsilon_S\circ \BD_S^{\on{naive}}.
\end{equation}

It is easy to see that the isomorphism in \eqref{e:conjugate Ups} is the tautoloigical isomorphism
$$\underline\Hom_{\QCoh(S)}(\CE,\omega_S)\simeq \CE^\vee\otimes \omega_S,\quad \CE\in \QCoh(S)^{\on{perf}}.$$

\end{rem}

\sssec{}

By \cite[Sect. 2.3.2 and Lemma 2.3.3]{DG}, the functor $\Psi^\vee_S$ admits a continuous right adjoint, which 
identifies with the functor $\Xi^\vee_S$, dual of $\Xi_S$. 

\medskip

Since $\omega_S\in \IndCoh(S)$ is compact, by \secref{sss:internal Hom}, we have a
continuous functor
\begin{equation} \label{e:internal Hom from dual}
\CF\mapsto \underline\Hom_{\QCoh(S)}(\omega_S,\CF):\IndCoh(S)\to \QCoh(S).
\end{equation}

\medskip

From the definition and  the isomorphism $\Psi^\vee_S\simeq \Upsilon_S$ of \propref{p:Psi check}, we obtain:

\begin{lem}  \label{l:expression for Xi dual}
The functor $\IndCoh(S)\to \QCoh(S)$, right adjoint of $\Psi^\vee_S$
is given by 
$$\CF\mapsto \underline\Hom_{\QCoh(S)}(\omega_S,\CF).$$
\end{lem}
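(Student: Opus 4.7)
The proof should be essentially a formal unwinding of definitions combined with the two main inputs already available: the identification $\Psi_S^\vee \simeq \Upsilon_S$ from \propref{p:Psi check}, and the compactness of $\omega_S$ in the eventually coconnective case (the corollary stated just before the lemma).

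The plan is as follows. First, I would invoke \propref{p:Psi check} to replace $\Psi_S^\vee$ with the functor $\Upsilon_S : \QCoh(S) \to \IndCoh(S)$, $\CE \mapsto \CE \otimes \omega_S$, where $\otimes$ denotes the action of $\QCoh(S)$ on $\IndCoh(S)$ from \secref{ss:action}. Thus the claim reduces to showing that the right adjoint of $\Upsilon_S$ is $\underline\Hom_{\QCoh(S)}(\omega_S, -)$.

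Next, I would verify the adjunction directly from the definition of the relative internal Hom recalled in \secref{sss:internal Hom}. Namely, for any $\CE \in \QCoh(S)$ and $\CF \in \IndCoh(S)$ we have, by definition of $\underline\Hom_{\QCoh(S)}(\omega_S, -)$,
$$\Maps_{\QCoh(S)}\bigl(\CE, \underline\Hom_{\QCoh(S)}(\omega_S, \CF)\bigr) \simeq \Maps_{\IndCoh(S)}(\CE \otimes \omega_S, \CF) \simeq \Maps_{\IndCoh(S)}(\Upsilon_S(\CE), \CF),$$
which identifies $\underline\Hom_{\QCoh(S)}(\omega_S, -)$ with the right adjoint of $\Upsilon_S \simeq \Psi_S^\vee$.

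Finally, one needs to check that this right adjoint is continuous, so that the identification takes place inside $\StinftyCat_{\on{cont}}$. This is the one point where the eventually coconnective hypothesis on $S$ is used: by the corollary preceding the lemma, $\omega_S$ is compact in $\IndCoh(S)$, and hence by \lemref{l:inner Hom continuous} the functor $\underline\Hom_{\QCoh(S)}(\omega_S, -)$ is continuous. There is no serious obstacle here; the only subtlety is making sure the compactness of $\omega_S$ is the genuine input that makes the right adjoint continuous, which is exactly the content of \cite[Sect. 2.3.2, Lemma 2.3.3]{DG} invoked in the preceding paragraph.
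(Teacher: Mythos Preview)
Your proposal is correct and follows exactly the paper's approach: the paper states the lemma as an immediate consequence of the definition of $\underline\Hom_{\QCoh(S)}(\omega_S,-)$ together with the identification $\Psi^\vee_S\simeq\Upsilon_S$ from \propref{p:Psi check}, having already noted continuity via compactness of $\omega_S$ and \lemref{l:inner Hom continuous}. One minor remark: your closing sentence slightly conflates two distinct inputs---the reference to \cite[Sect.~2.3.2, Lemma~2.3.3]{DG} in the preceding paragraph establishes abstractly that $\Psi^\vee_S$ admits a continuous right adjoint (as the dual of $\Xi_S$), whereas the compactness of $\omega_S$ combined with \lemref{l:inner Hom continuous} is what shows the \emph{explicit} formula $\underline\Hom_{\QCoh(S)}(\omega_S,-)$ is continuous; but this does not affect the argument.
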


\begin{rem}  
The functor right adjoint to $\Psi^\vee_S\simeq \Upsilon_S$ is defined for any $S$ (i.e., not necessarily eventually
coconnective), but in general it will fail to be continuous. 
\end{rem}

Hence, we obtain:

\begin{cor}  \label{c:expression for Xi dual}
The functor $$\Xi^\vee_S:\IndCoh(S)\to \QCoh(S),$$ dual to $\Xi_S:\QCoh(S)\to \IndCoh(S)$, is given by
$\underline\Hom_{\QCoh(S)}(\omega_S,-)$,
\end{cor}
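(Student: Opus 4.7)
The plan is to deduce this corollary in two moves: first identify $\Xi^\vee_S$ as the right adjoint of $\Psi^\vee_S$ by a general principle about dualization of adjoint pairs, and then invoke the preceding \lemref{l:expression for Xi dual} to identify that right adjoint with $\underline\Hom_{\QCoh(S)}(\omega_S,-)$.

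For the first step, I would recall that by \propref{p:Xi}, in the eventually coconnective case the functor $\Xi_S$ is the left adjoint of $\Psi_S$. Both $\QCoh(S)$ and $\IndCoh(S)$ are dualizable (in fact compactly generated), and both functors are continuous, so passing to dual functors under the self-dualities $\bD_S^{\on{naive}}$ and $\bD_S^{\on{Serre}}$ yields continuous functors $\Xi^\vee_S:\IndCoh(S)\to \QCoh(S)$ and $\Psi^\vee_S:\QCoh(S)\to \IndCoh(S)$, and the adjunction $(\Xi_S,\Psi_S)$ dualizes to an adjunction $(\Psi^\vee_S,\Xi^\vee_S)$; that is, $\Xi^\vee_S$ is the continuous right adjoint of $\Psi^\vee_S$. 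This is the standard behavior of adjunctions under duality in $\StinftyCat_{\on{cont}}$ (cf.\ \cite{DG}, Sect.~2.3).

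For the second step, combine this with \propref{p:Psi check}, which identifies $\Psi^\vee_S$ with $\Upsilon_S=-\otimes \omega_S$, and with \lemref{l:expression for Xi dual}, which exhibits $\underline\Hom_{\QCoh(S)}(\omega_S,-):\IndCoh(S)\to \QCoh(S)$ as the right adjoint of $\Upsilon_S$ (continuity of this internal Hom having been secured by \lemref{l:inner Hom continuous} using compactness of $\omega_S$, which itself holds because $S$ is eventually coconnective). By uniqueness of right adjoints we therefore obtain a canonical isomorphism $\Xi^\vee_S\simeq \underline\Hom_{\QCoh(S)}(\omega_S,-)$.

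There is essentially no obstacle: the corollary is a formal consequence of the material already assembled. The only point meriting care is that one is applying all of this in the eventually coconnective setting, where $\Xi_S$ exists (\propref{p:Xi and event coconn}), $\omega_S$ is compact, and $\Upsilon_S$ admits a \emph{continuous} right adjoint --- exactly the hypotheses under which \lemref{l:expression for Xi dual} and the preceding corollaries were proved.
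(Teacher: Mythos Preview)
Your proposal is correct and follows essentially the same route as the paper: the paper has already recorded (just before \lemref{l:expression for Xi dual}, citing \cite[Sect.~2.3.2 and Lemma~2.3.3]{DG}) that $\Psi^\vee_S$ admits a continuous right adjoint identified with $\Xi^\vee_S$, and then \lemref{l:expression for Xi dual} computes that right adjoint as $\underline\Hom_{\QCoh(S)}(\omega_S,-)$; the corollary is deduced by combining these, exactly as you do.
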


We also note:

\begin{lem}
Let $f:S_1\to S_2$ be an eventually coconnective map in $\dgSch_{\on{aft}}$, where
$S_1$ and $S_2$ are themselves eventually coconnective. 
Then there exists a canonical isomorphism:
$$\Xi_{S_1}^\vee\circ f^!\simeq f^*\circ \Xi^\vee_{S_2}:\IndCoh(S_2)\to \QCoh(S_1).$$
\end{lem}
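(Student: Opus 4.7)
The plan is to obtain this isomorphism by dualizing the isomorphism of \propref{p:Xi and *}. Since $S_1$ and $S_2$ are both eventually coconnective, the categories $\QCoh(S_i)$ and $\IndCoh(S_i)$ are self-dual --- via $\bD_{S_i}^{\on{naive}}$ and $\bD_{S_i}^{\on{Serre}}$, respectively --- and all four functors appearing on the two sides admit duals. I would first identify those duals. By definition, $(\Xi_{S_i})^\vee = \Xi_{S_i}^\vee$. By the naive self-duality of $\QCoh$ (i.e., the rigidity of $\QCoh(S_i)$ as a symmetric monoidal category), the dual of $f_*:\QCoh(S_1)\to\QCoh(S_2)$ is $f^*:\QCoh(S_2)\to\QCoh(S_1)$. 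The crucial identification is $(f^{\IndCoh}_*)^\vee = f^!$, which is precisely the content of diagram \eqref{e:dual of f_*} extracted from \thmref{t:Serre duality}.

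Since passage to duals is functorial and reverses composition, the dual of $f^{\IndCoh}_* \circ \Xi_{S_1}:\QCoh(S_1)\to\IndCoh(S_2)$ is $\Xi^\vee_{S_1}\circ f^!:\IndCoh(S_2)\to\QCoh(S_1)$, while the dual of $\Xi_{S_2}\circ f_*:\QCoh(S_1)\to\IndCoh(S_2)$ is $f^*\circ \Xi^\vee_{S_2}$. Consequently, a canonical isomorphism $\Xi^\vee_{S_1}\circ f^!\simeq f^*\circ \Xi^\vee_{S_2}$ is equivalent, under dualization, to a canonical isomorphism $\Xi_{S_2}\circ f_*\simeq f^{\IndCoh}_*\circ \Xi_{S_1}$. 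The latter is supplied by \propref{p:Xi and *}: the hypotheses there are met, since a morphism in $\dgSch_{\on{aft}}$ is automatically almost of finite type (as $k$ is Noetherian), and an eventually coconnective morphism almost of finite type is of bounded Tor dimension by \lemref{l:finite Tor}.

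The essential mathematical content of the proof thus lies in \propref{p:Xi and *}; what remains is the purely formal passage to duals. The one point to check is that the four identifications of duals listed above are mutually compatible, so that dualizing the isomorphism of \propref{p:Xi and *} produces exactly the stated natural transformation rather than some variant. This coherence is built into the structure of \thmref{t:Serre duality}, which encodes Serre duality at the level of a functor on $\dgSch_{\on{aft}}$ and is compatible with the involutivity of dualization. Hence I do not anticipate any genuine obstacle beyond this bookkeeping.
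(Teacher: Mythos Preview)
Your proposal is correct and follows exactly the paper's approach: the paper's proof consists of the single sentence ``Obtained by passing to dual functors in the isomorphism $\Xi_{S_2}\circ f_*\simeq f^\IndCoh_*\circ \Xi_{S_1}$ of \propref{p:Xi and *}.'' You have simply spelled out the details of that dualization, including the identification $(f^{\IndCoh}_*)^\vee\simeq f^!$ from \thmref{t:Serre duality} and the verification via \lemref{l:finite Tor} that the hypotheses of \propref{p:Xi and *} are met.
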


\begin{proof}
Obtained by passing to dual functors in the 
isomorphism 
$$\Xi_{S_2}\circ f_*\simeq f^\IndCoh_*\circ \Xi_{S_1}$$
of \propref{p:Xi and *}.
\end{proof}

\sssec{}

We note that the logic of the previous discussion can be inverted, and we obtain:

\begin{prop}  \label{p:other functor event coconn}
The following conditions on $S$ are equivalent:

\smallskip

\noindent(a) The functor $\Psi_S$ admits a left adjoint.

\smallskip

\noindent(b) The functor $\Psi^\vee_S\simeq\Upsilon_S$ sends compact
objects to compact ones.

\smallskip

\noindent(c) The object $\omega_S\in \IndCoh(S)$ is compact.

\smallskip

\noindent(d) $S$ is eventually coconnective. 

\end{prop}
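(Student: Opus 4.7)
The plan is to exploit the fact that $(a) \Leftrightarrow (d)$ is already in the paper, coming from \propref{p:Xi} and \propref{p:Xi and event coconn}. What remains is to insert the conditions $(b)$ and $(c)$ into this loop. The easy half of the cycle is $(d) \Rightarrow (b) \Rightarrow (c)$: assuming $(d)$, \corref{c:ppties of Upsilon} tells us that $\Upsilon_S$ sends compacts to compacts, which is $(b)$; and $(c)$ then follows because $\omega_S = \Upsilon_S(\CO_S)$ and $\CO_S \in \QCoh(S)^{\on{perf}} = \QCoh(S)^c$ (it is the symmetric monoidal unit, hence dualizable).

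Next I would close the loop by proving $(c) \Rightarrow (b) \Rightarrow (a)$. For $(c) \Rightarrow (b)$: for any $\CE \in \QCoh(S)^{\on{perf}}$ we have $\Upsilon_S(\CE) \simeq \CE \otimes \omega_S$, and since $\CE$ is dualizable in $\QCoh(S)$, the endofunctor $\CE \otimes -$ of $\IndCoh(S)$ admits the continuous right adjoint $\CE^\vee \otimes -$, hence preserves compact objects; applied to the compact object $\omega_S$ this gives $\Upsilon_S(\CE)$ compact, i.e., $(b)$. For $(b) \Rightarrow (a)$ I would invoke \propref{p:Psi check}, which provides the canonical identification $\Psi_S^\vee \simeq \Upsilon_S$ using the self-dualities $\bD_S^{\on{Serre}}$ and $\bD_S^{\on{naive}}$. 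Under passage to duals in $\StinftyCat_{\on{cont}}$, adjunctions swap sides, so a continuous functor $F$ between compactly generated DG categories admits a continuous left adjoint if and only if its dual $F^\vee$ admits a continuous right adjoint, and the latter holds precisely when $F^\vee$ preserves compacts. Applied to $F = \Psi_S$ with $F^\vee = \Upsilon_S$, hypothesis $(b)$ produces a left adjoint to $\Psi_S$, which is $(a)$.

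The main obstacle is the last implication $(b) \Rightarrow (a)$, which, although formally a manipulation with duals, relies crucially on the nontrivial identification $\Psi_S^\vee \simeq \Upsilon_S$ of \propref{p:Psi check} and on the standard duality-adjunction dictionary for dualizable DG categories; everything else is essentially bookkeeping. As a sanity check, one may note that the same duality dictionary renders the right adjoint of $\Upsilon_S$ as $\underline\Hom_{\QCoh(S)}(\omega_S,-)$ (cf.\ \lemref{l:expression for Xi dual}, whose internal-Hom formula is valid in general; the content of $(c)$ is precisely the continuity of this functor, guaranteed by \lemref{l:inner Hom continuous}), so the dual left adjoint to $\Psi_S$ asserted by $(a)$ is indeed $\Xi_S$ on the nose.
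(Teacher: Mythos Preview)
Your proof is correct and follows essentially the same approach as the paper. The paper is simply more terse: it declares $(b)\Leftrightarrow(c)$ ``tautological'' (your two directions spell this out), cites \propref{p:Xi and event coconn} for $(a)\Leftrightarrow(d)$ as you do, and invokes \cite[Sect.~2.3.2]{DG} directly for the equivalence $(a)\Leftrightarrow(b)$, which is exactly your duality-adjunction dictionary for $\Psi_S^\vee\simeq\Upsilon_S$; your separate step $(d)\Rightarrow(b)$ via \corref{c:ppties of Upsilon} is then redundant but harmless.
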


\begin{proof}

The equivalence (b) $\Leftrightarrow$ (c) is tautological. The equivalence
(a) $\Leftrightarrow$ (d) has been established in \propref{p:Xi and event coconn}. 
The equivalence (a) $\Leftrightarrow$ (b) follows from \cite{DG}, Sect. 2.3.2.

\end{proof}

\sssec{}

Let $S\in \dgSch_{\on{aft}}$ be affine. In this case, it is easy to see that for every $\CF\in \Coh(S)$ and $k\in \BN$
there exists an object $\CF'\in \QCoh(S)^{\on{perf}}$ and a map $\CF'\to \CF$, such that
$$\on{Cone}(\Xi_S(\CF')\to \CF)[-1]\in \Coh(S)^{\leq -k}.$$

We claim that the functor $\Psi_S^\vee$ plays a dual role:

\begin{lem}
For $\CF\in \Coh(S)$ and $k\in \BN$ there exists an object $\CF'\in \QCoh(S)^{\on{perf}}$
and a map $\CF\to \Upsilon_S(\CF')$ so that 
$$\on{Cone}(\CF\to \Upsilon_S(\CF'))\in \Coh(S)^{\geq k}.$$
\end{lem}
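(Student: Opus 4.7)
The plan is to deduce the claim from the unlabelled preceding fact by applying the Serre-duality anti-autoequivalence $\BD_S^{\on{Serre}}$ on $\Coh(S)$, using the compatibility $\BD_S^{\on{Serre}}\circ (\Xi_S)^{\on{op}}\simeq \Upsilon_S\circ \BD_S^{\on{naive}}$ from \eqref{e:conjugate Ups} together with the involutivity of $\BD_S^{\on{Serre}}$ on $\Coh(S)$ (recorded in \eqref{e:abstract Serre} and identified with $\BD_S^{\on{Serre}}$ via \propref{p:comparing Serre dualities}).

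The key preparatory step is a cohomological boundedness lemma for Serre duality. Since $S$ is eventually coconnective, affine and almost of finite type, the dualizing object $\omega_S$ lies in $\Coh(S)^{[a_0,b_0]}$ for some integers $a_0\leq b_0$, and I claim that $\BD_S^{\on{Serre}}$ sends $\Coh(S)^{\leq -M}$ into $\Coh(S)^{\geq M+a_0}$ for every $M$. Indeed, for any $\CC\in \Coh(S)^{\leq -M}$ and $X\in\QCoh(S)^{\leq M+a_0-1}$, right-exactness of $\otimes$ gives $X\otimes\CC\in \QCoh(S)^{\leq a_0-1}$, so
$$\Maps_{\QCoh(S)}\bigl(X,\underline\Hom_{\QCoh(S)}(\CC,\omega_S)\bigr)\simeq \Maps_{\QCoh(S)}(X\otimes\CC,\omega_S)$$
is contractible because $\omega_S\in\QCoh(S)^{\geq a_0}$. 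This forces $\underline\Hom_{\QCoh(S)}(\CC,\omega_S)\in\QCoh(S)^{\geq M+a_0}$, and combined with the already-noted fact that $\BD_S^{\on{Serre}}$ preserves $\Coh(S)$, gives the claimed bound.

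With this in hand, given $\CF\in\Coh(S)$ and $k\in\BN$, I would set $\CG:=\BD_S^{\on{Serre}}(\CF)\in\Coh(S)$ and apply the preceding ``$\Xi$-approximation'' fact to $\CG$ with parameter $N\geq k-a_0$. This yields $\CE\in\QCoh(S)^{\on{perf}}$ and a map $\Xi_S(\CE)\to \CG$ whose cone $\CC$ lies in $\Coh(S)^{\leq -N-1}$. Applying the contravariant exact functor $\BD_S^{\on{Serre}}$ to the distinguished triangle $\Xi_S(\CE)\to \CG\to \CC$, and then invoking involutivity of $\BD_S^{\on{Serre}}$ together with \eqref{e:conjugate Ups}, produces a distinguished triangle
$$\CF\longrightarrow \Upsilon_S(\CE^\vee)\longrightarrow \BD_S^{\on{Serre}}(\CC)[1].$$
Setting $\CF':=\CE^\vee\in\QCoh(S)^{\on{perf}}$, the boundedness lemma yields $\BD_S^{\on{Serre}}(\CC)[1]\in\Coh(S)^{\geq N+a_0}\subset\Coh(S)^{\geq k}$, as desired.

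The main obstacle I foresee is establishing the boundedness lemma for $\BD_S^{\on{Serre}}$, i.e., pinning down how Serre duality shifts cohomological degrees in terms of the amplitude of $\omega_S$; once that is in place, the rest is a purely formal transport of the preceding ``$\Xi$-approximation'' statement across the anti-equivalence $\BD_S^{\on{Serre}}$.
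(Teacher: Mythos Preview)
Your proposal is correct and follows essentially the same route as the paper: dualize $\CF$ via $\BD_S^{\on{Serre}}$, apply the $\Xi$-approximation statement, and dualize back using the identity $\BD_S^{\on{Serre}}\circ(\Xi_S)^{\on{op}}\simeq\Upsilon_S\circ\BD_S^{\on{naive}}$. The paper simply asserts the needed cohomological bound for $\BD_S^{\on{Serre}}$ (with the parenthetical remark that $m$ may be taken as $\dim({}^{cl}\!S)$), whereas you supply an argument via the amplitude of $\omega_S$; one small slip is that in your adjunction display the right-hand side should be $\Maps_{\IndCoh(S)}(X\otimes\CC,\omega_S)$ rather than $\Maps_{\QCoh(S)}$, since $X\otimes\CC$ denotes the action of $\QCoh(S)$ on $\IndCoh(S)$, but the vanishing argument goes through unchanged.
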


\begin{proof}
Let $m$ be an integer such that $\BD^{\on{Serre}}_S$ sends 
$$\Coh(S)^{\leq 0}\to \Coh(S)^{\geq -m}$$
(in fact, $m$ can be taken to be the dimension of $^{cl}\!S$.)

\medskip

For $\CF$ as in the lemma, consider $\BD^{\on{Serre}}_S(\CF)\in \Coh(S)$, and let $\CF''\in \QCoh(S)^{\on{perf}}$
and $\Xi_S(\CF'')\to \BD^{\on{Serre}}_S(\CF)$ be such that
$$\on{Cone}(\Xi_S(\CF'')\to \BD^{\on{Serre}}_S(\CF))[-1]\in \Coh(S)^{\leq -(k+m)}.$$

Set $\CF':=\BD^{\on{naive}}_S(\CF'')$. Applying Serre duality to $\Xi_S(\CF'')\to  \BD^{\on{Serre}}_S(\CF)$
we obtain a map
$$\CF\to \BD_S^{\on{Serre}}(\Xi_S(\CF''))\simeq \Upsilon_S(\CF')$$
with the desired properties.

\end{proof}

\sssec{Proof of \corref{c:Drinfeld}} \label{sss:proof of Drinf}

It is enough to show that $\Psi_S(\omega_S)\in \QCoh(S)^{\on{perf}}$ is invertible. 

\medskip

For $\CF_1,\CF_2\in \IndCoh(S)$ and $\CE\in \QCoh(S)$ consider
the canonical map
$$\CE\otimes \underline\Hom_{\QCoh(S)}(\CF_1,\CF_2)\to
\underline\Hom_{\QCoh(S)}(\CF_1,\CE\otimes \CF_2).$$
It is easy to see that this map is an isomorphism if $\CE\in \QCoh(S)^{\on{perf}}$.
In particular, we obtain an isomorphism
$$\Psi_S(\omega_S)\otimes \underline\Hom_{\QCoh(S)}(\omega_S,\Xi_S(\CO_S))\simeq
\underline\Hom_{\QCoh(S)}(\omega_S,\Psi_S(\omega_S)\otimes \Xi_S(\CO_S)).$$

\medskip

By \corref{c:upgrading Xi}, we have
$$\Psi_S(\omega_S)\otimes \Xi_S(\CO_S)\simeq \Xi_S\circ \Psi_S(\omega_S),$$
and since $\Psi_S(\omega_S)\in \QCoh(S)^{\on{perf}}$ we have $\Xi_S\circ \Psi_S(\omega_S)\simeq \omega_S$.
Thus, we obtain:
$$\Psi_S(\omega_S)\otimes \underline\Hom_{\QCoh(S)}(\omega_S,\Xi_S(\CO_S))\simeq
\underline\Hom_{\QCoh(S)}(\omega_S,\omega_S)\simeq \Xi^\vee_S\circ \Psi^\vee_S(\CO_S)\simeq \CO_S.$$
I.e., $\Psi_S(\omega_S)\in \QCoh(S)$ is invertible, as required.

\qed

\bigskip

\bigskip

\centerline{\bf Part III. Extending to stacks}

\bigskip

\section{$\IndCoh$ on prestacks}  \label{s:stacks}

As was mentioned in the introduction, our conventions regarding prestacks and stacks follow \cite{Stacks}. 
In this section we shall mostly be interested in the full subcategory
$$\inftydgprestack_{\on{laft}}\subset \inftydgprestack$$
consisting of prestacks, \emph{locally almost of finite type}, see \cite[Sect. 1.3.9]{Stacks}
for the definition. 

\ssec{Definition of $\IndCoh$}

\sssec{}

By definition, the category $\inftydgprestack_{\on{laft}}$ is the category of functors
$$({}^{<\infty}\!\affdgSch_{\on{aft}})^{\on{op}}\to \inftygroup.$$

We have the fully faithful embeddings
$$^{<\infty}\!\affdgSch_{\on{aft}}\hookrightarrow {}^{<\infty}\!\dgSch_{\on{aft}}\hookrightarrow 
\dgSch_{\on{aft}}\hookrightarrow \on{PreStk}_{\on{laft}},$$
where the composed arrow is the Yoneda embedding. 

\sssec{}     \label{sss:defn of IndCoh prestacks}

We let $\IndCoh^!_{^{<\infty}\!\affdgSch_{\on{aft}}}$ be the functor
$$({}^{<\infty}\!\affdgSch_{\on{aft}})^{\on{op}}\to \StinftyCat_{\on{cont}}$$
obtained by retsricting the functor $\IndCoh^!_{\dgSch_{\on{aft}}}$ 
under
$$({}^{<\infty}\!\affdgSch_{\on{aft}})^{\on{op}}\hookrightarrow (\dgSch_{\on{aft}})^{\on{op}}.$$

\medskip 

We define the functor
$$\IndCoh^!_{\on{PreStk}_{\on{laft}}}:(\on{PreStk}_{\on{laft}})^{\on{op}}\to \StinftyCat_{\on{cont}}$$
as the right Kan extension of $\IndCoh^!_{^{<\infty}\!\affdgSch_{\on{aft}}}$ under the Yoneda embedding
$$({}^{<\infty}\!\affdgSch_{\on{aft}})^{\on{op}}\hookrightarrow (\on{PreStk}_{\on{laft}})^{\on{op}}.$$

The following is immediate from the definition:

\begin{lem} \label{l:IndCoh and colimits}
The functor $\IndCoh^!_{\on{PreStk}_{\on{laft}}}$ introduced above takes colimits in $\on{PreStk}_{\on{laft}}$ to limits
in $\StinftyCat_{\on{cont}}$.
\end{lem}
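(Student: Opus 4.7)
The plan is to observe that the assertion is a formal consequence of the definition of $\IndCoh^!_{\on{PreStk}_{\on{laft}}}$ as a right Kan extension along the Yoneda embedding. First, I would recall that, by definition, $\on{PreStk}_{\on{laft}}$ is the $\infty$-category $\on{Funct}(({}^{<\infty}\!\affdgSch_{\on{aft}})^{\on{op}}, \inftygroup)$ of presheaves of $\infty$-groupoids on ${}^{<\infty}\!\affdgSch_{\on{aft}}$, i.e., the free cocompletion of ${}^{<\infty}\!\affdgSch_{\on{aft}}$ in the $\infty$-categorical sense (\cite{Lu0}, Sect.~5.1).

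Next, I would invoke the universal property of the Yoneda embedding $Y: \bC \hookrightarrow \on{PSh}(\bC)$: for any $\infty$-category $\bE$ admitting all small colimits, restriction along $Y$ induces an equivalence between colimit-preserving functors $\on{PSh}(\bC) \to \bE$ and arbitrary functors $\bC \to \bE$. Dualizing this statement, for any $\infty$-category $\bD$ admitting all small limits, limit-preserving functors $\on{PSh}(\bC)^{\on{op}} \to \bD$ correspond to arbitrary functors $\bC^{\on{op}} \to \bD$, the correspondence being given precisely by right Kan extension along $Y^{\on{op}}$.

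I would then apply this with $\bC = {}^{<\infty}\!\affdgSch_{\on{aft}}$ and $\bD = \StinftyCat_{\on{cont}}$. Since $\StinftyCat_{\on{cont}}$ admits all small limits (limits of presentable DG categories along continuous functors are computed in $\inftyCat$ and remain presentable, compatibly with the forgetful functor to $\StinftyCat_{\on{non-cocomplete}}$), the universal property implies that $\IndCoh^!_{\on{PreStk}_{\on{laft}}}$, as defined in \secref{sss:defn of IndCoh prestacks}, is the unique limit-preserving extension of $\IndCoh^!_{{}^{<\infty}\!\affdgSch_{\on{aft}}}$ to $(\on{PreStk}_{\on{laft}})^{\on{op}}$. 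Translating ``limit-preserving as a functor on $(\on{PreStk}_{\on{laft}})^{\on{op}}$'' back to ``sending colimits in $\on{PreStk}_{\on{laft}}$ to limits in $\StinftyCat_{\on{cont}}$'' yields exactly the statement of the lemma.

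No serious obstacle is anticipated; the assertion is formal. For readers who prefer an explicit argument, one can unfold the pointwise formula
\[
\IndCoh^!(X)\;\simeq\;\underset{S\in (({}^{<\infty}\!\affdgSch_{\on{aft}})_{/X})^{\on{op}}}{lim}\, \IndCoh(S),
\]
and observe that, for $X\simeq \underset{i}{colim}\, X_i$ in $\on{PreStk}_{\on{laft}}$, the slice category satisfies $({}^{<\infty}\!\affdgSch_{\on{aft}})_{/X}\simeq \underset{i}{colim}\, ({}^{<\infty}\!\affdgSch_{\on{aft}})_{/X_i}$ in $\inftyCat$ (a manifestation of the fact that colimits in presheaf $\infty$-categories are computed pointwise, via unstraightening); the claim then reduces to the commutation of limits with limits. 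The only mildly non-trivial point in either approach is the existence of small limits in $\StinftyCat_{\on{cont}}$, but this is standard.
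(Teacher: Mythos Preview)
Your proposal is correct and matches the paper's approach: the paper simply asserts that the lemma is ``immediate from the definition,'' and your argument via the universal property of right Kan extension along the Yoneda embedding is precisely the standard unpacking of that immediacy. Your explicit alternative via the pointwise formula and colimits of slice categories is also fine, but more than the paper bothers to record.
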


\sssec{}    \label{sss:! for stacks}

For $\CY\in \on{PreStk}_{\on{laft}}$ we let $\IndCoh(\CY)$ denote the value of $\IndCoh^!_{\on{PreStk}_{\on{laft}}}$
on $\CY$.

\medskip

Tautologically, we have:

\begin{equation} \label{e:indcoh as a limit}
\IndCoh(\CY)\simeq \underset{(S,y)\in (({}^{<\infty}\!\affdgSch_{\on{aft}})_{/\CY})^{\on{op}}}{lim}\, \IndCoh(S),
\end{equation}
where the functors for $f:S_1\to S_2$, $y_1=y_2\circ f$ are 
$$f^!:\IndCoh(S_2)\to \IndCoh(S_1).$$

For $(S,y)\in ({}^{<\infty}\!\affdgSch_{\on{aft}})_{/\CY}$ we let $y^!$ denote the corresponding evaluation functor
$$\IndCoh(\CY)\to \IndCoh(S).$$

\medskip

For a morphism $f:\CY_1\to \CY_2$ in $\on{PreStk}_{\on{laft}}$ we let $f^!$ denote the corresponding
functor
$$\IndCoh(\CY_2)\to \IndCoh(\CY_1).$$

\medskip

In particular, we let $\omega_\CY\in \IndCoh(\CY)$ denote the dualizing complex of $\CY$,
defined as $p_\CY^!(k)$, where $p_\CY:\CY\to \on{pt}$.

\sssec{The $n$-coconnective case}  \label{sss:n-coconn}

Suppose that $\CY\in \on{PreStk}_{\on{laft}}$ is $n$-coconnective, i.e., when viewed as a functor
$$({}^{<\infty}\!\affdgSch_{\on{aft}})^{\on{op}}\hookrightarrow \inftygroup,$$
it belongs to the essential image of the fully faithful embedding
$$\on{Funct}(({}^{<n}\!\affdgSch_{\on{ft}})^{\on{op}},\inftygroup)\hookrightarrow 
\on{Funct}(({}^{<\infty}\!\affdgSch_{\on{aft}})^{\on{op}},\inftygroup),$$
given by left Kan extension along 
$$({}^{<n}\!\affdgSch_{\on{ft}})^{\on{op}}\hookrightarrow ({}^{<\infty}\!\affdgSch_{\on{aft}})^{\on{op}}.$$

This is equivalent to the condition that the fully faithful embedding
$$({}^{<n}\!\affdgSch_{\on{ft}})_{/\CY}\hookrightarrow ({}^{<\infty}\!\affdgSch_{\on{aft}})_{/\CY}$$
be cofinal.

\medskip

In particular, we obtain that the restriction functor
\begin{equation} \label{e:IndCoh on n-coconn}
\IndCoh(\CY)\to  \underset{(S,y)\in (({}^{<n}\!\affdgSch_{\on{ft}})_{/\CY})^{\on{op}}}{lim}\, \IndCoh(S)
\end{equation}
is an equivalence.

\medskip

Hence, to calculate the value of $\IndCoh(\CY)$ on an $n$-coconnective prestack, it suffices
to consider only $n$-coconnective affine DG schemes. In particular, if $\CY$ is $0$-coconnective,
i.e., is classical, it suffices to consider only classical affine schemes. 

\ssec{Convergence}

In this subsection we will discuss several applications of \propref{p:convergence} to the study of
properties of the category $\IndCoh(\CY)$ for $\CY\in \inftydgprestack_{\on{laft}}$.

\medskip

First, we note that the assertion of \propref{p:convergence} implies::

\begin{cor} \label{c:convergence as RKE}
The functor 
$$\IndCoh^!_{\affdgSch_{\on{aft}}}:(\affdgSch_{\on{aft}})^{\on{op}}\to \StinftyCat_{\on{cont}}$$
is the right Kan extension from the full subcategory 
$$({}^{< \infty}\!\affdgSch_{\on{aft}})^{\on{op}}\hookrightarrow (\affdgSch_{\on{aft}})^{\on{op}}.$$
\end{cor}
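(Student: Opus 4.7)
The plan is to show that the canonical comparison map
$$\IndCoh(S)\to \underset{(S',y)\in (({}^{<\infty}\!\affdgSch_{\on{aft}})_{/S})^{\on{op}}}{lim}\, \IndCoh(S')$$
is an equivalence for every $S\in \affdgSch_{\on{aft}}$; this suffices because the displayed right-hand side computes the pointwise value of the right Kan extension. My strategy is to reduce this (large) limit to the countable one handled by \propref{p:convergence}, namely
$$\underset{n\in \BN^{\on{op}},\, (i_{n_1,n_2})^!}{lim}\, \IndCoh(\tau^{\leq n}(S)).$$

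To execute the reduction I would consider the functor
$$\Phi\colon \BN \longrightarrow ({}^{<\infty}\!\affdgSch_{\on{aft}})_{/S},\qquad n\mapsto (\tau^{\leq n}(S),\, i_n),$$
together with the transition maps $i_{n_1,n_2}$. This assignment lands in the indicated overcategory: since $S$ is affine and almost of finite type, each $\tau^{\leq n}(S)$ is affine, $n$-coconnective, and of finite type (in particular eventually coconnective). The essential point is to verify that $\Phi$ is \emph{cofinal}; passing to opposites then makes $\Phi^{\on{op}}$ initial, so restriction along it preserves limits, and \propref{p:convergence} identifies the resulting limit with $\IndCoh(S)$.

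The cofinality check is the substantive step, and it is where the assumption that we are Kan-extending from the eventually coconnective part is used. Fix $(S' = \Spec(A'),\, y)\in ({}^{<\infty}\!\affdgSch_{\on{aft}})_{/S}$ and choose $n_0$ with $H^{-i}(A')=0$ for $i>n_0$. An object of the comma category $(S',y)\backslash \Phi$ is a pair $(n,\beta)$ where $\beta$ is a factorization $S'\to \tau^{\leq n}(S)\to S$ of $y$. Because $S'$ is $n_0$-coconnective, any map out of $S$ to $S'$ factors through $\tau^{\leq n}(S)$ precisely when $n\geq n_0$, and uniquely so. The comma category is therefore equivalent to the filtered poset $\BN_{\geq n_0}$, hence contractible. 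This yields cofinality of $\Phi$.

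Finally, naturality in $S$ is automatic: both $\IndCoh^!_{\affdgSch_{\on{aft}}}$ and its right Kan extension send a morphism $f\colon S_1\to S_2$ to $f^!$, and the Kan extension diagram on $({}^{<\infty}\!\affdgSch_{\on{aft}})_{/S}$ pulls back along $f$ to the analogous diagram for $S_1$. The only place higher-categorical care is required is in the cofinality argument at the $\infty$-categorical level, i.e.\ in verifying contractibility (rather than mere connectedness) of the comma category; but the filtered poset $\BN_{\geq n_0}$ is nerve-contractible, which settles it. I do not anticipate a genuine obstacle; the content is entirely concentrated in the convergence statement of \propref{p:convergence}, together with the (easy) observation that any eventually coconnective scheme over $S$ factors through some truncation.
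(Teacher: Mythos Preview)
Your argument is correct and follows essentially the same strategy as the paper: reduce the large limit to the one in \propref{p:convergence} via the adjunction making $\tau^{\leq n}$ right adjoint to the inclusion of $n$-coconnective affines. The paper packages this as an iterated limit---first over $n$, then over $({}^{<n}\!\affdgSch_{\on{ft}})_{/S}$, the latter having terminal object $\tau^{\leq n}(S)$---rather than a direct cofinality check; one small correction: the comma category need not literally equal $\BN_{\geq n_0}$ (the map $y$ may factor through $\tau^{\leq n}(S)$ for some $n<n_0$), but it is still filtered and hence contractible for the same reason.
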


\begin{proof}
We wish to show that for $S\in \affdgSch_{\on{aft}}$, the functor
\begin{equation} \label{e:via coconn}
\IndCoh(S)\to \underset{S'\in {}^{< \infty}\!\affdgSch_{\on{aft}},S'\to S}{lim}\, \IndCoh(S')
\end{equation}
is an equivalence. The right-hand side of \eqref{e:via coconn} can be rewritten as
$$\underset{n}{lim}\, \underset{S'\in {}^{< n}\!\affdgSch_{\on{ft}},S'\to S}{lim}\in \IndCoh(S').$$
However, since the functor $^{< n}\!\affdgSch_{\on{ft}}\hookrightarrow \affdgSch_{\on{ft}}$
admits a right adjoint, given by $S\mapsto \tau^{\leq n}(S)$, we have:
$$\underset{S'\in {}^{< n}\!\affdgSch_{\on{ft}},S'\to S}{lim}\, \IndCoh(S')\simeq \IndCoh(\tau^{\leq n}(S)).$$
Thus, we need to show that the functor
$$\IndCoh(S)\to \underset{n}{lim}\, \IndCoh(\tau^{\leq n}(S))$$
is an equivalence, but the latter is the content of \propref{p:convergence}.
\end{proof}

Tautologically, \corref{c:convergence as RKE} implies that 
in the description of $\IndCoh(\CY)$ given by \eqref{e:indcoh as a limit}, instead
of eventually coconnective affine schemes of finite type, we can use all affine schemes almost of finite type:

\begin{cor}  \label{c:using laft}
The restriction functor
$$\underset{(S,y)\in ((\affdgSch_{\on{aft}})_{/\CY})^{\on{op}}}{lim}\, \IndCoh(S) \to
\underset{(S,y)\in (({}^{<\infty}\!\affdgSch_{\on{aft}})_{/\CY})^{\on{op}}}{lim}\, \IndCoh(S)=:\IndCoh(\CY)$$
is an equivalence.
\end{cor}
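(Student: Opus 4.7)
The plan is to reduce the corollary to \corref{c:convergence as RKE} via a standard Fubini/cofinality manipulation, with no new ingredient beyond that result. First I would apply \corref{c:convergence as RKE} to each $(S,y) \in (\affdgSch_{\on{aft}})_{/\CY}$ to rewrite
\[
\IndCoh(S) \simeq \underset{(S' \to S) \in (({}^{<\infty}\!\affdgSch_{\on{aft}})_{/S})^{\on{op}}}{lim}\, \IndCoh(S'),
\]
and then substitute into the left-hand side of the corollary. Commuting the two limits presents the left-hand side as a single limit of $\IndCoh(S')$ over $\bE^{\on{op}}$, where $\bE$ is the Grothendieck-style category whose objects are tuples $(S',S,\,S'\to S,\,y\colon S\to \CY)$ with $S\in \affdgSch_{\on{aft}}$ and $S'\in {}^{<\infty}\!\affdgSch_{\on{aft}}$, and whose morphisms are pairs of maps making the obvious squares commute.

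Next I would show that the forgetful functor
\[
p\colon \bE \to ({}^{<\infty}\!\affdgSch_{\on{aft}})_{/\CY}, \qquad (S',S,\,S'\to S,\,y) \mapsto (S',\,y\circ (S'\to S))
\]
is cofinal, so that $p^{\on{op}}$ is initial and therefore preserves limits. For each object $(S'_0,y'_0)$ of the target, the relevant comma category $(S'_0,y'_0)/p$ admits an initial object, namely the tautological tuple $(S'_0,\, S=S'_0,\, \mathrm{id}_{S'_0},\, y'_0)$ together with the identity morphism; this makes the comma category contractible and hence establishes cofinality. Combining with the previous step identifies the left-hand side with $\underset{(S',y')\in (({}^{<\infty}\!\affdgSch_{\on{aft}})_{/\CY})^{\on{op}}}{lim}\, \IndCoh(S')$, which by definition (cf.\ \secref{sss:defn of IndCoh prestacks}) is $\IndCoh(\CY)$. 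A small check is that the equivalence produced this way agrees with the tautological restriction functor in the statement, but this is immediate from unwinding the Fubini and cofinality isomorphisms.

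There is no genuine obstacle: the content is entirely packaged in \corref{c:convergence as RKE}, and the remainder is a formal manipulation expressing that right Kan extension along a fully faithful inclusion commutes with passage to slice categories. The only point requiring attention is the cofinality verification for $p$, which in turn rests on the fully faithful embedding ${}^{<\infty}\!\affdgSch_{\on{aft}} \hookrightarrow \affdgSch_{\on{aft}}$ and the trivial observation that any eventually coconnective $S'$ is its own initial ``thickening.''
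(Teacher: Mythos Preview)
Your proposal is correct and follows essentially the same approach as the paper. The paper deduces the corollary ``tautologically'' from \corref{c:convergence as RKE} by invoking transitivity of right Kan extensions (this is made explicit in the reformulation immediately following the corollary), while you unpack that transitivity by hand via the Fubini/cofinality argument; your observation that the section $(S',y')\mapsto (S',S',\on{id},y')$ is left adjoint to $p$ is precisely what makes the comma categories contractible.
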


\sssec{}

Equivalently, we can formulate the above corollary as saying that the functor
$$\IndCoh^!_{\inftydgprestack_{\on{laft}}}:(\on{PreStk}_{\on{aft}})^{\on{op}}\to \StinftyCat_{\on{cont}}$$
is the right Kan extension of 
$$\IndCoh^!_{\affdgSch_{\on{aft}}}:(\affdgSch_{\on{aft}})^{\on{op}}\to \StinftyCat_{\on{cont}}$$
along the tautological embedding
$$(\affdgSch_{\on{aft}})^{\on{op}}\hookrightarrow (\on{PreStk}_{\on{aft}})^{\on{op}}.$$

\sssec{}

Finally, let us note that the value of the functor $\IndCoh^!_{\inftydgprestack_{\on{laft}}}$
on a given $\CY\in \inftydgprestack_{\on{laft}}$ can be recovered from the $n$-coconnective
truncations $\tau^{\leq n}(\CY)$ of $\CY$, where we recall that
$$\tau^{\leq n}(\CY):=\on{LKE}_{({}^{<n}\!\affdgSch_{\on{ft}})^{\on{op}}\hookrightarrow ({}^{<\infty}\!\affdgSch_{\on{aft}})^{\on{op}}}
(\CY|_{{}^{<n}\!\affdgSch_{\on{ft}})^{\on{op}}}).$$

Namely, we claim:

\begin{lem}  \label{l:limit over n stacks}
For $\CY\in \inftydgprestack_{\on{laft}}$, the natural map
$$\IndCoh(\CY)\to \underset{n}{lim}\, 
\IndCoh(\tau^{\leq n}(\CY))$$
is an isomorphism.
\end{lem}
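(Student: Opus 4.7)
\medskip

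\noindent\textbf{Proof plan.} The plan is to reduce this statement to \lemref{l:IndCoh and colimits} by exhibiting $\CY$ as a colimit of its $n$-coconnective truncations in $\inftydgprestack_{\on{laft}}$.

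\medskip

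\noindent The key geometric input is the claim that the canonical map
\[
\underset{n}{\operatorname{colim}}\, \tau^{\leq n}(\CY)\to \CY
\]
is an isomorphism in $\inftydgprestack_{\on{laft}}$, where the colimit is taken along the tautological maps $\tau^{\leq n_1}(\CY)\to \tau^{\leq n_2}(\CY)$ for $n_1\leq n_2$. To verify this I would test both sides on an arbitrary $S\in {}^{<\infty}\!\affdgSch_{\on{aft}}$. By definition $S\in {}^{<m}\!\affdgSch_{\on{ft}}$ for some $m$, and for all $n\geq m$ the object $(S,\mathrm{id}_S)$ is terminal in $({}^{<n}\!\affdgSch_{\on{ft}})_{S/}$; plugging this into the pointwise formula for the left Kan extension defining $\tau^{\leq n}(\CY)$ (see \secref{sss:n-coconn}) yields $\tau^{\leq n}(\CY)(S)=\CY(S)$. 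Passing to the filtered colimit over $n$ in $\inftygroup$ then gives the desired identification on $S$.

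\medskip

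\noindent Once this identification of prestacks is in place, the conclusion is immediate: by \lemref{l:IndCoh and colimits}, the functor $\IndCoh^!_{\inftydgprestack_{\on{laft}}}$ converts colimits in $\inftydgprestack_{\on{laft}}$ into limits in $\StinftyCat_{\on{cont}}$, and hence
\[
\IndCoh(\CY)\simeq \IndCoh\bigl(\underset{n}{\operatorname{colim}}\, \tau^{\leq n}(\CY)\bigr)\simeq \underset{n}{\lim}\, \IndCoh(\tau^{\leq n}(\CY)).
\]
Tracing through the definitions in \secref{sss:! for stacks}, the equivalence so obtained is visibly the map induced by the family of !-pullbacks along $\tau^{\leq n}(\CY)\to \CY$, i.e. the natural map in the statement of the lemma.

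\medskip

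\noindent There is no substantial obstacle. The only point requiring any care is the pointwise computation of $\tau^{\leq n}(\CY)(S)$ and the verification that the resulting filtered colimit of mapping spaces is indeed $\CY(S)$; everything else is a formal consequence of \lemref{l:IndCoh and colimits}. Alternatively, and essentially equivalently, one could argue directly from the descriptions \eqref{e:indcoh as a limit} and \eqref{e:IndCoh on n-coconn}: rewrite $({}^{<\infty}\!\affdgSch_{\on{aft}})_{/\CY}$ as the filtered union $\bigcup_n ({}^{<n}\!\affdgSch_{\on{ft}})_{/\CY}$ and use the fact that a limit indexed by a filtered colimit of categories is the limit of the limits over the subcategories.
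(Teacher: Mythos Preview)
Your proof is correct and follows exactly the same route as the paper: exhibit $\CY$ as $\underset{n}{\operatorname{colim}}\,\tau^{\leq n}(\CY)$ in $\inftydgprestack_{\on{laft}}$ and then invoke \lemref{l:IndCoh and colimits}. You in fact spell out the verification of the colimit identity in more detail than the paper does.
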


\begin{proof}
The assertion follows from the fact that the natural map 
$$\underset{n}{colim}\, 
\on{LKE}_{^{\leq n}\!\affdgSch_{\on{ft}}\hookrightarrow {}^{<\infty}\!\affdgSch_{\on{aft}}}(\tau^{\leq n}(\CY))\to \CY$$
is an isomorphism in $\inftydgprestack_{\on{laft}}$, combined with \lemref{l:IndCoh and colimits}.
\end{proof}

\ssec{Relation to $\QCoh$ and the multiplicative structure}

\sssec{}

Following \secref{sss:intr Upsilon}, we view the assignment
$$S\in \dgSch_{\on{aft}}\rightsquigarrow \Upsilon_S:\QCoh(S)\to \IndCoh(S)$$
as a natural transformation
$$\Upsilon_{\dgSch_{\on{aft}}}:\QCoh^*_{\dgSch_{\on{aft}}}\to \IndCoh^!_{\dgSch_{\on{aft}}},$$
when we view $\QCoh^*_{\dgSch_{\on{aft}}}$ and $\IndCoh^!_{\dgSch_{\on{aft}}}$ as functors
$$(\dgSch_{\on{aft}})^{\on{op}}\to \SymMonStinftyCat_{\on{cont}}.$$

\medskip

Restricting to $^{<\infty}\!\affdgSch_{\on{aft}}\subset \dgSch_{\on{aft}}$, we obtain the corresponding
functors and the natural transformation
$$\Upsilon_{^{<\infty}\!\affdgSch_{\on{aft}}}:\QCoh^*_{^{<\infty}\!\affdgSch_{\on{aft}}}\to \IndCoh^!_{^{<\infty}\!\affdgSch_{\on{aft}}}.$$

\sssec{}

Since the forgetful functor $\SymMonStinftyCat_{\on{cont}}\to \StinftyCat_{\on{cont}}$ commutes
with limits, we obtain that the functor
$$\IndCoh^!_{\on{PreStk}_{\on{laft}}}:(\on{PreStk}_{\on{aft}})^{\on{op}}\to \StinftyCat_{\on{cont}}$$
naturally upgrades to a functor
$$\IndCoh^!_{\on{PreStk}_{\on{laft}}}:(\on{PreStk}_{\on{aft}})^{\on{op}}\to  \SymMonStinftyCat_{\on{cont}}.$$

\medskip

In particular, for every $\CY\in \on{PreStk}_{\on{aft}}$, the category $\IndCoh(\CY)$ acquires a natural
symmetric monoidal structure; we denote the corresponding monoidal operation by $\sotimes$.
The unit in this category is $\omega_\CY$. 

\medskip

For a map $f:\CY_1\to \CY_2$, the functor $f^!$ has a natural symmetric monoidal structure.

\sssec{}

Applying the functor
$$\on{RKE}_{({}^{<\infty}\!\affdgSch_{\on{aft}})^{\on{op}}\hookrightarrow (\on{PreStk}_{\on{laft}})^{\on{op}}}$$
to $\Upsilon_{^{<\infty}\!\affdgSch_{\on{aft}}}$, we obtain a natural transformation
$$\on{RKE}_{({}^{<\infty}\!\affdgSch_{\on{aft}})^{\on{op}}\hookrightarrow (\on{PreStk}_{\on{laft}})^{\on{op}}}
(\QCoh^*_{^{<\infty}\!\affdgSch_{\on{aft}}})\to \IndCoh^!_{\on{PreStk}_{\on{laft}}}.$$

\medskip

Recall that the functor
$$\QCoh^*_{\on{PreStk}}:(\on{PreStk})^{\on{op}}\to \StinftyCat_{\on{cont}}$$ is defined as
$$\on{RKE}_{(\affdgSch)^{\on{op}}\hookrightarrow (\on{PreStk})^{\on{op}}}(\QCoh^*_{\affdgSch}).$$

\medskip

Hence, we have a natural transformation
\begin{multline}  \label{e:mult on QCoh stacks}
\QCoh^*_{\on{PreStk}}:(\on{PreStk})^{\on{op}}|_{(\on{PreStk}_{\on{laft}})^{\on{op}}}=:
\QCoh^*_{\on{PreStk}_{\on{laft}}}\to \\
\to \on{RKE}_{({}^{<\infty}\!\affdgSch_{\on{aft}})^{\on{op}}\hookrightarrow (\on{PreStk}_{\on{laft}})^{\on{op}}}
(\QCoh^*_{^{<\infty}\!\affdgSch_{\on{aft}}}).
\end{multline}

Composing, we obatin a functor
$$\QCoh^*_{\on{PreStk}_{\on{laft}}}\to \IndCoh^!_{\on{PreStk}_{\on{laft}}}$$
that we shall denote by $\Upsilon_{\on{PreStk}_{\on{laft}}}$. 

\medskip

For an individual $\CY\in \on{PreStk}_{\on{laft}}$ we shall denote by
$$\Upsilon_{\CY}:\QCoh(\CY)\to \IndCoh(\CY)$$
the resulting functor.

\medskip

Furthermore, the functors and the natural transformation in \eqref{e:mult on QCoh stacks} naturally
upgrade to take values in $\SymMonStinftyCat_{\on{cont}}$, and so does the natural transformation
$\Upsilon_{\on{PreStk}_{\on{laft}}}$. 

\medskip

\begin{lem}  \label{l:dual of Psi for stacks}
Assume that $\CY$ belongs to $^{\leq n}\!\on{PreStk}_{\on{laft}}$. Then the functor
functor $\Upsilon_\CY$ is fully faithful.
\end{lem}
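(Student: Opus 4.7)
The plan is to express both $\QCoh(\CY)$ and $\IndCoh(\CY)$ as limits over the same small diagram, namely $((^{<n}\!\affdgSch_{\on{ft}})_{/\CY})^{\on{op}}$, and then identify $\Upsilon_\CY$ with the limit of the pointwise maps $\Upsilon_S$, each of which is already known to be fully faithful. For $\IndCoh$, the required identification is precisely \eqref{e:IndCoh on n-coconn}, which holds since $\CY$ is $n$-coconnective. Under the combined $n$-coconnectivity and laft hypotheses, I expect the analogous
$$\QCoh(\CY) \simeq \lim_{(S,y) \in ((^{<n}\!\affdgSch_{\on{ft}})_{/\CY})^{\on{op}}} \QCoh(S)$$
to hold, both identifications reducing to the claim that the inclusion $(^{<n}\!\affdgSch_{\on{ft}})_{/\CY} \hookrightarrow (\affdgSch)_{/\CY}$ is initial.

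Since $\Upsilon_{\on{PreStk}_{\on{laft}}}$ is constructed via right Kan extension of the natural transformation $\Upsilon_{^{<\infty}\!\affdgSch_{\on{aft}}}$, under the above identifications $\Upsilon_\CY$ agrees with the limit of the individual functors $\Upsilon_S$ for $(S,y) \in ((^{<n}\!\affdgSch_{\on{ft}})_{/\CY})^{\on{op}}$. Each such $S$ is eventually coconnective (being $n$-coconnective), so by \corref{c:ppties of Upsilon}, the functor $\Upsilon_S$ is fully faithful. Fully faithfulness is preserved under limits in $\StinftyCat_{\on{cont}}$: mapping objects in a limit of DG categories compute as the corresponding limit of the mapping objects at each vertex, so a levelwise fully faithful natural transformation induces a fully faithful functor on limits. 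This gives the desired conclusion.

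The main obstacle is verifying the $\QCoh$ identification, equivalently the initiality of the inclusion $(^{<n}\!\affdgSch_{\on{ft}})_{/\CY} \hookrightarrow (\affdgSch)_{/\CY}$. For every $(T, y) \in (\affdgSch)_{/\CY}$ one needs the overcategory of factorizations $T \to S \to \CY$ with $S \in {}^{<n}\!\affdgSch_{\on{ft}}$ to be contractible. The $n$-coconnectivity of $\CY$ forces $y$ to descend uniquely across the truncation $T \to \tau^{\leq n}(T)$, reducing to the case where $T$ is itself $n$-coconnective; the laft property of $\CY$ then presents the map $\tau^{\leq n}(T) \to \CY$ as a filtered colimit of factorizations through objects of $^{<n}\!\affdgSch_{\on{ft}}$, witnessing contractibility of the category of factorizations.
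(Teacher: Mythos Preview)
Your proposal is correct and follows essentially the same approach as the paper's proof, which is a one-liner citing \eqref{e:IndCoh on n-coconn}, ``the corresponding assertion for $\QCoh$,'' and \corref{c:ppties of Upsilon}; you have simply spelled out the implicit steps (that $\Upsilon_\CY$ is the limit of the $\Upsilon_S$ and that fully faithfulness passes to limits) and sketched why the $\QCoh$ identification holds. One terminological quibble: in Lurie's conventions (which the paper follows, see \secref{sss:n-coconn}), you want \emph{cofinality} of the inclusion $({}^{<n}\!\affdgSch_{\on{ft}})_{/\CY} \hookrightarrow (\affdgSch)_{/\CY}$ rather than initiality, since the limit is taken over the opposite category.
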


\begin{proof}
Follows from the isomorphism \eqref{e:IndCoh on n-coconn} (and the corresponding assertion for $\QCoh$),
and \corref{c:ppties of Upsilon}.
\end{proof}

\sssec{Behavior with respect to products}

Let $\CY_1$ and $\CY_2$ be prestacks. Pulling back along the two projections
$$\CY_1\times \CY_2\to \CY_i$$
and applying the monoidal operation, we obtain a functor
\begin{equation} \label{e:product map, prestacks}
\IndCoh(\CY_1)\otimes \IndCoh(\CY_2)\to \IndCoh(\CY_1\times \CY_2).
\end{equation}

Repeating the argument of \cite{QCoh}, Prop. 1.4.4, from \propref{p:products} we deduce:

\begin{cor}
Assume that $\IndCoh(\CY_1)$ is dualizable as a category. Then for any $\CY_2$, the functor
\eqref{e:product map, prestacks} is an equivalence.
\end{cor}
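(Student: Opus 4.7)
The plan is to mimic the argument of \cite{QCoh}, Proposition~1.4.4, combining the description \eqref{e:indcoh as a limit} of $\IndCoh$ on prestacks with \propref{p:products}, which settles the corresponding statement for pairs of DG schemes almost of finite type, and then to move the external tensor product past the defining limits by invoking dualizability.

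By \corref{c:using laft}, I may compute each of $\IndCoh(\CY_1)$, $\IndCoh(\CY_2)$, and $\IndCoh(\CY_1\times\CY_2)$ as a limit indexed by the opposite of the slice category of all affine DG schemes almost of finite type (rather than only the eventually coconnective ones, which would cause trouble upon taking products). First I would verify that the product functor
\[
F\colon (\affdgSch_{\on{aft}})_{/\CY_1}\times (\affdgSch_{\on{aft}})_{/\CY_2}\longrightarrow (\affdgSch_{\on{aft}})_{/\CY_1\times\CY_2},
\]
sending $((S_1,y_1),(S_2,y_2))$ to $(S_1\times S_2,\, y_1\times y_2)$, is coinitial in the sense that it induces an equivalence of limits. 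Indeed, for any object $(S,(y_1,y_2))$ in the target, the comma category of factorizations through such a product splits as a product of two comma categories, one for each factor, and each of these admits an initial object (namely $(S, y_i, \mathrm{id}_S)$), hence is contractible. Combined with the equivalence of \propref{p:products} at each vertex, this yields
\[
\IndCoh(\CY_1\times\CY_2)\;\simeq\;\underset{((S_1,y_1),\,(S_2,y_2))}{lim}\;\IndCoh(S_1)\otimes \IndCoh(S_2).
\]

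Next I would bring the tensor product out of the two limits in turn. Each $\IndCoh(S_i)$ is compactly generated and hence dualizable in $\StinftyCat_{\on{cont}}$; therefore, for fixed $(S_2, y_2)$, the functor $\IndCoh(S_2)\otimes -$ is right adjoint to tensoring with the dual and so commutes with limits, giving
\[
\underset{(S_1,y_1)}{lim}\;\IndCoh(S_1)\otimes \IndCoh(S_2)\;\simeq\; \IndCoh(\CY_1)\otimes \IndCoh(S_2).
\]
Then, using the hypothesis that $\IndCoh(\CY_1)$ itself is dualizable, the functor $\IndCoh(\CY_1)\otimes -$ likewise preserves limits, so the remaining limit may be pulled out to yield $\IndCoh(\CY_1)\otimes \IndCoh(\CY_2)$. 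A straightforward bookkeeping check (tracing the projections $\CY_1\times \CY_2\to \CY_i$ through the identifications) verifies that the resulting composite equivalence is exactly the functor \eqref{e:product map, prestacks}.

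The principal obstacle is the careful $\infty$-categorical justification of the two limit-tensor commutations, which rests on the fact that tensoring with a dualizable object in $\StinftyCat_{\on{cont}}$ is a right adjoint (to tensoring with its dual) and hence limit-preserving. The cofinality argument for $F$ is essentially formal but must also be carried out at the $\infty$-categorical level; all other ingredients are supplied by \propref{p:products} and \corref{c:using laft}.
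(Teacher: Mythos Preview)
Your proposal is correct and follows essentially the same approach that the paper indicates: it unwinds the reference to \cite{QCoh}, Prop.~1.4.4 by combining the cofinality of the product functor on slice categories, \propref{p:products} at each vertex, and the limit-preservation of tensoring with a dualizable object. Your care in passing to $\affdgSch_{\on{aft}}$ via \corref{c:using laft} (so that products of index objects remain in the index category) is exactly the right move.
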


\ssec{Descent properties}

\sssec{}

As in \cite{QCoh}, Sect. 1.3.1, we can consider the notion of descent for presheaves on 
the category $^{<\infty}\!\affdgSch_{\on{aft}}$ with values in an arbitrary $(\infty,1)$-category $\bC$.

\sssec{}

We observe that \thmref{t:fppf descent} implies that $\IndCoh^!_{^{<\infty}\!\affdgSch_{\on{aft}}}$, regarded as 
a presheaf on $^{<\infty}\!\affdgSch_{\on{aft}}$ with values in $\StinftyCat_{\on{cont}}$,
is a sheaf in the fppf topology. 

\medskip

By \cite{Lu0}, Sect. 6.2.1, we obtain that whenever $\CY_1\to \CY_2$ is a map in $\inftydgprestack_{\on{laft}}$
such that $L_{\on{laft}}(\CY_1)\to L_{\on{laft}}(\CY_2)$ is an isomorphism, the map
$$\IndCoh(\CY_2)\to \IndCoh(\CY_1)$$ is
an isomorphism. 

\medskip

Here $L_{\on{laft}}$ denotes the localization functor on 
$$\on{PreStk}_{\on{laft}}=\on{Funct}({}^{<\infty}\!\affdgSch_{\on{aft}},\inftygroup)$$
in the fppf topology. 

\medskip

From here we obtain:

\begin{cor}
For $\CY\in \inftydgprestack_{\on{laft}}$, the 
natural map $\CY\to L_{\on{laft}}(\CY)$
induces an isomorphism 
$$\IndCoh(L_{\on{laft}}(\CY))\to \IndCoh(\CY).$$ 
\end{cor}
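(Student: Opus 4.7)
The plan is to deduce this corollary directly from the preceding paragraph, which already does essentially all the work: it establishes that any map $\CY_1 \to \CY_2$ in $\inftydgprestack_{\on{laft}}$ which becomes an isomorphism after applying $L_{\on{laft}}$ induces an isomorphism $\IndCoh(\CY_2) \to \IndCoh(\CY_1)$. So I would simply apply this principle to the canonical unit map $\eta_\CY : \CY \to L_{\on{laft}}(\CY)$ of the localization.

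First, I would recall that $L_{\on{laft}}$ is an (accessible) localization functor on $\inftyprestack_{\on{laft}}$, so in particular it is idempotent in the sense that the unit map $\eta_{L_{\on{laft}}(\CY)} : L_{\on{laft}}(\CY) \to L_{\on{laft}}(L_{\on{laft}}(\CY))$ is an isomorphism, and the map $L_{\on{laft}}(\eta_\CY) : L_{\on{laft}}(\CY) \to L_{\on{laft}}(L_{\on{laft}}(\CY))$ induced by $\eta_\CY$ is also an isomorphism. Hence $L_{\on{laft}}$ carries the morphism $\eta_\CY$ to an isomorphism in the category of fppf sheaves on ${}^{<\infty}\!\affdgSch_{\on{aft}}$.

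Then I would invoke the statement from the previous paragraph: since $\IndCoh^!_{{}^{<\infty}\!\affdgSch_{\on{aft}}}$ is an fppf sheaf (by \thmref{t:fppf descent}) and $\IndCoh^!_{\inftydgprestack_{\on{laft}}}$ is defined as its right Kan extension along the Yoneda embedding (see \secref{sss:defn of IndCoh prestacks}), any morphism in $\inftydgprestack_{\on{laft}}$ that is inverted by $L_{\on{laft}}$ is sent to an isomorphism of $\StinftyCat_{\on{cont}}$ by $\IndCoh^!$. Applying this to $\eta_\CY$ gives that $\eta_\CY^! : \IndCoh(L_{\on{laft}}(\CY)) \to \IndCoh(\CY)$ is an equivalence, as claimed.

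The argument contains no genuine obstacle — everything is formal once one grants the two ingredients, namely that $\IndCoh^!$ is an fppf sheaf on $^{<\infty}\!\affdgSch_{\on{aft}}$ and that $L_{\on{laft}}$ is an idempotent localization. If anything were to require care it would be the cited fact from \cite{Lu0}, Sect.~6.2.1, that right Kan extension of an $\inftyCat$-valued sheaf along the Yoneda embedding automatically factors through the sheafification; but this is a general property of localizations and presheaf categories, and the paper has already invoked it in the previous paragraph, so no further work is needed here.
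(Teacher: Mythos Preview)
Your proposal is correct and matches the paper's approach exactly: the paper states the corollary immediately after the paragraph you cite, with only the phrase ``From here we obtain,'' and you have simply made explicit the one-line deduction (apply the previous paragraph to the unit map $\eta_\CY$, which $L_{\on{laft}}$ inverts by idempotence).
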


\sssec{}

For a map  $f:\CY_1\to \CY_2$ consider the cosimplicial category 
$\IndCoh^!(\CY_1^\bullet/\CY_2)$, formed using the !-pullback functors.

\medskip

Assume now that $f$ is a surjection for the fppf topology on $^{<\infty}\!\affdgSch_{\on{aft}}$
(see \cite{Stacks}, Sect. 2.4.8). From \cite{Lu0}, Cor. 6.2.3.5 we obtain:
 
\begin{cor} \label{c:Cech !}
Under the above circumstances, we obtain that the functor 
$$\IndCoh(\CY_2)\to \on{Tot}\left(\IndCoh^!(\CY_1^\bullet/\CY_2)\right)$$
given by !-pullback, is an equivalence.
\end{cor}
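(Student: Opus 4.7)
The plan is to derive this as a formal consequence of Theorem \ref{t:fppf descent} together with the cited \cite{Lu0}, Cor.~6.2.3.5. First I would recall from the preceding corollary that $\IndCoh^!_{\on{PreStk}_{\on{laft}}}$, being defined as the right Kan extension of an fppf sheaf on ${}^{<\infty}\!\affdgSch_{\on{aft}}$ along the Yoneda embedding, inverts all fppf-local isomorphisms in $\on{PreStk}_{\on{laft}}$; equivalently, it factors canonically through the fppf sheafification functor $L_{\on{laft}}$ into the $\infty$-topos $\on{Stk}_{\on{laft}}$ of fppf sheaves on ${}^{<\infty}\!\affdgSch_{\on{aft}}$. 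Moreover, by \lemref{l:IndCoh and colimits} (combined with the fact that sheafification is itself a colimit-preserving left adjoint), this factored functor converts colimits in $\on{Stk}_{\on{laft}}$ to limits in $\StinftyCat_{\on{cont}}$.

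Next, the hypothesis that $f : \CY_1 \to \CY_2$ is an fppf surjection of prestacks locally almost of finite type is, by the definition of surjectivity adopted in \cite{Stacks}, Sect.~2.4.8, precisely the assertion that $L_{\on{laft}}(f) : L_{\on{laft}}(\CY_1) \to L_{\on{laft}}(\CY_2)$ is an effective epimorphism in the $\infty$-topos $\on{Stk}_{\on{laft}}$. Invoking Lurie's fundamental characterization (\cite{Lu0}, Cor.~6.2.3.5) of effective epimorphisms in $\infty$-topoi, this produces a canonical colimit presentation
\[
L_{\on{laft}}(\CY_2) \simeq \underset{[n] \in \bDelta^{\on{op}}}{colim}\, L_{\on{laft}}(\CY_1^{n+1}/\CY_2)
\]
in $\on{Stk}_{\on{laft}}$, where the simplicial object on the right is the \v{C}ech nerve of $L_{\on{laft}}(f)$. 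Because sheafification preserves finite limits (in particular iterated fiber products), this \v{C}ech nerve agrees term-by-term with the image under $L_{\on{laft}}$ of the \v{C}ech nerve $\CY_1^\bullet/\CY_2$ formed in $\on{PreStk}_{\on{laft}}$.

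Finally, applying $\IndCoh^!_{\on{PreStk}_{\on{laft}}}$ to both sides of the above colimit expression and using that it converts colimits into limits yields
\[
\IndCoh(\CY_2) \simeq \underset{[n] \in \bDelta}{lim}\, \IndCoh(\CY_1^{n+1}/\CY_2) = \on{Tot}\bigl(\IndCoh^!(\CY_1^\bullet/\CY_2)\bigr),
\]
which is the claimed equivalence. There is no substantive obstacle here --- the only genuine content was already absorbed into Theorem \ref{t:fppf descent}. The remaining technical point to verify is the compatibility of the cosimplicial structure maps: the !-pullbacks induced by the face and degeneracy maps of the \v{C}ech nerve in $\on{PreStk}_{\on{laft}}$ must agree with those arising from Lurie's topos-theoretic \v{C}ech nerve after applying $\IndCoh^!$. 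But this is automatic, since both structures are functorially induced from the simplicial object $\CY_1^\bullet/\CY_2$ via the restriction of the right Kan extension.
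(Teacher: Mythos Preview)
Your proposal is correct and follows essentially the same approach as the paper: the paper simply cites \cite{Lu0}, Cor.~6.2.3.5 immediately after establishing that $\IndCoh^!_{\on{PreStk}_{\on{laft}}}$ is an fppf sheaf, and leaves the details implicit. Your write-up spells out precisely those details---factoring through the fppf sheafification, invoking \lemref{l:IndCoh and colimits} to convert the \v{C}ech colimit into a totalization---which is the intended content of that citation.
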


\ssec{Two definitions of $\IndCoh$ for DG schemes}

\sssec{}

Let $X$ be an object of $\dgSch_{\on{aft}}$. 

\medskip

Note that
we have two \emph{a priori} different definitions of $\IndCoh(X)$: one given in \secref{ss:defn}
(which we temporarily denote $\IndCoh(X)'$), and another as in \secref{sss:defn of IndCoh prestacks} 
(which we temporarily denote $\IndCoh(X)''$), when we regard $X$ as an object of 
$\inftydgprestack_{\on{laft}}$. 

\sssec{}

Note that !-pullback defines a natural functor:

\begin{equation} \label{e:two definitions}
\IndCoh(X)'\to \IndCoh(X)''.
\end{equation}

\begin{prop} \label{p:two defns of IndCoh}
The functor \eqref{e:two definitions} is an equivalence. 
\end{prop}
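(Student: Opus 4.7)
My strategy is to identify both sides with the same explicit limit over the slice category $(\affdgSch_{\on{aft}})_{/X}$. First, \corref{c:using laft} (combined with the observation that this slice is the same whether computed in $\dgSch_{\on{aft}}$ or in $\inftydgprestack_{\on{laft}}$, by full faithfulness of the Yoneda embedding) rewrites the right-hand side as
$$\IndCoh(X)''\simeq \underset{(S,y)\in ((\affdgSch_{\on{aft}})_{/X})^{\on{op}}}{lim}\,\IndCoh(S)'.$$
Unwinding definitions, the map \eqref{e:two definitions} becomes the natural !-pullback map
$$\IndCoh(X)'\to \underset{(S,y)\in ((\affdgSch_{\on{aft}})_{/X})^{\on{op}}}{lim}\,\IndCoh(S)',$$
so it is enough to prove that this map is an equivalence.

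The affine case is immediate: if $X\in \affdgSch_{\on{aft}}$, then $(X,\on{id}_X)$ is a terminal object in $(\affdgSch_{\on{aft}})_{/X}$, so the limit collapses to $\IndCoh(X)'$ and the map is the identity. For the general case, I would use Zariski descent to reduce to the affine case. Given an affine Zariski cover $\sqcup_\alpha U_\alpha \to X$ with \v{C}ech nerve $U^\bullet/X$, \propref{p:Zariski descent} (together with the identification $j^!\simeq j^{\IndCoh,*}$ for open embeddings provided by \thmref{t:upper shriek}(b)) gives $\IndCoh(X)'\simeq \on{Tot}(\IndCoh(U^\bullet/X)')$. On the other hand, since $X\simeq |U^\bullet/X|$ in $\inftydgprestack_{\on{laft}}$, and the functor $\CY\mapsto \IndCoh(\CY)''$ turns prestack-colimits into limits of DG categories (\lemref{l:IndCoh and colimits}), we obtain the matching description $\IndCoh(X)''\simeq \on{Tot}(\IndCoh(U^\bullet/X)'')$. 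The map \eqref{e:two definitions} is compatible with these two \v{C}ech presentations term-by-term, so it suffices to establish the claim for each $U^n/X$.

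The main obstacle is the termination of this reduction: each $U^n/X$ is again a DG scheme almost of finite type but need not be affine unless $X$ is separated. I would handle this either by induction on a ``Zariski complexity'' (iterated open covers eventually produce an affine refinement, since every DG scheme almost of finite type admits an affine open cover and finite intersections of such can be further refined by principal affine opens), or more cleanly by invoking fppf hypercover descent (\thmref{t:fppf descent}), which bypasses the non-affineness of iterated \v{C}ech intersections by allowing one to pass directly to an affine hypercover of $X$ and compute both sides as the corresponding totalization. Either route reduces the statement to the affine case treated above, completing the proof.
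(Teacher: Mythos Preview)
Your overall strategy matches the paper's: handle the affine case first, then bootstrap to general $X$ via Zariski descent applied to both sides. The affine case via \corref{c:using laft} and the terminal-object argument is equivalent to the paper's citation of \corref{c:convergence as RKE}.

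However, there is a genuine error in your descent step for the prestack side. You claim $X\simeq |U^\bullet/X|$ in $\inftydgprestack_{\on{laft}}$ and then invoke \lemref{l:IndCoh and colimits}. But $\inftydgprestack_{\on{laft}}$ is a category of \emph{presheaves}, and the geometric realization of the \v{C}ech nerve of a Zariski cover computed in presheaves is generally \emph{not} $X$: an affine $S$-point of $X$ need not factor through a single $U_\alpha$. The isomorphism $X\simeq |U^\bullet/X|$ only holds after sheafification. The correct input here is \corref{c:Cech !}, which records that $\IndCoh^!$ satisfies \v{C}ech descent for fppf (in particular Zariski) surjections of prestacks; this is exactly what the paper uses.

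Your termination discussion is also looser than necessary. The paper handles this in two clean steps: if $X$ is separated, choose an affine Zariski cover, so that all terms of the \v{C}ech nerve are affine (being fiber products of affines over a separated base), and the affine case applies; for general $X$, choose a Zariski cover by separated DG schemes, so that all terms of the \v{C}ech nerve are separated (open subschemes of separated schemes), and the separated case applies. Your ``Zariski complexity'' induction is not made precise, and your hypercover suggestion would require hyperdescent rather than the \v{C}ech descent actually established in \thmref{t:fppf descent}. Adopting the paper's two-step reduction closes both gaps.
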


\begin{proof}
First, assume that $X$ is affine (but not necessarily eventually coconnective). Then the assertion follows
from \corref{c:convergence as RKE}.

\medskip

Next, assume that $X$ is separated. Let $f:X'\to X$ be an affine Zariski cover, and let $X'{}^\bullet/X$
be its \v{C}ech nerve (whose terms are affine, since $X$ was assumed
separated). Then the validity of the assertion in the affine case, combined with 
\corref{c:Cech !} and \propref{p:Zariski descent}, imply that \eqref{e:two definitions} is an 
equivalence.

\medskip

Let now $X$ be arbitrary. We choose a Zariski cover $f:X'\to X$, where $X'$ is separated, and repeat
the same argument, using the fact that the terms of $X'{}^\bullet/X$ are now separated. 

\end{proof}

\sssec{}

The above proposition implies that in the definition of $\IndCoh(\CY)$ for $\CY\in \inftydgprestack_{\on{laft}}$
we can use all DG schemes almost of finite type, instead of the affine ones:

\begin{cor}  \label{c:IndCoh via all schemes}
For $\CY\in \inftydgprestack_{\on{laft}}$, the restriction functor
$$\underset{(S,y)\in ((\dgSch_{\on{aft}})_{/\CY})^{\on{op}}}{lim}\, \IndCoh(S)\to 
\underset{(S,y)\in (({}^{<\infty}\!\affdgSch_{\on{aft}})_{/\CY})^{\on{op}}}{lim}\, \IndCoh(S):=\IndCoh(\CY)$$
is an equivalence.
\end{cor}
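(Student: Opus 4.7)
The strategy is to identify both sides as values of right Kan extensions and invoke transitivity. By construction (see \secref{sss:defn of IndCoh prestacks}), the right-hand side $\IndCoh(\CY)$ is the value at $\CY$ of the right Kan extension of the functor $\IndCoh^!_{{}^{<\infty}\!\affdgSch_{\on{aft}}}$ along the embedding $({}^{<\infty}\!\affdgSch_{\on{aft}})^{\on{op}} \hookrightarrow (\on{PreStk}_{\on{laft}})^{\on{op}}$. By contrast, the left-hand side, by its formulation as a limit over $(\dgSch_{\on{aft}})_{/\CY}$, is the value at $\CY$ of the right Kan extension of $\IndCoh^!_{\dgSch_{\on{aft}}}$ along $(\dgSch_{\on{aft}})^{\on{op}} \hookrightarrow (\on{PreStk}_{\on{laft}})^{\on{op}}$.

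The plan is then to factor the inclusion ${}^{<\infty}\!\affdgSch_{\on{aft}} \hookrightarrow \on{PreStk}_{\on{laft}}$ through $\dgSch_{\on{aft}}$ and apply transitivity of right Kan extension. Concretely, transitivity reduces the corollary to the assertion that the functor
$$\IndCoh^!_{\dgSch_{\on{aft}}}:(\dgSch_{\on{aft}})^{\on{op}}\to \StinftyCat_{\on{cont}}$$
is canonically isomorphic to the right Kan extension of its restriction $\IndCoh^!_{{}^{<\infty}\!\affdgSch_{\on{aft}}}$ along the embedding $({}^{<\infty}\!\affdgSch_{\on{aft}})^{\on{op}} \hookrightarrow (\dgSch_{\on{aft}})^{\on{op}}$.

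To verify the latter, I would take an arbitrary $S\in \dgSch_{\on{aft}}$ and note that the value at $S$ of the purported right Kan extension is exactly the limit
$$\underset{(T,y)\in (({}^{<\infty}\!\affdgSch_{\on{aft}})_{/S})^{\on{op}}}{lim}\, \IndCoh(T),$$
which by definition equals $\IndCoh(S)$ when $S$ is regarded as an object of $\on{PreStk}_{\on{laft}}$ via the Yoneda embedding. The comparison of this quantity with $\IndCoh(S)$ as originally defined in \secref{ss:defn} is furnished precisely by \propref{p:two defns of IndCoh} (which packages together \corref{c:using laft} to reduce to the eventually coconnective case, Zariski descent via \propref{p:Zariski descent}, and \corref{c:Cech !}).

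There is no substantive obstacle beyond the manipulations already established: the corollary is essentially a formal consequence of \propref{p:two defns of IndCoh} combined with the transitivity of right Kan extensions. The only mildly delicate point is to be careful that the transitivity is invoked in the correct direction — namely, that the Kan extension through the intermediate category $\dgSch_{\on{aft}}$ indeed yields the same functor on $\on{PreStk}_{\on{laft}}$, which in turn uses that the inclusion $({}^{<\infty}\!\affdgSch_{\on{aft}})^{\on{op}} \hookrightarrow (\dgSch_{\on{aft}})^{\on{op}}$ is a full subcategory embedding.
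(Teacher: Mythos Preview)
Your proposal is correct and follows essentially the same approach as the paper: identify the left-hand side as the value of the right Kan extension of $\IndCoh^!_{\dgSch_{\on{aft}}}$ along $(\dgSch_{\on{aft}})^{\on{op}}\hookrightarrow (\on{PreStk}_{\on{laft}})^{\on{op}}$, reduce via transitivity to showing that $\IndCoh^!_{\dgSch_{\on{aft}}}$ agrees with the right Kan extension of $\IndCoh^!_{{}^{<\infty}\!\affdgSch_{\on{aft}}}$ along $({}^{<\infty}\!\affdgSch_{\on{aft}})^{\on{op}}\hookrightarrow (\dgSch_{\on{aft}})^{\on{op}}$, and then observe that the latter is exactly \propref{p:two defns of IndCoh}. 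The paper's proof is the same argument, stated slightly more tersely.
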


\begin{proof}

The left-hand side in the corollary calculates the value on $\CY$ of the right Kan extension of
$\IndCoh^!_{\dgSch_{\on{aft}}}$ along
$$(\dgSch_{\on{aft}})^{\on{op}}\hookrightarrow (\on{PreStk}_{\on{laft}})^{\on{op}}.$$

Hence, it is enough to show that the map
$$\IndCoh^!_{\dgSch_{\on{aft}}}\to
\on{RKE}_{({}^{<\infty}\!\affdgSch_{\on{aft}})^{\on{op}}\hookrightarrow (\dgSch_{\on{aft}})^{\on{op}}}(\IndCoh^!_{^{<\infty}\!\affdgSch_{\on{aft}}})$$
is an isomorphism.

\medskip

However, the latter is equivalent to the statement of \propref{p:two defns of IndCoh}.

\end{proof}

\ssec{Functoriality for direct image under schematic morphisms}  \label{ss:gen dir im stacks}

Let $f:\CY_1\to \CY_2$ be a map in $\inftydgprestack_{\on{laft}}$. We will not
be able to define the functor
$$f_*^{\IndCoh}:\IndCoh(\CY_1)\to \IndCoh(\CY_2)$$ in general.

\medskip

However, we will be able to do this in the case when $f$ is schematic and quasi-compact 
morphism between arbitrary prestacks, which is goal of this subsection.





\sssec{}

We shall say that a map $f:\CY_1\to \CY_2$ in $\on{PreStk}$
is schematic if for any
$(S_2,y_2)\in \affdgSch_{/\CY_2}$, the fiber product 
$$S_1:=S_2\underset{\CY_2}\times \CY_1$$ 
is a DG scheme. 

\medskip

We shall say that $f$ is schematic and quasi-separated and quasi-compact if for all $(S_2,y_2)\in \affdgSch_{/\CY_2}$,
the above DG scheme $S_1$ is quasi-separated and quasi-compact.

\medskip

We shall say that $f$ is schematic and proper if for all $(S_2,y_2)\in \affdgSch_{/\CY_2}$, the resulting map $S_1\to S_2$ of
DG schemes is proper. 

\medskip

It is easy to see that if $f$ is schematic, then for any $(S_2,y_2)\in \dgSch_{\CY_2}$ (i.e., $S_2$
is not necessarily affine), the fiber product $S_1:=S_2\underset{\CY_2}\times \CY_1$ is a DG scheme.

\medskip

The next assertion results easily from the definitions:

\begin{lem}  \label{l:crit sch}  
Let $f:\CY_1\to \CY_2$ be a map in $\on{PreStk}$.

\smallskip

\noindent{\em(a)} Suppose that $\CY_1$ and $\CY_2$ are convergent (see \cite[Sect. 1.2]{Stacks} where the notion 
is introduced).  Then the condition for $f$ to be schematic (resp., schematic and quasi-separated and quasi-compact,
schematic and proper)
is enough to test on $(S_2,y_2)\in {}^{<\infty}\!\affdgSch_{/\CY_2}$. 

\smallskip

\noindent{\em(b)} Suppose that $\CY_1,\CY_2\in \on{PreStk}_{\on{laft}}$. Then 
the condition for $f$ to be schematic (resp., schematic and quasi-separated and quasi-compact, schematic and proper)
is enough to test on $(S_2,y_2)\in ({}^{<\infty}\!\affdgSch_{\on{aft}})_{/\CY_2}$. 

\end{lem}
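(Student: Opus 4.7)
The plan is to establish part (a) first and then deduce (b) using the laft condition.

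For part (a), fix $(S_2, y_2) \in \affdgSch_{/\CY_2}$ and set
\[
S_1^{(n)} := \tau^{\leq n}(S_2) \underset{\CY_2}{\times} \CY_1,
\]
which by hypothesis is a DG scheme (resp.\ quasi-separated and quasi-compact, proper) over $\tau^{\leq n}(S_2)$. The canonical maps $S_1^{(n)} \to S_1^{(n+1)}$ are pullbacks of the closed embeddings $\tau^{\leq n}(S_2) \hookrightarrow \tau^{\leq n+1}(S_2)$, so they are closed embeddings inducing isomorphisms on underlying classical schemes. As in \cite{Stacks}, Lemma 3.1.5, this tower admits a filtered colimit $S_1 := \underset{n}{\on{colim}}\, S_1^{(n)}$ in $\dgSch$, with $^{cl}S_1 = {}^{cl}S_1^{(0)}$ and $\tau^{\leq n}(S_1) \simeq S_1^{(n)}$. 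I then claim $S_1 \simeq S_2 \underset{\CY_2}{\times} \CY_1$ as prestacks. Both sides are convergent---the first as a DG scheme, and the second as a fiber product of convergent prestacks---so it suffices to test on $T \in {}^{<\infty}\!\affdgSch$. If $T$ is $n$-coconnective, then $\Hom(T, S_1) = \Hom(T, S_1^{(n)})$, while
\[
\Hom(T, S_2 \underset{\CY_2}{\times} \CY_1) = \Hom(T, \tau^{\leq n}(S_2)) \underset{\Hom(T, \CY_2)}{\times} \Hom(T, \CY_1) = \Hom(T, S_1^{(n)}),
\]
since $\tau^{\leq n}$ is the $n$-coconnective reflection. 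The quasi-separated, quasi-compact and proper properties of the resulting map $S_1 \to S_2$ depend only on the underlying map of classical schemes, and so reduce to the corresponding properties of $S_1^{(0)} \to \tau^{\leq 0}(S_2)$, which hold by hypothesis.

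For part (b), laft implies convergent, so by (a) I may assume $S_2$ is $n$-coconnective affine. The laft hypothesis on $\CY_2$ yields
\[
\CY_2(S_2) \simeq \underset{(S_\alpha, \, S_2 \to S_\alpha)}{\on{colim}}\, \CY_2(S_\alpha),
\]
where the filtered colimit runs over $n$-coconnective affines $S_\alpha$ of finite type equipped with a map $S_2 \to S_\alpha$. Consequently $y_2$ factors as $S_2 \xrightarrow{f} S_\alpha \xrightarrow{y_{2, \alpha}} \CY_2$ for some such $S_\alpha$, giving
\[
S_2 \underset{\CY_2}{\times} \CY_1 \;\simeq\; S_2 \underset{S_\alpha}{\times} T_\alpha, \qquad T_\alpha := S_\alpha \underset{\CY_2}{\times} \CY_1.
\]
By hypothesis $T_\alpha$ is a DG scheme over $S_\alpha$ with the relevant property, and since quasi-separated, quasi-compact and proper morphisms are stable under base change, $S_2 \underset{S_\alpha}{\times} T_\alpha$ is a DG scheme over $S_2$ with the corresponding property.

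The main technical point is the construction of $S_1$ in Part (a) as a colimit in $\dgSch$ representing the prestack fiber product. Affine-locally this amounts to the fact that a connective DG ring is recovered as the limit of its coconnective truncations, so that $\dgSch$ admits filtered colimits along the kind of closed embeddings at hand, as used in \cite{Stacks}, Lemma 3.1.5. Once this is granted, the rest of (a) is a direct diagram chase with convergence, and the base-change step in (b) is formal.
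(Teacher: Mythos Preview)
The paper gives no proof beyond the remark that the assertion ``results easily from the definitions,'' so your argument is in any case more detailed than what appears there. Your overall strategy is the right one and presumably what the author intends: for (a), exploit convergence to reduce to eventually coconnective test schemes; for (b), use the laft hypothesis to factor any point through an object of ${}^{<\infty}\!\affdgSch_{\on{aft}}$ and then base change.

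There is one imprecision in your argument for (a). You assert $\tau^{\leq n}(S_1)\simeq S_1^{(n)}$, but $S_1^{(n)}=\tau^{\leq n}(S_2)\underset{\CY_2}\times \CY_1$ is typically \emph{not} $n$-coconnective: take $\CY_2=\on{pt}$ and $\CY_1$ any non-classical DG scheme. What is true, and what you actually use, is that for $T\in{}^{\leq n}\!\affdgSch$ one has
\[
\Hom\bigl(T,\,S_2\underset{\CY_2}\times \CY_1\bigr)\;=\;\Hom\bigl(T,\tau^{\leq n}(S_2)\bigr)\underset{\CY_2(T)}\times \CY_1(T)\;=\;\Hom\bigl(T,S_1^{(n)}\bigr),
\]
so the convergent prestack $\CZ:=S_2\underset{\CY_2}\times\CY_1$ satisfies $\CZ|_{{}^{\leq n}\!\affdgSch}\simeq S_1^{(n)}|_{{}^{\leq n}\!\affdgSch}$, i.e.\ $\tau^{\leq n}(\CZ)\simeq\tau^{\leq n}(S_1^{(n)})$ is an $n$-coconnective DG scheme. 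At this point you can conclude directly from the characterization of DG schemes among convergent prestacks in terms of their truncations (this is the content behind \cite[Sect.~1.2 and 3.2]{Stacks}); the explicit colimit construction of $S_1$ is then unnecessary, and the step ``$\Hom(T,S_1)=\Hom(T,S_1^{(n)})$,'' which does \emph{not} follow formally from $S_1$ being a colimit in $\dgSch$, can be avoided altogether.

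Your argument for (b) is correct. The initial reduction via (a) is convenient since it makes the relevant indexing category (of $n$-coconnective finite-type affines under $S_2$) visibly filtered, so that a point of $\CY_2(S_2)$ genuinely factors through a single $S_\alpha$.
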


\sssec{}  \label{sss:prestack corr}

We are going to make $\IndCoh$ into a functor on the category $\inftydgprestack_{\on{laft}}$
with $1$-morphisms being correspondences, where we allow to take direct images
along morphisms that are schematic and quasi-compact. 

\medskip

Namely, in the framework of \secref{sss:two classes}, we take $\bC:=\inftydgprestack_{\on{laft}}$,
${horiz}$ to be the class of all $1$-morphisms, and ${vert}$ to be the class of $1$-morphisms
that are schematic and quasi-compact (the quasi-separetedness condition comes for free because
of the finite type assumption).

\medskip

Let $(\inftydgprestack_{\on{laft}})_{\on{corr:sch-qc;all}}$
denote the resulting category of correspondences. 

\medskip

We shall now extend the assignment $\CY\mapsto \IndCoh(\CY)$ to a functor
$$\IndCoh_{(\inftydgprestack_{\on{laft}})_{\on{corr:sch-qc;all}}}:
(\inftydgprestack_{\on{laft}})_{\on{corr:sch-qc;all}}\to \StinftyCat_{\on{cont}},$$
such that for $g:\CY_1\to \CY_2$ the corresponding functor
$$\IndCoh(\CY_2)\to \IndCoh(\CY_1)$$ is
$g^!$ defined in \secref{sss:! for stacks}.

\sssec{} 

Consider the tautological functor
\begin{equation}  \label{e:schemes to stacks, corr}
(\dgSch_{\on{aft}})_{\on{corr:all;all}}\to (\inftydgprestack_{\on{laft}})_{\on{corr:sch-qc;all}},
\end{equation}
and define the functor
\begin{equation} \label{e:IndCoh corr on prestacks}
\IndCoh_{(\inftydgprestack_{\on{laft}})_{\on{corr:sch-qc;all}}}:
(\inftydgprestack_{\on{laft}})_{\on{corr:sch-qc;all}}\to \StinftyCat_{\on{cont}}
\end{equation}
as the right Kan extension of $\IndCoh_{(\dgSch_{\on{aft}})_{\on{corr:all;all}}}$ along the functor \eqref{e:schemes to stacks, corr}.

\begin{prop}   \label{p:correspondences for stacks}
The diagram of functors
$$
\CD
(\inftydgprestack_{\on{laft}})_{\on{corr:sch-qc;all}} @>{\IndCoh_{(\inftydgprestack_{\on{laft}})_{\on{corr:sch-qc;all}}}}>>   
\StinftyCat_{\on{cont}}  \\
@AAA    @AA{\on{Id}}A    \\
(\on{PreStk}_{\on{laft}})^{\on{op}}   @>{\IndCoh^!_{\inftydgprestack_{\on{laft}}}}>>   \StinftyCat_{\on{cont}} 
\endCD
$$
is canonically commutative
\end{prop}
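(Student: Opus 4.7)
The strategy is to invoke \propref{p:RKE corr}. We apply that result with $\bC^1 := \dgSch_{\on{aft}}$ and $vert^1 = horiz^1 := \on{all}$, $\bC^2 := \inftydgprestack_{\on{laft}}$ with $vert^2$ the class of schematic quasi-compact morphisms and $horiz^2 := \on{all}$, taking $\Phi$ to be the tautological embedding and $P_{\on{corr}:vert;horiz} := \IndCoh_{(\dgSch_{\on{aft}})_{\on{corr:all;all}}}$. The assumptions of \secref{sss:two classes} hold trivially for both triples, and $\Phi$ preserves both classes. The induced functor $\Phi_{\on{corr}}$ is precisely the tautological inclusion $(\dgSch_{\on{aft}})_{\on{corr:all;all}} \to (\inftydgprestack_{\on{laft}})_{\on{corr:sch-qc;all}}$ used to define $\IndCoh_{(\inftydgprestack_{\on{laft}})_{\on{corr:sch-qc;all}}}$ in \eqref{e:IndCoh corr on prestacks}.

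The sole hypothesis to verify is that for each $Z \in \dgSch_{\on{aft}}$ the functor
\[
\bigl(\dgSch_{\on{aft}}\bigr)_{/Z} \;\longrightarrow\; \bigl(\inftydgprestack_{\on{laft}}\bigr)_{\on{sch-qc}/Z}
\]
induced by $\Phi$ is an equivalence. For essential surjectivity, any schematic quasi-compact morphism $\CW \to Z$ with $\CW \in \inftydgprestack_{\on{laft}}$ forces $\CW$ to be a DG scheme, because $Z$ is already a DG scheme and schematicity is a local condition on the base; quasi-compactness of $\CW \to Z$ combined with that of $Z$ then ensures $\CW$ is quasi-compact, while $\CW \in \inftydgprestack_{\on{laft}}$ supplies local almost finite type, so $\CW \in \dgSch_{\on{aft}}$. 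Fully faithfulness is automatic, since any morphism between objects of $\dgSch_{\on{aft}}$ is schematic and quasi-compact (the source being already quasi-compact).

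With the hypothesis confirmed, \propref{p:RKE corr} yields a canonical equivalence
\[
\on{RKE}_{\Phi_{\on{corr}}}\bigl(\IndCoh_{(\dgSch_{\on{aft}})_{\on{corr:all;all}}}\bigr)\Big|_{(\inftydgprestack_{\on{laft}})^{\on{op}}}
\;\simeq\;
\on{RKE}_{\Phi^{\on{op}}}\bigl(\IndCoh^!_{\dgSch_{\on{aft}}}\bigr).
\]
By the definition in \eqref{e:IndCoh corr on prestacks}, the left-hand side is the restriction of $\IndCoh_{(\inftydgprestack_{\on{laft}})_{\on{corr:sch-qc;all}}}$ to $(\inftydgprestack_{\on{laft}})^{\on{op}}$; by \corref{c:IndCoh via all schemes}, the right-hand side is identified with $\IndCoh^!_{\inftydgprestack_{\on{laft}}}$. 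This is precisely the commutativity asserted by the proposition, with the 1-morphisms matched because the embedding $(\inftydgprestack_{\on{laft}})^{\on{op}} \hookrightarrow (\inftydgprestack_{\on{laft}})_{\on{corr:sch-qc;all}}$ sends $g\colon \CY_1 \to \CY_2$ to the correspondence $\CY_2 \xleftarrow{g} \CY_1 \xrightarrow{\on{id}} \CY_1$, whose image under the correspondence functor recovers $g^!$. No genuine obstacle arises beyond the unwinding of \propref{p:RKE corr}; the geometric content reduces to the slice-equivalence just verified.
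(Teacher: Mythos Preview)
Your proof is correct and follows the same approach as the paper, which simply invokes \propref{p:RKE corr}; you have filled in the verification of the slice-equivalence hypothesis and the identification of the right-hand side via \corref{c:IndCoh via all schemes}, which is exactly what is needed.
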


\begin{proof}

This follows from \propref{p:RKE corr}.

\end{proof}

\ssec{Adjunction for proper maps}

Let $(\inftydgprestack_{\on{laft}})_{\on{corr:sch-proper;all}}$ be the 1-full subcategory of
$(\inftydgprestack_{\on{laft}})_{\on{corr:sch-qc;all}}$, where we restrict vertical morphisms to be
proper. 

\medskip

In this subsection we shall study the restriction of the functor $\IndCoh_{(\inftydgprestack_{\on{laft}})_{\on{corr:sch-qc;all}}}$
to this subcategory.

\sssec{}

Let us return to the setting of \secref{sss:RKE corr}. Assume that $vert^1\subset horiz^1$ and 
$vert^2\subset horiz^2$.

\medskip

Let us start with a functor
$$P^!_{horiz}:(\bC^1_{horiz})^{\on{op}}\to \StinftyCat_{\on{cont}}.$$
Assume that $P^!_{horiz}$ satisfies the right (resp., left)
base change condition with respect to $vert^1$, see \secref{sss:by adjunction}.
I.e., for every $1$-morphism $f:\wt\bc^1\to \bc^1$ in $\bC^1$, with $f\in vert^1$, the functor
$$P^!_{horiz}:P(\bc^1)\to P(\wt\bc^1)$$
admits a left (right) adjoint, denoted $P_{vert}(f)$, and that for a Cartesian square 
$$
\CD
\wt\bc'{}^1 @>{\wt{g}}>>  \wt\bc^1 \\
@V{f'}VV  @VV{f}V  \\
\bc'{}^1 @>{g}>>  \bc^1
\endCD
$$
the resulting natural transformation 
$$P_{vert}(f')\circ P^!_{horiz}(\wt{g})\to P^!_{horiz}(g)\circ P_{vert}(f)$$ 
(in the case of left adjoints) and 
$$P^!_{horiz}(g)\circ P_{vert}(f)\to P_{vert}(f')\circ P^!_{horiz}(\wt{g})$$
(in the case of right adjoints) 
is an isomorphism. 

\medskip

We claim:

\begin{prop}  \label{p:RKE of base change}
Under the assumptions of \propref{p:RKE corr}, the functor 
$$Q^!_{horiz}:=\on{RKE}_{(\Phi_{horiz})^{\on{op}}}(P^!_{horiz}):(\bC^2_{horiz})^{\on{op}}\to \StinftyCat_{\on{cont}}$$
satisfies the right (resp., left) base change condition with respect to $vert^2\subset horiz^2$.
\end{prop}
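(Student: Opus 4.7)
The plan is to construct the adjoints $Q_{vert}(f)$ for $f \in vert^2$ term-by-term, using the pointwise formula for $Q^!_{horiz}$ and the hypothesis that slice categories match under $\Phi$. Throughout, write
$$\bI_{\bc^2} := \bC^1 \underset{\bC^2}\times (\bC^2_{horiz})_{/\bc^2},$$
so that $Q^!(\bc^2) = \underset{(\bc^1, g)\in \bI_{\bc^2}^{\on{op}}}{\lim}\, P(\bc^1)$, with $g : \Phi(\bc^1) \to \bc^2 \in horiz^2$.

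First, I would fix $f : \wt\bc^2 \to \bc^2$ in $vert^2$ and exhibit a canonical functor $\Psi_f : \bI_{\bc^2} \to \bI_{\wt\bc^2}$. Given $(\bc^1, g) \in \bI_{\bc^2}$, form the Cartesian square
$$
\CD
\wt\Phi(\bc^1) @>{g'}>> \wt\bc^2 \\
@V{f'}VV @VV{f}V \\
\Phi(\bc^1) @>{g}>> \bc^2,
\endCD
$$
so that $f' \in vert^2$ and $g' \in horiz^2$ by stability of these classes. By the equivalence $(\bC^1_{vert})_{/\bc^1} \simeq (\bC^2_{vert})_{/\Phi(\bc^1)}$, there is an essentially unique lift $\tilde f : \wt\bc^1 \to \bc^1$ in $vert^1$ with $\Phi(\tilde f) = f'$, and $\Psi_f(\bc^1, g) := (\wt\bc^1, g')$. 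The base change hypothesis for $P^!_{horiz}$ along the square $(\tilde f, g, f', g')$ identifies the restriction map $Q^!(\bc^2) \to Q^!(\wt\bc^2)$, i.e., the composition of the values $P(\bc^1) \xrightarrow{P^!_{horiz}(\tilde f)} P(\wt\bc^1)$, with $Q^!_{horiz}(f)$ computed directly from the restriction $\bI_{\wt\bc^2} \to \bI_{\bc^2}$ given by composition with $f$.

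Next, I would construct $Q_{vert}(f)$ by passing to adjoints termwise. For each $(\bc^1,g) \in \bI_{\bc^2}$, the functor $P^!_{horiz}(\tilde f)$ admits a left (resp.\ right) adjoint $P_{vert}(\tilde f) : P(\wt\bc^1) \to P(\bc^1)$. The base change condition for $P_{vert}$ with respect to morphisms in $horiz^1$ (within $\bC^1$) shows that the assignment $(\bc^1, g) \mapsto P_{vert}(\tilde f)$ is natural in $\bI_{\bc^2}$, and hence assembles into a functor of limits
$$Q_{vert}(f) : Q^!(\wt\bc^2) \to Q^!(\bc^2).$$
The unit and counit of $(P_{vert}(\tilde f), P^!_{horiz}(\tilde f))$ extend pointwise to natural transformations exhibiting $(Q_{vert}(f), Q^!_{horiz}(f))$ as an adjoint pair.

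Finally, for a Cartesian square
$$
\CD
\wt\bc'{}^2 @>{\wt g_2}>> \wt\bc^2 \\
@V{f'}VV @VV{f}V \\
\bc'{}^2 @>{g_2}>> \bc^2
\endCD
$$
with $g_2 \in horiz^2$ and $f, f' \in vert^2$, I would deduce the required base change isomorphism by restricting to each $(\bc^1, g) \in \bI_{\bc^2}$: the lifts $\tilde f, \tilde f'$ in $\bC^1$ sit in a Cartesian square over the original one, and termwise the claim reduces to the base change isomorphism for $P_{vert}(\tilde f)$ and $P^!_{horiz}$, which is the hypothesis. The main obstacle, as always in this paper, is not the content of the argument (which is straightforward for ordinary categories) but the $\infty$-categorical book-keeping: making $\Psi_f$, the unit/counit, and the base change datum into coherent functors and natural transformations of $\infty$-categories. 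As with the other statements of this type, the detailed verification is of the kind deferred to \cite{GR3}.
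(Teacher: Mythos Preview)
Your approach is essentially the paper's: reindex the limit computing $Q^!(\wt\bc^2)$ by the same category $\bI_{\bc^2}$ that computes $Q^!(\bc^2)$, so that $Q^!_{horiz}(f)$ becomes the map of limits induced termwise by $P^!_{horiz}(\tilde f)$; then pass to adjoints termwise using the base change hypothesis for $P$. The paper packages this last step as a stand-alone lemma about adjoints in limits of diagrams (\lemref{l:adjunction in limit}), whereas you do it inline; otherwise the two arguments coincide.

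The one substantive point you gloss over, and which the paper states explicitly, is that $\Psi_f : \bI_{\bc^2} \to \bI_{\wt\bc^2}$ is \emph{cofinal}. This is what justifies computing $Q^!(\wt\bc^2)$ as $\underset{\bI_{\bc^2}^{\on{op}}}{lim}\, P(\wt\bc^1)$. Without it, your claim that the termwise adjoints $P_{vert}(\tilde f)$ ``assemble into a functor of limits $Q_{vert}(f) : Q^!(\wt\bc^2) \to Q^!(\bc^2)$'' with unit and counit extending pointwise is incomplete: you can certainly produce a map $Q^!(\wt\bc^2) \to Q^!(\bc^2)$ by composing evaluation at $\Psi_f(\bc^1,g)$ with $P_{vert}(\tilde f)$, but to verify it is adjoint to $Q^!_{horiz}(f)$ you need $Q^!_{horiz}(f)$ itself to be described termwise over $\bI_{\bc^2}$ as $P^!_{horiz}(\tilde f)$, and that identification is exactly what cofinality provides. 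The cofinality is not difficult, but it is the hinge of the argument and should be named.
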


\begin{proof}

Let $f:\wt\bc^2\to \bc^2$ be a 1-morphism in $vert^2$. The condition of the proposition
implies that the assignment
$$\bc^1\in \bC^1\underset{\bC^2}\times (\bC^2_{horiz})_{/\bc^2}\, \rightsquigarrow \,
\Phi_{horiz}(\bc^1)\underset{\bc^2}\times \wt\bc_2\simeq \Phi_{horiz}(\wt\bc^1)$$
defines a functor
$$ \bC^1\underset{\bC^2}\times (\bC^2_{horiz})_{/\bc^2}\to 
\bC^1\underset{\bC^2}\times (\bC^2_{horiz})_{/\wt\bc^2},$$
which, moreover, is cofinal.
 
\medskip

Hence, we can calculate the value of $Q^!_{horiz}$ on $\wt\bc^2$
as
$$\underset{\bc^1\in (\bC^1\underset{\bC^2}\times (\bC^2_{horiz})_{/\bc^2})^{\on{op}}}{lim}\,
P^!_{horiz}(\wt\bc^1).$$

The existsence of the left (resp., right) adjoint $Q_{vert}(f)$ of $Q^!_{horiz}(f)$ follows now from the
following general paradigm:

\medskip

Let $I$ be an index category and 
$$i\mapsto \bC_i \text{ and } i\mapsto \wt\bC_i$$
be two functors $I\to \StinftyCat_{\on{cont}}$, and let 
$$i\mapsto F_i\in \on{Funct}_{\on{cont}}(\wt\bC_i,\bC_i)$$
be a natural transformations between them.

\medskip

Denote
$$\bC:=\underset{i\in I}{lim}\, \bC_i \text{ and } \wt\bC:=\underset{i\in I}{lim}\, \wt\bC_i,$$
and let $F$ be the corresponding functor $\wt\bC\to \bC$. 

\medskip

Assume that for everi $i$, the functor $F_i$ admits a left (resp., continuous right)
adjoint $G_i$, and that for every arrow $i\to i'$ in $I$, the square
$$
\CD
\wt\bC_i  @>>>  \wt\bC_{i'} \\
@A{G_i}AA   @AA{G_{i'}}A   \\
\bC_i  @>>>  \bC_{i'},
\endCD
$$
obtained by adjunction from the commutative square
$$
 \CD
\wt\bC_i  @>>>  \wt\bC_{i'} \\
@V{F_i}VV   @VV{F_{i'}}V   \\
\bC_i  @>>>  \bC_{i'},
\endCD
$$
which a priori commutes up to a natural transformation, actually commutes.

\begin{lem}  \label{l:adjunction in limit}
Under the above circumstances, the functor $F:\wt\bC\to \bC$ admits a left (resp., continuous right), denoted $G$,
and for every $i\in I$ the square,
$$
\CD
\wt\bC  @>>>  \wt\bC_{i} \\
@A{G}AA   @AA{G_{i}}A   \\
\bC @>>>  \bC_{i},
\endCD
$$
obtained by adjunction from the commutative square
$$
\CD
\wt\bC  @>>>  \wt\bC_{i} \\
@V{F}VV   @VV{F_{i'}}V   \\
\bC @>>>  \bC_{i},
\endCD
$$
which a priori commutes up to a natural transformation, actually commutes.
\end{lem}

The fact that $Q^!_{horiz}$ satisfies the right (resp., left) base change condition with respect to $vert^2\subset horiz^2$
is a formal consequence of the second statament in \lemref{l:adjunction in limit}.

\end{proof}

We shall need also the following statement which will be proved in \cite{GR3} along with 
\thmref{t:extension by adjoints}:

\begin{lem} \label{l:RKE of base change}
Let 
$$P_{\on{corr}:vert;horiz}:\bC^1_{\on{corr}:vert;horiz}\to \StinftyCat_{\on{cont}}$$
be the functor obtained from $P^!_{horiz}$ by \thmref{t:extension by adjoints} applied to
$vert^1\subset horiz^1$. 
Let 
$$Q_{\on{corr}:vert;horiz}:\bC^1_{\on{corr}:vert;horiz}\to \StinftyCat_{\on{cont}}$$
be the right Kan extension of $P_{\on{corr}:vert;horiz}$ along the functor
$$\Phi_{\on{corr}:vert;horiz}:\bC^1_{\on{corr}:vert;horiz}\to \bC^2_{\on{corr}:vert;horiz}.$$
Then in terms of the isomorphism
$$Q_{\on{corr}:vert;horiz}|_{(\bC^2_{horiz})^{\on{op}}}\simeq Q^!_{horiz}$$
of \propref{p:RKE corr}, the functor $Q_{\on{corr}:vert;horiz}$ identifies with one 
obtained from $Q^!_{horiz}$ by \thmref{t:extension by adjoints} applied to
$vert^2\subset horiz^2$. 
\end{lem}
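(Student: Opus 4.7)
The plan is to identify the two functors $Q_{\on{corr}:vert;horiz}$ and the functor $Q'_{\on{corr}:vert;horiz}$ obtained by applying \thmref{t:extension by adjoints} to $Q^!_{horiz}$, by invoking the universal characterization built into \thmref{t:extension by adjoints} itself. That theorem produces the functor on the correspondence category essentially uniquely from the following data: (i) the restriction to $(\bC_{horiz})^{\on{op}}$; (ii) the restriction to $\bC_{vert}$ (given by the adjoint construction); and (iii) the base change natural transformations associated to Cartesian squares. So the task is to exhibit canonical isomorphisms of these three pieces of data for $Q_{\on{corr}:vert;horiz}$ and $Q'_{\on{corr}:vert;horiz}$, compatibly with the higher simplicial structure, and then appeal to uniqueness.

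First, I would handle the restriction to $(\bC^2_{horiz})^{\on{op}}$: by \propref{p:RKE corr}, this restriction is canonically identified with $Q^!_{horiz} = \on{RKE}_{(\Phi_{horiz})^{\on{op}}}(P^!_{horiz})$, which is precisely the input of the \thmref{t:extension by adjoints}-construction producing $Q'_{\on{corr}:vert;horiz}$. Second, for the restriction to $\bC^2_{vert}$, I would use \propref{p:RKE of base change} (which itself rests on \lemref{l:adjunction in limit}) to show that $Q^!_{horiz}$ satisfies the relevant base change condition with respect to $vert^2$, so that the adjoints $Q_{vert}(f)$ of $Q^!_{horiz}(f)$ exist for $f \in vert^2$ and are computed ``termwise'' from the $P_{vert}$'s in the limit presentation of $Q$. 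Unwinding the right Kan extension formula for $Q_{\on{corr}:vert;horiz}$ on objects of the form $f \in \bC^2_{vert} \subset \bC^2_{\on{corr}:vert;horiz}$, the resulting functor is precisely the termwise adjoint, which agrees with the value of $Q'_{\on{corr}:vert;horiz}$ on $\bC^2_{vert}$ by construction of the latter.

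Third, one must match the base change natural isomorphisms attached to Cartesian squares in $\bC^2$. For $Q_{\on{corr}:vert;horiz}$, these come from the universal property of right Kan extension applied to the corresponding base change data of $P_{\on{corr}:vert;horiz}$, which are in turn produced by \thmref{t:extension by adjoints} applied to $P^!_{horiz}$. For $Q'_{\on{corr}:vert;horiz}$, they are produced by the same mechanism applied to $Q^!_{horiz}$. The identification then follows from the termwise-adjoint description of step two together with the base change compatibility built into \propref{p:RKE of base change}: a Cartesian square in $\bC^2$ can be tested on Cartesian squares in $\bC^1$ sitting over it via $\Phi$, and on each such square the base change isomorphism agrees by construction. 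Feeding these three matched pieces of data into the uniqueness clause of \thmref{t:extension by adjoints} (applied to $(Q^!_{horiz}, vert^2 \subset horiz^2)$) produces the desired canonical isomorphism $Q_{\on{corr}:vert;horiz} \simeq Q'_{\on{corr}:vert;horiz}$.

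The main obstacle I anticipate is strictly $\infty$-categorical and combinatorial rather than conceptual: coherently matching the higher simplices of \eqref{e:grids} on the two sides requires a sufficiently robust uniqueness statement in \thmref{t:extension by adjoints}, one that pins down the functor on correspondences not merely up to equivalence of objects and $1$-morphisms, but together with all its coherence data; this is precisely the point at which the author defers the details to \cite{GR3}. Concretely, the issue is to verify that the termwise-limit construction of the base change isomorphisms for $Q$ assembles into a natural transformation of simplicial objects matching the one produced directly from $Q^!_{horiz}$, and not just into a fiberwise equivalence.
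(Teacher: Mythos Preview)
The paper does not prove this lemma: it is explicitly deferred to \cite{GR3}, and in fact is bundled together with the proof of \thmref{t:extension by adjoints} itself (``the following statement \ldots\ will be proved in \cite{GR3} along with \thmref{t:extension by adjoints}''). So there is no in-paper argument to compare against.

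That said, your plan is conceptually the right one, and your self-diagnosis of the obstacle is accurate. The strategy of matching the restrictions to $(\bC^2_{horiz})^{\on{op}}$ (via \propref{p:RKE corr}) and to $\bC^2_{vert}$ (via \propref{p:RKE of base change} and the termwise-adjoint description of \lemref{l:adjunction in limit}), and then invoking a uniqueness characterization, is exactly how one would expect the argument to go. The fact that the paper packages this lemma with \thmref{t:extension by adjoints} itself corroborates your final paragraph: the ``uniqueness clause'' you need is not actually stated in \thmref{t:extension by adjoints} as written in this paper (which only asserts existence of a ``canonically defined'' functor), so the lemma cannot be proved as a corollary of the theorem in its present form. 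Rather, both results presumably emerge from a single construction in \cite{GR3} that builds the functor on correspondences directly from the simplicial data, and the lemma is a byproduct of the functoriality of that construction in the input $(\bC, P)$.

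One minor point of care: in the setting of \secref{ss:gen dir im stacks} the roles are reversed relative to the statement of \thmref{t:extension by adjoints} (here $vert \subset horiz$ and the input is the contravariant $P^!_{horiz}$, with \emph{left} adjoints playing the role of the ``vert'' functors), so when you speak of ``the adjoint construction'' and ``base change natural transformations'' you should be explicit about which variance you are in. This does not affect the substance of your plan.
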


\sssec{}

We apply \propref{p:RKE of base change} to the functor
$$\Phi:\dgSch_{\on{aft}}\to \on{PreStk}_{\on{laft}}$$
with $horiz^1=\on{all}$, $horiz^2=\on{all}$ and $vert^1=\on{proper}$,  $vert^1=\on{sch-proper}$, and
$$P^!_{horiz}:=\IndCoh^!_{\dgSch_{\on{aft}}}.$$

\medskip

By \corref{c:IndCoh via all schemes}, the resulting functor
$$(\on{PreStk}_{\on{laft}})^{\on{op}}\to \StinftyCat_{\on{cont}}$$
identifies with $\IndCoh_{\on{PreStk}_{\on{laft}}}$.

\medskip

Thus, we obtain that the functor $\IndCoh_{\on{PreStk}_{\on{laft}}}$ satisfies the right base change
condition with respect the class of schematic and proper maps. Applying \thmref{t:extension by adjoints},
we obtain a functor
$$(\inftydgprestack_{\on{laft}})_{\on{corr:sch-proper;all}}\to \StinftyCat_{\on{cont}}$$
that we shall denote by $\IndCoh_{(\inftydgprestack_{\on{laft}})_{\on{corr:sch-proper;all}}}$.

\medskip

We now claim:

\begin{prop}
There exists a canonical isomorphism
$$\IndCoh_{(\inftydgprestack_{\on{laft}})_{\on{corr:sch-qc;all}}}|_{(\inftydgprestack_{\on{laft}})_{\on{corr:sch-proper;all}}}\simeq
\IndCoh_{(\inftydgprestack_{\on{laft}})_{\on{corr:sch-proper;all}}},$$
compatible with the further restriction under
$$(\on{PreStk}_{\on{laft}})^{\on{op}}\hookrightarrow (\inftydgprestack_{\on{laft}})_{\on{corr:sch-proper;all}}.$$
\end{prop}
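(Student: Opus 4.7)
The plan is to apply the uniqueness statement of \thmref{t:extension by adjoints} to both functors appearing in the proposition, identifying each as the canonical extension of $\IndCoh^!_{\inftydgprestack_{\on{laft}}}$ from $(\on{PreStk}_{\on{laft}})^{\on{op}}$ to $(\inftydgprestack_{\on{laft}})_{\on{corr:sch-proper;all}}$ via the $(f^{\IndCoh}_*, f^!)$-adjunction for sch-proper morphisms $f$.

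By definition the right-hand side is produced by applying \thmref{t:extension by adjoints} to $\IndCoh^!_{\inftydgprestack_{\on{laft}}}$, once one verifies that this functor satisfies the right base change condition with respect to sch-proper morphisms. The latter follows from \propref{p:RKE of base change} applied to the identification of $\IndCoh^!_{\inftydgprestack_{\on{laft}}}$ with the right Kan extension of $\IndCoh^!_{\dgSch_{\on{aft}}}$ (\corref{c:IndCoh via all schemes}) and the scheme-level proper base change of \propref{p:proper base change}. To identify the left-hand side as the same extension, I would verify the two hypotheses of \thmref{t:extension by adjoints}. First, its restriction to $(\on{PreStk}_{\on{laft}})^{\on{op}}$ equals $\IndCoh^!_{\inftydgprestack_{\on{laft}}}$: this is exactly \propref{p:correspondences for stacks}. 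Second, for a sch-proper morphism $f:\CY_1\to\CY_2$, the direct image functor $f^{\IndCoh}_*$ produced from $\IndCoh_{(\inftydgprestack_{\on{laft}})_{\on{corr:sch-qc;all}}}$ is the continuous left adjoint of $f^!$.

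For the second check I would use \lemref{l:adjunction in limit} combined with the description
\begin{equation*}
\IndCoh(\CY_i) \simeq \underset{(T,y)\in ((\dgSch_{\on{aft}})_{/\CY_i})^{\on{op}}}{lim}\,\IndCoh(T)
\end{equation*}
of \corref{c:IndCoh via all schemes}. Each $(T,y:T\to \CY_2)$ pulls back under the schematic $f$ to a proper map $f_T: S_T := T\underset{\CY_2}\times\CY_1\to T$ of DG schemes almost of finite type over $k$, and for each such $f_T$ the scheme-level adjunction $((f_T)^{\IndCoh}_*, f_T^!)$ is in place. Scheme-level proper base change (\propref{p:proper base change}) supplies the compatibility hypothesis required by \lemref{l:adjunction in limit}, so the adjunctions assemble to an adjunction $(f^{\IndCoh}_*, f^!)$ on the limits; by construction of the RKE defining $\IndCoh_{(\inftydgprestack_{\on{laft}})_{\on{corr:sch-qc;all}}}$, the left adjoint produced this way agrees with the direct image encoded by the sch-proper correspondence from $\CY_1$ to $\CY_2$ determined by $f$. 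Invoking the uniqueness in \thmref{t:extension by adjoints} then supplies the canonical isomorphism asserted in the proposition; compatibility with the further restriction to $(\on{PreStk}_{\on{laft}})^{\on{op}}$ is built into that theorem's uniqueness clause.

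The main obstacle lies in the coherence of this second check: while \lemref{l:adjunction in limit} yields the adjunction and matches the $1$-morphism data, reconciling the higher compatibility data of the limit-adjoint against the coherence data built into the right Kan extension defining $\IndCoh_{(\inftydgprestack_{\on{laft}})_{\on{corr:sch-qc;all}}}$ is an $(\infty,1)$-categorical bookkeeping exercise. As with numerous analogous combinatorial statements about categories of correspondences in this paper, the full execution is deferred to \cite{GR3}.
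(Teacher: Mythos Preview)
Your proposal is essentially correct and follows the same overall strategy as the paper: identify both sides as the extension-by-adjoints (in the sense of \thmref{t:extension by adjoints}) of $\IndCoh^!_{\inftydgprestack_{\on{laft}}}$ with respect to sch-proper morphisms, then appeal to uniqueness. The difference is in packaging. The paper invokes \lemref{l:RKE of base change} as a black box, together with the scheme-level statement that $\IndCoh_{(\dgSch_{\on{aft}})_{\on{corr:all;all}}}|_{(\dgSch_{\on{aft}})_{\on{corr:proper;all}}}$ is already the extension-by-adjoints of $\IndCoh^!_{\dgSch_{\on{aft}}}$; this lemma then transports the identification through the right Kan extension in one step. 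You instead verify the adjunction $(f^{\IndCoh}_*,f^!)$ directly at the prestack level via \lemref{l:adjunction in limit}, which is effectively what the proof of \lemref{l:RKE of base change} would do internally. Your acknowledgment that the higher-coherence matching is deferred to \cite{GR3} is exactly in line with how the paper treats it: \lemref{l:RKE of base change} itself is only stated here, with its proof deferred to \cite{GR3}.
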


This follows by applying \lemref{l:RKE of base change} using the following statement implicit in the proof
of \thmref{t:upper shriek}, and which contains \propref{p:proper compat}(a) as a particular case:

\begin{prop}
The restriction of the functor $\IndCoh_{(\dgSch_{\on{aft}})_{\on{corr:all;all}}}$ under
$$(\dgSch_{\on{aft}})_{\on{corr:proper;all}}\to (\dgSch_{\on{aft}})_{\on{corr:all;all}}$$
identifies canonically with the functor obtained from 
$$\IndCoh^!_{\dgSch_{\on{aft}}}:(\dgSch_{\on{aft}})^{\on{op}}\to  \StinftyCat_{\on{cont}}$$
by \thmref{t:extension by adjoints} applied to $\on{proper}\subset \on{all}$.
\end{prop}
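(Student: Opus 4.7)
The plan is to apply the uniqueness clause of \thmref{t:extension by adjoints} in its dual form: starting from the contravariant functor $\IndCoh^!_{\dgSch_{\on{aft}}}$ on all aft-morphisms and passing to continuous \emph{left} adjoints on the subclass of proper morphisms. For a proper map $f$, the functor $f^!$ admits by construction the continuous left adjoint $f^\IndCoh_*$. The Beck--Chevalley (left) base change condition required by the theorem asks that for every Cartesian square with $f,f'$ proper and $g_1,g_2$ arbitrary aft-morphisms, the natural map
\[
(f')^\IndCoh_*\circ g_1^!\to g_2^!\circ f^\IndCoh_*
\]
coming by adjunction is an isomorphism; this is exactly the content of \propref{p:proper base change}. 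Dualizing \thmref{t:extension by adjoints} therefore produces a canonically defined functor
\[
{}'\!\IndCoh_{(\dgSch_{\on{aft}})_{\on{corr:proper;all}}}:(\dgSch_{\on{aft}})_{\on{corr:proper;all}}\to \StinftyCat_{\on{cont}},
\]
whose restrictions to $(\dgSch_{\on{aft}})^{\on{op}}$ and to $(\dgSch_{\on{aft}})_{\on{proper}}$ recover $\IndCoh^!_{\dgSch_{\on{aft}}}$ and $\IndCoh_{\dgSch_{\on{aft}}}|_{(\dgSch_{\on{aft}})_{\on{proper}}}$, respectively.

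To conclude, I would show that the restriction of the already-constructed functor $\IndCoh_{(\dgSch_{\on{aft}})_{\on{corr:all;all}}}$ to $(\dgSch_{\on{aft}})_{\on{corr:proper;all}}$ enjoys this same universal property. The matching on the two ``leg'' subcategories is by \thmref{t:upper shriek}(a),(c). The remaining point is that the base change datum encoded in the correspondence functor for a Cartesian square with proper vertical arrows coincides with the one obtained from $(f^\IndCoh_*,f^!)$- and $((f')^\IndCoh_*,(f')^!)$-adjunction from the !-pullback base change isomorphism $g_1^!\circ f^! \simeq (f')^! \circ g_2^!$. This is precisely the assertion of \propref{p:proper compat}(a). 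The uniqueness clause of \thmref{t:extension by adjoints} then supplies the desired canonical isomorphism, compatible with the common restrictions to $(\dgSch_{\on{aft}})^{\on{op}}$ and $(\dgSch_{\on{aft}})_{\on{proper}}$.

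The main obstacle is the $\infty$-categorical uniqueness statement itself: one must know that a functor out of a category of correspondences is determined up to canonical equivalence by its restriction to the two leg subcategories together with a homotopy-coherent system of Beck--Chevalley isomorphisms for Cartesian squares. This is the combinatorial bookkeeping explicitly deferred to \cite{GR3}, where \thmref{t:extension by adjoints} is proved in the $\infty$-categorical setting. Modulo that input, no further mathematical work is needed: the proposition assembles from \propref{p:proper base change} (verification of the hypotheses of the dual extension-by-adjoints procedure) and \propref{p:proper compat}(a) (matching of the base change data).
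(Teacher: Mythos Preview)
Your proposal is correct and is a reasonable way to make precise what the paper means by ``implicit in the proof of \thmref{t:upper shriek}.'' The paper gives no detailed argument for this proposition; it simply asserts that the statement falls out of the construction and notes that \propref{p:proper compat}(a) is a \emph{particular case} of it. Your route inverts this dependency: you take \propref{p:proper compat}(a) (already established from the construction) as an input, verify the Beck--Chevalley hypothesis via \propref{p:proper base change}, and then invoke a uniqueness clause for \thmref{t:extension by adjoints} to identify the two functors. This is not circular, since \propref{p:proper compat}(a) is derived directly from the construction rather than from the proposition at hand, but it does mean your logical flow differs from the paper's: you deduce the proposition from one of its consequences plus uniqueness, whereas the paper regards the proposition as the more primitive statement extracted from how the functor was built.

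The one genuine caveat you correctly flag is that \thmref{t:extension by adjoints} as stated in the paper asserts existence of a ``canonically defined'' functor but does not spell out a characterization strong enough to recognize when a given functor on correspondences agrees with it. That uniqueness---that a functor out of $\bC_{\on{corr}:vert;horiz}$ is determined by its restrictions to the two legs together with the adjunction-induced base change isomorphisms---is exactly the $\infty$-categorical bookkeeping deferred to \cite{GR3}. Modulo that input, your argument is complete.
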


\section{$\IndCoh$ on Artin stacks}  \label{s:Artin}

\ssec{Recap: Artin stacks}

In this subsection we will recall some facts concerning Artin stacks. 
We refer the reader to \cite{Stacks}, Sect. 4.9 for a more detailed discussion.

\sssec{}

We let $\inftydgstack_{\on{Artin}}$ denote the full subcategory of
$\inftydgprestack$ consisting of Artin stacks. For $k\in \BN$, we let 
$\inftydgstack_{k\on{-Artin}}$ denote the full subcategory of $k$-Artin stacks.

\medskip

We set by definition:
$$\inftydgstack_{(-1)\on{-Artin}}=\dgSch,$$
(i.e., all DG schemes).
$$\inftydgstack_{(-2)\on{-Artin}}=\dgSch_{\on{sep}},$$
(i.e., separated DG schemes)
$$\inftydgstack_{(-3)\on{-Artin}}=\affdgSch.$$

\sssec{}

We shall say that a morphism in $\on{PreStk}$ is ``$k$-representable"
if its base change by any affine DG scheme yields an object of $\inftydgstack_{k\on{-Artin}}$.
E.g., ``$(-1)$-representable" is the same as ``schematic." 

\medskip

We shall say that a morphism in $\on{PreStk}$ is ``eventually representable"
if is $k$-representable for some $k$.

\medskip

We shall say that an eventually representable morphism $\CY_1\to \CY_2$ is smooth/flat/of bounded Tor dimension/
eventually coconnective if
for every $S_2\in \affdgSch$ equipped with a map to $\CY_2$, and $S_1\in \affdgSch$,
equipped with a smooth map to $S_2\underset{\CY_2}\times \CY_1$, the resulting map 
$S_1\to S_2$ is smooth/flat/of bounded Tor dimension/eventually coconnective.

\medskip

If $\CY_2$ is itself an Artin stack, it is enough to test the above condition for those maps $S_2\to \CY_2$
that are smooth (or flat), and in fact for just one smooth (or flat) covering of $\CY_2$. 

\medskip

We note that the diagonal morphism of a $k$-Artin stack is $(k-1)$-representable.

\sssec{}

We let $\inftydgstack_{\on{laft},\on{Artin}}$ denote the full subcategory of $\on{PreStk}_{\on{laft}}$ equal to
$$\on{PreStk}_{\on{laft}}\cap \inftydgstack_{\on{Artin}},$$
and similarly,
$$\inftydgstack_{\on{laft},k\on{-Artin}}=\on{PreStk}_{\on{laft}}\cap \inftydgstack_{k\on{-Artin}}.$$

\medskip

We also note that in order to check that in order to check that a morphism $f:\CY_1\to \CY_2$ in 
$\on{PreStk}_{\on{laft}}$ is $k$-representable (resp., $k$-representable and smooth/flat/of bounded Tor dimension/eventually coconnective)
it is enough to do so for $S_2\to \CY_2$ with $S_2\in {}^{<\infty}\!\dgSch_{\on{aft}}$. 

\sssec{}

The following is a basic fact concerning the subcategory 
$$\inftydgstack_{\on{laft},\on{Artin}}\subset  \inftydgstack_{\on{Artin}},$$
see \cite[Proposition 4.9.4]{Stacks}:

\begin{lem} 
For a $\CY\in \inftydgstack_{\on{laft},\on{Artin}}$ and a smooth map $S\to \CY$, where $S\in \affdgSch$,
the DG scheme $S$ is almost of finite type.
\end{lem}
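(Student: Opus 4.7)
The plan is to reduce the assertion to a smooth descent statement for the almost-of-finite-type property of affine DG schemes, using a smooth atlas of $\CY$ by an aft affine DG scheme.

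First, I would invoke the following structural fact about laft Artin stacks, standard in the set-up of \cite{Stacks}, Sect.~4.9: every $\CY \in \inftydgstack_{\on{laft},\on{Artin}}$ admits a smooth surjective atlas $f_0: U_0 \to \CY$ with $U_0 \in \affdgSch_{\on{aft}}$. (For $0$-coconnective $\CY$ this is classical; in general it propagates through the $n$-coconnective truncations of $\CY$ using that $\CY$ is convergent, i.e., determined by $\{\tau^{\leq n}(\CY)\}_n$.)

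Given a smooth $f: S \to \CY$ with $S \in \affdgSch$, I form the fiber product $W := S \underset{\CY}\times U_0$, which is an Artin stack (of one Artin level lower) because the diagonal of $\CY$ is eventually representable. The projection $p: W \to S$ is smooth and surjective, being the base change of $f_0$, and $q: W \to U_0$ is smooth, being the base change of $f$. I next choose a smooth affine atlas $g: V \to W$; the composition $q \circ g: V \to U_0$ is then a smooth morphism in $\affdgSch$ whose target is aft, so $V$ itself lies in $\affdgSch_{\on{aft}}$ (smoothness preserves aft over an aft affine base, essentially by definition). On the other hand, $p \circ g: V \to S$ is smooth and surjective.

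It remains to establish the descent step: if $V \to S$ is smooth and surjective in $\affdgSch$ with $V \in \affdgSch_{\on{aft}}$, then $S \in \affdgSch_{\on{aft}}$. Writing $S = \Spec A$ and $V = \Spec B$, the induced map $H^0(A) \to H^0(B)$ is smooth and faithfully flat, so faithfully flat descent of the classical finite-type property over $k$ gives that $H^0(A)$ is a finitely generated $k$-algebra. Since $A \to B$ is flat at the DG level, one has natural isomorphisms $H^{-i}(A) \otimes_{H^0(A)} H^0(B) \simeq H^{-i}(B)$, so faithfully flat descent of finite generation (applied to each $H^{-i}(A)$ as an $H^0(A)$-module) yields finite generation of $H^{-i}(A)$ over $H^0(A)$. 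This shows $S \in \affdgSch_{\on{aft}}$.

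The main technical obstacle is this last descent step, specifically the bookkeeping of how flatness at the DG level controls the higher cohomology modules $H^{-i}(A)$ via their base changes to $H^{-i}(B)$; the rest of the argument is formal manipulation of smooth atlases and fiber products of Artin stacks.
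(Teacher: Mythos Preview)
The paper does not prove this lemma itself; it simply records it as a fact and cites \cite[Proposition 4.9.4]{Stacks}. So there is no in-paper argument to compare against directly.

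Your bootstrap argument (Steps 2--5) is correct: once one aft smooth atlas $U_0\to\CY$ is in hand, forming $W=S\times_\CY U_0$, choosing a smooth affine atlas $V\to W$, and using that $V\to U_0$ is a smooth map of affine DG schemes with aft target gives $V\in\affdgSch_{\on{aft}}$; then faithfully-flat descent of the aft condition along the smooth surjection $V\to S$ finishes. The descent step you spell out is standard, and the formula $H^{-i}(A)\otimes_{H^0(A)}H^0(B)\simeq H^{-i}(B)$ is exactly the characterization of flatness for connective DG rings.

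The issue is your Step 1. In the paper's logical organization, the existence of an aft smooth atlas is \corref{c:existence of laft covering}, which is \emph{deduced from} the present lemma, not an input to it. So invoking it as a ``structural fact'' is circular relative to this paper. You do gesture at an independent justification (classical case plus propagation through $n$-coconnective truncations via convergence), and that is indeed the right route---but that is precisely where all the content lives, and you have compressed it to a parenthetical. The substance of \cite[Proposition 4.9.4]{Stacks} is essentially this truncation argument: showing that the laft condition on $\CY$ as a prestack forces any smooth affine atlas to be aft, by reducing to finite coconnectivity levels. Your Steps 2--5 are then the easy amplification from ``some atlas is aft'' to ``every smooth affine $S$ is aft.'' So your sketch has the architecture right but has relocated the hard part into an unproved assumption.
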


In particular:
\begin{cor} \label{c:existence of laft covering}
Every object of $\inftydgstack_{\on{laft},\on{Artin}}$ admits a smooth surjective map
from $S\in \dgSch_{\on{laft}}$.
\end{cor}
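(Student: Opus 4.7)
The plan is to produce the desired smooth surjective cover by taking any smooth atlas of $\CY$ by affines (which exists because $\CY$ is Artin), observing that each component is automatically almost of finite type by the preceding lemma, and then packaging these components into a single DG scheme locally almost of finite type.

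First I would unwind the definition of an Artin stack (see \cite{Stacks}, Sect. 4.9) to get a smooth surjective $1$-morphism from a disjoint union of affine DG schemes
\[
u : S' := \underset{\alpha}\sqcup\, S_\alpha \longrightarrow \CY,
\]
with each $S_\alpha \in \affdgSch$. This is essentially built into the inductive definition of $k$-Artin stack: the base case ($(-3)$-Artin) gives an affine atlas, and passing from $(k-1)$-Artin to $k$-Artin by groupoid quotients preserves the property of admitting a smooth surjective cover from an affine disjoint union. The point here is only that \emph{some} such $u$ exists; its source need not be quasi-compact.

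Next I would invoke the lemma preceding the corollary: since $\CY\in \inftydgstack_{\on{laft},\on{Artin}}$ and each map $S_\alpha\to \CY$ is smooth (being the restriction of $u$ to a Zariski-open affine piece), each $S_\alpha$ lies in $\affdgSch_{\on{aft}}$. Consequently the disjoint union $S'=\underset{\alpha}\sqcup\, S_\alpha$ is a DG scheme which has a Zariski cover (namely by the $S_\alpha$ themselves) by Noetherian affine DG schemes almost of finite type over $k$. By the definition of $\dgSch_{\on{laft}}$ recalled in \secref{sss:intr aft}, this means $S'\in \dgSch_{\on{laft}}$.

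Finally, $u:S'\to \CY$ is smooth and surjective by construction. This is the desired cover, proving the corollary. There is essentially no obstacle here: the entire content has been front-loaded into the previous lemma, which controls the source of a smooth affine map into $\CY$; the present corollary is just the observation that assembling any such affine atlas into one DG scheme yields an object of $\dgSch_{\on{laft}}$ and does not disturb smoothness or surjectivity.
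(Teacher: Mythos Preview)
Your proposal is correct and is exactly the argument implicit in the paper: the corollary is stated immediately after the lemma with the phrase ``In particular:'' and no further proof, since taking any smooth affine atlas and applying the lemma to each piece yields a disjoint union in $\dgSch_{\on{laft}}$. Your write-up simply makes this one-line deduction explicit.
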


\sssec{}

Let 
$$\IndCoh^!_{\inftydgstack_{\on{laft},\on{Artin}}}:(\inftydgstack_{\on{laft},\on{Artin}})^{\on{op}}\to \StinftyCat_{\on{cont}}$$
(resp., $\IndCoh^!_{\inftydgstack_{\on{laft},k\on{-Artin}}}$)
denote the restriction of the functor 
$$\IndCoh^!_{\on{PreStk}_{\on{laft}}}:(\on{PreStk}_{\on{laft}})^{\on{op}}\to \StinftyCat_{\on{cont}}$$
to the corresponding subcategory.

\ssec{Recovering from smooth/flat/eventually coconnective maps}

In this subsection we will show that if $\CY$ is an Artin stack, the category $\IndCoh(\CY)$
can be recovered from just looking at affine DG schemes equipped with a \emph{smooth}
map to $\CY$. 

\sssec{}

Let $\fc$ be a class of morphisms between Artin stacks belonging to the following set
$$\on{smooth} \subset \on{flat} \subset  \on{bdd-Tor}\subset  \on{ev-coconn}\subset \on{all}.$$

\medskip

Consider the corresponding fully faithful embedding
$$(\affdgSch)_{\fc}\hookrightarrow (\inftydgstack_{\on{Artin}})_{\fc}.$$

\medskip

We claim:

\begin{prop}  \label{p:RKE from smooth general}
Let $P$ be a presheaf on $(\inftydgstack_{\on{Artin}})_{\fc}$ with values 
in an arbitrary $\infty$-category. Assume that $P$ satisfies descent 
with respect to smooth surjective maps. Then the map
$$P\to \on{RKE}_{((\affdgSch)_{\fc})^{\on{op}}\hookrightarrow ((\inftydgstack_{\on{Artin}})_{\fc})^{\on{op}}}(P)$$
is an isomorphism.
\end{prop}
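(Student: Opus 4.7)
The plan is to proceed by induction on the Artin level $k$, writing $\CY\in \inftydgstack_{k\text{-}\on{Artin}}$. The base case $k=-3$ is tautological: when $\CY$ is an affine DG scheme, the pair $(\CY,\on{id}_\CY)$ is the terminal object of $(\affdgSch)_{\fc,/\CY}$, so the limit computing the right Kan extension degenerates to $P(\CY)$.

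For the inductive step, fix a $k$-Artin stack $\CY$ and pick a smooth surjection $f:S_0\to\CY$ with $S_0\in \affdgSch$, whose existence is part of the definition of $k$-Artin. Because the diagonal of $\CY$ is $(k-1)$-representable, every non-degenerate term $S_0^n/\CY$ of the \v Cech nerve $S_0^\bullet/\CY$ is a $(k-1)$-Artin stack (with $S_0^0/\CY=S_0$ affine). Note also that each structure map $S_0^n/\CY\to\CY$ is smooth, being iteratively pulled back from $f$, and hence lies in $\fc$ since $\on{smooth}\subset\fc$ for every allowed class. Smooth descent for $P$ yields
\begin{equation*}
P(\CY)\;\simeq\;\on{Tot}\bigl(P(S_0^\bullet/\CY)\bigr),
\end{equation*}
and the inductive hypothesis applied term-by-term gives
\begin{equation*}
P(S_0^n/\CY)\;\simeq\;\underset{(T,z)\in((\affdgSch)_{\fc,/S_0^n/\CY})^{\on{op}}}{lim}\,P(T).
\end{equation*}
Combining these two identifications expresses $P(\CY)$ as a limit over the total index category $\mathcal{D}$ of tuples $([n],T\to S_0^n/\CY)$, and the forgetful functor $\mathcal{D}^{\on{op}}\to((\affdgSch)_{\fc,/\CY})^{\on{op}}$, sending $(T\to S_0^n/\CY)$ to the composite $T\to\CY$, is well-defined precisely because $\fc$ is closed under composition with smooth maps.

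What remains is to show that this forgetful functor is cofinal, after which the two limits coincide and the result follows. Given $(S,y)\in(\affdgSch)_{\fc,/\CY}$, one forms the smooth surjection $S\times_\CY S_0\to S$, refines it to an affine smooth cover $T_0\to S\times_\CY S_0$, and observes that the pair $([0],T_0\to S_0)$ is an object of $\mathcal{D}$ lying over $(S,y)$; the contractibility of the full slice is then established by the \v Cech-nerve argument that appears in the proof of \propref{p:density} (or, equivalently, in \secref{sss:proof of density}), now applied on top of the Artin-level induction. The main obstacle is precisely this cofinality/contractibility verification: while the shape of the argument is exactly that of \propref{p:density}, one must take care that at each level of the induction one only uses affine schemes mapping via $\fc$-morphisms to the given stack, and that the auxiliary smooth refinements introduced during the contraction do not leave this class (which is automatic since smooth $\subset\fc$). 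An alternative, possibly cleaner, formulation would be to apply \propref{p:density bis} to $\bC:=\inftydgstack_{\on{Artin}}$ and $\bC':=\affdgSch$ with their $\fc$-restricted $1$-full subcategories and the smooth surjective topology, but verifying the hypotheses of \propref{p:density bis} for this pair still requires the Artin-level induction above to produce suitable smooth affine coverings of fiber products.
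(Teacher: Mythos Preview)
Your approach is correct in outline and shares the induction-on-$k$ skeleton with the paper, but the paper's induction step is organized differently, and more efficiently. Rather than fixing a single affine atlas $S_0\to\CY$, unpacking the \v{C}ech nerve, and then arguing cofinality by hand, the paper simply applies \propref{p:density bis} at each step with
\[
\bC:=\inftydgstack_{k\text{-}\on{Artin}},\quad \bC':=\inftydgstack_{(k-1)\text{-}\on{Artin}},\quad \bC_0:=(\inftydgstack_{k\text{-}\on{Artin}})_{\fc},
\]
and the smooth topology. The density hypotheses are then immediate: every $k$-Artin stack admits a smooth cover by a $(k-1)$-Artin stack (indeed by an affine), and a fiber product of two $(k-1)$-Artin stacks over a $k$-Artin stack is again $(k-1)$-Artin because the diagonal of a $k$-Artin stack is $(k-1)$-representable. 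Thus the induction goes one level at a time, and the \v{C}ech/cofinality work is entirely absorbed into the already-proved \propref{p:density bis}.

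Your ``alternative'' at the end is close to this, but you set $\bC'=\affdgSch$ rather than $\bC'=\inftydgstack_{(k-1)\text{-}\on{Artin}}$; as you correctly note, verifying the fiber-product hypothesis for that choice requires exactly the induction you are trying to avoid. Your main approach, meanwhile, is essentially re-deriving the proof of \propref{p:density} in situ: the ``contractibility of the full slice'' you defer to is precisely the retract-diagram argument of \secref{sss:proof of density}, so you are citing the proof of the lemma rather than the lemma itself. Nothing is wrong, but the paper's choice of $\bC'$ makes the step a one-line citation.
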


\begin{proof}

It is enough to prove the claim after the restriction to
$$(\inftydgstack_{k\on{-Artin}})_{\fc}\subset (\inftydgstack_{\on{Artin}})_{\fc}$$
for every $k$.

\medskip

We will argue by induction on $k$. For $k=-3$, the assertion is tautological. The induction step
follows from \propref{p:density bis} and \corref{c:existence of laft covering}, applied to:

$$\bC:=\inftydgstack_{k\on{-Artin}},\,\, \bC':=\inftydgstack_{(k-1)\on{-Artin}},\,\,
\bC_0:=(\inftydgstack_{k\on{-Artin}})_{\fc},$$
and the smooth topology. 

\end{proof}

The above proposition can be reformulated as follows:

\begin{cor} \label{c:RKE from smooth general}
For $P$ as in \propref{p:RKE from smooth general} and $\CY\in \inftydgstack_{\on{Artin}}$, the natural map
$$P(\CY)\to \underset{S\in 
((\affdgSch)_{\fc}\underset{(\inftydgstack_{\on{Artin}})_{\fc}}\times ((\inftydgstack_{\on{Artin}})_{\fc})_{/\CY})^{\on{op}}}{lim}\, P(S)$$
is an isomorphism.
\end{cor}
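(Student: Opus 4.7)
The plan is to deduce the corollary directly from \propref{p:RKE from smooth general} by unwinding the definition of the right Kan extension. Concretely, the proposition asserts that the unit map
\[
P \;\to\; \on{RKE}_{((\affdgSch)_{\fc})^{\on{op}}\hookrightarrow ((\inftydgstack_{\on{Artin}})_{\fc})^{\on{op}}}(P)
\]
is an isomorphism of functors on $((\inftydgstack_{\on{Artin}})_{\fc})^{\on{op}}$, so it suffices to identify the value of the right Kan extension on a given $\CY \in \inftydgstack_{\on{Artin}}$ with the limit appearing on the right-hand side of the statement.

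First I would invoke the pointwise formula for right Kan extension in an $(\infty,1)$-category (see \cite{Lu0}, Sect. 4.3.2): for the fully faithful embedding
\[
i \colon (\affdgSch)_{\fc} \hookrightarrow (\inftydgstack_{\on{Artin}})_{\fc},
\]
the value of $\on{RKE}_{i^{\on{op}}}(P|_{((\affdgSch)_{\fc})^{\on{op}}})$ at $\CY$ is computed as the limit of $P$ over the comma $\infty$-category
\[
\bigl((\affdgSch)_{\fc} \underset{(\inftydgstack_{\on{Artin}})_{\fc}}{\times} ((\inftydgstack_{\on{Artin}})_{\fc})_{/\CY}\bigr)^{\on{op}}.
\]
This is literally the limit written in the statement of the corollary.

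Combining the two steps, the composed map from $P(\CY)$ to this limit coincides with the value at $\CY$ of the natural transformation in \propref{p:RKE from smooth general}, and hence is an equivalence by that proposition. There is essentially no obstacle here beyond bookkeeping: the only thing to verify carefully is that the index category for the pointwise Kan extension formula matches the one in the statement, which follows from the fact that $i$ is fully faithful (so the slice $((\affdgSch)_{\fc})_{/\CY}$ over the embedded object $\CY \in (\inftydgstack_{\on{Artin}})_{\fc}$ agrees with the displayed fiber product), together with the standard fact that right Kan extension along a fully faithful functor is computed by this limit.
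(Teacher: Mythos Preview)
Your proposal is correct and matches the paper's own treatment: the paper introduces this corollary with the phrase ``The above proposition can be reformulated as follows,'' giving no separate proof, and your argument spells out exactly why it is a reformulation---namely, that the pointwise formula for right Kan extension identifies $\on{RKE}_{i^{\on{op}}}(P|_{((\affdgSch)_{\fc})^{\on{op}}})(\CY)$ with the displayed limit. There is nothing to add.
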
 

We empasize that 
$$(\affdgSch)_{\fc}\underset{(\inftydgstack_{\on{Artin}})_{\fc}}\times ((\inftydgstack_{\on{Artin}})_{\fc})_{/\CY}$$
is the 1-full subcategory of $(\affdgSch)_{/\CY}$ spanned by those 
$S\to \CY$ whose map to $\CY$ belongs to $\fc$, and where we restrict 1-morphisms to those maps $S_1\to S_2$
that themselves belong to $\fc$. 

\medskip

As a corollary, we obtain:

\begin{cor} \label{c:RKE from smooth general non qc}
Under the assumptions of \corref{c:RKE from smooth general}, the maps
\begin{multline*}
P(\CY)\to 
\underset{S\in ((\dgSch)_{\fc}\underset{(\inftydgstack_{\on{Artin}})_{\fc}}\times 
((\inftydgstack_{\on{Artin}})_{\fc})_{/\CY})^{\on{op}}}{lim}\, P(S)\to \\
\to P(\CY)\to \underset{S\in (((\dgSch)_{\on{qsep-qc}})_{\fc}
\underset{(\inftydgstack_{\on{Artin}})_{\fc}}\times ((\inftydgstack_{\on{Artin}})_{\fc})_{/\CY})^{\on{op}}}{lim}\, P(S)
\end{multline*}
are also isomorphisms.
\end{cor}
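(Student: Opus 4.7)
The plan is to reduce to the preceding \corref{c:RKE from smooth general}, which already identifies $P(\CY)$ with the limit taken over $((\affdgSch)_{\fc})_{/\CY}^{\on{op}}$. Each of the two maps in the corollary factors the ``affine'' isomorphism as
\[
P(\CY) \longrightarrow \lim_{S\in ((\bC^\ast)_{\fc,/\CY})^{\on{op}}} P(S) \longrightarrow \lim_{S\in ((\affdgSch)_{\fc,/\CY})^{\on{op}}} P(S),
\]
where $\bC^\ast$ is either $\dgSch$ or $\dgSch_{\on{qsep-qc}}$; since the composition is an isomorphism by \corref{c:RKE from smooth general} and the second arrow is the tautological restriction, it suffices to prove this second arrow is an equivalence, or equivalently (by transitivity of right Kan extension) that
\[
P|_{(\bC^\ast)_{\fc}} \;\simeq\; \on{RKE}_{((\affdgSch)_{\fc})^{\on{op}} \hookrightarrow ((\bC^\ast)_{\fc})^{\on{op}}}\bigl(P|_{(\affdgSch)_{\fc}}\bigr).
\]

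I would establish this by applying \propref{p:density bis} in the Zariski topology, factored through the chain
\[
\affdgSch \;\subset\; \dgSch_{\on{sep-qc}} \;\subset\; \dgSch_{\on{qsep-qc}} \;\subset\; \dgSch.
\]
To set this up, first note that every class $\fc$ considered in \secref{sss:n-coconn} contains all Zariski open embeddings and is stable under composition and base change against them, which verifies the 1-full-subcategory condition of \propref{p:density bis} with $\bC_0 = \bC_{\fc}$. Next, the smooth-descent hypothesis on $P$ implies Zariski descent for the restriction of $P$ to any of the categories of DG schemes in the chain. The density hypothesis in the sense of \secref{ss:density} is then immediate for the two rightmost inclusions, since a (qsep) DG scheme admits an affine Zariski cover and pairwise intersections of (sep-)qc opens inside a qsep scheme remain (sep-)qc. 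For the leftmost inclusion $\affdgSch \subset \dgSch_{\on{sep-qc}}$, one uses the defining feature of separatedness: for affine opens $U_1, U_2 \hookrightarrow S$ of a separated DG scheme, the intersection $U_1 \cap U_2 = U_1 \times_S U_2$ is closed in the affine $U_1 \times U_2$ and hence itself affine.

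Three successive applications of \propref{p:density bis} then yield equivalences between the respective descent-subcategories of presheaves; composing them and combining with \corref{c:RKE from smooth general} (applied as the final right Kan extension from $(\dgSch)_{\fc}$, or $((\dgSch)_{\on{qsep-qc}})_{\fc}$, out to $(\inftydgstack_{\on{Artin}})_{\fc}$ and evaluated on $\CY$) gives the corollary.

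The main obstacle is precisely the necessity of inserting the intermediate categories $\dgSch_{\on{sep-qc}}$ and $\dgSch_{\on{qsep-qc}}$: a direct density of $\affdgSch$ inside $\dgSch$ in the Zariski topology fails because in a non-separated DG scheme the intersection of two affine opens need not be affine, nor even quasi-compact. All the remaining ingredients are formal manipulations with right Kan extensions and invocations of the smooth-descent hypothesis, so the real content of the argument sits in organizing the density statements through the chain above.
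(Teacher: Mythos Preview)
Your approach is correct and would succeed, but it is more laborious than the paper intends. The paper offers no explicit proof, and the phrase ``As a corollary'' signals that the statement is meant to follow \emph{formally} from \propref{p:RKE from smooth general}: that proposition already shows
\[
P \;\simeq\; \on{RKE}_{((\affdgSch)_{\fc})^{\on{op}}\hookrightarrow ((\inftydgstack_{\on{Artin}})_{\fc})^{\on{op}}}(P|_{(\affdgSch)_{\fc}}),
\]
and both $(\dgSch)_{\fc}$ and $((\dgSch)_{\on{qsep-qc}})_{\fc}$ sit as \emph{full} subcategories strictly between $(\affdgSch)_{\fc}$ and $(\inftydgstack_{\on{Artin}})_{\fc}$. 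For full intermediate inclusions $A\subset B\subset C$, one has $\on{RKE}_{A\to C}(Q)|_B \simeq \on{RKE}_{A\to B}(Q)$ pointwise, whence by transitivity of right Kan extension $P \simeq \on{RKE}_{B\to C}(P|_B)$; evaluating at $\CY$ gives the corollary directly. No fresh density or descent input is needed---the density work was already absorbed into the inductive proof of \propref{p:RKE from smooth general}, whose $k=-2$ and $k=-1$ steps are precisely the passages through $\dgSch_{\on{sep}}$ and $\dgSch$.

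Your argument also has a small gap worth noting: the last link of your chain, $\dgSch_{\on{qsep-qc}}\subset\dgSch$, is \emph{not} dense in the Zariski topology in the sense of \secref{ss:density}, since in a non-quasi-separated DG scheme the intersection of two quasi-compact opens need not be quasi-compact. For the first map in the corollary you should instead use the paper's own $k$-Artin chain $\affdgSch\subset\dgSch_{\on{sep}}\subset\dgSch$, where no qc condition intervenes and the density check reduces to the fact that open subschemes of separated schemes are separated. Your chain $\affdgSch\subset\dgSch_{\on{sep-qc}}\subset\dgSch_{\on{qsep-qc}}$ for the second map is fine.
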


\sssec{}

Let us fix $\CY\in \inftydgstack_{\on{Artin}}$, and let
$$(\inftydgstack_{\on{Artin}})_{\fc\,\on{over}\,\CY}\subset (\inftydgstack_{\on{Artin}})_{/\CY}$$
be the full subcategory spanned by those $f:\CY'\to \CY$, where $f$ belongs to $\fc$. 

\medskip

For another class $\fc'$ from the collection
$$\on{smooth} \subset \on{flat} \subset  \on{bdd-Tor}\subset  \on{ev-coconn}\subset \on{all},$$
consider the 1-full subcategory
$$((\inftydgstack_{\on{Artin}})_{\fc\,\on{over}\,\CY})_{\fc'}$$
and its full subcategory $((\affdgSch)_{\fc\,\on{over}\,\CY})_{\fc'}$.

\medskip

To decipher this, $((\dgSch)_{\fc\,\on{over}\,\CY})_{\fc'}$ is the 1-full subcategory of
$(\dgSch_{\on{Artin}})_{/\CY}$, spanned by those $f:S\to \CY$, where $f$ belongs to $\fc$,
and where we restrict 1-morphisms to those maps $S_1\to S_2$ that belong to $\fc'$.

\sssec{}

As in \propref{p:RKE from smooth general}, we have:

\begin{prop}  \label{p:RKE from smooth general bis}
Let $P$ be a presheaf on $((\inftydgstack_{\on{Artin}})_{\fc\,\on{over}\,\CY})_{\fc'}$ with values 
in an arbitrary $\infty$-category. Assume that $P$ satisfies descent 
with respect to smooth surjective maps. Then the map
$$P\to \on{RKE}_{(((\affdgSch)_{\fc\,\on{over}\,\CY})_{\fc'})^{\on{op}}\hookrightarrow 
(((\inftydgstack_{\on{Artin}})_{\fc\,\on{over}\,\CY})_{\fc'})^{\on{op}}}(P)$$
is an isomorphism.
\end{prop}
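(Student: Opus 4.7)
The plan is to mimic the proof of \propref{p:RKE from smooth general}, adapting it to keep track of the auxiliary class $\fc'$ of morphisms. As there, it suffices to restrict attention to objects $\CY' \to \CY$ whose source is $k$-Artin and induct on $k$, with the base case $k = -3$ (i.e.\ $\CY' \in \affdgSch$) being tautological. So I only need to handle the induction step, which should follow from \propref{p:density bis} applied to the diagram
\[
\bC := (\inftydgstack_{k\on{-Artin}})_{\fc\,\on{over}\,\CY}, \qquad \bC' := (\inftydgstack_{(k-1)\on{-Artin}})_{\fc\,\on{over}\,\CY},
\]
with the 1-full subcategories
\[
\bC_0 := ((\inftydgstack_{k\on{-Artin}})_{\fc\,\on{over}\,\CY})_{\fc'}, \qquad \bC'_0 := ((\inftydgstack_{(k-1)\on{-Artin}})_{\fc\,\on{over}\,\CY})_{\fc'},
\]
equipped with the smooth topology.

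First I would verify the density of $\bC'$ in $\bC$: given $\CY' \to \CY$ with $\CY'$ being $k$-Artin and the map in $\fc$, the defining property of $k$-Artin stacks provides a smooth surjection $S \to \CY'$ with $S \in \affdgSch$, and more generally a smooth surjection from a $(k-1)$-Artin stack $\CY''$. Because each of the five classes in our hierarchy $\on{smooth} \subset \on{flat} \subset \on{bdd-Tor} \subset \on{ev-coconn} \subset \on{all}$ contains smooth maps and is stable under composition, the composite $\CY'' \to \CY' \to \CY$ lies in $\fc$. The same stability under composition, together with the fact that $(k-1)$-Artin stacks are closed under fiber products, shows that $\underset{\alpha}\sqcup\, \CY''_{1,\alpha} \underset{\CY'}\times \CY''_{2,\beta}$ is $(k-1)$-Artin and maps to $\CY$ via $\fc$, verifying the second bullet in \secref{sss:setting for density}.

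Next I would check the two compatibilities between $\bC_0$ and $\bC$ required by \propref{p:density bis}. The first — that every arrow in a smooth covering belongs to $\bC_0$ — is automatic, since smooth maps lie in $\fc'$ (this is where the hypothesis that $\fc'$ belongs to our list is used: each of the five classes contains smooth). The second — that the base change of an arrow in $\bC_0$ along a component of a smooth covering again belongs to $\bC_0$ — amounts to the stability of each of the classes in $\fc'$ under arbitrary base change, which holds by inspection.

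With these checks in place, \propref{p:density bis} yields an equivalence between the categories of descent-satisfying $\bD$-valued presheaves on $\bC_0$ and on $\bC'_0$ mediated by restriction and right Kan extension. Combined with the inductive hypothesis applied to $P|_{\bC'_0}$, this gives the desired identification of $P$ on $\bC_0$ with its right Kan extension from $((\affdgSch)_{\fc\,\on{over}\,\CY})_{\fc'}$. The only genuinely delicate point is the bookkeeping: one must separate the role played by $\fc$ (which only constrains objects, via the slice over $\CY$) from the role of $\fc'$ (which constrains 1-morphisms), and verify that the induction hypothesis at level $k-1$ applies after this double restriction — which it does, since $\bC'_0$ is of exactly the same form as $\bC_0$ with $k$ replaced by $k-1$.
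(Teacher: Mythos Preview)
Your proposal is correct and follows exactly the approach indicated by the paper, which simply says ``As in \propref{p:RKE from smooth general}'' and gives no further detail. You have correctly adapted the induction-on-$k$ argument using \propref{p:density bis}, separating the role of $\fc$ (constraining objects via the slice) from that of $\fc'$ (constraining $1$-morphisms), and your verifications of the density and compatibility hypotheses are accurate.
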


\begin{cor}
Under the assumptions of \corref{c:RKE from smooth general}, the maps 
\begin{multline*}
P(\CY)\to \underset{S\in (((\dgSch)_{\fc\,\on{over}\,\CY})_{\fc'})^{\on{op}}}{lim}\, P(S)\to \\
\to P(\CY)\to \underset{S\in ((((\dgSch)_{\on{qsep-qc}})_{\fc\,\on{over}\,\CY})_{\fc'})^{\on{op}}}{lim}\, P(S)\to \\
\to \underset{S\in (((\affdgSch)_{\fc\,\on{over}\,\CY})_{\fc'})^{\on{op}}}{lim}\, P(S)
\end{multline*}
are isomorphisms. 
\end{cor}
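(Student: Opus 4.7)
My plan is to mirror the proof of \corref{c:RKE from smooth general non qc}, using \propref{p:RKE from smooth general bis} as the base point. That proposition, evaluated at $\CY$ (viewed as the terminal object of $(\inftydgstack_{\on{Artin}})_{\fc\,\on{over}\,\CY}$ via $\on{id}_\CY$, which trivially lies in $\fc$), already gives the isomorphism
\[
P(\CY)\;\simeq\;\underset{S\in (((\affdgSch)_{\fc\,\on{over}\,\CY})_{\fc'})^{\on{op}}}{lim}\, P(S).
\]
So it suffices to compare each of the intermediate limits (over DG schemes, or over quasi-separated quasi-compact DG schemes) with the affine one and to show that the restriction maps between them are equivalences.

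The mechanism is \propref{p:density bis} applied with the Zariski topology. First I would take $\bC:=\dgSch$, $\bC':=\affdgSch$, and for the 1-full subcategories $\bC_0:=((\dgSch)_{\fc\,\on{over}\,\CY})_{\fc'}$, $\bC'_0:=((\affdgSch)_{\fc\,\on{over}\,\CY})_{\fc'}$. Density of affine DG schemes in all DG schemes for the Zariski topology is standard. The two conditions to check about $\bC_0$ are: (i) that the components of a Zariski cover of an object of $\bC_0$ are themselves in $\bC_0$, and (ii) that pulling back a morphism of $\bC_0$ along an arrow in such a cover keeps the resulting maps in $\bC_0$. Both are immediate, because open embeddings are simultaneously smooth, flat, of bounded Tor dimension, and eventually coconnective (so they lie in $\fc'$ for every class $\fc'$ in the list), and the class $\fc$ is stable under composition with open embeddings and under base change. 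Finally, the restriction of $P$ to $(\bC_0)^{\on{op}}$ satisfies Zariski descent because it satisfies smooth descent by hypothesis and Zariski covers are smooth covers. \propref{p:density bis} then yields the isomorphism of $L_1$ with $L_3$.

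For the intermediate limit $L_2$ over $(((\dgSch)_{\on{qsep-qc}})_{\fc\,\on{over}\,\CY})_{\fc'}$, I would apply the same argument with $\bC := (\dgSch)_{\on{qsep-qc}}$ (or, more symmetrically, factor the comparison as $L_1 \to L_2 \to L_3$ and invoke \propref{p:density bis} twice): every quasi-separated quasi-compact DG scheme admits a finite Zariski cover by affines, so affine schemes remain dense, and affine schemes are automatically quasi-separated and quasi-compact. Thus the identifications $L_2\simeq L_3$ and $L_1\simeq L_3$ hold, and by two-out-of-three one also has $L_1\simeq L_2$; combined with \propref{p:RKE from smooth general bis} we get the desired chain of isomorphisms starting from $P(\CY)$.

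The main obstacle, as in \corref{c:RKE from smooth general non qc}, is the bookkeeping of verifying the hypotheses of \propref{p:density bis} for the 1-full subcategories $\bC_0,\bC_0'$, rather than for the full $\bC,\bC'$; but this reduces to the elementary check that open embeddings belong to each of the admissible classes $\fc'\in\{\on{smooth},\on{flat},\on{bdd\text{-}Tor},\on{ev\text{-}coconn},\on{all}\}$ and are stable under base change, which is immediate from the definitions.
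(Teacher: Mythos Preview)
Your proposal is correct and is essentially the argument the paper has in mind: the paper states this corollary without proof, as an immediate consequence of \propref{p:RKE from smooth general bis}, and the density mechanism you invoke (\propref{p:density bis} with the Zariski topology) is precisely what underlies the inductive proof of that proposition. One minor point of bookkeeping: when you set $\bC=\dgSch$ and $\bC'=\affdgSch$, the intersection of two affine opens in a non-separated DG scheme need not be affine, so the density hypothesis is not literally satisfied in one step; this is exactly why you (correctly) factor through $(\dgSch)_{\on{qsep-qc}}$, and it is also why the paper's induction passes through $\inftydgstack_{(-2)\on{-Artin}}=\dgSch_{\on{sep}}$ before reaching $\inftydgstack_{(-1)\on{-Artin}}=\dgSch$.
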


\sssec{}

Applying the above discussion to $\IndCoh^!_{\inftydgstack_{\on{laft},\on{Artin}}}$, we obtain:

\begin{prop} \label{p:recovering from smooth} 
Let $\CY$ be an object of $\inftydgstack_{\on{laft},\on{Artin}}$. Let $\fc$ be
one of the classes 
$$\on{smooth} \subset \on{flat} \subset \on{ev-coconn}\subset \on{all}.$$
Then the restriction maps
\begin{multline*} 
\IndCoh(\CY)\to \underset{S\in (((\dgSch)_{\on{aft}})_{\fc}\underset{(\inftydgstack_{\on{Artin}})_{\fc}}\times 
((\inftydgstack_{\on{Artin}})_{\fc})_{/\CY})^{\on{op}}}{lim}\, \IndCoh(S)\to \\
\to \underset{S\in (((\affdgSch)_{\on{aft}})_{\fc}\underset{(\inftydgstack_{\on{Artin}})_{\fc}}\times 
((\inftydgstack_{\on{Artin}})_{\fc})_{/\CY})^{\on{op}}}{lim}\, \IndCoh(S)
\end{multline*}
are isomorphisms.
\end{prop}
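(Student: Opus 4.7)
My plan is to apply the corollary following \propref{p:RKE from smooth general bis}, with $\fc' = \fc$, to the presheaf $\IndCoh^!$ viewed as a functor on $((\inftydgstack_{\on{Artin}})_{\fc\text{ over }\CY})_{\fc}$, and then to identify the resulting indexing categories with those appearing in the claim.

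First, I would verify the descent hypothesis: this presheaf satisfies descent with respect to smooth surjective maps. Since the functor $\IndCoh^!_{\on{PreStk}_{\on{laft}}}$ of \secref{sss:defn of IndCoh prestacks} is defined as the right Kan extension of its restriction to $({}^{<\infty}\!\affdgSch_{\on{aft}})^{\on{op}}$, and since at that level smooth descent is established by \propref{p:!-descent smooth} (a special case of \thmref{t:fppf descent}), the descent property propagates automatically to all laft prestacks, and in particular to laft Artin stacks and their slices.

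Next, the corollary of \propref{p:RKE from smooth general bis} applied to this setup yields that the composed restriction maps
$$
\IndCoh(\CY) \;\longrightarrow\; \underset{S\in(((\dgSch)_{\fc\text{ over }\CY})_{\fc})^{\on{op}}}{\lim}\,\IndCoh(S) \;\longrightarrow\; \underset{S\in(((\affdgSch)_{\fc\text{ over }\CY})_{\fc})^{\on{op}}}{\lim}\,\IndCoh(S)
$$
are both isomorphisms.

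Finally, I would identify these limits with those in the statement, where the source is restricted to be almost of finite type. For $\fc\in\{\on{smooth},\on{flat},\on{ev-coconn}\}$, any $\fc$-map from a DG scheme $S$ to the laft stack $\CY$ forces $S$ to be locally of almost finite type; a cofinality argument, using Zariski covers by quasi-compact opens together with \propref{p:Zariski descent} (and noting that Zariski open embeddings are smooth, hence belong to all the classes $\fc$ under consideration), reduces the limit to one over aft source schemes. For $\fc=\on{all}$, the identification with the aft slice is supplied directly by \corref{c:IndCoh via all schemes} and \corref{c:using laft}. The main obstacle is this last identification step, in particular handling the eventually coconnective class, where one additionally needs \corref{c:convergence as RKE} to reduce to the eventually coconnective aft setting, and the $\fc=\on{all}$ case, where the aft restriction is a genuine condition requiring the full convergence machinery of \secref{ss:convergence} to be deployed on the slice over $\CY$.
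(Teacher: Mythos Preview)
Your overall strategy matches the paper's: apply the general smooth-descent results of \propref{p:RKE from smooth general} and its corollaries to $\IndCoh^!$. The paper's ``proof'' is literally one sentence: ``Applying the above discussion to $\IndCoh^!_{\inftydgstack_{\on{laft},\on{Artin}}}$.''

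However, your handling of the passage from arbitrary schemes to almost-finite-type schemes contains an error. You claim that for $\fc\in\{\on{flat},\on{ev-coconn}\}$, a $\fc$-map $S\to\CY$ with $\CY$ laft forces $S$ to be locally almost of finite type. This is false: take $\CY=\on{pt}$ and $S=\Spec(A)$ for $A$ any classical $k$-algebra not of finite type; the map $S\to\on{pt}$ is flat (hence also eventually coconnective), yet $S$ is not laft. The lemma you have in mind applies only to smooth morphisms. More fundamentally, you cannot literally apply \propref{p:RKE from smooth general bis} as stated, because $\IndCoh^!$ is only defined on laft prestacks, so it is not a presheaf on $((\inftydgstack_{\on{Artin}})_{\fc\text{ over }\CY})_{\fc}$ to begin with.

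The fix---and the paper's intended reading---is that \propref{p:RKE from smooth general}, its corollaries, and \propref{p:RKE from smooth general bis} hold verbatim with $\inftydgstack_{\on{Artin}}$, $\dgSch$, $\affdgSch$ replaced throughout by $\inftydgstack_{\on{laft,Artin}}$, $\dgSch_{\on{aft}}$, $\affdgSch_{\on{aft}}$. The proofs go through without change: the inductive step only uses \propref{p:density bis} together with the existence of smooth covers by schemes in the relevant subcategory, which in the laft setting is exactly \corref{c:existence of laft covering}. Once you apply the laft version of \corref{c:RKE from smooth general non qc} directly to $\IndCoh^!_{\inftydgstack_{\on{laft},\on{Artin}}}$ (checking smooth descent via \propref{p:!-descent smooth} as you did), the indexing categories already carry the aft decoration and no further reduction step is needed.
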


Similarly, we have: 

\begin{prop} \hfill \label{p:recovering from smooth bis} 
Let $\CY$ be an object of $\inftydgstack_{\on{laft},\on{Artin}}$. Let $\fc$and $\fc'$ be any two 
of the classes 
$$\on{smooth} \subset \on{flat} \subset \on{ev-coconn}\subset \on{all}.$$
Then the restriction maps
\begin{multline*} 
\IndCoh(\CY)\to \underset{S\in ((((\dgSch)_{\on{aft}})_{\fc\,\on{over}\,\CY})_{\fc'})^{\on{op}}}{lim}\, \IndCoh(S)\to \\
\to \underset{S\in ((((\affdgSch)_{\on{aft}})_{\fc\,\on{over}\,\CY})_{\fc'})^{\on{op}}}{lim}\, \IndCoh(S)
\end{multline*}
are isomorphisms.
\end{prop}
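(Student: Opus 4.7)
The plan is to mimic the proof of \propref{p:recovering from smooth}, replacing \propref{p:RKE from smooth general} by its $1$-full-subcategory analogue \propref{p:RKE from smooth general bis}. Specifically, I would take $P$ to be the presheaf on $((\inftydgstack_{\on{laft},\on{Artin}})_{\fc\,\on{over}\,\CY})_{\fc'}$ obtained by restricting $\IndCoh^!_{\inftydgstack_{\on{laft},\on{Artin}}}$, and verify that $P$ satisfies descent with respect to smooth surjections. This follows from \propref{p:!-descent smooth} together with \corref{c:Cech !}, provided I check that for any smooth surjection $\CY'_1 \to \CY'_2$ inside the indexing category, its \v{C}ech nerve $(\CY'_1)^\bullet/\CY'_2$ remains inside the indexing category. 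This last verification is routine: each of the four classes $\on{smooth}, \on{flat}, \on{ev-coconn}, \on{all}$ is stable under smooth base change, so the composition to $\CY$ stays in $\fc$ and the structure maps of the \v{C}ech nerve stay in $\fc'$. Applying \propref{p:RKE from smooth general bis} then yields the second claimed isomorphism, namely reduction to affine DG schemes.

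For the first isomorphism (indexed by arbitrary DG schemes almost of finite type, not just affine ones), I would interpolate via Zariski descent \propref{p:Zariski descent}. Given any $S$ in $(((\dgSch)_{\on{aft}})_{\fc\,\on{over}\,\CY})_{\fc'}$, an affine Zariski cover $S' \to S$ has both $S'$ and its \v{C}ech nerve $S'{}^\bullet/S$ sitting inside the indexing category: Zariski open embeddings belong to $\fc'$ (being smooth, flat and eventually coconnective), and each composition $S'{}^n/S \to S \to \CY$ belongs to $\fc$ by stability of $\fc$ under smooth base change and composition. Thus Zariski descent shows that the restriction map
$$\underset{S\in ((((\dgSch)_{\on{aft}})_{\fc\,\on{over}\,\CY})_{\fc'})^{\on{op}}}{lim}\, \IndCoh(S) \to \underset{S\in ((((\affdgSch)_{\on{aft}})_{\fc\,\on{over}\,\CY})_{\fc'})^{\on{op}}}{lim}\, \IndCoh(S)$$
is an isomorphism, completing the proof modulo \propref{p:RKE from smooth general bis}.

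The main obstacle will be verifying \propref{p:RKE from smooth general bis} itself, which is stated but not proven in the excerpt. Its proof should proceed by induction on $k$ for the truncation to $k$-Artin stacks, invoking \propref{p:density bis} with $\bC := \inftydgstack_{k\on{-Artin}}$, $\bC' := \inftydgstack_{(k-1)\on{-Artin}}$, and $\bC_0$ the appropriate slice $1$-full subcategory. The hypothesis of \propref{p:density bis} --- that base changes of smooth covers by objects of $\bC_0$ stay in $\bC_0$ --- reduces to the same stability properties already used for smooth descent of $P$, so the induction proceeds exactly as in the proof of \propref{p:RKE from smooth general}. Once this is in hand, the remaining ingredients are formal.
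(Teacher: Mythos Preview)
Your proposal is correct and follows the same approach as the paper. The paper's own treatment is extremely terse: after stating \propref{p:RKE from smooth general bis} (proved ``as in \propref{p:RKE from smooth general}'') and its corollary, it simply says ``Applying the above discussion to $\IndCoh^!_{\inftydgstack_{\on{laft},\on{Artin}}}$'' to obtain \propref{p:recovering from smooth}, and then ``Similarly, we have'' for \propref{p:recovering from smooth bis}. Your write-up fills in exactly the details the paper leaves implicit---the smooth-descent verification for the restricted presheaf, the stability of the classes $\fc,\fc'$ under the relevant base changes, and the passage from affine to general DG schemes---so there is nothing to correct.
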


\sssec{}

The upshot of the above two propositions is that the category $\IndCoh(\CY)$ is recovered
from the knowledge of $\IndCoh(S)$ where $S$ belongs to $\affdgSch_{\on{aft}}$ (resp., $\dgSch_{\on{aft}}$, 
$\dgSch_{\on{laft}}$), endowed with a smooth/flat/eventually coconnective/arbitrary map to $\CY$.

\medskip

Furthermore, we can either take all maps between the schemes $S$, or restrict them to be 
smooth/flat/eventually coconnective.

\ssec{The $*$-version}

We are going to introduce another functor $$((\inftydgstack_{\on{laft},\on{Artin}})_{\on{ev-coconn}})^{\on{op}}\to \StinftyCat_{\on{cont}},$$
denoted $\IndCoh^*_{(\inftydgstack_{\on{laft},\on{Artin}})_{\on{ev-coconn}}}$.

\sssec{}

We set
\begin{multline*}\IndCoh^*_{(\inftydgstack_{\on{laft},\on{Artin}})_{\on{ev-coconn}}}:=\\
=\on{RKE}_{(((\affdgSch_{\on{aft}}))_{\on{ev-coconn}})^{\on{op}}\hookrightarrow 
((\inftydgstack_{\on{laft},\on{Artin}})_{\on{ev-coconn}})^{\on{op}}}
(\IndCoh^*_{(\affdgSch_{\on{aft}})_{\on{ev-coconn}}}),
\end{multline*}
where 
$$\IndCoh^*_{(\affdgSch_{\on{aft}})_{\on{ev-coconn}}}:((\affdgSch_{\on{aft}})_{\on{ev-coconn}})^{\on{op}}\to
\StinftyCat_{\on{cont}}$$
is the functor obtained from the functor $\IndCoh^*_{(\affdgSch_{\on{Noeth}})_{\on{ev-coconn}}}$ 
of \corref{c:upper * DG funct} by restriction along
$$((\affdgSch_{\on{aft}})_{\on{ev-coconn}})^{\on{op}}\hookrightarrow
((\affdgSch_{\on{Noeth}})_{\on{ev-coconn}})^{\on{op}}.$$

\sssec{}

In other words, 
$$\IndCoh^*(\CY)=\underset{S\to \CY}{lim}\, \IndCoh^*(S),$$
where the limit is taken over the category opposite to
$$(\affdgSch_{\on{aft}})_{\on{ev-coconn}}\underset{(\inftydgstack_{\on{laft,Artin}})_{\on{ev-coconn}}}\times 
((\inftydgstack_{\on{laft,Artin}})_{\on{ev-coconn}})_{/\CY},$$
which is a 1-full subcategory of $(\affdgSch_{\on{aft}})_{/\CY}$, spanned by those $S\to \CY$,
which are eventually coconnective, and where 1-morphisms $f:S_1\to S_2$ are restricted to also
be eventually coconnective. 

\medskip

In the above formula, for $S\in \affdgSch_{\on{aft}}$, the category $\IndCoh^*(S)$ is the usual
$\IndCoh(S)$, and for a 1-morphism $f:S_1\to S_2$, the functor $\IndCoh(S_2)\to \IndCoh(S_1)$ is $f^{\IndCoh,*}$.

\sssec{}

We claim:

\begin{lem}  \label{l:* descent Artin}
The functor $\IndCoh^*_{(\inftydgstack_{\on{laft},\on{Artin}})_{\on{ev-coconn}}}$ satisfies descent with respect
to smooth surjective morphisms.
\end{lem}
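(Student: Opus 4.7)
\emph{Plan.} Fix a smooth surjective morphism $f:\CY_1\to \CY_2$ in $(\inftydgstack_{\on{laft},\on{Artin}})_{\on{ev-coconn}}$; note that since smooth morphisms are in particular eventually coconnective (\secref{sss:smooth maps}), each fiber product $\CY_n := \CY_1^n/\CY_2$ lies in the same category, and the simplicial face/degeneracy maps are themselves smooth (hence eventually coconnective), so $\IndCoh^*(\CY_1^\bullet/\CY_2)$ is a well-defined cosimplicial object. I must show that the augmentation map
$$\IndCoh^*(\CY_2) \to \on{Tot}\bigl(\IndCoh^*(\CY_1^\bullet/\CY_2)\bigr)$$
is an equivalence. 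The strategy is to reduce, in two stages, to the $*$-descent for smooth surjective maps between affine DG schemes (l.a.f.t.), which is \propref{p:*-descent smooth}.

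\emph{Step 1 (Reduction to the case $\CY_2$ affine).} By construction,
$$\IndCoh^*(\CY_2) \simeq \underset{(S,y_2)\in \CC_2^{\on{op}}}{lim}\, \IndCoh(S),$$
where $\CC_2 := (\affdgSch_{\on{aft}})_{\on{ev-coconn}}\underset{(\inftydgstack_{\on{laft,Artin}})_{\on{ev-coconn}}}\times ((\inftydgstack_{\on{laft,Artin}})_{\on{ev-coconn}})_{/\CY_2}$, and similarly for each $\CY_n$. Since $\on{Tot}$ is itself a limit and limits commute, the descent claim for $f$ reduces to the analogous claim for each base change $\CY_1\times_{\CY_2} S \to S$, where $(S,y_2)$ ranges over $\CC_2$. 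Thus I may assume $\CY_2 = S \in \affdgSch_{\on{aft}}$, and I have to prove $*$-descent for an arbitrary smooth surjective map $\CY_1 \to S$ with $\CY_1$ a laft Artin stack.

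\emph{Step 2 (Induction on Artin level of $\CY_1$).} When $\CY_1$ is itself an affine DG scheme a.f.t., the assertion is exactly \propref{p:*-descent smooth}. For the inductive step, apply \corref{c:existence of laft covering} to pick a smooth surjective map $T \to \CY_1$ with $T$ affine a.f.t.\ and each $T^n/\CY_1$ strictly lower in the Artin hierarchy than $\CY_1$. Set $\wt T^{\bullet,\bullet} := T^\bullet/\CY_1 \underset{\CY_1}\times \CY_1^\bullet/S$; this is a bisimplicial object whose $(p,q)$-row is $T^p/\CY_1 \times_{\CY_1} \CY_1^q/S$, again a laft Artin stack, but of strictly smaller Artin level in the $p$-direction whenever $p\geq 1$. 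Using the inductive hypothesis, $*$-pullback gives a termwise equivalence for each row $q$; totalizing in $p$ first and then $q$, and using the composability of totalizations, the descent claim for $\CY_1 \to S$ is reduced to the descent claim for the composite $T \to \CY_1 \to S$. The latter is a smooth surjection from an affine a.f.t.\ DG scheme to an affine a.f.t.\ DG scheme, so \propref{p:*-descent smooth} applies and closes the induction.

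\emph{Main obstacle.} The routine verifications (limits commute with limits, smoothness is stable under base change, smooth morphisms are eventually coconnective, fiber products of laft Artin stacks are laft Artin) are easy, but there is genuine work in the bookkeeping of Step 2: one must organise the bisimplicial refinement so that the reduction from ``descent for $\CY_1 \to S$'' to ``descent for $T \to \CY_1$'' and ``descent for $T \to S$'' is compatible with the formation of Čech nerves across the whole simplicial diagram. This is the standard two-step descent manipulation, but executing it at the $\infty$-categorical level (rather than for ordinary sheaves of sets) requires care that the identifications are functorial in $[n]\in \bDelta$. Once this bisimplicial reshuffling is carried out, the rest of the argument is a formal limit manipulation.
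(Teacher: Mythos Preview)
Your approach is a hands-on unwinding of what the paper dispatches in a single sentence. The paper's proof observes that $\IndCoh^*_{(\inftydgstack_{\on{laft},\on{Artin}})_{\on{ev-coconn}}}$ is, by definition, the right Kan extension of a presheaf on $(\affdgSch_{\on{aft}})_{\on{ev-coconn}}$ that satisfies smooth descent (\propref{p:*-descent smooth}); since $\inftydgstack_{\on{laft},\on{Artin}}$ embeds fully faithfully into $\on{Funct}((\affdgSch_{\on{aft}})^{\on{op}},\inftygroup)$, one invokes \cite[Cor.~6.2.3.5]{Lu0} to conclude that the extended functor satisfies descent along any map that is an effective epimorphism in the ambient sheaf topos, and a smooth surjective map of Artin stacks is such. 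No induction or bisimplicial bookkeeping is needed.

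Your route is not wrong in spirit, but Step~1 hides a real verification. You assert that descent for $\CY_1\to\CY_2$ reduces to descent for each base change $\CY_1\times_{\CY_2}S\to S$ by ``limits commute with limits''. For this to go through you need, for every $n$, an identification
\[
\IndCoh^*(\CY_1^n/\CY_2)\;\simeq\;\underset{(S,y_2)\in\CC_2^{\on{op}}}{lim}\,\IndCoh^*\bigl((\CY_1^n/\CY_2)\underset{\CY_2}\times S\bigr),
\]
with the limit taken over the index category attached to $\CY_2$, not the one attached to $\CY_1^n/\CY_2$. These two index categories are different, and passing between them is a cofinality argument (essentially an instance of \propref{p:RKE from smooth general bis}), not a tautology about commuting limits. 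Once you supply that, your Steps~1--2 do work, but at that point you have reproved the content of the Lurie citation in this special case. The paper's approach buys brevity and avoids the bisimplicial juggling you flag as the main obstacle; your approach buys an explicit argument that does not route through the general topos-theoretic statement.
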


\begin{proof}

Follows from the fact that the category $\inftydgstack_{\on{laft},\on{Artin}}$ embeds fully fiathfully into
$$\on{Funct}((\affdgSch_{\on{aft}})^{\on{op}},\inftygroup),$$
combined with \cite{Lu0}, Cor. 6.2.3.5 and \propref{p:*-descent smooth}.

\end{proof}

From Corollaries \ref{c:RKE from smooth general} and \ref{c:RKE from smooth general non qc}, we obtain:

\begin{cor}
Let $\fc$ be one of the classes 
$$\on{smooth} \subset \on{flat} \subset \on{ev-coconn}.$$
Then for $\CY\in \inftydgstack_{\on{laft},\on{Artin}}$
the restriction functors 
\begin{multline*}
\IndCoh^*(\CY)
\to \underset{S\in (((\dgSch)_{\on{aft}})_{\fc}\underset{(\inftydgstack_{\on{laft,Artin}})_{\fc}}\times 
((\inftydgstack_{\on{laft,Artin}})_{\fc})_{/\CY})^{\on{op}}}{lim}\, \IndCoh^*(S)\to \\
\to \underset{S\in (((\affdgSch)_{\on{aft}})_{\fc}\underset{(\inftydgstack_{\on{laft,Artin}})_{\fc}}\times 
((\inftydgstack_{\on{laft,Artin}})_{\fc})_{/\CY})^{\on{op}}}{lim}\, \IndCoh^*(S)
\end{multline*}
are isomorphisms.
\end{cor}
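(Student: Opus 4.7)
The plan is to deduce this corollary directly from the general framework already established in Corollaries \ref{c:RKE from smooth general} and \ref{c:RKE from smooth general non qc}, applied to the presheaf
$$P := \IndCoh^*_{(\inftydgstack_{\on{laft},\on{Artin}})_{\on{ev-coconn}}}.$$
The smooth-descent hypothesis of those corollaries is precisely the content of \lemref{l:* descent Artin}, which was proven using \propref{p:*-descent smooth} and the fully faithful embedding of $\inftydgstack_{\on{laft},\on{Artin}}$ into presheaves on $\affdgSch_{\on{aft}}$.

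First, I would reconcile the index categories. The general corollaries operate with $\bC = \inftydgstack_{\on{Artin}}$ and slices by $(\affdgSch)_{\fc}$ or $(\dgSch)_{\fc}$, whereas our statement involves $\inftydgstack_{\on{laft},\on{Artin}}$ and $(\affdgSch_{\on{aft}})_{\fc}$ or $(\dgSch_{\on{aft}})_{\fc}$. Since $\CY$ is laft-Artin, any morphism $S\to\CY$ from a DG scheme with $\fc\in\{\on{smooth},\on{flat},\on{ev-coconn}\}$ forces $S$ to lie in $\dgSch_{\on{aft}}$ (resp.\ $\affdgSch_{\on{aft}}$): for smooth and flat maps this uses \cite[Proposition 4.9.4]{Stacks}, while for eventually coconnective maps one notes that the source category of $P$ is already $(\inftydgstack_{\on{laft},\on{Artin}})_{\on{ev-coconn}}$, so the object $S$ is itself laft-Artin and therefore aft as a scheme. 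Hence the $\on{laft}$ restrictions imposed in our statement are automatic given $\CY$, and the two indexing categories coincide.

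Second, by construction of $P$ as the right Kan extension from $((\affdgSch_{\on{aft}})_{\on{ev-coconn}})^{\on{op}}$ along the inclusion into $((\inftydgstack_{\on{laft},\on{Artin}})_{\on{ev-coconn}})^{\on{op}}$, the value $P(\CY) = \IndCoh^*(\CY)$ is tautologically the limit over the affine $\on{ev-coconn}$-slice category, which is the $\fc=\on{ev-coconn}$ case of the affine statement. For general $\fc\subset\on{ev-coconn}$, \propref{p:RKE from smooth general} (applied with smooth-descent hypothesis supplied by \lemref{l:* descent Artin}) identifies $P(\CY)$ with the limit over the $\fc$-affine-slice. Chaining this with the tautological map through the $\fc$-scheme-slice (which factors via restriction) gives the first two displayed isomorphisms in the corollary. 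The non-affine variant follows by the same argument using \corref{c:RKE from smooth general non qc} in place of \corref{c:RKE from smooth general}.

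The main thing requiring care is the verification that the index categories $((\dgSch)_{\on{aft}})_{\fc\,\on{over}\,\CY}$ implicit in the statement match those appearing in \corref{c:RKE from smooth general non qc}, and that the cofinality/equivalence of these categories with the ones used in the general statement is indeed automatic from the laft-Artin hypothesis on $\CY$. No further descent or computation is needed once this matching is in place, since the argument is a direct specialization of the abstract framework—the hard work was already done in \propref{p:*-descent smooth} and the general Kan-extension machinery.
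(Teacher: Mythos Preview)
Your proposal is correct and follows the same approach as the paper: the paper simply writes ``From Corollaries \ref{c:RKE from smooth general} and \ref{c:RKE from smooth general non qc}, we obtain'' and states the result, applying those corollaries to the presheaf $\IndCoh^*_{(\inftydgstack_{\on{laft},\on{Artin}})_{\on{ev-coconn}}}$ with smooth descent supplied by \lemref{l:* descent Artin}. Your additional discussion of index-category matching is more careful than the paper (which leaves it implicit that \propref{p:RKE from smooth general} applies mutatis mutandis in the laft setting), but note that since the statement already restricts to $(\dgSch)_{\on{aft}}$ and $(\affdgSch)_{\on{aft}}$, you do not actually need to argue that flat or eventually coconnective maps into a laft stack force the source to be aft---that restriction is imposed by hand.
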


\ssec{Comparing the two versions of $\IndCoh$: the case of algebraic stacks} \label{ss:!* comparison}

In this subsection we will show that for $\CY\in \inftydgstack_{\on{laft,Artin}}$, which is an algebraic stack, 
the categories $\IndCoh^*(\CY)$ and $\IndCoh^!(\CY):=\IndCoh(\CY)$ are canonically
equivalent.

\sssec{}   \label{sss:alg stack}

Our conventions regarding algebraic stacks follow those of \cite[Sect. 1.1.3]{DrGa1}.
Namely, an algebraic stack is an object of $\inftydgstack_{1\on{-Artin}}$, for which 
the diagonal morphism 
$$\CY\to \CY\times \CY$$
is schematic, quasi-separated and quasi-compact.

\sssec{}

We are going to prove:

\begin{prop} \label{p:!* comparison alg}
For an algebraic stack $\CY$ locally almost of finite type, there exists a canonical equivalence 
$$\IndCoh^!(\CY)\simeq \IndCoh^*(\CY),$$
such that for a Cartesian square
$$
\CD
S @>{g''}>>  S'' \\
@V{f'}VV   @VV{f}V  \\
S' @>{g}>>  \CY,
\endCD
$$
with $S',S''\in \dgSch_{\on{aft}}$, the morphism $g$ being arbitrary and $f$ being eventually coconnective,
the functors
$$(f')^{\IndCoh,*}\circ g^!:\IndCoh^!(\CY)\to \IndCoh(S) \text{ and }
(g'')^!\circ f^{\IndCoh,*}:\IndCoh^*(\CY)\to \IndCoh(S)$$
are canonically identified.
\end{prop}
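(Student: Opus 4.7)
The plan is to express both sides as limits over a common indexing category of smooth affine atlases, compare the two systems of transition functors via the relative dualizing line bundle, and assemble the pointwise equivalences into an equivalence of limits.

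First, I apply \propref{p:recovering from smooth bis} with $\fc=\fc'=\on{smooth}$ to obtain
$$\IndCoh^!(\CY)\simeq \underset{(S,y)\in \bC^{\on{op}}}{lim}\,\IndCoh(S),$$
where $\bC$ denotes the 1-full subcategory of $(\affdgSch_{\on{aft}})_{/\CY}$ spanned by smooth maps $y\colon S\to\CY$ and smooth $1$-morphisms between them, and the transition functors are $f^!$. The same argument applies to $\IndCoh^*$: its smooth descent is \lemref{l:* descent Artin}, and the proof of \propref{p:RKE from smooth general bis} carries over once $!$-pullbacks are replaced by $*$-pullbacks, which are defined on all of $\bC$ because smooth morphisms are eventually coconnective. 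This yields
$$\IndCoh^*(\CY)\simeq \underset{(S,y)\in \bC^{\on{op}}}{lim}\,\IndCoh(S),$$
now with transition functors $f^{\IndCoh,*}$. The two limits share an identical index $\bC$; only the structure maps differ.

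Next, I compare the two families of transition functors. By \corref{c:! and * pullback under smooth}, for every smooth $f\colon S_1\to S_2$ in $\bC$ there is a canonical isomorphism $f^!\simeq \CK_{S_1/S_2}\otimes f^{\IndCoh,*}$. To promote this pointwise relation into an equivalence of diagrams, I construct a coherent family of graded line bundles $\{\CK_{S/\CY}\}_{(S,y)\in\bC}$ satisfying the cocycle
$$\CK_{S_1/\CY}\simeq \CK_{S_1/S_2}\otimes f^*(\CK_{S_2/\CY}) \quad \text{for every arrow } f\colon S_1\to S_2 \text{ in } \bC.$$
Concretely, since $y\colon S\to\CY$ is smooth and $\CY$ is laft algebraic, the relative cotangent complex $L_{S/\CY}$ is a locally free $\CO_S$-module of finite rank; I set $\CK_{S/\CY}:=\det(L_{S/\CY})[\on{rk}(L_{S/\CY})]$. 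The cocycle then follows from the cofiber sequence $f^*L_{S_2/\CY}\to L_{S_1/\CY}\to L_{S_1/S_2}$ and multiplicativity of $\det$. Setting $\alpha_S(\CF):=\CK_{S/\CY}^{-1}\otimes\CF$, a short computation using \eqref{e:* pullback} and the cocycle yields $\alpha_{S_1}\circ f^!\simeq f^{\IndCoh,*}\circ \alpha_{S_2}$; hence $\{\alpha_S\}$ intertwines the two systems of transition functors on $\bC$ and, upon passing to limits, delivers the sought identification $\IndCoh^!(\CY)\simeq \IndCoh^*(\CY)$.

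The stated compatibility with a Cartesian square (with $g$ arbitrary and $f$ eventually coconnective) then reduces to \propref{p:! pullback and event coconn}, which provides precisely the base-change isomorphism $(f')^{\IndCoh,*}\circ g^!\simeq (g'')^!\circ f^{\IndCoh,*}$ once we pull back along an affine smooth atlas refining the square; the twists by $\CK$ cancel in view of the cocycle, so the constructed equivalence matches both pullback routes.

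The main obstacle will be the $\infty$-categorical coherence. Packaging $\{\alpha_S\}$ as a genuine natural transformation of functors $\bC^{\on{op}}\to \StinftyCat_{\on{cont}}$ requires that the cocycle for $\CK_{S/\CY}$ and the isomorphism $f^!\simeq \CK_{S_1/S_2}\otimes f^{\IndCoh,*}$ fit together coherently on all higher simplices, in particular on composition triangles $S_1\to S_2\to S_3$. This ultimately rests on the $\infty$-functoriality of the cotangent complex and of the determinant construction, in the framework to be developed in \cite{GR3}.
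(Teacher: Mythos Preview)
Your approach is correct at the level of $1$-morphisms and is genuinely different from the paper's proof. The paper does \emph{not} construct an explicit twisting by relative dualizing line bundles. Instead, it invokes the bi-simplicial datum $\IndCoh^{*!}_{(\dgSch_{\on{aft}})_{\on{ev-coconn;all}}}$ of \propref{p:!* IndCoh}, which packages the base-change isomorphism $(f')^{\IndCoh,*}\circ g^!\simeq (g'')^!\circ f^{\IndCoh,*}$ coherently, and then takes a \emph{double} limit: one index category $\bI'=(\dgSch_{\on{aft}})_{/\CY}$ records all $!$-pullbacks, the other $\bI''$ records eventually coconnective $*$-pullbacks, and the functor $F$ sends a pair to the fiber product $S'\times_\CY S''$. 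Both $\IndCoh^!(\CY)$ and $\IndCoh^*(\CY)$ are identified with $\lim_{(\bI'\times\bI'')^{\on{op}}}\IndCoh^{*!}\circ F$ by collapsing one factor at a time via smooth descent (\lemref{l:limit over representable}). The compatibility with the Cartesian square then comes for free from the very structure of $\IndCoh^{*!}$.

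What each approach buys: your line-bundle twist is geometrically transparent---it makes explicit that the equivalence is ``tensoring by $\CK_{S/\CY}^{-1}$''---but, as you correctly flag, it forces you to build the coherent cocycle for $\{\CK_{S/\CY}\}$ and to promote the pointwise isomorphisms of \corref{c:! and * pullback under smooth} to a natural transformation of diagrams in $\StinftyCat_{\on{cont}}$; you also need to reconcile your definition $\CK_{S/\CY}:=\det(L_{S/\CY})[\mathrm{rk}]$ with the paper's $\CK_{S_1/S_2}:=f^{\QCoh,!}(\CO_{S_2})$. The paper's double-limit trick sidesteps all of this: the coherence was already paid for when $\IndCoh^{*!}$ was constructed from adjunctions in \propref{p:!* IndCoh}, so no line bundles, determinants, or cotangent complexes enter the argument at all.
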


The rest of this subsection is devoted to the proof of this proposition.

\sssec{}  \label{sss:bootstrapping from Segal}

By restricting to DG schemes almost of finite type, \propref{p:!* IndCoh} defines a map
in $\inftygroup^{(\bDelta\times \bDelta)^{\on{op}}}$:
$$\IndCoh^{*!}_{(\dgSch_{\on{aft}})_{\on{ev-coconn;all}}}: 
\on{Cart}^{\bullet,\bullet}_{\on{ev-coconn;all}}(\dgSch_{\on{aft}})\to 
\on{Seg}^{\bullet,\bullet}((\StinftyCat_{\on{cont}})^{\on{op}}).$$

\medskip

Using \thmref{t:Segal}, the map $\IndCoh^{*!}_{(\dgSch_{\on{aft}})_{\on{ev-coconn;all}}}$ 
gives rise to the following construction:

\medskip

Let $\bI'$ and $\bI''$ be a pair of $\infty$-categories, and let $F$ be a functor
$$\bI'\times \bI''\to \dgSch_{\on{aft}},$$ with the property that for every
$\bi'\in \bI'$ and a 1-morphism $(\bi''_0\to \bi''_1)\in \bI''$, the map
$$F(\bi'\times \bi''_0)\to F(\bi'\times \bi''_1)$$ is
eventually coconnective, and for any pair
of 1-morphisms 
$$(\bi'_0\to \bi'_1)\in \bI' \text{ and } (\bi''_0\to \bi''_1)\in \bI'',$$
the square
$$
\CD
F(\bi'_0\times \bi''_0)  @>>>  F(\bi'_1\times \bi''_0)  \\
@VVV    @VVV   \\
F(\bi'_0\times \bi''_1)  @>>>  F(\bi'_1\times \bi''_1)  
\endCD
$$
is Cartesian. 

\medskip

Then the map $\IndCoh^{*!}_{(\dgSch_{\on{aft}})_{\on{ev-coconn;all}}}$ canonically attaches to a functor $F$ as above,
a datum of a functor
$$\IndCoh^{*!}_{\dgSch_{\on{aft}}}\circ F:(\bI'\times \bI'')^{\on{op}}\to \StinftyCat_{\on{cont}}.$$

\medskip

Moreover, for every fixed object $\bi'\in \bI'$ (resp., $\bi''\in \bI''$) the resulting functors
$$\IndCoh^{*!}_{\dgSch_{\on{aft}}}\circ F|_{\bi'\times \bI''}:(\bI'')^{\on{op}}\to \StinftyCat_{\on{cont}}$$
and
$$\IndCoh^{*!}_{\dgSch_{\on{aft}}}\circ F|{\bI'\times \bi''}:(\bI')^{\on{op}}\to \StinftyCat_{\on{cont}}$$
identify with 
$$\IndCoh^*_{\dgSch_{\on{aft}}}\circ F|_{\bi'\times \bI''} \text{ and }
\IndCoh^!_{\dgSch_{\on{aft}}}\circ F|{\bI'\times \bi''},$$
respectively.

\sssec{}

We let
$$\bI':=(\dgSch_{\on{aft}})_{/\CY} \text{ and } \bI'':=(\dgSch_{\on{aft}})_{\on{ev-coconn}}
\underset{(\inftydgstack_{\on{laft,Artin}})_{\on{ev-coconn}}}\times ((\inftydgstack_{\on{laft,Artin}})_{\on{ev-coconn}})_{/\CY}.$$

We let $F$ be the functor that sends
$$(g:S'\to \CY),(f:S''\to \CY)\mapsto S'\underset{\CY}\times S''.$$

\medskip

Applying the construction from \secref{sss:bootstrapping from Segal}, we obtain
a functor
$$\IndCoh^{*!}_{\dgSch_{\on{aft}}}\circ F.$$

\medskip

We claim that there are canonical equivalences
$$\IndCoh^!(\CY)\simeq \underset{(\bI'\times \bI'')^{\on{op}}}{lim}\, \IndCoh^{*!}_{\dgSch_{\on{aft}}}\circ F\simeq \IndCoh^*(\CY),$$
which would imply the assertion of \propref{p:!* comparison alg}.

\sssec{}

Let us construct the isomorphism 
$$\IndCoh^!(\CY)\simeq \underset{(\bI'\times \bI'')^{\on{op}}}{lim}\, \IndCoh^{*!}_{\dgSch_{\on{aft}}}\circ F.$$

We calculate
$$\underset{(\bI'\times \bI'')^{\on{op}}}{lim}\, \IndCoh^{*!}_{\dgSch_{\on{aft}}}\circ F$$
as the limit over $(\bI')^{\on{op}}$ of the functor that sends
$(g:S'\to \CY)$ to 
$$\underset{(f:S''\to \CY)\in (\bI'')^{\on{op}}}{lim}\, \IndCoh^*(S'\underset{\CY}\times S'').$$

\medskip

However, we claim:

\begin{lem}  \label{l:limit over representable}
The natural map
$$\IndCoh(S')\to \underset{(f:S''\to \CY)\in (\bI'')^{\on{op}}}{lim}\, \IndCoh^*(S'\underset{\CY}\times S'')$$
is an isomorphism. 
\end{lem}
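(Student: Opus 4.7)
The comparison map is constructed in \secref{sss:bootstrapping from Segal}: for each $(f:S''\to\CY)\in\bI''$, the base-change projection $f':S'\times_\CY S''\to S'$ is eventually coconnective (base change of an eventually coconnective morphism), so $(f')^{\IndCoh,*}$ is the continuous functor of \propref{p:* pullback}, and these pullbacks assemble via the $\IndCoh^{*!}$-bivariant formalism of \propref{p:!* IndCoh}.

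\medskip

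My plan is to compare both sides with a totalization coming from a smooth atlas of $\CY$. By \corref{c:existence of laft covering} pick a smooth surjection $\pi:T_0\to\CY$ with $T_0\in\dgSch_{\on{aft}}$. Since $\CY$ is algebraic (schematic diagonal, see \secref{sss:alg stack}), the \v{C}ech nerve $T_\bullet:=T_0^{\bullet+1}/\CY$ is a simplicial object of $\dgSch_{\on{aft}}$ whose face and degeneracy maps are smooth; each $(T_n\to\CY)$ thus lies in $\bI''$, giving a cosimplicial diagram $\bDelta\to\bI''$. The \v{C}ech nerve over $S'$ of the smooth surjection $S'\times_\CY T_0\to S'$ is $S'\times_\CY T_\bullet$, so \propref{p:*-descent smooth} yields
$$
\IndCoh(S')\ \xrightarrow{\ \sim\ }\ \on{Tot}\bigl(\IndCoh^*(S'\times_\CY T_\bullet)\bigr).
$$
Because this descent equivalence is given by the $\IndCoh^*$-pullbacks along $S'\times_\CY T_n\to S'$, it factors as the comparison map of the lemma followed by restriction $\lim_{\bI''^{\on{op}}}\to \on{Tot}_{\bDelta}$ along the diagram $T_\bullet$. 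Hence it suffices to prove that this restriction is itself an equivalence.

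\medskip

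I will establish the restriction in two stages. First, the functor $(S''\to\CY)\mapsto \IndCoh^*(S'\times_\CY S'')$ satisfies smooth *-descent in $S''$ by \propref{p:*-descent smooth}, and every $(f:S''\to\CY)\in\bI''$ admits the smooth surjection $S''\times_\CY T_0\to S''$ whose source is smooth over $\CY$; by a density argument in the spirit of \propref{p:density bis}, the restriction of the limit to the full subcategory $(\bI'')_{\on{sm}}\subset\bI''$ of objects with $f$ smooth is an equivalence. Second, inside $(\bI'')_{\on{sm}}^{\on{op}}$ the cosimplicial diagram $T_\bullet$ is cofinal: for any smooth $(f:S''\to\CY)$, the \v{C}ech nerve $S''\times_\CY T_\bullet\to S''$ is a smooth hypercover, each $S''\times_\CY T_n$ lying in $(\bI'')_{\on{sm}}$, and this provides the required contractible slice diagram.

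\medskip

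The main technical obstacle is the cofinality of $T_\bullet$ inside $(\bI'')_{\on{sm}}^{\on{op}}$: rigorously identifying the relevant slice $\infty$-categories with the simplicial $\infty$-groupoids of sections of \v{C}ech nerves of smooth surjections, and verifying their weak contractibility. Both the density and cofinality reductions are instances of the general principle that smooth hypercovers give cofinal diagrams for sheaves in the smooth topology; a complete treatment requires standard but non-trivial simplicial manipulations of the sort encoded in \thmref{t:Segal} and its surrounding framework, together with the $\IndCoh^{*!}$-bivariant structure of \secref{sss:bootstrapping from Segal} to ensure that the simplicial gluings at the level of \v{C}ech nerves match up with the $\bI''^{\on{op}}$-functoriality of the ambient limit.
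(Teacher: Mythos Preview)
Your overall strategy matches the paper's: both factor through the \v{C}ech totalization of a smooth atlas $T_0\to\CY$ and invoke \propref{p:*-descent smooth} for the map $S'\times_\CY T_0\to S'$. The paper's proof is shorter: it observes that the presheaf $P:(f:S''\to\CY)\mapsto \IndCoh^*(S'\times_\CY S'')$ on $\bI''$ satisfies descent for smooth surjections, and hence its limit over $(\bI'')^{\on{op}}$ can be computed as $\on{Tot}(P(T_\bullet))$; since $S'\times_\CY T_\bullet$ is the \v{C}ech nerve of $S'\times_\CY T_0\to S'$, smooth descent finishes.

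Your Stage~2, however, contains a genuine error. The functor $\bDelta^{\on{op}}\to(\bI'')_{\on{sm}}$ given by $[n]\mapsto T_n$ is \emph{not} cofinal. For a general $(S''\to\CY)\in(\bI'')_{\on{sm}}$ there need not exist any map $S''\to T_n$ over $\CY$: take $\CY$ a scheme, $T_0$ a disconnected Zariski cover, and $S''=\CY$ itself; then the relevant slice is empty. The existence of the smooth cover $S''\times_\CY T_0\to S''$ that you cite does not produce an object of the slice, since $S''\times_\CY T_0$ maps to $T_0$, not from $S''$. The passage from the full limit to the \v{C}ech totalization is not a cofinality statement but a sheaf-theoretic one: because $P$ satisfies smooth descent, one has $\underset{(\bI'')^{\on{op}}}{\lim}\,P\simeq\Hom_{\on{Shv}(\bI'')}(\ast,P)$, and the augmented simplicial sheaf $h_{T_\bullet}\to\ast$ becomes an equivalence after sheafification (precisely because $T_0$ pulls back to a smooth cover of every $S''$), yielding $\Hom(\ast,P)\simeq\on{Tot}(P(T_\bullet))$. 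This is what the paper's ``hence'' encodes, in the spirit of the density argument of \secref{ss:density}. Once you replace cofinality by this descent argument, your Stage~1 reduction to smooth maps becomes unnecessary: the sheaf argument works directly over all of $\bI''$.
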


Hence, we obtain that the above functor on $(\bI')^{\on{op}}$ identifies canonically with 
$$(\IndCoh^!_{\dgSch_{\on{aft}}})|_{(\dgSch_{\on{aft}})_{/\CY}}.$$

The limit of the latter is $\IndCoh^!(\CY)$, by definition.

\sssec{}

The isomorphism 
$$\underset{(\bI'\times \bI'')^{\on{op}}}{lim}\, \IndCoh^{*!}_{\dgSch_{\on{aft}}}\circ F\simeq \IndCoh^*(\CY)$$
is constructed similarly.

\sssec{Proof of \lemref{l:limit over representable}}

The functor
$$(f:S''\to \CY)\mapsto \IndCoh^*(S'\underset{\CY}\times S'')$$
is a presheaf on $\bI''$ that satisfies descent with respect to smooth
surjective maps. 

\medskip

Hence, the limit $$\underset{(f:S''\to \CY)\in (\bI'')^{\on{op}}}{lim}\, \IndCoh^*(S'\underset{\CY}\times S'')$$
can be calculated as
$$\on{Tot}(\IndCoh^*(S'\underset{\CY}\times (S''_0{}^\bullet/\CY))),$$
where $S''_0\to \CY$ is some smooth cover, and $S''_0{}^\bullet/\CY$ is its
\v{C}ech nerve. 

\medskip

However, $S'\underset{\CY}\times (S''_0{}^\bullet/\CY)$ is the \v{C}ech nerve
of the map 
$$S'\underset{\CY}\times S''_0\to S',$$
and the required isomorphism follows from smooth descent for $\IndCoh^*_{\dgSch_{\on{aft}}}$.

\ssec{Comparing the two versions of $\IndCoh$: general Artin stacks}

In this subsection we will generalize the construction of \secref{ss:!* comparison} to arbitrary Artin stacks. 

\sssec{}

We consider the category $\inftydgstack_{\on{laft},\on{Artin}}$
endowed with the classes of 1-morphisms
$$(\on{ev-coconn};\on{all}),$$
and we consider the corresponding object
$$\on{Cart}^{\bullet,\bullet}_{\on{ev-coconn;all}}(\inftydgstack_{\on{laft},\on{Artin}})\in 
\inftygroup^{(\bDelta\times \bDelta)^{\on{op}}}.$$

We claim: 

\begin{prop} \label{p:!* IndCoh Artin}
There exists a uniquely defined map in $\inftygroup^{(\bDelta\times \bDelta)^{\on{op}}}$
$$\IndCoh^{*!}_{(\inftydgstack_{\on{laft},\on{Artin}})_{\on{ev-coconn;all}}}: 
\on{Cart}^{\bullet,\bullet}_{\on{ev-coconn;all}}(\inftydgstack_{\on{laft},\on{Artin}})\to 
\on{Seg}^{\bullet,\bullet}((\StinftyCat_{\on{cont}})^{\on{op}})$$
that makes the following diagrams commute
$$
\CD
\on{Cart}^{\bullet,\bullet}_{\on{ev-coconn;all}}(\inftydgstack_{\on{laft},\on{Artin}})  
@>{\IndCoh^{*!}_{(\inftydgstack_{\on{laft},\on{Artin}})_{\on{ev-coconn;all}}}}>>  
\on{Seg}^{\bullet,\bullet}((\StinftyCat_{\on{cont}})^{\on{op}}) \\
@AAA        @AAA  \\
\pi_h(\on{Seg}^\bullet(\inftydgstack_{\on{laft},\on{Artin}})  @>>{\on{Seg}^\bullet(\IndCoh^!_{\inftydgstack_{\on{laft},\on{Artin}}})}>
\pi_h(\on{Seg}^\bullet((\StinftyCat_{\on{cont}})^{\on{op}}))  
\endCD
$$
and
$$
\CD
\on{Cart}^{\bullet,\bullet}_{\on{ev-coconn;all}}(\inftydgstack_{\on{laft},\on{Artin}}) 
@>{\IndCoh^{*!}_{(\inftydgstack_{\on{laft},\on{Artin}})_{\on{ev-coconn;all}}}}>>  
\on{Seg}^{\bullet,\bullet}((\StinftyCat_{\on{cont}})^{\on{op}}) \\
@AAA        @AAA  \\
\pi_v(\on{Seg}^\bullet((\inftydgstack_{\on{laft},\on{Artin}})_{\on{ev-coconn}}))  
@>>{\on{Seg}^\bullet(\IndCoh^*_{(\inftydgstack_{\on{laft},\on{Artin}})_{\on{ev-coconn}}})}>
\pi_v(\on{Seg}^\bullet((\StinftyCat_{\on{cont}})^{\on{op}})),
\endCD
$$
and which extends the map 
$$\IndCoh^{*!}_{(\dgSch_{\on{aft}})_{\on{ev-coconn;all}}}: 
\on{Cart}^{\bullet,\bullet}_{\on{ev-coconn;all}}(\dgSch_{\on{aft}})\to 
\on{Seg}^{\bullet,\bullet}((\StinftyCat_{\on{cont}})^{\on{op}}).$$
\end{prop}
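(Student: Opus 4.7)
The plan is to construct $\IndCoh^{*!}_{(\inftydgstack_{\on{laft},\on{Artin}})_{\on{ev-coconn;all}}}$ by right Kan extension from the already-constructed map $\IndCoh^{*!}_{(\dgSch_{\on{aft}})_{\on{ev-coconn;all}}}$ along the inclusion
$$\on{Cart}^{\bullet,\bullet}_{\on{ev-coconn;all}}(\dgSch_{\on{aft}}) \hookrightarrow \on{Cart}^{\bullet,\bullet}_{\on{ev-coconn;all}}(\inftydgstack_{\on{laft},\on{Artin}}),$$
and then verify the two compatibility squares with $\IndCoh^!$ and $\IndCoh^*$ on the horizontal and vertical axes. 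The induction will run over the $k$-Artin filtration, with the base case $k=-3$ handled by the input of \propref{p:!* IndCoh}.

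The first step is to establish the base-change compatibility at the level of a single Cartesian square in $\inftydgstack_{\on{laft},\on{Artin}}$, with vertical maps $f,f'$ eventually coconnective and horizontal maps $g_1,g_2$ arbitrary: namely, that the two functors $(f')^{\IndCoh,*}\circ g_2^!$ and $g_1^!\circ f^{\IndCoh,*}$ are canonically isomorphic. By choosing compatible smooth affine atlases of the four corners and invoking \lemref{l:* descent Artin} together with \corref{c:Cech !}, the assertion reduces to the corresponding DG scheme statement, which is \propref{p:! pullback and event coconn}.

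The inductive step uses that for a $(k+1)$-Artin stack $\CY$ one may choose a smooth atlas $U_0\to \CY$ with $U_0$ a $k$-Artin stack (and ultimately a DG scheme locally almost of finite type, via \corref{c:existence of laft covering}). \propref{p:density bis} applied to the smooth topology identifies the $\infty$-groupoid of Cartesian $(m,n)$-grids in $\inftydgstack_{\on{laft},\on{Artin}}$ containing $\CY$ at a fixed vertex with a limit over the \v{C}ech nerve of $U_0\to\CY$ of the analogous $\infty$-groupoids at the $k$-Artin level. Since both $\IndCoh^!_{\inftydgstack_{\on{laft},\on{Artin}}}$ and $\IndCoh^*_{(\inftydgstack_{\on{laft},\on{Artin}})_{\on{ev-coconn}}}$ satisfy smooth descent (\lemref{l:* descent Artin} together with \propref{p:!-descent smooth}), the value of the bi-simplicial map on such a grid is forced to be the limit of its values on the \v{C}ech nerve, which by induction lives one step lower. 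This simultaneously yields existence and uniqueness. The two compatibility squares in the statement then hold essentially by construction: restriction along $\pi_h$ corresponds to grids with identity vertical arrows, which recover $\IndCoh^!_{\inftydgstack_{\on{laft},\on{Artin}}}$ (itself defined as a right Kan extension from DG schemes), and similarly $\pi_v$ recovers $\IndCoh^*_{(\inftydgstack_{\on{laft},\on{Artin}})_{\on{ev-coconn}}}$.

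The main obstacle is showing that the bi-simplicial object produced by Kan extension actually lies in the image of the Segal construction $\on{Seg}^{\bullet,\bullet}$, i.e.\ that its higher-simplex data is coherent with the Cartesian structure. Equivalently, one needs a ``two-variable'' smooth descent statement for Cartesian grids, rather than merely one-variable descent applied to rows and columns. This is the combinatorial heart of the matter and the type of $\infty$-categorical manipulation deferred to \cite{GR3}. It should follow by iterated application of one-variable smooth descent to each row and each column, using the Cartesian property to patch compatibly, but rigorizing this at the level of the complete Segal space model is the delicate step, analogous to the construction of $\IndCoh^{*!}_{(\dgSch_{\on{Noeth}})_{\on{bdd-Tor;aft}}}$ in \propref{p:!* IndCoh} but now carried out over the enlarged site.
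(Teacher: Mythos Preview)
Your high-level strategy---induction on the $k$-Artin filtration with base case $k=-3$ handled by \propref{p:!* IndCoh}, and the acknowledgment that the full combinatorial verification is deferred to \cite{GR3}---matches the paper's. However, the mechanism you propose for the inductive step differs substantially from what the paper does, and your version has a gap.

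The paper does \emph{not} construct the extension as a single right Kan extension from $\dgSch_{\on{aft}}$, nor does it use the \v{C}ech nerve of a smooth atlas of a single vertex. Instead, for a fixed Cartesian grid $\CY^{m,n}$ of $k$-Artin stacks, it introduces \emph{two separate} index categories: $\bJ'$ consisting of $(k{-}1)$-Artin Cartesian grids $\CZ'{}^{m,n}$ equipped with arbitrary maps to $\CY^{m,n}$, and $\bJ''$ consisting of $(k{-}1)$-Artin Cartesian grids $\CZ''{}^{m,n}$ equipped with eventually coconnective maps to $\CY^{m,n}$. Setting $\bI'=\bJ'\times[m]$ and $\bI''=\bJ''\times[n]$, it defines a functor $F:\bI'\times\bI''\to\inftydgstack_{\on{laft},(k-1)\on{-Artin}}$ by forming fiber products $\CZ'{}^{m,n}_{a,b}\underset{\CY^{m,n}_{a,b}}\times\CZ''{}^{m,n}_{a,b}$, applies the inductive hypothesis to obtain $\IndCoh^{*!}\circ F$, and right-Kan-extends along the projection to $([m]\times[n])^{\on{op}}$. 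This two-parameter approximation is the direct generalization of the algebraic-stacks argument in \propref{p:!* comparison alg}, and it is what allows the $!$- and $*$-directions (which require approximations by different classes of maps) to be treated independently.

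Your proposal to use \propref{p:density bis} to ``identify the $\infty$-groupoid of Cartesian $(m,n)$-grids containing $\CY$ at a fixed vertex with a limit over the \v{C}ech nerve'' is where the gap lies. \propref{p:density bis} concerns presheaves on a 1-full subcategory, not the $\infty$-groupoid of Cartesian diagrams, and it is not clear how base-changing a grid along a smooth atlas of one vertex produces the data required to define the full bi-simplicial map. The essential asymmetry between the horizontal direction (arbitrary maps, governing $f^!$) and the vertical direction (eventually coconnective maps, governing $f^{\IndCoh,*}$) is precisely why the paper uses two distinct slice categories $\bJ'$ and $\bJ''$ rather than a single \v{C}ech resolution.
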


\begin{cor}
For $\CY\in \inftydgstack_{\on{laft},\on{Artin}}$ there exists a canonical equivalence 
$$\IndCoh^!(\CY)\simeq \IndCoh^*(\CY),$$
such that for a Cartesian square of Artin stacks
\begin{equation} \label{e:two pullbacks Artin}
\CD
\CZ @>{g''}>>  \CZ'' \\
@V{f'}VV   @VV{f}V  \\
\CZ' @>{g}>>  \CY,
\endCD
\end{equation}
where the morphism $g$ is arbitrary and $f$ is eventually coconnective,
the functors
$$(f')^{\IndCoh,*}\circ g^!:\IndCoh^!(\CY)\to \IndCoh(\CZ) \text{ and }
(g'')^!\circ f^{\IndCoh,*}:\IndCoh^*(\CY)\to \IndCoh(\CZ)$$
are canonically identified.
\end{cor}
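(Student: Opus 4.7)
The plan is to bootstrap from Proposition \ref{p:!* IndCoh Artin} in essentially the same way that Proposition \ref{p:!* comparison alg} is derived from Proposition \ref{p:!* IndCoh}, replacing DG schemes almost of finite type by arbitrary Artin stacks locally almost of finite type. Since the bi-simplicial map $\IndCoh^{*!}_{(\inftydgstack_{\on{laft},\on{Artin}})_{\on{ev-coconn;all}}}$ already packages the $!$- and $*$-pullbacks together with their commutation on Cartesian squares, the work is mostly to extract the equivalence from the double-limit formalism.

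First, I would run the Segal-space bootstrap of \secref{sss:bootstrapping from Segal}, but with $\IndCoh^{*!}_{(\inftydgstack_{\on{laft},\on{Artin}})_{\on{ev-coconn;all}}}$ in place of $\IndCoh^{*!}_{(\dgSch_{\on{aft}})_{\on{ev-coconn;all}}}$. Concretely, set $\bI':=(\inftydgstack_{\on{laft},\on{Artin}})_{/\CY}$ and let $\bI''$ be the full subcategory of $(\inftydgstack_{\on{laft},\on{Artin}})_{/\CY}$ spanned by eventually coconnective maps, with $1$-morphisms restricted to eventually coconnective ones; let $F:\bI'\times\bI''\to \inftydgstack_{\on{laft},\on{Artin}}$ be $(\CZ',\CZ'')\mapsto \CZ'\underset{\CY}\times \CZ''$. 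Every resulting square of fiber products is Cartesian, and the ``vertical'' morphisms coming from $\bI''$ remain eventually coconnective after base change. Thus $F$ together with Proposition \ref{p:!* IndCoh Artin} produces a functor $\IndCoh^{*!}\circ F:(\bI'\times\bI'')^{\on{op}}\to \StinftyCat_{\on{cont}}$ whose restrictions to the fibers $\{\bi'\}\times\bI''$ and $\bI'\times\{\bi''\}$ identify with $\IndCoh^*\circ F|_{\{\bi'\}\times\bI''}$ and $\IndCoh^!\circ F|_{\bI'\times\{\bi''\}}$, respectively.

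Next, I would compute the double limit $\underset{(\bI'\times\bI'')^{\on{op}}}{lim}\,\IndCoh^{*!}\circ F$ in both iteration orders. Fixing $(g:\CZ'\to\CY)\in\bI'$, the inner limit over $\bI''$ is
\[
\underset{(f:\CZ''\to\CY)\in(\bI'')^{\on{op}}}{lim}\,\IndCoh^*(\CZ'\underset{\CY}\times \CZ'').
\]
To identify this with $\IndCoh(\CZ')$, I would argue as in \lemref{l:limit over representable}: by smooth descent for $\IndCoh^*$ (\lemref{l:* descent Artin}) the limit can be replaced by the totalization of the \v{C}ech nerve of a smooth surjective map $\CZ''_0\to \CY$ from an affine DG scheme (which lies in $\bI''$, since smooth maps are eventually coconnective), and then $\CZ'\underset{\CY}\times(\CZ''_0{}^\bullet/\CY)$ is the \v{C}ech nerve of $\CZ'\underset{\CY}\times \CZ''_0\to\CZ'$, reducing the claim to smooth descent for $\IndCoh^!$ on $\CZ'$ (Corollary \ref{c:Cech !}). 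Taking the outer limit over $\bI'$ then produces $\IndCoh^!(\CY)$ by the defining property of $\IndCoh^!_{\inftydgstack_{\on{laft},\on{Artin}}}$. Iterating in the other order, the inner limit over $\bI'$ with $\bi''$ fixed gives $\IndCoh^*(\CZ'')$ by the dual argument, and the outer limit is $\IndCoh^*(\CY)$ by the defining property of $\IndCoh^*_{(\inftydgstack_{\on{laft},\on{Artin}})_{\on{ev-coconn}}}$. This yields the canonical equivalence $\IndCoh^!(\CY)\simeq\IndCoh^*(\CY)$. The compatibility with a Cartesian square \eqref{e:two pullbacks Artin} is then immediate: the square is itself a $(1,1)$-simplex of $\on{Cart}^{\bullet,\bullet}_{\on{ev-coconn;all}}(\inftydgstack_{\on{laft},\on{Artin}})$, so Proposition \ref{p:!* IndCoh Artin} supplies a canonical homotopy $(f')^{\IndCoh,*}\circ g^!\simeq (g'')^!\circ f^{\IndCoh,*}$; naturality of the double-limit construction with respect to maps in $\bI'$ and $\bI''$ ensures that this homotopy is the one induced by our equivalence $\IndCoh^!(\CY)\simeq \IndCoh^*(\CY)$.

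The main obstacle I foresee is the analog of \lemref{l:limit over representable} in the Artin setting: the cofinality argument needs a smooth affine cover of $\CY$ whose \v{C}ech nerve genuinely lives in the restricted category $\bI''$ (eventually coconnective maps, with eventually coconnective $1$-morphisms). This is handled by the observation that smooth maps between DG schemes are automatically eventually coconnective, so both the terms of the nerve $\CZ''_0{}^\bullet/\CY$ and all face/degeneracy maps between them are morphisms in $\bI''$. Once this cofinality is in place, the rest is a routine extension of the proof of \propref{p:!* comparison alg}, using smooth descent (\lemref{l:* descent Artin} for the $*$-side and \corref{c:Cech !} for the $!$-side) and the defining limit presentations of $\IndCoh^!(\CY)$ and $\IndCoh^*(\CY)$ on Artin stacks given by Propositions \ref{p:recovering from smooth} and \ref{p:RKE from smooth general bis}.
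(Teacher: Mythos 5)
Your overall strategy is the paper's: the corollary is extracted from \propref{p:!* IndCoh Artin}, and your double-limit construction mimics the way \propref{p:!* comparison alg} is derived from \propref{p:!* IndCoh}. The conclusion is correct. However, with your choice of index categories $\bI'=(\inftydgstack_{\on{laft},\on{Artin}})_{/\CY}$ and $\bI''\subset (\inftydgstack_{\on{laft},\on{Artin}})_{/\CY}$, both categories contain $\on{id}_\CY$ as a terminal object (the identity map is eventually coconnective), so every limit over $(\bI')^{\on{op}}$ or $(\bI'')^{\on{op}}$ is simply evaluation there. Your descent computations therefore do no work: the double limit collapses to the value of $\IndCoh^{*!}_{(\inftydgstack_{\on{laft},\on{Artin}})_{\on{ev-coconn;all}}}$ on the $(0,0)$-simplex $\CY$, and the two commutative diagrams in \propref{p:!* IndCoh Artin} (restriction to $\pi_h$ and to $\pi_v$) already identify this value with $\IndCoh^!(\CY)$ and with $\IndCoh^*(\CY)$; the compatibility for a square \eqref{e:two pullbacks Artin} is likewise just the value on the corresponding $(1,1)$-simplex. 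This is how the corollary is meant to be read off; the genuinely non-trivial double-limit argument, with index categories of $(k-1)$-Artin stacks over $\CY$ (which have no terminal object), lives inside the inductive proof of the proposition itself, not in the passage from the proposition to the corollary.

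One citation should also be corrected: in your identification of the inner limit over $\bI''$, the cosimplicial category $\IndCoh^*(\CZ'\underset{\CY}\times(\CZ''_0{}^\bullet/\CY))$ is formed using $*$-pullbacks, so the descent statement needed is smooth $*$-descent (\lemref{l:* descent Artin}, or \propref{p:*-descent smooth} at the level of schemes), exactly as in the proof of \lemref{l:limit over representable}; \corref{c:Cech !} concerns the $!$-pullback cosimplicial category and is not the right reference for that step.
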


The rest of this subsection is devoted to a sketch of the proof of \propref{p:!* IndCoh Artin};
details will be supplied elsewhere.

\sssec{}

We proceed by induction on $k$. Thus, we assume that the existence and uniqueness of the map
$$\IndCoh^{*!}_{(\inftydgstack_{\on{laft},(k-1)\on{-Artin}})_{\on{ev-coconn;all}}}: 
\on{Cart}^{\bullet,\bullet}_{\on{ev-coconn;all}}(\inftydgstack_{\on{laft},(k-1)\on{-Artin}})\to 
\on{Seg}^{\bullet,\bullet}((\StinftyCat_{\on{cont}})^{\on{op}})$$
with the required properties. 

\medskip

The base of the induction is $k=-3$, in which case the assertion follows by restriction from
\propref{p:!* IndCoh} inder $\affdgSch_{\on{aft}}\hookrightarrow \dgSch_{\on{aft}}$.

\medskip

We shall now perform the induction step. 

\begin{rem}
Note that by repeating the construction
in the proof of \propref{p:!* comparison alg}, we obtain that for an individual $\CY\in \inftydgstack_{\on{laft},k\on{-Artin}}$
there exists a canonical equivalence
$$\IndCoh^!(\CY)\simeq \IndCoh^*(\CY),$$
such that for every diagram \eqref{e:two pullbacks Artin} with $\CZ',\CZ''\in \inftydgstack_{\on{laft},(k-1)\on{-Artin}}$,
the diagram
$$
\CD
\IndCoh^*(\CZ)  @<{\sim}<< \IndCoh^!(\CZ)  @<{(g'')^!}<<  \IndCoh^!(\CZ'') @<{\sim}<< \IndCoh^*(\CZ'') \\
@A{(f')^{\IndCoh,*}}AA  & & & &  @AA{f^{\IndCoh,*}}A  \\
\IndCoh^*(\CZ') @<{\sim}<<  \IndCoh^!(\CZ')  @<{g^!}<< \IndCoh^!(\CY)  @<{\sim}<< \IndCoh^*(\CY)
\endCD
$$
is commutative.

\medskip

The above amounts to calculating the value of the map $\IndCoh^{*!}_{(\inftydgstack_{\on{laft},k\on{-Artin}})_{\on{ev-coconn;all}}}$
on $[0]\times [0]\in \bDelta\times \bDelta$. To make this construction functorial in $\CY$ amounts to constructing
the map $\IndCoh^{*!}_{(\inftydgstack_{\on{laft},k\on{-Artin}})_{\on{ev-coconn;all}}}$. The construction of the latter
follows the same idea: approximating Cartesian diagrams of $k$-Artin stacks by Cartesian diagrams of
$(k-1)$-Artin stacks.

\end{rem}

\sssec{}

As in \secref{sss:bootstrapping from Segal}, the datum of $\IndCoh^{*!}_{(\inftydgstack_{\on{laft},(k-1)\on{-Artin}})_{\on{ev-coconn;all}}}$
gives rise to the following construction:

\medskip

For a pair of $\infty$-categories
$\bI'$ and $\bI''$ and a functor 
$$F:\bI'\times \bI''\to \inftydgstack_{\on{laft},(k-1)\on{-Artin}}$$
with the corresponding properties, we obtain a functor
$$\IndCoh^{*!}_{(\inftydgstack_{\on{laft},(k-1)\on{-Artin}})_{\on{ev-coconn;all}}}\circ F:
(\bI'\times \bI'')^{\on{op}}\to \StinftyCat_{\on{cont}}.$$

\sssec{}

To carry out the induction step we need to construct, for every
$$[m]\times [n]\in \bDelta\times \bDelta,$$
a map of $\infty$-groupoids
$$\on{Cart}^{m,n}_{\on{ev-coconn;all}}(\inftydgstack_{\on{laft},k\on{-Artin}})\to 
\on{Seg}^{m,n}((\StinftyCat_{\on{cont}})^{\on{op}}),$$
in a way functorial in $[m]\times [n]$.

\medskip

We fix an object $\CY^{m,n}\in \on{Cart}^{m,n}_{\on{ev-coconn;all}}(\inftydgstack_{\on{laft},k\on{-Artin}})$
and will construct the corresponding functor
$$\IndCoh^{*!}(\CY^{m,n}):([m]\times [n])^{\on{op}}\to \StinftyCat_{\on{cont}}.$$

\sssec{}

We introduce an $\infty$-category 
$$_{\on{all}}\on{Cart}^{m,n}_{\on{ev-coconn;all}}(\inftydgstack_{\on{laft},k\on{-Artin}}),$$
which has the same objects as $\on{Cart}^{m,n}_{\on{ev-coconn;all}}(\inftydgstack_{\on{laft},k\on{-Artin}})$,
but where we now allow arbitrary maps between such diagrams. 

\medskip

Similarly, we introduce an $\infty$-category 
$$_{\on{ev-coconn}}\on{Cart}^{m,n}_{\on{ev-coconn;all}}(\inftydgstack_{\on{laft},k\on{-Artin}}),$$
which has the same objects as $\on{Cart}^{m,n}_{\on{ev-coconn;all}}(\inftydgstack_{\on{laft},k\on{-Artin}})$,
but where we allow arbitrary maps between diagrams that are eventually coconnective in each coordinate.

\medskip

We let $\bJ'$ be the full subcategory in
$$\left({}_{\on{all}}\on{Cart}^{m,n}_{\on{ev-coconn;all}}(\inftydgstack_{\on{laft},k\on{-Artin}})\right)_{/\CY^{m,n}},$$
whose objects are diagrams with entries from $\inftydgstack_{\on{laft},(k-1)\on{-Artin}}$.

\medskip

Similarly, we let $\bJ''$ be the full subcategory in 
$$\left({}_{\on{ev-coconn}}\on{Cart}^{m,n}_{\on{ev-coconn;all}}(\inftydgstack_{\on{laft},k\on{-Artin}})\right)_{/\CY^{m,n}}$$
whose objects are diagrams with entries from $\inftydgstack_{\on{laft},(k-1)\on{-Artin}}$.

\medskip

We set 
$$\bI':=\bJ'\times [m] \text{ and } \bI'':=\bJ''\times [n].$$

We define the functor
$$\bI'\times \bI''\to \inftydgstack_{\on{laft},(k-1)\on{-Artin}}$$
by sending
$$(g:\CZ'{}^{m,n}\to \CY^{m,n})\in \bJ',\,\, (f:\CZ''{}^{m,n}\to \CY^{m,n})\in \bJ'',\,\, a\in [0,m],\,\, b\in [0,n]$$
to 
$$\CZ'{}^{m,n}_{a,b}\underset{\CY^{m,n}_{a,b}}\times \CZ''{}^{m,n}_{a,b},$$
where the subscript ``$a,b$'' stands for the corresponding entry of $\CZ'{}^{m,n}$, $\CZ''{}^{m,n}$ and $\CY^{m,n}$,
respectively.

\medskip

Consider the corresponding functor
$$\IndCoh^{*!}_{(\inftydgstack_{\on{laft},(k-1)\on{-Artin}})_{\on{ev-coconn;all}}}\circ F:
(\bI'\times \bI'')^{\on{op}}\to \StinftyCat_{\on{cont}}.$$

\medskip

Finally, we define the sought-for functor $\IndCoh^{*!}(\CY^{m,n})$ as the right Kan extension of
$\IndCoh^{*!}_{(\inftydgstack_{\on{laft},(k-1)\on{-Artin}})_{\on{ev-coconn;all}}}\circ F$ along the projection
$$(\bI'\times \bI'')^{\on{op}}\to ([m]\times [n])^{\on{op}}.$$

\sssec{}

To extend the assignment
$$\CY^{m,n}\rightsquigarrow \IndCoh^{*!}(\CY^{m,n})$$
to a functor
$$\IndCoh^{*!}_{(\inftydgstack_{\on{laft},k\on{-Artin}})_{\on{ev-coconn;all}}}: 
\on{Cart}^{\bullet,\bullet}_{\on{ev-coconn;all}}(\inftydgstack_{\on{laft},(k-1)\on{-Artin}})\to 
\on{Seg}^{\bullet,\bullet}((\StinftyCat_{\on{cont}})^{\on{op}}),$$
we need to define the following data.

\medskip

Let $$\phi:([m']\times [n'])\to ([m]\times [n])$$
be a map in $\bDelta\times \bDelta$. 

\medskip

For $\CY^{m,n}\in \on{Cart}^{m,n}_{\on{ev-coconn;all}}(\inftydgstack_{\on{laft},k\on{-Artin}})$ as above, 
we denote by $\CY^{m',n'}$ the corresponding object of
$\on{Cart}^{m',n'}_{\on{ev-coconn;all}}(\inftydgstack_{\on{laft},k\on{-Artin}})$
obtained by restriction.

\medskip

We need to construct a canonical isomorphism between 
$$\IndCoh^{*!}(\CY^{m,n})\circ (\phi)^{\on{op}} \text{ and } \IndCoh^{*!}(\CY^{m',n'})$$
as functors $([m']\times [n'])^{\on{op}}\rightrightarrows \StinftyCat_{\on{cont}}$.

Furthermore, we need the above isomorphism to be homotopy coherent with respect to compositions
of morphisms in $\bDelta\times \bDelta$. 

\medskip

The map 
$$\IndCoh^{*!}(\CY^{m,n})\circ (\phi)^{\on{op}} \to \IndCoh^{*!}(\CY^{m',n'})$$
follows from the universal property of right Kan extensions. The fact that it
is an isomorphism is an easy cofinality argument.

\ssec{The *-pullback for eventually representable morphisms}

In this section we will further extend the assignment
$$\CY\to \IndCoh(\CY),\quad \CY\in \on{PreStk}_{\on{laft}}$$
where we can $!$-pullback along any morphisms, and $*$-pullback along 
eventually representable eventually coconnective morphisms of prestacks.

\sssec{}

Let
$$\on{ev-repr-coconn}\subset \on{all}$$
denote the class of eventually representable eventually coconnective morphisms
in $\on{PreStk}_{\on{laft}}$.

\medskip

Consider the corresponding object
$$\on{Cart}^{\bullet,\bullet}_{\on{ev-repr-coconn;all}}(\on{PreStk}_{\on{laft}})\in 
\inftygroup^{(\bDelta\times \bDelta)^{\on{op}}}.$$

We claim:

\begin{prop}
There exists a uniquely defined map in $\inftygroup^{(\bDelta\times \bDelta)^{\on{op}}}$
$$\IndCoh^{*!}_{\on{Cart}^{\bullet,\bullet}_{\on{ev-repr-coconn;all}}(\on{PreStk}_{\on{laft}})}: 
\on{Cart}^{\bullet,\bullet}_{\on{ev-repr-coconn;all}}(\on{PreStk}_{\on{laft}})\to 
\on{Seg}^{\bullet,\bullet}((\StinftyCat_{\on{cont}})^{\on{op}})$$
that makes the following diagram commute
$$
\CD
\on{Cart}^{\bullet,\bullet}_{\on{ev-repr-coconn;all}}(\on{PreStk}_{\on{laft}})
@>{\IndCoh^{*!}_{\on{Cart}^{\bullet,\bullet}_{\on{ev-repr-coconn;all}}(\on{PreStk}_{\on{laft}})}}>>  
\on{Seg}^{\bullet,\bullet}((\StinftyCat_{\on{cont}})^{\on{op}}) \\
@AAA        @AAA  \\
\pi_h(\on{Seg}^\bullet(\on{PreStk}_{\on{laft}})  @>>{\on{Seg}^\bullet(\IndCoh^!_{\on{PreStk}_{\on{laft}}})}>
\pi_h(\on{Seg}^\bullet((\StinftyCat_{\on{cont}})^{\on{op}}))  
\endCD
$$
and which extends the map 
$$\IndCoh^{*!}_{(\inftydgstack_{\on{laft},\on{Artin}})_{\on{ev-coconn;all}}}: 
\on{Cart}^{\bullet,\bullet}_{\on{ev-coconn;all}}(\inftydgstack_{\on{laft},\on{Artin}})\to 
\on{Seg}^{\bullet,\bullet}((\StinftyCat_{\on{cont}})^{\on{op}}).$$
\end{prop}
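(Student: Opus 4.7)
The plan is to mimic the inductive strategy used in the proof of \propref{p:!* IndCoh Artin}, where the construction on $k$-Artin stacks was bootstrapped from the one on $(k-1)$-Artin stacks by approximating every $(m,n)$-Cartesian diagram by diagrams of $(k-1)$-Artin stacks mapping to it. Here, however, we treat the inductive step ``$\inftydgstack_{\on{laft},\on{Artin}} \leadsto \inftydgprestack_{\on{laft}}$'' in one stroke, using the fact that any eventually representable morphism into a prestack becomes a morphism of Artin stacks after base change by an affine DG scheme almost of finite type. In particular, given the map $\IndCoh^{*!}_{(\inftydgstack_{\on{laft},\on{Artin}})_{\on{ev-coconn;all}}}$ supplied by \propref{p:!* IndCoh Artin}, I would produce the desired map on $\on{Cart}^{\bullet,\bullet}_{\on{ev-repr-coconn;all}}(\inftydgprestack_{\on{laft}})$ by right Kan extension along the natural embedding of diagrams whose top row (or, symmetrically, whose entire collection of objects) consists of Artin stacks.

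Concretely, fix $\CY^{m,n} \in \on{Cart}^{m,n}_{\on{ev-repr-coconn;all}}(\inftydgprestack_{\on{laft}})$ and let $\bI$ denote the $\infty$-category whose objects are morphisms of $(m,n)$-diagrams $\CZ^{m,n}\to \CY^{m,n}$ in which every vertex $\CZ^{m,n}_{i,0}$ of the top row belongs to $\affdgSch_{\on{aft}}$, together with the arrow $\CZ^{m,n}_{i,0}\to \CY^{m,n}_{i,0}$ being arbitrary; then each remaining vertex $\CZ^{m,n}_{i,j}$ is defined as the Cartesian product $\CZ^{m,n}_{i,0}\underset{\CY^{m,n}_{i,0}}\times \CY^{m,n}_{i,j}$, which by eventual representability is an Artin stack locally almost of finite type, and the induced vertical arrows remain eventually coconnective. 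Applying $\IndCoh^{*!}_{(\inftydgstack_{\on{laft},\on{Artin}})_{\on{ev-coconn;all}}}$ to each such $\CZ^{m,n}$ yields a diagram
$([m]\times [n])^{\on{op}} \to \StinftyCat_{\on{cont}}$, and the value $\IndCoh^{*!}(\CY^{m,n})$ is then defined as the right Kan extension along the obvious projection $\bI^{\on{op}} \to ([m]\times [n])^{\on{op}}$. The functoriality in $[m]\times [n] \in \bDelta\times \bDelta$ is then built into the construction via the universal property of right Kan extension, analogously to the final step in the sketch of \propref{p:!* IndCoh Artin}. Uniqueness follows from the same compatibility with $\pi_h(\on{Seg}^\bullet(\inftydgprestack_{\on{laft}}))$, since $\IndCoh^!_{\inftydgprestack_{\on{laft}}}$ itself was already defined as a right Kan extension from $^{<\infty}\!\affdgSch_{\on{aft}}$ by \secref{sss:defn of IndCoh prestacks}.

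The compatibility with the horizontal augmentation coming from $\IndCoh^!_{\inftydgprestack_{\on{laft}}}$ is then a cofinality check: restricting the above construction to the line $\bi''=*$ computes the value of the right Kan extension of $\IndCoh^!_{\inftydgstack_{\on{laft},\on{Artin}}}$ along $(\inftydgstack_{\on{laft},\on{Artin}})^{\on{op}} \hookrightarrow (\inftydgprestack_{\on{laft}})^{\on{op}}$, and since the indexing category $\bI$ contains cofinally the category $((\affdgSch_{\on{aft}})_{/\CY})^{\on{op}}$ used in the definition \eqref{e:indcoh as a limit}, one recovers $\IndCoh^!(\CY)$. Similarly, restricting to a fixed affine DG scheme cover of the bottom row recovers the *-pullback functors on Artin stacks appearing in the vertical augmentation.

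The main obstacle, as in the proofs of \propref{p:!* IndCoh} and \propref{p:!* IndCoh Artin}, lies not in the pointwise construction but in organizing the homotopy-coherent functoriality over $\bDelta \times \bDelta$. The pointwise right Kan extensions must be shown to glue into a single map of bisimplicial $\infty$-groupoids, and the verification that the cofinality arguments for the horizontal and vertical augmentations interact consistently with restriction maps $\phi\colon [m']\times[n']\to [m]\times [n]$ requires the same kind of $\infty$-categorical bookkeeping that was deferred to \cite{GR3}. Modulo this bookkeeping, the uniqueness clause reduces to the observation that any map satisfying the stated compatibilities must agree with our right Kan extension on the cofinal subcategory of diagrams whose top row is made of affine DG schemes almost of finite type, and hence agrees globally.
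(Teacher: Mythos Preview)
Your overall strategy—approximate the Cartesian prestack diagram $\CY^{m,n}$ by diagrams of Artin stacks, apply the already-constructed $\IndCoh^{*!}_{(\inftydgstack_{\on{laft,Artin}})_{\on{ev-coconn;all}}}$, and right Kan extend—is exactly the paper's approach, and your functoriality and cofinality remarks are in the right spirit.

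There is, however, a directional slip. In the paper's convention for $\on{Cart}^{m,n}_{vert;horiz}$ (see the grid \eqref{e:grid} in \secref{sss:bivariant}), the vertical arrows run from row $0$ downward to row $n$, and here $vert=\on{ev\text{-}repr\text{-}coconn}$. Thus it is $\CY^{m,n}_{i,0}\to\CY^{m,n}_{i,j}$ that is eventually representable, not the reverse; your fiber product $\CZ^{m,n}_{i,0}\underset{\CY^{m,n}_{i,0}}\times\CY^{m,n}_{i,j}$ is ill-formed because $\CY^{m,n}_{i,j}$ does not map to $\CY^{m,n}_{i,0}$. The fix is to anchor the affines in the \emph{bottom} row: choose a string $S_0\to\cdots\to S_m$ in $\affdgSch_{\on{aft}}$ over $\CY^{m,n}_{\bullet,n}$ and set $\CZ^{m,n}_{i,j}:=S_i\underset{\CY^{m,n}_{i,n}}\times\CY^{m,n}_{i,j}$; then eventual representability of $\CY^{m,n}_{i,j}\to\CY^{m,n}_{i,n}$ indeed forces each $\CZ^{m,n}_{i,j}$ to be an Artin stack, and a pasting argument (using that $\CY^{m,n}$ is Cartesian) shows the resulting grid is Cartesian with eventually coconnective vertical maps.

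For comparison, the paper uses the larger indexing category $\bJ$ of \emph{all} Cartesian diagrams with Artin-stack entries over $\CY^{m,n}$, and organizes the bootstrapping input asymmetrically as $\bI':=\bJ\times[m]$, $\bI'':=[n]$: only the horizontal (!-pullback) direction needs to be resolved by an auxiliary limit, while the vertical (*-pullback) direction is handled directly by the Artin result. Your (corrected) affine-bottom-row diagrams should sit cofinally inside $\bJ$, so the two constructions agree, though this cofinality needs to be checked. Two minor notational points: your ``projection $\bI^{\on{op}}\to([m]\times[n])^{\on{op}}$'' should read $(\bI\times[m]\times[n])^{\on{op}}\to([m]\times[n])^{\on{op}}$, and the phrase ``restricting to the line $\bi''=*$'' is borrowed from a two-index setup you never actually introduced.
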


The rest of this subsection is devoted to a sketch of the proof of this proposition. 

\sssec{}

The idea of the proof is similar to that of \propref{p:!* IndCoh Artin}. 

\medskip

First, from the map
$$\IndCoh^{*!}_{(\inftydgstack_{\on{laft},\on{Artin}})_{\on{ev-coconn;all}}}: 
\on{Cart}^{\bullet,\bullet}_{\on{ev-coconn;all}}(\inftydgstack_{\on{laft},\on{Artin}})\to 
\on{Seg}^{\bullet,\bullet}((\StinftyCat_{\on{cont}})^{\on{op}})$$
we obtain that for a pair of $\infty$-categories
$\bI'$ and $\bI''$ and a functor 
$$F:\bI'\times \bI''\to \inftydgstack_{\on{laft},\on{Artin}}$$
with the corresponding properties, we can produce a functor
$$\IndCoh^{*!}_{(\inftydgstack_{\on{laft},\on{Artin}})_{\on{ev-coconn;all}}}\circ F:
(\bI'\times \bI'')^{\on{op}}\to \StinftyCat_{\on{cont}}.$$

\sssec{}

To define the map $\IndCoh^{*!}_{\on{Cart}^{\bullet,\bullet}_{\on{ev-repr-coconn;all}}(\on{PreStk}_{\on{laft}})}$,
we need to construct, for every
$$[m]\times [n]\in \bDelta\times \bDelta,$$
a map of $\infty$-groupoids
$$\on{Cart}^{m,n}_{\on{ev-repr-coconn;all}}(\on{PreStk}_{\on{laft}}) \to 
\on{Seg}^{m,n}((\StinftyCat_{\on{cont}})^{\on{op}}),$$
in a way functorial in $[m]\times [n]$.

\medskip

We fix an object $\CY^{m,n}\in \on{Cart}^{\bullet,\bullet}_{\on{ev-repr-coconn;all}}(\on{PreStk}_{\on{laft}})$
and will construct the corresponding functor
$$\IndCoh^{*!}(\CY^{m,n}):([m]\times [n])^{\on{op}}\to \StinftyCat_{\on{cont}}.$$

\sssec{}

We introduce an $\infty$-category 
$$_{\on{all}}\on{Cart}^{m,n}_{\on{ev-repr-coconn;all}}(\on{PreStk}_{\on{laft}}),$$
which has the same objects as $\on{Cart}^{m,n}_{\on{ev-repr-coconn;all}}(\on{PreStk}_{\on{laft}})$, 
but where we now allow arbitrary maps between such diagrams. 

\medskip

We let $\bJ$ denote the full subcategory in
$$\left({}_{\on{all}}\on{Cart}^{m,n}_{\on{ev-repr-coconn;all}}(\on{PreStk}_{\on{laft}})\right)_{/\CY^{m,n}}$$
whose objects are diagrams with entries from $\inftydgstack_{\on{laft},\on{Artin}}$.

\medskip

We set
$$\bI':=\bJ\times [m] \text{ and } \bI'':=[n].$$

We define the functor
$$F:(\bI'\times \bI'')^{\on{op}}\to \inftydgstack_{\on{laft},\on{Artin}}$$
by sending 
$$(g:\CZ^{m,n}\to \CY^{m,n})\in \bJ,\,\, a\in [0,m],\,\, b\in [0,n]$$
to $\CZ{}^{m,n}_{a,b}$.

\medskip

Consider the resulting functor
$$\IndCoh^{*!}_{(\inftydgstack_{\on{laft},\on{Artin}})_{\on{ev-coconn;all}}}\circ F:
(\bI'\times \bI'')^{\on{op}}\to \StinftyCat_{\on{cont}}.$$

We define the sought-for functor $\IndCoh^{*!}(\CY^{m,n})$ as the right Kan extension of
the functor $\IndCoh^{*!}_{(\inftydgstack_{\on{laft},\on{Artin}})_{\on{ev-coconn;all}}}\circ F$
along the projection
$$(\bI'\times \bI'')^{\on{op}}\to ([m]\times [n])^{\on{op}}.$$

\medskip

The functoriality of this construction in
$$([m]\times [n])\in (\bDelta\times \bDelta)^{\on{op}}$$
is established in the same way as in \propref{p:!* IndCoh Artin}. 

\ssec{Properties of $\IndCoh$ of Artin stacks}

According to \propref{p:!* IndCoh Artin}, if $\CY$ is an Artin stack locally almost of finite type,
there exists a well-defined category $\IndCoh(\CY)$, and pullback functors
$$f^!:\IndCoh(\CY)\to \IndCoh(S),$$
for $(f:S\to \CY)\in (\dgSch_{\on{aft}})_{\CY}$, and also 
$$f^{\IndCoh,*}:\IndCoh(\CY)\to \IndCoh(S),$$
if $f$ is eventually coconnective.

\medskip

This will allow to define certain additional pieces of structure on $\IndCoh(\CY)$,
which are less obvious to see when we view $\IndCoh(\CY)$ just as $\IndCoh^!(\CY)$.

\sssec{The functor $\Psi$}  \label{sss:Psi for stacks}

We claim that the functor $\IndCoh^*_{(\inftydgstack_{\on{laft},\on{Artin}})_{\on{ev-coconn}}}$
comes equipped with a natural transformation
$$\Psi^*_{(\inftydgstack_{\on{laft},\on{Artin}})_{\on{ev-coconn}}}:
\IndCoh^*_{(\inftydgstack_{\on{laft},\on{Artin}})_{\on{ev-coconn}}}\to 
\QCoh^*_{(\inftydgstack_{\on{laft},\on{Artin}})_{\on{ev-coconn}}},$$
where $\QCoh^*_{(\inftydgstack_{\on{laft},\on{Artin}})_{\on{ev-coconn}}}$ denotes the restriction
of $\QCoh^*_{\on{PreStk}_{\on{laft}}}$ to
$$((\inftydgstack_{\on{laft},\on{Artin}})_{\on{ev-coconn}})^{\on{op}}\to (\on{PreStk}_{\on{laft}})^{\on{op}}.$$

\medskip

Indeed, this natural transformation is obtained as the right Kan extension of
$$\Psi^*_{(\dgSch_{\on{aft}})_{\on{ev-coconn}}}:
\IndCoh^*_{(\affdgSch_{\on{aft}})_{\on{ev-coconn}}}\to 
\QCoh^*_{(\affdgSch_{\on{aft}})_{\on{ev-coconn}}},$$
given by \corref{c:upper * DG funct}.

\medskip

Above we have used the fact that the natural map
\begin{multline*}
\QCoh^*_{(\inftydgstack_{\on{laft},\on{Artin}})_{\on{ev-coconn}}}\to \\
\to \on{RKE}_{(((\affdgSch_{\on{aft}}))_{\on{ev-coconn}})^{\on{op}}\hookrightarrow 
((\inftydgstack_{\on{laft},\on{Artin}})_{\on{ev-coconn}})^{\on{op}}}(\QCoh^*_{(\dgSch_{\on{aft}})_{\on{ev-coconn}}})
\end{multline*}
is an isomorphism, which holds due to \propref{p:RKE from smooth general}.

\sssec{}

In plain terms, the existence of the above natural transformation means that 
for $\CY\in \inftydgstack_{\on{laft},\on{Artin}}$, there exists a
canonically defined functor 
$$\Psi_{\CY}:\IndCoh(\CY)\to \QCoh(\CY),$$
such that for a smooth map $f:\CY_1\to \CY_2$ between Artin stacks
we have a commutative diagram:
$$
\CD
\IndCoh(\CY_1)   @>{\Psi_{\CY_1}}>>  \QCoh(\CY_1) \\
@A{f^{\IndCoh,*}}AA    @AA{f^*}A   \\
\IndCoh(\CY_2)   @>{\Psi_{\CY_2}}>>  \QCoh(\CY_2). 
\endCD
$$

\sssec{}  \label{sss:Xi for stacks}

By a similar token, we claim that if $\CY\in \inftydgstack_{\on{laft},\on{Artin}}$ is eventually coconnective,
i.e., $n$-coconnective as a stack for some $n$ (see \cite{Stacks}, Sect. 4.6.3 for the terminology), 
the functor $\Psi_\CY$ admits a left adjoint, denoted $\Xi_\CY$, such that for $S\in \affdgSch_{\on{aft}}$ endowed with
an eventually coconnective map $f:S\to \CY$, the diagram
$$
\CD
\IndCoh(S)   @<{\Xi_{S}}<<  \QCoh(S) \\
@A{f^{\IndCoh,*}}AA   @AA{f^*}A   \\
\IndCoh(\CY)   @<{\Xi_{\CY}}<<  \QCoh(\CY)
\endCD
$$
commutes. Moreover, the functor $\Xi_{\CY}$ is fully faithful. 

\medskip

Indeed, we construct the corresponding natural transformation 
$$\Xi^*_{({}^{\leq n}\!\inftydgstack_{\on{laft},\on{Artin}})_{\on{ev-coconn}}}:
\QCoh^*_{({}^{\leq n}\!\inftydgstack_{\on{laft},\on{Artin}})_{\on{ev-coconn}}}\to 
\IndCoh^*_{({}^{\leq n}\!\inftydgstack_{\on{laft},\on{Artin}})_{\on{ev-coconn}}},$$
as the right Kan extension of the corresponding natural transformation
$$\Xi^*_{({}^{\leq n}\!\affdgSch_{\on{aft}})_{\on{ev-coconn}}}:
\QCoh^*_{({}^{\leq n}\!\affdgSch_{\on{aft}})_{\on{ev-coconn}}}\to 
\IndCoh^*_{({}^{\leq n}\!\affdgSch_{\on{aft}})_{\on{ev-coconn}}},$$
the latter being given by \secref{sss:Xi and pullback}.

\medskip

The adjunction for $(\Xi_\CY,\Psi_\CY)$ follows from \lemref{l:adjunction in limit}.

\medskip

In particular, we obtain that for $\CY$ eventually coconnective, the functor 
$\Psi_\CY$ realizes $\QCoh(\CY)$
as a co-localization of $\IndCoh(\CY)$.

\sssec{t-structure}  \label{sss:t structure on stacks}

We now claim:

\begin{prop}  \label{p:t structure on Artin}
Let $\CY$ be an object of $\inftydgstack_{\on{laft},\on{Artin}}$. Then the category $\IndCoh(\CY)$
admits a unique t-structure, characterized by the property that for
$S\in \dgSch_{\on{aft}}$ with a flat map $f:S\to \CY$, the functor 
$$f^{\IndCoh,*}:\IndCoh(\CY)\to \IndCoh(S)$$ is t-exact. Moreover:

\smallskip

\noindent{\em(a)} The above t-structure is compatible
with filtered colimits. 

\smallskip

\noindent{\em(b)} The functor $\Psi_\CY$ is t-exact, induces an equivalence 
$$\IndCoh(\CY)^+\to \QCoh(\CY)^+,$$ and identifies $\QCoh(\CY)$ with the left completion
of $\IndCoh(\CY)$ in its t-structure. 

\end{prop}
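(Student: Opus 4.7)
The plan is to define the t-structure by descent from a flat atlas and then bootstrap all further assertions from the corresponding properties on schemes, established in Sections \ref{s:ind-coherent} and \ref{s:properties}.

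First, I would use \propref{p:recovering from smooth bis} with $\fc = \fc' = \on{flat}$, combined with \propref{p:!* IndCoh Artin}, to present
\begin{equation*}
\IndCoh(\CY) \;\simeq\; \underset{(f:S\to \CY)\in \bI^{\on{op}}}{lim}\, \IndCoh(S),
\end{equation*}
where $\bI$ is the $1$-full subcategory of $(\affdgSch_{\on{aft}})_{/\CY}$ spanned by flat maps, with $1$-morphisms required to be flat, and the transition functors in the limit are the $(\IndCoh,*)$-pullbacks $f^{\IndCoh,*}$. The key input for the whole argument is that for a flat morphism $h:S_1\to S_2$ in $\affdgSch_{\on{aft}}$ the functor $h^{\IndCoh,*}$ is t-exact (it is the ind-extension of the t-exact functor $h^*:\Coh(S_2)\to\Coh(S_1)$, and the t-structure on $\IndCoh$ is compatible with filtered colimits).

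Next, I would define $\IndCoh(\CY)^{\leq 0}$ (resp. $\IndCoh(\CY)^{\geq 0}$) to consist of those $\CF$ with $f^{\IndCoh,*}(\CF)\in \IndCoh(S)^{\leq 0}$ (resp.\ $\geq 0$) for every flat $f:S\to \CY$ with $S\in \dgSch_{\on{aft}}$. Uniqueness is then tautological from the stated characterization. To show this is indeed a t-structure, I would construct truncation functors termwise in the limit presentation: given a compatible system $\{\CF_S\}$, the systems $\{\tau^{\leq 0}\CF_S\}$ and $\{\tau^{\geq 1}\CF_S\}$ are again compatible precisely because all transition functors $h^{\IndCoh,*}$ are t-exact. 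The standard argument for passing t-structures through a limit along t-exact functors then gives the existence of truncations and the t-structure axioms. The fact that the definition via flat affines from $\dgSch_{\on{aft}}$ agrees with testing on flat maps from \emph{any} $S\in\dgSch_{\on{aft}}$ is a Zariski-local check reducing to the affine case via \corref{c:t structure local}. Assertion (a) is then immediate from the term-wise construction together with compatibility of filtered colimits with each $\IndCoh(S)$ and with each t-exact transition functor.

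For assertion (b), I would apply the same formalism to $\QCoh$: by \propref{p:RKE from smooth general} (with $P = \QCoh^*$, which satisfies smooth descent) one has
\begin{equation*}
\QCoh(\CY) \;\simeq\; \underset{(f:S\to \CY)\in \bI^{\on{op}}}{lim}\, \QCoh(S),
\end{equation*}
and the natural transformation $\Psi^*_{(\inftydgstack_{\on{laft},\on{Artin}})_{\on{ev-coconn}}}$ of \secref{sss:Psi for stacks} realizes $\Psi_\CY$ as the limit of the term-wise $\Psi_S$, each of which is t-exact; hence $\Psi_\CY$ is t-exact. Since for flat $f$ the square comparing $f^{\IndCoh,*}$ and $f^*$ via $\Psi$ commutes (Proposition \ref{p:* pullback}), the $\geq n$-parts pass through the limit, and \propref{p:equiv on D plus} gives term-wise equivalences $\IndCoh(S)^{\geq n}\simeq \QCoh(S)^{\geq n}$; taking limits yields $\IndCoh(\CY)^{\geq n}\simeq \QCoh(\CY)^{\geq n}$, and taking the union over $n$ gives $\IndCoh(\CY)^+\simeq \QCoh(\CY)^+$. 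Finally, $\QCoh(\CY)$ is left-complete because it is a limit (along t-exact functors) of the left-complete categories $\QCoh(S)$; combined with the equivalence on $+$-parts, this identifies $\QCoh(\CY)$ with the left completion of $\IndCoh(\CY)$, just as in \propref{p:QCoh as left compl}.

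The main obstacle I anticipate is the well-posedness of the termwise truncation construction: one must verify that truncating in each $\IndCoh(S)$ produces a system which is again an object of the limit category, i.e., that the isomorphisms $h^{\IndCoh,*}\CF_{S_2}\simeq \CF_{S_1}$ are promoted to isomorphisms on truncations with all higher coherences. This is where the t-exactness of all flat pullbacks (including the transition $1$-morphisms in the index category $\bI$) is essential, and where one needs the fact that our chosen $\bI$ has all $1$-morphisms flat so that t-exactness is available throughout; this is precisely why \propref{p:recovering from smooth bis} allows us to restrict both the objects and the morphisms to the flat class.
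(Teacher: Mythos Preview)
Your proposal is correct and follows essentially the same approach as the paper: present $\IndCoh(\CY)$ as a limit of $\IndCoh(S)$ over an index category of flat maps with $*$-pullbacks as transition functors (so that all transition functors are t-exact), invoke the general observation that a limit of DG categories along t-exact functors inherits a unique t-structure for which the evaluation functors are t-exact, and then run the same argument for $\QCoh$ together with the termwise $\Psi_S$ to obtain (b) via \propref{p:equiv on D plus}. The paper's proof is terser---it cites \corref{c:RKE from smooth general} and the general limit-of-t-structures observation without spelling out the passage from the $!$-presentation to the $*$-presentation---whereas you make explicit the role of \propref{p:!* IndCoh Artin} in obtaining the $*$-indexed limit; but the substance is the same.
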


\begin{proof}

The proof follows from \corref{c:RKE from smooth general} combined with  \propref{p:equiv on D plus},
using the following general observation:

\medskip

Let 
$$A\to \StinftyCat_{\on{cont}},\,\, \alpha\mapsto \bC_\alpha$$
be a functor, and set
$$\bC:=\underset{\alpha\in A}{lim}\, \bC_\alpha.$$

Assume now that each $\bC_\alpha$ is endowed with a t-structure, and that for each arrow
$\alpha_1\to\alpha_2$ in $A$, the functor
\begin{equation} \label{e:trans functors}
\bC_{\alpha_1}\to \bC_{\alpha_2}
\end{equation}
is t-exact.

\medskip

Then the category $\bC$ has a unique t-structure characterized by the property that the evaluation 
functors $\bC\to \bC_\alpha$ are t-exact.

\medskip

Moreover, if the t-structure on each $\bC_\alpha$ is compatible
with filtered colimits, then the t-structure on $\bC$ has the same property.

\medskip

Point (b) of the proposition follows similarly. 

\end{proof}


\begin{thebibliography}{99}

\bibitem[AG]{AG} D.~Arinkin and D.~Gaitsgory, {\it Singular support of coherent sheaves and the geometric 
Langlands conjecture}, arXiv:1201.6343. 

\bibitem[BFN]{BFN} D.~Benzvi, J.~Francis and D.~Nadler, {\it Integral transforms and Drinfeld centers in derived
algebraic geometry}, J. Amer. Math. Soc. {\bf 23} (2010), no. 4, 909Ð966. 

\bibitem[DrGa1]{DrGa1} V.~Drinfeld and D.~Gaitsgory, {\it On some finiteness questions for algebraic stacks},
arXiv:1108.5351. 

\bibitem[DrGa2]{DrGa2} V.~Drinfeld and D.~Gaitsgory, {\it Compact generation of the category of D-modules on the stack of
G-bundles on a curve}, arXiv:0805:1112.2402.

\bibitem[FrGa]{FG} J.~Francis and D.~Gaitsgory, {\it Chiral Koszul Duality}, Selecta Math (New Series) {\bf 18} (2012),
no. 1, 27-87.

\bibitem[FrenGa]{FrenGa} E.~Frenkel and D.~Gaitsgory, {\it D-modules on affine flag variety and 
representations of affine Kac-Moody algebras}, Jour. of Representation Theory {\bf 13} (2009), 470-608. 

\bibitem[GL:DG]{DG} Notes on Geometric Langlands,  {\it Generalities on DG categories}, 
available at \newline http://www.math.harvard.edu/~gaitsgde/GL/.

\bibitem[GL:Stacks]{Stacks} Notes on Geometric Langlands,  {\it Stacks}, 
available at http://www.math.harvard.edu/~gaitsgde/GL/.

\bibitem[GL:QCoh]{QCoh} Notes on Geometric Langlands,  {\it Quasi-coherent sheaves on stacks}, \\newline
available at http://www.math.harvard.edu/~gaitsgde/GL/.

\bibitem[GR1]{GR1} D.~Gaitsgory and N.~Rozenblyum, {\it DG Indschemes}, arXiv:1108.1738

\bibitem[GR2]{GR2} D.~Gaitsgory and N.~Rozenblyum, {\it Crystals and D-modules}, arXiv:1111.2087.

\bibitem[GR3]{GR3} D.~Gaitsgory and N.~Rozenblyum,
{\it A study in derived algebraic geometry}, in preparation, 
preliminary version will gradually become available at http://www.math.harvard.edu/~gaitsgde/GL/.

\bibitem[GoLi]{GoLi} T. G. Goodwillie and S. Lichtenbaum, {\it A cohomological bound for the h-topology},
American J. of Math, {\bf 123}, (2001), pp. 425-443

\bibitem[Kr]{Kr} H.~Krause, {\it The stable derived category of a Noetherian scheme},
Compositio Mathematica {\bf 141}, (2005), 1128-1162.




\bibitem[Lu0]{Lu0} J.~Lurie, {\it Higher Topos Theory}, Princeton Univ. Press (2009). 

\bibitem[Ne]{Ne} A.~Neeman, {\it The Grothendieck duality theorem via BousÞeldÕs techniques and Brown representability}, J. Amer. 
Math. Soc. {\bf 9} (1996), no. 1, 205Ð236. 

\bibitem[TT]{TT} R.~Thomason and T.~Trobaugh, {\it Higher algebraic K -theory of schemes and of derived categories}, The 
Grothendieck Festschrift, Vol. III, 247Ð435, Progr. Math., {\bf 88} (1990). 


\end{thebibliography}
\end{document}